\newcolumntype{d}[1]{D{.}{.}{#1}}
\title{\normalfont{Riemannian Geometry to Higher Order in the Infinitesimals}} 
\author{{William Edward Bies
	} \\ 
%704 Shady Drive East, Apt. 4, Pittsburgh PA, USA \\ 
{E-mail: william.bies.phd@gmail.com
}}
\date{June 6, 2024} 
\begin{document}
	
	\numberwithin{equation}{section}
	\theoremstyle{plain}
	\newtheorem{theorem}{Theorem}[section]
	\newtheorem{proposition}{Proposition}[section]
	\newtheorem{lemma}{Lemma}[section]
	\newtheorem{corollary}{Corollary}[section]
	\newtheorem{postulate}{Postulate}[section]
	
	\theoremstyle{definition}
	\newtheorem{definition}{Definition}[section]
	
	\theoremstyle{remark}
	\newtheorem{remark}{Remark}[section]
	
	\renewcommand{\qedsymbol}{\small \RectangleBold}
	
	\maketitle
	
	\setcounter{tocdepth}{2} 
	
	\tableofcontents
	
	\newpage
	
	\section*{Abstract}
	
	Differential geometry may be generalized to allow infinitesimals to any order. The purpose of the present contribution is to show that the theory so developed expands received geometrical ideas in an interesting way, rich in potential for future exploration. The first order of business is to furnish the notion of a higher tangent vector, as defined abstractly by means of commutative algebra, with a workable interpretation in terms of spatial intuition. Then we introduce the differential calculus of the so-called jet connection, viz., an extension of the usual affine connection that takes higher tangent vectors as its arguments---thereby enabling us to give a sense to parallel transport in the direction of a higher tangent, what has (to our knowledge) never been entertained before. After generalizing the Riemannian metric tensor to include a dependence up to any order in the infinitesimals, we arrive at natural analogues of the Levi-Civita connection and the Riemannian curvature tensor which display novel phenomena rooted in interactions among infinitesimals differing in order. Finally on the integral side, an intrinsic theory of integration adapted to integrands possibly of higher than first order in the differentials is developed, with a view towards eventually defining an action functional that will be applicable in the general theory of relativity.
		
	\vspace{12 pt}
	
	2020 Mathematics Subject Classification. Primary: 53A45 Differential geometric aspects in vector and tensor analysis. Secondary: 58A05 Differentiable manifolds, foundations; 53A17 Differential geometric aspects in kinematics; 58C99 Calculus on manifolds, none of the above.
		
	\vspace{12 pt}
	
	Keywords: higher-order infinitesimals, differential calculus, higher-order differential geometry 
	
	\vspace{12 pt}
		
	Subject Classifications: Riemannian geometry, local differential geometry, analysis on manifolds
	
	\vspace{12 pt}
	
	%Declarations of interest: None. This research did not receive any specific grant from funding agencies in the public, commercial, or not-for-profit sectors.	
	
	\section{Introduction}\label{chapter_1}

At the root of the present work lies a twofold reconceptualization of received ideas, first the mathematical, as to the pure doctrine of motion and second, the dynamical, as to quantity of motion (momentum). The basic premise informing our research is that the conventional Riemannian geometry everyone employs these days may well be adequate as a guide to the phenomena at leading order, but that (as Riemann himself foresees) the presence of infinitesimals of higher than first order may reflect itself in subtler effects to be revealed in regimes not so far thoroughly investigated, whether in the domain of the infinitely small or across sufficiently large distances and times. As we shall see, the ordinary tensor calculus can certainly be generalized to include contributions from higher-order infinitesimals. The difficulty of the matter consists in the need to seek out an appropriate starting point so as to ensure that the theory so obtained becomes suitably ample and versatile. As we suggest in the following subsections, the fruitful development of an higher-order geometry has, to date, been hindered by inappropriate definitions it will be our concern to overcome. 

\subsection{Motivations from Pure Mathematics}

Wallis in 1656 \cite{wallis} seeks to devise a generally applicable technique of quadrature by replacing Cavalieri's geometrical indivisibles with arithmetical infinitesimals that can be summed according to a known rule. At this stage, the infinitesimals arrived at via a limiting process interpolate a discrete sequence and the answer may be obtained by induction, heuristically if not entirely rigorously. In view of his focus on carrying out quadratures, Wallis has little use for any algebraical properties his infinitesimals may obey. Newton's development of the calculus takes Wallis' results as starting point, but he too prefers to stress the analytical operations of differentiation and integration so that any algebraical role of infinitesimals falls by the wayside, see \cite{newton_fluxions}, \cite{newton_de_analysi} and \cite{newton_de_methodis}. Therefore, we turn to Fontenelle in 1727 \cite{fontenelle}. Inspired by Descartes and Leibniz, Fontenelle adopts the position that the infinite, both the infinitely large and the infinitely small, far from being purely indeterminate in fact may be treated as a known quantity with which one can perform arithmetical operations. Thus, we can confer a sense on expressions such as $2 \infty, 3 \infty, 1/\infty$, $\infty \cdot \infty = \infty^2$ and many others. Hence, there exists a differential calculus defined by regular conventions to be sought out. Fontenelle's key insight is to recognize the existence of orders of infinity and to retain entities of higher order rather than to discard them as always being negligible in the limit.

After Cauchy, who still admits a place, though minimal, for infinitesimal magnitudes, Riemann's groundbreaking definition of his integral for sufficiently well behaved functions leaves no room for infinitesimals to play any role in the calculus, as is confirmed by Weierstrass' completely rigorous epsilontic formulation of real analysis. The situation with respect to this question has remained unchanged ever since, notwithstanding the twin twentieth-century revolutions of Robinson's non-standard analysis and Grothendieck's scheme theory. For the former, while indeed formally valid and reintroducing infinitesimal magnitudes in a rigorous fashion, happens to be too unwieldy for anyone to reason effectively with them, and in consequence nobody has ever made significant use of non-standard analysis in theoretical physics, to this author's knowledge. As for the latter, the case seems to be one rife with unrealized promise. For while Grothendieck offers a more intuitively accessible notion of an infinitesimal than does Robinson and great though the employment to which algebraic geometers have put his scheme-theoretical program may be, they have continued to be disconnected from analysis and so too have yet to enter mathematical physics in an instrumental way.

The right framework in terms of which to handle infinitesimals is that of modern commutative algebra, where one applies scheme theory in the context of the algebra of smooth functions on a differentiable manifold. Thus, one arrives at non-reduced elements, or jets, which from their definition admit of intuitive visualization. But the dual objects, or higher tangent vectors, are thus far defined only algebraically, as linear functionals on spaces of jets. What is the geometrical meaning of higher tangency, and how can it be connected with the concept of osculating contact? The stimulus to the investigations herein detailed has been, precisely, the discovery of a workable and intuitive interpretation of what a higher-order tangent vector would have to be. Another equally vital question is that as to whether it be possible to integrate a higher tangent vector field in order to obtain a diffeomorphism flow? It is not immediately evident how to do so while avoiding overdetermination. As we shall see in {\S}\ref{chapter_3}, one can indeed arrive at a sense in which higher tangents generate a flow if one allow virtual displacements in the cotangent space. Thus, by conferring inertial properties on space itself, as it were, we undertake a far-reaching revision of the very concept of motion.

\subsection{The Present Work in the Context of Previous Literature}
 
The idea of a differential geometry to higher order has previously been broached in the literature, but held to be unpromising for insufficient reasons. Pohl \cite{pohl}, indeed, defines infinitesimals of higher order in terms equivalent to ours, but fails to go beyond a consideration of Chern classes to any notions that would qualify as differential geometric, properly speaking. Hence, he misses the jet connection and the possibility of defining a Riemannian metric to higher order (thus, no Levi-Civita jet connection and no generalized Riemannian curvature tensor). Needless to say, Pohl makes no application of his formalism to the general theory of relativity, at all. Meyer \cite{meyer} considers higher tangents and jets to second order only, having in mind applications to the stochastic calculus on manifolds. He employs an altogether different definition of affine connection (1-vector valued only and restricted to be the identity on 1-vectors themselves). Hence, he too misses the possibility of a generalized Riemannian metric, the Levi-Civita jet connection and the generalized Riemannian curvature tensor. Thus, his contention that any potential extension of his ideas to the case of infinitesimals of greater than second order would be irrelevant has to be taken with a grain of salt---his artificial constraint on the form of the jet connection means that he cannot apply his theory to relativistic physics, either.
 
While infinitesimals to any order function as common ingredients in many theories of pure mathematics, for instance, Grothendieck's scheme theory \cite{grothendieck}, the Russian school of differential calculus \cite{gamkrelidze}, \cite{nestruev}, \cite{vinogradov}, Kock's synthetic geometry \cite{kock} or Sardanashvily's work on jet spaces in the context of classical field theory \cite{sardanashvily}, the fault germane to all of these authors (from the physicist's point of view) consists in a neglect of any metrical structure to space such as we know very well plays a central role in the general theory of relativity. The same criticism could be leveled against Connes' non-commutative geometry \cite{connes}. His version of the standard model as emerging from an almost-commutative algebra, the conceptual bases of which are Dirac's electron-positron theory and Yang-Mills' gauge theory, remains subject to a large measure of arbitrariness as far as the choice of the non-commutative algebras governing the internal degrees of freedom goes. Moreover, the price one pays in sacrificing spatial intuition when going from commutative to non-commutative algebras seems to be too steep.
 
The mentioned works have higher infinitesimals but no geometry; the opposite problem appears in the work of Boucetta on a Riemannian geometry in the context of Lie algebroids \cite{boucetta}. The concept of a jet connection to be advanced below in {\S}\ref{chapter_3} does not appear to be equivalent to Boucetta's standard connection which can be defined on every Lie algebroid with the aid of an anchor function. Of course, the higher tangent vector fields can be viewed as a Lie algebroid where the Lie bracket $[X,Y]$ is obtained from the commutator of $X$ and $Y$ viewed as linear partial differential operators. But our definition of a higher-order connection differs essentially from Boucetta's in obeying a Leibniz product rule, which is the appropriate extension to higher-order differential operators of the derivation property satisfied by a first-order operator. In other words, there is no anchor function in our approach. As far one can tell, the use of an anchor function in the conventional approach to the standard Lie algebroid connection prevents different orders of infinitesimals from interacting in an effective way. In consequence, the sense in which one discusses parallel transport there is quite different  from what we mean here by a jet geodesic and so fails to embody any novel kinematical notions of the kind our work discloses.\footnote{What we are seeking is a covariant notion of partial differentiation with respect to an arbitrary multi-index. In particular, observe that one will want to posit a distinction between, say, $\nabla_{\partial^2/\partial x \partial y}$ and $\nabla_{\partial/\partial x} \nabla_{\partial/\partial y}$. If the latter were all, one would encounter the problem that, in general, $\nabla_{\partial/\partial x} \nabla_{\partial/\partial y} \ne \nabla_{\partial/\partial y} \nabla_{\partial/\partial x}$.} All along we keep always in mind the concrete case of higher tangent vector fields and generalized tensors built from them, whereas Boucetta stays at the highly abstract level of an arbitrary Lie algebroid and hence omits to avail himself of spatial intuition.

It should be clear that the idea of going to higher order will involve a certain amount of repetition of results from the first-order case, in that the necessary machinery will often (but not always) involve a fairly routine extension of what has been done before. That is why we include references to the corresponding results in the textbooks by J.M. Lee and W. Thirring. But we then go on to do things with the formalism that are non-trivial, in the sense that they make essential use of the coupling between infinitesimals of different order, which of course cannot enter when one limits oneself to first-order infinitesimals. Specific formal results never before anticipated include proposition \ref{jeteuclid}, theorem \ref{existence_theorem_for_jet_geodesics}, theorem \ref{exist_levi_civita}, theorem \ref{tensoriality_of_curvature}, {\S}\ref{riemannian_appendix} and proposition \ref{properties_of_integral}.
	\section{Foundations}\label{chapter_2}

If the introduction of higher infinitesimals into differential geometry is to be consequential, they need to be rendered accessible to spatial intuition. The problem is that, to date, the geometrical concept of osculating contact has yet to be connected with the analytical concept of partial differentiation. Therefore, we seek a sense in which a higher tangent vector field may be viewed microlocally in cotangent space and the flow it generates integrated. When this is done, the higher components will be seen to correspond to virtual displacements which combine to generate the collective motion described by the integrated flow.

\subsection{Formal Definition of Jets and Higher-Order Tangents}\label{formal_def}

Our guiding intuition is that space may have more structure than is captured by ordinary tensor analysis. Let $\phi_\lambda$ be a 1-parameter group of diffeomorphisms acting on a differentiable manifold $M$. Its orbits $\gamma: [a,b] \rightarrow M$ are 1-dimensional curves in $M$ along which the tangent vector to $\gamma$ is translated by the diffeomorphism flow. To gain a full picture of the local behavior of $\phi_\lambda$, one has to look in an open neighborhood enclosing $\gamma$. The nearby space can twist, turn, shear and converge or diverge in various directions under the action of $\phi_\lambda$. Jets, being higher-order constructs, could be an apt tool for describing such phenomena. A vector attached to $\gamma$ itself detects only the direction of flow and its bending and local reorientation with respect to the line of flow, according to the well-known Serret-Frenet formula and its generalization to spaces of dimension greater than three \cite{lanczos}. 

Let $M$ be an $n$-dimensional differentiable manifold, $p \in M$ a point in $M$ {\cite{warner}}. In the spirit of the Russian school, everything will be based on the commutative algebra of smooth functions on $M$, denoted $C^\infty(M)$. It is natural to characterize points in $M$ by their maximal ideals. We adopt a local perspective and restrict attention to the algebra $C^\infty(M)_p$ of germs of smooth functions at $p$. Let $\mathfrak{m}_p$ denote the ideal in $C^\infty(M)_p$ of germs of functions that vanish at $p$. The usual construction is to represent a vector $\varv$ at $p$ as linear derivation of $C^\infty(M)_p$, the vector space of all such derivations being denoted $T_pM$. As we shall see in a moment, however, the derivation property constitutes something of a distraction and will have to be relinquished. The first elementary result is to characterize vectors so defined in algebraic terms as the dual space $T_pM \cong (\mathfrak{m}_p/\mathfrak{m}_p^2)^*$ (cf. \cite{warner}, Lemma 1.16). An element $[f]$ of the space $\mathfrak{m}_p/\mathfrak{m}_p^2$ is naturally viewed as the differential $df$ of the function $f$ at $p$. These constructions can be generalized to higher-order tangents and differentials as follows (cf. \cite{warner}, \S 1.26):

\begin{definition}\label{jetdef}
	An $r$-jet, $r \ge 1$ \textup{(}or $r$-th order differential\textup{)}, at $p$ is an element of $J^{r*}_pM := \mathfrak{m}_p/\mathfrak{m}_p^{r+1}$. The $r$-th order differential of a germ of a function $f \in C^\infty(M)$ is given by $d^rf_p = [f-f(p)]$, where the square brackets indicate the operation of taking the equivalence class modulo $\mathfrak{m}_p^{r+1}$. A higher-order vector, or a tangent of order $r$, is an element of the dual space $J^r_pM := (\mathfrak{m}_p/\mathfrak{m}_p^{r+1})^*$.
\end{definition}

\begin{remark}
	We imitate the conventional notation of $T_pM$ and $T^*_pM$ for first-order vectors and differentials, respectively. Note that a differential $j \in J^{r*}_pM$ acts as a linear functional on vectors $\varv \in J^r_pM$ via the canonical identification $J^{r*}_pM \cong J^{r*}_pM^{**} = J^r_pM^*$.
\end{remark}

\begin{remark}
	On why the derivation property is inessential to being a vector: as suggested from its definition above, a vector should be a linear operator that looks locally at germs of functions as represented through their jets. If one were to insist upon retaining the property of being a derivation with respect to the multiplicative structure of the algebra of jets, it would be tantamount to imposing an unnatural constraint, for we know from the ordinary calculus that differentiation observes, in its behavior with respect to multiplication, not always the derivation property per se but the Leibniz rule, which is to be thought of as a generalized derivation property applicable to higher-order derivatives.
\end{remark}

We can give explicit expressions for these quantities if we adopt coordinates $x^1,\ldots,x^n$ in a neighborhood of $p$. When the coordinate functions are understood, let us adopt the notation
\begin{align}\label{eq1}
	d^\alpha|_p &= d^{\alpha_1}x^1 \cdots d^{\alpha_n}x^n = [(x^1-x^1(p))^{\alpha_1}\cdots(x^n-x^n(p))^{\alpha_n}] \\
	\partial_\alpha|_p &= \frac{1}{\alpha!} \frac{\partial^{|\alpha|}}{\partial (x^1)^{\alpha_1}\cdots \partial (x^n)^{\alpha_n}} \bigg|_p \label{def_vector_basis}
\end{align}
where $\alpha = (\alpha_1,\ldots,\alpha_n)$, $\alpha_i \in \vvmathbb{N}_0$, $i=1,\ldots n$, $\alpha! = \alpha_1!\cdots \alpha_n!$, $|\alpha|=\sum_{i=1}^n \alpha_i$ is a multi-index and $\partial_\alpha|_p$ is the dual vector defined by $\partial_\alpha|_p (d^\beta|_p) \ = \delta_{\alpha\beta}$ ($\delta_{\alpha\beta}$ designating the Kronecker delta symbol on multi-indices).

The following elementary statements are standard and therefore will be quoted without proof.

\begin{proposition}
The $d^\alpha|_p$ with $1 \le |\alpha| \le r$ form a basis of $J^{r*}_pM$ and the $\partial_\alpha|_p$ with $1 \le |\alpha| \le r$ form a basis of $J^r_pM$.
\end{proposition}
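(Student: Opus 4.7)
The plan is to leverage Taylor's theorem together with Hadamard's lemma in the local coordinate chart $(x^1,\ldots,x^n)$ to pin down the structure of the quotient $\mathfrak{m}_p/\mathfrak{m}_p^{r+1}$, and then pass to the dual.

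First, for the spanning property, I would take an arbitrary germ $f \in \mathfrak{m}_p$ and apply Taylor's formula centered at $p$ to order $r$: one writes $f(x) = \sum_{1 \le |\alpha| \le r} c_\alpha \, (x-x(p))^\alpha + R_r(x)$, where the $c_\alpha$ are the usual Taylor coefficients (involving the partial derivatives of $f$ at $p$, scaled by $1/\alpha!$) and $R_r$ is the remainder. The key ingredient is that $R_r \in \mathfrak{m}_p^{r+1}$, which follows from the integral form of the remainder or, equivalently, from iterating Hadamard's lemma $r+1$ times so that $R_r$ gets expressed as a finite sum of products of $r+1$ factors drawn from $\mathfrak{m}_p$. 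Reducing modulo $\mathfrak{m}_p^{r+1}$ then yields $[f] = \sum_{1 \le |\alpha| \le r} c_\alpha \, d^\alpha|_p$, so the $d^\alpha|_p$ span.

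For linear independence, suppose $\sum_{1 \le |\alpha| \le r} c_\alpha \, d^\alpha|_p = 0$ in $J^{r*}_pM$. This means that the polynomial $P(x) = \sum c_\alpha (x-x(p))^\alpha$ lies in $\mathfrak{m}_p^{r+1}$, and so is a finite sum of products of $r+1$ germs each vanishing at $p$. By the Leibniz rule, every such product has all its partial derivatives of order $\le r$ equal to zero at $p$. Applying the operator $\partial_\beta|_p$ (in its literal sense, as a normalized partial derivative) to $P$ therefore gives zero for every multi-index $\beta$ with $1 \le |\beta| \le r$; but direct computation of $\partial_\beta|_p P$ extracts precisely the coefficient $c_\beta$, so all $c_\beta$ vanish. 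Hence the $d^\alpha|_p$ are linearly independent, establishing that they form a basis of the finite-dimensional space $J^{r*}_pM$.

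Finally, for the vectors, I would use the finite-dimensionality just obtained to identify $J^r_pM = (J^{r*}_pM)^*$. Since the $\partial_\alpha|_p$ are defined by the duality relation $\partial_\alpha|_p(d^\beta|_p) = \delta_{\alpha\beta}$ on the basis of $J^{r*}_pM$, they automatically constitute the dual basis and thus form a basis of $J^r_pM$.

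The only real obstacle is justifying that the Taylor remainder actually sits inside $\mathfrak{m}_p^{r+1}$ rather than merely vanishing to sufficiently high order pointwise at $p$; this is a purely algebraic containment, and the clean way to secure it is via the iterated Hadamard decomposition, which should be quoted or briefly verified before launching into the Taylor expansion argument.
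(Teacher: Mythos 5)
Your proof is correct, and it is precisely the standard argument that the paper itself declines to spell out (the proposition is quoted there without proof, deferring to the references, e.g.\ Warner's treatment of $\mathfrak{m}_p/\mathfrak{m}_p^2$ and its higher-order analogue). The two pillars you identify are exactly the right ones: iterated Hadamard's lemma to place the Taylor remainder in the ideal $\mathfrak{m}_p^{r+1}$ (not merely to make it vanish to high order at $p$), and the Leibniz-rule observation that every element of $\mathfrak{m}_p^{r+1}$ has all partial derivatives of order $\le r$ vanishing at $p$, which kills the coefficients and gives linear independence; the duality statement for $\partial_\alpha|_p$ then follows from finite-dimensionality.
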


Let $J^{r*}M$ denote the disjoint union of the jet spaces at all points of $M$:
\begin{equation}\label{eq11}
	J^{r*}M = \coprod_{p \in M} J^{r*}_pM.
\end{equation}
Correspondingly, let $J^rM$ denote the disjoint union of the higher-order vector spaces at all points of $M$:
\begin{equation}\label{eq16}
	J^rM = \coprod_{p \in M} J^r_pM.
\end{equation}

\begin{proposition}\label{jet_bundle}
	The collections $J^{r*}M$ resp. $J^rM$ have the structure of a vector bundles over $M$.
\end{proposition}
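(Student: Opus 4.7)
The plan is to apply the standard vector bundle construction criterion: if one has a disjoint union $E = \coprod_{p \in M} E_p$ of vector spaces of a common finite dimension $N$, a projection $\pi : E \to M$, and an open cover $\{U_\alpha\}$ of $M$ equipped with fiberwise linear bijections $\Phi_\alpha : \pi^{-1}(U_\alpha) \to U_\alpha \times \mathbb{R}^N$ whose transition maps $\Phi_\beta \circ \Phi_\alpha^{-1}$ are smooth on overlaps, then there exists a unique smooth structure on $E$ making it a vector bundle with the $\Phi_\alpha$ as local trivializations.

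First I would fix a coordinate chart $(U, x^1, \ldots, x^n)$ on $M$. By the preceding proposition, the $\partial_\alpha|_p$ with $1 \le |\alpha| \le r$ form a basis of $J^r_pM$ at every $p \in U$, and dually the $d^\alpha|_p$ form a basis of $J^{r*}_pM$. Setting $N = \binom{n+r}{n} - 1$ (the number of multi-indices with $1 \le |\alpha| \le r$), I define
\[
\Phi_U \colon \pi^{-1}(U) \to U \times \mathbb{R}^N, \qquad \sum_{1 \le |\alpha| \le r} v^\alpha \, \partial_\alpha|_p \;\longmapsto\; \bigl(p, (v^\alpha)_\alpha\bigr),
\]
and analogously on the cotangent side using the $d^\alpha|_p$. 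By construction each $\Phi_U$ is a fiberwise linear bijection, so it remains to verify smoothness of the transitions.

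Second, on an overlap of two charts $(U, x^i)$ and $(V, y^j)$ the transition rule needs to be computed. Working on $J^{r*}M$ is conceptually cleanest: one must reduce each monomial $\prod_j (y^j - y^j(p))^{\beta_j}$ modulo $\mathfrak{m}_p^{r+1}$ into a polynomial in the $x^i - x^i(p)$. By Taylor expansion of the transition map $y = y(x)$ through order $r$ at $p$, the coefficients of the resulting polynomial are polynomial expressions in the partial derivatives $\partial^{|\gamma|} y^j/\partial x^\gamma$ for $|\gamma| \le r$, evaluated at $p$, hence smooth functions of $p$; the Taylor remainder lies in $\mathfrak{m}_p^{r+1}$ and therefore drops out in the quotient. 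This is essentially an invocation of Fa\`a di Bruno's formula. Dualizing yields the corresponding transition matrix on $J^rM$, again with entries smooth in $p$.

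The main obstacle is less conceptual than combinatorial: the change-of-coordinates formulas at arbitrary order $r$ grow cumbersome to write out in closed form, and one must be careful that the passage to equivalence classes modulo $\mathfrak{m}_p^{r+1}$ is compatible with substitution of one Taylor polynomial into another. Once that bookkeeping is confirmed—which reduces to the observation that a smooth function of $p$ composed with a smooth substitution remains smooth in $p$—the construction criterion cited at the outset immediately yields the desired vector bundle structure on both $J^{r*}M$ and $J^rM$.
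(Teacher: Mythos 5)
Your proof is correct and is precisely the standard argument the paper has in mind when it quotes this proposition without proof: local trivializations from the coordinate bases $\partial_\alpha|_p$, $d^\alpha|_p$, plus the vector bundle chart lemma, with smoothness of transitions coming from the fact that the change-of-basis matrices are polynomial in the partial derivatives of the coordinate change up to order $r$ (Fa\`a di Bruno), the Taylor remainder dying in the quotient by $\mathfrak{m}_p^{r+1}$. Indeed, the transformation laws (\ref{jet_transf_law}) and (\ref{vector_transf_law}) displayed immediately after the proposition are exactly the third-order instances of the transition matrices your argument constructs in general, so your proposal fills the gap the paper leaves as "standard" in the intended way.
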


We can now specify how the differential and vector basis elements transform under change of coordinates. Let $y^1,\ldots,y^n$ be another set of coordinates in the vicinity of $p$. Denote by $x: U \rightarrow \vvmathbb{R}^n$ and $y: V \rightarrow \vvmathbb{R}^n$ the coordinate-chart mappings. Then on the domains where they are defined, $y \circ x^{-1}$ and $x \circ y^{-1}$ are smooth mappings from $\vvmathbb{R}^n$ to $\vvmathbb{R}^n$ which are inverse to each other. We can perform a Taylor expansion around the point $p$ with respect to one or the other coordinates and equate like terms. We merely quote the final result to third order, which is included here because we shall have constant occasion to make reference to it in what follows. 

Let the jet at the point $p$ be defined in the two coordinate frames by
\begin{equation}
a_\alpha dx^\alpha|_p + a_{\beta\gamma} dx^\beta dx^\gamma|_p + a_{\delta\varepsilon\zeta} dx^\delta dx^\varepsilon dx^\zeta|_p = 
b_\mu dy^\mu|_p + b_{\nu\lambda} dy^\nu dy^\lambda|_p + b_{\rho\sigma\tau} dy^\rho dy^\sigma dy^\tau|p
\end{equation}
and the tangent vector there by
\begin{equation}
 \varv^\alpha \partial_\alpha|_p + \varv^{\beta\gamma} \partial_{\beta\gamma}|_p + \varv^{\delta\varepsilon\zeta} \partial_{\delta\varepsilon\zeta}|_p =
  \varw^\mu \partial^\prime_\mu|_p + \varw^{\nu\lambda} \partial^\prime_{\nu\lambda}|_p + \varw^{\rho\sigma\tau} \partial^\prime_{\rho\sigma\tau}|_p.
\end{equation}
Then the transformation law is given by
\begin{equation}\label{jet_transf_law}
	\begin{pmatrix}
		b_\mu \cr b_{\nu\lambda} \cr b_{\rho\sigma\tau}
	\end{pmatrix}
	= \begin{pmatrix}
		\frac{\partial x^\alpha}{\partial y^\mu}\big|_p & 0 & 0 \cr
		\frac{1}{2} \frac{\partial^2 x^\alpha}{\partial y^\nu \partial y^\lambda}\big|_p
		& \frac{\partial x^\beta}{\partial y^\nu}\big|_p \frac{\partial x^\gamma}{\partial y^\lambda}\big|_p & 0 \cr
		\frac{1}{6} \frac{\partial^3 x^\alpha}{\partial y^\rho \partial y^\sigma \partial y^\tau}\big|_p &
		\frac{1}{2} \left( \frac{\partial^2 x^\beta}{\partial y^\rho \partial y^\sigma}\big|_p \frac{\partial x^\gamma}{\partial y^\tau}\big|_p + \frac{\partial x^\beta}{\partial y^\rho}\big|_p \frac{\partial^2 x^\gamma}{\partial y^\sigma \partial y^\tau}\big|_p \right)
		& \frac{\partial x^\delta}{\partial y^\rho}\big|_p
		\frac{\partial x^\varepsilon}{\partial y^\sigma}\big|_p \frac{\partial x^\zeta}{\partial y^\tau}\big|_p
	\end{pmatrix}
	\begin{pmatrix}
		a_\alpha \cr a_{\beta\gamma} \cr a_{\delta\epsilon\zeta}
	\end{pmatrix}.
\end{equation}
resp.,
\begin{equation}\label{vector_transf_law}
	\begin{pmatrix} \varw^\mu \cr \varw^{\nu\lambda} \cr \varw^{\rho\sigma\tau} \end{pmatrix}
	= \begin{pmatrix}
		\frac{\partial y^\mu}{\partial x^\alpha}\big|_p & \frac{1}{2} \frac{\partial^2 y^\mu}{\partial x^\beta \partial x^\gamma}\big|_p & 
		\frac{1}{6} \frac{\partial^3 y^\mu}{\partial x^\delta \partial x^\varepsilon \partial x^\zeta}\big|_p \cr
		0 & \frac{\partial y^\nu}{\partial x^\beta}\big|_p \frac{\partial y^\lambda}{\partial x^\gamma}\big|_p &
		\frac{1}{2} \left( \frac{\partial^2 y^\nu}{\partial x^\delta \partial x^\varepsilon}\big|_p \frac{\partial y^\lambda}{\partial x^\zeta}\big|_p + \frac{\partial y^\nu}{\partial x^\delta}\big|_p \frac{\partial^2 y^\lambda}{\partial x^\varepsilon \partial x^\zeta}\big|_p \right) \cr
		0 & 0 & \frac{\partial y^\rho}{\partial x^\delta}\big|_p
		\frac{\partial y^\sigma}{\partial x^\varepsilon}\big|_p \frac{\partial y^\tau}{\partial x^\zeta}\big|_p
	\end{pmatrix} 
	\begin{pmatrix} \varv^\alpha \cr \varv^{\beta\gamma} \cr \varv^{\delta\varepsilon\zeta} \end{pmatrix}.
\end{equation}
The jet calculus is complicated compared to the usual vector calculus by the presence of higher-order derivatives of the coordinate functions in the transition functions between bases, which requires us to take more care than when deriving the corresponding transformation laws for ordinary tensors.

Let us denote the space of smooth sections of $J^{r*}M$ by $\mathscr{J}^{r*}(M)$ and that of $J^rM$ by $\mathscr{J}^r(M)$. A section $\omega \in \mathscr{J}^{r*}(M)$ will be called a jet field and a section $X \in \mathscr{J}^r(M)$ will be called a higher-order vector field. As for any vector bundle, $\mathscr{J}^{r*}(M)$ and $\mathscr{J}^r(M)$ are projective modules of finite type over $C^\infty(M)$ (\cite{nestruev}, Theorem 11.29 and 11.32). In any given chart on $U \subset M$ with coordinate functions $x^1,\ldots,x^n$, we have coordinate jet fields $d^\alpha: U \rightarrow J^{r*}M$ defined by $d^\alpha(p) = d^\alpha|_p$ and coordinate vector fields $\partial_\alpha: U \rightarrow J^rM$ defined by $\partial_\alpha(p)=\partial_\alpha|_p$. 

A higher-order vector field $X \in \mathscr{J}^r(M)$ can be viewed as a differential operator of order $r$ acting on functions $f \in C^\infty(M)$ if we define $(Xf)_p = X_p([f])$. Since 
\begin{equation}\label{eq17}
	[f] = \sum_{1\le|\alpha|\le r} \frac{1}{\alpha!} \frac{\partial^{|\alpha|} f}{\partial (x^1)^{\alpha_1}\cdots \partial (x^n)^{\alpha_n}} \bigg|_p d^\alpha 
\end{equation}
and $X = \sum_{1 \le |\beta| \le r} X^\beta \partial_\beta$, we have
\begin{equation}\label{eq18}
	Xf = \sum_{1 \le |\alpha| \le r} X^\alpha 
	\frac{1}{\alpha!} \frac{\partial^{|\alpha|} f}{\partial (x^1)^{\alpha_1}\cdots \partial (x^n)^{\alpha_n}}
\end{equation}
Since the definition of $Xf$ is intrinsic, the right-hand side of (\ref{eq18}) is in fact independent of the choice of coordinates.

Just as for vectors and covectors we have a concept of tensors of arbitrary mixed type formed by taking products of the tangent and cotangent bundles, let us define a generalized tensor bundle of type ${k \choose j}$ and order $r$ to be the vector bundle
\begin{equation}\label{eq19}
	K^{r,k}_jM = \overbrace{ J^{r*}M \otimes \cdots \otimes J^{r*}M}^{k~\mathrm{times}} \otimes \overbrace{ J^rM \otimes \cdots \otimes J^rM}^{j~\mathrm{times}}.
\end{equation}
Its space of smooth sections will be denoted $\mathscr{K}^{r,k}_j(M)$.

\begin{remark} The generalized tensors form a natural geometric structure which obeys a principle of general covariance \textup{(}cf. \textup{\cite{gamkrelidze}}, Chapter 6, {\S}1.5\textup{)}.
\end{remark}

\begin{definition}
	Let $\varphi: N \rightarrow M$ be a smooth mapping. If $\varphi$ takes $q \in N$ to $p \in M$, define the pullback of a jet $j \in J^{r*}_p(M)$ by $(\varphi^*j)_q := [ f \circ \varphi]_{\mathfrak{m}_q^{r+1}}$ where $j = [f]_{\mathfrak{m}_p^{r+1}}$ (it is independent of the choice of $f$). The jet field $\omega \in \mathscr{J}^{r*}(M)$ has a pullback $\varphi^*\omega \in \mathscr{J}^{r*}(N)$ given by $(\varphi^*\omega)_q = (\varphi^*\omega_{\varphi(q)})_q$. Define the pushforward of a higher-order vector $\varv \in J^r_qN$ at $q$ to be the vector at $p$ given by $\varphi_*\varv([f]_{\mathfrak{m}_p^{r+1}}) := \varv([f \circ \varphi]_{\mathfrak{m}_q^{r+1}})$ for $f \in C^\infty(M)$. If $\varphi$ is a diffeomorphism, we can define the pushforward of a higher-order vector field $X \in \mathscr{J}^r(N)$ to be $\varphi_*X \in \mathscr{J}^r(M)$ given by $(\varphi_*X)_p(f) := X_{\varphi^{-1}(p)}(f \circ \varphi)$.
\end{definition}

\subsection{Jets of Arbitrary Order}

To obviate any concerns about technical subtleties with infinite-dimensional manifolds, $\mathscr{J}^\infty(M)$ and $\mathscr{J}^{\infty *}(M)$ should be thought of as sheaves of modules over the manifold $M$. The sheaf $\mathscr{J}^\infty$ is to be obtained as the direct limit of the $\mathscr{J}^r$ with respect to the canonical injections $\mathscr{J}^{r_1} \rightarrow \mathscr{J}^{r_2}$, $r_2>r_1$, while the sheaf $\mathscr{J}^{\infty*}$ will correspondingly be the inverse limit of the $\mathscr{J}^{r*}$ with respect to the canonical projections $\mathscr{J}^{r_2*} \rightarrow \mathscr{J}^{r_1*}$, $r_2>r_1$. Roughly speaking, these objects behave somewhat like polynomials resp. formal power series; with respect to a given coordinate system, a tangent of indefinite order resp. a jet of indefinite order may contain non-zero components up to an arbitary but finite, resp. any order and is to be identified as equivalent to another if all components agree. Since every element of $\mathscr{J}^\infty$ is obtained by injection from some $\mathscr{J}^{r_0}$ with $r_0<\infty$, its operation upon a jet of indefinite order will always be well defined since once may canonically project the latter down to $\mathscr{J}^{r_0*}$ (and for a given tangent, $r_0$ remains unchanged under change of coordinate).

The following observation explains why it would be unnatural to stop at any finite order in the jets or tangents: the commutator of two linear differential operators of order $r$ is, in general, a differential operator of order $2r-1$. Hence, there cannot exist a Lie bracket that closes upon itself within $\mathscr{J}^r$ itself when $r>1$.

\subsection{Intuitive Picture of what a Higher-Order Tangent is}

Consider a trajectory leaving a given point in a certain direction. To linear order, an ordinary 1-vector as velocity tells us where it goes. In view of this, one might expect a higher-order tangent vector to specify not only the immediate direction taken by the trajectory upon departure, but its deviation from linearity which would be quadratic, or perhaps even higher-order still, in time. However appropriate this manner of approaching the problem may be, a moment's reflection will show it to be insufficient for our purposes. As a count of dimension as vector spaces over $\vvmathbb{R}$ readily shows, such deviations cannot be thought of as iterated tangents, or tangents of tangents etc., which would live in $TTM$, $TTTM$ and $T^kM$, in general.  In $n$ dimensions, there are $n$ possible directions in which to turn when deviating from a strictly linear trajectory to second order, and $n$ further such directions at each higher order. This would imply a vector space of dimension $rn$, if it were to describe an $r$-th order higher tangent. But the dimension count does not agree; hence, we must abandon this approach. From a theoretical standpoint, one could make the telling criticism that iterated tangent bundles do not lead to a qualitatively new concept of tangency, in that, whatever the level one works at, one still deals with linear segments when going to the infinitely small (as is implicit in definition of a vector at a point as a derivation on germs of functions there, which viewed as a differential operator can only be of first order).

What have we failed to take into account here? Clearly, something is faulty in the intuitive picture sketched above. In order to disentangle what this may be, let us focus on what an ordinary 1-vector field does in the small (cf. \cite{schutz}, {\S}{\S}2.12-2.20). Therefore, let $X$ be a 1-vector field on a differentiable manifold $M$ in the vicinity of the point $p$. If we zoom in close enough to $p$, any spatial variation of $X$ can be neglected. In other words, we can work in the cotangent space $T^*_pM$ spanned by the $\xi_{1,...,n}$. If $X$ has components $\alpha_{1,...,n}$ in the corresponding coordinates, then its action on an arbitrary smooth function $f \in C^\infty(M)$ near $p$ can be written as
\begin{equation}
	f+df=\mathrm{exp}(-\lambda X)f = f - \lambda(\alpha_1\xi_1+\cdots+\alpha_n\xi_n)+o(\lambda).
\end{equation}
What could the right geometrical interpretation of such an expression be? If we view the function $f$ as elevation in a topographical contour map and think of the 1-vector $X_p = (\alpha_1,...,\alpha_n)$ in usual terms as an arrow pointing away from the origin at $p$, then what the leading correction to $f$ in the immediately preceding formula corresponds to is just the vertical increment in elevation one experiences upon moving infinitesimally away from the origin in the direction of $X_p$. The graph of $f$ will be locally a hyperplane passing through the origin of cotangent space at $p$ and the change in elevation $df$ under infinitesimal displacement in the direction indicated by $X_p$ will then be given by sliding the hyperplane in that direction. Taking a slightly less local point of view, $f$ should be transported along the integral curves of $X$.

What can we learn from this scenario? First, we can single out the point $p$ as the common zero locus of $n$ appropriately chosen independent functions $f_{1,...,n}$; so $\{p\}=Z_{f_1}\cap\cdots\cap Z_{f_n}$ where $Z_f:=\{q \in M|f(q)=0\}$. It suffices, thereby, to concentrate on the level sets of a single given function $f$. Here, restricting attention to the situation in the small means that $Z_f$ reduces to a hyperplane in the cotangent space. All that the 1-vector $X_p$ does there is to translate the hyperplane infinitesimally in a certain spatial direction. Now, by rights going to higher order in the infinitesimals ought to mean that one should expect the hyperplane $Z_f$ to become deformed under the action of an $r$-vector, $r\ge 2$, as points near the origin in the cotangent space will no longer all move with a common, uniform speed and velocity away from the origin.

As the next step in our investigation into how to imagine the action of a higher-order vector field, we wish to formalize this guiding intuition. Therefore, instead of a hyperplane consider the zero locus of a higher-order differential, or $r$-jet $j \in J^r_p(M)$. Associate to it the $r$-vector $\varv \in J^{*r}_p(M)$. Then, in the indicated coordinate system, $j$ will be given by a polynomial in the $\xi_{1,...,n}$ of $r$-th degree (i.e., $j$ is defined only modulo monomials of degree $r+1$; we must revisit the issue later of whether it be valid to neglect these). In place of a 1-vector, $\varv$ will now be given by $\varv = \alpha^\mu \partial_\mu$ where the summation extends over multi-indices $\mu$ up to order $r$. Picture $\varv$ intuitively as a possibly non-constant vector field in cotangent space (we shall have justify this statement more rigorously below as well). If $\varv$ \textit{were} a constant vector field in cotangent space, then substituting arbitrary choices of $j$ would mean taking the zero locus of a hypersurface passing through the origin and translating it uniformly and rigidly under the action of $\varv$ in the direction defined by $(\alpha_1,...,\alpha_n)$. So far, we reproduce the scenario just discussed where the zero locus $Z_f$ need no longer be a hyperplane alone. But the interesting case to consider is now precisely when $\varv$ happens \textit{not} to be constant in the cotangent space at $p$. For infinitesimal $\lambda$, we may presume the action of $\varv$ on the zero locus $Z_f$ corresponding to an arbitrary jet $j$ to be given by movement along the streamlines of the vector field in cotangent space associated to the higher-order vector $\varv$. Under these conditions, the hypersurface $Z_f$ becomes deformed to another hypersurface $Z_{f+df}$ which need not simply be a uniform translate of $Z_f$ in any spatial direction.

To continue along the lines sketched thus far must become unwieldy. If we are to penetrate any further into the mystery of higher-order vectors, we shall have to avail ourselves of a more powerful technique. This, in a moment; but first, we ask what does tangency mean in the setting of higher-order vectors?

In the setting of ordinary Euclidean geometry, we would picture a 1-vector at a point $p$ as being tangent to an immersed submanifold $N$ passing through that point if it lies in the restriction of the tangent space $T_p\vvmathbb{R}^n$ to $T_pN$, viewed as a subbundle (cf. Lee, Problem 10-14 \cite{lee_smooth_manif}). Now, for the sake of simplicity, we limit ourselves to the case of a hypersurface $Z_f$ given locally as the zero locus of a smooth function $f$, where $p$ is a regular point of $f$. Since, in order for the 1-vector $\varv$ to be tangent to $Z_f$ it must point along the level sets of $f$, it is evident that a necessary and sufficient criterion for tangency (in the usual sense) would be that $X_\varv f = 0$ at $p$, where $X_\varv$ denotes a 1-vector field extending $\varv$ in the neighborhood of $p$ viewed as a differential operator, which exists by Lee, Proposition 8.7 \cite{lee_smooth_manif}. But we are entitled to view a higher-order vector, denoted $\varv$ again, as a higher-order differential operator $X_\varv$. Then the obvious way to generalize the notion of tangency would be to define it via the condition $X_\varv f = 0$ at $p$, which is equivalent to the statement that $\varv(j)=0$ where $j$ denotes the $r$-jet of $f$ localized at the point $p$.

Let us explore this condition for a moment. Another way of picturing tangency of the 1-vector $\varv$ to $N$ at $p$ would be to require that there exist a 1-vector $w \in T_pN$ parallel to $\varv$. Any $w$ orthogonal to $\mathrm{grad}~f$ will do. So the second statement comes down to the condition that $\varv$ be orthogonal to $\mathrm{grad}~f$. Therefore, we employ duality between jets and vectors to define an inner product by appeal to the (non-canonical) isomorphism in Euclidean space that identifies $\partial_\alpha$ with $d^\alpha$. One has to exercise a little care when going from 1-jets and 1-vectors to the general case of $r$-jets and $r$-vectors, $r>1$. The gradient as ordinarily defined will not continue to be suited to our purposes, since, as a first-order differential operator, it cannot be sensitive to any higher-order terms in $f$ (expanded about $p$), i.e., to how the zero locus $Z_f$ may curve away from being a hyperplane. The solution to this difficulty is to go to the cotangent space, the Euclidean coordinates of which we may denote as $\xi_{1,\ldots,n}$. From now on, we shall understand by the gradient the operator $\mathrm{grad} = \dfrac{\partial}{\partial\xi_1} + \cdots + \dfrac{\partial}{\partial\xi_n}$.

To any $r$-vector $\varv = a^\alpha \partial_\alpha$ associate the function $g = a^\alpha\xi_\alpha$, which gives the corresponding jet $j=g ~\mathrm{mod}~ \mathfrak{m}_p^{r+1}$. Here, we have invoked the non-canonical isomorphism in Euclidean space. From now on, refer to vector $\varv$ resp. jet $j$ and its zero locus $Z_g$ as if they were interchangeable. As a result, we have at our disposal an inner product between $r$-vectors $\varv$ and $w$, to be equated to $\langle \varv,w\rangle := w(j) = a^\alpha b_\alpha$ if $w=b_\alpha\partial_\alpha$. Correspondingly, we may define the angle between $\varv$ and $w$ as
\begin{equation}
	\theta := \arccos \frac{\langle \varv, w \rangle}{\sqrt{\langle \varv,\varv \rangle}\sqrt{\langle w,w \rangle}}.
\end{equation}
The following immediate proposition underwrites the reasonability from a geometrical point of view of the manner of speaking implied by these definitions. Let $\varv$ be associated with $g$ and $w$ with $f$, respectively. Then:
\begin{proposition}
	The $r$-vector $\varv$ is parallel to $w$ if and only if $Z_f$ and $Z_g$ have osculating contact up to $r$-th order.	
\end{proposition}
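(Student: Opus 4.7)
The plan is to reduce the claim to a purely algebraic statement about polynomials: parallelism of $\varv$ and $w$ is equivalent to proportionality of their component arrays $a^\alpha$ and $b^\alpha$, which in turn is equivalent to proportionality of the associated polynomial functions $g=a^\alpha\xi_\alpha$ and $f=b^\alpha\xi_\alpha$, which in turn is precisely the condition of osculating contact of order $r$ for their zero loci at $p$.

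First I would make explicit the definitions at play. By the inner product $\langle \varv,w\rangle=a^\alpha b_\alpha$ and the associated angle $\theta$, parallelism ($\theta=0$ or $\pi$) is exactly the Cauchy--Schwarz saturation case, hence equivalent to $w=\lambda\varv$ for some scalar $\lambda\in\vvmathbb{R}$. I would then adopt as the working definition of ``osculating contact of order $r$ at $p$'' for two hypersurfaces defined by germs $f,g$ both vanishing at $p$ the statement that there exists $\lambda\neq 0$ with $f-\lambda g\in\mathfrak{m}_p^{r+1}$, i.e.\ that their $r$-jets at $p$ coincide in $J^{r*}_pM$ up to a nonzero scalar. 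This is the standard notion and is what is needed for the correspondence with the jet formalism of {\S}\ref{formal_def} to be exact.

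The forward implication is immediate: if $w=\lambda\varv$ then $b_\alpha=\lambda a^\alpha$ for every multi-index, so $f=\lambda g$ as polynomials in the cotangent coordinates $\xi_1,\ldots,\xi_n$, and a fortiori $[f]=\lambda[g]$ in $J^{r*}_pM$, so $Z_f$ and $Z_g$ osculate to order $r$ (indeed to all orders). For the converse, suppose $Z_f$ and $Z_g$ have osculating contact of order $r$, i.e.\ $f-\lambda g\in\mathfrak{m}_p^{r+1}$ for some $\lambda\neq 0$. The crucial observation is that $f$ and $g$ are, by construction, polynomials in the $\xi_i$ of total degree at most $r$ with no constant term, so $f-\lambda g$ is likewise such a polynomial; but a polynomial of degree $\le r$ that lies in $\mathfrak{m}_p^{r+1}$ must vanish identically. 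Hence $f=\lambda g$, and reading off coefficients against the monomial basis $\xi_\alpha$ gives $b_\alpha=\lambda a^\alpha$ for each $\alpha$, i.e.\ $w=\lambda\varv$.

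The one step requiring real care is the assertion that a polynomial of degree $\le r$ in the $\xi_i$ with no constant term that lies in $\mathfrak{m}_p^{r+1}$ is zero; I anticipate this will be the main obstacle, and it is handled by passing to the associated graded of $C^\infty(M)_p$, where $\mathfrak{m}_p^k/\mathfrak{m}_p^{k+1}$ is the space of homogeneous polynomials of degree $k$ in the cotangent coordinates, and the homogeneous components of $f-\lambda g$ of degrees $1,\ldots,r$ must each vanish. This is also the step where the non-canonical Euclidean identification $\partial_\alpha\leftrightarrow d^\alpha$ enters essentially, since without it there is no preferred association $\varv\mapsto g$ and the statement of the proposition would not even be well posed.
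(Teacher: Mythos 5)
Your proof is correct and follows essentially the same route as the paper's: parallelism via saturation of the Cauchy--Schwarz inequality, then the identification of parallelism with proportionality of the associated polynomials $f$ and $g$ in the cotangent coordinates, then osculating contact of $Z_f$ and $Z_g$. You are in fact somewhat more careful than the paper on the converse direction, spelling out that a polynomial of degree $\le r$ with no constant term lying in $\mathfrak{m}_p^{r+1}$ must vanish identically, so that agreement of $r$-jets up to a nonzero scalar forces genuine proportionality of the coefficient arrays --- a step the paper's proof leaves implicit.
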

\begin{proof}
	It is a standard result in connection with the Cauchy-Schwarz inequality 
	$|\langle \varv, w \rangle| \le \sqrt{\langle \varv, \varv \rangle}\sqrt{\langle w,w \rangle}$ that equality occurs if and only if the two vectors are parallel, or at least one of the two is zero (cf. Royden, {\S}10.8 \cite{royden}). But $\varv$ and $w$ are parallel if and only if $f$ and $g$ are equal up to a multiplicative constant, in which case $Z_f$ and $Z_g$ have osculating contact up to $r$-th order. Note that the property of being in osculating contact to a given order $r$ is well defined because the projections from jets of order $r_1$ to those of order $r_0$, $r_1>r_0$, are canonical.
\end{proof}

\begin{corollary}
	If $\varv$ is parallel to $w$ at order $r$, the (non-canonical) projections will be parallel at orders $s$, $1 \le s \le r$ as well.
\end{corollary}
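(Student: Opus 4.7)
The plan is to exploit linearity of the truncation map together with the algebraic characterization of parallelism established in the preceding proposition. By that proposition, saying that $\varv, w \in J^r_p M$ are parallel at order $r$ amounts to the assertion that the polynomials $f, g$ corresponding to their associated jets agree up to a multiplicative constant, or equivalently, modulo the case that one of them vanishes, that $\varv = c\, w$ for some $c \in \vvmathbb{R}$.

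First I would fix a coordinate chart about $p$, which via (\ref{def_vector_basis}) furnishes the basis $\partial_\alpha|_p$ of $J^r_p M$ indexed by multi-indices $\alpha$ with $1 \le |\alpha| \le r$. Such a chart induces, for every $s$ with $1 \le s \le r$, a linear projection $\pi_s : J^r_p M \to J^s_p M$ defined simply by discarding all components indexed by multi-indices of total order strictly greater than $s$. Applying $\pi_s$ to the identity $\varv = c\, w$ and invoking its linearity yields $\pi_s \varv = c\, \pi_s w$, which by the same equivalence is precisely the statement that the two projections are parallel at order $s$. The case in which $\varv$ or $w$ vanishes is trivial, since $\pi_s$ sends zero to zero.

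The only delicate point concerns the adjective \emph{non-canonical}. As pointed out in the remark preceding the corollary, the natural maps on jets go in the direction $J^{r_1*}_p M \to J^{r_0*}_p M$ for $r_1 > r_0$ and are canonical; dually, one has canonical \emph{injections} $J^{r_0}_p M \hookrightarrow J^{r_1}_p M$, so that truncating a higher tangent down to lower order depends on a choice of complement, i.e., on a coordinate system. Nevertheless, the statement of the corollary is well-posed regardless of this choice, because parallelism is preserved by \emph{every} linear map: if $\varv$ and $w$ are scalar multiples of one another before truncation, their images under any linear projection remain scalar multiples afterwards. I anticipate no substantial obstacle here; the corollary is essentially a direct consequence of the preceding proposition and serves mainly as a consistency check that the newly introduced higher-order notion of parallelism restricts compatibly to every lower order.
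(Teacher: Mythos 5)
Your proposal is correct and is essentially the argument the paper intends: the corollary is stated without proof precisely because, once parallelism at order $r$ is identified (via the Cauchy--Schwarz equality in the preceding proposition) with the relation $\varv = c\,w$ up to the degenerate zero case, applying any coordinate-induced linear truncation $\pi_s$ immediately yields $\pi_s\varv = c\,\pi_s w$. Your added observation that well-posedness survives the non-canonical nature of the projection---because scalar proportionality is preserved by every linear map, whichever complement the coordinates select---is exactly the point the parenthetical ``(non-canonical)'' is meant to flag.
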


As an example consider the unit sphere $S^n \subset \vvmathbb{R}^{n+1}$. Now the hyperplane passing through the point $(1,0,\ldots,0)$, namely $\Sigma_1 = \{ 0,x_2,\ldots,x_{n+1}| x_{2,\ldots,n+1} \in \vvmathbb{R} \}$, would conventionally be treated as tangent to $S^n$, and indeed it is to first order. But clearly osculating contact fails at any higher order, since we have in a neighborhood of $(1,0,\ldots,0)$ the jet expansion of $S^n$,
\begin{align}
	\sqrt{1-\xi_2^2 - \cdots - \xi_{n_1}^2} - 1 =& 
	- \frac{1}{2} \left( \xi_2^2 + \cdots + \xi_{n+1}^2 \right) - 	
	\frac{1}{8} \left( \xi_2^2 + \cdots + \xi_{n+1}^2 \right)^2 - \nonumber \\ 
	&\frac{1}{16} \left( \xi_2^2 + \cdots + \xi_{n+1}^2 \right)^3 - 
	\frac{5}{128} \left( \xi_2^2 + \cdots + \xi_{n+1}^2 \right)^4 - \cdots
\end{align}
To second order, then, it is the paraboloid $\Sigma_2: \xi_1 = \frac{1}{2} \xi_2^2 + \cdots + \frac{1}{2} \xi_{n+1}^2$ that has osculating contact with the sphere etc. NB: in this case, we can no longer truly say that $\Sigma_1$ is strictly tangent to $S^n$ at $(1,0,\ldots,0)$---counter to received views! Rather, the higher tangents must be generalized vectors of the form, for instance,
\begin{equation}
	a_1 \partial_{x_2} + a_3 \partial_{x_2x_2x_2} + a_5 \partial_{x_2x_2x_2x_2x_2} + \cdots
\end{equation}
and also others of the form
\begin{equation}
	\partial_{x_2x_2} - \partial_{x_3x_3}
\end{equation}
etc. where there are numerous such linear combinations satisfying the condition of tangency, increasingly many as a power of the order of the highest allowed non-zero component.

Orthogonality in a sense generalized to higher order becomes a subtler concept. Extrapolating from the 1-vector case, where $\varv$ and $w$ will be orthogonal, i.e., $\langle \varv,w \rangle = 0$, whenever the corresponding hyperplanes in cotangent space meet at right angles, we would like to say that $\varv$ and $w$ will be orthogonal as $r$-vectors, if the level sets of $Z_f$ and $Z_g$ form an orthogonal family of hypersurfaces in cotangent space.

Illustrate with $S^n$; $f=\sqrt{\xi_1^2+\cdots+\xi_n^2}$, $g=\xi_1$, this condition fails to be true when $\xi_1 \ne 0$ for the level sets of $Z_f$ consist of concentric spheres while the level sets of $Z_g$ are parallel hyperplanes in cotangent space and only the hyperplane passing through the origin intersects the spheres orthogonally.

We are accustomed to think in Euclidean space $\vvmathbb{R}^n$ that, to a given 1-vector, there will be $n-1$ directions perpendicular to it, filling out a hyperplane in space so that the linear dimension of the set of orthogonal 1-vectors is equal to the spatial dimension decremented by a unit, but with the generalized definition there are now more ways to be orthogonal, viz., a total comprising a $(N_r-1)$-dimensional linear manifold of higher-order vectors all simultaneously orthogonal at $(1,0,\ldots,0)$, which is $> n-1$ whenever $r>1$. Roughly speaking, these will consist in anything tangent in the ordinary sense (having osculating contact to first order only) to the hyperplane orthogonal in ordinary sense to grad g, where $g$ is associated to $\varv$. Since only first-order osculating contact matters here, there will be myriad ways of curving away non-linearly in the vicinity of any given point all of which reduce to the same hyperplane after projection to first order.

We want an orthogonal family filling all of cotangent space. This stipulation gives rise to a consistency condition: $Z_f$ is momentaneously invariant under the action of $\varv$  $\iff$ the derived vector field $V := \mathrm{grad}~g$ is everywhere tangent to $Z_f$ as a submanifold in cotangent space $\iff$ $X_\varv f = 0$ at $p$. The following key result anchors the consistency of the geometrical constructions proposed in this section:

\begin{proposition}\label{orthogonal_family_hypersurfaces}
	$\varv(j) = 0$ iff the derived vector field $V$ of $\varv$ is tangent (in the ordinary sense in cotangent space) to the level sets of $f$ iff $\mathrm{grad}~f \cdot \mathrm{grad}~g = 0$ everywhere in cotangent space.
\end{proposition}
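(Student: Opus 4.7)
The strategy is to split the three-fold equivalence into two links --- first the middle (geometric) condition against the analytic condition $\mathrm{grad}\,f \cdot \mathrm{grad}\,g = 0$, and then the middle condition against the algebraic criterion $\varv(j) = 0$ --- using the geometric statement as the natural bridge between the algebraic and analytic descriptions. The middle condition was in any case anticipated in the consistency discussion just before the proposition, so a good deal of the work is already laid out.

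I would first dispose of the equivalence between tangency of $V := \mathrm{grad}\,g$ to the level sets of $f$ and the pointwise vanishing of $\mathrm{grad}\,f \cdot \mathrm{grad}\,g$, which is the classical criterion from vector calculus: in Euclidean cotangent space, a smooth vector field $V$ is tangent at a point $q$ to the level hypersurface of $f$ through $q$ if and only if $(Vf)(q) = 0$, and substituting $V = \mathrm{grad}\,g$ yields $Vf = \mathrm{grad}\,g \cdot \mathrm{grad}\,f$ pointwise. Requiring this at every point of cotangent space is exactly the quoted ``everywhere'' condition.

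For the remaining link, $\varv(j) = 0 \Leftrightarrow V$ tangent to the level sets of $f$, the key is the dictionary set up in the preceding paragraphs: the higher-order vector $\varv = a^\alpha \partial_\alpha$ is identified via the non-canonical Euclidean isomorphism with the polynomial $g = a^\alpha \xi_\alpha$, while $\varv$ acts on germs as the higher-order differential operator of (\ref{eq18}), so $\varv(j) = (X_\varv f)(p)$. The monomials comprising $g$ then determine the streamlines of $V$ in cotangent space, and $(X_\varv f)(p)$ measures the infinitesimal displacement of the hypersurface $Z_f$ at $p$ when it is transported along those streamlines; tangency of $V$ to every level set of $f$ is therefore exactly what makes this infinitesimal displacement vanish, i.e.\ forces $\varv(j) = 0$, and conversely.

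The chief obstacle lies in the combinatorial bookkeeping required to reconcile the two descriptions. On one side one has the factorial normalizations $1/\alpha!$ from (\ref{def_vector_basis}) entering $\varv(j) = \sum_\alpha a^\alpha c_\alpha$, with $c_\alpha = (1/\alpha!)\,\partial^{|\alpha|} f/\partial x^\alpha|_p$ the Taylor coefficients of $f$ at $p$; on the other side one has to expand the polynomial $\mathrm{grad}\,g \cdot \mathrm{grad}\,f$ in monomials $\xi^\beta$ and demand that the resulting identity hold throughout cotangent space. Since both $g$ and the jet representative of $f$ are polynomials of degree at most $r$, the matching reduces to a finite set of coefficient comparisons; once the factorials are cleared, the duality pairing $\varv(j) = a^\alpha c_\alpha$ already exhibited between higher-order vectors and jets closes the equivalence.
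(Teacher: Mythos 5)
Your first link (tangency of $V=\mathrm{grad}\,g$ to the level sets of $f$ if and only if $\mathrm{grad}\,f\cdot\mathrm{grad}\,g=0$ everywhere) is exactly the observation the paper disposes of in one sentence, since $V$ is $\mathrm{grad}\,g$ by construction; no issue there. The gap is in your second link, which is where the entire content of the proposition lives. You assert that $\varv(j)=0$ is equivalent to the tangency condition because tangency ``is exactly what makes this infinitesimal displacement vanish,'' and that the proof then ``reduces to a finite set of coefficient comparisons'' once the factorials are cleared. This cannot be made to work as described: $\varv(j)=a^\alpha c_\alpha=0$ is a \emph{single} scalar equation in the coefficient vectors, whereas the identical vanishing of the polynomial $\mathrm{grad}\,f\cdot\mathrm{grad}\,g$ in the cotangent variables is a \emph{system} of equations, one for each monomial $\xi^\gamma$. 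The two conditions do not have the same linear-algebraic strength, so no direct matching of coefficients can close the equivalence. Concretely, in two cotangent variables take $f=\xi_1$ and $g=\xi_1\xi_2$, so that $\varv$ is the mixed second-order tangent $\partial_{\xi_1\xi_2}$: then $\varv(j)=0$, yet $\mathrm{grad}\,f\cdot\mathrm{grad}\,g=\xi_2$, which vanishes only on the hyperplane $\xi_2=0$. Your proposed bookkeeping runs head-on into this example, and what it actually reveals is that the pointwise pairing at $p$ and the global polynomial identity are different conditions.

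The paper's proof is built entirely around bridging this disparity, and none of that machinery appears in your proposal. It first converts the scalar pairing into an integral: with the stated normalization, $\varv(j)=0$ if and only if $\int_\Omega f\bar{g}\,dz_1 d\bar{z}_1\cdots dz_n d\bar{z}_n=0$, where $f$ and $g$ are analytically continued to the unit polydisk $\Omega$ and one uses orthogonality of monomials in this $L^2$ space. It then relates that integrated condition to $\int_\Omega \mathrm{grad}\,f\cdot\overline{\mathrm{grad}}\,\bar{g}$ by holomorphic integration by parts, with the boundary terms killed by a Schwarz-reflection symmetry of $h(z):=f(z)g(\bar{z})$, and finally extracts a pointwise statement by shrinking the polydisk radius $\eta$ and taking $\eta\rightarrow 0$; the converse direction runs through harmonicity and reflection again. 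Note that even then the forward direction lands only on vanishing at $x=y=0$, with a translation argument invoked to do the rest---underscoring that the global identity is in no sense a formal consequence of the single pairing equation. Whatever one makes of those analytic details, they are the substance of the proposition; a proposal that treats the passage from one scalar equation to a polynomial identity as combinatorial bookkeeping has not engaged with the actual difficulty.
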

\begin{proof}
	The second equivalence is trivial in as much as $g$ was constructed precisely so that $V = \mathrm{grad}~g$.	For the equivalence of the first and third conditions, first observe the equivalence of the following notions: if 
	$f=a_1x_1 + \cdots + a_nx_n + a_{12}x_1x_2+\cdots = a_\alpha \sqrt{(\alpha_1+1)\cdots(\alpha_n+1)} x^\alpha$ and
	$g=b_1x_1 + \cdots + b_nx_n + b_{12}x_1x_2+\cdots = b_\beta \sqrt{(\beta_1+1)\cdots(\beta_n+1)} x^\beta$, where 
	$x_{1,\ldots,n}$ are taken to be the coordinates spanning the cotangent space, then $\varv(j)=0 \iff a \cdot b = 0 \iff \int_\Omega f\bar{g}dz_1d\bar{z}_1\cdots
	dz_nd\bar{z}_n=0$ where in the lastmost case $f$ and $g$ are to be regarded as having been analytically continued to elements in the Hilbert space $L^2(\Omega,dz_1d\bar{z}_1\cdots
	dz_nd\bar{z}_n)$ defined on the domain $\Omega=\vvmathbb{D}\times\cdots\times\vvmathbb{D}$, $\vvmathbb{D}=\{ z \in \vvmathbb{C} | |z| \le 1 \}$ being the unit disk in the complex plane. Along the same lines, $\mathrm{grad}~f \cdot \mathrm{grad}~g = 0$ identically would be equivalent to $\partial_{x_1}f\partial_{x_1}g + \cdots + \partial_{x_1}f\partial_{x_n}g = 0$ identically or $\partial_{z_1}f\bar{\partial}_{z_1}g + \cdots + \partial_{z_1}f\bar{\partial}_{z_n}g = 0$ in the real part of the domain $\Omega$ given by $y_{1,\ldots,n}=0$ with $z_k=x_k+iy_k, k=1,\cdots,n$. The following observation simplifies the matter further:
	\begin{equation}
		\int_\Omega \left( \partial_{z_1}f\bar{\partial}_{z_1}g + \cdots + \partial_{z_1}f\bar{\partial}_{z_n}g \right) dz_1d\bar{z}_1\cdots
		dz_nd\bar{z}_n = 0
	\end{equation}
	is equivalent after integration by parts to the statement that
	\begin{equation}
		\int_\Omega \Delta (f\bar{g}) dz_1d\bar{z}_1\cdots dz_nd\bar{z}_n = 0,
	\end{equation}
	where $\Delta = \frac{\partial^2}{\partial_{z_1}\bar{\partial}_{z_1}} + \cdots + \frac{\partial^2}{\partial_{z_n}\bar{\partial}_{z_n}}$ since $(\Delta f)\bar{g} = f\Delta\bar{g}=0$ identically, $f$ and $g$ being polynomial and hence holomorphic as functions of the $z_{1,\ldots,n}$.
	
	$(\Rightarrow)$ Suppose therefore that $\varv(j)=0$, or $\int_\Omega f\bar{g}=0$. It will be sufficient to show that $\Delta (f\bar{g})=0$ at $x=y=0$ because under displacement of the real part $x$ to $x^\prime=x-c$ the condition that
	\begin{equation}
		\varv(j) = 0 = \left( a_\alpha \partial_\alpha \right) \left( b_\beta d^\beta \right) =  \left( a_\alpha \partial_\alpha \right) \left( b_\beta x^\beta \right)
	\end{equation}
	goes over into
	\begin{equation}
		\left( a_\alpha \partial^\prime_\alpha \right) \left( b_\beta x^{\prime\beta} \right) =
		\left( a_\alpha \partial_\alpha \right) \left( b_\beta (x-c)^\beta \right) = 0.
	\end{equation}
	Hence, set $h(z):=f(z)g(\bar{z})$ so that $\overline{h(\bar{z})}=
	\overline{f(\bar{z})\overline{g(\bar{z})}} = f(z)g(\bar{z})=h(z)$ seeing that the coefficients $a_\alpha, b_\beta$ are purely real (and viewing $h$ as a function of the $z_{1,\ldots,n}$ one at a time). Thence, by hypothesis $h=F+iG$ with real part $F$ symmetric  and imaginary part $G$ antisymmetric in the $y_{1,\ldots,n}$ by virtue of the Schwarz reflection principle (Conway, Theorem IX.1.1 \cite{conway}). Hold the values of $z_{2,\ldots,n}$ fixed. If we perform the remaining integral over $z_1=z$, we have after integration by parts,
	\begin{align}
		\int \left( \partial_{z}f(z) \right) g(\bar{z}) &= 
		\int \left( (\partial_x + i \partial_y )f(z) \right) g(\bar{z}) \nonumber \\ 
		&= -\int f(z) \left( (\partial_x+i\partial_y) g(\bar{z}) \right) +
		f\bar{g}\bigg|^{x=\varepsilon}_{x=-\varepsilon, y ~\mathrm{fixed}}
		+f\bar{g}\bigg|^{y=\delta}_{y=-\delta, x ~\mathrm{fixed}}.
	\end{align}
	The first term on the right-hand side vanishes as $g(\bar{z})$ is anti-holomorphic. As for the boundary terms, due to the antisymmetry of $h(z)=f\bar{g}$ under reflection in $y$ together with the symmetry of the region of integration over $dzd\bar{z}=dxdy$ (namely, the interior of the unit circle, $\vvmathbb{D}$), these terms cancel under integration with respect to the other variable of the pair (i.e., with respect to $y$ for the penultimate term and with respect to $x$ for the final term). Hence, we may conclude that $\int_\Omega (\partial_{z_1}f)\bar{g}=0$ and similarly that
	$\int_\Omega f\bar{\partial}_{z_1}\bar{g}=0$. Then, repeating the argument putting these in place of the condition $\int_\Omega f\bar{g}=0$ yields moreover that $\int_\Omega (\partial_{z1}f)(\bar{\partial}_{z_1}\bar{g})=0$. But the same argument works just as well with $z_k$ taking the place of $z_1$, $k=2,\ldots,n$.
	Therefore, $\int_\Omega \mathrm{grad}~f \cdot \overline{\mathrm{grad}}~\bar{g} =
	0$. However, the radius of the polydisk $\Omega$ is a matter of indifference. If we substitute any radius $\eta>0$ we would find that
	\begin{equation}
		\frac{1}{\eta^{2n}} \int^\eta_0 \mathrm{grad}~f \cdot \overline{\mathrm{grad}}~\bar{g} = 0
	\end{equation}
	and taking the limit as $\eta$ tends to zero leads to the conclusion that
	\begin{equation}
		\mathrm{grad}~f \cdot \overline{\mathrm{grad}}~\bar{g} \bigg|_{x=y=0} = 0,
	\end{equation}
	as was to be shown.
	
	$(\Leftarrow)$ Suppose that $\Delta (f\bar{g}) = 0$ identically on the real domain given by $y_{1,\cdots,n}=0$. Now let $h=f\bar{g}=F+iG$. Since by hypothesis $\Delta F=0$ identically on the real domain, it is harmonic there and by the Schwarz reflection principle again we know that $F$ is the real part of a holomorphic function $h$ such that $\overline{h(\bar{z})}=h(z)$; in other words, $G$ is antisymmetric under reflection in the $y_{1,\ldots,n}$. Due to the symmetry of the polydisk $\Omega$, then, we must have that  $\int_\Omega G = 0$. But since $\Delta(\bar{f}g) = \overline{\Delta(f\bar{g})} = 0$, the same argument interchanging the roles of $f$ and $g$ shows that $\int_\Omega F = 0$ as well, whence $\int_\Omega f\bar{g} = \int_\Omega h = \int_\Omega (F+iG) = 0$, or, what is equivalent, $\varv(j)=0$ as was to be shown.
\end{proof}

The import of this proposition is that it justifies the visualizable [anschauliches] picture of an higher-order vector as a 1-vector field in cotangent space. What happens when we seek to integrate an higher-order vector field $\varv$ in order to yield a flow? In the vicinity of a given point, this flow would be approximated by that of the 1-vector field $V_p$ in cotangent space, but another consideration supervenes when we broaden our field of view to encompass nearby points as well. In the higher-order case ($r>1$), the derived vector field $V_p$ need not be constant in the $\xi_{1,\ldots,n}$, but rather varies from place to place in cotangent space as one departs from the origin. The right way to imagine what is going on would be in terms of virtual motion of infinitely near points in space (not anymore simply to be identified with a conventional differentiable manifold, since we must now include the infinitesimals of all orders); these virtual motions may be either mutually reinforcing or mutually opposed and the resultant flow emerges from their collective behavior. Thus, in the case of an ordinary 1-vector field, one obtains in the small a coherent component of flow, as everyone is familiar with. Clearly, all kinds of complicated possibilities enter as soon as one goes either to yet higher order or to multiple dimensions, or both, and it may not as yet be immediately evident to everyone that the intuitive picture of an higher-order vector field just described can be made entirely consistent. So far our remarks have been very impressionistic; in a moment, in {\S}\ref{phoronomy} below, they are about to be rendered precise and explicit.

The intrinsic nature of the geometrical constructions in this section, moreover, stands very much in question since they were performed with the aid of the non-canonical isomorphism in Euclidean space. In the next section we formalize the informal ideas sketched in this section.

\subsection{Towards a Phoronomy for Higher-Order Vector Fields}\label{phoronomy}

The present section concerns the question of integration of general higher-order vector fields so as to obtain a corresponding flow and not so much what a higher-order vector field may be eo ipso.

Let $\theta$ be the canonical 1-form on $T^*M$ and $\omega=-d\theta$ the canonical symplectic form. If $\pi_{r,1}:J^{*r}(M)\rightarrow J^{*1}(M)$ denotes the canonical projection from $r$-jets to 1-jets, we can lift $\theta$ to $\hat{\theta} := \pi^*_{r,1}\theta$ (where $T^*M$ is to be identified with $J^{*1}(M)$ as usual).

\begin{definition}
	The canonical $r$-jet field over a differentiable manifold $M$ will be defined as
	\begin{equation}
		\Theta := \hat{\theta} + \frac{1}{2}\hat{\theta}\hat{\theta}+\cdots+\frac{1}{r!}
		\overbrace{\hat{\theta}\cdots\hat{\theta}}^{r ~\mathrm{times}},
	\end{equation}
	where the powers of $\hat{\theta}$ are to be understood in the sense of the natural multiplication of jets.
\end{definition}

\begin{definition}
	Let $T^*M$ be the cotangent bundle of the differentiable manifold $M$ and $\varv \in \mathscr{J}^{r}(M)$ an $r$-vector field on $M$. Denoting an arbitrary element of the cotangent bundle by $\xi=\xi_1d^1+\cdots+\xi_nd^n$ with respect to suitable coordinates $x^{1,\ldots,n}$ around the point $p \in M$ (always implicitly understood) and the injection $\iota:M\rightarrow J^r(M)$, $p\mapsto (x,\Theta(\xi))$, define the function $P_\varv: T^*M \rightarrow \vvmathbb{R}$ via
	\begin{equation}
		P_\varv(\xi) := \iota_*\varv(\Theta(\xi)).
	\end{equation}
\end{definition}

\begin{remark}
	The function $P_\varv$ is covariantly defined since all of the operations entering into its construction are intrinsic. Under a linear change of coordinate, $\xi$ undergoes a linear transformation to $\eta=B\xi$ and, correspondingly its powers go over into $B\otimes\cdots\otimes B \xi\cdots\xi$, while at the same time the coefficients $a_\alpha$ of $\varv$ go into $Aa$ where $A$ involves a sum of repeated tensor products of the Jacobian $J$. When contracted against each other, the factors of $J$ combine with the factors of $B$ to yield the identity, as they must. For the general non-linear change of coordinate, off-diagonal terms connecting terms different in degree enter into the transformation law of $\varv$ as well as of its argument $\Theta(\xi)$, but these always combine again to yield the identity. 
	
	Any series in the $\hat{\theta}(\xi)$ would serve just as well, if all we wished was to obtain a covariantly defined function on the cotangent bundle, but the factorial coefficients in the above definition have been chosen for the following reason. In view of the multinomial expansion,
	\begin{equation}
		(\xi_1+\cdots+\xi_n)^m = \sum_{|\beta|=m} \frac{m!}{\beta!}\xi^\beta,
	\end{equation}
	taking $\varv=a^\alpha\partial_\alpha=\dfrac{a^\alpha}{\alpha!}
	\dfrac{\partial^{|\alpha|}}{\partial x_1^{\alpha_1}\cdots\partial x_n^{\alpha_n}}$ we have
	\begin{equation}
		P_\varv(\xi)=\frac{1}{\alpha!}a^\alpha\xi^\alpha.
	\end{equation}
	Morally speaking, $P_\varv$ is just the total symbol of $\varv$ viewed as a linear partial differential operator. The only difference from the conventional expression for the total symbol consists in the factorials in the denominator, which arise for us from the decision to include them when defining the basis elements $\partial_\alpha$ (cf. equation (\ref{def_vector_basis}) above). Our definition is the natural one if these are to become the dual basis to the $d^\alpha$. Confusion reigns in the literature as to the proper definition and significance of the total symbol of a linear partial differential operator, for two reasons: first, as a rule one is interested solely in the principal symbol (consisting of those terms with $|\alpha|=r$) and second, it is customary in the field of the analysis of partial differential equations to work all the time in a fixed Cartesian frame. The covariance of the total symbol as just defined follows from the more complicated transformation laws applying to both $\varv$ and its argument $\Theta(\xi)$, which must be understood as generalized tensors in accordance with {\S}\ref{formal_def}.	
\end{remark}

We are prepared now to introduce the natural and intrinsic definition of the flow associated to a higher-order vector field. Then, we arrive at an ordinary 1-vector field $V \in \mathscr{X}(T^*M)$ in cotangent space by means of the symplectic gradient: $V$ is to be taken as the unique 1-vector field such that
\begin{equation}
	i_V \omega = dP_\varv.
\end{equation}
With respect to coordinates $(x^{1,\ldots,n},\xi_{1,\ldots,n})$,
\begin{equation}\label{derived_vf_cotangent}
	V = P_{\varv,\xi_1}\partial_{x^1}+\cdots+P_{\varv,\xi_n}\partial_{x^n}-
	P_{\varv,x^1}\partial_{\xi_1}-\cdots-P_{\varv,x^n}\partial_{\xi_n}.
\end{equation}

\begin{proposition}[Local existence of the derived flow]\label{existence_local_flow}
	If $\varv \in \mathscr{J}^{r}(M)$ is an $r$-th order vector field, to every point in cotangent space $(x,\xi) \in T^*M$ there exists a neighborhood $(x,\xi) \in U \subset T^*M$ and a time $\lambda_0>0$ such that for every $0 \le \lambda < \lambda_0$ the smooth maximal flow $\varphi_\lambda:U\rightarrow T^*M$ by diffeomorphisms satisfying $\varphi_{\lambda} \circ \varphi_\mu = \varphi_{\lambda+\mu}$ where defined whose orbits are precisely the integral curves of $V$, exists and is unique.
\end{proposition}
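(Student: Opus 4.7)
The plan is to reduce the statement to the classical local existence and uniqueness theorem for flows of smooth vector fields on a manifold (e.g.\ Lee, \emph{Introduction to Smooth Manifolds}, Theorem 9.12), applied to the symplectic-gradient field $V \in \mathscr{X}(T^*M)$ defined immediately above the proposition. Once smoothness of $V$ is established, every conclusion of the proposition, namely existence of $\lambda_0 > 0$, a neighborhood $U$, the group law $\varphi_\lambda \circ \varphi_\mu = \varphi_{\lambda+\mu}$ on its domain of definition, uniqueness of the flow, and agreement of its orbits with the integral curves of $V$, is part of the standard conclusion of that theorem, and Picard--Lindel\"of provides the underlying ODE input.

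The only real content is therefore to verify that $V$ is a smooth vector field on $T^*M$. By the remark preceding the statement, in a chart with coordinates $(x^1,\ldots,x^n,\xi_1,\ldots,\xi_n)$ we have the closed-form expression
\begin{equation}
    P_\varv(x,\xi) = \sum_{1 \le |\alpha| \le r} \frac{1}{\alpha!}\, a^\alpha(x)\, \xi^\alpha,
\end{equation}
which is polynomial in $\xi$ with coefficients $a^\alpha(x)$ that are smooth in $x$ because $\varv \in \mathscr{J}^r(M)$ is by hypothesis a smooth section of the jet bundle $J^rM$ (proposition \ref{jet_bundle}). Hence $P_\varv \in C^\infty(T^*M)$ and, since the canonical symplectic form $\omega = d\xi_i \wedge dx^i$ has constant coefficients in Darboux coordinates (and in particular is everywhere non-degenerate), the equation $i_V\omega = dP_\varv$ admits a unique solution $V$ whose components are precisely those displayed in (\ref{derived_vf_cotangent}); these components are smooth in $(x,\xi)$ because they are first-order partials of the smooth function $P_\varv$.

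With $V \in \mathscr{X}(T^*M)$ smooth, the fundamental theorem on flows furnishes, at any chosen point $(x_0,\xi_0) \in T^*M$, an open neighborhood $U$ and a number $\lambda_0 > 0$ together with a smooth map $\varphi \colon (-\lambda_0,\lambda_0) \times U \to T^*M$ that is a diffeomorphism onto its image for each fixed $\lambda$, satisfies the one-parameter group law wherever both sides are defined, has $\tfrac{d}{d\lambda}\varphi_\lambda(x,\xi) = V(\varphi_\lambda(x,\xi))$ with $\varphi_0 = \mathrm{id}$, and is unique with these properties. The orbits $\lambda \mapsto \varphi_\lambda(x,\xi)$ are then exactly the integral curves of $V$ through $(x,\xi)$.

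The main obstacle, such as it is, lies entirely in confirming that the intrinsic construction $\varv \mapsto P_\varv$ through the canonical jet field $\Theta$ and the injection $\iota$ coincides in coordinates with the explicit polynomial above; but this has already been carried out in the remark following the definition of $P_\varv$. All else is routine symplectic geometry together with the standard ODE theory.
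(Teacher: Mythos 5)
Your proposal is correct and follows essentially the same route as the paper, whose entire proof consists of citing Lee's fundamental theorem on flows (Theorem 9.12). The only difference is that you explicitly verify the smoothness of $V$ via the coordinate expression for $P_\varv$ and the non-degeneracy of the canonical symplectic form, a step the paper leaves implicit in the remark preceding the proposition.
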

\begin{proof}
	Lee, Theorem 9.12 \cite{lee_smooth_manif}.
\end{proof}

The following immediate result ensures that the flow $\varphi_\lambda$ behaves as expected when applied to a smooth function on the base space $M$; in other words, it can be viewed as acting on such functions as we presume in the examples discussed above:

\begin{proposition}
	The property of exactness is preserved under the flow $\varphi_\lambda$ derived from $\varv$.
\end{proposition}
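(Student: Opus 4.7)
The plan is to identify exactness with the condition that a 1-form $\sigma$ on $M$, viewed as a section $\sigma: M \to T^*M$, should take the form $\sigma = df$ for some $f \in C^\infty(M)$, and then to show that the image of such a section under $\varphi_\lambda$ remains the graph of an exact 1-form for sufficiently small $\lambda$. The essential input is that the derived vector field $V$ is Hamiltonian with Hamiltonian $P_\varv$: by definition $i_V\omega = dP_\varv$, so by Cartan's magic formula
\begin{equation}
\mathcal{L}_V\theta = i_V\,d\theta + d(i_V\theta) = -i_V\omega + d(\theta(V)) = d\bigl(\theta(V) - P_\varv\bigr).
\end{equation}
Hence $\mathcal{L}_V\theta$ is exact, and integrating along the flow yields $\varphi_\lambda^*\theta = \theta + d\psi_\lambda$ for the smooth function $\psi_\lambda := \int_0^\lambda \varphi_\mu^*\bigl(\theta(V) - P_\varv\bigr)\,d\mu$ on $T^*M$.

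The main computation exploits the tautological property $\sigma^*\theta = \sigma$, valid for any section $\sigma$ of $T^*M$. Let $\sigma = df$ and set $\sigma_\lambda := \varphi_\lambda \circ \sigma : M \to T^*M$. Pulling back along $\sigma$ then gives
\begin{equation}
\sigma_\lambda^*\theta = \sigma^*\varphi_\lambda^*\theta = \sigma^*\theta + d(\sigma^*\psi_\lambda) = df + d(\psi_\lambda \circ \sigma) = d\bigl(f + \psi_\lambda \circ \sigma\bigr).
\end{equation}
For $\lambda$ sufficiently small, the composition $\pi \circ \sigma_\lambda: M \to M$ is a local diffeomorphism by continuity of the flow (proposition \ref{existence_local_flow}) together with $\pi \circ \sigma_0 = \mathrm{id}_M$, so $\sigma_\lambda$ factors locally as $\bar\sigma_\lambda \circ (\pi \circ \sigma_\lambda)$, where $\bar\sigma_\lambda$ is a bona fide section of $T^*M$; applying the tautological property to $\bar\sigma_\lambda$ then shows that $\bar\sigma_\lambda$ equals the differential of $(f + \psi_\lambda \circ \sigma) \circ (\pi \circ \sigma_\lambda)^{-1}$, which is manifestly exact.

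The only potential obstacle is the mild technicality of verifying that $\sigma_\lambda$ remains the graph of a section after the flow has been applied; this holds locally and for short times by the argument just indicated and does not reflect any deeper difficulty. The global appearance of caustics in the Hamiltonian flow is a separate phenomenon unrelated to the preservation of exactness addressed here.
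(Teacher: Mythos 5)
Your proof is correct, and it takes a genuinely different---and in fact more complete---route than the paper's. The paper's own argument defines the base map $F_\lambda := \pi_M \circ \varphi_\lambda \circ d$ (your $\pi\circ\sigma_\lambda$), notes that it is a diffeomorphism for small $\lambda$, and then attempts to identify $\varphi_\lambda\big|_{\mathrm{Im}\,df}$ with $(F_\lambda^{-1})^*$ by ``juggling of the definitions,'' concluding that the transported form is $d(f\circ F_\lambda^{-1})$; at no point does it invoke the Hamiltonian character of $V$. Your argument instead makes the symplectic structure carry the load: $i_V\omega = dP_\varv$ plus Cartan's formula gives $\mathcal{L}_V\theta = d(\theta(V)-P_\varv)$, integration gives $\varphi_\lambda^*\theta = \theta + d\psi_\lambda$, and the tautological identity $\sigma^*\theta=\sigma$ converts this into exactness of the flowed section. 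This difference is substantive, not stylistic: preservation of exactness is special to Hamiltonian flows, so any correct proof must use $i_V\omega = dP_\varv$ somewhere, which yours does and the paper's does not---the paper's final equality $(\varphi_\lambda(df_p))(w) = w_{F_\lambda(p)}(f\circ F_\lambda^{-1})$, justified only ``by the definition of $F_\lambda$,'' is precisely the content requiring proof, since the definition of $F_\lambda$ constrains only base points, not fiber components. Moreover, your primitive $(f+\psi_\lambda\circ\sigma)\circ F_\lambda^{-1}$ carries the action correction $\psi_\lambda\circ\sigma$ that the paper's claimed primitive $f\circ F_\lambda^{-1}$ omits, and this correction is genuinely nonzero: on $M=\vvmathbb{R}$ with $P_\varv=\frac{1}{2}\xi^2$ and $f=\frac{1}{2}x^2$, the flow is $\varphi_\lambda(x,\xi)=(x+\lambda\xi,\xi)$, and the flowed graph of $df$ is the graph of $\frac{y}{1+\lambda}\,dy = d\bigl(\tfrac{y^2}{2(1+\lambda)}\bigr)$, which agrees with your formula but not with $d(f\circ F_\lambda^{-1}) = d\bigl(\tfrac{y^2}{2(1+\lambda)^2}\bigr)$. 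In short, your route buys rigor exactly where the paper's proof is thinnest, and it produces the correct generating function (the Hamilton--Jacobi action term); the paper's route is shorter but, as written, asserts rather than proves the key identification, and would misidentify the primitive.
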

\begin{proof}
	Let $f$ be a smooth function defined on $M$, where if necessary we may restrict its domain to $\pi_M[U]$, where $U \subset T^*M$ is the open set on which the flow $\varphi_\lambda$ is defined, whose existence is guaranteed by proposition \ref{existence_local_flow}. We want to show that the image of $df$ under $\varphi_\lambda$ will be again an exact 1-form. To this end, it suffices to show that the flow $\varphi_\lambda$ can be expressed as the pullback of some smooth map $F_\lambda^{-1}$, since by the naturality of the exterior derivative (Lee, Proposition 14.26 \cite{lee_smooth_manif}) we would have $\varphi_\lambda(df)= F_\lambda^{-1*}df = d (f \circ F_\lambda^{-1})$. Therefore, define a mapping $F_\lambda:M \rightarrow M$ by $F_\lambda := \pi_M \circ \varphi_\lambda \circ d$, i.e., $p \in M \mapsto \pi_M(\varphi_\lambda(df_p))$. As the product of three diffeomorphisms for sufficiently small $\lambda$, $F_\lambda$ is itself a diffeomorphism. What has to be shown, thus, is that $\varphi_\lambda \big|_{\mathrm{Im}~df \subset T^*M} = (F_\lambda^{-1})^*$. All this calls for is some juggling of the definitions. Let an arbitrary element $w \in T_{F_\lambda(p)}M$. Then,
	\begin{align}
		(F_{\lambda,F_\lambda(p)}^{-1} df)^*(w) &= df_p( F_{\lambda*}^{-1} w) \nonumber \\
		&= w_{F_\lambda(p)}(f \circ F_\lambda^{-1}) \nonumber \\
		&= (\varphi_\lambda(df_p))(w),
	\end{align}
	as was to be shown, where the last step follows by the definition of $F_\lambda$.
\end{proof}

A further remark will be helpful in interpreting the geometrical significance of our construction of the derived 1-vector field in cotangent space $V$ from an higher-order vector field $\varv$ on $M$. Above, we have represented the action of $X_\varv$ on smooth functions through a superposition of that on the unit Gaussians $\varphi_\varkappa$. To see what happens microlocally, we promote the test function $\varphi_\varkappa$ to cotangent space by applying the exterior derivative: $d\varphi_\varkappa \in \Gamma(T^*M)$. Now, we are interested in test functions of this form only in their limit as the parameter $\varkappa$ tends to zero. In such a limit, the dominant part of $d\varphi_\varkappa$ will be located near the origin where $\| x \|$ is sufficiently small in norm, due to the suppression coming from the exponential. For the same reason, the higher spatial derivatives of $\varphi_\varkappa$ in $X_\varv$ will be suppressed relative to the lower there; hence, when forming our geometrical intuition for what the action of $X_\varv$ on $\varphi_\varkappa$ looks like, we can limit ourselves to considering the region in cotangent space near the zero section where $\| \xi \|$ is sufficiently small in norm as well, so that the leading terms dominate.

Lastly, let us remark on what it means that $\varphi_\lambda$ is defined only locally in cotangent space. First, the flow $\varphi_\lambda$ may not exist for all future times $\lambda>0$, but only up to some limiting time $\lambda_0$. Second, it might not be possible to bound $\lambda_0>0$ away from zero uniformly across all of space, or uniformly away from zero across the fiber in cotangent space for all given functions at a point (unless one happens to be dealing with a 1-vector field). All we can say is that we know how to transport a function on $M$ under the flow derived from $\varv$ for a possibly bounded time near any point in the domain of its definition (by the uniqueness in proposition \ref{existence_local_flow}, the local flows thereby defined must be consistent among themselves wherever they overlap).

	\section{Revising the differential calculus}\label{chapter_3}

The ideas about the possible role of higher-order infinitesimals in geometry advanced above can become effective only if one has at one's disposal the technique with which to handle them facilely. The issue facing us would be to extend the differential calculus on manifolds so as to permit, as far as possible, an intrinsic characterization of local behavior of jets and higher-order tangents. Thus, we require a notion of parallel transport. After duly enlarging the concept of affine connection, our principal result will be an existence theorem for geodesics of the jet connection. The conceptual setting here is non-trivial, since it is not immediately evident what it could mean to integrate a vector field of higher than first order. We are led to a definition of a 1-parameter semigroup of diffeomorphisms that, in a certain sense, possesses inertia. A 1-vector field has no inertia since the flow it generates consists of mappings from one point to another at a later time, but the point itself has no inherent tendency to prolong its motion in the direction in which it is headed at the moment. This, we shall see, constitutes a new feature that supervenes once one goes to higher order.

\subsection{Extension of the Differential Calculus to Jets}\label{parallel_transport_jets}

Now that we have the concept of a jet bundle, the question raises itself as to how jets at different points in space are related to one another. To visualize a jet geometrically, think of the jet $j = [f]$ at $p \in M$ in terms of the graph $\Gamma$ of $f$ near $p$ with respect to some set of local coordinates. When transformed under a diffeomorphism $\varphi$ the graph changes in shape. Suppose first that $\varphi$ takes $p$ to itself. The transformed jet is $\varphi^{-1*}j$. The graph $\Gamma_\varphi$ of $f \circ \varphi^{-1}$ will distort in shape to reflect the behavior of $\varphi$ around $p$. To first order, the slopes of the graph $\Gamma$ of $f$ can change and the level sets of $f$ can be rotated. To second order, the ellipsoid, paraboloid or hyperboloid defined by the Hessian quadratic form could be rotated and expanded or compressed or dilated, or all of these possibilities at once along different principal axes. At third order, $\varphi$ can modify the degree of asymmetry of the graph $\Gamma$, etc. for higher orders.

To make meaningful statements of this kind when $\varphi$ takes $p$ to another point $q \ne p$, we have to have a standard by means of which the jet spaces $J^{r*}_pM$ and $J^{r*}_qM$ can be compared. The tool for this purpose is known as a connection in the case of ordinary tensors formed from the tangent and cotangent spaces. Thus, we are seeking to formulate a concept of a `jet connection'. The usual connection defined for arbitrary vector bundles is not suited to our purposes. To see why, let $\pi: E \rightarrow M$ be a vector bundle and $\nabla: \mathscr{T}(M) \times \mathscr{E}(M) \rightarrow \mathscr{E}(M)$ a connection, where $\mathscr{T}(M)$ is the space of vector fields and $\mathscr{E}(M)$ the space of sections of $E \rightarrow M$. The concept of parallel transport along a curve $\gamma$ in $M$ may be defined as usual. When parallel transported along $\gamma$, a section $\xi \in \mathscr{E}(M)$ knows only about the values of $X \in \mathscr{T}(M)$ restricted to $\gamma$, $X|_\gamma$. This statement can be made precise as follows: let $Y \in \mathscr{T}(M)$ be another vector field that agrees with $X$ when restricted to $\gamma$, $Y|_\gamma = X|_\gamma$. Then the parallel transport of $\xi$ by $Y$ agrees with that by $X$, $\nabla_Y \xi|_\gamma = \nabla_X \xi|_\gamma$ (\cite{lee_riemannian_manif}, Lemma 4.2). The source of the problem is apparent: the vector field $X$ depends only on differentials of the first order; that is, we need a higher-order vector field. To remedy this defect (from our point of view), we proceed as follows. Parallel transport in the wider sense we are seeking here is to be thought of not just as a translation along a 1-dimensional curve, but (returning to our initial picture) as motion in a flow inside a tubular neighborhood in the limit as the cross-section tends to zero (to give it a name, we will describe this situation as a `jet stream').

Thus, we have to investigate more closely the local behavior of 1-parameter groups of diffeomorphisms. Let $\phi_\lambda$ be a 1-parameter group of diffeomorphisms acting on $M$ (also known as flows). Every flow has an infinitesimal generator $X_1 \in \mathscr{T}(M)$, where $\mathscr{T}(M)$ denotes the space of smooth sections of the tangent bundle $TM$, for which each orbit is an integral curve of $X_1$ (\cite{lee_smooth_manif}, Proposition 17.3). The converse also holds: every vector field $X_1 \in \mathscr{T}(M)$ is the infinitesimal generator of a unique maximal smooth flow (\cite{lee_smooth_manif}, Theorem 17.8). We would like to ask whether a higher-order vector field $X_r \in \mathscr{J}^r(M)$, $r \ge 1$, can be associated to a flow that characterizes its infinitesimal behavior to higher order. To do so, look at how $\phi_\lambda$ acts on functions. For $f \in C^\infty(M)$, define $f_\lambda = f \circ \phi_\lambda$. Then we can set
\begin{equation}\label{eq20}
	(X f)(p) = \sum_{s=1}^r X_s(\lambda) f(p) = \sum_{s=1}^r \frac{1}{s!} \lambda^{s} \frac{d^s f_\lambda(p)}{d\lambda^s}\bigg|_0,
\end{equation}
which is to be thought of as a formal power series in $\lambda$.

\begin{lemma}\label{infinitesimal_lemma}
	The vector field $X_1$ so defined is proportional to the infinitesimal generator of $\phi_\lambda$.
\end{lemma}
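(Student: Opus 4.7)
The plan is to unpack the $s=1$ term of equation (\ref{eq20}) and compare it directly against the standard characterization of the infinitesimal generator of $\phi_\lambda$. By Lee, Proposition 17.3 (cited just above), that generator is the unique $Y \in \mathscr{T}(M)$ satisfying $Yf(p) = \dfrac{d}{d\lambda}\big|_{\lambda=0} f(\phi_\lambda(p))$ for every $f \in C^\infty(M)$ and every $p \in M$.

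First I would isolate from (\ref{eq20}) the defining expression for $X_1$, namely
\begin{equation*}
X_1(\lambda) f(p) \;=\; \lambda \cdot \frac{d f_\lambda(p)}{d\lambda}\bigg|_0 \;=\; \lambda \cdot \frac{d}{d\lambda}\bigg|_{\lambda=0} f(\phi_\lambda(p)).
\end{equation*}
Substituting the characterization of $Y$ on the right-hand side gives $X_1(\lambda) f(p) = \lambda \cdot Y f(p)$. Since this identity holds for all $f$ and all $p$, one concludes $X_1(\lambda) = \lambda Y$ as first-order linear operators on $C^\infty(M)$, and hence as elements of $\mathscr{T}(M)$. This is exactly the asserted proportionality, with $\lambda$ playing the role of the scalar factor.

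There is essentially no obstacle: the lemma is a direct consequence of unwinding definitions. The only point worth flagging for the reader is bookkeeping — the operator $X_1(\lambda)$ introduced in (\ref{eq20}) carries an explicit factor of $\lambda$ by construction, so the ``proportionality'' in question is parametric in the flow time, and one recovers the unadorned infinitesimal generator by dividing through by $\lambda$ (or equivalently by passing to $\lim_{\lambda \to 0} \lambda^{-1} X_1(\lambda)$). Nothing about the higher-order setup is invoked; the statement concerns only the first-order part of the $\lambda$-expansion and therefore reduces to a classical fact about flows.
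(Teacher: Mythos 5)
Your proof is correct and follows essentially the same route as the paper's: both unwind the $s=1$ term of equation (\ref{eq20}), identify $\frac{d}{d\lambda}\big|_0 f(\phi_\lambda(p))$ with the action of the infinitesimal generator $Y$ on $f$, and conclude $X_1 = \lambda Y$. The only cosmetic difference is that the paper writes this via the orbit map $\theta^{(p)}$ and its pushforward of $d/dt$, while you cite Lee's characterization of the generator directly.
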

\begin{proof}
	For each $p \in M$, let $\theta^{(p)}(t) = \phi_t(p)$ be the orbit starting at $p$ and let $Y \in \mathscr{T}(M)$ be the infinitesimal generator of the flow $\phi_\lambda$. Then for any $f \in C^\infty(M)$,
	\begin{align}\label{eq21}
		\frac{1}{\lambda}(X_1f)(p) &= \lim_{t \rightarrow 0} \frac{f(\theta^{(p)}(t))-f(p)}{t} = \frac{d}{dt}\bigg|_0 f(\theta^{(p)}(t)) \nonumber \\
		&= \theta^{(p)}_* \left( \frac{d}{dt}{\bigg|}_0 \right) f= (Yf)(p),
	\end{align}
	as asserted.\end{proof} 

\begin{definition}
	We will call the higher-order vector field $X_r \in \mathscr{J}^r(M)$ associated to a flow $\phi_\lambda$ the $r$-th order infinitesimal generator of $\phi_\lambda$.
\end{definition}

\begin{remark}
	The reason for the coefficients in \textup{(\ref{eq20})} is that we would like to write, formally at least, $\phi_\lambda = \exp X(\lambda)$ and obtain $X_r$ from the terms up to order $\lambda^r$ in the expansion of the exponential. Evaluated near $\lambda=0$, the terms after the first are infinitesimally small compared to the leading term $\lambda X_1$.
\end{remark}

Not every higher-order vector field $Y \in \mathscr{J}^r(M)$ is the infinitesimal generator of a flow, since the flow is uniquely determined by its infinitesimal generator at first order. 

\begin{definition}
	If $X \in \mathscr{T}(M)$, define its $r$-th prolongation $\textup{pr}_rX \in \mathscr{J}^r(M)$ by the equation
	\begin{equation}\label{eq22}
		\textup{pr}_rXf = \sum_{s=1}^r \frac{1}{s!} \lambda^{s} \overbrace{X \circ \cdots \circ X}^{s}f,
	\end{equation}
	for every $f \in C^\infty(M)$ \textup{(}again, thought of as a formal power series in $\lambda$\textup{)}.
\end{definition}

The following proposition characterizes the relation between the infinitesimal generator of order $r$ and that of order 1.

\begin{proposition} Let $\phi_\lambda$ be a flow with infinitesimal generator $X_1$. Then the $r$-th order infinitesimal generator $X_r$ of $\phi_\lambda$ satisfies $X_r = \textup{pr}_r X_1$.
\end{proposition}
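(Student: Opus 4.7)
The plan is to reduce the identity $X_r = \textup{pr}_r X_1$ to a single Taylor-expansion fact for flows: namely, that if $Y$ denotes the ordinary (first-order) infinitesimal generator of $\phi_\lambda$, then for every $s \ge 1$, every $f \in C^\infty(M)$ and every $p \in M$,
\begin{equation*}
\frac{d^s f_\lambda(p)}{d\lambda^s}\bigg|_{\lambda = 0} = (Y^s f)(p).
\end{equation*}
Once this is in hand, plugging into the definition of $X_r$ yields $X_r f = \sum_{s=1}^r \frac{\lambda^s}{s!}\, Y^s f$, which by (\ref{eq22}) is precisely $(\textup{pr}_r Y) f$. Since Lemma \ref{infinitesimal_lemma} identifies $X_1$ with $Y$ (up to the overall factor of $\lambda$ built into the definition of $X_r$), coefficient-by-coefficient matching of the two formal power series in $\lambda$ gives the claim.

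The Taylor identity itself I would prove by induction on $s$. The case $s=1$ is exactly the content of Lemma \ref{infinitesimal_lemma}. For the inductive step, the essential input is the flow semigroup law $\phi_{\lambda + t} = \phi_\lambda \circ \phi_t$, which gives
\begin{equation*}
\frac{d}{d\lambda} f_\lambda(p) = \frac{d}{dt}\bigg|_{t=0} f(\phi_{\lambda + t}(p)) = \frac{d}{dt}\bigg|_{t=0} (f \circ \phi_t)(\phi_\lambda(p)) = (Yf)(\phi_\lambda(p)) = (Yf)_\lambda(p).
\end{equation*}
Hence differentiating in $\lambda$ amounts to replacing the integrand $f$ by $Yf$ and then pulling back again along $\phi_\lambda$. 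Setting $g := Yf$ and applying the inductive hypothesis to $g$ yields $\frac{d^{s+1}}{d\lambda^{s+1}} f_\lambda(p)\big|_{0} = \frac{d^{s}}{d\lambda^{s}} g_\lambda(p)\big|_{0} = (Y^{s+1} f)(p)$, closing the induction.

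The principal obstacle is really just bookkeeping rather than any substantial geometric difficulty: one has to be careful to distinguish the role of $\lambda$ as a formal parameter in the power series defining $X_r$ and $\textup{pr}_r$ from its role as the actual flow parameter in $\phi_\lambda$ and $f_\lambda$, and to invoke the semigroup property at exactly the right spot in the induction. Apart from that, the whole argument amounts to a rigorous articulation, at each finite order in $\lambda$, of the heuristic identity $\phi_\lambda = \exp(\lambda Y)$ already remarked on after the definition of $X_r$, the only genuinely new ingredient being the inductive step that turns the single-derivative formula into the iterated one.
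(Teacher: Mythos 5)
Your proposal is correct and takes essentially the same route as the paper's proof: induction on the order, with the group/semigroup law of the flow supplying the key step that converts iterated applications of the generator into iterated $\lambda$-derivatives of $f_\lambda$. The only real difference is bookkeeping: where the paper proves $X_1 \circ X_1 f = \frac{d^2 f_\lambda}{d\lambda^2}\big|_0$ by nested difference quotients at $\lambda=0$ and then asserts the extension to higher orders is ``immediate,'' you first establish the transport identity $\frac{d}{d\lambda} f_\lambda = (Yf)_\lambda$ valid along the whole flow and then iterate it, which makes the inductive step genuinely one line and, if anything, renders rigorous exactly the step the paper waves through.
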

\begin{proof}
	The relation holds for $r=1$ by lemma \ref{infinitesimal_lemma}. Suppose the relation is true for $1,\ldots,r$.
	Then $X_{r+1}f - X_rf = \dfrac{1}{(r+1)!} \lambda^{r+1} \dfrac{d^{r+1} f_\lambda}{d\lambda^{r+1}}\big|_0$ and $\text{pr}_{r+1}X_1f - \text{pr}_rX_1f = \dfrac{1}{(r+1)!} \lambda^{r+1} \overbrace{ X_1 \circ \cdots \circ X_1}^{r+1}f$. Now,
	\begin{align}\label{eq23}
		X_1 \circ X_1 f &= \lim_{t_1 \rightarrow 0} \frac{ (X_1f)(\theta^{(p)}(t_1)) - (X_1f)(p)}{t_1} \nonumber \\
		&= \lim_{t_1 \rightarrow 0} \frac{1}{t_1} \lim_{t_2 \rightarrow 0} \frac{1}{t_2} \left(
		f(\theta^{(\theta^{(p)}(t_1))}(t_2)) - f(\theta^{(p)}(t_1)) - f(\theta^{(p)}(t_2)) + f(p) \right).
	\end{align}
	But by the group property of the flow, $\theta^{(\theta^{(p)}(t_1))}(t_2) = \theta^{(p)}(t_1+t_2)$. Hence,
	\begin{align}\label{eq24}
		X_1 \circ X_1 f &= 
		\lim_{t_1 \rightarrow 0} \frac{1}{t_1} \lim_{t_2 \rightarrow 0} \frac{1}{t_2} \left(
		f(\theta^{(p)}(t_1+t_2)) - f(\theta^{(p)}(t_1)) - f(\theta^{(p)}(t_2)) + f(p) \right) \nonumber \\ 
		&= \lim_{t_1 \rightarrow 0} \frac{1}{t_1} \left( \frac{df_\lambda}{d\lambda}\bigg|_{t_1}
		- \frac{df_\lambda}{d\lambda}\bigg|_0 \right) \nonumber \\
		&= \frac{d^2 f_\lambda}{d\lambda^2}\bigg|_0.
	\end{align}
	The extension of the argument to $\overbrace{X_1 \circ \cdots \circ X_1}^{r+1}$ is immediate.
\end{proof}

\begin{definition}
	We will say that an $r$-vector field is integrable if it can be written as the $r$-prolongation of a 1-vector field.
\end{definition}

Integrable higher-order vector fields thus arise naturally. We propose that the concept of parallel transport by a vector field along a curve be replaced by a concept of parallel transport in an infinitesimally small tubular neighborhood of a curve, which is equivalent to parallel transport by an integrable higher-order vector field along the curve. In the conventional geometry on differentiable manifolds, an affine connection constitutes a rule for covariantly differentiating one vector field (call it $Y$) along another (call it $X$). The important properties characterizing the resulting covariant derivative, denoted $\nabla_X Y$ are its linearity over $C^\infty(M)$ in $X$, linearity over $\vvmathbb{R}$ in $Y$ and an analogue of the Leibniz product rule for scalar multiples of $Y$, i.e., vector fields of the form $fY$, where $f \in C^\infty(M)$. 

Before formulating a formal definition of a jet connection, we must first attend to a preliminary matter. An ordinary vector field $X$ can be viewed as a first-order differential operator. Since we wish to extend the formalism to the case of higher-order vector fields, we must settle upon a consistent manner of treating derivatives of higher than first order. To arrive at the right product rule in the higher-order case, reflect first upon the situation in flat space. There, we want to equate $\nabla_X$ with $X$ for any higher-order vector field $X \in \mathscr{J}^r(\vvmathbb{R}^n)$. Now, we can express the vector field in terms of coordinates as $X = X^\alpha \partial_\alpha$. It will suffice, then, to examine the partial derivative $\partial_\alpha$ with respect to an arbitrary multi-index $\alpha$. For the ordinary derivative, Leibniz' product rule \cite{leibniz_1684} assumes the well-known form,
\begin{equation}
	\frac{d}{dx} fg = \frac{df}{dx}g + f\frac{dg}{dx}.
\end{equation}
The simplest non-trivial case beyond what we know already would be the multi-index at level $|\alpha|=2$ given by $\alpha=\mu\mu$, where $\mu=1,\ldots,n$ labels one of the coordinate directions; in other words, $\partial_\alpha = \partial^2/\partial (x^\mu)^2$. But from elementary multi-variable calculus, we have that
\begin{equation}
	\frac{\partial^2}{\partial (x^\mu)^2} fg = \left( \frac{\partial^2 f}{\partial (x^\mu)^2} \right) g + 2 \frac{\partial f}{\partial x^\mu}\frac{\partial g}{\partial x^\mu} + f \left( \frac{\partial^2 g}{\partial (x^\mu)^2} \right),
\end{equation}
which quickly leads by induction to Leibniz' product rule in the form
\begin{align}
	\frac{\partial^n}{\partial (x^\mu)^n} fg =
	&\left( \frac{\partial^n f}{\partial (x^\mu)^n} \right) g + \binom{n}{1} \frac{\partial^{n-1} f}{\partial (x^\mu)^{n-1}}\frac{\partial g}{\partial x^\mu} + \binom{n}{2}\frac{\partial^{n-2} f}{\partial (x^\mu)^{n-2}}\frac{\partial^2 g}{\partial (x^\mu)^2} + \cdots + \nonumber \\
	&
	\binom{n}{n-1}\frac{\partial f}{\partial x^\mu}\frac{\partial^{n-1} g}{\partial (x^\mu)^{n-1}} + 
	f \left( \frac{\partial^n g}{\partial (x^\mu)^n} \right) 
\end{align}
(cf. Courant and John, \cite{courant_john}, $\S$3.1). A multi-index may, in general, involve partial derivatives with respect to more than one coordinate at the same time, however. This situation is easily embraced if we employ a compressed notation:
\begin{equation}
	\partial_\alpha (fg) = \frac{\alpha!}{\alpha_1!\alpha_2!} (\partial_{\alpha_1}f) (\partial_{\alpha_2}g),
\end{equation}
where, according to the most straightforward extrapolation of Einstein's convention, the summation in expressions of this form will always be understood to extend over all multi-indices $\alpha_{1,2}$ such that $\alpha_1+\alpha_2=\alpha$ (unless otherwise indicated). Recall that for the multi-index $\alpha=(\alpha_1,\ldots,\alpha_n)$ we have
$\alpha!=\alpha_1!\cdots\alpha_n!$.

There are three desiderata one could want from a jet connection $\nabla: \mathscr{J}^r(M) \times \mathscr{J}^r(M) \rightarrow \mathscr{J}^r(M)$, written $(X,Y) \mapsto \nabla_XY$:

\begin{enumerate}
	\item Linearity over $C^\infty(M)$ in $X$: $\nabla_{fX_1+gX_2}Y = f\nabla_{X_1} Y + g\nabla_{X_2}Y$ for $f, g \in C^\infty(M)$.
	
	\item Linearity over $\vvmathbb{R}$ in $Y$: $\nabla_X(aY_1+bY_2) = a\nabla_XY_1 + b\nabla_XY_2$ for $a, b \in \vvmathbb{R}$.
	
	\item Leibniz product rule: $\nabla_X(fY) = \dfrac{\alpha!}{\alpha_1!\alpha_2!} X^\alpha \partial_{\alpha_1}f \nabla_{\alpha_2} Y$ for $f \in C^\infty(M)$.
\end{enumerate}

\begin{remark}
	Although the integrable case was our motivation, we do not require $X$ to be integrable. The generators $X_r(\lambda)$ of the jet stream will thus, in general, contain terms corresponding to the prolongations of $X_1$ as well as to an innovation introduced at each successive order in $\lambda$ by its higher tangent components. For how this works in free space, see equation (\ref{jet_stream_expansion}) below.
\end{remark}

These properties are direct transcriptions of the properties that hold for the usual affine connection defined on the tangent bundle. The form the jet connection must have follows from these stipulations. Let its action on the coordinate basis vectors be given by
\begin{equation}\label{eq25}
	\nabla_{\partial_\alpha}\partial_\beta = \Gamma^\gamma_{\alpha\beta}\partial_\gamma
\end{equation}
where $\partial_\alpha$, $1 \le |\alpha|\le r$, is the coordinate frame in $U \subset M$ and the Christoffel symbols $\Gamma^\gamma_{\alpha\beta}$ are smooth functions in $C^\infty(U)$---from now on, the Einstein summation convention over multi-indices being in force. Then by the product rule the covariant derivative of a higher-order vector field $Y = Y^\alpha \partial_\alpha$ in the direction of the higher-order vector field $X = X^\alpha \partial_\alpha$ is given by
\begin{equation}\label{jet_conn_on_vf}
	\nabla_XY = \left( \frac{\alpha!}{\alpha_1!\alpha_2!} X^\alpha \partial_{\alpha_1} Y^\beta \Gamma^\gamma_{\alpha_2\beta}\right) \partial_\gamma.
\end{equation}

To start with, we observe that the jet covariant derivative, if it exists, is local in the sense that information about what direction to apply the parallel transport is already encoded in the values of the higher-order vector field $X$ at a point through the higher-order tangents.

\begin{lemma}\label{locality}
	If $\nabla$ is a jet connection on a manifold $M$, $X \in \mathscr{J}^r(M)$, $Y \in \mathscr{J}^r(M)$ and $p \in M$, then $\nabla_XY|_p$ depends only on the value of $X$ at $p$ and on the values of $Y$ in an arbitrarily small neighborhood of $p$. 
\end{lemma}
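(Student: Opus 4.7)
The plan is to split the statement into two assertions mirroring the standard first-order locality lemma: locality in $X$ (only the value $X_p$ matters) and locality in $Y$ (only the values of $Y$ on an arbitrarily small neighborhood of $p$ matter). Both pieces proceed by familiar bump-function arguments, but the second is where the higher-order Leibniz rule forces a slightly more delicate cut-off than in the classical case.

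For locality in $X$, I would first reduce to the sub-claim that if $X$ vanishes on some open neighborhood $V$ of $p$, then $\nabla_X Y|_p = 0$. The usual trick works: pick $\psi \in C^\infty(M)$ with $\psi(p)=0$ and $\psi \equiv 1$ off $V$, so that $X = \psi X$, and apply property 1 (linearity in $X$ over $C^\infty(M)$) to obtain $\nabla_X Y|_p = \psi(p)\, \nabla_X Y|_p = 0$. Then for general $X$ with $X_p = 0$, I would globalise the coordinate expansion $X = X^\alpha \partial_\alpha$ near $p$ by multiplying both the components $X^\alpha$ and the basis sections $\partial_\alpha$ by a bump function supported in the chart and equal to $1$ on a smaller neighborhood of $p$, yielding a global section $X' \in \mathscr{J}^r(M)$ with $X = X'$ near $p$ and $X'_p = 0$. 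The difference $X - X'$ vanishes on an open neighborhood of $p$, so by the sub-claim its contribution drops out at $p$; the remaining piece $\tilde{X}^\alpha(p)\,(\nabla_{\tilde{\partial}_\alpha}Y)|_p$ vanishes because $\tilde{X}^\alpha(p) = X^\alpha(p) = 0$.

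For locality in $Y$, suppose $Y$ vanishes on an open neighborhood $U$ of $p$. The key is to choose a cut-off $\psi \in C^\infty(M)$ that is identically $0$ on a smaller open neighborhood $V \subset U$ of $p$ and identically $1$ off $U$; such a $\psi$ exists by standard smooth Urysohn-type constructions. Because $\psi \equiv 0$ on the entire open set $V$, every partial derivative $\partial_{\alpha_1}\psi$ also vanishes on $V$, and in particular at $p$. On the other hand, $Y = \psi Y$ globally, so expressing $X = X^\alpha \partial_\alpha$ in coordinates around $p$ and invoking properties 1 and 3,
\begin{equation}
\nabla_X Y|_p = \nabla_X(\psi Y)|_p = \sum_{\alpha_1+\alpha_2=\alpha} \frac{\alpha!}{\alpha_1!\,\alpha_2!}\, X^\alpha(p)\, \partial_{\alpha_1}\psi(p)\, (\nabla_{\partial_{\alpha_2}}Y)|_p = 0,
\end{equation}
each summand vanishing because $\partial_{\alpha_1}\psi(p) = 0$ for every multi-index $\alpha_1$.

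The main subtlety—and the one feature that distinguishes the argument from its first-order counterpart—is precisely this choice of $\psi$. In the classical case one uses a cut-off with $\psi(p) = 0$ but no control over its derivatives; here the higher-order Leibniz rule drags in all mixed partial derivatives of $\psi$ up to order $r$, and to annihilate them simultaneously at $p$ one must require $\psi$ to vanish identically on an open subset containing $p$, not merely at the point itself. Once the correct bump function is in hand, the rest is a routine transcription of the standard locality proof for affine connections.
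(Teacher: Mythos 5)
Your proof is correct, and in the step where the higher-order structure actually bites it takes a genuinely different---and sounder---route than the paper's. The paper also argues via a cut-off, but with the complementary choice: it takes $\chi$ supported in $U$ with $\chi(p)=1$, notes $\chi Y \equiv 0$, expands $0=\nabla_X(\chi Y)$ by the Leibniz rule, and must then dispose of the cross-terms $\frac{\alpha!}{\alpha_1!\alpha_2!}X^\alpha(p)\,\partial_{\alpha_1}\chi(p)\,(\nabla_{\partial_{\alpha_2}}Y)|_p$ with $\alpha_1,\alpha_2\ne 0$. Its stated justification---that these vanish ``because $Y=0$ identically on the support of $\chi$''---is circular: the factors $(\nabla_{\partial_{\alpha_2}}Y)|_p$ are exactly the quantities whose vanishing the lemma is supposed to establish, and a bump function constrained only by $\chi(p)=1$ gives no control over $\partial_{\alpha_1}\chi(p)$. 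The paper's argument is repaired only by demanding $\chi\equiv 1$ on a neighborhood of $p$, which is precisely the derivative control you isolate as the higher-order subtlety. Your complementary cut-off ($\psi\equiv 0$ near $p$, $\equiv 1$ off $U$, so that $Y=\psi Y$) is cleaner: every summand in the expansion of $\nabla_X(\psi Y)|_p$, including the $\alpha_1=0$ term $\psi(p)\,\nabla_X Y|_p$, carries a vanishing factor $\partial_{\alpha_1}\psi(p)$, so nothing whatever need be known about $(\nabla_{\partial_{\alpha_2}}Y)|_p$. Your treatment of locality in $X$ is likewise more careful than the paper's one-line computation $\nabla_{X^\alpha\partial_\alpha}Y|_p = X^\alpha(p)\,\nabla_{\partial_\alpha}Y|_p=0$: the globalization of $X^\alpha\partial_\alpha$ by a bump function, together with your sub-claim that fields vanishing near $p$ contribute nothing at $p$, is what legitimizes applying $C^\infty(M)$-linearity to the chart-bound sections $X^\alpha$ and $\partial_\alpha$, since $\nabla$ is defined only on global sections. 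In short: same bump-function skeleton as the paper (and as Lee's Lemmas 4.1--4.2), but your choice of cut-off closes a genuine gap in the paper's own argument rather than reproducing it.
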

\begin{proof}
	(Cf. \cite{lee_riemannian_manif}, Lemmas 4.1 and 4.2.) We have to show that if $X = \tilde{X}$ at $p$ and $Y = \tilde{Y}$ in a neighborhood $U$ of $p$, then $\nabla_XY|_p = \nabla_{\tilde{X}}\tilde{Y}|_p$. Replacing $Y$ by $Y - \tilde{Y}$, it suffices to show that $\nabla_XY|_p = 0$ if $Y$ vanishes identically on $U$. Let $\chi \in C^\infty(M)$ be a bump function supported in $U$ such that $\chi(p)=1$. Clearly, $\chi Y$ vanishes identically in $U$, so by linearity over $\vvmathbb{R}$ we have
	$\nabla_X(\chi Y) = \nabla_X( 0 \cdot \chi Y) = 0 \cdot \nabla_X(\chi Y) = 0$. Then by the product rule,
	$\nabla_X(\chi Y) = X^\alpha \frac{\alpha!}{\alpha_1!\alpha_2!} \partial_{\alpha_!}\chi \nabla_{\alpha_2}Y$. All but the final term on the right-hand side vanish because $Y = 0$ identically on the support of $\chi$. Hence, evaluating at $p$, we have $\nabla_XY|_p = 0$.
	
	As for locality in $X$, it suffices by linearity to show that $\nabla_XY|_p = 0$ whenever $X_p = 0$. In some set of coordinates in a neighborhood of $p$, we can write $X = X^\alpha \partial_\alpha$ with $X^\alpha(p)=0$, $1 \le |\alpha| \le r$. For any $Y \in \mathscr{J}^r(M)$, we have
	\begin{equation}\label{eq27}
		\nabla_XY|_p = \nabla_{X^\alpha \partial_\alpha} Y|_p = X^\alpha(p) \nabla_{\partial_\alpha} Y|_p = 0.
	\end{equation}
	Therefore, the desired locality in $X$ holds.
\end{proof}

Euclidean space $\vvmathbb{R}^n$ equipped with the standard coordinates has the trivial jet connection $\nabla^0$ defined by $\nabla^0_{\partial_\alpha}\partial_\beta=0$. In fact, a large class of jet connections exists on any manifold, as we presently show.

\begin{lemma}\label{localjetconn}
	Let $U \subset M$ be a coordinate chart. The jet connections on $U$ are in one-to-one correspondence with sets of smooth functions $\Gamma^\gamma_{\alpha\beta} \in C^\infty(U)$, $1 \le |\alpha| \le r$, $1 \le |\beta| \le r$, $1 \le |\gamma| \le r$ via the rule of equation (\ref{jet_conn_on_vf}).
\end{lemma}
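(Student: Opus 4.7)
My plan is to prove the lemma in two directions: first, that any jet connection on $U$ determines a unique set of Christoffel symbols, and second, that any such set of smooth functions, substituted into (\ref{jet_conn_on_vf}), yields a jet connection.

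For the forward direction, I would start from a jet connection $\nabla$ on $U$. Since $\partial_\alpha, \partial_\beta \in \mathscr{J}^r(U)$ are smooth sections of $J^rM|_U$, the section $\nabla_{\partial_\alpha}\partial_\beta$ is itself a smooth higher-order vector field on $U$, so its expansion in the basis $\{\partial_\gamma\}_{1\le|\gamma|\le r}$ has smooth coefficients $\Gamma^\gamma_{\alpha\beta}\in C^\infty(U)$, as stipulated in (\ref{eq25}). To see that these Christoffel symbols determine $\nabla$ completely, I appeal to the locality asserted in lemma \ref{locality}: for arbitrary $X=X^\alpha\partial_\alpha$ and $Y=Y^\beta\partial_\beta$ in $\mathscr{J}^r(U)$, property (1) gives $\nabla_XY=X^\alpha\nabla_{\partial_\alpha}Y$ at each point, then the Leibniz rule (3) applied to $Y^\beta\partial_\beta$ and linearity over $\mathbb{R}$ in $Y$ reduce $\nabla_{\partial_\alpha}Y$ to a combination of the $\Gamma^\gamma_{\alpha\beta}$ with derivatives of $Y^\beta$; the resulting expression is precisely (\ref{jet_conn_on_vf}), as was already shown in the discussion preceding the lemma.

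For the reverse direction, given smooth functions $\Gamma^\gamma_{\alpha\beta}$ on $U$, I would define $\nabla_XY$ on $U$ by the right-hand side of (\ref{jet_conn_on_vf}) and verify the three defining properties of a jet connection directly. Linearity over $C^\infty(U)$ in $X$ is immediate, since $X$ enters only through its components $X^\alpha$, which are multiplied by smooth functions; and $\mathbb{R}$-linearity in $Y$ is equally immediate. The nontrivial check is the Leibniz rule. I would compute $\nabla_X(fY)$ by expanding $\partial_{\alpha_1}(fY^\beta)$ via Leibniz' product rule for multi-indices, obtaining a triple-splitting $\alpha_1=\alpha_{1a}+\alpha_{1b}$ with combinatorial weight $\alpha_1!/(\alpha_{1a}!\alpha_{1b}!)$, and compare to the right-hand side, in which I expand $\nabla_{\partial_{\alpha_2}}Y$ by the same formula, splitting $\alpha_2=\mu_1+\mu_2$. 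After relabelling and simplification, both sides reduce to a single triple-sum indexed by decompositions $\alpha=\alpha_{1a}+\alpha_{1b}+\alpha_2$, each term weighted by the common multinomial factor $\alpha!/(\alpha_{1a}!\alpha_{1b}!\alpha_2!)$.

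The main obstacle is this last multinomial identity: the matching of coefficients on the two sides rests on the telescoping $\tfrac{\alpha!}{\alpha_1!\alpha_2!}\cdot\tfrac{\alpha_1!}{\alpha_{1a}!\alpha_{1b}!}=\tfrac{\alpha!}{\alpha_{1a}!\alpha_{1b}!\alpha_2!}=\tfrac{\alpha!}{\alpha_1!\alpha_2!}\cdot\tfrac{\alpha_2!}{\mu_1!\mu_2!}$ under the indicated relabelings. Once this combinatorial check is done, the three properties hold on $U$, and bijectivity with the set of Christoffel symbols follows because the forward and reverse maps are manifestly inverse by construction.
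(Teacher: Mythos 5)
Your proposal is correct and follows essentially the same route as the paper's proof: read off the $\Gamma^\gamma_{\alpha\beta}$ from $\nabla_{\partial_\alpha}\partial_\beta$ and recover equation (\ref{jet_conn_on_vf}) via locality, linearity and the product rule, then in the converse direction verify the Leibniz rule by the telescoping multinomial identity $\frac{\alpha!}{\alpha_1!\alpha_2!}\cdot\frac{\alpha_1!}{\beta_1!\beta_2!}=\frac{\alpha!}{\beta_1!\beta_2!\alpha_2!}$. The only cosmetic difference is that you expand both sides into the common triple sum over decompositions $\alpha_{1a}+\alpha_{1b}+\alpha_2=\alpha$, whereas the paper expands the left-hand side alone and regroups it by setting $\gamma_1=\beta_2+\alpha_2$; the combinatorial content is identical.
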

\begin{proof} (Cf. \cite{lee_riemannian_manif}, Lemma 4.4.)
	Given a connection $\nabla$, (\ref{jet_conn_on_vf}) follows from the definition (\ref{eq25}) and the product rule. In the other direction, given arbitrary $\Gamma^\gamma_{\alpha\beta} \in C^\infty(U)$, the expression (\ref{jet_conn_on_vf}) is smooth if $X$ and $Y$ are smooth, and linearity over $C^\infty(U)$ in $X$ and over $\vvmathbb{R}$ in $Y$ are immediate. It remains to check the product rule:
	\begin{align}\label{eq29}
		\nabla_X(fY) &= X^\alpha \frac{\alpha!}{\alpha_1!\alpha_2!} \partial_{\alpha_1} (fY^\beta) \Gamma^\gamma_{\alpha_2\beta} \partial_\gamma \nonumber \\
		&=  X^\alpha \frac{\alpha!}{\alpha_1!\alpha_2!} \frac{\alpha_1!}{\beta_1!\beta_2!} \partial_{\beta_1}f \partial_{\beta_2}Y^\beta \Gamma^\gamma_{\alpha_2\beta} \partial_\gamma \nonumber \\
		&= X^\alpha \frac{\alpha!}{\beta_1!\beta_2!\alpha_2!} \partial_{\beta_1}f \partial_{\beta_2}Y^\beta \Gamma^\gamma_{\alpha_2\beta} \partial_\gamma \nonumber \\
		&=  X^\alpha \frac{\alpha!}{\beta_1!\gamma_1!} \partial_{\beta_1}f \nabla_{\gamma_1}Y,
	\end{align}
	where in going from the second to the third line the summation becomes one over multi-indices $\beta_1,\beta_2,\alpha_2$ such that $\beta_1+\beta_2+\alpha_2=\alpha_1+\alpha_2=\alpha$ and likewise in going from the third to the fourth, we define $\gamma_1=\beta_2+\alpha_2$ so that $\beta_1+\gamma_1=\beta_1+\beta_2+\alpha_2=\alpha$.
\end{proof}

\begin{proposition}[Existence]
	Every manifold $M$ admits a jet connection $\nabla$ of any order $r$.
\end{proposition}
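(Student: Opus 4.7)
The plan is to mimic the standard proof of existence of an affine connection on any smooth manifold via a partition-of-unity argument. The key preparatory observation is that Lemma \ref{localjetconn} furnishes a jet connection on any coordinate chart for free: in coordinates $x^1,\ldots,x^n$ on $U$, the choice $\Gamma^\gamma_{\alpha\beta}\equiv 0$ yields a local jet connection $\nabla^U$ with
\begin{equation}
\nabla^U_X Y = X^\alpha \tfrac{\alpha!}{\alpha_1!\alpha_2!}\,\partial_{\alpha_1} Y^\gamma\,\partial_\gamma,
\end{equation}
so the problem reduces to gluing together such local pieces.

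I would proceed as follows. First, cover $M$ by a locally finite atlas $\{U_i\}$ of coordinate charts, and choose a smooth partition of unity $\{\psi_i\}$ subordinate to it. On each $U_i$ take the trivial jet connection $\nabla^i$ just described, extending $\psi_i \nabla^i_X Y$ by zero outside $U_i$ so that it defines a global smooth section of $J^rM$. Then define
\begin{equation}
\nabla_X Y := \sum_i \psi_i\, \nabla^i_X Y,
\end{equation}
a locally finite sum. Linearity over $C^\infty(M)$ in $X$ and over $\vvmathbb{R}$ in $Y$ are inherited termwise from each $\nabla^i$, so the substantive verification is the generalized Leibniz rule.

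For the Leibniz rule, the crucial point is that the partition functions $\psi_i$ are scalars multiplying the already-computed jet-valued sections, and they do not participate in any differentiation of $f$. Using $\sum_i \psi_i \equiv 1$,
\begin{equation}
\nabla_X(fY) = \sum_i \psi_i \,\tfrac{\alpha!}{\alpha_1!\alpha_2!}\, X^\alpha \partial_{\alpha_1}f\, \nabla^i_{\alpha_2}Y
= \tfrac{\alpha!}{\alpha_1!\alpha_2!}\, X^\alpha \partial_{\alpha_1}f \sum_i \psi_i \nabla^i_{\alpha_2}Y
= \tfrac{\alpha!}{\alpha_1!\alpha_2!}\, X^\alpha \partial_{\alpha_1}f\, \nabla_{\alpha_2}Y,
\end{equation}
which is the desired identity.

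The main thing to be careful about, which I would flag as the only real obstacle worth attention, is this: for the classical first-order connection, the preservation of the Leibniz rule under convex combination is well known precisely because the rule is affine in the connection (the inhomogeneous term $X(f)Y$ is independent of the connection and absorbs the $\sum_i \psi_i = 1$). One must verify that the same holds in the jet setting, i.e., that the inhomogeneous terms arising from the multi-index Leibniz expansion likewise do not involve the connection and so are preserved under the convex combination. Equation \eqref{jet_conn_on_vf} shows that this is the case—every term in the product-rule expansion is linear in a single occurrence of $\nabla^i$—so the convex-combination mechanism carries through without modification. No existence theorem for ODEs, no geometric input beyond paracompactness, is needed.
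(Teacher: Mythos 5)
Your proposal is correct and follows essentially the same route as the paper: local jet connections from Lemma \ref{localjetconn} glued by a partition of unity via $\nabla_XY=\sum_i\psi_i\nabla^i_XY$, with the Leibniz rule surviving the convex combination because (under the convention $\nabla_0=\mathrm{id}$) the rule is affine in the connection and $\sum_i\psi_i=1$ absorbs the inhomogeneous terms. The only cosmetic difference is that you take the trivial Christoffel symbols $\Gamma^\gamma_{\alpha\beta}\equiv 0$ on each chart, whereas the paper allows arbitrary smooth $\Gamma^{\gamma a}_{\alpha\beta}\in C^\infty(U_a)$; this changes nothing in the argument.
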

\begin{proof} (Cf. \cite{lee_riemannian_manif}, Proposition 4.5.)
	Let $M$ be covered by coordinate charts $(U_a,\varphi_a)$ and let $\chi_a$ be a partition of unity subordinate to $(U_a,\varphi_a)$. Choose functions $\Gamma^{\gamma a}_{\alpha\beta} \in C^\infty(U_a)$ and define jet connections $\nabla^a$ on $U_a$ by lemma \ref{localjetconn}. A jet connection can be defined on all of $M$ via the formula
	\begin{equation}\label{eq30}
		\nabla_XY = \sum_a \chi_a \nabla^a_XY.
	\end{equation}
	Linearity over $C^\infty(M)$ in $X$ and over $\vvmathbb{R}$ in $Y$ follow from the corresponding properties for $\nabla^a$. The product rule can be verified as follows:
	\begin{align}\label{eq31}
		\nabla_X(fY) &= \sum_a \chi_a \nabla^a_X (fY) \nonumber \\
		&= \sum_a \chi_a \left( X^\alpha \frac{\alpha!}{\alpha_1!\alpha_2!} \partial_{\alpha_1}f \nabla^a_{\alpha_2}Y \right) \nonumber \\
		&= X^\alpha  \frac{\alpha!}{\alpha_1!\alpha_2!} \partial_{\alpha_1}f \sum_a \chi_a \nabla^a_{\alpha_2}Y \nonumber \\
		&=  X^\alpha  \frac{\alpha!}{\alpha_1!\alpha_2!} \partial_{\alpha_1}f \nabla_{\alpha_2}Y.
	\end{align}
	Therefore, the Leibniz product rule holds for $\nabla$.
\end{proof}

There is no canonical way to restrict a tangent vector field of higher than first order to a submanifold. At first, this may seem like a disappointment; yet it is of the essence of the geometrical problem. For what it tells us is that one has to remember its behavior nearby in transverse directions. In the case of a curve, on which we focus henceforward, we have labeled this situation the jet stream. Suppose one has a curve $\gamma:[a,b] \rightarrow M$. In order to represent the jet stream, select a hypersurface $\Sigma$ transverse to $\gamma$ at $p \in M$ and suppose one may develop it into a family of transverse hypersurfaces $\Sigma_\lambda$ such that $\Sigma_0 = \Sigma$. If we choose coordinates $x_{2,\ldots,n}$ along $\Sigma$ then we have, locally around $p$, a coordinate system $\lambda,x_2,\ldots,x_n$. Given a higher tangent $j \in J^r_p[a,b]$, we can push it forward to become a higher tangent $\gamma_* j \in J^rM$. To go in the other direction, suppose $X$ is a higher vector field defined on $M$. For any function $f$ on the interval $[a,b]$ we may define $f_\lambda$ as a function on $M$ near $p$ by extending $f$ so as to remain constant on the leaves $\Sigma_\lambda$. One obtains thereby a vector field $X_\lambda \in \mathscr{J}^r[a,b]$ living on $\gamma$, such that $X_\lambda f = X f_\lambda|_\lambda$. 

In the theory of the affine connection on a manifold, one has the concept of the covariant derivative along a curve. The analogue of this construction in the theory of jets is the covariant derivative along a jet stream. By a higher-order vector field along the jet stream $\gamma$ we will mean a smooth map $V: I \rightarrow J^rM$ such that $V(\lambda) \in J^r_{\gamma(\lambda)}M$ for every $t \in I$. Let us denote the space of such higher-order vector fields by $\mathscr{J}^r(\gamma)$. The higher-order vector field $V$ along $\gamma$ will be said to be extendable if there exists a higher-order vector field  on a neighborhood $U$ of the image of $\gamma$ that is related to $V(t)$ via $V(t) = \tilde{V}_\lambda(\gamma(t))$. The following proposition makes the concept of a covariant derivative of a higher-order vector field along a jet stream well-defined.

\begin{proposition}Let $\nabla$ be a jet connection on $M$. For every jet stream $\gamma: I \rightarrow M$, $\nabla$ determines a unique operator $D: \mathscr{J}^r(\gamma) \times \mathscr{J}^r(\gamma) \rightarrow \mathscr{J}^r(\gamma)$ that satisfies the properties:
	
	%\begin{enumerate}[(i)]
	\noindent \qquad \textup{(i)} Linearity over $\vvmathbb{R}$: $D_X(aV+bW) = aD_XV + bD_XW$ for $a,b \in \vvmathbb{R}$.
	
	\noindent \qquad \textup{(ii)} Product rule: $D_X(fV) = X^\alpha \dfrac{\alpha}{\alpha_1!(\alpha-\alpha_1)!} f^{(\alpha_1)} D_{(\alpha-\alpha_1)}V$ for $f \in C^\infty(I)$.
	
	\noindent \qquad \textup{(iii)} If $V$ is extendable, then for any extension $\tilde{V}$ of $V$ and any $X \in \mathscr{J}^rM$, $D_{X_\lambda} V(t)  = \nabla_X \tilde{V}|_{\gamma(\lambda)}$.
	%\end{enumerate}
\end{proposition}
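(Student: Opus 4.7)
The approach would follow the classical template used to construct the induced covariant derivative along a curve, adapted here to the jet bundle. I would proceed by first pinning down the local coordinate expression that $D_X V$ is forced to take under properties (i)--(iii), then defining $D_X V$ globally by that expression and verifying the three axioms hold, and finally confirming independence of the chart so that the local definitions patch.

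For uniqueness, I would fix $\lambda_0 \in I$ and a coordinate neighborhood $U$ of $\gamma(\lambda_0)$ carrying coordinate basis fields $\partial_\beta$, $1 \le |\beta| \le r$, which are manifestly extendable off $\gamma$. Any $V \in \mathscr{J}^r(\gamma)$ admits the local expansion $V(\lambda) = V^\beta(\lambda)\,\partial_\beta|_{\gamma(\lambda)}$ with $V^\beta \in C^\infty(I)$. Property (iii), applied to an extension of $X$ furnished by a standard partition-of-unity argument, identifies $D_X \partial_\beta|_\gamma$ with $\nabla_X \partial_\beta|_\gamma$ and thereby expresses it through the Christoffel symbols $\Gamma^\gamma_{\alpha\beta}$ of $\nabla$. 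Feeding this into the Leibniz rule (ii) for the product $V^\beta \partial_\beta$ and invoking $\vvmathbb{R}$-linearity (i) then yields a closed form for $D_X V|_{\lambda}$ involving only the components $X^\alpha(\lambda)$, finitely many $\lambda$-derivatives of the $V^\beta$, and the values $\Gamma^\gamma_{\alpha\beta}(\gamma(\lambda))$. A bump-function localization along the lines of lemma \ref{locality} then shows that the resulting value does not depend on which extension of $X$ was chosen, and this fixes $D_X V$ pointwise.

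For existence, I would take that formula as the definition and verify the three axioms in turn. Axiom (i) is manifest from the linear appearance of the $V^\beta$ and their derivatives. Axiom (iii) reduces to a chain-rule check: for an extension $\tilde V$ of $V$, the one-parameter derivatives $d^{|\alpha_1|}V^\beta/d\lambda^{|\alpha_1|}$ reassemble along $\gamma$ into the ambient partials $\partial_{\alpha_1}\tilde V^\beta$ that appear in (\ref{jet_conn_on_vf}) for $\nabla_X \tilde V$. Independence of the chart is inherited from the corresponding invariance of $\nabla$, since every ingredient in the formula transforms correctly under change of coordinates by the construction of the jet connection itself. The main obstacle is the Leibniz rule (ii): one has to match the multinomial bookkeeping $\alpha!/\alpha_1!\alpha_2!$ inherited from the ambient product rule against the one-parameter derivatives along $\gamma$, so that the multi-index contractions on $M$ collapse consistently onto the single transverse parameter without spoiling the balance of combinatorial weights. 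This requires the same sort of index juggling as in the proof of lemma \ref{localjetconn}, but now with chain-rule factors coming from $\gamma$ entering the ledger; once that compatibility is nailed down, the remainder of the argument is bookkeeping.
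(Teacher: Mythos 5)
Your proposal is correct and follows essentially the same route as the paper's proof (which itself adapts Lee's Lemma 4.9): uniqueness by expanding $V=V^\beta(\lambda)\,\partial_\beta|_\gamma$ in a chart and using locality, the product rule (ii), and property (iii) applied to the extendable coordinate basis fields to force the Christoffel-symbol formula for $D_XV$; existence by adopting that formula as the definition in each chart and patching. The only cosmetic difference is that the paper patches charts by appealing to the already-established uniqueness, whereas you argue chart-independence from the covariance of $\nabla$ — both are standard and equivalent here.
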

\begin{proof} (Cf. \cite{lee_riemannian_manif}, Lemma 4.9.)
	First, uniqueness. Suppose $D:\mathscr{J}^r(\gamma) \times \mathscr{J}^r(\gamma) \rightarrow \mathscr{J}^r(\gamma)$ is an operator that satisfies (i),(ii),(iii) and let $\lambda_0 \in I$. Arguing as in the proof of lemma \ref{locality}, we know that $D_XV|_{\lambda_0}$ depends only on the values of $V$ in an interval $(\lambda_0-\varepsilon,\lambda_0+\varepsilon)$, where we extend the interval $I$ on which $\gamma$ is defined beyond its endpoint if necessary. In coordinates $\lambda,x^2,\ldots,x^n$ on $U$ near $\gamma(\lambda_0)$ we can write $V(\lambda)=V^\alpha(\lambda)\partial_\alpha$. Then by (i),(ii),(iii) and the extendability of the basis vector fields $\partial_\alpha$ we have
	\begin{align}\label{eq32}
		D_XV(t_0) &= X^\alpha \frac{\alpha!}{\alpha_1!(\alpha-\alpha_!)!} V^\alpha_{,\alpha_1}(\lambda_0) \nabla_{(\alpha-\alpha_1)}\partial_\alpha \nonumber \\
		&= X^\alpha \frac{\alpha!}{\alpha_1!(\alpha-\alpha_!)!} \left( V^\beta_{,\alpha_1}(\lambda_0)\Gamma^\gamma_{\beta(\alpha-\alpha_1)}(\gamma(t_0))X^\alpha_r \right)\partial_\gamma
	\end{align}
	Hence, $D_t$, if it exists, is uniquely defined. The formula (\ref{eq32}) can be used to define $D_t$ in the chart $U$. If more than one chart is needed to cover $\gamma(I)$, this formula defines $D_t$ in each chart and by uniqueness the definitions must agree on the overlaps.  
\end{proof}

\subsection{Existence of Jet Geodesics, or Diffeomorphisms with Inertia}\label{existence_of_jet_geodesics}

The concept of covariant differentiation along a jet stream supplies us with an invariant notion of acceleration. If $X$ is the $r$-th order infinitesimal generator of a jet stream along the curve $\gamma$, we define the acceleration of $\gamma$ to be the higher-order vector field $D_{X_\lambda}X_\lambda \in \mathscr{J}^r(\gamma)$. The curve $\gamma$ is said to be a jet geodesic with respect to $\nabla$ if its acceleration vanishes, $D_{X_\lambda}X_\lambda = 0$ identically on $\gamma$. Before turning to the problem of existence, we solve for the jet geodesics of the trivial jet connection in Euclidean space.

\begin{proposition}\label{jeteuclid}
	Consider Euclidean space $\vvmathbb{R}^n$ equipped with the trivial jet connection $\nabla^0$. The jet geodesics of $\nabla^0$ correspond to motion along integral curves of the mapping
	\begin{equation}\label{jet_euclid_ansatz}
		\theta^{(\mathbf{x})}: t \mapsto \theta^{(\mathbf{x})}(t) = \mathbf{x} + t \mathbf{v}^{(1)} + \frac{1}{2} 
		t^2 \mathbf{v}^{(2)} + \cdots +
		\frac{1}{r!} t^r \mathbf{v}^{(r)},
	\end{equation}
	for constant vectors $\mathbf{v}^{(s)} \in \vvmathbb{R}^n$, $s=1,\ldots,r$.
\end{proposition}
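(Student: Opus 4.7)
The plan is to exploit the vanishing of the Christoffel symbols of $\nabla^0$ in the standard Cartesian coordinates on $\vvmathbb{R}^n$, which reduces the jet geodesic condition $D_{X_\lambda}X_\lambda=0$ to a constancy statement about the Cartesian components of the generator, after which the Ansatz (\ref{jet_euclid_ansatz}) admits direct verification.

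First, I would unpack the covariant derivative. In Cartesian coordinates all $\Gamma^\gamma_{\alpha\beta}\equiv 0$, so the formula (\ref{jet_conn_on_vf}) collapses (the implicit identity on the trivial multi-index slot reproducing the action $X^\alpha\partial_\alpha Y^\gamma$) to $\nabla^0_X Y = X^\alpha \partial_\alpha Y^\gamma \cdot \partial_\gamma$. Transported to a jet stream along a curve, the operator $D_{X_\lambda}$ becomes the ordinary $\lambda$-derivative of the Cartesian components; the jet geodesic equation thus asserts that each Cartesian component of $X_\lambda$ is independent of $\lambda$ along $\gamma$.

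Second, to exhibit the Ansatz as a jet geodesic, for fixed $\mathbf{v}^{(1)}, \ldots, \mathbf{v}^{(r)}$ I would consider the translation flow $\phi_\mu(\mathbf{y}) = \mathbf{y} + \mu\mathbf{v}^{(1)} + \frac{\mu^2}{2!}\mathbf{v}^{(2)} + \cdots + \frac{\mu^r}{r!}\mathbf{v}^{(r)}$, whose orbit through $\mathbf{x}$ is precisely $\theta^{(\mathbf{x})}$. Because $\phi_\mu$ adds to $\mathbf{y}$ a vector depending only on $\mu$, when one Taylor-expands $f \circ \phi_\mu$ around $\mu=0$ at an arbitrary base point and reads off the coefficients against the basis $\{\partial_\alpha\}$, the resulting Cartesian components are polynomials in the $\mathbf{v}^{(s)}$ wholly independent of $\mathbf{y}$. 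Hence the $r$-th order infinitesimal generator $X$ is a constant section of $J^r\vvmathbb{R}^n$; its restriction along $\gamma$ has $\lambda$-constant Cartesian components, so $D_{X_\lambda}X_\lambda=0$ and $\gamma(\lambda)=\theta^{(\mathbf{x})}(\lambda)$ is a jet geodesic.

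For the converse, I would argue that an arbitrary jet geodesic carries along itself a generator $X_\lambda$ whose Cartesian components are $\lambda$-constants, and extract from these constants the Taylor coefficients of $\gamma$: iteratively reading off the multi-indices of order $1, 2, \ldots, r$ one recovers $\gamma^{(s)}(\lambda) = \mathbf{v}^{(s)}$ for $s \le r$ and $\gamma^{(r+1)}(\lambda) \equiv 0$, which integrates to (\ref{jet_euclid_ansatz}) with $\mathbf{v}^{(s)}=\gamma^{(s)}(0)$. The main obstacle is precisely this last step: the components $X^\alpha_\lambda$ are expressed as Fa\`a di Bruno-type polynomials in the derivatives $\gamma^{(s)}(\lambda)$, and one must verify that requiring constancy at every multi-index decouples recursively into the single polynomial constraint on $\gamma$ without producing hidden dependencies among the lower-order derivatives.
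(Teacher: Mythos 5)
Your first two steps and your forward direction essentially reproduce the paper's own argument: the paper likewise computes the generator of the translation flow $\phi_\mu(\mathbf{y})=\mathbf{y}+\mu\mathbf{v}^{(1)}+\cdots+\frac{1}{r!}\mu^r\mathbf{v}^{(r)}$, observes in equation (\ref{jet_stream_expansion}) that its components are independent of the base point, and concludes $D^0_{X_\lambda}X_\lambda=0$ because the trivial connection merely differentiates components. The genuine gap is in your converse, and it originates in your second step. With $\Gamma\equiv 0$, the covariant derivative along the jet stream is \emph{not} the ordinary $\lambda$-derivative of the Cartesian components: by your own step 1, $D^0_{X_\lambda}V=(X_\lambda V^\beta)\partial_\beta$, where $X_\lambda=\partial_\lambda+\partial_{\lambda\lambda}+\cdots+\partial_{\lambda\cdots\lambda}$ acts as the full higher-order operator $\sum_{s=1}^r\frac{1}{s!}\frac{d^s}{d\lambda^s}$. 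The geodesic equation therefore reads $\sum_{s=1}^r\frac{1}{s!}\frac{d^s}{d\lambda^s}X^\beta_\lambda=0$ for each $\beta$, and this ODE has non-constant solutions: for $r=2$ the characteristic polynomial $\mu+\frac{1}{2}\mu^2$ has the root $\mu=-2$, so $X^\beta_\lambda=C_1+C_2e^{-2\lambda}$ lies in its kernel. Constancy of components is thus sufficient for the geodesic equation but is not ``asserted'' by it, so your converse cannot begin from constancy. Worse, the extraction step you then propose over-constrains: if the components are the Fa\`a di Bruno polynomials in $\gamma^{(s)}(\lambda)$ obtained by pushing $X_\lambda$ forward along $\gamma$, then the top-order component is (up to factors) $(\gamma')^{\otimes r}$, and its constancy forces $\gamma'$ itself to be constant, i.e.\ straight lines only --- contradicting the very statement to be proved, which admits $\mathbf{v}^{(s)}\ne 0$ for $s\ge 2$. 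The ``hidden dependencies'' you flag at the end are therefore not a technical nuisance but a fatal inconsistency between the flow-based generator you use in the forward direction (re-based at every point, hence constant) and the pushforward generator you invoke in the converse (whose components along the Ansatz curve are in fact $\lambda$-dependent).

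The paper's converse takes a different and safer route that you should adopt: given any solution of the jet geodesic equation issuing from $\mathbf{x}$ at $\lambda=0$, choose $\mathbf{v}^{(1)},\ldots,\mathbf{v}^{(r)}$ so that the Ansatz (\ref{jet_euclid_ansatz}) matches that solution to order $r$ at $\lambda=0$; since the geodesic equation is an ordinary differential equation of order $r$ and the Ansatz curve is already known (from the forward direction) to solve it with the same initial datum, uniqueness of solutions forces the two to coincide on their common domain. This bypasses entirely the need to show that the geodesic equation implies constancy of components, and it requires no inversion of the Fa\`a di Bruno relations.
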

\begin{proof}
	Let the flow be given by the Ansatz
	\begin{equation}\label{eq36}
		\theta^{(\mathbf{x})}(t) = \mathbf{x} + t \mathbf{v}^{(1)} + \frac{1}{2} 
		t^2 \mathbf{v}^{(2)} + \cdots +
		\frac{1}{r!} t^r \mathbf{v}^{(r)},
	\end{equation}
	for all points $\mathbf{x} \in \vvmathbb{R}^n$ and some vectors $\mathbf{v}^{(s)} \in \vvmathbb{R}^n$. The curve $\gamma$ itself will be generated as the flow emanating from a given point: $\gamma(t)=\theta^{(\mathbf{x})}(t)$. Given an arbitrary smooth function $f$ on $\vvmathbb{R}^n$ let $f_\lambda = f(\theta^{(\mathbf{x})}(\lambda))$. But from the Ansatz it is evident that
	\begin{align}\label{jet_stream_expansion}
	\gamma_* \frac{\partial}{\partial\lambda} &= \mathbf{v}^{(1)} \partial_\mathbf{x} =: X_1 \nonumber \\
	\gamma_* \frac{\partial^2}{\partial\lambda^2} &= \mathbf{v}^{(2)} \partial_\mathbf{x} + \mathbf{v}^{(1)} \otimes \mathbf{v}^{(1)} \partial^2_\mathbf{x} =: X_2 \nonumber \\
	\gamma_* \frac{\partial^3}{\partial \lambda^3} &= \mathbf{v}^{(3)} \partial_\mathbf{x} + 3 \mathbf{v}^{(2)} \otimes \mathbf{v}^{(1)} \partial^2_\mathbf{x} + \mathbf{v}^{(1)} \otimes \mathbf{v}^{(1)} \otimes \mathbf{v}^{(1)} \partial^3_\mathbf{x} =: X_3	
	\end{align}etc. In other words, if we are at $\lambda_0$ along the integral curve starting at $\mathbf{x}$ we have 
	\begin{equation}
	f(\theta^{(\mathbf{x})}(\lambda_0+\lambda)) = (1 + \lambda X_1 + \cdots + \frac{1}{r!} \lambda^r X_r) f \mid_{\theta^{(\mathbf{x})}(\lambda_0)} + o(\lambda^r).
	\end{equation}
	If we set
	\begin{equation}
		X_\lambda = \partial_\lambda + \frac{1}{2}\frac{\partial^2}{\partial \lambda^2} + \cdots + \frac{1}{r!}\frac{\partial^r}{\partial \lambda^r} = \partial_\lambda + \partial_{\lambda\lambda} + \cdots + \partial_{\lambda\cdots\lambda ~(r ~\mathrm{times})}.
	\end{equation}
	then the infinitesimal change in $f$ is given up to $r$-th order by
	\begin{equation}\label{euclidean_flow}
	[f(\theta^{(\mathbf{x})}(\lambda_0+\lambda))-f(\theta^{(\mathbf{x})}(\lambda_0))](\gamma_* X_\lambda) = ( \gamma^* d^r f)(X_\lambda).
	\end{equation}
Equation (\ref{euclidean_flow}) says that, at any location along the flow which putatively is to correspond to a solution of the jet geodesic equation, the action of $X_\lambda$ is given by the same differential operator without explicit $\lambda_0$ dependence. Clearly then $D^0_{X_\lambda} X_\lambda = 0$. Therefore, the orbits of the flow (\ref{eq36}) are the jet geodesics in Euclidean $\vvmathbb{R}^n$.

Conversely, suppose we have a solution to the jet geodesic equation for $\nabla^0$ starting from $\mathbf{x}$ at $\lambda=0$. Then we can select $\mathbf{v}^{(1)},\ldots,\mathbf{v}^{(r)}$ so that equation (\ref{jet_euclid_ansatz}) matches the integral curve up to $r$-th order at $\lambda=0$, but since the jet geodesic equation itself is an ordinary differential equation of $r$-th order, by uniqueness this form of the solution continues to hold for all future time.
\end{proof}

Reflect for a moment on what this means. Clearly, for the case $r=1$ one recovers straight-line motion, as one should. The implication of proposition \ref{jeteuclid}, however, is that freely moving bodies in empty space acquire inertia in a higher sense. The first term, $\mathbf{v}^{(1)}$, would be just the momentum in the ordinary sense of a body of unit mass. At short enough times, this lowest-order term dominates, but at longer times the effect of the $\mathbf{v}^{(2,\ldots,r)}$ makes itself felt. The higher components of momentum are somehow endowed with the virtue of being able to produce a constant acceleration, or yet higher derivatives with respect to time. Their constancy (independence from $\lambda$) reflects a conservation law for higher-order momentum in free space.

Evidently, one could envision a generalization of the concept of impressed force. Just as a force in the Newtonian sense gives rise to an impulse, i.e., an increment or decrement of momentum during the time over which the force operates, here too a higher force must produce a higher impulse, or change in the respective higher component of momentum. But we already have at our disposal Einstein's profound notion of motion along a geodesic in curved space as the appropriate means by which to represent impressed forces in a covariant manner. Therefore, we have every right to expect that the same applies in a post-Einsteinian general theory of relativity when the geodesic is replaced with the jet stream, only now the higher infinitesimals ought to manifest themselves as producing forces beyond those currently known in Einstein's theory. To justify this viewpoint will take considerable work, see our remarks in {\S}\ref{discussion} below. For the time being, we have first to prove the existence of jet geodesics for a general jet connection and to investigate the associated phenomena more closely. 

Now, the prolongations of an ordinary 1-vector field serve merely to implement the flow over a finite interval of time via the exponential function (which will be convergent in the analytic case). With higher tangency, we are faced with a novel phenomenon, to which we have given the name of a diffeomorphism with inertia, of which the simple-minded investigation in {\S}\ref{phoronomy} above permits some description at very short times. We have assembled by now a formalism sufficient for us to derive the diffeomorphism flow with inertia up to a finite interval of time.

\begin{theorem}[Existence and Uniqueness of Jet Geodesics]\label{existence_theorem_for_jet_geodesics}
	Let $M$ be a manifold with jet connection $\nabla$. For any $p \in M$ and any initial datum $\varv \in J^r_pM$ which is admissible in the sense that there exists an analytic Cauchy hypersurface $\Sigma_0$ passing through $p$ having the property of being non-characteristic for $\varv$ there, then there exists a jet geodesic $\gamma: I \rightarrow M$ defined on a half-open interval $I \subset \vvmathbb{R}$ and satisfying $\gamma(t_0)=p$, $X_r(t_0)=\varv$ for $t_0 \in I$. Any two such jet geodesics agree on their common domain.
\end{theorem}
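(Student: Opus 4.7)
The plan is to recast the jet geodesic equation $D_{X_\lambda}X_\lambda = 0$ as a quasilinear system of partial differential equations of order $r$ for the components of an $r$-th order vector field defined in a full neighborhood of $p$, and then invoke the Cauchy-Kowalevski theorem in the analytic category. The crucial point is that, unlike an ordinary first-order geodesic equation which reduces to an ODE system along $\gamma$, the present equation genuinely ``looks into'' transverse directions (the jet stream carries the behavior of nearby space with it); hence an auxiliary Cauchy hypersurface is indispensable for well-posedness, which is what the admissibility hypothesis provides.

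First, I would pick analytic local coordinates $x^1,\ldots,x^n$ around $p$ so that $\Sigma_0 = \{x^1 = 0\}$ and $p$ is the origin, arranging the Christoffel symbols $\Gamma^\gamma_{\alpha\beta}$ to be real-analytic on the chart. Writing the unknown jet stream generator as $X = X^\alpha(x)\partial_\alpha$ for $1\le |\alpha|\le r$, and substituting into (\ref{jet_conn_on_vf}), the equation $\nabla_X X = 0$ becomes, for each output multi-index $\gamma$,
\begin{equation*}
X^\alpha\,\frac{\alpha!}{\alpha_1!\alpha_2!}\bigl(\partial_{\alpha_1}X^\beta\bigr)\,\Gamma^{\gamma}_{\alpha_2\beta} \;+\; X^\alpha\,\partial_\alpha X^\gamma \;=\; 0,
\end{equation*}
a nonlinear system of order $r$ in the components $X^\gamma(x)$. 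The top-order contributions arise from $\alpha_1 = \alpha$ with $|\alpha|=r$, contracted against the first-order pieces of $X^\alpha$ itself.

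Second, I would set up Cauchy data on $\Sigma_0$. Starting from $\varv$ at $p$, I would extend $\varv$ to an analytic section of $J^rM$ over $\Sigma_0$ and then prescribe the transverse derivatives $\partial_1^k X^\gamma|_{\Sigma_0}$, $0\le k\le r-1$, consistently with $\varv$ at $p$. The admissibility assumption (that $\Sigma_0$ be non-characteristic for $\varv$) translates into the invertibility, on $\Sigma_0$, of the principal symbol of the system when evaluated on the conormal $dx^1$; hence the top-order transverse derivatives $\partial_1^r X^\gamma$ can be solved for algebraically in terms of lower-order transverse and all tangential derivatives, putting the system in Cauchy-Kowalevski normal form. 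Applying the analytic Cauchy-Kowalevski theorem then yields a unique real-analytic solution $X$ on a one-sided neighborhood $U^+$ of $p$ in $M$. The sought jet geodesic $\gamma$ is obtained by integrating the first-order part of $X$---equivalently, the derived 1-vector field in cotangent space constructed in proposition \ref{existence_local_flow}---starting from $p$; this yields a flow defined on a half-open interval $I$, whence the half-open nature of the domain in the theorem statement. Uniqueness of $\gamma$ on common subintervals follows from the analytic uniqueness portion of Cauchy-Kowalevski, coupled with the locality lemma \ref{locality}.

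The main obstacle will be establishing a precise equivalence between the coordinate-free admissibility hypothesis (existence of an analytic non-characteristic Cauchy hypersurface through $p$ for $\varv$) and the analytic invertibility of the principal symbol on $\Sigma_0$. This demands a careful accounting of which terms in the displayed system are genuinely of top differential order, keeping track of the multinomial weights $\alpha!/(\alpha_1!\alpha_2!)$ produced by the higher-order Leibniz rule, and verifying covariance of the condition under changes of adapted coordinates using the transformation law (\ref{vector_transf_law}). A secondary subtlety is that Cauchy-Kowalevski produces solutions on only one side of $\Sigma_0$ in general, which is precisely why the theorem promises only a half-open interval $I \subset \vvmathbb{R}$ for the flow parameter rather than an open one symmetric about $t_0$.
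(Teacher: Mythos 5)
Your first half coincides with the paper's own strategy: adapted analytic coordinates $(\lambda,x_2,\ldots,x_n)$ with $\Sigma_0=\{\lambda=0\}$, the jet geodesic equation written out as the quasilinear system (\ref{jet_geodesic_pde}), the non-characteristic hypothesis, and an appeal to the Cauchy-Kovalevskaia theorem to extend $\varv$ to an analytic solution $X$ of $\nabla_XX=0$ on a one-sided neighborhood of $p$. (One slip there: the top-order derivatives $\partial_\alpha X^\gamma$, $|\alpha|=r$, are contracted against the \emph{top-order} components of $X$, not its ``first-order pieces''; that is exactly why the non-characteristic condition reads $X^{(r,0,\ldots,0)}\ne 0$ on $\Sigma_0$.) The genuine gap lies in your second half. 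You produce $\gamma$ by ``integrating the first-order part of $X$,'' claiming this is equivalent to the derived $1$-vector field of proposition \ref{existence_local_flow}. Neither claim holds: admissibility constrains only the top-order transverse component, so the first-order part of $X$ may vanish identically (take $X=\Delta=\partial_{11}+\cdots+\partial_{nn}$, the example treated in the remark following the theorem), in which case your curve is constant; and the derived field $V$ lives on $T^*M$, depends on \emph{all} orders of $X$ through the total symbol $P_\varv(\xi)$, and its integral curves project to curves in $M$ only after a choice of covector, so it is not the first-order part of $X$. This hole is precisely what the bulk of the paper's proof fills: a lengthy normal-form reduction (successive polynomial changes of coordinate putting the top-order part $X_r$ into symmetric-power form $\tilde{\varv}^{\odot r}$, reparametrization of $\lambda$, and the Frobenius theorem to integrate $\mathrm{ker}~d\bar{\lambda}$ into a foliation $\Sigma_\lambda$), after which $X$ takes the form $\partial_{\lambda\cdots\lambda} + \sigma_{r-1}\partial_{\lambda\cdots\lambda}+\cdots+\sigma_1\partial_\lambda$ and the jet geodesic can be identified as the $\lambda$-coordinate curve.

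The second gap is uniqueness. Cauchy-Kovalevskaia uniqueness presupposes that the full Cauchy data on $\Sigma_0$ are fixed, whereas the theorem prescribes only the value $\varv$ at the single point $p$; the paper even remarks explicitly that the jet stream depends on the whole datum on $\Sigma_0$ and not just on its value at $p$, so two admissible analytic extensions of $\varv$ over $\Sigma_0$ yield different solutions $X$, and your argument gives no reason why the resulting curves through $p$ coincide. The paper's uniqueness does not come from the PDE at all: after the normal-form reduction, $D_{X_\lambda}X_\lambda=0$ restricted to the $\lambda$-curve is an ordinary differential equation of order $r$ whose initial conditions are exactly the components of $\varv$ at $p$, and the existence-uniqueness theorem for ODEs closes the argument. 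Without some reduction of this kind to an ODE along the curve, neither the half-open interval of existence nor the uniqueness clause of the theorem is actually reached by your proposal.
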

\begin{proof} 
	1) By Lee \cite{lee_smooth_manif}, Theorem 5.8 (local slice criterion for embedded submanifolds), we can find coordinates $(\lambda,x_2,\ldots,x_n)$ near $p$ such that $\Sigma_0$ is given by the condition $\lambda=0$. The jet geodesic equation $\nabla_X X=0$ defines a non-linear partial differential equation for which the Cauchy problem is well defined by virtue of the hypothesis on the initial datum, namely,
	\begin{equation}\label{jet_geodesic_pde}
		\frac{\alpha!}{\alpha_1!\alpha_2!} X^\alpha X^\beta_{,\alpha_1} \Gamma^\lambda_{\alpha_2,\beta} \partial_\lambda = 0.
	\end{equation}
	Equation (\ref{jet_geodesic_pde}) forms a quasilinear system in the $N_1+\cdots+N_r$ variables $X^\alpha$, where $N_s = \binom{n+s-1}{s}$, $s=1,\ldots,r$. The condition for $\Sigma_0$ to be non-characteristic reduces in our chosen coordinate system to $X^{(r,0,\ldots,0)} \ne 0$ in a neighborhood of $p$ on $\Sigma_0$. Hence, by the Cauchy-Kovalevskaia theorem \cite{john_pde} there will exist a local solution on some neighborhood of $p$ that extends the components of $\varv$ analytically away from $\Sigma_0$ to a vector field $X$ defined on $\Sigma_\lambda$ for $0 \le \lambda < \varepsilon$. 
	
	2) An heuristic argument will show why, given this $X$, the solution ought to exist and how the jet stream is generated. Consider a family of Gaussian test functions $\varphi_\varkappa(0)$ defined on $\Sigma$ with centroid at $p$, converging to a delta function at $p$ in the limit as $\varkappa$ tends to infinity. We may then produce the development in time via the relation
	\begin{equation}
		\varphi_\varkappa(\lambda) = \exp X(\lambda) \varphi_\varkappa(0),
	\end{equation}
	which is to be regarded as a power series in $\lambda$ uniformly convergent in an interval containing the origin. The first thing to note is that by construction $\nabla_X \varphi_\varkappa(\lambda) = 0$ on its domain of definition. Define the curve $\gamma(\lambda)$ from the centroids of $\varphi_\varkappa(\lambda)$ for given $\lambda$ as $\varkappa \rightarrow \infty$. Then $\varphi_\kappa(\lambda)X$ tends to $X_{\gamma(\lambda)}$ and $\nabla_X X=$ to $D_\lambda X=0$ on $\gamma$. The argument fails to be rigorous, of course, because it might not be possible to bound the domain of existence of the local solution uniformly away from zero as $\varkappa$ diverges.
	
	Before proceeding to the derivation of the jet stream, it will be expedient first to put $X$ into a canonical form. Consider now the first non-trivial case $r=2$. The hypothesis on the initial datum implies that the second-order part of $X$ assumes the form
	\begin{equation}\label{pre_canonical_two}
		X_2 = \varv \odot \varv + \varv \odot w + u_i \odot u_j,
	\end{equation}
	where $\varv$ is a vector with non-zero component in the $\lambda$-direction and $w, u_i$ are orthogonal to $\varv$. Without loss of generality, by projecting along $\varv$ we may suppose instead that $w, u_i$ lie in $T\Sigma_0$ (which is transverse to $\varv$ by assumption). Then if we replace $\varv$ by $\tilde{\varv} = \varv + \dfrac{1}{2} w$ we find
	\begin{equation}\label{canonical_two}
		X_2 = \tilde{\varv}\odot\tilde{\varv} + u_i \odot u_j;
	\end{equation}
	i.e., we can eliminate the $\varv\otimes w$ term. For the inductive step, consider the case of arbitrary $r \ge 2$. Now we have, as in equation (\ref{pre_canonical_two}), the highest-order part of $X$ may be written in the form
	\begin{equation}
		X_r = \overbrace{\varv \odot \cdots \odot \varv}^{r ~\mathrm{times}} + \overbrace{\varv \odot \cdots \odot \varv}^{r-1 ~\mathrm{times}} \odot w_0 +
		\overbrace{\varv \odot \cdots \odot \varv}^{r-2 ~\mathrm{times}} \odot w_1 \otimes w_2 + \cdots + u_{i_1} \odot \cdots \odot u_{i_r},
	\end{equation}
	where as before $\varv$ has a non-zero component in the $\lambda$-direction and $w_{0,1,2,\dots}$ and the $u_{i_s}$ are initially orthogonal to $\varv$, but may without loss of generality be projected along $\varv$ to lie in $T\Sigma_0$. Set $\tilde{\varv} = \varv + \dfrac{1}{r} w_0$ so as to obtain
	\begin{align}
		X_r &= \overbrace{\tilde{\varv} \odot \cdots \odot \tilde{\varv}}^{r ~\mathrm{times}} + \overbrace{\tilde{\varv} \odot \cdots \odot \tilde{\varv}}^{r-2 ~\mathrm{times}} \odot w_1 \odot w_2 + \cdots +  u_{i_1} \odot \cdots \odot u_{i_r} \nonumber \\
		&= \tilde{\varv} \odot \left( \overbrace{\tilde{\varv} \odot \cdots \otimes \tilde{\varv}}^{r-1 ~\mathrm{times}} + \overbrace{\tilde{\varv} \odot \cdots \odot \tilde{\varv}}^{r-3 ~\mathrm{times}} \otimes w_1 \odot w_2 + \cdots \right) +  u_{i_1} \odot \cdots \odot u_{i_r}
	\end{align}
	Hence, we have reduced the problem to the $r-1$ case. For let us look closer at equation (\ref{canonical_two}). Let $u_0 = u_1 + \cdots + u_{n-1}$ and set $u_i = u_i^\prime + \dfrac{1}{n-1}u_0 + A \tilde{\varv}$, where the coefficient $A$ is to be determined in a moment. If we substitute into equation (\ref{canonical_two}), we get (using $u_1^\prime + \cdots u_n^\prime = - A \tilde{\varv}$)
	\begin{align}
		X_2 &= (1-A^2) \tilde{\varv} \odot \tilde{\varv} + \frac{2A}{n-1} \tilde{\varv} \odot u_0 + \frac{1}{(n-1)^2} u_0 \odot u_0 + u_i^\prime \odot u_j^\prime \nonumber \\
		&= \left( 1+A^2 \left( \frac{1}{(n-1)^2}-1 \right) \right) \tilde{\varv} \odot \tilde{\varv} + \frac{2A}{n-1} \tilde{\varv} \odot u_0 + \frac{1}{(n-1)^2} u_0 \odot u_0 + u_i^{\prime\prime} \odot u_j^{\prime\prime} \nonumber \\
		&= \sqrt{1+A^{\prime 2}} \tilde{\varv} \odot \sqrt{1+A^{\prime 2}} \tilde{\varv} + \frac{1}{(n-1)^2} u_0 \odot u_0 + \frac{2A}{(n-1)\sqrt{1+A^{\prime 2}}} \sqrt{1+A^{\prime 2}} \tilde{\varv} \odot u_0 + u_i^{\prime\prime} \odot u_j^{\prime\prime} \nonumber \\
		&= \left( \sqrt{1+A^{\prime 2}} \tilde{\varv} + \frac{1}{n-1} u_0 \right) \odot
		\left( \sqrt{1+A^{\prime 2}} \tilde{\varv} + \frac{1}{n-1} u_0 \right) \nonumber \\
		&=: \bar{\varv} \odot \bar{\varv}
	\end{align}
	if $u_i^{\prime\prime} = u_i^\prime + \dfrac{A}{n-1} \tilde{\varv}$ and $A$ is chosen such that $\dfrac{A}{\sqrt{1+A^{\prime 2}}} = 1$, a solution for which always exists with $1 \le A < \dfrac{1}{\sqrt{2}}$.
	
	For the inductive step, suppose that when $s=2,\ldots,r-1$ it is possible to accomplish the following reduction:
	\begin{align}
		X_2 &= \overbrace{\varv \odot \cdots \odot \varv}^{s ~\mathrm{times}} + \overbrace{\varv \odot \cdots \odot \varv}^{s-1 ~\mathrm{times}} \odot w_0 +
		\overbrace{\varv \odot \cdots \odot \varv}^{s-2 ~\mathrm{times}} \odot w_1 \otimes w_2 + \cdots + u_{i_1} \odot \cdots \odot u_{i_s} \nonumber \\
		&= \overbrace{\bar{\varv} \odot \cdots \bar{\varv}}^{s ~\mathrm{times}} + u^\prime_{i_1} \odot \cdots \odot u^\prime_{i_s}.
	\end{align}
	Suppose we are given an expression of this form with $s=r$. Then undertake the following series of transformations (for the sake of simplicity, we suppress subscripts on the $w$ vectors):
	\begin{align}
		X_2 &= \overbrace{\varv \odot \cdots \odot \varv}^{r ~\mathrm{times}} + \overbrace{\varv \odot \cdots \odot \varv}^{r-1 ~\mathrm{times}} \odot w_0 +
		\overbrace{\varv \odot \cdots \odot \varv}^{r-2 ~\mathrm{times}} \odot w_1 \otimes w_2 + \cdots + w_{i_1} \odot \cdots \odot w_{i_r} \nonumber \\
		&= \varv \odot \left( \overbrace{\varv \odot \cdots \odot \varv}^{r-1 ~\mathrm{times}} + \overbrace{\varv \odot \cdots \odot \varv}^{r-2 ~\mathrm{times}} \odot w_0 + \cdots +
		\overbrace{w \otimes \cdots \odot w}^{r-1 ~\mathrm{times}} \right) +  w_{i_1} \odot \cdots \odot w_{i_r} \nonumber \\
		&= \varv \odot \overbrace{\bar{\varv} \odot \cdots \odot \bar{\varv}}^{r-1 ~\mathrm{times}} + \varv \odot \overbrace{w \odot \cdots \odot w}^{r-1 ~\mathrm{times}} + \overbrace{w \odot \cdots \odot w}^{r ~\mathrm{times}} \nonumber \\
		&= \left( \bar{\varv}+\bar{w} \right) \odot \overbrace{\bar{\varv} \odot \cdots \odot \bar{\varv}}^{r-1 ~\mathrm{times}} + \left( \bar{\varv} + \bar{w} \right) \odot  \overbrace{w \odot \cdots \odot w}^{r-1 ~\mathrm{times}} + \overbrace{w \odot \cdots \odot w}^{r ~\mathrm{times}} \nonumber \\
		&= \overbrace{\bar{\varv} \odot \cdots \odot \bar{\varv}}^{r ~\mathrm{times}} +  \overbrace{\bar{\bar{\varv}} \odot \cdots \odot \bar{\bar{\varv}}}^{r-1 ~\mathrm{times}} \odot w + \overbrace{w \odot \cdots \odot w}^{r ~\mathrm{times}} \nonumber \\
		&= \left( \varv_0 + \varv_1 \right) \odot \left( \varv_2 + \Delta w \right)^{\odot (r-1)} + \left( \varv_0 - \varv_1 \right) \odot \left( \varv_2 - \Delta w \right)^{\odot (r-1)} + \overbrace{w \odot \cdots \odot w}^{r ~\mathrm{times}}.
	\end{align}
	By a further induction using the fact that the tangent space decomposes as $\mathrm{\hat{\varv}}\oplus T\Sigma_0$, it is not too hard to show that expressions of this form can be reduced to
	\begin{equation}\label{reduced_prod_form}
		X_2 = \tilde{\varv}_1 \odot {\tilde{\varv}_2}^{\odot (r-1)} = \left( \varv + w \right) \left( \varv - w \right)^{\odot(r-1)},
	\end{equation}
	dropping any terms of the form $\overbrace{w \odot \cdots \odot w}^{r ~\mathrm{times}}$ as irrelevant for the time being. Consider now a linear change of coordinate that sends these two vectors to the following:
	\begin{align}
		\varv &\mapsto \varv^\prime = \xi( \delta \varv + \alpha w ) \\
		w &\mapsto w^\prime = \gamma + \beta w.
	\end{align}
	Then equation (\ref{reduced_prod_form}) becomes
	\begin{equation}
		X_2 = \xi \left((\delta + \gamma) \varv + (\alpha+\beta) w \right) \left( (\delta - \gamma) \varv + (\alpha - \beta) w \right)^{\odot(r-1)}.
	\end{equation}
	We want $\xi(\delta+\gamma)=\delta-\gamma$ and $\xi(\alpha+\beta)=\alpha-\beta$, which may readily be arranged (for instance, take $\delta=\alpha=1$, $\beta=\gamma=\frac{1}{2}$, $\xi=\frac{2}{3}$). After a rescaling we end up with
	\begin{equation}
		X_2 = \tilde{\varv}^{\odot r} +  \overbrace{w \odot \cdots \odot w}^{r ~\mathrm{times}},
	\end{equation}
	thus completing the inductive step. We conclude that we may put $X$ into the overall form,
	\begin{equation}
		X = \sigma \partial_{\lambda \cdots \lambda ~(r ~\mathrm{times})} + X^{(r)}_\perp + X^{(r-1)} + \cdots X^{(1)}.
	\end{equation}
	Here, the coefficient $\sigma$ may of course have any spatial dependence. But for each value of the transverse coordinates $(0,x_2,\ldots,x_n)$ on $\Sigma_0$ we have a first-order ordinary differential equation to solve, namely,
	\begin{equation}
		\left( \frac{d\bar{\lambda}}{d\lambda} \right)^r = \sigma
	\end{equation}
	in order to set the leading coefficient identically equal to one with respect to the coordinates $(\bar{\lambda},x_2,\ldots,x_n)$.
	
	All that remains is to apply a similar procedure to the subleading terms in $X$. From the vector transformation law given by equation (\ref{vector_transf_law}), it is evident that upon another change of coordinate of the form
	$\bar{\lambda} = \lambda + c_{r-1} \lambda^2$ with $c_{r-1}>0$ sufficiently large, we may ensure that $X^{(r-1,0,\ldots,0)}>0$. Then, applying what has already been shown to the $X^{(r-1)}$ part we get $X^{(r-1)}=\sigma_{r-1} \partial_{\lambda \cdots \lambda ~(r-1 ~\mathrm{times})} + X^{(r-1)}_\perp$. Continuing in this fashion (with a change of time coordinate of the form $\bar{\lambda}=\lambda+c_{r-2}\lambda^3$ in the next step), we therefore have that $X$ may be represented in the format,
	\begin{equation}
		X = \partial_{\lambda \cdots \lambda ~(r ~\mathrm{times})} + X^{(r)}_\perp +
		\varv_{r-1}^{\odot (r-1)} + X^{(r-1)}_\perp + \cdots +
		\varv_1 + X^{(1)}_\perp,
	\end{equation}
	where 
	\begin{equation}
		\varv_s = \partial_\lambda + w_s, \qquad s=1,\ldots,r-1.
	\end{equation}
	The same trick as above yields,
	\begin{equation}
		X = \tilde{\varv}_r^{\odot r} + X^{(r)}_\perp +
		\tilde{\varv}_{r-1}^{\odot (r-1)} + X^{(r-1)}_\perp + \varv_{r-2}^{\odot (r-2)} + X^{(r-2))}_\perp \cdots +
		\varv_1 + X^{(1)}_\perp.
	\end{equation}
	Transforming the time coordinate once again gives us
	\begin{equation}
		X = \partial_{\lambda \cdots \lambda ~(r ~\mathrm{times})} + X^{(r)}_\perp + \sigma_{r-1} \partial_{\lambda \cdots \lambda ~(r-1 ~\mathrm{times})}  + X^{(r-1)}_\perp + \varv_{r-2}^{\odot (r-2)} + X^{(r-2))}_\perp \cdots +
		\varv_1 + X^{(1)}_\perp.
	\end{equation}
	A successive series of like transformations on the lower order terms leads eventually to our final canonical expression for $X$, viz.,
	\begin{align}	
		X &= \partial_{\lambda \cdots \lambda ~(r ~\mathrm{times})} + X^{(r)}_\perp + \sigma_{r-1} \partial_{\lambda \cdots \lambda ~(r-1 ~\mathrm{times})}  + X^{(r-1)}_\perp + \cdots + \sigma_1 \partial_\lambda + X^{(1)}_\perp \nonumber \\
		&= \partial_{\lambda \cdots \lambda ~(r ~\mathrm{times})} + \sigma_{r-1} \partial_{\lambda \cdots \lambda ~(r-1 ~\mathrm{times})} + \cdots + \sigma_1 \partial_\lambda + X_\perp.
	\end{align}
	We could exploit our remaining freedom in the choice of coordinates to impose the condition $X^{(1)}_\perp=0$.
	
	The linear change of coordinate inside each tangent space that puts $X$ into canonical form defines a Jacobian $\Phi^\mu_\nu$ with the property that $\Phi^\mu_\nu=\delta^\mu_\nu$ for $2 \le |\mu|,|\nu| \le n$. Let $d\bar{\lambda} = \Phi^1_\nu d^\nu$. By the Frobenius theorem (cf. \cite{warner}, Theorem 1.60 and Proposition 2.30), the distribution $\mathrm{ker}~d\bar{\lambda}$ integrates to a foliation of a neighborhood of the point $p$ by the level sets of $\bar{\lambda}$ since $d\bar{\lambda}$ is obviously closed. Relabel $\bar{\lambda}$ as $\lambda$. Therefore, we may extend the Cauchy surface $\Sigma_0$ to $\Sigma_\lambda$ and we end up with simply $X_\lambda = \partial_{\lambda \cdots \lambda ~(r  ~\mathrm{times})} + \sigma_{r-1} \partial_{\lambda \cdots \lambda ~(r-1 ~\mathrm{times})} + \cdots + \sigma_1 \partial_\lambda$. For each value of the tranvserse coordinates $(0,x_2,\ldots,x_n)$ on $\Sigma_0$ we may trace the curve $\gamma_{(x_2,\ldots,x_n)}(\lambda) = (\lambda,x_2,\ldots,x_n)$. The restriction of $X$ to the curve $\gamma(\lambda)$ should satisfy the jet geodesic equation in the form $D_{X_\lambda} X_\lambda = 0$, which is an ordinary differential equation of order $r$ with initial conditions specified by the value of $\varv$ at the point $p$. Therefore, the existence theorem for ordinary differential equations \cite{arnold_ode} delivers a unique solution to the jet geodesic equation in an interval of time surrounding the origin.
\end{proof}

\begin{remark}
	The second half of this result makes for a higher-order analogue corresponding to the standard flow box of the 1-vector field around a regular point, see Lee, \cite{lee_smooth_manif}, Theorem 9.22. At higher order, we cannot expect to be able to make all of the coefficients $\sigma_{1,\ldots,r-1}$ identically equal to unity as this would overdetermine the coordinate $\lambda$ and we have availed ourself of our freedom to reparametrize in order to fix the coefficient of the leading term $\partial_{\lambda \cdots \lambda ~(r ~\mathrm{times})}$. Nevertheless, via a further transformation of the form $\bar{\lambda}=\lambda+a_2 \lambda^2 + \cdots + a_r \lambda^r$ we may at least arrange that $\lim_{\lambda \rightarrow 0} \sigma_{1,\ldots,r} = 1$. In other words, we have factored the flow generated by $X$ into free inertial motion in the constructed $\lambda$ coordinate for sufficiently short times plus a transverse component locally parallel to the $\Sigma_\lambda$. The linear part which dominates at the shortest times goes as $\partial_\lambda$ and contributes no transverse component. It should also be noted that the jet stream leaving from the point $p$ by all means depends on the initial datum on $\Sigma_0$ and not just on the starting point, as in the 1-vector field case.
	
	Lastly, for this reason the role of the Cauchy surface $\Sigma_0$ is ineliminable as soon as one goes to higher than first order. What makes sense to consider transporting under the flow are only functions constant along the leaves of the foliation $\Sigma_\lambda$ constructed as above from a given Cauchy surface $\Sigma_0$ around the point $p$ (perhaps this indicates how adroitly to generalize Huygens' principle to any higher tangent vector field). To illustrate the idea, consider the Laplacian in Euclidean $\vvmathbb{R}^n$ given by $\Delta=\partial_{11}+\cdots+\partial_{nn}$. For the Cauchy hypersurface take $\Sigma_0 = \{ x_1 = 0 \}$. To put $\Delta$ into canonical form, set $\lambda=e^x-1$ so that $\partial_1=e^x\partial_\lambda=(1+\lambda)\partial_\lambda$. Then $\partial_{11} = (1+\lambda)\partial_\lambda(1+\lambda)\partial_\lambda=(1+\lambda)^2 \partial_{\lambda\lambda} + (1+\lambda)\partial_\lambda$. We have
	\begin{equation}
		\Delta = (1+\lambda)^2 \partial_{\lambda\lambda} + (1+\lambda)\partial_\lambda + \partial_{22} + \cdots + \partial_{nn}.
	\end{equation}
	As we see, the best we can do is to arrange that the coefficients depending on $\lambda$ tend to unity in the limit as $\lambda \rightarrow 0$. In this choice of coordinate, a wavefront will propagate from $\Sigma_0$ in the $\lambda$-direction with unit initial speed. Due to the isotropy of the Laplacian, the problem will look the same around any other hyperplane $\Sigma$ in $\vvmathbb{R}^n$. To avoid the degeneracy in general, if instead $X=\Delta+a\partial_1$, with $a \ne 0$, there would be no need to change coordinate and its initial speed would be just $a$. The situation will perhaps become clearer if the Laplacian is a singular perturbation: for $\varepsilon>0$, let $X=\varepsilon^2 \Delta + a \partial_1$. Then rescaling to a coordinate $\lambda=\varepsilon^{-1}x_1$ we see that the wavefront leaves the vicinity of $\Sigma_0$ very fast when $\varepsilon>0$ is sufficiently small, at the speed $a/\varepsilon$. Thus, the virtual diffusion away from the centroid (of order unity in this choice of coordinate) becomes negligible compared to the translatory motion unless one has the degenerate case $a=0$. The lesson for us is that if one wants the behavior of the jet geodesic approximately to resemble the familiar translatory motion produced by an ordinary 1-vector field, the higher-order terms need to be singular perturbations.
\end{remark}

\subsection{Elementary Operations in the Differential Calculus of the Jet Connection}

There will be no deep results announced in this section, just necessary warm-up. We kick off by formulating some simple lemmas that make it easy to check tensoriality in the generalized sense. First,

\begin{lemma}[cf. Lee, \cite{lee_riemannian_manif}, Exercise 2.6]\label{local_smoothness_criterion}
	Let $F \in \Gamma(\mathscr{J}^{r*}\times \cdots \mathscr{J}^{r*}\times \mathscr{J}^{r}\times \cdots \times \mathscr{J}^{r},M)$ be a rough section. Then $F$ is a smooth generalized tensor field if and only if whenever $X_{1\ldots,k}$ are smooth $r$-vector fields and $\omega^{1,\ldots,j}$ are smooth $r$-jets defined on an open set $U \subset M$, the function
	\begin{equation}
		F(X_1,\ldots,X_k,\omega^1,\ldots,\omega^j)(p) := F_p(X_1|_p,\ldots,X_k|_p,\omega^1|_p,\ldots,\omega^k|_p)
	\end{equation}	
	defined on $U$ is smooth.
\end{lemma}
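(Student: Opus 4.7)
The plan is to model the argument on the standard tensoriality criterion for ordinary tensor fields (cf. Lee, Exercise 2.6 \cite{lee_riemannian_manif}), substituting $J^rM$ and $J^{r*}M$ throughout for the roles that $TM$ and $T^*M$ conventionally play. The forward implication is a matter of unwinding definitions: if $F$ is a smooth section of $K^{r,k}_jM$, then in any chart its components are smooth, and the pointwise contraction with smooth input fields (whose own components are likewise smooth) yields a smooth function on $U$, because the fibrewise evaluation pairing between a vector bundle and its duals is multilinear and hence a smooth bundle map.

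For the converse, I would exploit that smoothness is a local property and work in a chart. Around a given point $p \in M$, pick a coordinate chart $(U, x^1, \ldots, x^n)$; by proposition \ref{jet_bundle} together with the definitions in (\ref{eq1})--(\ref{def_vector_basis}), the families $\{\partial_\alpha\}_{1 \le |\alpha| \le r}$ and $\{d^\alpha\}_{1 \le |\alpha| \le r}$ furnish smooth local frames for $J^rM$ and $J^{r*}M$ over $U$. In this chart $F$ admits the expansion
\begin{equation}
F = F^{\beta_1 \cdots \beta_j}_{\alpha_1 \cdots \alpha_k}\, d^{\alpha_1} \otimes \cdots \otimes d^{\alpha_k} \otimes \partial_{\beta_1} \otimes \cdots \otimes \partial_{\beta_j},
\end{equation}
whose components are recovered pointwise by $F^{\beta_1 \cdots}_{\alpha_1 \cdots}(q) = F_q(\partial_{\alpha_1}|_q, \ldots, d^{\beta_j}|_q)$. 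It then suffices to show that each such component is smooth on $U$.

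The principal obstacle is that the coordinate frames $\partial_\alpha$ and $d^\alpha$ are defined only on $U$, whereas the cleanest application of the hypothesis would take globally smooth sections as inputs. I plan to dispatch this by the standard bump-function device: for each $p \in U$, choose a precompact neighborhood $V$ with $\overline{V} \subset U$ together with a cutoff $\chi \in C^\infty(M)$ satisfying $\chi \equiv 1$ on $V$ and $\mathrm{supp}(\chi) \subset U$. The products $\chi \partial_\alpha$ and $\chi d^\alpha$, extended by zero outside $U$, then yield smooth global sections of the respective jet bundles, so by hypothesis the function $q \mapsto F_q(\chi\partial_{\alpha_1}|_q, \ldots, \chi d^{\beta_j}|_q)$ is smooth on $M$. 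By the pointwise multilinearity of $F_q$ in each slot, this function coincides on $V$ (where $\chi \equiv 1$) with the local component $F^{\beta_1 \cdots}_{\alpha_1 \cdots}$, establishing its smoothness on $V$. Since $p$ was arbitrary in $U$, all components are smooth on $U$, whence $F$ is a smooth section. Beyond this bump-function manoeuvre I anticipate no deeper difficulty; the higher-order nature of the jets enters only in the bookkeeping of multi-indices and leaves the logical structure of the argument unaltered from the first-order case.
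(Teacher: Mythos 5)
Your proposal is correct. In the forward direction it is essentially the paper's argument: expand $F$ in a smooth local frame and note that the evaluation on smooth inputs is a linear combination of the component functions of $F$ with smooth coefficients. In the converse direction you take a genuinely different, and in fact tighter, route. You feed the coordinate frame elements $\partial_\alpha$ and $d^\beta$ (cut off by a bump function so as to become global sections) into the hypothesis, recover the component functions of $F$ as precisely those evaluations on the set where the cutoff equals one, and then invoke the local-frame criterion for smoothness (Lee, Proposition 10.22) to conclude that $F$ is smooth. The paper handles this direction contrapositively: it supposes some evaluation on smooth sections fails to be smooth, completes that tuple of sections to a local frame on a smaller neighborhood $U_0 \subset U$, and concludes via the same Proposition 10.22 that $F$ cannot be smooth on $U_0$, hence not on $U$. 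Observe that, as written, the paper's argument runs from the negation of the evaluation hypothesis to the non-smoothness of $F$, which is the contrapositive of the \emph{forward} implication rather than of the converse; your direct argument is the one that actually delivers the implication needed later (in lemma \ref{tensor_char}, smoothness of $F$ is inferred from smoothness of its evaluations), and it also avoids the fussier step of completing an arbitrary non-vanishing tuple of sections to a frame. One economy you could make: the hypothesis of the lemma already admits input fields defined merely on an open set $U \subset M$, so the coordinate frame elements are legitimate inputs as they stand, and the bump-function padding, while harmless, is unnecessary.
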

\begin{proof}
	By proposition \ref{jet_bundle} above, $\mathscr{J}^{r*}\times \cdots \mathscr{J}^{r*}\times \mathscr{J}^{r}\times \cdots \mathscr{J}^{r}$ is a tensor-product vector bundle for which a smooth local frame exists by Lee, Proposition 10.15. By the local frame criterion for smoothness in Lee, Proposition 10.22, $F$ is smooth if and only if its component functions of $F$ with respect to any smooth local frame are smooth. The indicated statement holds because the right-hand side will be expressible as just a linear combination of its component functions, hence smooth if they are, while conversely, if there exists a choice of $X_{1,\ldots,k},\omega^{1,\ldots,j}$ making the right-hand side fail to be smooth, we can complete the section $X_1\times \cdots \times X_k\times \omega^1\times \cdots \times \omega^j$ (which does not vanish identically, else it would be smooth) to yield a local frame on a neighborhood $U_0 \subset U$ for which the first component function fails to be smooth, whence by Proposition 10.22 neither can $F$ be smooth there. But if $F$ were smooth on $U$, its restriction to any $U_0 \subset U$ would have to be smooth as well. Therefore, $F$ would not be a smooth generalized tensor field.
\end{proof}

Now, the next result will show that the concept of multilinearity over $C^\infty(M)$ of an ordinary tensor extends to generalized tensors. By lemma \ref{local_smoothness_criterion}, when we evaluate a generalized tensor $F$ on a given set of arguments $X_{1,\ldots,k},\omega^{1,\ldots,j}$ we obtain the smooth function 
$F(X_{1,\ldots,k},\omega^{1,\ldots,j})$, thus inducing in a natural way a map
\begin{equation}
	F: \mathscr{J}^{r*}(M) \times \cdots \mathscr{J}^{r*}(M) \times \mathscr{J}^{r}(M) \times \cdots \mathscr{J}^{r}(M) \rightarrow C^\infty(M).
\end{equation}Here, we say that the map $F: \mathscr{J}^{r*}(M) \times \cdots \mathscr{J}^{r*}(M) \times \mathscr{J}^{r}(M) \times \cdots \mathscr{J}^{r}(M) \rightarrow C^\infty(M)$ is multilinear over $C^\infty(M)$ if for any smooth vector or covector fields $\alpha,\beta$ and for any smooth functions $f,g \in C^\infty(M)$, we have that
\begin{equation}
	F(\ldots,f\alpha+g\beta,\ldots)=fF(\ldots,\alpha,\ldots)+gF(\ldots,\beta,\ldots).
\end{equation}
Formally, this is the same statement as what holds for an ordinary tensor field. That this property suffices to characterize tensoriality in the generalized setting is the content of the following two lemmas:

\begin{lemma}[Extension Lemma; cf. Lee, \cite{lee_smooth_manif}, Lemma 8.6 and Proposition 8.7]
	The following statements hold:
	\begin{itemize}
		\item[$(1)$] If $\varv \in J_p^r(M)$ for any point $p \in M$, there exists a globally defined $r$-vector field $V \in \mathscr{J}^r(M)$ such that $V(p)=\varv$. 
		
		\item[$(2)$] If $\omega \in J^{r*}_p(M)$ for any point $p \in M$, there exists a globally defined $r$-jet field $\Omega \in \mathscr{J}^{r*}(M)$ such that $\Omega(p)=\omega$.	
	\end{itemize}
\end{lemma}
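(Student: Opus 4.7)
The plan is to mimic the standard proof of the corresponding statement for ordinary tangent vectors (Lee, Lemma 8.6), adapting it to the higher-order setting by using a bump function together with the coordinate frames $\partial_\alpha$ and $d^\alpha$ introduced in {\S}\ref{formal_def}.

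For part (1), I would first choose a coordinate chart $(U,\varphi)$ containing $p$ with coordinate functions $x^1,\ldots,x^n$ and the associated coordinate frame $\{\partial_\alpha\}_{1 \le |\alpha| \le r}$ of sections of $J^r M$ over $U$ provided by the discussion preceding proposition \ref{jet_bundle}. Since $\varv \in J^r_p(M)$, it admits a unique expansion $\varv = v^\alpha \partial_\alpha|_p$ with $v^\alpha \in \vvmathbb{R}$. Define on $U$ the local section $V_0 = v^\alpha \partial_\alpha$ (with the $v^\alpha$ treated as constant functions), which is visibly smooth. Next, pick a smooth bump function $\chi \in C^\infty(M)$ with $\chi \equiv 1$ on some neighborhood of $p$ contained in $U$ and $\mathrm{supp}\,\chi \subset U$, whose existence is ensured by the standard partition-of-unity machinery for smooth manifolds. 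Define
\begin{equation}
V(q) := \begin{cases} \chi(q) V_0(q), & q \in U, \\ 0 \in J^r_q(M), & q \notin U. \end{cases}
\end{equation}
Since the two definitions agree on the open set $U \setminus \mathrm{supp}\,\chi$, $V$ is a well-defined rough section of $J^r M \to M$, it is smooth on $U$ (as the product of smooth objects) and smooth on $M \setminus \mathrm{supp}\,\chi$ (where it is identically zero), hence smooth everywhere. By construction $V(p) = 1 \cdot v^\alpha \partial_\alpha|_p = \varv$.

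For part (2), the argument is literally the same after replacing the vector basis $\partial_\alpha$ by the dual jet basis $d^\alpha$: expand $\omega = a_\alpha\, d^\alpha|_p$ in coordinates, form the local jet field $\Omega_0 = a_\alpha\, d^\alpha$ on $U$, and cut off by the same bump function $\chi$ to obtain a smooth global section $\Omega \in \mathscr{J}^{r*}(M)$ with $\Omega(p) = \omega$.

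The only conceptual point requiring care is that multiplication by the scalar function $\chi$ does preserve smoothness for sections of $J^r M$ and $J^{r*} M$; this is immediate since $\mathscr{J}^r(M)$ and $\mathscr{J}^{r*}(M)$ are $C^\infty(M)$-modules (they are, indeed, projective modules of finite type, as recorded after proposition \ref{jet_bundle}), so $\chi V_0$ and $\chi \Omega_0$ are legitimate smooth sections over $U$. No other step presents any genuine difficulty; the higher-order nature of the bundles enters only through the larger multi-index range $1 \le |\alpha| \le r$, which plays no role in the argument beyond bookkeeping.
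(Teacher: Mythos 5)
Your proposal is correct and follows essentially the same route as the paper's proof: both expand $\varv$ in the coordinate frame $\partial_\alpha$, cut off by a bump function supported in the chart and equal to $1$ near $p$ (the paper sets $E_\alpha := \lambda\partial_\alpha$ and takes $\varv^\alpha E_\alpha$, which is the same section as your $\chi V_0$), and dispose of the jet case by observing the argument is identical with $d^\alpha$ in place of $\partial_\alpha$. If anything, your write-up is slightly more careful about why the extension by zero is smooth on the overlap, a point the paper leaves implicit.
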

\begin{proof}
	The proof for jets parallels that for vectors exactly; therefore, we may limit ourselves to the latter case. Let $\varv \in J_p^r(M)$ for $p \in M$. Consider coordinates $x^{1,\ldots,n}$ defined in a neighborhood $U \ni p$. Since, by hypothesis, $M$ is a differentiable manifold, it is presumed to be paracompact and Hausdorff, hence by Royden \cite{royden}, Problem 9.34b, normal. Let $V$ and $W$ be the images in $M$ of any nested balls around $p$ in a coordinate neighborhood and denote by $\lambda$ be the smooth function on $M$ supported in $W$ such that $\lambda=1$ identically on $\bar{V}$ whose existence is assured by the extension lemma, Lee 2.26. Set $E_\alpha := \lambda \partial_\alpha$. Now, in as much as the $\partial_\alpha$ form a basis, we can write $\varv = \varv^\alpha \partial_\alpha|_p$ and set $V := \varv^\alpha E_\alpha$. Then this will be the sought-for extension from $p$ to all of $M$.
\end{proof}

\begin{remark}
	The proof goes through just as well for $r$-vectors and $r$-jets as it does for 1-vectors and 1-jets, the reason being that a differential operator depends only on the germ of a function at any given point and, as far as the extension property is concerned, it is irrelevant whether one stops at first order or goes on to higher orders. In light of this, the following generalization of the corresponding result that holds for ordinary tensors will come as little surprise. No essentially new concept enters into its proof.
\end{remark}

\begin{lemma}[Generalized Tensor Characterization Lemma, cf. Lee, \cite{lee_riemannian_manif}, Lemma 2.4; \cite{lee_smooth_manif}, Lemma 12.24]\label{tensor_char}
	A map
	\begin{equation}
		F: \mathscr{J}^{r*}(M) \times \cdots \mathscr{J}^{r*}(M) \times \mathscr{J}^{r}(M) \times \cdots \mathscr{J}^{r}(M) \rightarrow C^\infty(M)
	\end{equation}
	is induced by a $\binom{k}{j}$-tensor field if and only if it is multilinear over $C^\infty(M)$.	
\end{lemma}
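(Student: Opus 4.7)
The plan is to mimic the classical argument for ordinary tensor fields, using the extension lemma to make sense of ``local'' manipulations in the globally-defined language that multilinearity over $C^\infty(M)$ requires.

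\textbf{The easy direction $(\Rightarrow)$.} If $F$ comes from a smooth generalized tensor field $T \in \mathscr{K}^{r,k}_j(M)$ via $F(X_1,\ldots,\omega^j)(p) = T_p(X_1|_p,\ldots,\omega^j|_p)$, then multilinearity over $C^\infty(M)$ is immediate, since $(f\alpha+g\beta)|_p = f(p)\alpha|_p + g(p)\beta|_p$ and $T_p$ is $\mathbb{R}$-multilinear on each fiber. Smoothness of $F(X_1,\ldots,\omega^j)$ as a function on $M$ follows from Lemma \ref{local_smoothness_criterion}.

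\textbf{The substantive direction $(\Leftarrow)$.} Assume $F$ is $C^\infty(M)$-multilinear. The heart of the argument is locality: I will show that $F(X_1,\ldots,\omega^j)(p)$ depends only on the values $X_i|_p \in J^r_pM$ and $\omega^i|_p \in J^{r*}_pM$. By multilinearity it suffices to establish that if $X_1|_p = 0$ (or $\omega^1|_p = 0$) then $F(X_1,\ldots,\omega^j)(p) = 0$; I treat the vector case, the covector case being identical. Choose coordinates $x^{1,\ldots,n}$ on a neighborhood $U \ni p$, a nested pair of open sets $p \in V \subset \bar{V} \subset W \subset U$, and a bump function $\chi \in C^\infty(M)$ with $\chi \equiv 1$ on $\bar V$ and $\mathrm{supp}\,\chi \subset W$. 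Writing $X_1 = X_1^\alpha \partial_\alpha$ on $U$, set $E_\alpha := \chi \partial_\alpha$ (extended by zero) and $\widetilde{X}_1^\alpha := \chi X_1^\alpha$ (likewise extended by zero); both are globally defined smooth sections. On all of $M$ one then has the identity $\chi^2 X_1 = \widetilde{X}_1^\alpha E_\alpha$. Applying $C^\infty(M)$-multilinearity twice,
\begin{equation}
\chi^2 \cdot F(X_1,\ldots,\omega^j) = F(\chi^2 X_1,\ldots,\omega^j) = \widetilde{X}_1^\alpha \, F(E_\alpha,\ldots,\omega^j).
\end{equation}
Evaluating at $p$ and using $\chi(p)=1$, $\widetilde{X}_1^\alpha(p) = X_1^\alpha(p) = 0$ for all $\alpha$ with $1 \le |\alpha| \le r$ yields $F(X_1,\ldots,\omega^j)(p) = 0$, as required.

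\textbf{Assembly into a tensor field.} Locality now permits a pointwise definition
\begin{equation}
T_p(v_1,\ldots,v_k,\eta^1,\ldots,\eta^j) := F(V_1,\ldots,V_k,H^1,\ldots,H^j)(p),
\end{equation}
where $V_i \in \mathscr{J}^r(M)$ and $H^i \in \mathscr{J}^{r*}(M)$ are any smooth global extensions of $v_i \in J^r_pM$ and $\eta^i \in J^{r*}_pM$, whose existence is furnished by the preceding extension lemma; independence from the choice of extension is exactly locality. The induced $T_p \in (J^{r*}_pM)^{\otimes k} \otimes (J^r_pM)^{\otimes j}$ is $\mathbb{R}$-multilinear because $F$ is. Finally, $T$ is a smooth section: for any smooth local frames of the jet and higher-tangent bundles, the component functions of $T$ are values of $F$ on smooth arguments and so lie in $C^\infty$, so Lemma \ref{local_smoothness_criterion} yields smoothness of $T$.

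\textbf{Main obstacle.} The only non-routine step is the locality argument, because the expansion $X_1 = X_1^\alpha \partial_\alpha$ is available only on the coordinate chart $U$, whereas $C^\infty(M)$-multilinearity is a global hypothesis. The bump-function device above is the standard way to globalize the expansion; it is slightly more delicate in the higher-order setting only because the multi-index $\alpha$ now ranges up to order $r$, but the same bump function $\chi$ handles all multi-indices simultaneously and no new phenomenon enters.
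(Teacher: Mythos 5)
Your proof is correct and follows essentially the same route as the paper's: the forward direction by pointwise multilinearity, and the converse by a bump-function locality argument, coordinate-frame expansion, pointwise definition of the tensor via the extension lemma, and smoothness from Lemma \ref{local_smoothness_criterion}. The only difference is cosmetic: your identity $\chi^2 X_1 = \widetilde{X}_1^\alpha E_\alpha$ merges into a single step what the paper does in two stages (first vanishing-on-a-neighborhood locality, then pointwise dependence via a global frame agreeing with $\partial_\alpha$ near $p$).
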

\begin{proof} ($\Rightarrow$) Suppose $F$ to be a smooth $\binom{k}{j}$-tensor field. Then by its definition the map $(X_1,\ldots,X_k,\omega^1,\ldots,\omega^j) \mapsto F(X_1,\ldots,X_k,\omega^1,\ldots,\omega^j)$ is trivially pointwise multilinear over $C^\infty(M)$, but the module structure of generalized mixed tensor fields ensures that the image will be a smooth global section; cf. Lee \cite{lee_smooth_manif}, Exercise 10.11.
	
	($\Leftarrow$) Suppose $F$ to have the requisite multilinearity. First, to show locality. Let $X_1$ resp. $\omega^1$ be a vector field resp. jet field that vanishes identically on a neighborhood $U$ of $p \in M$. Choose a bump function $\xi$ supported on $U$ such that $\xi(p)=0$. Then by hypothesis,
	\begin{equation}
		0=F(\xi X_1,\ldots)(p)=\xi(p)F(X_1,\ldots)(p)
	\end{equation}
	respectively,
	\begin{equation}
		0=F(X_1,\ldots,\xi\omega^1\ldots)(p)=\xi(p)F(X_1,\ldots,\omega^1,\ldots)(p)
	\end{equation}
	and similarly for $X_{2,\ldots,k}, \omega^{2,\ldots,j}$. The displayed equations imply linearity of $F$ over $C^\infty(M)$, as in the proof of the bundle homomorphism characterization in Lee, Lemma 10.29. In other words, we have shown that $F$ depends locally on its arguments; i.e., its value at any given point $p$ depends only on the values assumed by the $X_{1,\ldots,k},\omega^{1,\ldots,j}$ in an arbitrarily small neighborhood of $p$. We can, in fact, do better: actually, $F$ acts pointwise in the sense that $F(X_1,\ldots,\omega^1,\ldots)(p)$ depends only the values of its arguments at $p$, $X_{1\ldots,k}(p),\omega^{1,\ldots,j}(p)$. To see why, suppose first that $X_1(p)=0$. Now, apply the extension lemma for vector fields to obtain smooth vector fields $E_\alpha$ globally defined on $M$ such that $E_\alpha=\partial_\alpha$ on some neighborhood of $p$; respectively, . Since $X_1=f_1^\alpha E_\alpha$ in a neighborhood of $p$, we have by multilinearity,
	\begin{align}
		F(X_1,\ldots,\omega^1,\ldots)(p) &= F(f_1^\alpha E_\alpha,\ldots,\omega^1,\ldots)(p) \nonumber \\
		&= f_1^\alpha(p) F(X_1,\ldots,\omega^1,\ldots)(p) = 0.
	\end{align}
	By linearity, $F(X_1,\ldots,\omega^1,\ldots)$ depends only on the value of $X_1$ at $p$. The same argument goes through for $X_{2,\ldots,k}$ and $\omega^{1,\ldots,j}$, where in the latter case we must appeal to the extension lemma for jets in place of vector fields.
	
	Given $\varv_{1,\ldots,k} \in J_p^r(M)$ and $\omega^{1,\ldots,j} \in J_p^{r*}(M)$ and extensions $V_{1,\ldots,k}$ respectively $\Omega^{1,\ldots,j}$ to smooth, globally defined vector fields, respectively, jet fields on $M$, we can define a rough generalized tensor field by
	\begin{equation}
		F_p(\varv_1,\ldots,\varv_k,\omega^1,\ldots,\omega^j) := F(V_1,\ldots,V_k,\Omega^1,\ldots,\Omega^j)(p)
	\end{equation}
	for $p \in M$. The discussion above shows that the rough vector field is well defined, viz., independent of the choice of extensions, and lemma \ref{local_smoothness_criterion} then guarantees that the resulting generalized tensor field is smooth.	
\end{proof}	

So far, the jet connection has been defined only on higher-order vector fields. In the conventional theory, however, one extends the affine connection to act in a natural way on arbitrary tensors of mixed type, and we should like to do the same for the jet connection. The first item to resolve, then, is to specify how the covariant derivative act on 1-forms, or higher-order jets. One this has been done, a further rule governing how it behaves with respect to tensor products will suffice to define it in general, since any generalized mixed tensor reduces by definition to a tensor product of a certain number of factors of the form $\mathscr{J}^r$ together with factors of the form $\mathscr{J}^{r*}$.

Now, the scalar product of two ordinary functions could be regarded as a tensor product of 0-vector fields. If we view it this way, the natural property to stipulate for the behavior of the jet connection under tensor product would be the following:
\begin{equation}\label{Leibniz_prod_rule}
	\nabla_\alpha (F \otimes G) = \frac{\alpha!}{\alpha_1!\alpha_2!} (\nabla_{\alpha_1}F) \otimes (\nabla_{\alpha_2}G),
\end{equation}
where by $\nabla_\alpha$ we understand $\nabla_{\partial_\alpha}$.
Pause for a moment at the first order of business, to define a jet connection on higher-order jets. To this end, it will be convenient to adopt the following stratagem: if we contract the jet against a vector field, we would obtain a scalar function. Clearly, the jet connection should act on scalar functions simply as $\nabla_X f = X f$, where $X \in \mathscr{J}^r(M)$. If, then, we expand the product according to the postulated rule, it will determine the Christoffel symbols of the jet connection acting on higher-order jets in terms of the already known jet connection acting on higher-order vector fields. This is the content of the following proposition:

\begin{proposition}[cf. Lee, \cite{lee_riemannian_manif}, Exercise 4.4]\label{jet_conn_on_1_forms}
	Let $M$ be a differentiable manifold on which a jet connection acting on higher-order vector fields is defined via the Christoffel symbols $\Gamma$, as above, and let a jet connection be defined acting on higher-order jets according to Christoffel symbols $\tilde{\Gamma}$; i.e., $\nabla_\alpha d^\beta = \tilde{\Gamma}_{\gamma\alpha}^\beta d^\gamma$. Then, if the $\Gamma$ are uniformly bounded by a sufficiently small constant $\delta>0$, there exists a unique solution for the $\tilde{\Gamma}$ satisfying Leibniz' product rule.	
\end{proposition}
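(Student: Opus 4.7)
\emph{Plan.} The strategy is to derive the governing linear algebraic system for the $\tilde{\Gamma}$ from the requirement that the Leibniz rule (\ref{Leibniz_prod_rule}) be compatible with the canonical contraction $d^\beta(\partial_\gamma)=\delta^\beta_\gamma$, and then to solve it by a contraction-mapping argument on a suitable Banach space of Christoffel-symbol families.

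First I would apply $\nabla_\alpha$ to the scalar identity $d^\beta(\partial_\gamma)=\delta^\beta_\gamma$. Since the right-hand side is a constant scalar, the left-hand side vanishes. Expanding via (\ref{Leibniz_prod_rule}) with the convention $\nabla_0=\mathrm{id}$ and splitting off the two boundary terms---the $\alpha_1=0$ term giving $\Gamma^\beta_{\alpha\gamma}$ from $\nabla_\alpha\partial_\gamma$, and the $\alpha_2=0$ term giving $\tilde{\Gamma}^\beta_{\gamma\alpha}$ from $\nabla_\alpha d^\beta$ paired with $\partial_\gamma$---I obtain
\begin{equation*}
\tilde{\Gamma}^\beta_{\gamma\alpha}
=-\Gamma^\beta_{\alpha\gamma}
-\sum_{\substack{\alpha_1+\alpha_2=\alpha\\|\alpha_1|,|\alpha_2|\ge 1}}
\frac{\alpha!}{\alpha_1!\,\alpha_2!}\,\tilde{\Gamma}^\beta_{\epsilon\alpha_1}\,\Gamma^\epsilon_{\alpha_2\gamma}.
\end{equation*}
This expresses $\tilde{\Gamma}$ implicitly as the fixed point of an affine operator $T[\Gamma]\colon\tilde{\Gamma}\mapsto -\Gamma+L[\Gamma]\tilde{\Gamma}$, whose linear part $L[\Gamma]$ is a convolution over multi-indices with kernel built from $\Gamma$.

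Next I would view $T[\Gamma]$ as a map on the Banach space of bounded smooth families $\{\tilde{\Gamma}^\beta_{\gamma\alpha}\}_{\alpha,\beta,\gamma}$ in a coordinate chart, equipped with a pointwise supremum norm suitably weighted by multi-index order. A routine combinatorial estimate yields $\|L[\Gamma]\|\le C\|\Gamma\|$ with $C=C(n,r)$ depending only on the combinatorics of the multi-index convolution. Choosing $\delta>0$ so that $C\delta<1$ makes $T[\Gamma]$ a contraction whenever $\|\Gamma\|<\delta$, and the Banach fixed-point theorem then delivers a unique solution $\tilde{\Gamma}$. Smoothness follows because $T[\Gamma]$ preserves smoothness and the iteration converges uniformly in $p\in M$; once $\tilde{\Gamma}$ is in hand, the verification that the induced operator on jet fields is genuinely a jet connection (linearity over $C^\infty(M)$ in $X$, linearity over $\vvmathbb{R}$ in the jet argument, and the Leibniz rule) is purely formal, following the pattern of lemma \ref{localjetconn}.

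The main obstacle I anticipate is controlling the multi-index combinatorics in the estimate $\|L[\Gamma]\|\le C\|\Gamma\|$, since the factors $\alpha!/(\alpha_1!\alpha_2!)$ and the sum over the contraction index $\epsilon$ can both grow with $r$, and the weighting of the norm must be chosen carefully to get a finite $C=C(n,r)$. An alternative route, valid for finite $r$, would be to solve inductively on $|\alpha|$: on the right-hand side of the displayed equation one always has $|\alpha_1|<|\alpha|$, so induction gives existence and uniqueness with no smallness assumption. The smallness of $\Gamma$ becomes indispensable only when one wants a uniform bound on $\tilde{\Gamma}$ independent of order, or wishes to pass to the sheaf-theoretic $r=\infty$ limit discussed earlier in the excerpt.
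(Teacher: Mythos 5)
Your proposal is correct, and your derivation of the governing system is the same as the paper's: applying the product rule to the contraction of basis jets against basis vectors produces precisely the paper's necessary condition (\ref{gamma_gamma}), an equation affine in $\tilde{\Gamma}$ with cross-terms bilinear in $\tilde{\Gamma}$ and $\Gamma$ (your index placement, contracting the upper index of $\Gamma$ against the first lower index of $\tilde{\Gamma}$, is the one actually consistent with the paper's equation (\ref{jet_consistency})). Where you diverge is the solution step: the paper invokes the implicit function theorem (Lee, Theorem C.40) for $\Phi(\Gamma,\tilde{\Gamma})=0$, checking that the Jacobian $\mathrm{id}+O(\Gamma)$ is non-singular for small $\Gamma$, whereas you run a Banach fixed-point iteration on $\tilde{\Gamma}\mapsto-\Gamma-L[\Gamma]\tilde{\Gamma}$. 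The two are essentially equivalent in strength and both use the smallness hypothesis; indeed, since the system is affine in $\tilde{\Gamma}$, either tool is heavier than needed, as invertibility of $I+L[\Gamma]$ gives $\tilde{\Gamma}=-(I+L[\Gamma])^{-1}\Gamma$ outright, and smoothness in the base point then follows from Cramer's rule --- which also repairs your slightly loose claim that uniform convergence of the iteration yields smoothness of the limit. Your closing observation is a genuine improvement on the paper: because the cross-terms involve only $\tilde{\Gamma}^\beta_{\epsilon\alpha_1}$ with $|\alpha_1|<|\alpha|$, the system is triangular in the order of the direction index, so for finite $r$ induction on $|\alpha|$ gives existence and uniqueness with no smallness assumption at all; the bound on $\Gamma$ earns its keep only when one wants estimates on $\tilde{\Gamma}$ uniform in the order, as in the $r=\infty$ setting of the paper's appendix, {\S}\ref{riemannian_appendix}.
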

\begin{proof}
	Let $\omega$ be a jet field and $Y$ a vector field. By supposition, we would like to have the following relation obeyed for all choices of $\omega$ and $Y$: the covariant derivative with respect to a vector field $X$ should be given by
	\begin{equation}
		\nabla_X \omega Y = X (\omega Y) = X^\alpha \frac{\alpha!}{\alpha_1!\alpha_2!}
		\left( \nabla_{\alpha_1} \omega_\nu d^\nu \right)
		\left( \nabla_{\alpha_2} Y^\mu \partial_\mu \right) \delta_{\mu\nu}.
	\end{equation}	
	To orient ourselves, we first consider the case when $r=1$: if the mooted relation is to hold for all $X, \omega, Y$, a necessary condition would be that
	\begin{equation}\label{jet_consistency}
		\frac{\alpha!}{\alpha_1!\alpha_2!} \tilde{\Gamma}_{\gamma\alpha_1}^\nu \Gamma_{\alpha_2\mu}^\gamma = 0
	\end{equation}
	for all $\mu,\nu$. This becomes, for $r=1$,
	\begin{equation}\label{consistency_lowest_order}
		\tilde{\Gamma}_{\gamma\alpha}^\nu \delta_\mu^\gamma + \delta_\gamma^\nu \Gamma_{\alpha\mu}^\gamma = 0,
	\end{equation}
	which is solved by $\tilde{\Gamma}_{\mu\alpha}^\nu = - \Gamma_{\alpha\mu}^\nu$, which we may write for short as $\tilde{\Gamma} = - {}^t\Gamma$. Note, this statement agrees with Lee, \cite{lee_riemannian_manif}, Exercise 4.4. It is immediate how to proceed in the case of general $r>1$; simply allow the summation in equation (\ref{jet_consistency}) to run over multi-indices of all orders up to $r$. This will result in cross-terms. One readily finds the expected necessary condition to be that
	\begin{equation}\label{gamma_gamma}
		\tilde{\Gamma}_{\mu\alpha}^\nu + \frac{\alpha!}{\alpha_1!\alpha_2!}\big|_{\alpha_{1,2} \ne 0} \tilde{\Gamma}_{\mu\alpha_1}^\lambda \Gamma_{\lambda\alpha_2}^\nu + \Gamma_{\alpha\mu}^\nu = 0.
	\end{equation}
	As the term quadratic in the Christoffel symbols becomes vanishingly small compared to the leading terms in the nearly flat limit, there must exist a solution as a perturbation upon equation (\ref{consistency_lowest_order}). Said more formally, we may appeal to the implicit function theorem (Lee, \cite{lee_smooth_manif}, Theorem C.40). Let $d$ denote the dimension of the set of Christoffel symbols depending on multi-indices up to order $r$. In a neighborhood of the origin $U \subset \vvmathbb{R}^d \times \vvmathbb{R}^d$, define the function $\Phi: U \rightarrow \vvmathbb{R}^d$ by
	\begin{equation}
		\Phi(\Gamma,\tilde{\Gamma}) = \tilde{\Gamma}_{\mu\alpha}^\nu + \frac{\alpha!}{\alpha_1!\alpha_2!}\big|_{\alpha_{1,2} \ne 0} 
		\tilde{\Gamma}_{\mu\alpha_1}^\lambda \Gamma_{\lambda\alpha_2}^\nu + \Gamma_{\alpha\mu}^\nu.
	\end{equation}
	The implicit function theorem applies in a domain where the Jacobian
	\begin{equation}
		\frac{\partial\Phi}{\partial\tilde{\Gamma}_{\beta\rho}^\sigma} = \mathrm{id}~ + \frac{\alpha!}{\beta!\beta^\prime!} \Gamma_{\beta\beta^\prime}^\sigma
	\end{equation}
	is non-singular (where $\beta^\prime=\alpha-\beta$, no summation over repeated indices). But the hypothesis on uniform boundedness ensures just this. Therefore, we may state the conclusion of the implicit function theorem applied here as follows: there exist neighborhoods of the origin $V_0,W_0 \subset \vvmathbb{R}^d$ and a smooth map $F:V_0 \rightarrow W_0$ such that the level set of $\Phi(\Gamma,\tilde{\Gamma})=0$ is given by $\tilde{\Gamma}=F(\Gamma)$.
	
	Sufficiency is immediate: that with the solution $\tilde{\Gamma}$, read the equations in the other direction in order to conclude that Leibniz' product rule will be satisfied.
\end{proof}

\begin{proposition}[cf. Lee, \cite{lee_riemannian_manif}, Lemma 4.6]\label{extension_to_arb_tensors}
	Let $\nabla$ be a jet connection. Then there is a unique way of extending $\nabla$ to act on generalized tensors of mixed type, all the while satisfying 
	\begin{itemize}
		\item[$(1)$] On $J^r(M)$, $\nabla$ agrees with the given connection.
		
		\item[$(2)$] On $J^0(M) = C^\infty(M)$, $\nabla$ goes over into ordinary differentiation of functions, or $\nabla_X f = Xf$.
		
		\item[$(3)$] $\nabla$ obeys Leibniz' product rule as in Equation (\ref{Leibniz_prod_rule}):
		\begin{equation}
			\nabla_\alpha (F \otimes G) = \frac{\alpha!}{\alpha_1!\alpha_2!} (\nabla_{\alpha_1}F) \otimes (\nabla_{\alpha_2}G).
		\end{equation}
		
		\item[$(4)$] $\nabla$ commutes with any partial trace in the sense that $\nabla_X ~\mathrm{tr}~F = \mathrm{tr}~ \nabla_X F$.
	\end{itemize}
\end{proposition}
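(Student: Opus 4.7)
The plan is to establish uniqueness first and then existence by reversing the argument. For uniqueness, suppose an extension $\nabla$ satisfying (1)--(4) has been fixed. On $J^0(M)=C^\infty(M)$ and $J^r(M)$ it is determined outright by (2) and (1). For a jet field $\omega\in\mathscr{J}^{r*}(M)$ and a vector field $Y\in\mathscr{J}^r(M)$, the scalar $\omega(Y)$ is nothing but $\mathrm{tr}(\omega\otimes Y)$; applying (4) and (2) gives $X^\alpha\partial_\alpha(\omega(Y))=X^\alpha\,\mathrm{tr}(\nabla_\alpha(\omega\otimes Y))$, and expanding the right-hand side via (3) yields
\begin{equation}
X^\alpha\partial_\alpha(\omega(Y))=X^\alpha\sum_{\alpha_1+\alpha_2=\alpha}\frac{\alpha!}{\alpha_1!\,\alpha_2!}(\nabla_{\alpha_1}\omega)(\nabla_{\alpha_2}Y).
\end{equation}
This is exactly the relation that defines $\nabla_\alpha\omega$ implicitly by induction on $|\alpha|$ and is the content of proposition \ref{jet_conn_on_1_forms}; under its smallness hypothesis on $\Gamma$ it has a unique solution $\tilde{\Gamma}$. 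Finally, for a general mixed tensor field $F\in\mathscr{K}^{r,k}_j(M)$, expand $F$ in a local coordinate frame as a sum of scalar multiples of $d^{\alpha_1}\otimes\cdots\otimes d^{\alpha_k}\otimes\partial_{\beta_1}\otimes\cdots\otimes\partial_{\beta_j}$ and apply (3) iteratively; every factor is then a scalar, a vector field or a jet field, on each of which $\nabla$ is already determined.

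For existence, I would run this construction in reverse: take the local-frame formula just derived as the definition of $\nabla_X F$. Coordinate independence rests on the transformation laws in equations (\ref{jet_transf_law}) and (\ref{vector_transf_law}) together with the compatibility between $\Gamma$ and $\tilde{\Gamma}$ already required of the connection on $J^r$ and $J^{r*}$ separately. Properties (1) and (2) hold by fiat, while (3) follows from the coordinate formula once one verifies a consistency identity: that $\nabla_\alpha(F\otimes G)$ computed directly agrees with what one obtains by further Leibniz-expanding whichever of $F,G$ is itself a tensor product. This reduces to the associativity of the trinomial decomposition
\begin{equation}
\frac{\alpha!}{\alpha_1!\,\alpha_2!\,\alpha_3!}=\frac{\alpha!}{(\alpha_1+\alpha_2)!\,\alpha_3!}\cdot\frac{(\alpha_1+\alpha_2)!}{\alpha_1!\,\alpha_2!},
\end{equation}
i.e., an elementary multinomial identity. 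Tensoriality in $X$ follows via lemma \ref{tensor_char}, since multilinearity of the defining formula over $C^\infty(M)$ is manifest and smoothness is inherited from the component functions.

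The main obstacle, in my judgment, will be verifying property (4), the commutation of $\nabla$ with partial traces. I would handle it by induction on the total tensor rank $k+j$. The base case $k=j=1$ is precisely the defining relation for $\nabla_\alpha\omega$ and is therefore built in. For the inductive step, by multilinearity it suffices to treat an elementary tensor product $F=\omega\otimes Y\otimes T$ with $T$ of strictly smaller rank and to trace the first two slots. Applying (3) to split off the factor $T$, then applying (3) again inside $\omega\otimes Y$, one gets a triple sum over $\alpha_1+\alpha_2+\alpha_3=\alpha$ of terms $\frac{\alpha!}{\alpha_1!\alpha_2!\alpha_3!}(\nabla_{\alpha_1}\omega)\otimes(\nabla_{\alpha_2}Y)\otimes(\nabla_{\alpha_3}T)$. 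After taking the trace in the first two slots and regrouping with $\beta:=\alpha_1+\alpha_2$, $\gamma:=\alpha_3$ and using the trinomial identity above, the sum reorganizes into a Leibniz expansion $\sum_{\beta+\gamma=\alpha}\frac{\alpha!}{\beta!\gamma!}\nabla_\beta(\omega(Y))\otimes\nabla_\gamma T$, which is exactly $\nabla_\alpha(\omega(Y)\cdot T)=\nabla_\alpha\,\mathrm{tr}_{12}F$ by the already-verified Leibniz rule for scalar-tensor products and the inductive hypothesis applied to $T$. Once this elementary-trace case is secured, an arbitrary partial trace is a composition of elementary ones and commutation follows. The combinatorial bookkeeping in this step is what I expect to require the most care.
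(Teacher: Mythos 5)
Your proposal is correct, and its skeleton coincides with the paper's: both arguments pin down the action on jet fields by applying (2), (3) and (4) to the contraction $\omega(Y)=\mathrm{tr}(\omega\otimes Y)$, so that the covector Christoffel symbols $\tilde{\Gamma}$ are forced to be exactly those of proposition \ref{jet_conn_on_1_forms}, and both then extend to mixed tensors by Leibniz expansion over decomposable sections, which span. Where you genuinely part ways with the paper is in property (4): the paper does not prove trace-commutation inside this proposition at all, but defers it to lemma \ref{coord_expr_for_conn}, where it is read off (rather tersely) from the explicit coordinate formula for the components of $\nabla F$; you instead give a self-contained induction on tensor rank, regrouping the trinomial Leibniz expansion via $\frac{\alpha!}{\alpha_1!\,\alpha_2!\,\alpha_3!}=\frac{\alpha!}{\beta!\,\gamma!}\cdot\frac{\beta!}{\alpha_1!\,\alpha_2!}$ (with $\beta=\alpha_1+\alpha_2$, $\gamma=\alpha_3$) and feeding the inner sum back into the defining relation for $\tilde{\Gamma}$, i.e., equation (\ref{gamma_gamma}). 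Your route buys two things: it avoids the paper's forward reference to a lemma proved only afterwards, and it makes explicit that trace-commutation is a formal consequence of the relation defining $\tilde{\Gamma}$ rather than an independent coordinate verification. You also rightly flag that uniqueness of the covector action rests on the smallness hypothesis on $\Gamma$ from proposition \ref{jet_conn_on_1_forms}; the paper's statement of the present proposition suppresses that hypothesis even though its proof invokes exactly that result, so your version is, if anything, the more honest one.
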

\begin{proof}	
	Linearity over $\vvmathbb{R}$ trivially implies $\nabla_X 0 = 0$. Since equation (\ref{Leibniz_prod_rule}) depends on $\nabla$ only, it defines the operation of the jet connection on product vector bundles uniquely, if it exists. But equation (\ref{Leibniz_prod_rule}) is clearly multilinear over $C^\infty(M)$ in $X$ and linear over $\vvmathbb{R}$ in $F$ and $G$. Now, any section in the tensor product $F \otimes G$  can be written as a sum over decomposable products of elements in $F$ and $G$. Therefore, $\nabla_X$ acting on $F \otimes G$ exists and is uniquely defined.
	
	As for the property of $\nabla$ with respect to partial trace, see the proof in lemma \ref{coord_expr_for_conn} below. The action of $\nabla$ on covectors has to be the same as in proposition \ref{jet_conn_on_1_forms} due to uniqueness and (3) and (4) applied to the contraction of a covector against a vector field.
\end{proof}

The concept of the total covariant derivative to be introduced in the following proposition will prove instrumental to streamlining the proof of a number of results, in this and in following sections:

\begin{proposition}[cf. Lee, \cite{lee_riemannian_manif}, Lemma 4.7]\label{total_deriv}
	If $\nabla$ is a jet connection on a differentiable manifold $M$ and $F \in \mathscr{K}^{r,k}_j$, the map $\nabla F: \mathscr{J}^{r*} \times \cdots \times 
	\mathscr{J}^{r*} \times \mathscr{J}^r \times \cdots \mathscr{J}^r \rightarrow
	C^\infty(M)$ given by
	\begin{equation}
		(\nabla F)(\omega^1,\ldots,\omega^k,Y_1,\ldots,Y_j,X) := 
		(\nabla_X F)(\omega^1,\ldots,\omega^k,Y_1,\ldots,Y_j)
	\end{equation}
	defines a generalized $\binom{k}{j+1}$-tensor field.
\end{proposition}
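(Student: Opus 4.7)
The approach is to invoke the Generalized Tensor Characterization Lemma \ref{tensor_char}: it suffices to verify that the map $(\omega^1,\ldots,\omega^k,Y_1,\ldots,Y_j,X) \mapsto (\nabla_X F)(\omega^1,\ldots,\omega^k,Y_1,\ldots,Y_j)$ is multilinear over $C^\infty(M)$ in each of its $k+j+1$ slots and returns a smooth function on $M$ whenever its arguments are smooth.

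First I would dispose of the original $k+j$ slots by holding $X$ fixed. Proposition \ref{extension_to_arb_tensors} already supplies $\nabla_X F$ as a smooth section of the generalized tensor bundle $K^{r,k}_j M$. The forward direction of lemma \ref{tensor_char}---immediate from the pointwise definition of a tensor field as a fiberwise multilinear functional---then delivers multilinearity over $C^\infty(M)$ in those $k+j$ slots at no further cost.

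For the remaining slot occupied by $X$, I would expand $X = X^\alpha \partial_\alpha$ in a coordinate chart ($1 \le |\alpha| \le r$) and use that the extension of $\nabla$ in proposition \ref{extension_to_arb_tensors} operates factor-wise through the coordinate operators $\nabla_\alpha := \nabla_{\partial_\alpha}$, yielding the coordinate expression $\nabla_X F = X^\alpha \nabla_\alpha F$. Since the component functions $X^\alpha$ sit as scalar multipliers on the outside of the multi-index sum, the identities $\nabla_{hX}F = h \nabla_X F$ and $\nabla_{X_1+X_2}F = \nabla_{X_1}F + \nabla_{X_2}F$ are both manifest, so that $C^\infty$-linearity in $X$ is immediate. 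Smoothness of the whole expression on smooth arguments is then clear from the smoothness of each $\nabla_\alpha F$, of the coefficient functions $X^\alpha$ in any chart, and of the evaluation of a smooth tensor field on smooth jet and tangent fields via lemma \ref{local_smoothness_criterion}.

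The only delicate point will be to confirm that the decomposition $\nabla_X F = X^\alpha \nabla_\alpha F$ really captures the intrinsic operator defined in proposition \ref{extension_to_arb_tensors}: unlike the first-order case, the generalized Leibniz rule (\ref{Leibniz_prod_rule}) carries the multi-index weights $\frac{\alpha!}{\alpha_1!\alpha_2!}$ and couples tensor factors of different orders, so one must check that these weights combine cleanly when $\nabla_X$ is applied to a tensor product of lower-rank pieces in such a way as to remain linear in the scalar coefficients $X^\alpha$ at the outermost level. Coordinate independence then follows because both sides transform as the same generalized tensor under (\ref{vector_transf_law}). With all multilinearity conditions and smoothness verified, the reverse direction of lemma \ref{tensor_char} yields the desired generalized $\binom{k}{j+1}$-tensor field.
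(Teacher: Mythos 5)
Your proposal is correct and follows essentially the same route as the paper: its proof likewise invokes the generalized tensor characterization lemma \ref{tensor_char}, obtaining multilinearity over $C^\infty(M)$ in the $k+j$ slots from the fact that $\nabla_X F$ is itself a generalized tensor field, and $C^\infty(M)$-linearity in the $X$ slot directly from the defining property of a jet connection. Your coordinate verification $\nabla_X F = X^\alpha \nabla_\alpha F$, together with the worry about the multi-index weights in the Leibniz rule, is a harmless elaboration of that last point, which the paper simply cites without computation.
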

\begin{proof}
	As in Lee, the proof follows immediately from the (here) generalized tensor characterization lemma, in that $\nabla_X F$ is a generalized tensor field and hence multilinear over $C^\infty(M)$ in its $(k+j)$ arguments, while it is also linear over $C^\infty(M)$ in $X$ by virtue of the definition of a jet connection.	
\end{proof}	

The following coordinate expression for total covariant derivative will be useful to derive:

\begin{lemma}[cf. Lee, \cite{lee_riemannian_manif}, Lemma 4.8]\label{coord_expr_for_conn}
	Let $\nabla$ be a jet connection. The components of the total covariant derivative of a $\binom{p}{q}$-tensor field $F$ with respect to a coordinate system are given by
	\begin{equation}
		F^{\mu_1\cdots\mu_p}_{\mu_1\nu_2\cdots\nu_q;\alpha}= 
		\frac{\alpha!}{\lambda!\alpha_1!\cdots\alpha_p!\beta_1!\cdots\beta_q}
		\left( \partial_\lambda F^{\gamma_1\cdots\gamma_p}_{\delta_1\cdots\delta_q} \right)
		\Gamma^{\mu_1}_{\alpha_1\gamma_1}\cdots\Gamma^{\mu_p}_{\alpha_p\gamma_p}
		\tilde{\Gamma}^{\nu_1}_{\beta_1\delta_1}\cdots\tilde{\Gamma}^{\nu_p}_{\beta_p\delta_p}.
	\end{equation}
\end{lemma}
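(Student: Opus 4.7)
The strategy is to expand the tensor $F$ in the coordinate basis as a sum of decomposable products and then apply the iterated Leibniz rule of proposition \ref{extension_to_arb_tensors} (3), using the known actions of $\nabla$ on the three elementary kinds of factor (scalar component functions, basis vector fields $\partial_\gamma$, and basis jet fields $d^\delta$). Concretely, I would write
\begin{equation}
F = F^{\gamma_1\cdots\gamma_p}_{\delta_1\cdots\delta_q}\,\partial_{\gamma_1}\otimes\cdots\otimes\partial_{\gamma_p}\otimes d^{\delta_1}\otimes\cdots\otimes d^{\delta_q},
\end{equation}
then compute $\nabla_{\partial_\alpha} F$ and read off its components against the basis element $\partial_{\mu_1}\otimes\cdots\otimes\partial_{\mu_p}\otimes d^{\nu_1}\otimes\cdots\otimes d^{\nu_q}$, which by proposition \ref{total_deriv} are precisely the semicolon-indexed quantities $F^{\mu_1\cdots\mu_p}_{\nu_1\cdots\nu_q;\alpha}$.

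The heart of the matter is the iterated multinomial form of equation (\ref{Leibniz_prod_rule}). One first shows, by induction on the number of factors, that for $k$ tensor fields $G_1,\ldots,G_k$,
\begin{equation}
\nabla_\alpha(G_1\otimes\cdots\otimes G_k)=\sum_{\alpha_1+\cdots+\alpha_k=\alpha}\frac{\alpha!}{\alpha_1!\cdots\alpha_k!}\,(\nabla_{\alpha_1}G_1)\otimes\cdots\otimes(\nabla_{\alpha_k}G_k);
\end{equation}
the induction step is immediate from the binomial case already granted, together with the Vandermonde identity $\binom{\alpha}{\beta_1+\beta_2}\binom{\beta_1+\beta_2}{\beta_1}=\binom{\alpha}{\beta_1,\beta_2,\alpha-\beta_1-\beta_2}$ applied to the multinomial coefficients. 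Applied with the $1+p+q$ factors above, the partition becomes $\lambda+\alpha_1+\cdots+\alpha_p+\beta_1+\cdots+\beta_q=\alpha$, which accounts exactly for the combinatorial prefactor in the stated formula.

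Next I would substitute the elementary covariant-derivative rules factor by factor: on the scalar component, property (2) of proposition \ref{extension_to_arb_tensors} gives $\nabla_\lambda F^{\gamma_1\cdots\gamma_p}_{\delta_1\cdots\delta_q}=\partial_\lambda F^{\gamma_1\cdots\gamma_p}_{\delta_1\cdots\delta_q}$; on each vector factor, (\ref{eq25}) gives $\nabla_{\alpha_i}\partial_{\gamma_i}=\Gamma^{\mu_i}_{\alpha_i\gamma_i}\partial_{\mu_i}$; and on each jet factor, proposition \ref{jet_conn_on_1_forms} gives $\nabla_{\beta_j}d^{\delta_j}=\tilde{\Gamma}^{\delta_j}_{\nu_j\beta_j}d^{\nu_j}$. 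Collecting the coefficient of the distinguished basis element $\partial_{\mu_1}\otimes\cdots\otimes d^{\nu_q}$ and relabelling the dummy multi-indices $\gamma_i,\delta_j$ recovers the displayed formula verbatim.

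The one step that merits care, and which I expect to be the main obstacle, is the bookkeeping of the multinomial coefficients together with the raised/lowered conventions for $\tilde{\Gamma}$ versus $\Gamma$: because several of the sums run only over \emph{non-zero} multi-indices in the definition of $\tilde{\Gamma}$ (cf.\ equation (\ref{gamma_gamma})), one has to check that the terms of the iterated Leibniz expansion in which one or more $\alpha_i$ or $\beta_j$ vanishes do assemble correctly into the single expression on the right-hand side, without double counting. This reduces to the elementary observation that both $\nabla_{0}\partial_\gamma=\partial_\gamma$ and $\nabla_{0}d^\delta=d^\delta$ (as $\nabla_{0}$ is the identity, being contraction against the zero multi-index), so the $\alpha_i=0$ resp.\ $\beta_j=0$ contributions amount to retaining that factor unchanged; with the standard convention $\Gamma^{\mu_i}_{0\gamma_i}=\delta^{\mu_i}_{\gamma_i}$ and $\tilde\Gamma^{\delta_j}_{\nu_j 0}=\delta^{\delta_j}_{\nu_j}$, these degenerate cases are uniformly incorporated into the single multinomial sum. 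As an immediate corollary one verifies the commutation of $\nabla$ with partial trace that was left outstanding from proposition \ref{extension_to_arb_tensors}: contracting a $\mu_i$ with a $\nu_j$ in the coordinate formula and using the consistency relation (\ref{gamma_gamma}) collapses the corresponding $\Gamma$ and $\tilde{\Gamma}$ factors into a $\delta$, which is exactly what contracting \emph{before} applying $\nabla$ would yield.
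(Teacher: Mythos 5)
Your proposal is correct and follows essentially the same route as the paper's own proof: expand $F$ in the coordinate basis, distribute $\nabla_\alpha$ over the tensor product via the multinomial Leibniz rule, substitute $\nabla_{\alpha_i}\partial_{\gamma_i}=\Gamma^{\mu_i}_{\alpha_i\gamma_i}\partial_{\mu_i}$ and the corresponding $\tilde{\Gamma}$-rule on the jet factors, read off the components, and then obtain $\nabla\,\mathrm{tr}\,F=\mathrm{tr}\,\nabla F$ by tracing the resulting coordinate formula over the first upper and first lower indices. Your added care with the induction on the number of factors and with the $\nabla_0=\mathrm{id}$ convention (which the paper only records in a remark following the lemma) is a welcome but inessential elaboration of the same argument.
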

\begin{proof}
	Using the rule for how the covariant derivative distributes over tensor products, we perform the expansion
	\begin{align}
		\nabla_\alpha \left( F^{\mu_1\cdots\mu_p}_{\nu_1\cdots\nu_q} \partial_{\mu_1} \otimes
		\cdots \otimes \partial_{\mu_p} \otimes d^{\nu_1} \otimes \cdots d^{\nu_q} 
		\right) &=
		\frac{\alpha!}{\lambda!\alpha_1!\cdots\alpha_p!\beta_1!\cdots\beta_q}
		\left( \partial_\lambda F^{\mu_1\cdots\mu_p}_{\nu_1\cdots\nu_q} \right) 
		\times \nonumber \\
		&\nabla_{\alpha_1}\partial_{\mu_1}\otimes\cdots\otimes\nabla_{\alpha_p}
		\partial_{\mu_p} \otimes
		\nabla_{\beta_1}d^{\nu_1}\otimes\cdots \otimes \nabla_{\beta_q}d^{\nu_q},
	\end{align}
	from which, after applying the definitions of the jet connection acting on basis elements, we may read off the expression,
	\begin{equation}\label{coord_expression}
		F^{\mu_1\cdots\mu_p}_{\nu_1\cdots\nu_q;\alpha}=
		\frac{\alpha!}{\lambda!\alpha_1!\cdots\alpha_p!\beta_1!\cdots\beta_q}
		\left( \partial_\lambda F^{\gamma_1\cdots\gamma_p}_{\delta_1\cdots\delta_q} \right)
		\Gamma^{\mu_1}_{\alpha_1\gamma_1}\cdots\Gamma^{\mu_p}_{\alpha_p\gamma_p}
		\tilde{\Gamma}^{\nu_1}_{\beta_1\delta_1}\cdots\tilde{\Gamma}^{\nu_p}_{\beta_p\delta_p}.
	\end{equation}
	From the right-hand side of equation (\ref{coord_expression}), it is clear that by linearity over $\vvmathbb{R}$ the covariant derivative commutes with permutations over indices; that is, if $\sigma \in S_p$ and $\rho \in S_q$, we have $\left(F^{\sigma(\mu_1)\cdots\sigma(\mu_p)}_{\rho(\nu_1)\cdots\rho(\nu_q)}
	\right)_{;\alpha} =
	F^{\mu_1\cdots\mu_p}_{\nu_1\cdots\nu_q;\alpha}$. Hence, we need consider partial traces over the first upper and the first lower indices only. In particular, if we trace over the first upper and the first lower index, we get,
	\begin{equation}
		F^{\mu_1\cdots\mu_p}_{\mu_1\nu_2\cdots\nu_q;\alpha}= 
		\frac{\alpha!}{\lambda!\alpha_1!\cdots\alpha_p!\beta_1!\cdots\beta_q}
		\left( \partial_\lambda F^{\gamma_1\cdots\gamma_p}_{\delta_1\cdots\delta_q} \right)
		\Gamma^{\mu_1}_{\alpha_1\gamma_1}\cdots\Gamma^{\mu_p}_{\alpha_p\gamma_p}
		\tilde{\Gamma}^{\mu_1}_{\beta_1\delta_1}\tilde{\Gamma}^{\nu_2}_{\beta_2\delta_2}\cdots\tilde{\Gamma}^{\nu_p}_{\beta_p\delta_p};
	\end{equation}
	i.e., $\nabla~\mathrm{tr}~F = \mathrm{tr}~\nabla F$.
\end{proof}

\begin{remark}
	Recall in equation (\ref{coord_expression}) our convention that $\nabla_0 = \mathrm{id}$. In light of proposition \ref{jet_conn_on_1_forms}, this formula reduces to the usual one in the case of first-order infinitesimals only ($r=1$). 
\end{remark}

Let us round out this section with a couple simple applications. Following Lee, we could for the purpose of illustration define a covariant Hessian
of a smooth function $u$ on $M$ as the following generalized 2-tensor field: $\nabla^2 u := \nabla \left( \nabla u \right)$. In order to obtain a convenient expression for the covariant Hessian, we introduce two concepts. First, we may transfer the action of the jet connection $\nabla$ on jets to vectors by identifying $d^\alpha$ with $\partial_\alpha$, in a given coordinate system. Denote the resultant jet connection acting on vectors by $\tilde{\Gamma}$. The construction is not at all canonical, but we are free to define (locally) a jet connection coordinate-wise however we please as long as it serve as an intermediary on the way to a coordinate-free result (cf. proposition \ref{torsion_tensor_properties} below). 

Second, we advert to a subtle issue that arises as soon as one goes beyond first order in the infinitesimals. In the definition of an ordinary connection, the covariant derivative $\nabla_X Y$ of a vector field $X$ acting on the vector field $Y$, the differentiation associated with $X$ operates on the components of $Y$ but does not go past $Y$ to operate directly on the scalar function to which this covariant derivative is applied; i.e., $(\nabla_X Y)u = (XY^\mu)u_{,\mu}+\Gamma^\lambda_{\nu\mu}X^\nu Y^\mu u_{,\lambda}$. As long as vector fields are applied to scalar functions only no problem arises. A vector field regarded as a linear partial differential operator, however, may first be composed with another vector field before being applied to a scalar function. In this case, an inconsistency arises in the higher derivatives once one proceeds beyond flat space. Logically, all derivatives ought to be covariant in a curved space. In order to meet this stipulation, we introduce a new concept, that of 
covariant composition of vector fields (viewed as linear operators), to be denoted $X \circ Y$ in place of $XY$, the ordinary composition of operators. It will be defined inductively if we make the assignments
\begin{align}
	\nabla_\alpha \circ u &:= \partial_\alpha u = u_{,\alpha}; \\
	\nabla_\alpha \circ X &:= \frac{\alpha!}{\alpha_1!\alpha_2!} X^\nu_{,\alpha_1} \Gamma^\mu_{\alpha_2\nu} \nabla_\mu \circ 
\end{align}
and
\begin{equation}\label{cov_comp_def}
	Y \circ X \circ :=  Y^\mu \frac{\mu!}{\mu_1!\mu_2!} \left( \nabla_{\mu_1} X \right) \left( \nabla_{\mu_2} \circ ~~~ \right).
\end{equation}

\begin{proposition}[cf. Lee, \cite{lee_riemannian_manif}, Exercise 4.5]\label{cov_hess}The covariant Hessian satisfies the following two identities:
	\begin{itemize}
		\item[$(1)$] $\nabla^2 u (X,Y) = Y(Xu) + (\tilde{\nabla}_Y X) u = Y(Xu) - (\nabla_Y X) u - \delta(X,Y) u$;
		
		\item[$(2)$] $\nabla^2 u (X,Y) = Y(Xu) + X(Yu) - Y \circ X \circ u + \varepsilon(X,Y) u$.
	\end{itemize}
	where the defects $\delta(X,Y)$ and $\varepsilon(X,Y)$ will be defined in the course of proof.
\end{proposition}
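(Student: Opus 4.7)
The starting point is to unfold the definition of the total covariant derivative from proposition \ref{total_deriv}, which gives $\nabla^2 u(X,Y) = (\nabla_Y \nabla u)(X) = (\nabla_Y du)(X)$. From this common expression, the two stated identities will emerge as distinct algebraic rewritings, in which $\delta(X,Y)$ and $\varepsilon(X,Y)$ are to be defined as the residual generalized tensors capturing the higher-order cross contributions absent in the ordinary first-order calculus.

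For the second equality in $(1)$, I would exploit property $(4)$ of proposition \ref{extension_to_arb_tensors}---the fact that $\nabla$ commutes with trace---in tandem with the multi-index Leibniz rule (\ref{Leibniz_prod_rule}). Writing $Xu = du(X) = \mathrm{tr}(du \otimes X)$ and using that $\nabla_Y f = Yf$ for scalar $f$, one computes
\begin{align}
	Y(Xu) &= \nabla_Y \mathrm{tr}(du \otimes X) = \mathrm{tr}(\nabla_Y(du \otimes X)) \nonumber \\
	&= \mathrm{tr}\left( Y^\alpha \tfrac{\alpha!}{\alpha_1!\alpha_2!} (\nabla_{\alpha_1} du) \otimes (\nabla_{\alpha_2} X) \right) \nonumber \\
	&= (\nabla_Y du)(X) + (\nabla_Y X)u + \delta(X,Y)u, \nonumber
\end{align}
where the boundary summands $\alpha_2 = 0$ and $\alpha_1 = 0$ deliver the first two terms on the right-hand side and the interior cross terms are collected into
\[
	\delta(X,Y)u := \sum_{\alpha_1+\alpha_2=\alpha,\,\alpha_{1,2}\ne 0} Y^\alpha \tfrac{\alpha!}{\alpha_1!\alpha_2!} (\nabla_{\alpha_1} du)(\nabla_{\alpha_2} X).
\]
Rearranging yields the second equality in $(1)$. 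The first equality then becomes the purely algebraic assertion that $(\tilde{\nabla}_Y X)u = -(\nabla_Y X)u - \delta(X,Y)u$, which I would verify by expanding $(\tilde{\nabla}_Y X)u$ in coordinates and substituting the consistency relation (\ref{gamma_gamma}) between $\Gamma$ and $\tilde{\Gamma}$ furnished by proposition \ref{jet_conn_on_1_forms}; the quadratic-in-Christoffel correction provided there pairs term-by-term with $\delta$.

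For identity $(2)$, I would unpack $Y \circ X \circ u$ using its recursive definition (\ref{cov_comp_def}). Splitting off the $\mu_2 = 0$ summand (which contributes $(\nabla_Y X)u$) from the $\mu_1 = 0$ summand (which after iterated unfolding yields $X(Yu)$ modulo connection coefficients) and the remaining interior cross terms produces
\[
	Y \circ X \circ u = X(Yu) + (\nabla_Y X)u + \delta(X,Y)u + \varepsilon(X,Y)u,
\]
where $\varepsilon(X,Y)$ is defined as the residual. Substituting this relation into $(1)$ then gives $(2)$ at once. That $\varepsilon$ is in fact a generalized tensor follows from lemma \ref{tensor_char}, since the defining right-hand side is manifestly multilinear over $C^\infty(M)$ in $X$ and $Y$. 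The main obstacle throughout is the combinatorial bookkeeping: the multinomial factors $\tfrac{\alpha!}{\alpha_1!\alpha_2!}$ appear in both the Leibniz rule and the recursive expansion of $\circ$, and one must verify that they collapse consistently against the correction in (\ref{gamma_gamma})---this is the one technical step that is not automatic.
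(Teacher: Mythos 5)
Your proposal is correct and follows essentially the same route as the paper's own proof: the same trace-plus-Leibniz expansion of $Y(Xu)$ in which the boundary summands yield $\nabla^2 u(X,Y)$ and $(\nabla_Y X)u$ and the interior cross terms define $\delta$, the same appeal to the consistency relation (\ref{gamma_gamma}) for the first equality in $(1)$, and the same comparison of the expansion of $Y \circ X \circ u$ against that of $Y(Xu)$ for $(2)$. Indeed, your intermediate identity $Y \circ X \circ u = X(Yu) + (\nabla_Y X)u + \delta(X,Y)u + \varepsilon(X,Y)u$ is precisely the relation the paper itself later reads off from this proposition when discussing torsion.

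Two caveats. First, where you leave $\varepsilon$ as an unspecified residual, the paper pins it down in closed form, namely $\varepsilon(X,Y)u = Y^\mu \frac{\mu!}{\mu_1!\mu_2!}\big|_{\mu_{1,2}\ne 0}\,\mathrm{tr}\,\nabla_{\mu_1}X \otimes \left( d\nabla_{\mu_2}u - \nabla_{\mu_2}du \right)$, i.e., a cross-term pairing of $\nabla X$ against the failure of $d$ to commute with $\nabla$ on scalars; since the defects are only required to be \emph{defined} in the course of proof, your residual definition suffices for the identities as stated, but the explicit form is what propositions \ref{defect_symm} and \ref{cov_comp_vs_cov_deriv} actually rely on, so your version defers that work. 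Second, your closing claim that $\varepsilon$ is a generalized tensor because the right-hand side is ``manifestly multilinear over $C^\infty(M)$'' is not correct: both defects contain $\nabla_{\mu_1}X$ with $\mu_1 \ne 0$, which is not $C^\infty(M)$-linear in $X$ (covariant derivatives of $fX$ produce derivatives of $f$), and the paper accordingly makes no tensoriality claim here---linearity of $\varepsilon$ in its first argument is recovered only later, modulo $\delta$-terms and under the symmetry hypothesis on $\nabla$. This flaw is peripheral to the stated identities, but the claim should be struck.
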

\begin{proof}
	For any $u \in C^\infty(M)$ and $X, Y \in \mathscr{J}^r(M)$, we compute $(\nabla u)(X) = \nabla_X u = X u$ so $\nabla u$ is just the 1-form given by $du = u_{,\alpha} d^\alpha$. Proceeding onwards:
	\begin{align}\label{cov_hess_expansion}
		\nabla^2 u(X,Y) &= \nabla \left( \nabla u \right) (X,Y) = \left( \nabla_Y \nabla u \right)(X) = Y^\mu \frac{\mu!}{\mu_1!\mu_2!} X^\nu u_{,\mu_1+\alpha} \tilde{\Gamma}^\alpha_{\nu\mu_2} =
		Y^\mu X^\nu \left( \nabla_\mu du \right)_\nu \\
		Y \left( X u \right) &=  Y^\mu \frac{\mu!}{\mu_1!\mu_2!} X^\nu_{,\mu_1} u_{,\mu_2+\nu} = Y^\mu \partial_\mu \left( X u \right) = Y^\mu \nabla_\mu \left( X u \right) \nonumber \\
		&= Y^\mu \nabla_\mu \mathrm{tr}~ X \otimes du \nonumber \\
		&= Y^\mu \frac{\mu!}{\mu_1!\mu_2!} \mathrm{tr}~ \nabla_{\mu_1} X \otimes \nabla_{\mu_2} du \nonumber \\
		&= \left( Y^\mu \nabla_\mu X \right) u +  Y^\mu \frac{\mu!}{\mu_1!\mu_2!}\bigg|_{\mu_{1,2}\ne 0} \mathrm{tr}~ \nabla_{\mu_1} X \otimes \nabla_{\mu_2} du + Y^\mu X^\nu \left( \nabla_\mu du \right)_\nu. 
	\end{align}
	But the final term on the right-hand side in equation (\ref{cov_hess_expansion}) is just the sought-for $\nabla^2 u(X,Y)$. If we define a vector field by
	\begin{equation}
		\delta(X,Y) u := Y^\mu \frac{\mu!}{\mu_1!\mu_2!}\bigg|_{\mu_{1,2}\ne 0} \mathrm{tr}~ \nabla_{\mu_1} X \otimes \nabla_{\mu_2} du,
	\end{equation}
	we immediately obtain the second equation in (1). As for the other relation in (1), expand the right-hand side of equation (\ref{cov_hess_expansion}) in Christoffel symbols with respect to any coordinate system:
	\begin{equation}
		Y(Xu) = Y^\mu \frac{\mu!}{\mu_1!\mu_2!}\bigg|_{\mu_1\ne 0} \frac{\mu_1!}{\lambda_1!\lambda_2!} \frac{\mu_2!}{\rho_1!\rho_2!} X^\nu_{,\lambda_1} \Gamma^\beta_{\lambda_2\nu} \tilde{\Gamma}^\alpha_{\beta\rho_2} u_{,\rho_1+\alpha} + \nabla^2 u(X,Y)
	\end{equation}
	and recognize that then in light of equation (\ref{gamma_gamma}) the Christoffel symbols may be regrouped as simply $-{}^t \tilde{\Gamma}$, which we now interpret as a connection acting to its left on the vector $X$ rather than to its right on $du$. The result is just, as claimed:
	\begin{equation}
		Y(Xu) = - \left( \tilde{\nabla}_Y X \right) u + \nabla^2 u(X,Y).
	\end{equation}
	
	Now, the covariant composition of $Y$ with $X$ is given by
	\begin{equation}\label{cov_comp_expansion}
		Y \circ X \circ =  Y^\mu \frac{\mu!}{\mu_1!\mu_2!} \left( \nabla_{\mu_1} X \right) \left( \nabla_{\mu_2} \circ ~~~ \right);
	\end{equation}
	so when applied to a scalar function $u$ the last term on the right-hand side becomes just $X(Yu)$, in view of the fact that the covariant derivative of a scalar coincides with the ordinary derivative. Upon comparison of the remaining terms in equation (\ref{cov_comp_expansion}) with their corresponding members in equation (\ref{cov_hess_expansion}), we obtain, after re-expressing them in terms of a trace, the desired result (2) with the definition
	\begin{equation}
		\varepsilon(X,Y) u :=  Y^\mu \frac{\mu!}{\mu_1!\mu_2!}\bigg|_{\mu_{1,2}\ne 0} \mathrm{tr}~ \nabla_{\mu_1} X \otimes \left(
		d \nabla_{\mu_2} u - \nabla_{\mu_2} du \right).
	\end{equation}
	Thus, (1) and (2) hold with the indicated definitions of the defects $\delta(X,Y)$ and $\varepsilon(X,Y)$.
\end{proof}

\begin{remark}
	For infinitesimals of first order only, we have $\tilde{\Gamma} = - {}^t \Gamma$ (cross-terms do not enter). Hence, up to the defects the formula for the covariant Hessian at which we arrive looks formally the same as what appears in Lee, only we understand here that it involves the jet connection and higher-order vector fields.
\end{remark}

\begin{lemma}\label{cov_hess_symm}
	Let $M$ be a manifold equipped with a jet connection $\nabla$. Then the following two properties hold:
	\begin{itemize}
		\item[$(1)$] The covariant Hessian is symmetric if and only if the Christoffel symbols of the jet connection acting on jets with respect to any coordinate frame are symmetric in the sense that 
		$\tilde{\Gamma}^\gamma_{\alpha\beta}=\tilde{\Gamma}^\gamma_{\beta\alpha}$. 	
		
		\item[$(2)$] $\tilde{\Gamma}$ is symmetric if and only if $\Gamma$ is symmetric.
	\end{itemize}
\end{lemma}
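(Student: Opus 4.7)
For Part (1), my plan is to compute the components $(\nabla^2 u)_{\nu\beta} = (\nabla_\beta du)(\partial_\nu)$ in a coordinate frame using the Leibniz rule applied to $du = u_{,\alpha} d^\alpha$, or equivalently Lemma \ref{coord_expr_for_conn}. This yields a sum over splittings $\beta_1 + \beta_2 = \beta$ of terms of the shape $\binom{\beta}{\beta_1}(\partial_{\beta_1} u_{,\alpha})\tilde{\Gamma}^\alpha_{\nu\beta_2}$. The boundary contribution with $\beta_2 = 0$ (where $\nabla_0 = \mathrm{id}$ forces $\tilde{\Gamma}^\alpha_{\nu,0} = \delta^\alpha_\nu$) collapses to the pure iterated partial $\partial_\beta u_{,\nu}$, which is symmetric in $(\nu,\beta)$ because iterated partials commute. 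Consequently the symmetry of the Hessian reduces to a linear condition on the $\tilde{\Gamma}$-dependent terms. To see that this condition is equivalent to $\tilde{\Gamma}^\gamma_{\alpha\beta}=\tilde{\Gamma}^\gamma_{\beta\alpha}$, I would test the Hessian against monomials $u=x^\gamma$ at the origin; varying the multi-index $\gamma$ isolates individual components of $\tilde{\Gamma}$, and the collective system of constraints obtained this way will force the claimed lower-index symmetry.

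For Part (2), the natural tool is the recursive consistency equation (\ref{gamma_gamma}) from the proof of Proposition \ref{jet_conn_on_1_forms},
\[
\tilde{\Gamma}^\nu_{\mu\alpha} + \sum_{\substack{\alpha_1+\alpha_2=\alpha\\ \alpha_{1,2}\ne 0}}\frac{\alpha!}{\alpha_1!\alpha_2!}\,\tilde{\Gamma}^\lambda_{\mu\alpha_1}\Gamma^\nu_{\lambda\alpha_2} + \Gamma^\nu_{\alpha\mu} = 0,
\]
which determines $\tilde{\Gamma}$ implicitly from $\Gamma$. I would proceed by induction on the total order $|\mu|+|\alpha|$. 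In the base case $|\mu|=|\alpha|=1$, no cross-terms appear and the equation reduces to $\tilde{\Gamma}^\nu_{\mu\alpha}=-\Gamma^\nu_{\alpha\mu}$, so symmetry of $\Gamma$ in its lower indices is instantly equivalent to that of $\tilde{\Gamma}$. For the inductive step, I would compare this equation at $(\mu,\alpha)$ with its counterpart obtained by exchanging $\mu\leftrightarrow\alpha$: under the symmetry hypothesis on $\Gamma$ the extreme linear terms match, and the nonlinear cross-terms involve $\tilde{\Gamma}$ at strictly lower total order, where the inductive hypothesis applies.

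The main obstacle is the inductive step of Part (2). The nonlinear cross-terms in the $(\mu,\alpha)$ version of (\ref{gamma_gamma}) sum over splittings of $\alpha$, while in the $(\alpha,\mu)$ version they sum over splittings of $\mu$, so the bookkeeping is not manifestly invariant under the swap. Reconciling the two demands careful re-indexing and a systematic use of multi-index binomial identities together with the inductive symmetry hypothesis on $\tilde{\Gamma}$ at lower orders. A smaller but analogous difficulty appears in Part (1): one must ensure that the test-polynomial argument genuinely isolates each Christoffel symbol without leaving residual cross-contamination among the "middle" splittings; this is likely handled by varying $\gamma$ over a sufficiently rich family of multi-indices so that the constraints can be disentangled order by order.
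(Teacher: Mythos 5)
Your route is genuinely different from the paper's --- the paper argues index-free, proving (1) by pushing a transpose through a chain of traces and (2) by observing that ${}^{t_{23}}\tilde{\Gamma}$ satisfies the index-free form of (\ref{gamma_gamma}) and then invoking the uniqueness in Proposition \ref{jet_conn_on_1_forms} --- but the comparison is moot, because the step you defer as ``careful re-indexing and a systematic use of multi-index binomial identities'' is not bookkeeping: it is the crux, and it fails. In your inductive step for (2), after inserting the inductive hypothesis you must identify a convolution over splittings of $\alpha$ with a convolution over splittings of $\mu$,
\begin{equation*}
	\sum_{\substack{\alpha_1+\alpha_2=\alpha\\ \alpha_{1,2}\ne 0}}\frac{\alpha!}{\alpha_1!\alpha_2!}\,\tilde{\Gamma}^\lambda_{\mu\alpha_1}\Gamma^\nu_{\lambda\alpha_2}
	\;=\;
	\sum_{\substack{\mu_1+\mu_2=\mu\\ \mu_{1,2}\ne 0}}\frac{\mu!}{\mu_1!\mu_2!}\,\tilde{\Gamma}^\lambda_{\alpha\mu_1}\Gamma^\nu_{\lambda\mu_2},
\end{equation*}
and this is false at the very first case past your base case. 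Take $n=1$, $r=2$, $\mu=(1)$, $\alpha=(2)$: the right-hand side is an empty sum (the multi-index $(1)$ admits no splitting into two non-zero parts), while the left-hand side equals $2\tilde{\Gamma}^\lambda_{11}\Gamma^\nu_{\lambda 1}=-2\Gamma^\lambda_{11}\Gamma^\nu_{\lambda 1}$, which is non-zero for a generic \emph{symmetric} $\Gamma$ (e.g.\ $\Gamma^1_{11}=1$, all other components zero, $\nu=1$). Indeed, solving (\ref{gamma_gamma}) order by order in the direction index --- which is exactly the unique solution the implicit function theorem produces --- gives $\tilde{\Gamma}^\nu_{21}=-\Gamma^\nu_{12}$ but $\tilde{\Gamma}^\nu_{12}=-\Gamma^\nu_{21}+2\Gamma^\lambda_{11}\Gamma^\nu_{\lambda 1}$, so with $\Gamma$ symmetric one finds $\tilde{\Gamma}^1_{12}-\tilde{\Gamma}^1_{21}=2\ne 0$. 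The quadratic cross-terms exist only when the differentiation index has order $\ge 2$, and no amount of re-indexing makes a non-empty sum equal an empty one; your induction collapses at total order three.

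The same asymmetry undermines your plan for (1). In $(\nabla^2u)_{\nu\beta}=\sum_{\beta_1+\beta_2=\beta}\frac{\beta!}{\beta_1!\beta_2!}(\partial_{\beta_1}u_{,\alpha})\tilde{\Gamma}^\alpha_{\nu\beta_2}$ the $Y$-slot index $\beta$ splits under the Leibniz rule while the $X$-slot index $\nu$ stays whole, so your monomial test does isolate constraints, but they are not the claimed ones: in the same one-dimensional second-order setting, matching the coefficient of $u'''$ in $(\nabla^2u)_{12}=(\nabla^2u)_{21}$ forces $\tilde{\Gamma}^2_{11}=0$, and matching $u''$ forces $\tilde{\Gamma}^2_{12}-\tilde{\Gamma}^2_{21}=-4\tilde{\Gamma}^1_{11}$; neither is implied by, nor implies, $\tilde{\Gamma}^\gamma_{\alpha\beta}=\tilde{\Gamma}^\gamma_{\beta\alpha}$. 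Carried out honestly, your method therefore refutes the stated equivalences rather than proving them. For what it is worth, your unease was well placed: the paper's own proof commits precisely the interchange at issue, asserting ${}^{t_{23}}\mathrm{tr}_{14}\,\tilde{\Gamma}\otimes\Gamma=\mathrm{tr}_{14}\,({}^{t_{23}}\tilde{\Gamma})\otimes\Gamma$ ``since the operations apply to distinct subgroups of indices,'' which swaps an index that stays whole with one that splits --- exactly what the counterexample above rules out. The obstacle you flagged is fatal to the lemma as stated, not a technicality to be absorbed into binomial identities.
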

\begin{proof}
	$(1)$ Observe that
	\begin{equation}
		\nabla^2 u(X,Y) = Y^\mu X^\nu \nabla_\mu \left( \nabla_\nu u \right) 
		= Y^\mu X^\nu \frac{\mu!}{\mu_1!\mu_2!} u_{,\mu_1+\nu} \tilde{\Gamma}^\nu_{\lambda\mu_2} d^\lambda.
	\end{equation}
	It will be convenient to employ index-free notation so that the right-hand side of the above equation comprises terms of the form
	\begin{equation}
		\mathrm{tr}~\mathrm{tr}~\mathrm{tr}~ Y \otimes \tilde{\Gamma} \otimes du \otimes X.
	\end{equation}
	Now, by the assumed symmetry of $\tilde{\Gamma}$, this becomes as follows:
	\begin{align}
		\mathrm{tr}~\mathrm{tr}~\mathrm{tr}~ Y \otimes {}^t\tilde{\Gamma} \otimes du 
		\otimes X &= \mathrm{tr}~\mathrm{tr}~\mathrm{tr}~ \left( \tilde{\Gamma} Y \right) \otimes du \otimes X \nonumber \\
		&= \mathrm{tr}~\mathrm{tr}~\mathrm{tr}~ X \otimes du \otimes \left( \tilde{\Gamma} Y \right) \nonumber \\
		&= \mathrm{tr}~\mathrm{tr}~\mathrm{tr}~ X \otimes \tilde{\Gamma} \otimes du \otimes Y.
	\end{align}
	Therefore, after collecting terms we end up with $\nabla^2 u(X,Y) = \nabla^2 u(Y,X)$. Heading in the other direction, one reads off ${}^t\tilde{\Gamma}=\tilde{\Gamma}$ immediately from $\nabla^2 u(Y,X) = 
	\nabla^2 u(X,Y)$.
	
	$(2)$ We can suppress the combinatorial factors by temporarily understanding the Christoffel symbols to be normalized as $(\gamma!/\alpha!\beta!)\Gamma^\gamma_{\alpha\beta}$ and similarly for $\tilde{\Gamma}$. Write the defining equation (\ref{gamma_gamma}) in an index-free manner as follows, where $\mathrm{tr}_{14}$ denotes a partial trace over the first and fourth indices:
	\begin{equation}\label{gamma_gamma_index_free}
		\tilde{\Gamma} + \mathrm{tr}_{14} \tilde{\Gamma} \otimes \Gamma + \Gamma = 0.
	\end{equation}
	Suppose now the jet connection to be symmetric: ${}^{t_{23}}\Gamma=\Gamma$. Then substitute into the transpose with respect to the second and third indices of equation (\ref{gamma_gamma_index_free}) to yield,
	\begin{equation}
		{}^{t_{23}}\tilde{\Gamma} + {}^{t_{23}}\mathrm{tr}_{14} \tilde{\Gamma} \otimes \Gamma + \Gamma = 0.
	\end{equation}
	But since the operations apply to distinct subgroups of indices, we must have that
	${}^{t_{23}}\mathrm{tr}_{14} \tilde{\Gamma} \otimes \Gamma = \mathrm{tr}_{14} \left( {}^{t_{23}}\tilde{\Gamma} \right) \otimes \Gamma$ and so
	\begin{equation}
		{}^{t_{23}}\tilde{\Gamma} + \mathrm{tr}_{14} \left( {}^{t_{23}}\tilde{\Gamma} \right) \otimes \Gamma + \Gamma = 0;
	\end{equation}
	by uniqueness in proposition \ref{jet_conn_on_1_forms} we find that ${}^{t_{23}}\tilde{\Gamma}=\tilde{\Gamma}$; i.e., $\tilde{\Gamma}$ is symmetric as well. The other direction is immediate. 
\end{proof}

\begin{definition}\label{torsion_def}
	Let $\nabla$ be a jet connection on $M$. We define a map $\tau: \mathscr{J}^\infty(M) \times \mathscr{J}^\infty(M) \rightarrow \mathscr{J}^\infty(M)$ by 
	\begin{equation}\label{torsion_def_equ}
		\tau(X,Y) := \nabla_X Y - \nabla_Y X - [X,Y] - \delta(X,Y) + \delta(Y,X).
	\end{equation}
\end{definition}

\begin{proposition}[Cf. Lee, \cite{lee_riemannian_manif}, Problem 4.2]\label{torsion_tensor_properties}
	Let $M$ be a manifold equipped with a jet connection $\nabla$ and the map $\tau$ defined by equation (\ref{torsion_def_equ}). Then 
	\begin{itemize}
		\item[$(1)$] $\tau$ is a $\binom{2}{1}$-tensor field, which we shall call the torsion tensor of $\nabla$; 
		
		\item[$(2)$] we shall call $\nabla$ symmetric if its torsion tensor vanishes identically, which holds if and only if its Christoffel symbols with respect to any coordinate frame satisfy $\Gamma^\gamma_{\alpha\beta}=\Gamma^\gamma_{\beta\alpha}$. 	
	\end{itemize}
\end{proposition}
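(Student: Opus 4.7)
The plan is to reduce both parts of the proposition to the master identity
\[
  \tau(X,Y)\,u \;=\; \nabla^2 u(X,Y) - \nabla^2 u(Y,X)
\]
for every $u \in C^\infty(M)$ and $X,Y \in \mathscr{J}^\infty(M)$, and then invoke the results already at our disposal on the covariant Hessian. To establish the master identity, I would first rewrite the two $\delta$-terms using proposition~\ref{cov_hess}(1), which gives
\[
  \delta(X,Y)\,u = Y(Xu) - (\nabla_Y X)\,u - \nabla^2 u(X,Y), \qquad
  \delta(Y,X)\,u = X(Yu) - (\nabla_X Y)\,u - \nabla^2 u(Y,X),
\]
substitute these into the definition (\ref{torsion_def_equ}), and use that $[X,Y]$ denotes the ordinary operator commutator so that $[X,Y]u = X(Yu) - Y(Xu)$. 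The terms $(\nabla_X Y)u$, $(\nabla_Y X)u$, $X(Yu)$ and $Y(Xu)$ then all cancel pairwise, leaving only the antisymmetrized covariant Hessian.

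For part (1), notice that the right-hand side of the master identity is $C^\infty(M)$-bilinear in $(X,Y)$ because, by proposition~\ref{total_deriv}, $\nabla^2 u = \nabla(du)$ is a generalized $\binom{0}{2}$-tensor field, hence each of $\nabla^2 u(X,Y)$ and $\nabla^2 u(Y,X)$ is. Since a higher-order vector field is fully determined by its action on scalars, the identities $\tau(fX,Y)u = f\,\tau(X,Y)u$ and $\tau(X,fY)u = f\,\tau(X,Y)u$, valid for every $u$, upgrade to $\tau(fX,Y) = f\,\tau(X,Y)$ and $\tau(X,fY) = f\,\tau(X,Y)$ as vector fields; linearity over $\vvmathbb{R}$ in each slot is trivial from the definition. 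By the generalized tensor characterization lemma~\ref{tensor_char}, this suffices to conclude that $\tau$ is a smooth $\binom{2}{1}$-tensor field.

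For part (2), the master identity at once makes $\tau \equiv 0$ equivalent to the covariant Hessian $\nabla^2 u$ being symmetric for every $u$. Lemma~\ref{cov_hess_symm}(1) translates this symmetry into the condition $\tilde{\Gamma}^\gamma_{\alpha\beta} = \tilde{\Gamma}^\gamma_{\beta\alpha}$ on the jet-side Christoffel symbols, and lemma~\ref{cov_hess_symm}(2) then exchanges symmetry of $\tilde{\Gamma}$ for symmetry of $\Gamma$ itself, giving the stated criterion $\Gamma^\gamma_{\alpha\beta} = \Gamma^\gamma_{\beta\alpha}$.

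The main obstacle is purely computational: verifying the master identity cleanly at higher order requires careful bookkeeping of the cross-terms hidden in $\delta(X,Y)$ and of the fact that $[X,Y]$ lives a priori in $\mathscr{J}^\infty(M)$ (its order can be up to $2r-1$), so the two sides of the identity must be compared as operators on $C^\infty(M)$ rather than termwise within any fixed $\mathscr{J}^r(M)$. Once this identity is pinned down, tensoriality follows from a formal appeal to lemma~\ref{tensor_char} and symmetry follows from lemma~\ref{cov_hess_symm}, so no genuinely new machinery is required beyond what has already been developed.
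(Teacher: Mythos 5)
Your proposal is correct and takes essentially the same route as the paper's own proof: the identical master identity $\tau(X,Y)u = \nabla^2 u(X,Y) - \nabla^2 u(Y,X)$ obtained from proposition \ref{cov_hess}, tensoriality deduced from the tensoriality of the covariant Hessian (proposition \ref{total_deriv}) together with lemma \ref{tensor_char}, and the symmetry criterion obtained by combining this identity with both parts of lemma \ref{cov_hess_symm}. The only cosmetic difference is that you check $C^\infty(M)$-linearity in both slots directly from bilinearity of the Hessian, whereas the paper checks one slot and invokes the manifest antisymmetry of $\tau$ for the other.
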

\begin{proof}
	$(1)$ In view of lemma \ref{tensor_char} and the anti-symmetry of $\tau$ under interchange of $X$ and $Y$, it suffices to show that $\tau(fX,Y)=f\tau(X,Y)$ for all smooth functions $f \in C^\infty(M)$. Thus, from its definition and proposition \ref{cov_hess} we have for any $u \in C^\infty(M)$,
	\begin{align}\label{torsion_cov_hess}
		\tau(X,Y)u &= \nabla_X Y - \nabla_Y X - XY + YX - \delta(X,Y) + \delta(Y,X) \nonumber \\
		&= \nabla^2 u (X,Y) - \nabla^2 u (Y,X).
	\end{align}
	But we know by proposition \ref{total_deriv} that the covariant Hessian is tensorial and hence multilinear in both of its arguments. Therefore, the torsion must be multilinear in $X$ as well, for
	\begin{align}
		\tau(fX,Y)u &= \nabla^2 u (fX,Y) - \nabla^2 u (Y,fX) \nonumber \\
		&= f \nabla^2 u (X,Y) - f \nabla^2 u (Y,X) \nonumber \\
		&= f\tau(X,Y)u.
	\end{align}
	
	$(2)$ By lemma \ref{cov_hess_symm}, $\nabla$ is symmetric if and only if $\tilde{\nabla}$ is. Again by the lemma, the latter statement is equivalent to symmetry of the covariant Hessian. But as we see above in (1), symmetry of the covariant Hessian implies vanishing of torsion and vice versa.
\end{proof}

\begin{remark}
	Observe that in the conventional case of infinitesimals of first order only, the covariant composition law simplifies to
	\begin{equation}
		Y \circ X \circ = \nabla_Y X + XY,
	\end{equation}
	which when inserted into definition \ref{torsion_def} produces
	\begin{equation}
		\tau(X,Y) = X \circ Y \circ - Y \circ X \circ,
	\end{equation}
	what will serve as an alternate and perhaps more perspicuous expression for the torsion, now recognizable as a direct measure of the non-commutativity of covariant composition. 
\end{remark}

This statement can be generalized to the case of higher order infinitesimals if we read off from proposition \ref{cov_hess} that $Y \circ X \circ - \varepsilon(X,Y) = XY + \nabla_Y X + \delta(X,Y)$; then substituting into definition \ref{torsion_def} we obtain the following expression:
\begin{equation}\label{torsion_cov_comp}
	\tau(X,Y) = X \circ Y \circ - Y \circ X \circ + \varepsilon(X,Y) - \varepsilon(Y,X).
\end{equation}
We can say a little more if the jet connection be taken to be symmetric, as forms the content of the following proposition:

\begin{proposition}\label{defect_symm}
	When the jet connection $\nabla$ is symmetric, the defects $\varepsilon(X,Y)$ and $\delta(X,Y)$ are symmetric and the following two identities hold:
	\begin{itemize}
		\item[$(1)$] $\tau(X,Y) = \nabla_X Y - \nabla_Y X - [X,Y] = 0$;
		
		\item[$(2)$] $\tau(X,Y) = X \circ Y \circ - Y \circ X \circ = 0$.
	\end{itemize}
\end{proposition}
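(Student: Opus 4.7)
The plan is to first exploit the hypothesis to obtain $\tau \equiv 0$ and the symmetry of the covariant Hessian, then use proposition \ref{cov_hess} to upgrade these to symmetry of the defects, at which point identities (1) and (2) drop out of the definitions. One of the four claims will require a genuine coordinate computation as an independent input.

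First I would invoke proposition \ref{torsion_tensor_properties}(2) to see that the hypothesis $\Gamma^\gamma_{\alpha\beta} = \Gamma^\gamma_{\beta\alpha}$ is equivalent to $\tau \equiv 0$, and lemma \ref{cov_hess_symm} to pass from symmetry of $\Gamma$ to symmetry of $\tilde{\Gamma}$ and thence to symmetry of the covariant Hessian, $\nabla^2 u(X,Y) = \nabla^2 u(Y,X)$ for every smooth scalar $u$.

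To establish $\delta(X,Y) = \delta(Y,X)$, I start from proposition \ref{cov_hess}(1), namely $\delta(X,Y) u = Y(Xu) - (\nabla_Y X) u - \nabla^2 u(X,Y)$, and form the antisymmetric combination. The Hessian term drops by the symmetry just secured; the difference $Y(Xu) - X(Yu)$ equals $-[X,Y] u$ by the definition of the commutator of differential operators; and $(\nabla_Y X - \nabla_X Y) u$ equals $-[X,Y] u$ by $\tau = 0$, so the two $-[X,Y]u$ contributions cancel. Hence $\delta(X,Y) = \delta(Y,X)$. Feeding this back into the defining equation (\ref{torsion_def_equ}) together with $\tau = 0$ immediately yields identity (1).

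For $\varepsilon$ and (2) I would run exactly the same bookkeeping on proposition \ref{cov_hess}(2). Antisymmetrizing, the Hessian difference vanishes and the terms $Y(Xu) + X(Yu)$ cancel against their swap, leaving
\begin{equation*}
\varepsilon(X,Y) u - \varepsilon(Y,X) u = Y \circ X \circ u - X \circ Y \circ u.
\end{equation*}
Equation (\ref{torsion_cov_comp}) with $\tau = 0$ rearranged reads $X \circ Y \circ - Y \circ X \circ = \varepsilon(Y,X) - \varepsilon(X,Y)$, so the two antisymmetric combinations are negatives of each other; this consistency is necessary but not by itself sufficient to force either to vanish. To break the circularity I would verify $X \circ Y \circ = Y \circ X \circ$ (as operators on scalars) directly by expanding the inductive definition (\ref{cov_comp_def}) in a coordinate frame: after collecting terms, the antisymmetric part of the double sum assembles into expressions proportional to the torsion components $\Gamma^\gamma_{\alpha\beta} - \Gamma^\gamma_{\beta\alpha}$, each of which is zero by hypothesis. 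This gives identity (2), whereupon the symmetry of $\varepsilon$ follows by substitution into the preceding display.

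The main obstacle is precisely this final coordinate computation. The covariant composition law opens into a fairly intricate multi-index expression involving not only the Christoffel symbols but also the Leibniz coefficients $\alpha!/\alpha_1!\alpha_2!$ intrinsic to the jet calculus together with the combinatorial factors relating $\partial_\alpha \partial_\beta$ and $\partial_{\alpha+\beta}$. A careful organization, perhaps stratifying the sum by the total order of derivatives and exploiting symmetry of $\tilde{\Gamma}$ (via lemma \ref{cov_hess_symm}(2)) at each stratum, should make the vanishing of the antisymmetric part transparent. If a slicker argument is available it would presumably proceed by identifying $Y \circ X \circ - X \circ Y \circ$ with a tensorial bracket of the same formal shape as the torsion and then invoking proposition \ref{torsion_tensor_properties}(2) to kill it en bloc, but the direct expansion seems the safest route.
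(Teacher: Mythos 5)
Your first step, the symmetry of $\delta$, is circular. In this paper the torsion is \emph{not} the classical expression: definition \ref{torsion_def} reads $\tau(X,Y) = \nabla_X Y - \nabla_Y X - [X,Y] - \delta(X,Y) + \delta(Y,X)$, so the vanishing of $\tau$ (which you correctly obtain from proposition \ref{torsion_tensor_properties}(2)) yields only
\[
\nabla_X Y - \nabla_Y X - [X,Y] = \delta(X,Y) - \delta(Y,X),
\]
and not $\nabla_X Y - \nabla_Y X = [X,Y]$. The latter is identity (1) itself, and, once $\tau \equiv 0$ is known, it is \emph{equivalent} to the symmetry of $\delta$ you are trying to prove. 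If you substitute the correct consequence of $\tau \equiv 0$ into your antisymmetrized form of proposition \ref{cov_hess}(1), the Hessian terms drop as you say, but the computation then collapses to the tautology $\delta(X,Y)-\delta(Y,X) = \delta(X,Y)-\delta(Y,X)$. The paper escapes this circle by using an input your argument never touches: the transferred connection $\tilde{\nabla}$. It gets symmetry of $\tilde{\Gamma}$ from lemma \ref{cov_hess_symm}(2), then uses the first equality in proposition \ref{cov_hess}(1), $\nabla^2 u(X,Y) = Y(Xu) + (\tilde{\nabla}_Y X)u$, to rewrite the non-defect part of the torsion in $\tilde{\nabla}$-quantities killed by that symmetry, and only then reads off $\delta(Y,X) - \delta(X,Y) = 0$.

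The second gap is the coordinate computation you defer to the end, which you rightly call the crux, but which does not close in the way you sketch. Expanding definition (\ref{cov_comp_def}) on coordinate fields and applying to a scalar $u$, the terms with $\alpha_1 = 0$ resp.\ $\beta_1 = 0$ cancel by equality of mixed partials, leaving
\[
\partial_\alpha \circ \partial_\beta \circ u - \partial_\beta \circ \partial_\alpha \circ u
= \frac{\alpha!}{\alpha_1!\alpha_2!}\bigg|_{\alpha_1 \ne 0} \Gamma^\lambda_{\alpha_1\beta}\,\partial_\lambda u_{,\alpha_2}
- \frac{\beta!}{\beta_1!\beta_2!}\bigg|_{\beta_1 \ne 0} \Gamma^\lambda_{\beta_1\alpha}\,\partial_\lambda u_{,\beta_2}.
\]
The undivided terms ($\alpha_2 = 0$, $\beta_2 = 0$) do pair into $\left(\Gamma^\lambda_{\alpha\beta}-\Gamma^\lambda_{\beta\alpha}\right)u_{,\lambda}=0$, but the remaining terms range over proper decompositions of two \emph{different} multi-indices and have no partners: symmetry of the lower indices relates $\Gamma^\lambda_{\alpha_1\beta}$ to $\Gamma^\lambda_{\beta\alpha_1}$, which is not a term of the second sum. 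Concretely, for $|\alpha|=1$ and $|\beta|=2$ the first sum is empty while the second retains $\Gamma^\lambda_{\beta_1\alpha}\partial_\lambda u_{,\beta_2}$ with $|\beta_1|=|\beta_2|=1$, generically non-zero; so the antisymmetric part does not assemble into factors $\Gamma^\gamma_{\alpha\beta}-\Gamma^\gamma_{\beta\alpha}$ as claimed. (To be fair, this coordinate-level assertion is exactly the one the paper itself dispatches with the word ``evidently.'') There is also a structural omission: the paper makes this claim only for basis fields $\partial_\alpha$, $\partial_\beta$ and then extends to general $X$, $Y$ via the bilinearity identity $\varepsilon(fX,gY)-\varepsilon(gY,fX) = fg\left(\varepsilon(X,Y)-\varepsilon(Y,X)\right)$, which rests on lemma \ref{cov_comp_vs_cov_deriv} and on the previously established symmetry of $\delta$; your plan attempts the general-field computation directly and has no substitute for that reduction. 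Your one solid structural observation---that antisymmetrizing proposition \ref{cov_hess}(2) against equation (\ref{torsion_cov_comp}) gives only a necessary consistency relation---is precisely where an independent argument must enter, and in your proposal it is missing.
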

\begin{proof}
	From the assumed symmetry and Propositions \ref{cov_hess_symm}, \ref{torsion_tensor_properties}, 
	\begin{equation}
		\tilde{\tau}(X,Y) = \tilde{\nabla}^2 u(X,Y) - \tilde{\nabla}^2 u(Y,X) = 0;
	\end{equation}
	while by its definition, the vanishing of torsion implies that
	\begin{align}
		0 = \tau(X,Y) &= \nabla_X Y - \nabla_Y X - XY + YX + \delta(Y,X) - \delta(X,Y) \nonumber \\
		&= \tilde{\nabla}^2 u(Y,X) - \tilde{\nabla}^2 u(X,Y) + \delta(Y,X) - \delta(X,Y) \nonumber \\
		&= \delta(Y,X) - \delta(X,Y).
	\end{align}
	This shows symmetry of the defect $\delta(X,Y)$. 
	
	As for the second defect, we start by computing in the case $X=\partial_\alpha$, $Y=\partial_\beta$.  From equation (\ref{torsion_cov_comp}), we have
	\begin{equation}
		\tau(\partial_\alpha,\partial_\beta) = \partial_\alpha \circ \partial_\beta \circ - \partial_\beta \circ \partial_\alpha \circ + \varepsilon(\partial_\alpha,\partial_\beta) - \varepsilon(\partial_\beta,\partial_\alpha) = 0,
	\end{equation}
	whence symmetry of $\varepsilon(\partial_\alpha,\partial_\beta)$ is equivalent to that of covariant composition of coordinate basis vectors, i.e., of $\partial_\alpha \circ \partial_\beta \circ$. But evidently this will involve a sum of terms in $\nabla_\alpha \partial_\beta = \Gamma^\gamma_{\alpha\beta}\partial_\gamma = \Gamma^\gamma_{\beta\alpha}\partial_\gamma = \nabla_\beta \partial_\alpha$ whenever $\nabla$ is symmetric. Therefore, $\varepsilon(\partial_\alpha,\partial_\beta)=\varepsilon(\partial_\beta,\partial_\alpha)$.
	
	That the defect is linear over $C^\infty(M)$ in its second argument, or $\varepsilon(X,gY)=g\varepsilon(X,Y)$, follows trivially from its definition. Meanwhile, that $Y \circ fX - \nabla_Y fX = f \left( Y \circ X - \nabla_Y X \right)$ follows from the lemma immediately following this proof. But for a symmetric jet connection, $X \circ Y \circ - Y \circ X \circ = X \circ Y - Y \circ X$, as can be seen by writing out the terms in which $X$ acts past $Y$, respectively $Y$ acts past $X$ to the right, leaving us with terms involving
	\begin{equation}
		\partial_\alpha \circ \partial_\beta \circ - \partial_\beta \circ \partial_\alpha \circ = \tau(\partial_\alpha,\partial_\beta) - \varepsilon(\partial_\alpha,\partial_\beta) + \varepsilon(\partial_\beta,\partial_\alpha) = 0.
	\end{equation}
	Hence,
	the second defect is almost linear over $C^\infty(M)$ in its first argument as well, or, which is to say, $\varepsilon(fX,Y)=f\varepsilon(X,Y)$ modulo terms in $\delta$. But the latter appear only in the combination $\delta(X,Y)-\delta(Y,X)$, which as we have just seen above vanishes when $\nabla$ is symmetric. Therefore, we can write,
	\begin{equation}
		\varepsilon(fX,gY)-\varepsilon(gY,fX) = fg \left( \varepsilon(X,Y)-\varepsilon(Y,X) \right).
	\end{equation}
	Given this, then symmetry of the second defect follows right away from what we have already established:
	\begin{equation}
		\varepsilon(X,Y)-\varepsilon(Y,X) = X^\alpha Y^\beta \left( 
		\varepsilon(\partial_\alpha,\partial_\beta) - \varepsilon(\partial_\beta,\partial_\alpha) \right) = 0.
	\end{equation}
	The two identities are immediate consequences of previously established relations together with symmetry of the defects.
\end{proof}

\begin{lemma}\label{cov_comp_vs_cov_deriv}
	Let $\nabla$ be a symmetric jet connection.
	The difference between covariant composition and the covariant derivative is multilinear in both of its arguments, or in other words,
	\begin{equation}
		Y \circ fX \circ - \nabla_Y fX = f \left( Y \circ X - \nabla_Y X \right);
	\end{equation}
	(linearity over $C^\infty(M)$ in $Y$ is trivial).
\end{lemma}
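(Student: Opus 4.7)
The plan is to prove the identity by direct expansion, using the definition of covariant composition (\ref{cov_comp_def}) together with Leibniz's product rule for the jet connection from property (3) of Proposition \ref{extension_to_arb_tensors}.

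First I would substitute $X \mapsto fX$ into the defining formula
\begin{equation*}
Y \circ X \circ = Y^\mu \frac{\mu!}{\mu_1!\mu_2!}(\nabla_{\mu_1} X)(\nabla_{\mu_2} \circ),
\end{equation*}
and apply the Leibniz product rule to the inner factor to obtain $\nabla_{\mu_1}(fX) = \frac{\mu_1!}{\nu_1!\nu_2!}(\partial_{\nu_1}f)(\nabla_{\nu_2}X)$. This expresses $Y \circ fX \circ$ as a triple-index sum over $(\nu_1,\nu_2,\mu_2)$ with $\nu_1+\nu_2+\mu_2 = \mu$, weighted by $\frac{\mu!}{\nu_1!\nu_2!\mu_2!}$ after using the multinomial identity $\frac{\mu!}{\mu_1!\mu_2!}\cdot\frac{\mu_1!}{\nu_1!\nu_2!} = \frac{\mu!}{\nu_1!\nu_2!\mu_2!}$. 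In parallel I would expand $\nabla_Y(fX)$ directly via Leibniz as a double sum $Y^\mu \frac{\mu!}{\alpha_1!\alpha_2!}(\partial_{\alpha_1}f)(\nabla_{\alpha_2}X)$.

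Second, I would segregate those contributions in which $f$ is left undifferentiated, namely the $\nu_1 = 0$ terms in the expansion of $Y \circ fX \circ$ and the $\alpha_1 = 0$ term in the expansion of $\nabla_Y(fX)$. The former collapses, using $\partial_0 f = f$, to exactly $f\,(Y \circ X \circ)$; the latter collapses to $f\,\nabla_Y X$. Thus the right-hand side $f\,(Y \circ X \circ - \nabla_Y X)$ is fully produced by these subsums.

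Third, I would demonstrate that the remaining cross terms (those in which at least one derivative lands on $f$) cancel between $Y \circ fX \circ$ and $\nabla_Y fX$ in the subtraction. For fixed $\nu_1 > 0$, the coefficient of $(\partial_{\nu_1}f)(\nabla_{\nu_2}X)(\nabla_{\mu_2}\circ)$ in $Y \circ fX \circ$ is matched by the corresponding term of $\nabla_Y fX$ under the identification $\alpha_1 = \nu_1$, $\alpha_2 = \nu_2+\mu_2$, once one invokes the symmetry of the jet connection and the vanishing of the torsion-like obstructions supplied by Proposition \ref{defect_symm} to consolidate $(\nabla_{\nu_2}X)(\nabla_{\mu_2}\circ)$ with the corresponding pure covariant derivative $\nabla_{\alpha_2}X$ in the sense of higher vectors. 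Since linearity over $C^\infty(M)$ in $Y$ is immediate from the appearance of $Y^\mu$ as a scalar coefficient in every term, the proof is then complete.

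The hard part is the combinatorial bookkeeping in the third step: tracking how the trailing operator $\nabla_{\mu_2}\circ$ in the triple sum interacts with the partial derivatives of $X$, and verifying that the multi-index factorials, symmetry of $\nabla$, and the vanishing of $\delta(X,Y)-\delta(Y,X)$ established in Proposition \ref{defect_symm} conspire to leave only the desired $f$-multiplied expression after cancellation.
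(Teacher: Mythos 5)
Your first two steps are sound and agree with what a direct expansion gives: the $\nu_1=0$ subsum of $Y\circ fX\circ$ and the $\alpha_1=0$ term of $\nabla_Y fX$ do reproduce $f\,(Y\circ X\circ)$ and $f\,\nabla_Y X$ respectively. The gap is in your third step. The ``consolidation'' you invoke --- replacing $\sum_{\nu_2+\mu_2=\beta}\frac{\beta!}{\nu_2!\mu_2!}(\nabla_{\nu_2}X)(\nabla_{\mu_2}\circ)$ by the single covariant derivative $\nabla_{\beta}X$ with $\beta=\nu_2+\mu_2$ --- is precisely the assertion that $\partial_\beta\circ X\circ$ and $\nabla_\beta X$ agree as operators on scalar functions. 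That assertion is false in general; indeed its failure is the very subject of the lemma, since by Proposition \ref{cov_hess} one has $\partial_\beta\circ X\circ u-(\nabla_\beta X)u = X(\partial_\beta u)+\delta(X,\partial_\beta)u+\varepsilon(X,\partial_\beta)u$, which contains the genuine composition term $X(\partial_\beta u)$ and does not vanish. Neither symmetry of $\nabla$ nor Proposition \ref{defect_symm} supplies such a collapse: that proposition only permits interchanging the two arguments of $\delta$ and $\varepsilon$, not trading a split composition for a pure covariant derivative. Concretely, after your subtraction the cross terms with $\nu_1\neq 0$ \emph{and} $\mu_2\neq 0$ simultaneously --- a derivative landing on $f$ while the trailing operator $\nabla_{\mu_2}\circ$ still acts on $u$ --- survive; they occur in $Y\circ fX\circ$ but have no counterpart anywhere in the expansion of $\nabla_Y fX$, and since each carries a factor $\partial_{\nu_1}f$ with $\nu_1\neq 0$ they cannot be absorbed into $f$ times anything. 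So the proposed term-by-term matching under $\alpha_1=\nu_1$, $\alpha_2=\nu_2+\mu_2$ does not close.

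This is exactly why the paper's proof never expands in coordinates at all. It writes $(\nabla_Y fX)u$ and $(Y\circ fX\circ)u$ via the two identities of Proposition \ref{cov_hess}, pulls $f$ out of the Hessian terms wholesale using the tensoriality of $\nabla^2 u$ (Proposition \ref{total_deriv} with Lemma \ref{tensor_char}), and disposes of the remaining defect term by the chain $\varepsilon(fX,Y)=\varepsilon(Y,fX)=f\varepsilon(Y,X)=f\varepsilon(X,Y)$, which uses the symmetry of $\varepsilon$ from Proposition \ref{defect_symm} together with its manifest $C^\infty(M)$-linearity in its \emph{second} argument. Both ``primed'' remainders are then seen to equal the same quantity $(YfX)'$ and cancel in the difference. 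That structural detour through the covariant Hessian and the defects is the device that absorbs the cross terms your raw expansion leaves behind; to salvage a purely computational proof you would need an independent argument that $\sum_{\nu_1\neq 0}\frac{\mu!}{\nu_1!\beta!}(\partial_{\nu_1}f)\bigl[\partial_\beta\circ X\circ-\nabla_\beta\bigr.$ $\bigl.X\bigr]u$ contributes nothing, and no result available before this lemma provides that.
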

\begin{proof}
	Isolate the terms involving derivatives of $f$ as follows:
	\begin{align}
		\nabla_Y fX &:= f \nabla_Y X + \left( \nabla_Y fX \right)^\prime \\
		Y \circ fX &:= f Y \circ X + \left( Y \circ fX \right)^\prime.
	\end{align}
	Now, from Proposition \ref{cov_hess}, we may first compute
	\begin{align}
		\left( \nabla_Y fX \right) u &= Y(fXu) - \tilde{\nabla}^2 u(fX,Y) \nonumber \\
		&= f Y(Xu) - f \tilde{\nabla}^2 u(X,Y) + \left( Y f X \right)^\prime \nonumber \\
		&= f \left( \nabla_Y X \right) u + \left( Y f X \right)^\prime. 
	\end{align}
	Therefore, $\left(\nabla_Y fX \right)^\prime = \left( Y f X \right)^\prime$. Again, by proposition \ref{cov_hess}, we have
	\begin{align}
		\left( Y \circ fX \right) u &= - \nabla^2 u(fX,Y) + Y(fXu) + fX(Yu) + \varepsilon(fX,Y) \nonumber \\
		&= - f \nabla^2 u(X,Y) + fY(Xu) + fX(Yu) + f \varepsilon(X,Y) + \left( YfXu \right)^\prime \nonumber \\
		&= f \left( Y \circ X \right) u + \left( YfXu \right)^\prime,
	\end{align}
	where we invoke symmetry of the defect (proposition \ref{defect_symm}) along with its manifest linearity over $C^\infty(M)$ in its second argument in order to write, $\varepsilon(fX,Y)=\varepsilon(Y,fX)=f\varepsilon(Y,X)=f\varepsilon(X,Y)$.
	Therefore, we have shown that $\left( Y \circ fX \right)^\prime = \left( YfX \right)^\prime = \left( \nabla_Y fX \right)^\prime$. The claimed result is immediate, recognizing that the covariant composition need be applied here only to scalar functions. \end{proof}

\begin{remark}
	The presence of the defects $\delta(X,Y)$, respectively $\varepsilon(X,Y)$, in the case of a non-symmetric jet connection comes as something of a surprise. The torsion, as defined (however inappositely), must correspond to a natural concept, in light of the twin advantages of its tensoriality and the condition that its vanishing is equivalent to symmetry of the jet connection. Thus, it would offer a potentially productive avenue for further research to ponder whether the defects can be supplied with an intrinsic geometrical interpretation. We have, so far, no good grounds upon which to base any intuition as to how the covariant derivative should behave at higher than first order in the infinitesimals, especially in the absence of symmetry.
\end{remark}

	\section{Prologomenon to a Future Riemannian Geometry in the Generalized Sense}\label{chapter_4}

In his inaugural lecture of 1854 \cite{riemann_habilitation_lecture}, Riemann invokes the assumption that the line element $ds$ of an $n$-dimensional manifold could be expressed as the square root of a positive definite quadratic form in the differentials $dx$ of the coordinate functions; i.e., that quantities of higher order in the differentials could be neglected. He feels that, while this assumption leads to a simplification in the analytical formulation of geometry, its applicability to the infinitely small in the space of our experience is an empirical question, which was to be answered in the affirmative for the Newtonian physics of his day but which might have to be revisited if a simpler explanation of appearances were to become available.

\subsection{Differential Geometry to Higher Order in the Infinitesimals}

Riemann does not merely found the field of differential geometry that has come to bear his name but outlines a blueprint for how, in principle, we are to think about mathematical space in general and its relation to the world of experience, as encompassed by the natural sciences.  Riemann's original motivation for introducing his metric is to render it possible to measure distances in a manner independent of any choice of coordinates. In the presence of infinitesimals, the line element $ds$, which is supposed to measure the increment in length when one sets out in an arbitrary direction, has now to be promoted to include a possible dependence on higher jets, for we are supplementing the concept of direction to embrace higher tangent vectors. Therefore our guiding theme will be to define a metric involving possible dependence on higher infinitesimals and to seek out a \textit{generalized} tensor corresponding to the known Riemannian curvature which now may take higher tangents among its arguments.

Many would suppose this impossible. Hence, after sketching the epistemological basis of our own theory, we shall take it upon ourselves to refute the arguments going back to Helmholtz and to Weyl purporting to show that the form of the metric tensor has to be restricted to be homogeneous and quadratic in the coordinate differentials. It proves necessary for us to go into some detail, in as much as these authors' contention would seem to have impeded the formulation of an infinitesimal geometry to higher order for well over a century.

\subsection{On the Metrization of Space}
\label{metrization_of_space}

In Riemann's natural philosophy, reconstruction of experience in thought requires metrization, for the metric concept is essential to the quantification of motion. But, once equipped with the notions of fluent and fluxion, it becomes possible in the ordinary calculus to measure the velocity as an instantaneous derivative and thereby effectively to quantify motion for a wide range of problems. Now, to follow along the lines of Riemann's precedent, we wish to develop the concept of a mathematical space with a view towards its application in physics.

\subsubsection{Preparatory Remarks towards a Notion of Measure for Higher-Order Infinitesimals}

Geometric considerations motivate the positing of a metric. If there were no metric by which to measure displacements in space, the formulae expressing physical laws would become unnecessarily complicated; the situation is analogous to what would hold if there were no general covariance, either. Thus, from a Machian instrumentalistic perspective, the introduction of the concept of a metric is very well justified, since the aim of theoretical science is to reduce the phenomena of nature to a simple description, indeed, as simple as possible (anyone who, further, adheres to a principle of reality will thus be inclined to regard the metric not as a mere device by which to aid our reasoning, but as existing in the world). But, seeing as these statements flow from quite general propositions about the nature of science, we have every reason to expect that metrical concepts will continue to have relevance to geometry when infinitesimals of higher order are allowed to play a role.

Accordingly, we seek an analogue of the well-known metric of first-order geometry. To start with, let us ask what is a line element $ds$? The fundamental concept is that of an assignable measure to every curve in space, its length $s$. Then the line element $ds$ is supposed to be an infinitesimal from which the length $s$ of every curve can be obtained by integration. Since we want to have a measure of the length of every possible curve, the line element has to be able to measure movement in an arbitrary direction away from any given point in space. The idea then, is that we measure the length of a given curve by setting out in a given direction corresponding to its initial tangent and keep changing direction as we trace out the full interval of the curve, integrating the line element $ds$ all along as we go. But, from our present perspective, the concept of direction can include, in principle, higher-order deviations. Hence, we are faced with the problem of figuring how consistently to incorporate them into the formalism.

Thus, instead of a 1d trajectory by itself, we ought to deal with the jet stream which occupies a tubular neighborhood around it. The metric, in this context, measures the volume of the jet stream in the limit as its cross-section shrinks and tends to zero (that is, the length of a 1d trajectory is to be thought of as the ratio of its volume to its cross-sectional area in the limit as the latter becomes vanishingly small). As we shall remark upon below in {\S}\ref{chapter_5}, the procedure here being sketched coincides with what is ordinarily done in as much as the very definition of the concept of a metric along the trajectory induced by pulling back the Riemannian metric in the ambient space under the submersion implicitly invokes just such a limiting ratio. But something new can be expected to arise in the situation presently under consideration, when differentials of higher than first order are supposed possibly to make a material difference. For at every stage before completion of the limiting procedure, the jet stream will fill a volume of finite, non-vanishing measure. Thus, we envision volume elements that somehow (in a sense to be rendered precise in the next section) register an arc segment rather than merely a line segment. Therefore, it is intuitively clear that the metric, whatever it be, must include differentials of higher than first order alone.

Now Riemann assumes, provisionally, that the line element should be quadratic in the coordinate differentials of first order \cite{riemann_habilitation_lecture}; he does later entertain the possibility of an expression (homogeneous of) fourth order in differentials, but dismisses it as unnecessarily complicated.\footnote{See also Lipschitz \cite{lipschitz}, who around the same time proposes a line element of the form $ds^p = f(dq_1,...,dq_n)$, where $f$ is homogeneous of degree $p$.} In any case, he presumes the expression for the line element has to be homogeneous. Since we want to dispute this contention, let us seek to motivate why it isn't really the case. Riemann's reasoning behind his contention---it is justified to suppose---must be the same as what everyone has concluded ever since the introduction of infinitesimal analysis by Cavalieri in the early seventeenth century, namely, that, in the limit as the differential $dx$ becomes infinitely small, any higher powers of $dx$ must become negligible in comparison with $dx$ itself. Against this reasoning, however, we wish to note that a (finite) differential ought to be construed as arising from the integration of a corresponding differential operator of the same order. Yet, it is by no means the case that the differential equations relevant to mathematical physics must always reduce to an expression that will be homogeneous in the derivatives. For instance, the Fokker-Planck equation of non-equilibrium statistical mechanics involves both a linear drift term and a quadratic diffusive term. Here, the operative assumption underlying the derivation of the Fokker-Planck equation is that the the distribution functions we work with in practice will have sufficient regularity that cutting off the Fokker-Planck operator at second-order in differentials will be justified. All the same, it is necessary to go to quadratic order if one wants to apply the equation to phenomena that take place over time scales long enough that the diffusive behavior matters. Speaking in general, then, the import of a statement such as this must be that we cannot, as a rule, always expect any possible terms of higher order in the metric to be negligible. For---what everyone up to now has missed---the derivative $\frac{d}{dx}$ diverges in the limit as $dx$ tends to zero at the same rate as $dx$ itself vanishes; thus, when the two are paired (or any like powers of the two), a cancellation results leading to the possibility that an appreciable contribution could very well persist after completion of the limiting procedure.

There remains a loose end to tie up. Both Helmholtz \cite{helmholtz}, in 1868, and Weyl \cite{weyl_einzigartigkeit}, in 1922, have published arguments to the effect that the line element defining a metric must be quadratic. Therefore, we respond in the following subsection with an explanation of why their reasoning is sophistical.

\subsubsection{How to Circumvent Helmholtz and Weyl}

The first author after Riemann to take up the question of the determination of the metrical properties of space is Helmholtz \cite{helmholtz}. His starting point is that all measurement of space rests ultimately upon the observation of congruences, but to speak of congruence at all one must have freely movable rigid bodies. Hence, Helmholtz posits four hypotheses: that i) spatial motions must be continuously differentiable, ii) there must exist movable rigid bodies satisfying a condition among the $2n$ coordinates of every pair of points, iii) the group of motions acts transitively on space and iv) any one-parameter subgroup of motions of a rigid body holding $n-1$ of its points fixed must close upon itself (which is not in fact necessarily true). Beginning from the general expression for an infinitesimal motion, imposing the conditions and integrating leads to the form of the line element $ds^2$ homogeneous and quadratic in the coordinate differentials. If Helmholtz' conclusion is to be contested, one must ask whether his initial expression for the differentials of the transformed coordinates in terms of the original ones is sufficiently general. Clearly, for us, it is not, since he ignores terms of second and higher order. Therefore, what his derivation really shows is that the leading dependence of the square of the line element is indeed homogenous and quadratic, but there will be corrections at higher order which we intend no longer to neglect.

Another line of argument to the same end is given by Weyl in 1922 \cite{weyl_einzigartigkeit}, who considers the essence of a metric to consist in the concept of infinitesimal congruences, which must be volume-preserving and form a group. The form of the metric will be known once one has decided which among the linear transformations of the tangent space at a point into itself or into that of an infinitesimally nearby point are to count as being congruences. Those that map from the tangent space at a point into itself will be termed rotations. Now, Weyl stipulates that the rotations should form a group, that any congruence to a given infinitesimally close point should be obtainable as a translation followed by a rotation (as would be the case with the ordinary Euclidean group) and, lastly, that composition of any two infinitesimal congruences should yield again an infinitesimal congruence.

A parallel translation from a given point to an infinitesimally nearby point would be a linear transformation realizable in some coordinate system under which every vector retains its coefficients unchanged. In an arbitrary coordinate system, it should be expressible in the usual way in terms of symmetric Christoffel symbols. Weyl regards everything that has been said so far as an analytical explication of our concepts of metric, connection and parallel translation. To complete the program set forth in this paper, one must invoke, synthetically, a second postulate demanding that among all possible systems of parallel translations (as determined by their Christoffel symbols) there should be a unique one in which the parallel translations are at the same time congruences. In the Pythagorean case, the fundamental form of the metric corresponds to a non-degenerate quadratic form in the components of vectors in the tangent space above any given point in space.

Weyl's principal result proves that the infinitesimal Euclidean group corresponding to the Pythagorean case satisfies all of the above requirements and that, conversely, any infinitesimal group that does so is equivalent to the Pythagorean case up to a choice of orientation. The derivation, which Weyl acknowledges not to be based on any geometrical insight into the problem but merely calculational, proceeds by writing the infinitesimal rotation group in the form $\mathrm{id} + A \varepsilon$ and neglecting all quantities that converge to zero faster than $\varepsilon$. The two main cases can be resolved in a natural way; then the general case can be reduced to these two in a manner that is `ziemlich unerfreulich'.

For our purposes, the crucial point is his restriction to transformations of the tangent space (either to itself or to an infinitesimally nearby tangent space) in which quantities of $o(\varepsilon)$ may be neglected, in other words, in which only the linear part matters. As soon as one is prepared to entertain tangents of higher than first order, however, it becomes no longer very natural to impose such a restriction. But a transformation with non-negligible non-linear part is capable of introducing a mixing among infinitesimals differing in order (see equations (\ref{jet_transf_law}) and (\ref{vector_transf_law}) for jets resp. tangents). Hence, there would be no reason why the fundamental form of the metric should always be quadratic only in the infinitesimals, and Weyl's argument ceases to apply.

\subsection{The Metric Tensor Incorporating Jets to Higher Order}

Consider the problem of rectification of a curve, informally. The usual procedure is implemented along the following lines: we can break the curve down into infinitely many infinitely small line segments, and compute its length by summing up the line elements of the infinitesimal segments as determined by the usual metric tensor. But now with higher-order terms figuring non-trivially in the generalized metric tensor, the na{\"i}ve procedure appears problematical. For we must expect the generalized metric tensor, to be defined in a moment, to be sensitive to the manner in which the true curve rounds the corners connecting one first-order line element to the next---that is to say, for the second-order infinitesimals there to be of consequence. The same goes by analogy to every succeeding order in the infinitesimals. While we have not as yet at our disposal a satisfactory and well-defined method of integration of differentials of order higher than the first, or differing in order (vide {\S}\ref{chapter_5}),
for the time being it will occupy our preliminary concern simply to define how a generalized metric tensor can exist in the first place and supply us with a consistent way of assigning sense to the concept of measure of a higher-order infinitesimal, or indeed of an expression of mixed degree in the infinitesimals. Once we have justified the assumption of a generalized metric tensor in the given form, we may then take up the question of performing quadrature with it, see below in {\S}\ref{chapter_5}.

At this point, it would appear to be straightforward to alight upon an appropriate Ansatz for a generalized metric tensor, if we view the ordinary metric tensor $g_p$ at the point $p \in M$ in its algebraical guise as a symmetrical element of $TM^*_p \otimes TM^*_p$, which yields a scalar upon contraction with two vectors, one on either side. What we would want is something that fulfills the same role for higher-order vectors. Thus, the following definition seems to be very natural:

\begin{definition}
	Let $M$ be a differentiable manifold. Then an $r$-th order generalized metric tensor on $M$ is a distinguished smooth section of the vector bundle $g \in \mathrm{Sym}(\mathscr{J}^{r*}(M) \otimes \mathscr{J}^{r*}(M))$, where the induced endomorphism $\mathscr{J}^r(M) \rightarrow \mathscr{J}^{r*}(M)$ is pointwise invertible. Given two $r$-vectors $X_p, Y_p$ at a point $p \in M$, their inner product with respect to the metric $g$ will be denoted as $\langle X_p, Y_p \rangle := \mathrm{tr}~\mathrm{tr}~g_p \otimes X_p \otimes Y_p$.
\end{definition}

The question that confronts us straightaway at this juncture is whether the metric so defined corresponds to what we should want based upon geometrical intuition. Since projection to 1-jets is canonical, we may always recover from the generalized metric tensor $g$ an ordinary metric tensor $\hat{g}$ that acts on 1-vectors only. In this sense, we extend the usual formalism. What does our extension say at higher order? Assume a coordinate basis exists in which the metric becomes diagonal (what is no longer immediate in the presence of higher infinitesimals). In this basis, vector components of different order would not interact with one another and within each order would behave as independent increments; but this property need not hold in the basis with which we started. Therefore, we must expect that generically we do have what is a novel geometrical property, the possibility of interaction among different orders of tangency, as far as the measure of size is concerned. Clearly, there must be extensive ramifications in the theory of geodesics and of integration. If the question is to be treated appropriately and covariantly, we ought to seek intrinsic geometrical invariants by which to get a handle on the phenomenon. This will be the task of the next and succeeding sections.

But first, let us ask what is the relation of the metric so defined to tangency in the generalized sense? The answer to this question comes down to what happens to proposition \ref{orthogonal_family_hypersurfaces} when the metric is no longer the standard Euclidean one and depends on whether it is possible to prove that local coordinates exist in which, near a given point, the metric always assumes something close to the standard Euclidean expression. If so, then proposition \ref{orthogonal_family_hypersurfaces} would remain true at least in the small. Our general picture of space as homogeneous and isotropic in the small requires, then, a result of this kind for its consistency; we shall return to the question below in {\S}\ref{chapter_5}.

\subsection{The Levi-Civita Jet Connection}

A key step in Riemannian geometry is to relate covariant differentiation to the metrical properties of space. To this end, one requires first a sense in which the two notions can be compatible:
\begin{definition}\label{def_compatibility}
	A jet connection $\nabla$ will be said to be compatible with the metric $g$ if, for all vector fields $X,Y,Z \in \mathscr{J}^r(M)$, we have $X \langle Y,Z \rangle = \nabla_X \langle Y,Z \rangle = X^\alpha \frac{\alpha!}{\alpha_1!\alpha_2!} \langle \nabla_{\alpha_1}Y,\nabla_{\alpha_2}Z \rangle$.
\end{definition}

\begin{lemma}
	Let $\nabla$ be a jet connection defined on a (pseudo-) Riemannian manifold $(M,g)$. Then a necessary and sufficient condition for $\nabla$ to be compatible with $g$ is that $\nabla g = 0$.
\end{lemma}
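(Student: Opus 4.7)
The plan is to expand $\nabla_X\langle Y,Z\rangle$ directly, using that the inner product is by definition the double trace $\langle Y,Z\rangle = \mathrm{tr}\,\mathrm{tr}\,g\otimes Y\otimes Z$. By property $(2)$ of proposition \ref{extension_to_arb_tensors}, $\nabla_X\langle Y,Z\rangle = X\langle Y,Z\rangle$; by property $(4)$, $\nabla_X$ commutes with the traces; and by applying the generalized Leibniz rule of equation \eqref{Leibniz_prod_rule} twice to the three-factor product $(g\otimes Y)\otimes Z$, the binomial coefficients combine into a trinomial coefficient to give
\begin{equation*}
X\langle Y,Z\rangle = X^\alpha \sum_{\alpha_1+\alpha_2+\alpha_3=\alpha} \frac{\alpha!}{\alpha_1!\,\alpha_2!\,\alpha_3!}\,\mathrm{tr}\,\mathrm{tr}\,(\nabla_{\alpha_1}g)\otimes(\nabla_{\alpha_2}Y)\otimes(\nabla_{\alpha_3}Z).
\end{equation*}
Since $\nabla_0 = \mathrm{id}$ by the convention recorded in the remark following lemma \ref{coord_expr_for_conn}, the terms with $\alpha_1 = 0$ reassemble precisely into the right-hand side of definition \ref{def_compatibility}.

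For the forward direction, if $\nabla g = 0$ then $\nabla_{\alpha_1}g \equiv 0$ for every multi-index with $|\alpha_1|\geq 1$, only the $\alpha_1=0$ terms survive in the expansion above, and compatibility in the sense of definition \ref{def_compatibility} follows at once.

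For the converse, subtracting the compatibility identity from the full expansion leaves, for all $X,Y,Z$,
\begin{equation*}
0 = X^\alpha \sum_{\substack{\alpha_1+\alpha_2+\alpha_3=\alpha \\ |\alpha_1|\geq 1}} \frac{\alpha!}{\alpha_1!\,\alpha_2!\,\alpha_3!}\,\mathrm{tr}\,\mathrm{tr}\,(\nabla_{\alpha_1}g)\otimes(\nabla_{\alpha_2}Y)\otimes(\nabla_{\alpha_3}Z).
\end{equation*}
I would deduce $\nabla g = 0$ pointwise by induction on the order $|\alpha|$. Fix $p\in M$ with coordinates around $p$; by tensoriality of $\nabla g$ (proposition \ref{total_deriv}) it is enough to check that every component $(\nabla_\alpha g)(\partial_\beta,\partial_\gamma)|_p$ vanishes, so take $Y = \partial_\beta$, $Z = \partial_\gamma$ and let $X = \partial_\alpha$ range over coordinate multi-index vectors. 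For $|\alpha|=1$ the only admissible decomposition forces $|\alpha_2|=|\alpha_3|=0$, and the relation collapses to $(\nabla_\alpha g)(\partial_\beta,\partial_\gamma)|_p = 0$. For $|\alpha|\geq 2$, every decomposition with $|\alpha_1| < |\alpha|$ contains a factor $\nabla_{\alpha_1}g$ of strictly lower order, which vanishes by the inductive hypothesis; hence only the decomposition $\alpha_1=\alpha$, $\alpha_2=\alpha_3=0$ survives, again yielding $(\nabla_\alpha g)(\partial_\beta,\partial_\gamma)|_p = 0$. Since $\beta,\gamma,\alpha$ and $p$ were arbitrary, $\nabla g \equiv 0$.

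The main technical obstacle is the three-factor combinatorics of the Leibniz rule, together with the care needed in interpreting $\nabla_0$ as the identity so that the $\alpha_1 = 0$ contribution exactly matches the expression in definition \ref{def_compatibility}; the remainder is an almost mechanical inductive sweep over the order. Notably, no construction of normal frames at $p$ is required, because the induction on $|\alpha|$ removes the need to arrange that $\nabla_{\alpha_2}Y$ and $\nabla_{\alpha_3}Z$ vanish pointwise.
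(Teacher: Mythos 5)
Your proof is correct, and while it opens exactly as the paper does --- expanding $\nabla_X\langle Y,Z\rangle = X\langle Y,Z\rangle$ through the trace property and the Leibniz rule of proposition \ref{extension_to_arb_tensors} into the trinomial sum, recognizing the right-hand side of definition \ref{def_compatibility} as precisely the $\alpha_1=0$ part of that sum (the paper's equation (\ref{compatibility_condition})), and obtaining the implication $\nabla g=0\Rightarrow$ compatibility for free --- your argument for the other direction takes a genuinely different route. The paper isolates $\nabla_{\alpha_1}g$ by a test-function device: it sets $Y=\varphi_{\alpha_0}\partial_\lambda$, $Z=\partial_\rho$ with $\varphi_{\alpha_0}=x^{\alpha_0}$ a monomial, chooses $\alpha=\alpha_0+\alpha_1$, and evaluates at the origin of the chart so that the unwanted terms drop out; you instead feed in bare coordinate fields $Y=\partial_\beta$, $Z=\partial_\gamma$ and run an induction on $|\alpha|$. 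Your route is the more airtight of the two. In the paper's argument the decompositions with $\alpha_0<\alpha_2<\alpha$ do \emph{not} vanish at the origin: they contribute cross terms in which lower-order derivatives $\nabla_{\alpha_1-\delta}g$, $0<\delta<\alpha_1$, are contracted against Christoffel symbols, and these disappear only if the lower-order covariant derivatives of $g$ are already known to vanish --- which is exactly the inductive hypothesis you make explicit and the paper leaves tacit (its reasoning is self-contained only at order $|\alpha_1|=1$, your base case, where no such $\delta$ exists). What the paper's device would buy, were the induction spelled out, is a one-shot pointwise statement; what yours buys is a proof requiring no choice of test function and no preferred origin, with the ordering argument that actually closes the argument displayed rather than hidden.
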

\begin{proof}
	$(\Rightarrow)$ Suppose that $\nabla$ is compatible with $g$ and work at the origin in a local chart with coordinates $x^{1,...,n}$. Now for $X=\partial_\alpha$, we have
	\begin{equation}\label{compatibility_condition}
		\nabla_\alpha  \langle Y,Z \rangle = \mathrm{tr} ~\mathrm{tr} \frac{\alpha!}{\alpha_1!\alpha_2!} \nabla_{\alpha_1} g \otimes \nabla_{\alpha_2} (Y \otimes Z).
	\end{equation}
	In order to derive a condition that will imply the vanishing of $\nabla g$ at the origin, let us take for $Y$ the form $Y=\varphi\partial_\lambda$ for an arbitrary smooth function $\varphi$ while for $Z=\partial_\rho$. But then,
	\begin{align}\label{compatibility_condition_2}
		\nabla_\alpha \varphi \partial_\lambda \otimes \partial_\rho &= \frac{\alpha!}{\beta_0!\beta_1!\beta_2!} \varphi_{,\beta_0} \left( \Gamma^\mu_{\beta_1\lambda}\partial_\mu \right) \otimes \left( \Gamma^\nu_{\beta_2\rho}\partial_\nu \right) \nonumber \\
		&= \varphi \left( \frac{\alpha!}{\beta_0!\beta_1!\beta_2!} \Gamma^\mu_{\beta_1\lambda}\partial_\mu \otimes  \Gamma^\nu_{\beta_2\rho}\partial_\nu \right) + \frac{\alpha!}{\beta_0!\beta_1!\beta_2!}\bigg|_{\beta_0 \ne 0} \varphi_{,\beta_0}
		\left( \Gamma^\mu_{\beta_1\lambda}\partial_\mu \otimes  \Gamma^\nu_{\beta_2\rho}\partial_\nu \right).
	\end{align}
	For a given multi-index $\alpha_0 \ne 0$, define $\varphi_{\alpha_0} = x^{\alpha_0}$ so that $\varphi_{\alpha_0,\delta}|_0 = 0$ unless $\delta=\alpha_0$. Setting aside a fixed multi-index $\alpha_1 \ne 0$, choose $\alpha=\alpha_0+\alpha_1$. For $\alpha_2 \ne 0,\alpha$, we have from equation (\ref{compatibility_condition_2}) that
	\begin{equation}
		\nabla_{\alpha_2} \varphi_{\alpha_0} \partial_\lambda \otimes \partial_\rho 
		= \varphi_{\alpha_0} \nabla_{\alpha_2} \partial_\lambda \otimes \partial_\rho
	\end{equation}
	because the second term cancels by construction. At the origin, $\varphi_{\alpha_0}=0$ and the right-hand side vanishes altogether. Substitute now into equation (\ref{compatibility_condition}) to get for $\alpha_2=\alpha_0 \ne 0, \alpha$:
	\begin{equation}
		\mathrm{tr} ~\mathrm{tr} \frac{(\alpha_0+\alpha_1)!}{\alpha_0!\alpha_1!} \left(
		\nabla_{\alpha_1} g \right) \varphi_{\alpha_0,\alpha_1} \partial_\lambda \otimes \partial_\rho = 0
	\end{equation}
	by hypothesis. Therefore, we must have $\nabla_{\alpha_1} g = 0$ at $x=0$. But coordinates can be chosen around any point on $M$ to put it at the origin and $\alpha_1$ is any arbitrary non-zero multi-index; so we may conclude by multilinearity that $\nabla g=0$ everywhere, in view of the fact that the vanishing of a tensor at a given point is a covariant property.
	
	$(\Leftarrow)$ Suppose that $\nabla g=0$. If we choose again $X=\partial_\alpha$, the terms $\nabla_{\alpha_1} g$ in equation (\ref{compatibility_condition}) vanish by hypothesis for $\alpha_1 \ne 0$, leaving us with
	\begin{align}
		\nabla_\alpha  \langle Y,Z \rangle &= \mathrm{tr}~ \mathrm{tr}~ g \otimes \nabla_\alpha (Y \otimes Z) \nonumber \\
		&= \mathrm{tr}~ \mathrm{tr}~ g \otimes \frac{\alpha!}{\alpha_1!\alpha_2!}
		\nabla_{\alpha_1} Y \otimes \nabla_{\alpha_2} Z \nonumber \\
		&= \frac{\alpha!}{\alpha_1!\alpha_2!}
		\langle \nabla_{\alpha_1} Y, \nabla_{\alpha_2} Z \rangle.
	\end{align}
	Since this holds for any non-zero multi-index $\alpha$, by multilinearity in $X$ this shows that $\nabla$ is compatible with $g$ by definition \ref{def_compatibility}.
\end{proof}

\begin{lemma}
	$\nabla g = 0$ iff $\nabla g^{-1} = 0$.
\end{lemma}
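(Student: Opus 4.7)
The plan is to exploit the defining identity $g \cdot g^{-1} = I$, where $I$ denotes the identity endomorphism of $\mathscr{J}^r(M)$ realized as a suitable partial trace of $g \otimes g^{-1}$, then apply the jet connection via the Leibniz rule and invoke the pointwise invertibility of $g$. Under the standing hypothesis $g$ acts as an isomorphism $\mathscr{J}^r(M) \to \mathscr{J}^{r*}(M)$, so $g^{-1}$ is a well-defined symmetric section of $\mathscr{J}^r(M) \otimes \mathscr{J}^r(M)$, and the defining relation $\mathrm{tr}(g \otimes g^{-1}) = I$ holds in the appropriate pair of slots.

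The first step is to verify $\nabla I = 0$. Writing $I = \partial_\alpha \otimes d^\alpha$ in coordinates and expanding $\nabla_\mu I$ via the Leibniz product rule of proposition \ref{extension_to_arb_tensors}, one obtains a sum of contractions of $\Gamma$ against $\tilde\Gamma$ that is precisely the combination forced to vanish by the defining equation (\ref{gamma_gamma}) for $\tilde\Gamma$. Alternatively and more conceptually, one may invoke item $(4)$ of proposition \ref{extension_to_arb_tensors} (the trace-commuting property) together with $\nabla_X f = Xf$ on scalars to reach the same conclusion. Either route establishes $\nabla I = 0$.

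The main computation then applies $\nabla_\mu$ to $\mathrm{tr}(g \otimes g^{-1}) = I$ via the multi-index Leibniz rule (\ref{Leibniz_prod_rule}):
\begin{equation*}
	0 \;=\; \nabla_\mu I \;=\; \sum_{\mu_1 + \mu_2 = \mu} \frac{\mu!}{\mu_1!\,\mu_2!} \, \mathrm{tr}\bigl( (\nabla_{\mu_1} g) \otimes (\nabla_{\mu_2} g^{-1}) \bigr).
\end{equation*}
Under the hypothesis $\nabla g = 0$, every term with $\mu_1 \neq 0$ drops out, leaving $\mathrm{tr}(g \otimes \nabla_\mu g^{-1}) = 0$ for each $\mu$ with $|\mu| \ge 1$. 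Since $g$ is pointwise an isomorphism, the surviving contraction can be inverted by applying $g^{-1}$ to the appropriate slot, whence $\nabla_\mu g^{-1} = 0$ for every such $\mu$, i.e., $\nabla g^{-1} = 0$. The converse follows by the symmetric argument applied to the dual identity $\mathrm{tr}(g^{-1} \otimes g) = I$, with the roles of $g$ and $g^{-1}$ interchanged.

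The main obstacle is really just careful bookkeeping---tracking which two index slots of $g \otimes g^{-1}$ are being contracted to produce the identity, and checking that the combinatorial coefficients in (\ref{Leibniz_prod_rule}) interact correctly with these partial traces so that the cancellation by $g$'s invertibility is legitimate. The genuinely higher-order content hides entirely in the verification that $\nabla I = 0$: whereas in first-order geometry this is trivial because $\tilde\Gamma = -{}^t\Gamma$, here one truly needs the full defining identity (\ref{gamma_gamma}) to absorb the mixed $\Gamma$-$\tilde\Gamma$ cross terms generated by the Leibniz expansion.
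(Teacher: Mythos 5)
Your proposal is correct and follows essentially the same route as the paper's own proof: both establish $\nabla_\alpha(g g^{-1}) = \nabla I = 0$ by expanding in Christoffel symbols and invoking the defining relation (\ref{gamma_gamma}) for $\tilde{\Gamma}$, then apply the Leibniz rule (\ref{Leibniz_prod_rule}) to $\mathrm{tr}\,(g \otimes g^{-1})$ so that the hypothesis kills all terms but one, and finish by non-degeneracy of $g$. The only cosmetic difference is your suggested alternative verification of $\nabla I = 0$ via the trace-commuting property, which the paper does not use (and which, as stated, would only give vanishing of the trace of $\nabla_X I$ rather than of $\nabla_X I$ itself); your primary argument does not rely on it.
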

\begin{proof}
	In light of the fact that $g^{-1}$ is the inverse of $g$, we have
	\begin{equation}\label{g_ginv}
		\nabla_\alpha g g^{-1} = \nabla_\alpha d_\mu \otimes \partial^\mu =
		\frac{\alpha!}{\alpha_1!\alpha_2!} \tilde{\Gamma}^\mu_{\alpha_1\nu}
		\Gamma^\mu_{\alpha_2\lambda} = \tilde{\Gamma}^\lambda_{\alpha\nu}
		+ \frac{\alpha!}{\alpha_1!\alpha_2!}\bigg|_{\alpha_{1,2} \ne 0} \tilde{\Gamma}^\mu_{\alpha_1\nu}\Gamma^\mu_{\alpha_2\lambda} + \Gamma^\nu_{\alpha\lambda} = 0,
	\end{equation}
	where the $\tilde{\Gamma}$ were introduced above in lemma \ref{jet_conn_on_1_forms} in just such a way as to make the right-hand side of equation (\ref{g_ginv}) vanish. Thus, for any $X \in \mathscr{J}^r(M)$,
	\begin{equation}
		0 = \nabla_X g g^{-1} = \mathrm{tr} ~ \nabla_X \left( g \otimes g^{-1} \right)
		= \mathrm{tr}~ \left(
		X^\alpha \frac{\alpha!}{\alpha_1!\alpha_2!} \nabla_{\alpha_1} g \otimes \nabla_{\alpha_2} g^{-1} \right).
	\end{equation}
	If now $\nabla g=0$, then in particular $\nabla_{\alpha_1} g=0$ for all $\alpha_1 \ne 0$. Thus, only the last term on the right-hand side survives yielding $\mathrm{tr}~ g \otimes \nabla_X g^{-1} = 0$. But since $g$ is assumed to be non-degenerate, this can be true only if $\nabla_X g^{-1} = 0$ for all $X$, or in other words, $\nabla g^{-1}=0$. Conversely, if $\nabla g^{-1}=0$, only the first term on the right-hand side survives, leading by the same argument to $\nabla g=0$.
\end{proof}

\begin{theorem}[Existence of the Levi-Civita Jet Connection]\label{exist_levi_civita}
	Let $(M,g)$ be a (pseudo-) Riemannian manifold and suppose that there exist sufficiently small constants $\delta_{0,1}>0$ such that the metric and all of its derivatives are uniformly bounded; i.e., $|g_{\alpha\beta}|<\delta_0$ and $|g_{\alpha\beta,\gamma}|<\delta_1$ for all multi-indices $\alpha,\beta,\gamma$ with $|\alpha|,|\beta|,|\gamma|\le r$ (in some coordinate system around each point of $M$). Then there exists a unique jet connection $\nabla$ on $M$ that is symmetric and compatible with the metric $g$.
\end{theorem}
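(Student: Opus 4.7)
The plan is to mimic the classical Koszul construction of the Levi-Civita connection, deriving an explicit formula for the Christoffel symbols from the two conditions while handling the genuinely new feature that in the higher-order setting compatibility couples Christoffel symbols across different multi-index orders. The uniform boundedness hypothesis will play exactly the same role as in proposition \ref{jet_conn_on_1_forms}, namely to close the resulting nonlinear implicit system via the implicit function theorem.

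First I would transcribe both conditions into coordinates. By proposition \ref{torsion_tensor_properties}, symmetry of $\nabla$ is equivalent to $\Gamma^\gamma_{\alpha\beta}=\Gamma^\gamma_{\beta\alpha}$, and by lemma \ref{cov_hess_symm}(2) to the corresponding symmetry of $\tilde\Gamma$. Meanwhile compatibility $\nabla g=0$, expanded through lemma \ref{coord_expr_for_conn}, becomes for each multi-index $\alpha$ a relation of the schematic form
\begin{equation}
\partial_\alpha g_{\mu\nu}+g_{\lambda\nu}\tilde\Gamma^\lambda_{\alpha\mu}+g_{\mu\lambda}\tilde\Gamma^\lambda_{\alpha\nu}+R_{\alpha\mu\nu}(g,\tilde\Gamma)=0,
\end{equation}
where the remainder $R_{\alpha\mu\nu}$ collects all cross-terms coming from multi-index decompositions $\alpha=\lambda+\beta_1+\beta_2$ with two or more pieces non-zero. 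By construction $R_{\alpha\mu\nu}$ is polynomial of degree at least two in $\tilde\Gamma$ with coefficients controlled uniformly in terms of $\delta_0$ and $\delta_1$.

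Second, I would apply the classical cyclic Koszul trick: write the relation above in each of the three orderings of $(\alpha,\mu,\nu)$ and take the appropriate alternating sum. Four of the six leading $\tilde\Gamma$ terms cancel pairwise thanks to the symmetry already established, isolating $2g_{\lambda\nu}\tilde\Gamma^\lambda_{\alpha\mu}$. Multiplying through by $g^{-1}$, which exists by the non-degeneracy hypothesis on $g$, gives a relation of the schematic form
\begin{equation}
\tilde\Gamma=\Phi_0(g,g^{-1},\partial g)+\Psi(g,g^{-1},\partial g,\tilde\Gamma),
\end{equation}
with $\Phi_0$ the usual Koszul expression and $\Psi$ polynomial of degree at least two in $\tilde\Gamma$ and carrying an overall factor controlled uniformly by $\delta_0\delta_1$. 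The implicit function theorem then applies exactly as in proposition \ref{jet_conn_on_1_forms}: the Jacobian $\mathrm{id}-\partial\Psi/\partial\tilde\Gamma$ is invertible on a uniform neighborhood of the origin, so there is a unique smooth solution $\tilde\Gamma=\tilde\Gamma(g,\partial g)$ fiberwise. The associated $\Gamma$ is recovered through the inverse of the correspondence established in proposition \ref{jet_conn_on_1_forms}, and consistency across coordinate charts is automatic by uniqueness. Symmetry and compatibility hold for the resulting $\nabla$ by construction, and uniqueness is built into the implicit function theorem.

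The main obstacle, and what is genuinely new compared with the first-order case, is the combinatorial bookkeeping needed to verify that the Koszul manipulation really does isolate a single linear occurrence of $\tilde\Gamma$ at leading order and that every remaining contribution can be absorbed into a $\Psi$ small enough for the implicit function theorem to apply. The smallness hypothesis on $g$ and its first derivatives is essential here: without it the Jacobian could degenerate in the same manner and for the same reason as in proposition \ref{jet_conn_on_1_forms}, and both existence and uniqueness could be lost.
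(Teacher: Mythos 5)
Your strategy coincides in essence with the paper's: the cyclic Koszul manipulation applied to the compatibility relation, isolating a single linear occurrence of the Christoffel symbols plus a remainder at least quadratic in them, followed by the implicit function theorem, with the smallness hypothesis securing invertibility of the Jacobian $\mathrm{id}-\partial\Psi/\partial\tilde\Gamma$. One structural difference: you route the argument through $\tilde\Gamma$ by expanding $\nabla g=0$ via lemma \ref{coord_expr_for_conn}, whereas the paper applies the Koszul trick directly to the defining identity $X\langle Y,Z\rangle=X^\alpha\frac{\alpha!}{\alpha_1!\alpha_2!}\langle\nabla_{\alpha_1}Y,\nabla_{\alpha_2}Z\rangle$ evaluated on coordinate fields, which produces equation (\ref{Levi_Civita_formula}) directly in $\Gamma$ and never needs to pass between $\Gamma$ and $\tilde\Gamma$. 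Your detour is repairable---relation (\ref{gamma_gamma}) is \emph{linear} in $\Gamma$ once $\tilde\Gamma$ is known, hence solvable when $\tilde\Gamma$ is small---but be aware that proposition \ref{jet_conn_on_1_forms} as stated runs only in the direction $\Gamma\mapsto\tilde\Gamma$, so the inverse correspondence is an extra step you must supply rather than something you may cite.

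The genuine gap is uniqueness. The implicit function theorem gives uniqueness of the solution only within the neighborhood $W_0$ of the origin that it constructs; it says nothing about solutions of the quadratic fixed-point relation $\tilde\Gamma=\Phi_0+\Psi(\tilde\Gamma)$ lying outside that neighborhood (compare $x=\epsilon+x^2$, which has a second root near $1$ that the implicit function theorem at the origin never sees). Since \emph{any} symmetric, metric-compatible jet connection satisfies this relation, asserting the theorem's uniqueness requires ruling out such large solutions, and "built into the implicit function theorem" does not do this. The paper devotes a separate argument to exactly this point: it sets up the map $F(\Gamma)=\Gamma_0+\Gamma\Gamma$ (schematically) on the ball of admissible Christoffel symbols, shows it is a strong contraction under the stated bounds, and concludes that any two symmetric compatible connections whose Christoffel symbols obey the bound must coincide. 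To complete your proposal you need either this contraction-mapping step or an argument that every symmetric, metric-compatible jet connection necessarily has Christoffel symbols lying in the neighborhood where the implicit function theorem applies (the paper gestures at the latter by restricting attention to connections that "exist at all," i.e. whose extension to covariant tensors via proposition \ref{jet_conn_on_1_forms} is defined, which already forces a bound on $\Gamma$).
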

\begin{proof}
	In view of the lemmas, we can lift the proof from Lee \cite{lee_riemannian_manif}, Theorem 5.4, almost word-for-word, as long as it is understood that all indices go up to order $|\alpha| \le r$. For convenience, we reproduce the argument here. First note that, if compatibility with the metric is to hold, we must have now that for all vector fields $X,Y,Z \in \mathscr{J}^r(M)$ (including cyclic permutations):
	\begin{align}
		X \langle Y,Z \rangle &= X^\alpha \frac{\alpha!}{\alpha_1!\alpha_2!} \langle \nabla_{\alpha_1}Y, \nabla_{\alpha_2}Z \rangle \nonumber \\
		&= \langle \nabla_X Y,Z \rangle +
		\langle Y, \nabla_X Z \rangle + X^\alpha \frac{\alpha!}{\alpha_1!\alpha_2!}\bigg|_{\alpha_{1,2}\ne 0} \langle \nabla_{\alpha_1}Y, \nabla_{\alpha_2}Z \rangle \nonumber \\
		&= \langle \nabla_X Y,Z \rangle +
		\langle Y, \nabla_X Z \rangle + X \langle Y, Z \rangle_{\alpha_{1,2}\ne 0} \nonumber \\
		Y \langle Z,X \rangle &= \langle \nabla_Y Z,X \rangle +
		\langle Z, \nabla_Y X \rangle + Y \langle Z, X \rangle_{\beta_{1,2}\ne 0}
		\nonumber \\
		Z \langle X,Y \rangle &= \langle \nabla_Z X,Y \rangle +
		\langle X, \nabla_Z Y \rangle + Z \langle X, Y \rangle_{\gamma_{1,2}\ne 0}. 
	\end{align}
	By reason of the assumed symmetry, we can rewrite these equations as follows:
	\begin{align}
		X \langle Y,Z \rangle &= \langle \nabla_X Y,Z \rangle +
		\langle Y, \nabla_Z X \rangle + \langle Y, [X,Z] \rangle +
		X \langle Y, Z \rangle_{\alpha_{1,2}\ne 0} \nonumber \\
		Y \langle Z,X \rangle &= \langle \nabla_Y Z,X \rangle +
		\langle Z, \nabla_X Y \rangle + \langle Z, [Y,X] \rangle +
		Y \langle Z, X \rangle_{\beta_{1,2}\ne 0} \nonumber \\
		Z \langle X,Y \rangle &= \langle \nabla_Z X,Y \rangle +
		\langle X, \nabla_Y Z \rangle + \langle X, [Z,Y] \rangle +
		Z \langle X, Y \rangle_{\gamma_{1,2}\ne 0}
	\end{align}
	If we add the first two and subtract the third, we obtain
	\begin{align}
		X \langle Y,Z \rangle + Y \langle Z,X \rangle - Z \langle X,Y \rangle = &2 \langle \nabla_X Y,Z \rangle + \langle Y,[X,Z] \rangle + \langle Z, [Y,X] \rangle
		- \langle X,[Z,Y] \rangle + \nonumber \\
		& X \langle Y, Z \rangle_{\alpha_{1,2}\ne 0} +
		Y \langle Z, X \rangle_{\beta_{1,2}\ne 0} - Z \langle X, Y \rangle_{\gamma_{1,2}\ne 0}.
	\end{align}
	Then, solve for $\langle \nabla_X Y,Z \rangle$:
	\begin{align}\label{nablaxyz}
		\langle \nabla_X Y,Z \rangle = & \frac{1}{2} \bigg(
		X \langle Y,Z \rangle + Y \langle Z,X \rangle - Z \langle X,Y \rangle -
		\langle Y,[X,Z] \rangle - \langle Z, [Y,X] \rangle + \langle X,[Z,Y] \rangle - 
		\nonumber \\ &
		X \langle Y, Z \rangle_{\alpha_{1,2}\ne 0} -
		Y \langle Z, X \rangle_{\beta_{1,2}\ne 0} + Z \langle X, Y \rangle_{\gamma_{1,2}\ne 0}
		\bigg).
	\end{align}
	The expression is the same as the usual one, except that for $r \ge 2$ cross-terms quadratic in the Christoffel symbols arise in the second line. Consider existence first.  It will be convenient to work in a local chart with $X = \partial_\alpha$, $Y=\partial_\beta$ and $Z=\partial_\gamma$. If we evaluate equation (\ref{nablaxyz}) on coordinate vector fields, the Lie brackets vanish identically leaving us with the following relation:
	\begin{align}
		\Gamma^\lambda_{\alpha\beta}g_{\lambda\gamma} = \frac{1}{2} \bigg( &
		\partial_\alpha g_{\beta\gamma} - \frac{\alpha!}{\alpha_1!\alpha_2!}\bigg|_{\alpha_{1,2}\ne 0} g_{\mu\nu}
		\Gamma^\mu_{\alpha_1\beta}\Gamma^\nu_{\alpha_2\gamma} + \nonumber \\
		&\partial_\beta g_{\gamma\alpha} - \frac{\beta!}{\beta_1!\beta_2!}\bigg|_{\beta_{1,2}\ne 0} g_{\mu\nu}
		\Gamma^\mu_{\beta_1\gamma}\Gamma^\nu_{\beta_2\alpha} - \nonumber \\
		&\partial_\gamma g_{\alpha\beta} +  \frac{\gamma!}{\gamma_1!\gamma_2!}\bigg|_{\gamma_{1,2}\ne 0} g_{\mu\nu}
		\Gamma^\mu_{\gamma_1\alpha}\Gamma^\nu_{\gamma_2\beta}
		\bigg),
	\end{align}
	where we have used $g_{\alpha\beta}=\langle \partial_\alpha,\partial_\beta \rangle$ and $\nabla_\alpha \partial_\beta = \Gamma^\gamma_{\alpha\beta}\partial_\gamma$. Multiply from the left by $g^{-1}$ and regroup the terms to yield an expression satisfied by the Christoffel symbols:
	\begin{align}\label{Levi_Civita_formula}
		\Gamma^\gamma_{\alpha\beta} = & \frac{1}{2} g^{\gamma\delta} \bigg(
		\partial_\alpha g_{\beta\delta} + \partial_\beta g_{\delta\alpha} - \partial_\delta g_{\alpha\beta} - \nonumber \\ &\frac{\alpha!}{\alpha_1!\alpha_2!}\bigg|_{\alpha_{1,2}\ne 0} g_{\mu\nu}
		\Gamma^\mu_{\alpha_1\beta}\Gamma^\nu_{\alpha_2\delta} -
		\frac{\beta!}{\beta_1!\beta_2!}\bigg|_{\beta_{1,2}\ne 0} g_{\mu\nu}
		\Gamma^\mu_{\beta_1\alpha}\Gamma^\nu_{\beta_2\delta} +
		\frac{\delta!}{\delta_1!\delta_2!}\bigg|_{\delta_{1,2}\ne 0} g_{\mu\nu}
		\Gamma^\mu_{\delta_1\alpha}\Gamma^\nu_{\delta_2\beta}
		\bigg).
	\end{align}
	Now in the general case, we no longer have a formula in explicit form for the Christoffel symbols, but only a relation they satisfy involving the cross-terms. Nevertheless, under our assumptions it is easy to show existence of a solution by appeal to the implicit function theorem. Let $d_1$ denote the dimension of the space of metric coefficients and their derivatives up to order $r$ and $d_2$ the dimension of the space of Christoffel symbols, and let $U \subset \vvmathbb{R}^{d_1} \times \vvmathbb{R}^{d_2}$ be a neighborhood of the origin. Define now the function $\Phi:U\rightarrow \vvmathbb{R}^{d_2}$ by
	\begin{align}
		\Phi(g,\Gamma) = & \Gamma^\gamma_{\alpha\beta} - \frac{1}{2} g^{\gamma\delta} \bigg(
		\partial_\alpha g_{\beta\gamma} + \partial_\beta g_{\gamma\alpha} - \partial_\gamma g_{\alpha\beta} - \nonumber \\ &\frac{\alpha!}{\alpha_1!\alpha_2!}\bigg|_{\alpha_{1,2}\ne 0} g_{\mu\nu}
		\Gamma^\mu_{\alpha_1\beta}\Gamma^\nu_{\alpha_2\delta} -
		\frac{\beta!}{\beta_1!\beta_2!}\bigg|_{\beta_{1,2}\ne 0} g_{\mu\nu}
		\Gamma^\mu_{\beta_1\alpha}\Gamma^\nu_{\beta_2\delta} +
		\frac{\delta!}{\delta_1!\delta_2!}\bigg|_{\delta_{1,2}\ne 0} g_{\mu\nu}
		\Gamma^\mu_{\delta_1\alpha}\Gamma^\nu_{\delta_2\beta}
		\bigg). 
	\end{align}
	The Jacobian
	\begin{align}
		\frac{\partial\Phi^{\bar{\gamma}}_{\bar{\alpha}\bar{\beta}}}
		{\partial\Gamma^\gamma_{\alpha\beta}} = I + & \frac{1}{2} g^{\bar{\gamma}\delta} 
		\bigg(
		\frac{\bar{\alpha}!}{\alpha_1!\alpha_2!}\bigg|_{\alpha_{1,2}\ne 0} g_{\mu'\nu'}
		\big( \delta_{\mu'\mu}\delta_{\alpha_1\alpha}\delta_{\bar{\beta}\beta}
		\Gamma^{\nu'}_{\alpha_2\delta} +
		\delta_{\nu'\nu}\delta_{\alpha_2\alpha}\delta_{\delta\beta}
		\Gamma^{\mu'}_{\alpha_1\bar{\beta}} + \nonumber \\
		&
		\frac{\bar{\beta}!}{\beta_1!\beta_2!}\bigg|_{\beta_{1,2}\ne 0} g_{\mu'\nu'}
		\big( \delta_{\mu'\mu}\delta_{\beta_1\alpha}\delta_{\bar{\alpha}\beta}
		\Gamma^{\nu'}_{\beta_2\delta} +
		\delta_{\nu'\nu}\delta_{\beta_2\alpha}\delta_{\delta\beta}
		\Gamma^{\mu'}_{\beta_1\bar{\beta}} \big) - \nonumber \\
		&
		\frac{\delta!}{\delta_1!\delta_2!}\bigg|_{\delta_{1,2}\ne 0} g_{\mu'\nu'}
		\big( \delta_{\mu'\mu}\delta_{\delta_1\alpha}\delta_{\bar{\alpha}\beta}
		\Gamma^{\nu'}_{\delta_2\bar{\beta}} +
		\delta_{\nu'\nu}\delta_{\delta_2\alpha}\delta_{\bar{\beta}\beta}
		\Gamma^{\mu'}_{\delta_1\bar{\beta}} \big) \bigg)
	\end{align}
	will be non-singular everywhere on a sufficiently small neighborhood $U$ due to the uniform bounds on the metric and its derivatives. Therefore, by the implicit function theorem (Lee \cite{lee_smooth_manif}, Theorem C.40), there exist neighborhoods $V \subset \vvmathbb{R}^{d_1}$ and $W \subset \vvmathbb{R}^{d_2}$ of the origin and a smooth function $F:V\rightarrow W$ such that $\Phi^{-1}(0) \cup V \times W$ is the graph of $F$, that is, $\Phi(g,\Gamma)=0$ if and only if $\Gamma=F(g)$. Since the coefficients of the metric tensor depend smoothly on position, so will $p \mapsto F(g(p))$. This shows existence.
	
	It is evident from the defining formula, equation (\ref{Levi_Civita_formula}), that $\Gamma^\gamma_{\alpha\beta}=\Gamma^\gamma_{\beta\alpha}$, hence the jet connection $\nabla$ defined by the Christoffel symbols is symmetric (proposition \ref{torsion_tensor_properties}). As for compatibility with the metric, observe that $\nabla g = 0$ is equivalent to
	\begin{equation}
		X \langle \partial_\mu,\partial_\nu \rangle = X^\alpha \frac{\alpha!}{\alpha_1!\alpha_2!}
		\langle \nabla_{\alpha_1} \partial_\mu, \nabla_{\alpha_2} \partial_\nu \rangle = 0
	\end{equation}
	for all vector fields $X \in \mathscr{J}^r(M)$, which in turn is equivalent to the statement that 
	\begin{equation}
		g_{\mu\nu,\alpha} = \frac{\alpha!}{\alpha_1!\alpha_2!} g_{\lambda\sigma}
		\Gamma^\lambda_{\alpha_1\mu}\Gamma^\sigma_{\alpha_2\nu}
	\end{equation}
	for all multi-indices $\mu,\nu,\alpha$; $1 \le |\mu|,|\nu|,|\alpha| \le r$. But from the definition of the Christoffel symbols we have
	\begin{align}
		\Gamma^\lambda_{\delta\alpha}g_{\lambda\beta} + \Gamma^\lambda_{\delta\beta}g_{\alpha\lambda} =& \frac{1}{2} \bigg(
		g_{\alpha\beta,\delta} + g_{\delta\beta,\alpha} - g_{\delta\alpha,\beta} + g_{\beta\alpha,\delta} + g_{\alpha\delta,\beta} - g_{\delta\beta,\alpha} - 
		\nonumber \\
		&
		\frac{\delta!}{\delta_1!\delta_2!}\bigg|_{\delta_{1,2}\ne 0}
		\Gamma^\mu_{\delta_1\bar{\alpha}}\Gamma^\nu_{\delta_2\bar{\beta}} -
		\frac{\alpha!}{\alpha_1!\alpha_2!}\bigg|_{\alpha_{1,2}\ne 0} g_{\mu\nu}
		\Gamma^\mu_{\alpha_1\delta}\Gamma^\nu_{\alpha_2\beta} +
		\frac{\beta!}{\beta_1!\beta_2!}\bigg|_{\beta_{1,2}\ne 0} g_{\mu\nu}
		\Gamma^\mu_{\beta_1\delta}\Gamma^\nu_{\beta_2\alpha} - \nonumber \\
		&
		\frac{\delta!}{\delta_1!\delta_2!}\bigg|_{\delta_{1,2}\ne 0}
		\Gamma^\mu_{\delta_1\bar{\alpha}}\Gamma^\nu_{\delta_2\bar{\beta}} -
		\frac{\beta!}{\beta_1!\beta_2!}\bigg|_{\beta_{1,2}\ne 0} g_{\mu\nu}
		\Gamma^\mu_{\beta_1\alpha}\Gamma^\nu_{\beta_2\delta} +
		\frac{\alpha!}{\alpha_1!\alpha_2!}\bigg|_{\alpha_{1,2}\ne 0} g_{\mu\nu}
		\Gamma^\mu_{\alpha_1\delta}\Gamma^\nu_{\alpha_2\beta}
		\bigg). 
	\end{align}
	After canceling terms and transferring the sum over the multi-indices $\delta_{1,2} \ne 0$ to the left-hand side, we obtain just the condition that $\nabla g = 0$. Therefore, the jet connection thus defined is compatible with the metric.
	
	Last, uniqueness. If $\nabla^1$ and $\nabla^2$ are two jet connections that are symmetric and compatible with the metric, the first line in equation (\ref{Levi_Civita_formula}), being independent of the connection, will be identical for the two; this leaves only the cross-terms. Then evaluating the difference between the two we arrive at
	\begin{align}
		g_{\gamma\lambda}\Gamma^{1\lambda}_{\alpha\beta} - g_{\gamma\lambda}\Gamma^{2\lambda}_{\alpha\beta} = &
		\frac{1}{2} \frac{\alpha!}{\alpha_1!\alpha_2!}\bigg|_{\alpha_{1,2}\ne 0} g_{\mu\nu}\Gamma^{2\mu}_{\alpha_1\beta}\Gamma^{2\nu}_{\alpha_2\gamma} -
		\frac{1}{2} \frac{\alpha!}{\alpha_1!\alpha_2!}\bigg|_{\alpha_{1,2}\ne 0} g_{\mu\nu}\Gamma^{1\mu}_{\alpha_1\beta}\Gamma^{1\nu}_{\alpha_2\gamma} + 
		\nonumber \\
		&\frac{1}{2} \frac{\beta!}{\beta_1!\beta_2!}\bigg|_{\beta_{1,2}\ne 0}
		g_{\mu\nu}\Gamma^{2\mu}_{\beta_1\gamma}\Gamma^{2\nu}_{\beta_2\alpha} -
		\frac{1}{2} \frac{\beta!}{\beta_1!\beta_2!}\bigg|_{\beta_{1,2}\ne 0} g_{\mu\nu}\Gamma^{1\mu}_{\beta_1\gamma}\Gamma^{1\nu}_{\beta_2\alpha} - 
		\nonumber \\
		&\frac{1}{2} \frac{\gamma!}{\gamma_1!\gamma_2!}\bigg|_{\gamma_{1,2}\ne 0}
		g_{\mu\nu}\Gamma^{2\mu}_{\gamma_1\alpha}\Gamma^{2\nu}_{\gamma_2\beta} +
		\frac{1}{2} \frac{\gamma!}{\gamma_1!\gamma_2!}\bigg|_{\gamma_{1,2}\ne 0} g_{\mu\nu}\Gamma^{1\mu}_{\gamma_1\alpha}\Gamma^{1\nu}_{\gamma_2\beta}.
	\end{align}
	Multiply from the left by $g^{-1}$ and put this into the form,
	\begin{align}
		\Gamma^{1\lambda}_{\alpha\beta} - \Gamma^{2\lambda}_{\alpha\beta} = g^{\lambda\gamma} \bigg( &
		\frac{1}{2} \frac{\alpha!}{\alpha_1!\alpha_2!}\bigg|_{\alpha_{1,2}\ne 0} g_{\mu\nu}\Gamma^{2\mu}_{\alpha_1\beta}\Gamma^{2\nu}_{\alpha_2\gamma} -
		\frac{1}{2} \frac{\alpha!}{\alpha_1!\alpha_2!}\bigg|_{\alpha_{1,2}\ne 0} g_{\mu\nu}\Gamma^{1\mu}_{\alpha_1\beta}\Gamma^{1\nu}_{\alpha_2\gamma} + 
		\nonumber \\
		&\frac{1}{2} \frac{\beta!}{\beta_1!\beta_2!}\bigg|_{\beta_{1,2}\ne 0}
		g_{\mu\nu}\Gamma^{2\mu}_{\beta_1\gamma}\Gamma^{2\nu}_{\beta_2\alpha} -
		\frac{1}{2} \frac{\beta!}{\beta_1!\beta_2!}\bigg|_{\beta_{1,2}\ne 0} g_{\mu\nu}\Gamma^{1\mu}_{\beta_1\gamma}\Gamma^{1\nu}_{\beta_2\alpha} - 
		\nonumber \\
		&\frac{1}{2} \frac{\gamma!}{\gamma_1!\gamma_2!}\bigg|_{\gamma_{1,2}\ne 0}
		g_{\mu\nu}\Gamma^{2\mu}_{\gamma_1\alpha}\Gamma^{2\nu}_{\gamma_2\beta} +
		\frac{1}{2} \frac{\gamma!}{\gamma_1!\gamma_2!}\bigg|_{\gamma_{1,2}\ne 0} g_{\mu\nu}\Gamma^{1\mu}_{\gamma_1\alpha}\Gamma^{1\nu}_{\gamma_2\beta} \bigg).
	\end{align}
	We will now show uniquess as a result of the contraction mapping theorem. Define the following map on the space of connections, equipped with the $\ell_\infty$ norm in the finite sequence of coefficients of the Christoffel symbol:
	\begin{align}
		F(\Gamma) = \Gamma^{\lambda}_{\alpha\beta} - g^{\lambda\gamma} \bigg( &
		\frac{1}{2} \frac{\alpha!}{\alpha_1!\alpha_2!}\bigg|_{\alpha_{1,2}\ne 0} g_{\mu\nu}\Gamma^{2\mu}_{\alpha_1\beta}\Gamma^{2\nu}_{\alpha_2\gamma} -
		\frac{1}{2} \frac{\alpha!}{\alpha_1!\alpha_2!}\bigg|_{\alpha_{1,2}\ne 0} g_{\mu\nu}\Gamma^{\mu}_{\alpha_1\beta}\Gamma^{\nu}_{\alpha_2\gamma} + 
		\nonumber \\
		&\frac{1}{2} \frac{\beta!}{\beta_1!\beta_2!}\bigg|_{\beta_{1,2}\ne 0}
		g_{\mu\nu}\Gamma^{2\mu}_{\beta_1\gamma}\Gamma^{2\nu}_{\beta_2\alpha} -
		\frac{1}{2} \frac{\beta!}{\beta_1!\beta_2!}\bigg|_{\beta_{1,2}\ne 0} g_{\mu\nu}\Gamma^{\mu}_{\beta_1\gamma}\Gamma^{\nu}_{\beta_2\alpha} - 
		\nonumber \\
		&\frac{1}{2} \frac{\gamma!}{\gamma_1!\gamma_2!}\bigg|_{\gamma_{1,2}\ne 0}
		g_{\mu\nu}\Gamma^{2\mu}_{\gamma_1\alpha}\Gamma^{2\nu}_{\gamma_2\beta} +
		\frac{1}{2} \frac{\gamma!}{\gamma_1!\gamma_2!}\bigg|_{\gamma_{1,2}\ne 0} g_{\mu\nu}\Gamma^{\mu}_{\gamma_1\alpha}\Gamma^{\nu}_{\gamma_2\beta} \bigg).
	\end{align}
	Then, from the above argument, without loss of generality there is a constant $1>\delta>0$ such that the Christoffel symbols satisfy $\|\Gamma\|_\infty<\delta$ when they exist at all; so we have $\|F(\Gamma)-\Gamma\|_\infty = \|\Gamma^2\Gamma^2-\Gamma\Gamma)\|_\infty \le 2 C \delta^2$ for some constant $C>0$ depending on $\delta_{0,1}$ and $r$, where for simplicity we adopt an abbreviated notation. Let us suppose the bound $\delta>0$ be taken small enough to satisfy $\varepsilon = 2C\delta^2<1$. 
	But from its definition, clearly $F(\Gamma^2)=\Gamma^2$, whence our result: 
	\begin{align}
		F(\Gamma^1)-\Gamma^1 &= \Gamma^1\Gamma^1 - \Gamma^2\Gamma^2 \nonumber \\
		&= (\Gamma^2+F(\Gamma^1)-\Gamma^1)(\Gamma^2+F(\Gamma^1)-\Gamma^1) - \Gamma^2\Gamma^2 \nonumber \\
		&= (F(\Gamma^1)-\Gamma^1)^2 + \Gamma^2 (F(\Gamma^1) - \Gamma^2) + 
		(F(\Gamma^1) - \Gamma^1)\Gamma^2,
	\end{align}
	where in our condensed notation the expressions on the right-hand side are understood to involve possible numerical coefficients and factors of the metric tensor. Thus, the right-hand side involves $F(\Gamma^1)-\Gamma^1$ and $\Gamma^2$. But the former is bounded by $\varepsilon<1$ and the latter by $\delta<1$. Hence, when we take the $\ell_\infty$ norm of both sides, there will be a constant $M>0$ depending only on $r$ such that 
	\begin{equation}
		\|F(\Gamma^1)-\Gamma^1\|_\infty < \frac{\varepsilon}{1-M \| g^{-1} \|_\infty \| g \|_\infty \delta}\|F(\Gamma^1)-\Gamma^1\|_\infty,
	\end{equation}
	whence $F(\Gamma^1)=\Gamma^1$ if $\varepsilon>0$ and $\delta>0$ are taken to be small enough that the coefficient on the right-hand side is less than one. Putting our results together, we have $\Gamma^1-\Gamma^2 = F(\Gamma^1) - F(\Gamma^2) = (\Gamma^1+\Gamma^2-\Gamma^1) - (\Gamma^2) = 0$, which is to say, $\Gamma^1=\Gamma^2$. \end{proof}

\begin{remark}
	The condition of uniform boundedness in the statement of the theorem is not ideal in that it depends on the choice of coordinates; in order to remove it, one would need to introduce a covariant sense in which a manifold could be said to be nearly flat. This could be done in a pointwise fashion with normal coordinates, in which the deviation of the metric from the Euclidean is given in terms of the Riemannian curvature tensor, a geometrical invariant; then, the Levi-Civita jet connection would exist in a neighborhood of every point on $M$ in normal coordinates, but, being an intrinsic concept, its existence everywhere on $M$ would then be secured. The problem with this approach, however, is that we do not as yet have a theorem establishing the existence of suitable normal coordinates.
\end{remark}

\subsection{On the Curvature of Space in the Presence of Infinitesimals}\label{generalized_curvature_tensor}

Now that we have at our disposal the Levi-Civita jet connection as the natural means by which to formalize the concept of parallel transport with respect to the generalized Riemannian metric, let us take up the question as to how to arrive at an extended sense of curvature. Riemann's idea in the habilitation lecture is to propose that one study the curvature of space by singling out the $\frac{1}{2}n(n-1)$ Fl{\"a}chenrichtungen along which an infinitesimal surface can fan out from a given point. Later on, he would formalize his intuitive approach in terms of the curvature tensor that bears his name. For an excellent presentation of the right way to think about Riemann's intuitive idea in modern terms, see Lee, \cite{lee_riemannian_manif}, Chapter 8. There, Lee attaches a quantitative geometrical measure of curvature, viz. the sectional curvature, to every such Fl{\"a}chenrichtung by deriving it from the second fundamental form on the two-dimensional submanifold swept out by geodesics whose initial tangents lie in the corresponding two-dimensional subspace of the tangent space at the point. The requisite technique does not as yet exist to replicate the procedure at higher order. Nevertheless, it does orient us as to the direction in which we wish to head. For a geodesic can be thought of as a curve along which its tangent vector undergoes parallel transport. Therefore, since we already have a higher-order analogue, namely the jet connection of {\S}\ref{parallel_transport_jets}, we propose to introduce the natural generalization of curvature by resorting to it, to be applied now to what we have called the jet stream rather than just a 1-dimensional trajectory. Formally, curvature manifests itself through non-commutativity of parallel transport along the two independent initial tangents that define a Fl{\"a}chenrichtung (see Lee, \cite{lee_riemannian_manif}, Chapter 7). At higher order, of course, there will be more than $\frac{1}{2}n(n-1)$ possible pairs of such directions; roughly speaking, there should be one corresponding to any choice of two linearly independent $r$-vectors in $J^r_pM$. Then, if we wish to proceed along lines as close as possible to what is done in the case of differential geometry to first order, it immediately motivates the following definition. 

\begin{definition}\label{def_curv}
	Given a jet connection $\nabla$ on a manifold $M$, its curvature endomorphism will be defined by the formula
	\begin{equation}\label{def_curvature_endomorphism}
		R(X,Y)Z := \nabla_X \nabla_Y Z - \nabla_Y \nabla_X Z - \nabla_{[X,Y]} Z,
	\end{equation}
	for vector fields $X,Y,Z \in \mathscr{J}^\infty(M)$.
\end{definition}
The first order of business is to show that the curvature endomorphism so defined will, in fact, be a generalized tensor. Here, the generalized tensor characterization lemma makes the verification relatively easy.

\begin{lemma}\label{raise_cov_deriv}
	Covariant composition and the covariant derivative satisfy the relation
	\begin{equation}
		\nabla_Y \circ \nabla_X Z = \nabla_{\nabla_Y X} Z.
	\end{equation}
\end{lemma}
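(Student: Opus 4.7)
The plan is to verify the identity by a direct coordinate computation that unpacks the definition of covariant composition (equation \ref{cov_comp_def}) and compares it against the coordinate formula (equation \ref{jet_conn_on_vf}) for the jet connection acting on a vector field. The key observation is that the right-hand side of the defining identity $\nabla_\alpha \circ X := \frac{\alpha!}{\alpha_1!\alpha_2!} X^\nu_{,\alpha_1} \Gamma^\mu_{\alpha_2\nu} \nabla_\mu \circ$ has precisely the same scalar coefficients as the components of $\nabla_{\partial_\alpha} X = \frac{\alpha!}{\alpha_1!\alpha_2!} X^\nu_{,\alpha_1} \Gamma^\mu_{\alpha_2\nu} \partial_\mu$; the two expressions differ only in that the basis vector field $\partial_\mu$ (which would yield a vector field) is replaced by the empty operator slot $\nabla_\mu \circ$. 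Reading off this parallel, one obtains the operator identity $\nabla_\alpha \circ X \circ = (\nabla_\alpha X)^\mu \nabla_\mu \circ = \nabla_{\nabla_\alpha X} \circ$, which says that covariant composition of $\partial_\alpha$ with $X$ is the same operator as covariant differentiation along the vector field $\nabla_\alpha X$.

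From here the general case is immediate. For arbitrary $Y = Y^\alpha \partial_\alpha$, inspection of equation (\ref{cov_comp_def}) reveals that $Y \circ X \circ$ is $C^\infty(M)$-linear in $Y$ (the factor $Y^\mu$ entering multiplicatively), so I would extend the basis-vector computation by linearity: $Y \circ X \circ = Y^\alpha\, \nabla_\alpha \circ X \circ = Y^\alpha\, \nabla_{\nabla_\alpha X} \circ = \nabla_{Y^\alpha \nabla_\alpha X} \circ = \nabla_{\nabla_Y X} \circ$, where the penultimate step invokes the $C^\infty(M)$-linearity of the jet connection in its subscript argument. Applying both sides as operators to the vector field $Z$ then produces $Y \circ X \circ Z = \nabla_{\nabla_Y X} Z$, which, upon identifying the left-hand side with $\nabla_Y \circ \nabla_X Z$ in the notation of the lemma, yields the desired conclusion.

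The calculation is routine rather than deep; the only conceptual hurdle is the interpretation of the suffix ``$\circ$'' as a placeholder in an operator composition, so that the statement $\nabla_Y \circ \nabla_X Z$ is read as the covariant composition $Y \circ X \circ$ of vector fields viewed as higher-order differential operators and then evaluated on $Z$. Once that notational convention is fixed, no genuine obstacle remains: the result is essentially a transcription of the definition of covariant composition against the coordinate expression for the jet connection, with no need to appeal to symmetry of $\nabla$, to the defects $\delta$ or $\varepsilon$, or to any of the more elaborate machinery developed earlier.
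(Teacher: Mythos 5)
Your proof is correct and coincides in essence with the paper's own argument: both reduce to coordinate vector fields by $C^\infty(M)$-linearity in $Y$ and then exploit the fact that the scalar coefficients in the definition of covariant composition, $\frac{\alpha!}{\alpha_1!\alpha_2!}X^\nu_{,\alpha_1}\Gamma^\mu_{\alpha_2\nu}$, are precisely the components of $\nabla_\alpha X$ as given by equation (\ref{jet_conn_on_vf}). The only difference is organizational---you first establish the operator identity $\nabla_\alpha \circ X \circ = \nabla_{\nabla_\alpha X}\circ$ and then evaluate on $Z$, whereas the paper expands both sides applied to $Z$ and matches the resulting coordinate expressions---so it does not amount to a genuinely different method.
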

\begin{proof}
	By multilinearity it suffices to show the relation for $Y = \partial_\beta$. With $X = X^\alpha \partial_\alpha$, the left-hand side reads
	\begin{equation}
		\nabla_\beta \circ \nabla_X Z = \nabla_\beta \circ X^\alpha \nabla_\alpha Z
		= \frac{\beta!}{\beta_1!\beta_2!} X^\alpha_{,\beta_1} \nabla_{\beta_2} \circ \nabla_\alpha Z.
	\end{equation}
	On the right-hand side, we have
	\begin{align}
		\nabla_{\nabla_\beta X} Z &= \nabla_{\frac{\beta!}{\beta_1!\beta_2!} Xs^\alpha_{,\beta_1} \nabla_{\beta_2} \partial_\alpha} Z \nonumber \\
		&= \frac{\beta!}{\beta_1!\beta_2!} X^\alpha_{,\beta_1} \nabla_{\nabla_{\beta_2} \partial_\alpha} Z \nonumber \\
		&= \frac{\beta!}{\beta_1!\beta_2!}\frac{\gamma!}{\gamma_1!\gamma_2!} X^\alpha_{,\beta_1} \Gamma^\gamma_{\beta_2\alpha} Z^\nu_{,\gamma_1} \nabla_{\gamma_2} \partial_\nu \nonumber \\
		&= \frac{\beta!}{\beta_1!\beta_2!} X^\alpha_{,\beta_1} \nabla_{\beta_2} \circ \left( \nabla Z \right)_\alpha \nonumber \\
		&= \frac{\beta!}{\beta_1!\beta_2!} X^\alpha_{,\beta_1} \nabla_{\beta_2} \circ \nabla_\alpha Z. 
	\end{align}
	Therefore, the desired result holds.
\end{proof}

\begin{theorem}[Tensoriality of the Curvature]\label{tensoriality_of_curvature}
	For the Levi-Civita jet connection of Theorem \ref{exist_levi_civita}, the curvature endomorphism $R(X,Y)$ defined by equation (\ref{def_curvature_endomorphism}) is a generalized tensor field of type $\binom{3}{1}$.
\end{theorem}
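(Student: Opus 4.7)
The plan is to invoke the generalized tensor characterization lemma (Lemma \ref{tensor_char}) and thereby reduce the claim to verifying that the map
\begin{equation*}
  (X,Y,Z) \longmapsto R(X,Y)Z = \nabla_X\nabla_Y Z - \nabla_Y\nabla_X Z - \nabla_{[X,Y]}Z
\end{equation*}
is multilinear over $C^\infty(M)$ in each of its three arguments, after which the extension lemma permits us to recover the fiberwise generalized tensor. The manifest antisymmetry $R(X,Y)Z=-R(Y,X)Z$ that is built into the definition means linearity in $Y$ is a consequence of linearity in $X$, so only two checks remain: $R(fX,Y)Z = fR(X,Y)Z$ and $R(X,Y)(fZ) = fR(X,Y)Z$ for every $f\in C^\infty(M)$.

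For linearity in the first slot, the $C^\infty(M)$-linearity of $\nabla_{\bullet}$ in its subscript yields $\nabla_{fX}\nabla_Y Z = f\nabla_X\nabla_Y Z$ immediately. Expanding the middle term via the generalized Leibniz rule,
\begin{equation*}
  \nabla_Y(f\nabla_X Z) = Y^\alpha \frac{\alpha!}{\alpha_1!\alpha_2!}\,(\partial_{\alpha_1}f)\,\nabla_{\alpha_2}\nabla_X Z = f\nabla_Y\nabla_X Z + \mathcal{C}(f;Y,X,Z),
\end{equation*}
where $\mathcal{C}$ collects the cross-terms with $\alpha_1\neq 0$. These cross-terms must be absorbed by $\nabla_{[fX,Y]}Z$. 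Here I would exploit the fact that the Levi-Civita jet connection is symmetric, so Proposition \ref{defect_symm}(1) gives $[fX,Y]=\nabla_{fX}Y-\nabla_Y(fX)=f\nabla_X Y-\nabla_Y(fX)$; expanding $\nabla_Y(fX)$ by the same Leibniz rule produces $f(\nabla_X Y-\nabla_Y X)=f[X,Y]$ plus exactly the cross-terms that, once fed through the $C^\infty(M)$-linear slot of $\nabla_\bullet Z$, cancel $\mathcal{C}(f;Y,X,Z)$.

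For linearity in $Z$, I would expand each of the three pieces of $R(X,Y)(fZ)$ by the Leibniz rule and group the result by the order of derivative that lands on $f$. The terms with no derivatives on $f$ recombine to $fR(X,Y)Z$; the terms where a first derivative of $f$ appears are grouped into a bracket-free piece that is manifestly symmetric in $(X,Y)$ modulo the $\nabla_{[X,Y]}(fZ)$ correction, which is precisely where the symmetry of $\nabla$ is needed so that $X\circ Y\circ - Y\circ X\circ$ vanishes on functions (Proposition \ref{defect_symm}(2)). Lemma \ref{raise_cov_deriv} further allows me to rewrite the iterated covariant composition appearing in these cross-terms as a single covariant derivative, which recombines with the Lie-bracket term via Proposition \ref{defect_symm}(1).

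The principal obstacle is bookkeeping: at every order $|\alpha|\ge 2$, each application of the Leibniz rule generates combinatorially proliferating cross-terms indexed by partitions of multi-indices, and one must match them across $\nabla_X\nabla_Y(fZ)$, $\nabla_Y\nabla_X(fZ)$, and $\nabla_{[X,Y]}(fZ)$ to exhibit the cancellation. I expect the cleanest execution to proceed via covariant composition rather than ordinary composition of differential operators, using Lemma \ref{raise_cov_deriv} to collapse each $\nabla_X\nabla_Y$ as $\nabla_X\circ\nabla_Y + (\text{defect})$, and then invoking the symmetry identities of Proposition \ref{defect_symm} to dispose of the defects. The payoff is that the entire verification then mirrors the classical first-order argument of Lee, \cite{lee_riemannian_manif}, Proposition 7.3, the generalized multinomial coefficients $\alpha!/(\alpha_1!\alpha_2!)$ playing the role of the elementary product rule.
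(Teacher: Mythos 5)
Your overall framework and your first-slot argument coincide with the paper's own proof: Lemma \ref{tensor_char} plus antisymmetry reduces everything to the two identities, and for $R(fX,Y)Z$ the paper, exactly as you propose, uses vanishing torsion (Proposition \ref{defect_symm}) to trade $[fX,Y]$ for $\nabla_{fX}Y-\nabla_Y(fX)$ and then cancels the derivative-of-$f$ terms. One caution: your phrase ``once fed through the $C^\infty(M)$-linear slot of $\nabla_\bullet Z$'' conceals the only nontrivial step there. Feeding the cross-terms of $\nabla_Y(fX)$ through that slot produces terms of the form $(\partial_{\alpha_1}f)\,\nabla_{\nabla_{\alpha_2}X}Z$, whereas your $\mathcal{C}(f;Y,X,Z)$ consists of terms $(\partial_{\alpha_1}f)\,\nabla_{\alpha_2}\nabla_X Z$; these are not equal term by term (this is precisely the distinction between $\nabla_{\partial^2/\partial x\partial y}$ and $\nabla_{\partial/\partial x}\nabla_{\partial/\partial y}$ that the paper stresses), and identifying the two collections is exactly the chain of Lemma \ref{raise_cov_deriv} followed by Lemma \ref{cov_comp_vs_cov_deriv} with $\nabla_X Z$ substituted for $X$, which is how the paper closes this step. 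Since you name that machinery in your final paragraph, this half of your plan goes through.

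The genuine gap is in the third slot. Your mechanism --- expand by the Leibniz rule and observe that the terms with one derivative on $f$ are ``manifestly symmetric in $(X,Y)$,'' hence killed by antisymmetrization --- is the classical $r=1$ argument, and it fails for $r\ge 2$. Take $X=\partial_\alpha$, $Y=\partial_\beta$, so the bracket term is absent; the generalized Leibniz rule gives
\[
\nabla_\alpha\nabla_\beta(fZ)-\nabla_\beta\nabla_\alpha(fZ)
=\sum_{\substack{\alpha_1+\alpha_2=\alpha\\ \beta_1+\beta_2=\beta}}
\frac{\alpha!}{\alpha_1!\,\alpha_2!}\,\frac{\beta!}{\beta_1!\,\beta_2!}\,
\bigl(\partial_{\alpha_1+\beta_1}f\bigr)\,\bigl[\nabla_{\alpha_2},\nabla_{\beta_2}\bigr]Z.
\]
The $(\alpha_1,\beta_1)=(0,0)$ term is $fR(\partial_\alpha,\partial_\beta)Z$; classically every remaining term dies because $|\alpha|=|\beta|=1$ forces $\alpha_2=0$ or $\beta_2=0$, making the commutator trivial. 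At higher order both $\alpha_2$ and $\beta_2$ can be non-zero: with $\alpha=(2,0)$, $\beta=(0,2)$ the coefficient of $\partial_{(1,1)}f$ is $4\bigl[\nabla_{(1,0)},\nabla_{(0,1)}\bigr]Z=4R(\partial_1,\partial_2)Z$, a lower-order curvature term that no symmetric-versus-antisymmetric bookkeeping removes; Proposition \ref{defect_symm}(2) concerns covariant composition, not these iterated covariant derivatives, and does not touch it. Disposing of such terms is essentially equivalent to the statement being proved, so your expansion does not close. The paper avoids the issue by a maneuver your plan would need to import: it rewrites $\nabla_\alpha\nabla_\beta(fZ)$ as the value $\nabla^2(fZ)(\partial_\beta,\partial_\alpha)$ of the second total covariant derivative, invokes the exchange symmetry $\nabla^2 Z(X,Y)=\nabla^2 X(Z,Y)$ (obtained from the symmetry of the connection and Lemma \ref{cov_hess_symm}) to move $fZ$ out of the differentiated position and into an argument slot of the generalized tensor $\nabla^2\partial_\beta$, extracts $f$ there by the $C^\infty(M)$-multilinearity supplied by Proposition \ref{total_deriv}, and then swaps back.
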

\begin{proof}
	In view of lemma \ref{tensor_char}, all we have to show is that for all smooth functions $f \in C^\infty(M)$ and all vector fields $X,Y,Z \in \mathscr{J}(M)$, we have $R(fX,Y)Z=fR(X,Y)Z$ and $R(X,Y)fZ=fR(X,Y)Z$ (appealing to anti-symmetry in $X$ and $Y$). As to the first statement, compute as follows:
	\begin{align}
		R(fX,Y)Z &= \nabla_{fX} \nabla_Y Z - \nabla_Y \nabla_{fX} Z - \nabla_{[fX,Y]} Z \nonumber \\
		&= f \nabla_X \nabla_YZ - \nabla_Y f \nabla_X Z + \nabla_{- \nabla_{fX} Y + \nabla_Y fX} Z \nonumber \\
		&= f \nabla_X \nabla_Y Z - \nabla_Y f \nabla_X Z + f \nabla_{- \nabla_X Y} + \nabla_{\nabla_Y fX} Z.
	\end{align}
	Write the two terms in which $f$ has not been commuted to the left as follows, isolating terms involving derivatives of $f$:
	\begin{align}
		\nabla_Y f \nabla_X Z &:= f \nabla_Y \nabla_X Z + \left( \nabla_Y f \nabla_X Z \right)^\prime \\
		\nabla_{\nabla_Y fX} Z &:= f \nabla_{\nabla_Y X} Z + \left( \nabla_{\nabla_Y fX} Z \right)^\prime.   
	\end{align}
	Thus, we need to show that $\left( \nabla_Y f \nabla_X Z \right)^\prime = 
	\left( \nabla_{\nabla_Y fX} Z \right)^\prime$. From lemma \ref{raise_cov_deriv}, the latter quantity may be written as $\left( Y \circ f \nabla_X Z \right)^\prime$. By lemma \ref{cov_comp_vs_cov_deriv} therefore, also $\left( Y \circ fX \right)^\prime = \left( Y fX \right)^\prime = \left( \nabla_Y fX \right)^\prime$. Combine this statement with substitution of $\nabla_X Z$ for $X$ to yield for the curvature endomorphism,
	\begin{align}
		R(fX,Y)Z &= f \left( \nabla_X \nabla_Y Z - \nabla_Y \nabla_X Z + \nabla_{- \nabla_X Y + \nabla_Y X} Z \right) \nonumber \\
		&= f \left( \nabla_X \nabla_Y Z - \nabla_Y \nabla_X Z - \nabla_{[X,Y]} Z \right) \nonumber \\
		&= f R(X,Y)Z.
	\end{align} 
	
	As for linearity over $C^\infty(M)$ in $Z$, by multilinearity of the curvature endomorphism in its first two arguments it will suffice to show that
	\begin{equation}\label{curv_multilin_third}
		R(\partial_\alpha,\partial_\beta)fZ = \left( \nabla_\alpha \nabla_\beta - \nabla_\beta \nabla_\alpha \right) fZ = f \left( \nabla_\alpha \nabla_\beta - \nabla_\beta \nabla_\alpha \right) Z = fR(\partial_\alpha,\partial_\beta)Z
	\end{equation}
	for all multi-indices $\alpha,\beta$, since of course $[\partial_\alpha,\partial_\beta]=0$ always. The key point is that, by virtue of the symmetry of the Levi-Civita jet connection and lemma \ref{cov_hess_symm}, we may write
	\begin{equation}
		\nabla^2 Z(X,Y) = \left( \nabla_Y \nabla^2 u \right) (Z,X) = \left( \nabla_Y \nabla^2 u \right) (X,Z) = \nabla^2 X(Z,Y)
	\end{equation}
	and substitute into equation (\ref{curv_multilin_third}) to yield,
	\begin{align}
		R(\partial_\alpha,\partial_\beta)fZ &= \nabla_{\partial_\alpha} \nabla_{\partial_\beta} fZ - \nabla_{\partial_\beta} \nabla_{\partial_\alpha} fZ \nonumber \\ 
		&= \nabla^2 fZ(\partial_\beta,\partial_\alpha) - \nabla^2 fZ(\partial_\alpha,\partial_\beta) \nonumber \\
		&= \nabla^2 \partial_\beta (fZ,\partial_\alpha) - \nabla^2 \partial_\alpha(fZ,\partial_\beta) \nonumber \\
		&= f \nabla^2 \partial_\beta (Z,\partial_\alpha) - f \nabla^2 \partial_\alpha(Z,\partial_\beta) \nonumber \\
		&= f \nabla^2 Z(\partial_\beta,\partial_\alpha) - f \nabla^2 Z(\partial_\alpha,\partial_\beta) \nonumber \\
		&= f \nabla_{\partial_\alpha} \nabla_{\partial_\beta} Z - f \nabla_{\partial_\beta} \nabla_{\partial_\alpha} Z \nonumber \\ 		
		&= fR(\partial_\alpha,\partial_\beta)Z.
	\end{align}
	This step completes the proof.
\end{proof}

\begin{definition}
	In analogy to what is usual (cf. Lee, \cite{lee_riemannian_manif}, Equation 7.4), define the Riemann curvature tensor by lowering the last index; i.e.,
	\begin{equation}
		Rm(X,Y,Z,W) := \langle R(X,Y)Z, W \rangle.
	\end{equation}
\end{definition}

\begin{proposition}[Symmetries of the curvature tensor; cf. Lee, \cite{lee_riemannian_manif}, Proposition 7.4]\label{first_bianchi}
	For any vector fields $W, X, Y, Z \in \mathscr{J}^\infty(M)$, the generalized curvature tensor enjoys the following symmetries:
	\begin{itemize}
		\item[$(1)$] $Rm(W,X,Y,Z) = - Rm(X,W,Y,Z)$;
		
		\item[$(2)$] $Rm(W,X,Y,Z) = - Rm(W,X,Z,Y)$;
		
		\item[$(3)$] $Rm(W,X,Y,Z) = Rm(Y,Z,W,X)$;
		
		\item[$(4)$] $Rm(W,X,Y,Z) + Rm(X,Y,W,Z) + Rm(Y,W,X,Z) = 0$.
	\end{itemize}
\end{proposition}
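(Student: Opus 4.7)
Part (1) is immediate from Definition \ref{def_curv}: swapping $W$ and $X$ in $\nabla_W\nabla_X Z - \nabla_X\nabla_W Z - \nabla_{[W,X]}Z$ flips the sign of every term (using $[X,W]=-[W,X]$), so $R(W,X)=-R(X,W)$ as endomorphisms; contracting with $\langle\,\cdot\,,Z\rangle$ gives the antisymmetry of $Rm$ in its first pair.

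For part (4), I would expand $R(W,X)Y+R(X,Y)W+R(Y,W)X$ and regroup the six $\nabla\nabla$ terms into the three differences $\nabla_W(\nabla_X Y-\nabla_Y X)+\nabla_X(\nabla_Y W-\nabla_W Y)+\nabla_Y(\nabla_W X-\nabla_X W)$. Because the Levi-Civita jet connection is symmetric, Proposition \ref{defect_symm}(1) gives $\nabla_A B-\nabla_B A=[A,B]$, converting this sum into $\nabla_W[X,Y]+\nabla_X[Y,W]+\nabla_Y[W,X]$. Combining with the remaining $-\nabla_{[W,X]}Y-\nabla_{[X,Y]}W-\nabla_{[Y,W]}X$ and applying Proposition \ref{defect_symm}(1) once more to each pair yields $[W,[X,Y]]+[X,[Y,W]]+[Y,[W,X]]$, which vanishes by the Jacobi identity for the operator commutator bracket on $\mathscr{J}^\infty(M)$. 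Contracting with $Z$ completes (4).

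For part (2), I would apply the curvature operator to the scalar $\langle Y,Z\rangle$. Because any jet connection reduces to the underlying differential operator on scalar functions, $\nabla_A f=Af$, and hence
\begin{equation}
R(W,X)\langle Y,Z\rangle = WX\langle Y,Z\rangle - XW\langle Y,Z\rangle - [W,X]\langle Y,Z\rangle = 0
\end{equation}
as differential operators. On the other hand, I would expand the same expression using metric compatibility (Definition \ref{def_compatibility}): each application of $\nabla_A$ distributes across $\langle\cdot,\cdot\rangle$ with the higher-order Leibniz coefficients $\frac{\alpha!}{\alpha_1!\alpha_2!}$, producing a double sum over pairs of partitions of the multi-indices of $W$ and $X$. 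The key step is to verify that, when one subtracts the $XW$ expansion and the $\nabla_{[W,X]}$ expansion, every cross-term involving intermediate derivatives $\nabla_{\alpha_1}Y\otimes\nabla_{\alpha_2}Z$ with both $\alpha_i\neq 0$ cancels pairwise with its mirror partner, leaving only the two extremal terms $\langle R(W,X)Y,Z\rangle + \langle Y,R(W,X)Z\rangle$. Equating to zero delivers (2).

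Part (3) is then purely algebraic: apply the Bianchi identity (4) to each of the four cyclic rearrangements $(W,X,Y,Z)$, $(X,Y,Z,W)$, $(Y,Z,W,X)$, $(Z,W,X,Y)$; sum the four identities and use (1), (2) to pair terms of opposite sign, whereupon all contributions collapse to $2Rm(W,X,Y,Z)-2Rm(Y,Z,W,X)=0$. The main obstacle is the combinatorial bookkeeping inside part (2): one must track how the extended Leibniz coefficients on the $W$-side and the $X$-side combine with the multinomial structure of $\nabla_{[W,X]}$ (whose order is generically higher than that of $W$ or $X$) to produce the exact cancellation. I expect this to rest on the symmetry of the expression under $(\alpha_1,\alpha_2)\leftrightarrow(\alpha_2,\alpha_1)$ together with the relation between compositions $\nabla_{\beta}\circ\nabla_{\alpha}$ expanded in terms of $\nabla_{\beta+\alpha}$ plus connection-dependent corrections that the two sides produce symmetrically.
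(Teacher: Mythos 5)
Your parts (1), (3), and (4) follow the paper's own proof essentially verbatim: (1) from antisymmetry of the curvature endomorphism, (4) by regrouping the six second-derivative terms via torsion-freeness (Proposition \ref{defect_symm}) into iterated brackets and invoking the Jacobi identity, and (3) by summing the four cyclic instances of (4) and cancelling with (1) and (2). Your plan for (2) also opens the same way the paper does---apply the identically vanishing operator $WX-XW-[W,X]$ to a scalar product and expand using metric compatibility---but there is a genuine gap at exactly the point you flag as ``the main obstacle.'' The mixed cross-terms $\nabla_{\beta_1}\nabla_{\alpha_1}Y\otimes\nabla_{\beta_2}\nabla_{\alpha_2}Z$ do \emph{not} cancel pairwise under the symmetry $(\alpha_1,\alpha_2)\leftrightarrow(\alpha_2,\alpha_1)$: after subtracting the $XW$ expansion they survive, and the $\nabla_{[W,X]}$ expansion contributes its own cross-terms $[W,X]^\alpha\frac{\alpha!}{\alpha_1!\alpha_2!}\big|_{\alpha_{1,2}\ne 0}\nabla_{\alpha_1}Y\otimes\nabla_{\alpha_2}Z$, whose coefficients involve \emph{derivatives} of the components of $W$ and $X$ (the commutator $[W,X]$ is an operator of order $r_W+r_X-1$), so no purely combinatorial mirror-pairing can match them against the first group.

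What actually disposes of these terms in the paper is the covariant-composition calculus: the mixed-index part of $\left(\nabla_W\nabla_X-\nabla_X\nabla_W\right)(Y\otimes Z)$ is identified with $\left(W\circ X\circ \; - \; X\circ W\circ\right)$ restricted to mixed indices; the cross-terms of $\nabla_{[W,X]}$ are rewritten using the torsion-free identity $[W,X]=\nabla_WX-\nabla_XW$; and the resulting combination vanishes because covariant composition commutes for a symmetric jet connection, $X\circ Y\circ - Y\circ X\circ=0$ (both steps are Proposition \ref{defect_symm}). So the missing ingredient in your part (2) is an appeal to the symmetry of the Levi-Civita jet connection through Proposition \ref{defect_symm}---the very tool you correctly deploy in (4)---without which the ``connection-dependent corrections'' you hope cancel symmetrically in fact do not. (A cosmetic difference only: the paper polarizes, proving $Rm(W,X,V,V)=0$ via the expansion of $Rm(W,X,Y+Z,Y+Z)$, whereas you work with $\langle Y,Z\rangle$ directly; either route is fine once the cancellation is actually established.)
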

\begin{proof}
	Just as in the ordinary first-order case, (1) follows immediately from the obvious antisymmetry of the curvature endomorphism: $R(W,X)Y=-R(X,W)Y$, upon taking the inner product with $Z$.
	
	If we we expand
	\begin{align}
		Rm(W,X,Y+Z,Y+Z) = &Rm(W,X,Y,Y) + Rm(W,X,Y,Z) + \nonumber \\
		&Rm(W,X,Z,Y) + Rm(W,X,Z,Z),
	\end{align}
	it will be enough to show that $Rm(W,X,V,V)=0$ for any $V$ in order to conclude to the identity (2). Due to compatibility of the Levi-Civita connection with the metric, we have
	\begin{align}
		0 &= \left( WX - XW - [W,X] \right) \langle Y , Y \rangle \nonumber \\
		& = \left( \nabla_W \nabla_X - \nabla_X \nabla_W - \nabla_{[W,X]} \right \langle Y , Y \rangle \nonumber \\
		&= \mathrm{tr}~\mathrm{tr}~ g \otimes \left( \nabla_W \nabla_X - \nabla_X \nabla_W - \nabla_{[W,X]} \right) Y \otimes Y.
	\end{align}
	But if we expand the iterated covariant derivatives,
	\begin{align}
		\nabla_W \left( \nabla_X \left( Y \otimes Y \right) \right) &=
		\nabla_W \left( X^\alpha \frac{\alpha!}{\alpha_1!\alpha_2!} \nabla_{\alpha_1} Y \otimes \nabla_{\alpha_2} Y \right) \nonumber \\
		&= W^\beta \frac{\beta!}{\beta_0!\beta_1!\beta_2!} X^\alpha \frac{\alpha!}{\alpha_1!\alpha_2!} \nabla_{\beta_1}\nabla_{\alpha_1} Y \otimes \nabla_{\beta_2}\nabla_{\alpha_2} Y.
	\end{align}
	Adding the term with factors in the reverse order we will end up on the right-hand side with
	\begin{align}
		\left( \nabla_W \nabla_X - \nabla_X \nabla_W \right) & Y \otimes Y =
		\left( \nabla_W \nabla_X Y - \nabla_X \nabla_W Y \right) \otimes Y +
		Y \otimes \left( \nabla_W \nabla_X Y - \nabla_X \nabla_W Y \right) + \nonumber \nonumber \\
		& \left( W \circ X \circ - X \circ W \circ \right)\bigg|_{\alpha_1+\beta_1 \ne 0 ~\mathrm{and}~ \alpha_2+\beta_2 \ne 0}  \left( Y \otimes Y \right) \nonumber \\
		=& \left( \nabla_W \nabla_X Y - \nabla_X \nabla_W Y - W \circ X \circ Y + X \circ W \circ Y \right) \otimes Y + \nonumber \nonumber \\
		& Y \otimes \left( \nabla_W \nabla_X Y - \nabla_X \nabla_W Y - W \circ X \circ Y + X \circ W \circ Y \right) + \nonumber \nonumber \\
		&\left( W \circ X \circ - X \circ W \circ \right) Y \otimes Y \nonumber \\
		=& \left( \nabla_W \nabla_X Y - \nabla_X \nabla_W Y \right) \otimes Y +
		Y \otimes \left( \nabla_W \nabla_X Y - \nabla_X \nabla_W Y \right),
	\end{align}
	where we have used symmetry of the Levi-Civita connection and lemma \ref{defect_symm}. Similarly,
	\begin{align}
		\nabla_{[W,X]} \left( Y \otimes Y \right) =& \left( \nabla_{[W,X]} Y \right) \otimes Y + Y \otimes 
		\nabla_{[W,X]} \left( Y \otimes Y \right) + \nonumber \\
		&[W,X]^\alpha \frac{\alpha!}{\alpha_1!\alpha_2!}\bigg|_{\alpha_{1,2}\ne 0} \nabla_{\alpha_1} Y \otimes \nabla_{\alpha_2} Y
	\end{align}
	and the last term on the right-hand side may be expressed as
	\begin{align}
		[W,X]^\alpha \frac{\alpha!}{\alpha_1!\alpha_2!}\bigg|_{\alpha_{1,2}\ne 0} \nabla_{\alpha_1} Y \otimes \nabla_{\alpha_2} Y &=
		\nabla_{\nabla_W X - \nabla_X W}\bigg|_{\alpha_{1,2}\ne 0} Y \otimes Y \nonumber \\
		&= \left( W \circ X \circ - X \circ W \circ \right)\bigg|_{\alpha_{1,2}\ne 0} Y \otimes Y \nonumber \\
		&= \left( W \circ X \circ - X \circ W \circ \right) Y \otimes Y \nonumber \\
		&= 0.
	\end{align}
	Therefore, putting our results together we may conclude that
	\begin{align}
		\mathrm{tr}~\mathrm{tr}~ g\otimes & \left( \left( \nabla_W \nabla_X Y - \nabla_X \nabla_W Y - \nabla_{[W,X]} Y \right) \otimes Y +
		Y \otimes \left( \nabla_W \nabla_X Y - \nabla_X \nabla_W Y - \nabla_{[W,X]} Y \right) \right) \nonumber \\ &= \langle R(W,X)Y,Y \rangle + \langle Y, R(W,X)Y \rangle = 0,
	\end{align}  
	as was to be shown.
	
	To prove (4), observe that by definition it is equivalent to
	\begin{equation}
		R(W,X)Y + R(X,Y)W + R(Y,W)X = 0.
	\end{equation}
	But from the original definition of the curvature endomorphism together with the symmetry of the Levi-Civita jet connection, we have
	\begin{align}
		\nabla_W \nabla_X Y - &\nabla_X \nabla_W Y - \nabla_{[W,X]} Y +
		\nabla_X \nabla_Y W - \nabla_Y \nabla_X W - \nabla_{[X,Y]} W + \nonumber \nonumber \\
		&\nabla_Y \nabla_W X - \nabla_W \nabla_Y X - \nabla_{[Y,W]} X
		\nonumber \\ 
		=& \nabla_W \left( \nabla_X Y - \nabla_Y X \right) + \nabla_X \left( \nabla_Y W - \nabla_W Y \right) + \nabla_Y \left( \nabla_W X - \nabla_X W \right) - \nonumber \nonumber \\
		&\nabla_{[W,X]} Y - \nabla_{[X,Y]} W - \nabla_{[Y,W]} X \nonumber \\
		=&
		\nabla_W [X,Y] + \nabla_X [Y,W] + \nabla_Y [W,X] - \nabla_{[W,X]} Y - \nabla_{[X,Y]} W - \nabla_{[Y,W]} X \nonumber \\
		=& [W,[X,Y]] + [X,[Y,W]] + [Y,[W,X]].
	\end{align}
	where appeal to proposition \ref{defect_symm} has been made in going from the second to the third lines and again from the third to the fourth lines. The last line vanishes identically in virtue of the Jacobi identity, which holds for any algebra of linear operators and thus is just as valid in the presence of higher-order terms as it is for 1-vector fields.
	
	Finally, to show (3), write out the cyclic permutations of (4):
	\begin{align}
		Rm(W,X,Y,Z) + Rm(X,Y,W,Z) + Rm(Y,W,X,Z) &= 0 \nonumber \\
		Rm(X,Y,Z,W) + Rm(Y,Z,X,W) + Rm(Z,X,Y,W) &= 0 \nonumber \\
		Rm(Y,Z,W,X) + Rm(Z,W,Y,X) + Rm(W,Y,Z,X) &= 0 \nonumber \\
		Rm(Z,W,X,Y) + Rm(W,X,Z,Y) + Rm(X,Z,W,Y) &= 0 
	\end{align}
	Sum the four lines. Apply (2) to the first two columns and (1) together with (2) to the third column. After cancellation, one is left with
	\begin{equation}
		2 Rm(Y,W,X,Z) - 2 Rm(X,Z,Y,W) = 0
	\end{equation}
	which is tantamount to (3).
\end{proof}

\begin{proposition}[Second Bianchi identity; cf. Lee \cite{lee_riemannian_manif}, Proposition 7.5]\label{second_bianchi}
	The total covariant derivative of the generalized curvature tensor satisfies the following identity:
	\begin{equation}
		\nabla Rm(X,Y,Z,V,W) + \nabla Rm(X,Y,V,W,Z) + \nabla Rm(X,Y,W,Z,V) = 0.
	\end{equation}
\end{proposition}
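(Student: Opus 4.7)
The strategy is to adapt the classical operator-level proof of the second Bianchi identity to the higher-order jet setting, carefully accounting for the cross-terms generated by the generalized Leibniz rule of Proposition \ref{extension_to_arb_tensors}. As a first step I would invoke the curvature symmetry $Rm(X,Y,Z,V)=Rm(Z,V,X,Y)$ from Proposition \ref{first_bianchi}(3). Because this identity holds pointwise for the tensor $Rm$, it extends to $\nabla Rm$ slot by slot, so the desired cyclic sum is equivalent to
\[
(\nabla_W Rm)(Z,V,X,Y) + (\nabla_Z Rm)(V,W,X,Y) + (\nabla_V Rm)(W,Z,X,Y) = 0,
\]
i.e.\ the more familiar form in which the cycle acts on the differentiation direction together with the first two arguments of $Rm$, while $(X,Y)$ plays a passive role.

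By Proposition \ref{total_deriv} and Lemma \ref{tensor_char}, $\nabla Rm$ is a generalized $\binom{5}{0}$-tensor, so the identity need only be verified at an arbitrary $p\in M$ on any convenient extensions of fixed initial data $W|_p,Z|_p,V|_p,X|_p,Y|_p\in J^r_pM$. I would extend each field so that its total covariant derivative at $p$ vanishes, an explicit prescription of its Taylor coefficients in a chart that is tantamount in the first-order case to working in normal coordinates. With these extensions Proposition \ref{defect_symm} forces $[W,Z]|_p=[Z,V]|_p=[V,W]|_p=0$, so the operator identity $[\nabla_W,\nabla_Z]=R(W,Z)+\nabla_{[W,Z]}$ and its cyclic analogues reduce at $p$ to $[\nabla_W,\nabla_Z]|_p=R(W,Z)|_p$ etc.

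Now apply the Jacobi identity for linear operators,
\[
[\nabla_W,[\nabla_Z,\nabla_V]] + [\nabla_Z,[\nabla_V,\nabla_W]] + [\nabla_V,[\nabla_W,\nabla_Z]] = 0,
\]
to the vector field $X$, contract against $Y$ using the metric, and use the Leibniz rule of Proposition \ref{extension_to_arb_tensors} together with compatibility $\nabla g=0$ to commute $\nabla_V$ past $R(W,Z)X$. In the first-order case this commutation produces precisely $(\nabla_V R)(W,Z)X$ and $R(W,Z)\nabla_V X$; in our higher-order setting it produces those two leading terms plus cross terms of shape $(\nabla_{\gamma_1} R)(\nabla_{\gamma_2} W,\nabla_{\gamma_3} Z)\nabla_{\gamma_4} X$ with $\gamma_1+\gamma_2+\gamma_3+\gamma_4=\gamma$ and at least one of $|\gamma_{2,3,4}|\ge 2$. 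Vanishing of first total covariant derivatives at $p$ kills the splittings with some $|\gamma_i|=1$, leaving the targeted cyclic sum of $(\nabla_W Rm)(Z,V,X,Y)$ terms plus a residue of purely higher-order cross terms.

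The main obstacle will be disposing of this residue. I expect the cross terms to cancel pairwise within the cyclic sum by exploiting the antisymmetry of $R$ in its first two slots (Proposition \ref{first_bianchi}(1)) together with commutativity of covariant composition at $p$ for a symmetric jet connection (Proposition \ref{defect_symm}: $\partial_\alpha\circ\partial_\beta\circ=\partial_\beta\circ\partial_\alpha\circ$), so that a splitting assigning higher derivatives to $W$ is matched by a swapped splitting assigning them to $Z$ of opposite sign. Should such combinatorial pairing fail, the fallback would be to reinforce the extensions so that higher total covariant derivatives of the five arguments also vanish at $p$, a pointwise stand-in for the higher-order normal coordinates whose absence is flagged in the remark following Theorem \ref{exist_levi_civita}; securing these enriched extensions is the chief technical hurdle separating the higher-order proof from its first-order prototype.
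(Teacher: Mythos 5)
Your reduction via the pair symmetry and the appeal to tensoriality are fine, and the special extensions you want do exist (the equations $\nabla_\alpha Y|_p=0$, $1\le|\alpha|\le r$, determine the Taylor coefficients of $Y$ at $p$ recursively, the unknown $\partial_\alpha Y^\gamma(p)$ entering triangularly), so that $[Z,V]|_p=\nabla_Z V|_p-\nabla_V Z|_p=0$ does follow from Proposition \ref{defect_symm}. The gap is exactly where you flagged it, and it is fatal to this route. Pointwise vanishing of $[Z,V]$ does not dispose of the term $\nabla_W\nabla_{[Z,V]}X$ that the Jacobi identity leaves behind: writing $[Z,V]=C^\alpha\partial_\alpha$ and applying the generalized Leibniz rule, the end terms die (one because $C(p)=0$, the other because $\nabla_\alpha X|_p=0$), but the mixed splittings survive,
\[
\nabla_W\nabla_{[Z,V]}X\Big|_p \;=\; W^\mu\,\frac{\mu!}{\mu_1!\mu_2!}\bigg|_{\mu_{1,2}\neq 0}\,\partial_{\mu_1}C^\alpha(p)\,\nabla_{\mu_2}\nabla_\alpha X\Big|_p .
\]
At first order there are no such splittings, which is precisely why this scheme closes in classical Riemannian geometry; for $r\ge 2$ the factor $\partial_{\mu_1}C^\alpha(p)$ is not controlled by your extensions, and the factor $\nabla_{\mu_2}\nabla_\alpha X|_p$ is a second covariant derivative, whose antisymmetric part equals $R(\partial_{\mu_2},\partial_\alpha)X|_p$ and hence depends only on $X|_p$, not on the extension. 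For the same reason your fallback is impossible in principle: no enrichment of the extensions can make the second covariant derivatives of $X$ vanish at $p$ unless the curvature itself vanishes there. So the residue neither cancels by the hoped-for pairing nor can be legislated away, and the remark after Theorem \ref{exist_levi_civita} that normal coordinates are unavailable is not a removable inconvenience here --- your construction is strictly weaker than normal coordinates (it does not make the Christoffel symbols vanish at $p$), and that difference is what the surviving terms detect.

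The paper's proof uses a different mechanism that removes this obstruction globally rather than pointwise: for the symmetric Levi-Civita jet connection, Lemma \ref{raise_cov_deriv} combined with Proposition \ref{defect_symm} gives $\nabla_{[W,Z]}\nabla_V X=\nabla_{\nabla_W Z-\nabla_Z W}\nabla_V X= W\circ Z\circ V\circ - Z\circ W\circ V\circ=0$ \emph{identically}, for arbitrary vector fields, with no choice of extension at all. With the $\nabla_{[\cdot,\cdot]}$ terms gone everywhere, the cyclic sum collapses to six third-derivative operator terms, which are regrouped and then finished off with the symmetries of Proposition \ref{first_bianchi} and polarization ($Rm(A,B,C,C)=0$). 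If you wish to salvage your pointwise scheme, the tool that kills your residue is that same symmetry of covariant composition; but once it is invoked the argument no longer needs the special extensions, and you are effectively running the paper's global computation.
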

\begin{proof}
	As Lee observes, by the symmetries of proposition \ref{first_bianchi} the statement is equivalent to
	\begin{equation}\label{alternate_form_second_bianchi}
		\nabla Rm(Z,V,X,Y,W) + \nabla Rm(V,W,X,Y,Z) + \nabla Rm(W,Z,X,Y,V) = 0
	\end{equation}
	for which in turn it will be enough to to establish that
	\begin{equation}\label{curv_plus_cyclic}
		\langle R(W,Z)\nabla_V X + R(X,V)\nabla_W X + R(V,W)\nabla_Z X, Y \rangle = 0.
	\end{equation}
	For if we use this relation substituting $W^\alpha\nabla_{\alpha_1}$ for $\nabla_W$ and $\nabla_{\alpha_2}Y$ for $Y$ along with cylic permutations of $W, Z, V$, we find that in (invoking compatibility with the metric)
	\begin{align}
		\nabla_W Rm(Z,V,X,Y) &= \nabla_W \langle R(Z,V)X,Y \rangle \nonumber \\
		&= W^\alpha \frac{\alpha!}{\alpha_1!\alpha_2!} \langle \nabla_{\alpha_!} \nabla_Z \nabla_V X - \nabla_{\alpha_1} \nabla_V \nabla_Z X, \nabla_{\alpha_2} Y \rangle
	\end{align}
	along with its cyclic permutations, the respective terms cancel among themselves by reason of the claimed relation \ref{curv_plus_cyclic}. Then we would indeed have justified Equation (\ref{alternate_form_second_bianchi}).
	
	Now for the claimed relation, write it out as
	\begin{align}
		&\nabla_W \nabla_Z \nabla_V - \nabla_Z \nabla_W \nabla_V - \nabla_{[W,Z]} \nabla_V + \nonumber \\
		&\nabla_Z \nabla_V \nabla_W - \nabla_V \nabla_Z \nabla_W - \nabla_{[Z,V]} \nabla_W + \nonumber \\
		&\nabla_V \nabla_W \nabla_Z - \nabla_W \nabla_V \nabla_Z - \nabla_{[V,W]} \nabla_Z
	\end{align}
	But $\nabla_{[W,Z]} \nabla_V = \nabla_{\nabla_W Z - \nabla_Z W} \nabla_V =
	W \circ Z \circ V \circ - Z \circ W \circ V \circ = 0$ by proposition \ref{defect_symm}. Thus, the terms involving commutators drop out and we are left with just
	\begin{align}
		&\nabla_W \nabla_Z \nabla_V - \nabla_Z \nabla_W \nabla_V + \nonumber \\
		&\nabla_Z \nabla_V \nabla_W - \nabla_V \nabla_Z \nabla_W + \nonumber \\
		&\nabla_V \nabla_W \nabla_Z - \nabla_W \nabla_V \nabla_Z 
	\end{align}
	which can be regrouped as
	\begin{align}
		&\nabla_V \nabla_W \nabla_Z - \nabla_V \nabla_Z \nabla_W + \nonumber \\
		&\nabla_W \nabla_Z \nabla_V - \nabla_W \nabla_V \nabla_Z + \nonumber \\
		&\nabla_Z \nabla_V \nabla_W - \nabla_Z \nabla_W \nabla_V.
	\end{align}
	In other words,
	\begin{align}
		&\nabla Rm(Z,V,X,Y,W) + \nabla Rm(V,W,X,Y,Z) + \nabla Rm(W,Z,X,Y,V) = \nonumber \\
		&\nabla Rm(W,Z,X,Y,V) + \nabla Rm(Z,V,X,Y,W) + \nabla Rm(V,W,X,Y,Z) = \nonumber \\
		-&\nabla Rm(W,Z,Y,X,V) - \nabla Rm(Z,V,Y,X,W) - \nabla Rm(V,W,Y,X,Z), 
	\end{align}
	making use of the first Bianchi identity \ref{first_bianchi} to go to the last line. Hence, transferring the right-hand side to the left-hand side we have
	\begin{equation}
		\nabla Rm(Z,V,X+Y,X+Y,W) + \nabla Rm(V,W,X+Y,X+Y,Z) + \nabla Rm(W,Z,X+Y,X+Y,V) = 0
	\end{equation}
	since, as we saw in the proof of proposition \ref{first_bianchi}, all expressions of the form $Rm(A,B,C,C)$ vanish identically. This step completes the proof.
\end{proof}

\begin{remark}
	Lee prefers to prove the second Bianchi identity by appealing to normal coordinates, which are not available to us. Therefore, we employ a brute-force approach which, however, does not seem to end up being as tedious as Lee suggests it would (moreover, our calculation applies to any order!).
\end{remark}

\begin{remark}
	Sectional curvature cannot as yet be defined as we do not have an analogue of the exponential map; for same reason, neither are we ready as yet to prove the property that essentially characterizes flatness in ordinary Riemannian geometry, viz., Lee, \cite{lee_riemannian_manif}, Theorem 7.3, to the effect that a Riemannian space is flat if and only if its curvature tensor vanishes identically. Clearly, to check whether anything analogous to this result holds at higher order would be a most desirable objective of future research in order to arrive at a complete conceptual understanding of what the curvature endomorphism of definition \ref{def_curv} means geometrically. Until then, the use of the term curvature indicates a hope, not confirmed knowledge, that the endomorphism so defined will capture our intuitive notion of what curvature is.
\end{remark}

\begin{remark}
	As they stand, equations (\ref{Levi_Civita_formula}) for the Levi-Civita jet connection and (\ref{def_curvature_endomorphism}) for the Riemannian curvature endomorphism are formal expressions only in the case of infinitesimals of indefinitely high order. One needs to define the latter for vector fields of indefinitely high order (lying in the direct limit $\mathscr{J}^\infty(M)$) because otherwise, if one were to truncate at any finite $r \ge 2$, the proof of tensoriality would not go through. For the commutator of a vector field of order $r_1$ with one of order $r_2$ will be of order $r_1+r_2-1$ in general. Thus, we wish to return to this question below in {\S}\ref{riemannian_appendix} where we shall exhibit a sufficient condition for the convergence of all power series involving sums over multi-indices of all orders.
\end{remark}

\subsection{Calculations in Example Spaces}\label{example_spaces}

In this section we shall flesh out the theory of spaces equipped with a generalized Riemannian metric by way of a handful of the simplest examples. The procedure of incorporating higher jets is by no means uniquely determined. Nevertheless, one can be guided by theoretical desiderata. First, one wants the generalized metric to have full rank. Second, some care must be devoted to the choice of coordinates. Indeed, in {\S}\ref{chapter_5} we introduce what we term uniformizing coordinates, the reasons for which will be spelled out there. These enjoy the property that the metric tensor is diagonal when written in terms of them. But from a mathematical point of view, any given coordinate frame is just as good as any other in principle. Thus, without further ado we embark on a few calculations with them. For the sake of notational simplicity, let us start out in two-dimensional space and denote the primary coordinates by $x$ and $y$. Then the generalized Riemannian metric assumes the form,
\begin{equation}\label{Riemannian_metric_2d}
	\delta = (d^{x})^2 + (d^{y})^2 + (d^{xx})^2 + 2 (d^{xy})^2 + (d^{y})^2.
\end{equation}
Let us now show how to go into polar coordinates:
\begin{equation}
	x = r \cos \theta \qquad y = r \sin \theta.
\end{equation}
Referring to equation (\ref{jet_transf_law}), we may write
\begin{align}
	d^x &= x_{,r} d^r + x_{,\theta} d^\theta + \frac{1}{2} \left( x_{,rr} d^{rr} + 2 x_{,r\theta} d^{r\theta} + x_{,\theta\theta} d^{\theta\theta} \right) \nonumber \\
	&= \cos \theta d^r - r \sin \theta d^\theta - \sin \theta d^{r\theta} - \frac{1}{2} r \cos \theta d^{\theta\theta} \nonumber \\
	d^y &= y_{,r} d^r + y_{,\theta} d^\theta + \frac{1}{2} \left( y_{,rr} d^{rr} + 2 y_{,r\theta} d^{r\theta} + y_{,\theta\theta} d^{\theta\theta} \right) \nonumber \\
	&= \sin \theta d^r + r \cos \theta d^\theta + \cos \theta d^{r\theta} - \frac{1}{2} r \sin \theta d^{\theta\theta}.
\end{align} 
Substitute into equation (\ref{Riemannian_metric_2d}), expand and collect terms. Here, it is best to proceed by steps since the full expression is not very illuminating. Therefore, let
\begin{equation}
	\delta^{(1)} = (d^{x})^2 + (d^{y})^2 \qquad \delta^{(2)} = (\delta^{(1)})^2.
\end{equation}
The formula for $\delta^{(1)}$ by itself is manageable:
\begin{equation}\label{Riemannian_metric_2d_delta1}
	\delta^{(1)} = (d^r)^2 - r d^r d^{\theta\theta} + r^2 \left( (d^\theta)^2 + 
	\frac{1}{4} (d^{\theta\theta})^2 \right) + d^{r\theta} \left( d^{r \theta} + 2 r d^\theta \right).
\end{equation} 
The key to going further in order to reduce the generalized metric in polar coordinates into diagonal form is to find an orthonormal basis in the space of 2-jets. From equation (\ref{Riemannian_metric_2d_delta1}), one sees that we may take as the first two elements of our orthonormal basis the following:
\begin{align}\label{polar_jet_basis_1}
	e^r &= d^r - \frac{1}{2} r d^{\theta\theta} \nonumber \\
	e^\theta &= r d^\theta + d^{r\theta},
\end{align}
in terms of which
\begin{align}
	\delta^{(1)} &= (e^r)^2 + (e^\theta)^2 \nonumber \\
	\delta^{(2)} &= (e^r)^4 + 2 (e^r)^2 (e^\theta)^2 + (e^\theta)^4 = (e^{rr})^2 + 2 e^{rr} e^{\theta\theta} + (e^{\theta\theta})^2,
\end{align}
wherein we are entitled to neglect jets of third and higher order and write the remaining three orthonormal basis elements as follows:
\begin{align}\label{polar_jet_basis_2}
	e^{rr} &= d^{rr} \nonumber \\
	e^{r\theta} &= r d^{r\theta} \nonumber \\
	e^{\theta\theta} &= r^2 d^{\theta\theta}.
\end{align}
If now we make the radial coordinate constant, $r=1$, and eliminate all radial differentials, we end up with the second-order Riemannian metric on the circle $S^1$:
\begin{equation}
	\delta_{S^1} = (d^{\theta})^2 + (d^{\theta\theta})^2.
\end{equation}
With constant coefficients in front of the differentials, it is immediate that the generalized Riemannian curvature tensor must vanish identically on the circle.

Our next illustration will be a surface of revolution $P$ about the $z$-axis in three-dimensional space, given by a function $r=h(z)$. Working once more in the uniformizing frame it readily follows that the Riemannian metric in cylindrical coordinates assumes the form,
\begin{equation}\label{Riemannian_metric_3d_cyclindrical}
	\delta_{\vvmathbb{R}^3} = (e^r)^2 + (e^\theta)^2 + (e^z)^2 + (e^{rr})^2 + (e^{\theta\theta})^2 + (e^{zz})^2 + 2 (e^{r\theta})^2 + 2 (e^{rz})^2 + 2 (e^{\theta z})^2,
\end{equation}
with the orthonormal basis jet in the vertical direction just $e^z = d^z$. Impose the defining condition of the surface of revolution as $r=h(z)$ whence
\begin{align}
	d^r &= h^\prime d^z \nonumber \\
	d^{rr} &= h^{\prime\prime} d^z + h^\prime d^{zz}.
\end{align}
After substitution and elimination of 3-jets, equation (\ref{Riemannian_metric_3d_cyclindrical}) yields
\begin{align}
	\delta_P = & (1+h^{\prime 2}) (d^z)^2 + h^2 (d^\theta)^2 + (1+h^{\prime 2})^2 (d^{zz})^2 + h^2(1+h^2) (d^{\theta\theta})^2 + \left( h^2 (1 + 2 h^{\prime 2}) + 4 h^{\prime 2} \right) (d^{z\theta})^2 \nonumber \\
	& + 2 h d^{zz} d^{\theta\theta} + 2 h^\prime h d^z d^{\theta\theta}.
\end{align}
This expression may be diagonalized with the following orthonormal jet basis:
\begin{align}\label{cylindrical_jet_basis}
	e^z &= \left( 1 + h^{\prime 2} \right)^{1/2} d^z + \frac{h^\prime h}{\left( 1 + h^{\prime 2} \right)^{1/2}} d^{\theta\theta} \nonumber \\
	e^\theta &= h d^\theta \nonumber \\
	e^{zz} &= \left( 1 + h^{\prime 2} \right) d^{zz} \nonumber \\
		e^{z\theta} &= \left( h^2 (1+2 h^{\prime2}) + 4 h^{\prime^2} \right)^{1/2} d^{z\theta} \nonumber \\
	e^{\theta\theta} &= \left( h^2(1+h^2) - \frac{h^{\prime 2}h^2}{1+h^{\prime 2}} \right)^{1/2} d^{\theta\theta}.
\end{align}
On a surface, there can be only a single non-vanishing curvature 2-form in the 1-jets, namely $R^1_2$. Moreover, it is given simply by the exterior derivative $\text{\th} \omega^1_2$ since 
$\omega^1_1 \wedge \omega^1_2 = \omega^1_2 \wedge \omega^2_2 = 0$.
It is not too hard to calculate in the Cartan formalism that $\omega^z_\theta = - h^\prime d^\theta$ and hence
\begin{equation}
	R^z_\theta = - h^{\prime\prime} d^z \wedge d^\theta - h^{\prime\prime\prime} d^{zz} \wedge d^\theta.
\end{equation}
Thus, the curvature 2-form contains a 2-jet correction and one may read off that when $h$ is concave ($h^{\prime\prime}>0$) it yields, as usual, a surface of negative curvature (hyperboloidal) while when $h$ is convex ($h^{\prime\prime}<0$), a surface of positive curvature (ellipsoidal), to the extent that the correction in $h^{\prime\prime\prime}$ can be neglected. One could derive from equation (\ref{cylindrical_jet_basis}) formulae for the 2-jet curvature 2-forms, but it scarcely seems it would repay the effort without an ulterior motive in mind.

Lastly, we derive the form of the Riemannian metric to second order in Euclidean three-space, represented in radial coordinates. Here,
\begin{align}
	x &= r \cos \phi \sin \theta \nonumber \\
	y &= r \sin \phi \sin \theta \nonumber \\
	z &= r \cos \theta
\end{align}
and we have to compute, for instance,
\begin{align}
	d^x = & \, x_{,r} d^r + x_{,\theta} d^\theta + x_{,\phi} d^\phi + \frac{1}{2} \left( x_{,rr} d^{rr} + x_{,\theta\theta} d^{\theta\theta} + x_{,\phi\phi} d^{\phi\phi} + \right. \nonumber \\
	& + \left. 2 x_{,r\theta} d^{r\theta} + 2 x_{,r\phi} d^{r\phi} + 2 x_{,\theta\phi} d^{\theta\phi} \right),
\end{align}
with similar expressions for $d^y$ and $d^z$. Little purpose would be served by going through the tedious intermediate steps, which are analogous to what we have done above in polar coordinates in the plane, and we merely quote the final result.\footnote{Recommended to the reader as an instructive exercise to explore how to combine in an intelligently organized fashion the welter of terms yielded by brute force.} That is, the Riemannian metric in Euclidean three-space in radial coordinates is given to second order by the following orthonormal jet basis:
\begin{align}\label{radial_jet_basis}
	e^r &= d^r - \frac{1}{2} r d^{\theta\theta} - \frac{1}{2} r \sin^2 \theta d^{\phi\phi} \nonumber \\
	e^\theta &= r d^\theta + d^{r\theta} - \frac{1}{2} r^2 \sin \theta \cos \theta d^{\phi\phi} \nonumber \\
	e^\phi &= r \sin \theta d^\phi + \sin \theta d^{r\phi} + r \cos \theta d^{\theta\phi} \nonumber \\
	e^{rr} &= d^{rr} \nonumber \\
	e^{r\theta} &= r d^{r\theta} \nonumber \\
	e^{r\phi} &= r \sin \theta \, d^{r\phi} \nonumber \\
	e^{\theta\theta} &= r^2 d^{\theta\theta} \nonumber \\
	e^{\theta\phi} &= r^2 \sin \theta \, d^{\theta\phi} \nonumber \\
	e^{\phi\phi} &= r^2 \sin^2 \theta \, d^{\phi\phi}.
\end{align}
If, as above in the case of the circle $S^1$, we set $r=1$ and cancel all radial differentials, we get the Riemannian metric on the sphere $S^2$ to second order:
\begin{equation}
	\delta_{S^2} = (d^{\theta}-\frac{1}{2}\sin\theta\cos\theta d^{\phi\phi})^2 + (\sin \theta d^{\phi}+\cos\theta d^{\theta\phi})^2 + (d^{\theta\theta })^2 + 
	2 \sin^2 \theta (d^{\theta\phi})^2 + \frac{5}{4} \sin^4 \theta (d^{\phi\phi})^2. 
\end{equation}
An easy problem, once given $\delta_{S^2}$, is to compute the 1-jet curvature 2-form,
\begin{align}
	R^\theta_\phi &= \left( \sin \theta d^\theta + \cos \theta d^{\theta\theta} + \sin \theta \sin 2 \theta d^{\phi\phi} \right) \wedge \left( d^\phi - d^{\theta\phi} \right) \nonumber \\
	&= e^\theta \wedge e^\phi + ~\mathrm{h.o.},
\end{align}
in which one recognizes the constant positive curvature in the first term accompanied now by a second-order correction, as expected.

\subsection{Exterior Differential Calculus at Higher Order}

Recall that in the ordinary calculus on differentiable manifolds there exists a concept of a smooth 1-form, which is simply a device for organizing differentials in the coordinate functions across various positions in space in an intrinsic manner. A given smooth 1-form $\omega$ could possibly be written as a complete differential of a smooth function $f$, or $\omega = df$, but not always. The necessary condition for this to be the case is that in every coordinate chart $x^{1,\ldots,n}$ its components satisfy the relation
\begin{equation}\label{consistency_cond_first_order}
	\frac{\partial\omega_\mu}{\partial x^\nu} - \frac{\partial\omega_\nu}{\partial x^\mu} = 0
\end{equation}
identically. Since the property of being closed can without much difficulty be shown to be coordinate-invariant (cf. Lee \cite{lee_riemannian_manif}, Proposition 11.45), one naturally expects to be able to ascribe an intrinsic meaning to antisymmetrical expressions of such form, and indeed the theory of differential forms of arbitrary degree does just this. Observe, however, that there is no reason to stop at differentials of the first order, as is conventionally done. Clearly, in terms of concepts already herein introduced, we could regard an $r$-jet field $\omega \in \mathscr{J}^{r*}$ as again a 1-form and understand it to be a complete differential of a smooth function $f \in C^\infty(M)$, $\omega=\omega_\alpha d^\alpha=df$, if its components assume the form $\omega_\alpha=\partial_\alpha f$ for all multi-indices $\alpha$. But equality of mixed partial derivatives of smooth functions holds for all pairs of multi-indices, $\partial_\beta \partial_\alpha f = \partial_\alpha \partial f$ and therefore we arrive at the analogous necessary condition that
\begin{equation}\label{consistency_cond_any_order}
	\partial_\beta \omega_\alpha = \partial_\alpha \omega_\beta,
\end{equation}
where the indicated relation must hold for all pairs of multi-indices $\alpha, \beta, 1 \le |\alpha| \le r, 1 \le |\beta| \le r$. Equation (\ref{consistency_cond_any_order}) merely reproduces equation (\ref{consistency_cond_first_order}) in greater generality. We thus have every right to anticipate that the full calculus of differential forms on manifolds should admit a natural extension to include infinitesimals of higher than first order.

Towards the stated end, some elementary definitions. On a differentiable manifold $M$, let us consider the alternating generalized covariant tensors, to be denoted
\begin{equation}
	\bigwedge^k J^{r*}M := \coprod_{p \in M} \bigwedge^k \left( J_p^{r*}M \right).
\end{equation}
After arguments similar to those advanced in {\S}\ref{chapter_2}, $\bigwedge^k J^{r*}M$ forms a smooth subbundle of  the vector bundle of covariant $\binom{k}{0}$-tensors $\overbrace{J^{r*}M \otimes \cdots \otimes J^{r*}M}^{k ~\mathrm{times}}$ on $M$. A section of $\bigwedge^k J^{r*}M$ will be referred to as a generalized differential $k$-form of degree $k$ and the vector space of smooth $k$-forms denoted by
\begin{equation}
	\Omega^k(M) := \Gamma\left( \bigwedge^k J^{r*}M \right).
\end{equation}
As would be conventional, define the wedge product of two differential forms pointwise: $(\omega \wedge \eta)_p := \omega_p \wedge \eta_p$. Hence, the wedge product renders the total space of differential forms
\begin{equation}\label{total_diff_form}
	\Omega^*(M) := \bigoplus_{k=0}^\infty \Omega^k(M),
\end{equation}
where $\Omega^0(M)=C^\infty(M)$ and a wedge product involving a zero-form is to be interpreted as $f \wedge \eta = f \eta$, into an associative, anti-commutative graded algebra. Note the new feature here that there is, as yet, no evident manner of speaking of a top-degree form since $J_p^{r*}$ has dimension greater than $n = \dim M$. Thus, we have extended the summation in equation (\ref{total_diff_form}) to infinity, where of course above a certain limit (depending on the dimension $n$ of space and on the maximal order $r$ of the differentials allowed) all of the wedge products must vanish.

Some further notation. Once $k$-forms with $k \ge 2$ have been introduced, we shall have to entertain collections of multi-indices and the following formalism seems to be expedient: let us call a grand multi-index $A$ the collection $A = (\alpha_1,\ldots,\alpha_k)$ where each $\alpha_{1,\ldots,k}$ is itself a multi-index $\alpha_i = (\alpha_{i1},\ldots,\alpha_{in})$ and the set of such multi-indices is ordered lexicographically. Then a differential $k$-form may be written locally in any smooth chart as
\begin{equation}
	\omega = \sum_A^\prime \omega_A d^A = \sum_{A:~ \alpha_1 < \cdots < \alpha_k} \omega_A d^A,
\end{equation}
where the coefficients $\omega_A$ are smooth functions defined on the chart and $d^A$ stands as an abbreviation for $d^{(\alpha_1,\ldots,\alpha_k)} = d^{\alpha_1} \wedge \cdots \wedge d^{\alpha_k}$.

What makes differential forms into a powerful tool in the context of the ordinary calculus on manifolds is that they admit a natural differential operator known as the exterior derivative. So as to keep the exposition clear and in view of the fact that we are about to introduce a significant new concept in the foundations of geometry, we wish to resort to a terminological innovation. As we saw above, the complete differential of a smooth function $f$, $df$, as previously introduced in {\S}\ref{chapter_2} corresponding to the operation of sending the function around a given point $p$ to its equivalence class $[f-f(p)]$ modulo $\mathfrak{m}_p^{r+1}$, could be framed under the aspect of being the first instance of a general procedure for operating upon differential forms of any degree. Cartan originally invented the concept of the exterior derivative in order to capture the notion of closedness in a neat fashion that is inherently intrinsic in the sense of being independent of all choices of local coordinate chart. We should like then to do the same here, permitting now the infinitesimals of higher order to enter the picture. If we are to avoid the confusion of having too many senses of the differential in concurrent use within a single formula, it seems best define the exterior derivative of a generalized 1-form $\omega$ as the following 2-form:
\begin{equation}\label{exterior_deriv_of_1_form}
	\text{\th} \omega := \frac{1}{2} \left( \partial_\alpha \omega_\beta - \partial_\beta \omega_\alpha \right) d^\alpha \wedge d^\beta,
\end{equation}
where the implicit summation extends over all multi-indices $\alpha, \beta$ and the factor of one-half takes care of the repetition of terms under interchange of the two.\footnote{The new symbol appearing in equation (\ref{exterior_deriv_of_1_form}), $\text{\th}$ [thorn], corresponds to a letter in Old English that happens still to be available for our use since it dropped out of circulation during the late medieval period. Another defunct Old English letter $\eth$ [eth] has been coopted by physicists to denote the Dirac operator.} Now we should like to define along the same lines the exterior derivative of a generalized $k$-form $\omega \in \Omega^k(M)$ as the following $(k+1)$-form:
\begin{align}\label{def_exterior_deriv}
	\text{\th} \omega &= \text{\th} \left( \sum^\prime \omega_A d^A \right) := \sum^\prime \left( d \omega_A \right) d^A \nonumber \\
	&= \sum^\prime_A \sum_\beta \partial_\beta \omega_A d^\beta \wedge d^{\alpha_1} \wedge \cdots \wedge d^{\alpha_k}.
\end{align}
Thus, acting on a 1-form we obtain
\begin{align}
	\text{\th} \left( \omega_\alpha d^\alpha \right) &= \sum_{\alpha, \beta} \partial_\beta \omega_\alpha d^\beta \wedge d^\alpha \nonumber \\
	&= \sum_{\beta < \alpha} \partial_\beta \omega_\alpha d^\beta \wedge d^\alpha +
	\sum_{\beta > \alpha} \partial_\beta \omega_\alpha d^\beta \wedge d^\alpha \nonumber \\
	&= \sum_{A=(\alpha,\beta):~ \alpha < \beta}^\prime \left( \partial_\alpha \omega_\beta - \partial_\beta \omega_\alpha \right) d^A = \text{\th} \omega
\end{align}
in agreement with equation (\ref{exterior_deriv_of_1_form}).

\begin{lemma}[Cf. Lee, Lemma 14.16]\label{pullback_on_forms}
	Let $F: M \rightarrow N$ be a smooth map from the differentiable manifold $M$ into the differentiable manifold $N$. Then
	\begin{itemize}
		\item[$(1)$] $F^*: \Omega^k(N) \rightarrow \Omega^k(M)$ is linear over $\vvmathbb{R}$;
		
		\item[$(2)$] $F^*(\omega \wedge \eta) = (F^*\omega) \wedge (F^*\eta)$;
		
		\item[$(3)$] In any smooth chart,
		\begin{equation}
			F^* \left( \sum_A^\prime \omega_A d^{y_A} \right) = \sum_A^\prime \left( \omega_A \circ F \right) d^{(y \circ F)_A},
		\end{equation}
		where in an evident notation by $d^{y_A}$ we mean $d^{y_{\alpha_1}}\cdots d^{y_{\alpha_n}}$ and likewise by $d^{(y \circ F)_A}$, $d^{y_{\alpha_1} \circ F} \cdots d^{y_{\alpha_n}\circ F}$ 	
	\end{itemize}
\end{lemma}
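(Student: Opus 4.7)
The plan is to base the whole lemma on the pointwise definition of pullback inherited from the pushforward on higher tangents introduced in Section~2, namely $(F^*\omega)_p(\varv_1,\ldots,\varv_k) := \omega_{F(p)}(F_*\varv_1,\ldots,F_*\varv_k)$ for $\varv_{1,\ldots,k} \in J^r_pM$. Since the pushforward is defined algebraically by $F_*\varv([g]_{\mathfrak{m}^{r+1}_{F(p)}}) := \varv([g \circ F]_{\mathfrak{m}^{r+1}_p})$, it is manifestly $\vvmathbb{R}$-linear in $\varv$; hence the induced action on alternating $k$-multilinear functionals is $\vvmathbb{R}$-linear in $\omega$. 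This disposes of (1) immediately.

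For (2), I would expand the wedge product via its standard antisymmetrization formula, apply the pointwise definition of $F^*$, and observe that because $F_*$ acts on each of its arguments independently, each summand factors as a product of one pullback of $\omega$ with one pullback of $\eta$ evaluated on appropriately permuted arguments. Reassembling the signed sum recovers $(F^*\omega) \wedge (F^*\eta)$ evaluated on the same $\varv_{1,\ldots,k+l}$. The only formality to check is that pullback truly commutes with the alternation operator, which is purely a matter of rearranging finite signed sums.

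For (3), by (1) and (2) it suffices to compute the pullback on a single scalar coefficient and on a single basis jet. For $h \in C^\infty(N)$ regarded as a $0$-form, $(F^*h)(p) = h(F(p))$, hence $F^*\omega_A = \omega_A \circ F$. For a single basis jet $d^{y_\alpha}$ on $N$, one applies the definition of the jet pullback: $d^{y_\alpha}$ at $F(p)$ is the class modulo $\mathfrak{m}^{r+1}_{F(p)}$ of the monomial $(y^1-y^1(F(p)))^{\alpha_1}\cdots(y^n-y^n(F(p)))^{\alpha_n}$, and pullback replaces each $y^j$ by $y^j \circ F$ while dropping the class to modulo $\mathfrak{m}^{r+1}_p$; using $(y^j \circ F)(p) = y^j(F(p))$, one reads off $F^*d^{y_\alpha} = d^{(y \circ F)_\alpha}$. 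Iterating (2) across $d^{y_A} = d^{y_{\alpha_1}} \wedge \cdots \wedge d^{y_{\alpha_k}}$ then yields the stated local formula.

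The only point that requires any care, though it is hardly an obstacle, is to verify that $F^*$ sends $J^{r*}_{F(p)}N$ into $J^{r*}_p M$ of the \emph{same} order rather than into a module of different order---but this is immediate from the observations that $g \in \mathfrak{m}^{r+1}_{F(p)}$ implies $g \circ F \in \mathfrak{m}^{r+1}_p$, and conversely $g \in \mathfrak{m}_{F(p)}$ implies $g \circ F \in \mathfrak{m}_p$, so the equivalence class is well-defined and non-degenerate. Notably, no new phenomenon from coupling of infinitesimals of differing order intrudes here: pullback on generalized forms inherits its algebraic properties directly from pushforward on higher tangents, so the argument is structurally identical to the first-order case found in Lee.
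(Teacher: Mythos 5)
Your proposal is correct and follows essentially the same route as the paper's own proof: both rest on the pointwise definition $\left( F^* \omega \right)_p ( \varv_1,\ldots,\varv_k) = \omega_{F(p)} \left( F_{*p} \varv_1,\ldots, F_{*p} \varv_k \right)$ with the algebraic pushforward $\left( F_{*p} \varv \right) (g) = \varv(g \circ F)$, deduce (1) and (2) from linearity and compatibility with alternation, and reduce (3) to monomial forms where it is a repackaging of the jet-pullback definition. The only difference is that you spell out the steps the paper calls ``immediate'' (the alternation check and the ideal-theoretic well-definedness $g \in \mathfrak{m}^{r+1}_{F(p)} \Rightarrow g \circ F \in \mathfrak{m}^{r+1}_p$), which is a welcome but not essential elaboration.
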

\begin{proof}
	For \textit{any} covariant tensor field evaluated on the vectors $\varv_{1,\ldots,k}$ we have
	\begin{align}\label{pull_push_defs}
		\left( F^* \omega \right)_p ( \varv_1,\ldots,\varv_k) &= \omega_{F(p)} \left( F_{*p} \varv_1,\ldots, F_{*p} \varv_k \right) \\
		\left( F_{*p} \varv \right) (g) &= \varv(g \circ F).
	\end{align}
	Clearly, restriction to alternating tensor fields presents no problem and moreover respects the pointwise wedge product, so (1) and (2) are immediate. But (3) follows right away as well. For it suffices by (1) and (2) to show it only for 1-forms $f d^\alpha$. Then the statement $F^*(f d^\alpha) = (f \circ F) d^{(y \circ F)_\alpha}$ is just a repackaging of what equation (\ref{pull_push_defs}) defines.
\end{proof}

Before we can proceed to define a higher-order exterior derivative (first in $\vvmathbb{R}^n$), there is a minor complication that requires attention. The ordinary exterior derivative $d$ satisfies a graded derivation property with respect to the wedge product of differential forms and we should like any higher-order exterior derivative $\text{\th}$ to extend this property in a suitable fashion. In particular, terms containing derivatives with respect to multi-indices $A, |A| \ge 2$ ought to implement something like the generalized Leibniz product rule. One runs into a difficulty at this point, however, since pushing ahead blindly one soon encounters a clash between the anti-commutativity of the wedge product and the commutativity of differentiation with respect to differing independent coordinate directions. To get around the problem, we shall introduce another notion of a product involving two differential forms in addition to the wedge product. To distinguish the two, let us denote the former by $\vee$ (vee) and the latter by $\wedge$ (wedge).

Since derivatives with respect to differing multi-indices act independently, let us break down our candidate exterior derivative as follows:
\begin{equation}\label{definition_of_thorn}
	\text{\th} \omega := \sum_{\alpha > 0} \text{\th}_\alpha \omega,
\end{equation}
where
\begin{equation}
	\text{\th}_\alpha := d^\alpha \wedge \partial_\alpha \cdot,
\end{equation}
it being understood that the partial derivative acts first to the right on coefficients of basis co-vectors followed by taking the wedge product of the resultant with the basis co-vector $d^\alpha$ from the left. Now, any multi-index $\alpha > 0$ can potentially be broken down into a sum $\alpha = \alpha_1 + \alpha_2$, where the decomposition makes sense only in the case when both $\alpha_1$ and $\alpha_2$ are non-zero. Acting by itself, $\alpha$ gives rise to a term in $d^\alpha$, but clearly we cannot rewrite this as $d^\alpha = d^{\alpha_1} \wedge d^{\alpha_2}$, as superficially one might expect we would need to in order to obtain the Leibniz product rule. For the coefficient $\partial_\alpha \omega_A = \partial_{\alpha_1+\alpha_2} \omega_A$ is symmetric under interchange of $\alpha_1$ and $\alpha_2$ while $d^{\alpha_1} \wedge d^{\alpha_2}$ could be symmetric or antisymmetric, depending on how many indices have to be commuted past each other. If antisymmetric though, the net result would cancel when contracted against $\partial_{\alpha_1+\alpha_2}\omega_A$ and such cancellation need not necessarily hold in the correct formula for the exterior derivative of the wedge product of two differential forms.

The problem is easily remedied, however. The exterior derivative is supposed to raise the form degree by one, but if we want to express it as a sum over terms in $\text{\th}_{\alpha_1}$ and $\text{\th}_{\alpha_2}$ we require an operation that will lower the form degree by one on the right-hand side so that the increased degree coming from $\text{\th}_{\alpha_1}$ and that coming from $\text{\th}_{\alpha_2}$ will combine to produce something with only one added degree. Therefore, given two differential forms $\omega_{1,2} \in \Omega^{k_{1,2}}(M)$, define their vee product on monomials
\begin{align}\label{define_vee_product}
	\bigg( f d^{a_1} \wedge \cdots \wedge d^{a_{k_1}} \bigg) \vee &
	\bigg( g d^{b_1} \wedge \cdots \wedge d^{b_{k_2}} \bigg) :=  \nonumber \\
	& \sum_{1 \le j_1 \le k_1, 1 \le j_2 \le k_2} (-1)^{j_1+j_2}
	fg d^{a_{j_1}+b_{j_2}} \wedge \nonumber \\
	& d^{a_1} \wedge \cdots \wedge 
	\widehat{d^{a_{j_1}}}\wedge \cdots 
	d^{a_{k_1}} \wedge
	d^{b_{2}} \wedge \cdots \wedge \widehat{d^{b_{j_2}}} \wedge 
	\cdots \wedge d^{b_{k_2}}
\end{align}
and extend by linearity. Here, we take grand multi-indices $A=(a_1,\ldots,a_{k_1})$ and $B=(b_1,\ldots,b_{k_2})$ of degree $k_{1,2}$ respectively. The indicated operation is equivalent to a tensor product (not a wedge product) between $\omega_1$ and $\omega_2$ followed by symmetrization over their respective first indices and antisymmetrization over the other indices. With this we are prepared at last to state,	 

\begin{proposition}[Properties of the exterior derivative in $\vvmathbb{R}^n$; cf. Lee, Proposition 14.23]\label{properties_exterior_deriv}
	The exterior derivative satisfies the following properties:
	
	\begin{itemize}
		\item[$(1)$] $\text{\th}$ is linear over $\vvmathbb{R}$;
		
		\item[$(2)$] If $\omega$ is a smooth $k$-form and $\eta$ is a smooth $l$-form on an open set $U \subset \vvmathbb{R}^n$, then
		\begin{equation}
			\text{\th} \left( \omega \wedge \eta \right) = \text{\th}\omega \wedge \eta
			+ (-1)^k \sum_{\alpha>0} \frac{\alpha!}{\alpha_1!\alpha_2!}\bigg|_{\alpha_{1,2} \ne 0} \text{\th}_{\alpha_1} \omega \vee \text{\th}_{\alpha_2} \eta +
			(-1)^k \omega \wedge \text{\th}\eta;
		\end{equation}
		
		\item[$(3)$] $\text{\th} \circ \text{\th} = 0$;
		
		\item[$(4)$] $\text{\th}$ commutes with pullbacks in the sense that, if $U, V \subset \vvmathbb{R}^n$, $F: U \rightarrow V$ is a smooth map and $\omega \in \Omega^k(V)$, then
		\begin{equation}
			F^* \left( \text{\th} \omega \right) = \text{\th} \left( F^* \omega \right).
		\end{equation}
		
	\end{itemize}
\end{proposition}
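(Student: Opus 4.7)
The plan is to verify each of the four properties by a coordinate computation, exploiting the $\vvmathbb{R}$-linearity of $\text{\th}$ to reduce everything to monomial summands of the form $f\,d^A$. Property (1) is immediate from the decomposition $\text{\th} = \sum_{\alpha>0} d^\alpha \wedge \partial_\alpha$ in equation (\ref{definition_of_thorn}), since every $\partial_\alpha$ is $\vvmathbb{R}$-linear and the wedge product is bilinear.

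For (2), by linearity it suffices to establish the formula on monomials $\omega = f\,d^A$ of degree $k$ and $\eta = g\,d^B$ of degree $l$. One starts from
\begin{equation*}
\text{\th}(\omega \wedge \eta) \;=\; \sum_{\alpha>0} d^\alpha \wedge \partial_\alpha(fg)\, d^A \wedge d^B
\end{equation*}
and invokes the generalized Leibniz rule $\partial_\alpha(fg) = \sum_{\alpha_1+\alpha_2=\alpha}\frac{\alpha!}{\alpha_1!\alpha_2!}(\partial_{\alpha_1}f)(\partial_{\alpha_2}g)$ introduced in {\S}\ref{chapter_3} so as to split the result into three pieces according to whether $\alpha_2 = 0$, $\alpha_1 = 0$, or $\alpha_{1,2} \ne 0$. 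The first piece reassembles to $\text{\th}\omega \wedge \eta$ without adjustment, while the second becomes $(-1)^k\,\omega \wedge \text{\th}\eta$ after commuting $d^\alpha$ past the $k$-fold factor $d^A$. The remaining cross-terms should reproduce the vee-product sum in the proposition: the leading basis co-vectors $d^{\alpha_1}$ of $\text{\th}_{\alpha_1}\omega$ and $d^{\alpha_2}$ of $\text{\th}_{\alpha_2}\eta$ combine via the $(j_1,j_2)=(1,1)$ contribution in equation (\ref{define_vee_product}) to produce $d^{\alpha_1+\alpha_2}$ in the leading slot, exactly as in the original sum. Showing that the non-$(1,1)$ contributions in the vee-product sum assemble consistently, either by cancelling in pairs through their alternating signs or by combining with the already-accounted-for contributions $\text{\th}\omega \wedge \eta$ and $\omega \wedge \text{\th}\eta$, is where the combinatorial bookkeeping becomes delicate; I expect this to be the main obstacle of the proposition.

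Property (3) reduces on a monomial $\omega = f\,d^A$ to
\begin{equation*}
\text{\th}\text{\th}\omega \;=\; \sum_{\alpha,\beta>0} (\partial_\beta \partial_\alpha f)\, d^\beta \wedge d^\alpha \wedge d^A,
\end{equation*}
which vanishes upon interchange of the summation indices $\alpha \leftrightarrow \beta$: equality of mixed partials gives $\partial_\beta \partial_\alpha f = \partial_\alpha \partial_\beta f$, while antisymmetry of the wedge product gives $d^\beta \wedge d^\alpha = -d^\alpha \wedge d^\beta$, so the double sum equals its own negative. For (4), Lemma \ref{pullback_on_forms} reduces $F^*\omega$ on monomials to $(\omega_A \circ F)\,d^{(y\circ F)_A}$, and one expands both $\text{\th}(F^*\omega)$ and $F^*(\text{\th}\omega)$ in terms of $x$-coordinate differentials using the higher-order Jacobian expressions in equation (\ref{jet_transf_law}). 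Their agreement then reduces to Fa\`a di Bruno's formula --- the higher-order chain rule --- applied coefficient-by-coefficient, which may be organized by induction on the maximal order $r$ of the multi-indices involved; once this chain rule is granted, (4) follows by linearity.
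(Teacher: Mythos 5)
For parts (1) and (3) your argument is the paper's own: linearity is read off from the decomposition (\ref{definition_of_thorn}), and $\text{\th}\circ\text{\th}=0$ follows on monomials by playing equality of mixed partials against antisymmetry of the wedge. For (2) you likewise follow the paper's route (reduction to monomials, the generalized Leibniz rule, splitting into the $\alpha_2=0$, $\alpha_1=0$ and $\alpha_{1,2}\ne 0$ pieces), and the step you flag as the main obstacle is exactly the step the paper never carries out: its proof passes in a single unexplained line from the Leibniz cross-terms
\begin{equation*}
	\sum_{\alpha>0}\frac{\alpha!}{\alpha_1!\alpha_2!}\bigg|_{\alpha_{1,2}\ne 0}(\partial_{\alpha_1}f)(\partial_{\alpha_2}g)\,d^{\alpha_1+\alpha_2}\wedge d^A\wedge d^B
\end{equation*}
to the expression $(-1)^k\sum_{\alpha>0}\frac{\alpha!}{\alpha_1!\alpha_2!}\big|_{\alpha_{1,2}\ne 0}\,\text{\th}_{\alpha_1}\omega\vee\text{\th}_{\alpha_2}\eta$, with no accounting of the extra vee terms or of the sign. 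Your suspicion about the non-$(1,1)$ contributions is justified, and they are not harmless. Concretely, in the plane take $\omega=y\,d^x$ and $\eta=x\,d^y$. Then $\text{\th}(\omega\wedge\eta)=\text{\th}(xy\,d^x\wedge d^y)=d^{xy}\wedge d^x\wedge d^y\ne 0$, whereas on the right-hand side $\text{\th}\omega\wedge\eta=x\,d^y\wedge d^x\wedge d^y=0$, $\omega\wedge\text{\th}\eta=y\,d^x\wedge d^x\wedge d^y=0$, and the single surviving cross term is $(d^y\wedge d^x)\vee(d^x\wedge d^y)$, whose $(j_1,j_2)=(1,1)$ contribution $d^{xy}\wedge d^x\wedge d^y$ is cancelled by its $(2,2)$ contribution $d^{xy}\wedge d^y\wedge d^x$ under definition (\ref{define_vee_product}) (the $(1,2)$ and $(2,1)$ contributions vanish by repeated factors). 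So the identity in (2), read literally, fails; it survives only if $\vee$ is truncated to its leading $(1,1)$ component and the prefactor $(-1)^k$ on the cross-terms is dropped. In short, what you call the main obstacle is not something you failed to see through --- it is a defect in the statement and in the paper's own proof, and any completed argument must first repair the formula.

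On (4) you genuinely part ways with the paper, and to your disadvantage in labor. You propose expanding both sides in $x$-differentials via the higher-order Jacobian of equation (\ref{jet_transf_law}) and matching coefficients by Fa\`a di Bruno's formula, with an induction on the order $r$. The paper avoids all of this: for $\omega=u\,d^A$ it invokes lemma \ref{pullback_on_forms}, after which both $F^*(\text{\th}\omega)$ and $\text{\th}(F^*\omega)$ collapse to the single expression $\text{\th}(u\circ F)\wedge d^{x^{\alpha_1}\circ F}\wedge\cdots\wedge d^{x^{\alpha_k}\circ F}$. No chain-rule combinatorics ever appears, because the higher-order chain rule is already packaged into the definition of the pullback of a jet as $[f\circ\varphi]$; Fa\`a di Bruno is precisely what makes that definition functorial, so your route re-derives by hand what the jet formalism supplies for free. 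Be aware, though, that the slicker argument also has an unexamined point: it needs $\text{\th}$ to pass through the non-constant pulled-back factors $d^{x^{\alpha_i}\circ F}$ without generating further (wedge or vee) terms, which neither you nor the paper verifies.
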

\begin{proof}
	(1) Trivial. For (2), it suffices by linearity to consider $\omega = u d^A$ and $\eta = \varv d^B$ for smooth functions $u, \varv$ and multi-indices $A, B$, so that
	\begin{align}
		\text{\th} \left( \omega \wedge \eta \right) &= \text{\th} \left( u d^A \wedge \varv d^B \right) \nonumber \\
		&= \left( \sum_{\alpha > 0} \text{\th}_\alpha \right)  \left( u \varv d^A \wedge d^B  \right) \nonumber \\
		&= \sum_{\alpha > 0} \frac{\alpha!}{\alpha_1!\alpha_2!} \left( \text{\th}_{\alpha_1} u \vee \text{\th}_{\alpha_2} \varv \right) d^A \wedge d^B \nonumber \\
		&= \sum_{\alpha > 0} \left( \text{\th}_\alpha u d^A \wedge \varv d^B + (-1)^k \frac{\alpha!}{\alpha_1!\alpha_2!}\bigg|_{\alpha_{1,2} \ne 0} \text{\th}_{\alpha_1} u d^A \vee \text{\th}_{\alpha_2} \varv d^B +
		(-1)^k u d^A \wedge \text{\th}_\alpha \varv d^A \right) \nonumber \\
		&= \text{\th} \omega \wedge \eta + (-1)^k \sum_{\alpha > 0} 
		\frac{\alpha!}{\alpha_1!\alpha_2!}\bigg|_{\alpha_{1,2} \ne 0} \text{\th}_{\alpha_1} \omega \vee \text{\th}_{\alpha_2} \eta
		+ (-1)^k \omega \wedge \text{\th} \eta.
	\end{align} 
	
	For (3), let $\omega \in \Omega^k(U)$. Then compute directly,
	\begin{align}
		\text{\th} \left( \text{\th} \omega \right) &= \text{\th} \left( \sum_A^\prime \sum_\beta \partial_\beta \omega_A d^\beta \wedge d^A \right) \nonumber \\
		&= \sum_{A\beta}^\prime \sum_\gamma \partial_\gamma \partial_\beta \omega_A d^\gamma \wedge d^\beta \wedge d^A \nonumber \\
		&= \sum_A^\prime \sum_{(\beta,\gamma)}^\prime \left(
		\partial_\gamma \partial_\beta \omega_A - \partial_\beta \partial_\gamma \omega_A \right) d^\gamma \wedge d^\beta \wedge d^A = 0
	\end{align}
	by reason of the equality of mixed partial derivatives.
	
	For (4), it is sufficient to show the result for $k$-forms expressible as $\omega = u d^A$, where $u$ is some smooth function on $V$. We compute:
	\begin{align}
		F^* \left( \text{\th} \left( u d^A \right) \right) &= F^* \left( \text{\th} u \wedge d^A \right) \nonumber \\
		&= \left( \text{\th} \left( u \circ F \right) \wedge d^{x^{\alpha_1}\circ F} \wedge \cdots \wedge d^{x^{\alpha_k}\circ F} \right)
	\end{align}
	whereas
	\begin{align}
		\text{\th} \left( F^* \left( u d^A \right) \right) &= \text{\th} \left( u \circ F  d^{x^{\alpha_1}\circ F} \wedge \cdots \wedge d^{x^{\alpha_k}\circ F} \right) \nonumber \\
		&= \left( \text{\th} \left( u \circ F \right) \right) \wedge d^{x^{\alpha_1}\circ F} \wedge \cdots \wedge d^{x^{\alpha_k}\circ F}.
	\end{align}
	Therefore, $F^* \left( \text{\th} \omega \right) = \text{\th} \left( F^* \omega \right)$ for $\omega = u d^A$ and extending by linearity for any $\omega \in \Omega^k(V)$.
\end{proof}

\begin{lemma}\label{commutativity_of_vee_euclidean}
	Let $\omega$ and $\eta$ be homogeneous of degree $p$ and $q$ on an open set $U \subset \vvmathbb{R}^n$, respectively. If $p$ and $q$ are both even, then $\omega \vee \eta = - \eta \vee \omega$. If either $p$ is odd and $q$ is even or $p$ and $q$ are both odd, then $\omega \vee \eta = \eta \vee \omega$.
\end{lemma}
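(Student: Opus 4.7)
The plan is to reduce by bilinearity to the monomial case $\omega = f\,d^{a_1}\wedge\cdots\wedge d^{a_p}$ and $\eta = g\,d^{b_1}\wedge\cdots\wedge d^{b_q}$, then expand $\omega \vee \eta$ and $\eta \vee \omega$ using the defining formula (\ref{define_vee_product}) and match the two sums term-by-term. A typical summand of $\omega\vee\eta$ is
$$(-1)^{j_1+j_2}fg\, d^{a_{j_1}+b_{j_2}}\wedge d^{a_1}\wedge\cdots\widehat{d^{a_{j_1}}}\cdots\wedge d^{a_p}\wedge d^{b_1}\wedge\cdots\widehat{d^{b_{j_2}}}\cdots\wedge d^{b_q},$$
while the summand of $\eta\vee\omega$ indexed by the same pair (now with $j_2$ read off the $b$-block and $j_1$ off the $a$-block) is
$$(-1)^{j_2+j_1}fg\, d^{b_{j_2}+a_{j_1}}\wedge d^{b_1}\wedge\cdots\widehat{d^{b_{j_2}}}\cdots\wedge d^{b_q}\wedge d^{a_1}\wedge\cdots\widehat{d^{a_{j_1}}}\cdots\wedge d^{a_p}.$$

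First I would remark that the "merged" prefactor agrees, $d^{a_{j_1}+b_{j_2}}=d^{b_{j_2}+a_{j_1}}$, because multi-index addition is commutative, and that the sign $(-1)^{j_1+j_2}$ is visibly the same. Hence the two summands differ only in the relative order of the surviving $a$-block (length $p-1$) and $b$-block (length $q-1$). Because $\wedge$ is anti-commutative on $1$-forms, interchanging these two blocks introduces the sign $(-1)^{(p-1)(q-1)}$. Summing the bijection over $(j_1,j_2)$ therefore yields the uniform identity
$$\omega\vee\eta = (-1)^{(p-1)(q-1)}\,\eta\vee\omega.$$

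The stated conclusion then follows by inspection of parities: $(p-1)(q-1)$ is odd iff both $p$ and $q$ are even, giving $\omega\vee\eta=-\eta\vee\omega$; in all other parity combinations (including the $p$ even, $q$ odd case omitted from the statement, which is symmetric to $p$ odd, $q$ even) the exponent is even and one has $\omega\vee\eta=\eta\vee\omega$. No serious obstacle arises in this argument; the only point requiring attention is bookkeeping, namely verifying that the bijection of indexing pairs $(j_1,j_2)$ between the two expansions is the natural one and that, after factoring out the common first slot $d^{a_{j_1}+b_{j_2}}$, the residual wedge products really do differ solely by the block transposition whose sign has just been computed.
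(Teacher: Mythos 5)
Your proof is correct and follows essentially the same route as the paper's: both arguments come down to the observation that interchanging the surviving $a$-block (length $p-1$) and $b$-block (length $q-1$) costs $(p-1)(q-1)$ transpositions, giving $\omega \vee \eta = (-1)^{(p-1)(q-1)}\,\eta \vee \omega$, followed by the parity check. Your write-up merely makes explicit the term-by-term bijection and the agreement of the merged factor $d^{a_{j_1}+b_{j_2}}$ and the sign $(-1)^{j_1+j_2}$, which the paper leaves implicit.
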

\begin{proof}
	From equation (\ref{define_vee_product}), when $p$ and $q$ are both even, $p-1$ and $q-1$ are both odd, hence $\omega \vee \eta$ can be rendered into $\eta \vee \omega$ with an odd number of transpositions (sc., $(p-1)(q-1)$). Thus, $\omega \vee \eta = - \eta \vee \omega$. When, however, either $p$ is odd and $q$ is even or both are odd, $(p-1)(q-1)$ will be even so as to give $\omega \vee \eta = \eta \vee \omega$.		
\end{proof}

\begin{corollary}\label{vee_cancellation_lemma_euclidean}
	Let $\omega$ and $\eta$ be homogeneous of degree $p$ and $q$  on an open set $U \subset \vvmathbb{R}^n$, respectively. If either $p$ and $q$ are both odd or $p$ is odd and $q$ is even, then the cross terms in proposition \ref{properties_exterior_deriv} cancel.
\end{corollary}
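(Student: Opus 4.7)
The plan is to read off the parity of the vee products appearing in the cross terms and then exploit the involution $(\alpha_1,\alpha_2)\mapsto(\alpha_2,\alpha_1)$ on decompositions $\alpha=\alpha_1+\alpha_2$. The cross term of proposition \ref{properties_exterior_deriv}(2) has the form
\begin{equation*}
S(\omega,\eta)\;=\;(-1)^p\sum_{\alpha>0}\;\sum_{\substack{\alpha_1+\alpha_2=\alpha\\ \alpha_{1,2}\neq 0}}\frac{\alpha!}{\alpha_1!\alpha_2!}\,\text{\th}_{\alpha_1}\omega\vee\text{\th}_{\alpha_2}\eta,
\end{equation*}
where by construction $\text{\th}_{\alpha_1}\omega$ has form degree $p+1$ and $\text{\th}_{\alpha_2}\eta$ has form degree $q+1$. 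My first move would be to record the parity dichotomy coming from lemma \ref{commutativity_of_vee_euclidean}: when $p$ and $q$ are both odd, $p+1$ and $q+1$ are both even, so the lemma gives $\text{\th}_{\alpha_1}\omega\vee\text{\th}_{\alpha_2}\eta=-\text{\th}_{\alpha_2}\eta\vee\text{\th}_{\alpha_1}\omega$; when $p$ is odd and $q$ is even, $p+1$ is even and $q+1$ is odd, and the lemma instead gives $\text{\th}_{\alpha_1}\omega\vee\text{\th}_{\alpha_2}\eta=+\text{\th}_{\alpha_2}\eta\vee\text{\th}_{\alpha_1}\omega$.

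Next I would rewrite $S$ by applying the involution $\iota:(\alpha_1,\alpha_2)\mapsto(\alpha_2,\alpha_1)$ to the inner summation. Because the multinomial coefficient $\alpha!/(\alpha_1!\alpha_2!)$ is invariant under $\iota$, and because the range $\alpha_{1,2}\neq 0$ is preserved, the change of dummy variable gives
\begin{equation*}
S(\omega,\eta)\;=\;(-1)^p\sum_{\alpha>0}\;\sum_{\substack{\alpha_1+\alpha_2=\alpha\\ \alpha_{1,2}\neq 0}}\frac{\alpha!}{\alpha_1!\alpha_2!}\,\text{\th}_{\alpha_2}\omega\vee\text{\th}_{\alpha_1}\eta.
\end{equation*}
The plan is then to substitute the appropriate case of the lemma into this second expression and add the two expressions for $S$. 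In the first sub-case the lemma produces $-\text{\th}_{\alpha_1}\eta\vee\text{\th}_{\alpha_2}\omega$, which after a final relabelling $\alpha_1\leftrightarrow\alpha_2$ is $-\text{\th}_{\alpha_2}\eta\vee\text{\th}_{\alpha_1}\omega$, leading to the identity $S(\omega,\eta)=-S(\eta,\omega)$ together with the antisymmetry of the vee; in the second sub-case one uses the symmetry of the vee in place of its antisymmetry to obtain the cognate identity. In both sub-cases, combining the two expressions for $S$ forces $2S=0$.

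The step I expect to be most delicate is the bookkeeping of signs in the reordering of wedge factors that appears inside the definition \eqref{define_vee_product} of the vee product. Specifically, each term in the expansion of $\text{\th}_{\alpha_1}\omega\vee\text{\th}_{\alpha_2}\eta$ carries the sign $(-1)^{j_1+j_2}$ together with the transposition sign required to commute the $(p-1)$-block of residual $a$-indices past the $(q-1)$-block of residual $b$-indices when one reverses the order of the two factors; it is precisely this $(p-1)(q-1)$ sign that decides which sub-case of the lemma applies, and both sub-cases need to be treated in parallel so that the pairing under $\iota$ cancels pointwise. Once this sign computation is organised---most cleanly by noting that the fixed points of $\iota$ lie on the diagonal $\alpha_1=\alpha_2$, where the antisymmetry of the vee in the first sub-case and a direct inspection in the second sub-case both force vanishing---the global cancellation of cross terms follows immediately.
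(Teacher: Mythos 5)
You have read off the parities correctly (the factors in the cross terms have degrees $p+1$ and $q+1$), and your involution-plus-lemma manipulation is sound as far as it goes; the trouble is what it actually yields. Write
\begin{equation*}
S(\omega,\eta) \;=\; (-1)^{p}\sum_{\alpha>0}\frac{\alpha!}{\alpha_1!\alpha_2!}\bigg|_{\alpha_{1,2}\neq 0}\,\text{\th}_{\alpha_1}\omega\vee\text{\th}_{\alpha_2}\eta .
\end{equation*}
Relabelling by your involution $\iota$ and then applying lemma \ref{commutativity_of_vee_euclidean} (with sign $\epsilon=(-1)^{pq}$, the factors having degrees $p+1$ and $q+1$) produces
\begin{equation*}
S(\omega,\eta) \;=\; \epsilon\,(-1)^{p+q}\,S(\eta,\omega),
\end{equation*}
which in both of your sub-cases reads $S(\omega,\eta)=-S(\eta,\omega)$. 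This is a relation between two \emph{different} quantities---the cross term, and the cross term with the roles of $\omega$ and $\eta$ exchanged---and no amount of re-adding your two expressions for $S(\omega,\eta)$ turns it into $2S(\omega,\eta)=0$: substituting the lemma into the relabelled sum and adding gives $2S(\omega,\eta)=S(\omega,\eta)-S(\eta,\omega)$, i.e.\ the same relation over again. The same conflation undoes your fixed-point remark: at $\alpha_1=\alpha_2=\alpha$ the term is $\text{\th}_{\alpha}\omega\vee\text{\th}_{\alpha}\eta$, and the lemma only equates it to $\pm\,\text{\th}_{\alpha}\eta\vee\text{\th}_{\alpha}\omega$, which is not the negative of the same term unless $\omega=\eta$. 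As written, therefore, the proposal establishes no vanishing at all.

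The missing ingredient is a second, independent relation between $S(\omega,\eta)$ and $S(\eta,\omega)$, and supplying it is exactly what the paper's proof does: it expands \emph{both} $\text{\th}(\omega\wedge\eta)$ and $\text{\th}(\eta\wedge\omega)$ by proposition \ref{properties_exterior_deriv} and uses graded commutativity of the wedge, $\eta\wedge\omega=(-1)^{pq}\omega\wedge\eta$, hence $\text{\th}(\eta\wedge\omega)=(-1)^{pq}\text{\th}(\omega\wedge\eta)$. Since the non-cross terms of the two expansions match after graded commutation, comparison yields $S(\eta,\omega)=(-1)^{pq}S(\omega,\eta)$. For $p$ odd and $q$ even this says $S(\eta,\omega)=S(\omega,\eta)$, which combined with your $S(\omega,\eta)=-S(\eta,\omega)$ does force $S(\omega,\eta)=0$. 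Be aware, however, that when $p$ and $q$ are both odd one has $(-1)^{pq}=-1$ and the two relations coincide, so the symmetry bookkeeping---yours, or the paper's ``add the two expansions'' step, which by itself establishes only $S(\omega,\eta)+S(\eta,\omega)=0$---genuinely closes only the mixed-parity case; in the both-odd case one must return to the definition (\ref{define_vee_product}) and verify the cancellation term by term, which neither your sketch nor a purely symmetry-based argument accomplishes.
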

\begin{proof}
	When either $p$ and $q$ are both odd or $p$ is odd and $q$ is even, then $p+1$ and $q+1$ are either both even or even and odd. Therefore, by lemma \ref{commutativity_of_vee_euclidean}, the vee products in the cross-terms are anticommutative. If we expand $\text{\th}(\omega \wedge \eta)$ and $\text{\th}(\eta \wedge \omega)$ and add the two expressions, we straightforwardly find that the cross-terms sum to zero, leaving us with
	\begin{equation}
		\text{\th}(\omega \wedge \eta) = \text{\th} \omega \wedge \eta - \omega \wedge \text{\th} \eta.
	\end{equation}	
	Observe that the cross-terms need not always thus cancel, either because one has a product of homogeneous forms of even degree or because the forms could be inhomogeneous.	
\end{proof}

\begin{theorem}[Cf. Lee, \cite{lee_smooth_manif}, Theorem 14.24]\label{exist_exterior_deriv}
	Suppose $M$ is a differentiable manifold. Then there are (unique) operators $\text{\th}: \Omega^k(M) \rightarrow \Omega^{k+1}(M)$, $k \ge 0$, to be known as exterior differentiation, satisfying the following properties:
	\begin{itemize}
		\item[$(1)$] $\text{\th}$ is linear over $\vvmathbb{R}$;
		
		\item[$(2)$] If $\omega \in \Omega^k(M)$ and $\eta \in \Omega^l(M)$, then
		\begin{equation}
			\text{\th} \left( \omega \wedge \eta \right) = \text{\th}\omega \wedge \eta
			+ (-1)^k \sum_{\alpha>0} \frac{\alpha!}{\alpha_1!\alpha_2!}\bigg|_{\alpha_{1,2} \ne 0} \text{\th}_{\alpha_1} \omega \vee \text{\th}_{\alpha_2} \eta + 
			(-1)^k \omega \wedge \text{\th}\eta;
		\end{equation}
		
		\item[$(3)$] $\text{\th} \circ \text{\th} = 0$;
		
		\item[$(4)$] For $f \in \Omega^0(M) = C^\infty(M)$, $\text{\th}f$ is the differential of $f$ given by $\text{\th}f(X)=Xf$, $X \in \mathscr{J}^r(M)$.
	\end{itemize}
\end{theorem}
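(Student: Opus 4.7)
The plan is to model the proof on Lee's Theorem 14.24, following the two-step strategy: first derive uniqueness (which will pin down a local coordinate formula), then construct the operator chart-by-chart using the Euclidean formula of proposition \ref{properties_exterior_deriv}, invoking uniqueness to guarantee that the local pieces agree on overlaps and so glue into a global operator.

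First I would establish a locality principle: if $\text{\th}$ satisfies (1)--(4) and $\omega \in \Omega^k(M)$ vanishes on an open set $U$, then $\text{\th}\omega$ vanishes on $U$ as well. The standard bump-function argument adapts here, but with some care: choose $\psi \in C^\infty(M)$ supported in $U$ with $\psi = 1$ near a given $p \in U$; since $\psi\omega \equiv 0$, expanding $\text{\th}(\psi \omega) = 0$ via property (2) gives $\text{\th}\psi \wedge \omega + \psi\,\text{\th}\omega$ plus cross-terms in $\text{\th}_{\alpha_1}\psi \vee \text{\th}_{\alpha_2}\omega$. At $p$, all factors involving $\psi$ reduce to $\psi(p) = 1$ and derivatives $\partial_\beta \psi(p) = 0$ for $|\beta| \ge 1$ (arranged by choice of $\psi$), while all factors of $\omega$ and its derivatives vanish identically on $U$; hence $\text{\th}\omega|_p = 0$.

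With locality in hand, I would work in a coordinate chart $(U, x^{1,\ldots,n})$ and apply properties (1), (2), (4), together with $\text{\th}(\text{\th} x^\alpha) = 0$ from (3), to derive the coordinate formula of equation \ref{def_exterior_deriv} by induction on form degree. The derivation parallels Lee's argument but has to track the vee-product cross-terms: the key observation is that in the chart, these correspond precisely to the cross-terms appearing in proposition \ref{properties_exterior_deriv}(2), so the local formula forced by the axioms matches the one proved to satisfy them on Euclidean space. This yields uniqueness on each chart, and hence globally: if $\text{\th}_1$ and $\text{\th}_2$ both satisfy (1)--(4), they agree in every chart, so $\text{\th}_1 = \text{\th}_2$ everywhere.

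For existence, I would \emph{define} $\text{\th}$ in each chart by equation \ref{def_exterior_deriv}; properties (1), (3), and (4) transfer immediately from proposition \ref{properties_exterior_deriv}, while (2) holds with the cross-term convention as proven there. To see this gives a well-defined global operator, note that on the overlap $U \cap V$ of two charts, both the $U$-definition and $V$-definition satisfy (1)--(4) on $U \cap V$ viewed as an open submanifold; uniqueness forces them to agree. The main obstacle, and the one requiring the most care, is verifying that the locality argument goes through in the presence of the vee-product cross-terms, since the vee product was only defined on monomials in $\vvmathbb{R}^n$ and its naturality under coordinate change (implicitly required for the chart-overlap compatibility) was not established in the excerpt; a brief lemma asserting $F^*(\omega \vee \eta) = (F^*\omega) \vee (F^*\eta)$ under smooth $F$, proved on monomials and extended by bilinearity, will be needed to close the argument.
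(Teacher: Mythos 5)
Your proposal is correct in outline, but it reverses the paper's architecture and uses a different gluing mechanism. The paper proves existence \emph{first}: it defines $\text{\th}\omega|_U := \varphi^*\,\text{\th}\left(\varphi^{-1*}\omega\right)$ chart by chart and establishes agreement on overlaps directly from the pullback-naturality of the Euclidean operator (item $(4)$ of proposition \ref{properties_exterior_deriv}, applied to the transition diffeomorphism $\varphi\circ\psi^{-1}$); uniqueness then follows by the same bump-function locality argument you give. You instead prove uniqueness first and let uniqueness on $U\cap V$ force the chart-wise definitions to agree. Both routes work, but the paper's choice buys something concrete: the proof of pullback-naturality in proposition \ref{properties_exterior_deriv} never involves the vee product, so the paper's gluing step sidesteps exactly the issue you flag, whereas your gluing genuinely requires axiom $(2)$ to be a coordinate-independent statement on $U\cap V$, hence requires your lemma $F^*(\omega\vee\eta) = (F^*\omega)\vee(F^*\eta)$ — which is true, since pullback is an algebra homomorphism on jets and therefore respects the multi-index addition $d^{a_{j_1}+b_{j_2}}$ in equation (\ref{define_vee_product}), but is nowhere proved in the paper; to be fair, the paper implicitly owes the same lemma merely for property $(2)$ of the theorem to make sense as a global statement, so your flag is a service rather than a defect of your approach alone. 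One caveat on your uniqueness step: deriving the coordinate formula (\ref{def_exterior_deriv}) from the axioms is not a transcription of Lee, because for $|\alpha|\ge 2$ the basis 1-forms $d^\alpha$ are not exact; one has only $d^\alpha = \text{\th}(x^\alpha/\alpha!) - \sum_{\beta<\alpha}\frac{x^{\alpha-\beta}}{\beta!\,(\alpha-\beta)!}\,d^\beta$, so Lee's trick of writing basis monomials as wedges of exact forms and invoking $(2)$ and $(3)$ must be replaced by an induction on the jet order $|\alpha|$ as well as on form degree. The paper's own uniqueness paragraph is equally terse on this point, so this is a detail to fill rather than an error, but ``parallels Lee's argument'' understates it.
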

\begin{proof}
	The argument is precisely parallel to that given by Lee, at the formal level where we understand the expressions as now possibly including higher-order infinitesimals. Existence can be had by defining $\text{\th} \omega$ by means of the coordinate formula \ref{def_exterior_deriv} in each chart $(U, \varphi)$ and transferring from the chart to $M$:
	\begin{equation}
		\text{\th} \omega|_U := \varphi^* \text{\th} \left( \varphi^{-1*} \omega \right).
	\end{equation}
	The procedure is well defined since we may note that in another chart $(V,\psi)$, the map $\varphi \circ \psi^{-1}$ is a diffeomorphism between open subsets in the respective copies of $\vvmathbb{R}^n$; hence by proposition \ref{properties_exterior_deriv},
	\begin{align}
		\left( \varphi \circ \psi^{-1} \right)^* \text{\th} \left( \varphi^{-1*} \omega \right) &= \text{\th} \left( \left( \varphi \circ \psi^{-1} \right)^* \varphi^{-1*} \omega \right) \nonumber \\
		&= \text{\th} \left( \left( \psi^{-1*} \circ \varphi^* \circ \varphi^{-1} \right)^* \omega \right) \nonumber \\
		&= \psi^{-1*} \varphi^* \text{\th} \left( \varphi^{-1*} \omega \right)
	\end{align}
	and so (applying $\psi^*$ to both sides from the left)
	\begin{equation}
		\varphi^* \text{\th} \left( \varphi^{-1*} \omega \right) = \psi^* \text{\th} \left( \psi^{-1*} \omega \right).
	\end{equation}
	The last line, however, asserts the independence of $\text{\th} \omega$ from the chart in which it is defined. Properties (1)-(4) now follow from the corresponding items in proposition \ref{properties_exterior_deriv}.
	
	It remains to show uniqueness. The first desideratum is to ensure that any operator $\text{\th}$ satisfying (1) to (4) is determined locally. To this end, suppose $\omega_{1,2}$ to be two smooth $k$-forms that agree on an open set $U \subset M$. Then, $\text{\th} \omega_1|_U =\text{\th} \omega_2|_U$. To see why, take $p \in U$ arbitrary and let $\eta := \omega_2 - \omega_1$. We may find a smooth bump function $\psi$ defined on all of $M$ that becomes equal to unity identically on some neighborhood of $p$ but whose support is contained in $U$. Evidently, $\psi \eta = 0$ identically by construction. Hence,
	\begin{equation}
		0 = \text{\th} \psi \eta = \text{\th} \psi \wedge \eta + (-1)^k \sum_{\alpha>0} \frac{\alpha!}{\alpha_1!\alpha_2!}\bigg|_{\alpha_{1,2} \ne 0} \text{\th}_{\alpha_1} \psi \vee \text{\th}_{\alpha_2} \eta +
		(-1)^k \psi \wedge \text{\th}\eta.
	\end{equation} 
	Evaluate at $p$ and use the facts that $\psi(p)=1$ and $\psi_{p,\beta} = 0$ for any non-zero multi-index $\beta$. One concludes that $\text{\th} \omega_2 |_p - \text{\th} \omega_1 |_p = \text{\th} \eta |_p = 0$ by commutativity of the vee product. 
	
	Let now $\omega \in \Omega^k(M)$ be arbitrary. With respect to any chart $(U,\varphi)$ around the point $p \in M$, we may write $\omega$ in terms of the corresponding coordinates. By means of an appropriate bump function $\psi$, construct global smooth functions for the components $\tilde{\omega}_A$ and new coordinates $\tilde{x}_{1,\ldots,n}$ that agree with $\omega_A$ and $x_{1,\ldots,n}$ in a neighborhood of $p$ contained in $U$. By virtue of (1) to (4) together with the argument of the preceding paragraph, we may conclude that the indicated operator $\text{\th}$ is uniquely defined at $p$, but $p$ was arbitrary, and therefore $\text{\th}$ is uniquely defined on all of $M$.
\end{proof}

\begin{lemma}\label{commutativity_of_vee}
	Let $\omega$ and $\eta$ be homogeneous of degree $p$ and $q$ on a differentiable manifold $M$, respectively. If $p$ and $q$ are both even, then $\omega \vee \eta = - \eta \vee \omega$. If either $p$ is odd and $q$ is even or $p$ and $q$ are both odd, then $\omega \vee \eta = \eta \vee \omega$.
\end{lemma}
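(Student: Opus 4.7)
The plan is to reduce the statement immediately to the Euclidean case already handled in lemma \ref{commutativity_of_vee_euclidean}. The vee product defined in equation (\ref{define_vee_product}) is manifestly algebraic and pointwise: its value at a point $p \in M$ depends only on the values of $\omega$ and $\eta$ at $p$, and is given in terms of transpositions of tensor factors together with a symmetrization on the two contracted indices. Consequently, once one works in a coordinate chart the entire identity becomes a fiberwise identity in the exterior algebra over $J_p^{r*}M$, which is formally indistinguishable from what one has in $\vvmathbb{R}^n$.

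More concretely, I would proceed as follows. First, fix $p \in M$ and choose a smooth chart $(U,\varphi)$ around $p$ with coordinate functions $x^{1,\ldots,n}$, so that in this chart we have coordinate $r$-jet fields $d^\alpha$ and every $\omega \in \Omega^p(U)$ and $\eta \in \Omega^q(U)$ can be expanded as
\begin{equation}
\omega = \sum_A^\prime \omega_A \, d^{a_1}\wedge\cdots\wedge d^{a_p}, \qquad \eta = \sum_B^\prime \eta_B \, d^{b_1}\wedge\cdots\wedge d^{b_q},
\end{equation}
with smooth coefficients $\omega_A,\eta_B$. Second, by bilinearity of the vee product (immediate from its defining formula), it suffices to establish the desired (anti-)commutativity on pairs of monomials. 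Third, on such monomials the expression for $\omega \vee \eta$ versus $\eta \vee \omega$ involves exactly the same kind of index-shuffling analyzed in lemma \ref{commutativity_of_vee_euclidean}: summation over the choice of indices $j_1,j_2$ to be merged, a symmetric combination $d^{a_{j_1}+b_{j_2}}$ under interchange of $\omega$ and $\eta$, and an ordinary wedge product of the remaining $(p-1)+(q-1)$ coordinate jets. Hence $\omega \vee \eta$ and $\eta \vee \omega$ differ by the sign $(-1)^{(p-1)(q-1)}$, which yields $-1$ when both $p$ and $q$ are even and $+1$ otherwise.

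Finally, one must check that the conclusion is independent of the chart. But since the statement $\omega \vee \eta = \pm \eta \vee \omega$ holds pointwise on $U$, and since every point of $M$ lies in some chart, by covering $M$ with charts the identity holds globally. I expect no substantive obstacle here; the only subtlety is the book-keeping with grand multi-indices, which merely repeats the Euclidean calculation in a slightly more elaborate notation.
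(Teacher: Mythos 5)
Your proposal is correct and follows essentially the same route as the paper, which simply declares the lemma ``immediate from lemma \ref{commutativity_of_vee_euclidean}''; your chart-localization, bilinearity reduction to monomials, and the sign count $(-1)^{(p-1)(q-1)}$ are exactly the unstated details behind that one-line reduction to the Euclidean case.
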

\begin{proof}
	Immediate from lemma \ref{commutativity_of_vee_euclidean}.
\end{proof}

\begin{corollary}\label{vee_cancellation_lemma}
	Let $\omega$ and $\eta$ be homogeneous of degree $p$ and $q$ on a differentiable manifold $M$, respectively. If either $p$ and $q$ are both odd or $p$ is odd and $q$ is even, then the cross terms in theorem \ref{exist_exterior_deriv} cancel.
\end{corollary}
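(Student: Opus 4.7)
The plan is a straight localization argument: reduce the manifold statement to the Euclidean corollary \ref{vee_cancellation_lemma_euclidean} by passing to coordinate charts. Recall that the global exterior derivative on $M$ was constructed in theorem \ref{exist_exterior_deriv} precisely by the prescription $\text{\th}\omega|_U = \varphi^*\text{\th}(\varphi^{-1*}\omega)$ for each chart $(U,\varphi)$; the Leibniz-style formula of theorem \ref{exist_exterior_deriv}(2), in which the cross terms under discussion appear, thus follows chart by chart from its Euclidean progenitor (proposition \ref{properties_exterior_deriv}) via pullback. So the task reduces to verifying that each ingredient of the cross term --- the operator $\text{\th}_\alpha$, the wedge product, and the vee product --- transforms naturally under a diffeomorphism between Euclidean opens.

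The first two are handled by previously established material: the wedge product commutes with pullback by lemma \ref{pullback_on_forms}(2), while the decomposition $\text{\th} = \sum_{\alpha>0} \text{\th}_\alpha$ is local in each chart by construction. For the vee product, I would inspect definition (\ref{define_vee_product}) and note that it is assembled entirely out of multiplications of component functions and ordinary wedge products, both of which are natural under pullback; hence for any diffeomorphism $F$ between open subsets of $\vvmathbb{R}^n$, $F^*(\omega \vee \eta) = F^*\omega \vee F^*\eta$. Combined with the naturality of $\text{\th}_\alpha$ (which is visible from its defining formula $\text{\th}_\alpha = d^\alpha \wedge \partial_\alpha\,\cdot$ together with the transformation law of jets, equation (\ref{jet_transf_law})), this places the full cross-term expression in the image of $\varphi^*$.

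With those naturality points in hand, I would fix an arbitrary $p \in M$, choose a chart $(U,\varphi)$ around $p$, and observe that the pulled-back forms $\varphi^{-1*}\omega$ and $\varphi^{-1*}\eta$ remain homogeneous of degrees $p$ and $q$ respectively, since pullback preserves form degree. Corollary \ref{vee_cancellation_lemma_euclidean} then applies to them on $\varphi(U) \subset \vvmathbb{R}^n$, giving cancellation of the Euclidean cross terms there. Applying $\varphi^*$ to that vanishing transports the cancellation onto $U \subset M$, and since $p$ was arbitrary this yields the result on all of $M$.

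There is no genuine obstacle here, only bookkeeping: the corollary has already been established in flat space, and everything above is just the transfer principle via charts, an argument already executed in the uniqueness half of theorem \ref{exist_exterior_deriv}. The one point that deserves explicit mention (and which is why I want the observation about $F^*(\omega \vee \eta) = F^*\omega \vee F^*\eta$ on the record) is that the vee product, being a later and somewhat \textit{ad hoc} construct introduced specifically to make the Leibniz formula anticommutativity-consistent, has not been shown to be natural elsewhere in the paper; but its definition is transparent enough that naturality is immediate.
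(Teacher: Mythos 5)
Your proof is correct and follows essentially the same route as the paper, whose entire proof reads ``Immediate from corollary \ref{vee_cancellation_lemma_euclidean}''---the chart-by-chart transfer you spell out is precisely what that ``immediate'' suppresses. One small refinement: an individual $\text{\th}_\alpha$ is not coordinate-natural (only the full sum $\text{\th}$ is), so rather than arguing naturality of each ingredient of the cross terms separately, it is cleaner to observe that their total equals, up to the sign $(-1)^p$, the quantity $\text{\th}(\omega \wedge \eta) - \text{\th}\omega \wedge \eta - (-1)^p\, \omega \wedge \text{\th}\eta$, which is assembled entirely from chart-independent operations and therefore vanishes on $M$ as soon as it vanishes in every chart, which is exactly what corollary \ref{vee_cancellation_lemma_euclidean} applied to the pulled-back (still homogeneous) forms provides.
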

\begin{proof}
	Immediate from corollary \ref{vee_cancellation_lemma_euclidean}.		
\end{proof}

\subsection{Covariant Differentiation and Curvature in the Cartan Formalism}\label{cartan_formalism}

{\'E}. Cartan proposed his differential-form based approach to differential geometry, centered on the exterior algebra of differential forms and its calculus, originally as early as 1922 \cite{cartan}, cf. Thirring \cite{thirring_vol_2}, {\S}4.1. It has longed figured as a curiosity in the mathematical community, for it is known to be equivalent to the usual approaches in finitely many dimensions and therefore tends to be dismissed as redundant, despite the greater degree of clarity with which Cartan's version of the theory exhibits its inherent geometrical content. In what follows though, we shall want to have recourse to it both as a particularly elegant means of framing general relations and as a convenient technique by which to aid computations, which necessarily become more complicated once higher-order infinitesimals enter the picture. The variational principle in particular receives a quite nice formulation in these terms.

The first order of business is to express jet covariant differential in Cartan's terms, for which jet affine connections have to be introduced. Then, as expected, the Riemannian curvature tensor can readily be written as a differential 2-form along the same lines as in the ordinary 1-jet case. This form of the curvature is key to the expeditious formulation of the field equations, where other methods fail.

Let $E_\alpha, ~e^\alpha$, where the multi-index ranges over all $1 \le |\alpha| \le r$, denote a local orthonormal frame of $J^{r}$ and its dual coframe in some open subset $U \subset M$. Then, in parallel with Cartan's philosophy, we may define the jet affine connections as 1-forms as follows:
\begin{equation}
	\nabla_X E_\alpha =: \omega^\beta_\alpha(X)E_\beta,
\end{equation}
to hold for any $r$-vector field $X$. Here, $\nabla$ denotes the natural action of the Levi-Civita jet connection $\nabla$ on vectors. The existence and uniqueness of the matrix of affine connections as smooth 1-forms follow at once from the generalized tensor characterization lemma, I.3.6 above.

First of all, we will need analogue of Lee, Proposition 14.29. Then we may follow the derivation of Lee's Problems 4.5 and 7.2.

\begin{lemma}[Cf. Lee, \cite{lee_smooth_manif}, Proposition 14.29]\label{diff_of_1_form}
	For any smooth 1-form $\omega$ and smooth $r$-vector fields $X$ and $Y$, we have
	\begin{equation}
		\text{\th} \omega(X,Y) = X(\omega(Y)) - Y(\omega(X))-\omega([X,Y]).
	\end{equation}	
\end{lemma}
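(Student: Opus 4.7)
The plan is to invoke the generalized tensor characterization lemma (\ref{tensor_char}) in order to reduce the identity to a check on coordinate basis vector fields. Write $F(X,Y) := X(\omega(Y)) - Y(\omega(X)) - \omega([X,Y])$, viewed as a map $\mathscr{J}^\infty(M) \times \mathscr{J}^\infty(M) \to C^\infty(M)$. The quantity $\text{\th}\omega$ is a generalized 2-form and hence automatically $C^\infty(M)$-bilinear and antisymmetric by construction. If one can show that $F$ enjoys the same two properties, then equality of $\text{\th}\omega(X,Y)$ and $F(X,Y)$ need be verified only at coordinate basis vectors $X=\partial_\alpha$, $Y=\partial_\beta$.

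On the coordinate basis the Lie bracket $[\partial_\alpha,\partial_\beta]$ vanishes, since each $\partial_\alpha$ is proportional to a pure iterated partial derivative and such operators commute pairwise. The right-hand side of the asserted identity therefore collapses to $\partial_\alpha\omega_\beta - \partial_\beta\omega_\alpha$. On the left, the coordinate expression for $\text{\th}\omega$ recorded in equation (\ref{exterior_deriv_of_1_form}), combined with the elementary pairing $(d^\gamma \wedge d^\delta)(\partial_\alpha,\partial_\beta)=\delta^\gamma_\alpha\delta^\delta_\beta-\delta^\gamma_\beta\delta^\delta_\alpha$, yields the same expression. The two sides therefore coincide on basis vectors, and tensoriality propagates the equality to all higher-order vector fields.

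The delicate step is the $C^\infty(M)$-bilinearity check for $F$, since antisymmetry is apparent. To verify $F(fX,Y) = fF(X,Y)$ for $f \in C^\infty(M)$, I would expand $Y(f\omega(X))$ by the generalized Leibniz rule, producing a principal term $f Y(\omega(X))$ together with cross terms indexed by the non-trivial decompositions $\beta = \beta_1 + \beta_2$ with $\beta_{1,2}\ne 0$ of each multi-index carrying $Y$. A matching family of cross terms appears when one expands $\omega([fX,Y])=\omega((fX)\circ Y - Y\circ(fX))$ as a composition of higher-order differential operators and collects the pieces in which the extra derivatives fall on the scalar factor $f$. The main obstacle will be verifying, by combinatorial bookkeeping of the coefficients $\tfrac{\beta!}{\beta_1!\beta_2!}$, that these two families of cross terms cancel in pairs, leaving only $f F(X,Y)$. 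Once this cancellation is established, the hypotheses of lemma \ref{tensor_char} are met, and the coordinate computation of the preceding paragraph completes the proof.
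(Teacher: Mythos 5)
Your strategy stands or falls with the $C^\infty(M)$-bilinearity of $F(X,Y) := X(\omega(Y)) - Y(\omega(X)) - \omega([X,Y])$, and that is precisely the step you defer to ``combinatorial bookkeeping.'' The cancellation you are counting on does not occur, because the two families of cross terms are structurally mismatched. The generalized Leibniz expansion of $Y(f\,\omega(X))$ produces terms $\frac{\beta!}{\beta_1!\beta_2!}(\partial_{\beta_1}f)\,\partial_{\beta_2}(\omega(X))$ in which the extra derivatives fall on the \emph{components} of $\omega$, whereas $\omega([fX,Y]) - f\,\omega([X,Y])$ produces terms in which derivatives of $f$ multiply contractions of $\omega$ against basis vectors of order strictly greater than $r$ (recall that the commutator of two operators of order $r$ has order $2r-1$). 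Derivatives of components of $\omega$ and higher components of $\omega$ are independent data, so no pairwise matching can cancel them. A one-dimensional example makes the residue explicit: take $X = f\partial_2$, $Y = \partial_2$ with $\partial_2 = \tfrac{1}{2}\partial_x^2$, and write $\omega_\alpha := \omega(\partial_\alpha)$. Then $[f\partial_2,\partial_2] = -\tfrac{f''}{2}\,\partial_2 - 3f'\,\partial_3$, and
\[
F(f\partial_2,\partial_2) = \tfrac{f}{2}\omega_2'' - \Bigl(\tfrac{f''}{2}\omega_2 + f'\omega_2' + \tfrac{f}{2}\omega_2''\Bigr) + \Bigl(\tfrac{f''}{2}\omega_2 + 3f'\omega_3\Bigr) = f'\,\bigl(3\omega_3 - \omega_2'\bigr),
\]
while $f\,F(\partial_2,\partial_2)=0$. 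Since $3\omega_3-\omega_2'$ is not identically zero for a general jet field, $F$ is not tensorial in its first argument; the hypotheses of lemma \ref{tensor_char} are not met, and the reduction to coordinate basis vectors is unjustified. (The residue survives whether you let $\omega$ carry components of order $>r$ or instead kill the $\partial_3$ part of the bracket: in the latter case one is left with $-f'\omega_2'$.) Note also the subsidiary point your setup glosses over: merely to write $\omega([X,Y])$ for $r$-vector fields you must let $\omega$ act on vectors of order up to $2r-1$, so the map $F$ is not even defined on $\mathscr{J}^r \times \mathscr{J}^r$ without a convention.

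This failure is also why the paper does not argue via tensoriality. Its proof reduces, by linearity and locality, to jet fields of the special form $\omega = u\,\text{\th}\varv$ and then expands both sides directly for that form (following Lee's proof of his Proposition 14.29), handling the bracket term through covariant composition and the symmetry of the Levi-Civita jet connection rather than through the raw operator commutator. Whatever one makes of the bookkeeping there, that route never requires the right-hand side to be $C^\infty(M)$-bilinear for arbitrary $\omega$, $X$, $Y$; your route does, and that property fails.
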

\begin{proof}
	We can follow Lee's derivation line by line, mutatis mutandis. By linearity, it suffices to consider the case $\omega = u \text{\th} \varv$ (as the statement is local). The left-hand side becomes
	\begin{align}
		\text{\th} (u \text{\th} \varv)(X,Y) &= \text{\th} u \wedge \text{\th} \varv (X,Y)
		+ \frac{\alpha!}{\alpha_1!\alpha_2!}\bigg|_{\alpha_{1,2} \ne 0} \text{\th}_{\alpha_1}u \wedge \text{\th}_{\alpha_2} \text{\th}\varv(X,Y) \nonumber \\
		&= \text{\th}u(X)\text{\th}\varv(Y) -  \text{\th}\varv(X)\text{\th}u(Y) 
		+ \nonumber \\
		&\frac{\alpha!}{\alpha_1!\alpha_2!}\bigg|_{\alpha_{1,2} \ne 0} \left( X_{\alpha_1} \partial_{\alpha_1}u Y_{\alpha_2+\beta}\partial_{\alpha_2+\beta}\varv -
		X_{\alpha_1} \partial_{\alpha_1}\varv Y_{\alpha_2+\beta}\partial_{\alpha_2+\beta}u \right)
		\nonumber \\
		&= X (uY \text{\th}\varv) - X(\varv Y \text{\th}u).
	\end{align}
	But the right-hand side is given by
	\begin{align}
		X(u\text{\th}\varv(Y)) &- Y(u\text{\th}\varv(X)) - u\text{\th}\varv(
		X \circ Y \circ - Y \circ X \circ) \nonumber \\
		&= X(uY\varv) - Y(uX\varv) - u[X,Y]\varv \nonumber \\
		&= (XuY\varv + \cdots + uXY\varv) - (YuX\varv+ \cdots +uYX\varv) - uXY\varv + uYX\varv) \nonumber \\
		&= (Xu)(Y\varv) + \cdots - (X\varv)(Yu) - \cdots \nonumber \\
		&=  X (uY \text{\th}\varv) - X(\varv Y \text{\th}u).
	\end{align}
	The last step completes the proof.		
\end{proof}

The 1-forms $\omega^\alpha_\beta$ are so named because they determine a differential relation among the elements of the dual coframe that expresses how curvature enters the local problem, known as Cartan's first structural equation. To find it out, start from lemma \ref{diff_of_1_form} and write,
\begin{align}
	\text{\th} e^\alpha(X,Y) &= X e^\alpha(Y) - Y e^\alpha(X) - e^\alpha([X,Y]) \nonumber \\
	&= X Y^\alpha - Y X^\alpha - e^\alpha(\nabla_XY - \nabla_YX) \nonumber \\
	&= X Y^\alpha - Y X^\alpha - X^\beta \frac{\beta!}{\beta_1!\beta_2!}Y^\gamma_{,\beta_1}\Gamma^\alpha_{\beta_2\gamma}
	+ Y^\beta
	\frac{\beta!}{\beta_1!\beta_2!}X^\gamma_{,\beta_1}\Gamma^\alpha_{\beta_2\gamma} \nonumber \\
	&= X^\beta \frac{\beta!}{\beta_1!\beta_2!}  Y^\gamma_{,\beta_1} \Gamma^\alpha_{\beta_2\gamma} +
	Y^\beta \frac{\beta!}{\beta_1!\beta_2!}  X^\gamma_{,\beta_1} \Gamma^\alpha_{\beta_2\gamma} \nonumber \\
	&= X (\omega^\alpha_\gamma(Y)) - Y(\omega^\alpha_\gamma(X))
\end{align}
but
\begin{equation}
	e^\alpha \wedge \omega^\beta_\alpha(X,Y) = X^\alpha \omega^\beta_\alpha(Y)
	- Y^\alpha \omega^\beta_\alpha(X).
\end{equation}
Therefore, 
\begin{equation}\label{def_connection_1-form}
	\text{\th} e^\alpha = e^\beta \wedge \omega^\alpha_\beta.
\end{equation}

Define the curvature 2-form by
\begin{equation}
	\Omega_\alpha^\beta := \frac{1}{2} R_{\gamma\delta\alpha}^\beta e^\gamma \wedge e^\delta.
\end{equation}
and compute,
\begin{align}
	R(E_\gamma,E_\delta) E_\alpha &= \nabla_\gamma \nabla_\delta E_\alpha - 
	\nabla_\delta \nabla_\gamma E_\alpha -
	\nabla_{[E_\gamma,E_\delta]} E_\alpha \nonumber \\
	&= \nabla_\gamma \omega^\beta_\alpha(E_\delta) E_\beta - 
	\nabla_\delta \omega^\beta_\alpha(E_\gamma) E_\beta -
	\nabla_{[E_\gamma,E_\delta]} E_\alpha \nonumber \\
	&= \omega^\beta_\alpha(E_\delta)\omega^\lambda_\beta(E_\gamma)E_\lambda + \cdots + (E_\gamma \omega^\beta_\alpha(E_\delta))E_\beta
	- (\gamma\leftrightarrow\delta) - \omega^\beta_\alpha([E_\gamma,E_\delta]) E_\beta \nonumber \\
	&= 2 \text{\th} \omega^\beta_\alpha(E_\gamma,E_\delta) - 2 (\omega^\lambda_\alpha \wedge \omega^\beta_\lambda)(E_\gamma,E_\delta))
\end{align}
by lemma \ref{diff_of_1_form}. The inner terms reinforce one another due to symmetry (i.e., write out $R(E_\gamma+E_\delta,E_\gamma+E_\delta)=0$ and collect like terms with $\gamma_{1,2} \ne 0$ and $\delta_{1,2} \ne 0$). Hence, we obtain Cartan's second structural equation:
\begin{equation}
	\Omega^\mu_\nu = \text{\th} \omega^\mu_\nu - \omega^\lambda_\nu \wedge \omega^\mu_\lambda = \text{\th} \omega^\mu_\nu + \omega^\mu_\lambda \wedge \omega^\lambda_\nu.
\end{equation}

Now that we have a formula for the curvature in terms of the generalized exterior derivative of the jet affine connection, it is natural to ask what happens when we differentiate once again. Here and in what follows we shall bow to terminological convention in the physics community and denote the curvature 2-form with the letter $R^\mu_\nu$.

\begin{proposition}[Bianchi's differential identity; cf. Thirring, \cite{thirring_vol_2}, 4.1.25]\label{bianchi_differential_id}
	The curvature 2-form obeys the following identity upon differentiation:
	\begin{equation}
		\text{\th} R^\mu_\nu = \omega^\mu_\lambda \wedge R^\lambda_\nu - R^\mu_\lambda \wedge \omega^\lambda_\nu.
	\end{equation}
\end{proposition}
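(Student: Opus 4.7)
The plan is to derive the identity directly from Cartan's second structural equation by applying the exterior derivative $\text{\th}$ to both sides and exploiting $\text{\th}\circ\text{\th} = 0$ together with the generalized product rule. Writing the structural equation as $R^\mu_\nu = \text{\th}\omega^\mu_\nu + \omega^\mu_\lambda \wedge \omega^\lambda_\nu$ and applying $\text{\th}$, the first term on the right dies instantly by property (3) of theorem \ref{exist_exterior_deriv}, leaving only
\begin{equation*}
\text{\th} R^\mu_\nu = \text{\th}\bigl(\omega^\mu_\lambda \wedge \omega^\lambda_\nu\bigr).
\end{equation*}

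Next I would expand this using the generalized Leibniz rule (property (2) of theorem \ref{exist_exterior_deriv}). In general the expansion produces cross-terms of the form $\text{\th}_{\alpha_1}\omega^\mu_\lambda \vee \text{\th}_{\alpha_2}\omega^\lambda_\nu$ summed over decompositions $\alpha = \alpha_1+\alpha_2$ with both parts non-zero; these are precisely what makes the higher-order differential calculus more intricate than the classical one. Here, however, both factors $\omega^\mu_\lambda$ and $\omega^\lambda_\nu$ are 1-forms, hence of odd degree, so corollary \ref{vee_cancellation_lemma} applies: the vee-products in the cross-terms are anti-commutative, and when I symmetrize (relabeling the summation variables $\alpha_1 \leftrightarrow \alpha_2$) they cancel in pairs. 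The Leibniz rule therefore collapses to its classical form, yielding
\begin{equation*}
\text{\th}\bigl(\omega^\mu_\lambda \wedge \omega^\lambda_\nu\bigr) = \text{\th}\omega^\mu_\lambda \wedge \omega^\lambda_\nu - \omega^\mu_\lambda \wedge \text{\th}\omega^\lambda_\nu.
\end{equation*}

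The final step is to substitute Cartan's second structural equation back in, replacing each $\text{\th}\omega$ by $R - \omega \wedge \omega$. This produces four terms: two involving the curvature 2-form (which survive) and two triple-wedge terms $\omega^\mu_\sigma \wedge \omega^\sigma_\lambda \wedge \omega^\lambda_\nu$ coming with opposite signs after a simple relabeling of dummy indices $\sigma \leftrightarrow \lambda$. Hence the triple-wedge contributions cancel, leaving exactly the commutator expression in $\omega$ and $R$ asserted by the proposition.

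The main obstacle is the justification of the cancellation of the higher-order cross-terms. This is the single place where the extension to infinitesimals of arbitrary order could in principle fail to reproduce a classical-looking identity; everything else is formally identical to the standard first-order derivation. Fortunately, corollary \ref{vee_cancellation_lemma} has been established precisely for this configuration (odd-odd), and combined with the relabeling of summation multi-indices in each $\text{\th}_\alpha$, it yields the needed cancellation without any appeal to normal coordinates or to a local trivialization of the jet bundle.
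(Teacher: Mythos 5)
Your proposal is correct and is essentially the paper's own proof: both differentiate Cartan's second structural equation, use $\text{\th}\circ\text{\th}=0$, dispose of the higher-order cross-terms via the vee anticommutativity for products of 1-forms (the paper invokes inline the same relative-minus-sign argument that you cite as corollary \ref{vee_cancellation_lemma}), and cancel the two triple-wedge terms by relabeling dummy indices. One cosmetic remark: like the paper's own computation, your derivation actually lands on $R^\mu_\lambda\wedge\omega^\lambda_\nu-\omega^\mu_\lambda\wedge R^\lambda_\nu$, which is the negative of the sign order displayed in the proposition's statement, so that discrepancy is inherited from the paper rather than introduced by you.
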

\begin{proof}
	In view of Cartan's second structural equation, the derivation proceeds directly as in Thirring:
	\begin{align}
		\text{\th} \left( \text{\th} \omega^\mu_\nu + \omega^\mu_\lambda \wedge \omega^\lambda_\nu \right) &= \text{\th} \omega^\mu_\lambda \wedge \omega^\lambda_\nu - \omega^\mu_\lambda \wedge \text{\th} \omega^\lambda_\nu + \cdots \nonumber \\
		&= R^\mu_\lambda \wedge \omega^\lambda_\nu - \omega^\mu_\lambda \wedge R^\lambda_\nu - 
		\omega^\mu_\sigma \wedge \omega^\sigma_\lambda \wedge \omega^\lambda_\nu +
		\omega^\mu_\lambda \wedge \omega^\lambda_\sigma \wedge \omega^\sigma_\nu + \cdots \nonumber \\
		&= R^\mu_\lambda \wedge \omega^\lambda_\nu - \omega^\mu_\lambda \wedge R^\lambda_\nu.
	\end{align}
	The cross-terms drop out as usual due to the relative minus sign in the vee expansions of $\text{\th} \left( \omega^\mu_\lambda \wedge \omega^\lambda_\nu \right)$ and $\text{\th} \left( \omega^\lambda_\nu \wedge \omega^\mu_\lambda \right)$.
\end{proof}

\begin{lemma}[Cf. Thirring, \cite{thirring_vol_2}, 4.1.23,b)]\label{noncovariant_curvature_vanishing_lemma}
	$\Omega^\alpha_\beta \wedge e^\beta = 0$.
\end{lemma}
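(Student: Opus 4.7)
The plan is to derive the identity as a consequence of Cartan's first structural equation (equation~(\ref{def_connection_1-form})) together with $\text{\th}\circ\text{\th}=0$ and the nilpotency of $\text{\th}$, in complete analogy to how the algebraic (first) Bianchi identity is obtained in the ordinary Cartan formalism. Specifically, I would apply $\text{\th}$ to both sides of
\begin{equation*}
\text{\th} e^\alpha \;=\; e^\beta \wedge \omega^\alpha_\beta
\end{equation*}
and read off the desired relation from the vanishing of the left-hand side under $\text{\th}^2=0$ (theorem~\ref{exist_exterior_deriv}, item~(3)).

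The first step is to expand $\text{\th}(e^\beta \wedge \omega^\alpha_\beta)$ using the generalized Leibniz rule of theorem~\ref{exist_exterior_deriv}. Since both $e^\beta$ and $\omega^\alpha_\beta$ are 1-forms (odd degree), corollary~\ref{vee_cancellation_lemma} applies with $p=q=1$, and the troublesome cross-terms involving the vee-product $\sum_{\alpha_{1,2}\neq 0} \text{\th}_{\alpha_1}e^\beta \vee \text{\th}_{\alpha_2}\omega^\alpha_\beta$ drop out. This leaves the cleaner expansion
\begin{equation*}
0 \;=\; \text{\th}^2 e^\alpha \;=\; \text{\th} e^\beta \wedge \omega^\alpha_\beta \;-\; e^\beta \wedge \text{\th}\omega^\alpha_\beta.
\end{equation*}

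Next, I would substitute Cartan's first structural equation into the first term, $\text{\th} e^\beta = e^\gamma \wedge \omega^\beta_\gamma$, and Cartan's second structural equation into the second, $\text{\th}\omega^\alpha_\beta = \Omega^\alpha_\beta - \omega^\alpha_\lambda \wedge \omega^\lambda_\beta$. After a relabeling of dummy indices ($\gamma\!\to\!\beta$, $\beta\!\to\!\lambda$) and using anti-commutativity of the wedge product of two 1-forms to write $\omega^\lambda_\beta \wedge \omega^\alpha_\lambda = -\omega^\alpha_\lambda \wedge \omega^\lambda_\beta$, the two cubic terms in the connection forms cancel exactly. Finally, since a 2-form and a 1-form commute under wedge, one obtains $\Omega^\alpha_\beta \wedge e^\beta = -e^\beta \wedge \Omega^\alpha_\beta = 0$.

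The only point requiring any delicacy is the first step: one must invoke corollary~\ref{vee_cancellation_lemma} to be sure that the extra higher-order vee-terms in the Leibniz rule do not spoil the argument. Once that is granted, the remainder is purely formal index juggling identical to the first-order case, and the new higher-tangent content of the formalism enters only through the fact that $\text{\th}$, $\omega^\alpha_\beta$, and $\Omega^\alpha_\beta$ now carry multi-indices of arbitrary order.
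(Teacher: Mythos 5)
Your proof is correct and takes essentially the same route as the paper's: apply $\text{\th}$ to Cartan's first structural equation, invoke $\text{\th}\circ\text{\th}=0$, eliminate the vee cross-terms, and recognize $-e^\beta\wedge\Omega^\alpha_\beta$ in what remains after substituting the structural equations. The only difference is bookkeeping: you cite corollary~\ref{vee_cancellation_lemma} to kill the cross-terms, whereas the paper rederives that same cancellation in-line (expanding $-\text{\th}\left(\omega^\alpha_\beta\wedge e^\beta\right)$ and playing the symmetry of the $\gamma_{1,2}$ summation against the exchange behavior of $\vee$), which is just the corollary's argument specialized to a pair of 1-forms.
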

\begin{proof}
	Expand
	\begin{align}
		0 &= \text{\th} \text{\th} e^\alpha = \text{\th} \left( e^\beta \wedge \omega^\alpha_\beta \right) \nonumber \\
		&= \text{\th} e^\beta \wedge \omega^\alpha_\beta - e^\beta \wedge \text{\th} \omega^\alpha_\beta - \frac{\gamma!}{\gamma_1!\gamma_2!}\bigg|_{\gamma_{1,2} \ne 0} \text{\th}_{\gamma_1} e^\beta \vee \text{\th}_{\gamma_2} \omega^\alpha_\beta.  
	\end{align}
	The leading two terms give
	\begin{equation}
		\text{\th} e^\beta \wedge \omega^\alpha_\beta - e^\beta \wedge \text{\th} \omega^\alpha_\beta = e^\gamma \wedge \omega^\beta_\gamma \wedge \omega^\alpha_\beta - e^\beta \wedge \text{\th} \omega^\alpha_\beta
		= e^\beta \wedge \left( \omega^\lambda_\beta \wedge \omega^\alpha_\lambda
		- \text{\th} \omega^\alpha_\beta \right) = - e^\beta \wedge \Omega^\alpha_\beta.
	\end{equation}
	As for the remainder, it cancels due to the combination of antisymmetry under exchange of the two factors but symmetry under exchange of $\gamma_1$ and $\gamma_2$. For we can write,
	\begin{align}
		\text{\th} \left( e^\beta \wedge \omega^\alpha_\beta \right) &= -\text{\th} \left( \omega^\alpha_\beta \wedge e^\beta \right) \nonumber \\ 
		&= \cdots + \frac{\gamma!}{\gamma_1!\gamma_2!}\bigg|_{\gamma_{1,2} \ne 0} \text{\th}_{\gamma_1} \omega^\alpha_\beta \vee \text{\th}_{\gamma_1} e^\beta \nonumber \\
		&= \cdots + \frac{\gamma!}{\gamma_1!\gamma_2!}\bigg|_{\gamma_{1,2} \ne 0} \text{\th}_{\gamma_2} e^\beta \vee \text{\th}_{\gamma_1} \omega^\alpha_\beta \nonumber \\
		&= \cdots + \frac{\gamma!}{\gamma_1!\gamma_2!}\bigg|_{\gamma_{1,2} \ne 0} \text{\th}_{\gamma_1} e^\beta \vee \text{\th}_{\gamma_2} \omega^\alpha_\beta 
	\end{align}
	whereas the leading two terms are of course left unchanged:
	\begin{align}
		-\text{\th} \left( \omega^\alpha_\beta \wedge e^\beta \right) &=
		- \text{\th} \omega^\alpha_\beta \wedge e^\beta + \omega^\alpha_\beta \wedge \text{\th} e^\beta + \cdots \nonumber \\
		&= - e^\beta \wedge \text{\th} \omega^\alpha_\beta + \omega^\alpha_\beta \wedge e^\gamma \wedge \omega^\beta_\gamma + \cdots \nonumber \\
		&= - e^\beta \wedge \Omega^\alpha_\beta + \cdots.
	\end{align}
	Therefore, the terms in $\gamma_{1,2} \ne 0$ drop out, leaving the required result.
\end{proof}

\subsection{Stepwise Definition of a Generalized Hodge-* Operator}\label{def_hodge_star}

The Hodge-* operator as conventionally defined makes sense only in the exterior algebra over a finite-dimensional space, for in a suitable orthonormal basis it consists in complementation with respect to a top-degree form. If we wish, however, to include infinitesimals of indefinitely high order, then there will be no top-degree form. Therefore, let us attend to what the Hodge-* really does in differential geometry before seeking to generalize it from $\Omega^*(M;d)$ to $\Omega^*(M;\text{\th})$. The problem is to understand where the higher infinitesimals fit in, in that the jets in each stalk do not form a module of finite type over $C^\infty(M)$ anymore.

Let $\pi_{r,s}, 1 \le s < r$ denote the canonical projection from $J^{*r}(M)$ onto $J^{*s}(M)$. Proceed inductively as follows: first define an orthonormal set in $J^{*1}$ as usual, with respect to $\hat{g}_1 := \pi_{\infty,1}^* g$. But in a given coordinate system now $J^{*1} \subset J^{*2}$ non-canonically by extending the 1-jets by zero (i.e., assigning a zero coefficient to any quadratic differential). Therefore, define a further orthonormal set in the orthogonal complement of $J^{*1}$ in $J^{*2}$ (relative to $\hat{g}_2 := \pi_{\infty,2}^* g$). Likewise, for each $s \ge 1$ we have the orthogonal complement of $J^{*s}$ in $J^{*(s+1)}$. Thus, we obtain a decomposition of the entire space of jets up to indefinite order into a direct sum of orthogonal subspaces $Q_{1,\ldots,r}$, each of which is finite-dimensional. Hence, $J^{*\infty} = \bigoplus_{s=1}^\infty Q_s$.
For any given jet in $j \in J^{*\infty}$ and any given $s \ge 1$, its projection onto $Q_s$, namely $\pi_{\infty,s}j$, is uniquely defined. Let $*_s:Q_s \rightarrow Q_s$ be the Hodge-* operator defined as usual within each orthogonal component. Then we can extend these by linear combination to a well-defined operation $* := *_1 \oplus *_2 \oplus \cdots$, which continues to make sense in the inductive limit of jets of indefinitely high order. Moreover, the generalized Hodge-* so defined extends by linearity in the obvious way to the algebra of alternating forms, $\Omega^*(M;\text{\th})$ even though the latter does not admit any top-degree form.

\begin{lemma}
	The generalized Hodge-*, viewed as an operator in $\Omega^*(M;\text{\th})$, is independent of the choices made in constructing it.
\end{lemma}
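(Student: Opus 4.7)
The plan is to handle separately the two layers of arbitrariness: first, the choice of orthonormal basis within each $Q_s$; second, the coordinate-dependent inclusion $J^{*s} \hookrightarrow J^{*(s+1)}$ used to initiate the inductive orthogonalization. Both will be shown harmless, by different arguments.

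For the basis choice, I would invoke the classical characterization of the Hodge star on a finite-dimensional oriented inner product space: $*_s$ acting on $\Lambda^*(Q_s)$ is uniquely determined by the identity $\alpha \wedge *_s \beta = \langle \alpha, \beta\rangle_s \,\mathrm{vol}_s$ for forms of equal degree, where $\mathrm{vol}_s$ is the canonical volume element built from the inner product and the orientation. Since no reference to a particular orthonormal basis enters, any two orthonormal bases of $Q_s$ compatible with the fixed orientation produce the same operator $*_s$. An orientation on each $Q_s$ may be pinned down once and for all, e.g.\ by pullback from an ambient orientation on $M$, so this first layer of arbitrariness is eliminated outright.

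For the coordinate-dependent inclusion, the key observation is that the kernels $\ker \pi_{r,s} \subset J^{*r}$ of the canonical jet projections are intrinsic subspaces, a fact that can be read off directly from the strictly lower-triangular block structure of the transformation law \eqref{jet_transf_law}: a jet whose components of order $\le s$ all vanish in one coordinate frame has this property in every frame. Although the literal image of $J^{*s}$ under the ``extend by zero'' inclusion depends on the chart, I would show that its metric-orthogonal complement in $J^{*(s+1)}$ is canonically isometric to $\ker \pi_{s+1,s}$ equipped with the inner product induced from $g$, via the restriction of $\pi_{s+1,s}$ combined with its unique metric-orthogonal section. Transporting the Hodge star along this canonical isometry expresses each $*_s$ in terms of canonical data alone: the metric $g$, the intrinsic subspace $\ker \pi_{s+1,s}$, and the fixed orientation. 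Assembling the pieces into $* = *_1 \oplus *_2 \oplus \cdots$ and extending to the full algebra $\Omega^*(M;\text{\th})$ by linearity then yields an operator manifestly depending only on the metric and the orientations of the $Q_s$.

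The main obstacle will be the second step: demonstrating that the two candidate subspaces---the coordinate-defined orthogonal complement $Q_{s+1}$, and the intrinsic $\ker \pi_{s+1,s}$---are not merely isomorphic as inner-product spaces but canonically isometric in a way that carries the Hodge star. This reduces to a direct verification using the quotient inner product and the orthogonality of the complement, and must be propagated consistently up the inductive tower so as to pass cleanly to the inverse limit $J^{*\infty}$. An attractive alternative that would avoid the tower altogether is to bypass the inductive construction and simply take $Q_s := \ker \pi_{\infty,s-1} \cap (\ker \pi_{\infty,s})^{\perp}$ as the definition, after which intrinsicality is immediate and one need only verify that this canonical decomposition reproduces the inductive one given in the text.
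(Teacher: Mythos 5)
Your first paragraph is, in substance, the paper's entire proof: the paper likewise disposes of the intra-$Q_s$ basis choice by a dimension count ($Q_s$ and $Q_s'$ are inner-product spaces of the same dimension $m_s$) plus the finite-dimensional fact that the Hodge star depends only on the inner product and an orientation. You are more careful than the paper about pinning down orientations, which is to your credit. Where you part ways with the paper is the second layer of arbitrariness---the coordinate-dependent inclusions $J^{*s}\hookrightarrow J^{*(s+1)}$---which you rightly identify as the real issue; the paper's argument (isomorphic because equidimensional, hence ``$*_s=*'_s$'') never engages with the fact that the subspaces $Q_s$ themselves move when the chart changes.

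It is exactly there that your proposal has a genuine gap. The claimed canonical isometry between the coordinate-defined complement $Q_{s+1}=W^\perp$ (with $W$ the extend-by-zero image of $J^{*s}$) and the intrinsic kernel $\ker\pi_{s+1,s}$ with its restricted inner product fails precisely when the metric couples jets of different orders, which is the generic case this paper is about. Take $n=1$, $r=2$, and let $\hat g_2$ have matrix $\begin{pmatrix}1&\epsilon\\ \epsilon&1\end{pmatrix}$ in the basis $(d^x,d^{xx})$, so $W=\mathrm{span}(d^x)$ and $K=\ker\pi_{2,1}=\mathrm{span}(d^{xx})$. Then $W^\perp=\mathrm{span}(-\epsilon\,d^x+d^{xx})$, a vector of norm $\sqrt{1-\epsilon^2}$, while its image in $K$ under either natural map (projection along $W$, or $g$-orthogonal projection onto $K$) has norm $1$, resp.\ $1-\epsilon^2$: no natural map is an isometry for the restricted metrics, and the quotient metric that would repair this is defined through $W$, hence is itself chart-dependent. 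More fundamentally, even a genuine isometry $Q_s\cong\ker\pi_{s+1,s}$ would not finish the proof: the assembled operator $*=*_1\oplus *_2\oplus\cdots$ on $\Omega^*(M;\text{\th})$ depends on where the summands $Q_s$ actually sit inside $J^{*\infty}$, not merely on their isometry classes, and the summands do move under a non-affine change of chart (by (\ref{jet_transf_law}), $d^y$ acquires a $d^{xx}$-component, so $W$, and with it $W^\perp$, changes); two distinct orthogonal decompositions of the same inner-product space already give different blockwise stars on $1$-forms. Your fallback, defining $Q_s:=\ker\pi_{\infty,s-1}\cap(\ker\pi_{\infty,s})^\perp$ outright, is intrinsic and would yield a well-defined operator---arguably the right repair---but it provably does \emph{not} ``reproduce the inductive one given in the text'': in the example above the intrinsic decomposition is $K^\perp\oplus K$ while the text's is $W\oplus W^\perp$, and these coincide only when $\epsilon=0$. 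So the verification you defer to the end cannot be carried out; what you construct is a different (better-founded) operator, not a proof that the paper's construction is independent of its choices.
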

\begin{proof}
	Consider two direct-sum decompositions corresponding to different choices of orthonormal bases in $J^{*1}$ and in the sequence of complements $J^{*(s+1)}/J^{*s}$, $s \ge 1$:
	\begin{equation}
		J^{*\infty} = \lim_{\leftarrow}~ \bigoplus_{s=1}^n Q_s = \lim_{\leftarrow}~ \bigoplus_{s=1}^n Q^\prime_s.
	\end{equation}
	The first summand on either side is the same, namely, $J^{*1} = Q_1 = Q^\prime_1$. For $s>1$ and considering the bundles stalkwise over each point in space-time, each $Q_s$ resp. $Q^\prime_s$ is spanned by $m_s$ linearly independent jets of order $s$ and therefore $Q_s$ and $Q^\prime_s$ are isomorphic as Hilbert spaces with respect to the inner product $\langle \alpha, \beta \rangle := *_s \left( \alpha \wedge *_s \beta \right)$ in as much as they admit orthonormal bases with the same cardinality.
	
	Now, in any finite-dimensional inner-product space the Hodge-*, though initially defined by means of a choice of orthonormal frame, turns out to be independent of this choice. Therefore, under the isomorphism we have that $*_s = *^\prime_s$ for every $s \ge 1$; hence $* = *^\prime$ as well.
\end{proof}

In order to perform computations, it will be useful to have a formula by which to express the action of the Hodge-* operator on $p$-forms (which then extends, as usual, to all of $\Omega^*$ by linearity). The point is that relative to the orthonormal basis in terms of which the Hodge-* has been defined, the metric becomes block-diagonal:
$g = \hat{g}_1 \oplus \hat{g}_2 \oplus \cdots$, where $\hat{g}_s$ denotes the restriction of $g$ to $g|_{Q_s}$ (i.e., when viewed as an operator $\mathscr{J}^\infty \rightarrow \mathscr{J}^{*\infty}$). If we further adopt Thirring's compressed notation $e^{\alpha_1\cdots\alpha_p} := e^{\alpha_1} \wedge \cdots \wedge e^{\alpha_p}$, then his formula 1.2.16 becomes for us,
\begin{equation}
	*_s e^{\alpha_1\cdots\alpha_p} = g^{\alpha_1\beta_!}\cdots g^{\alpha_p\beta_p} e^{\beta_{p+1}\cdots \beta_{m_s}} \varepsilon_{\beta_1\cdots\beta_{m_s}} \frac{\sqrt{|\hat{g}_s|}}{(m_s-p)!},
\end{equation}
where $\varepsilon$ denotes the totally antisymmetric symbol and $m_s := \dim Q_s = \dim J^s - \dim J^{s-1}$ and $|\alpha_{1,\ldots,p}|=s$. An arbitrary $p$-form $\omega$ of compact support will be reducible in the sense of uniform approximation to a linear combination of products of factors
\begin{equation}
	\omega = \omega_1 \wedge \omega_2 \wedge \cdots,
\end{equation} 
the expression on the right-hand side being open-ended to the left and to the right. Here, $\omega_s = e^{\alpha_{s_1}} \wedge \cdots \wedge e^{\alpha_{s_q}}$, $q \le p$ and the sum over all such $q$'s = $p$. The case may arise that, on the left, there may be a finite sequence with $q_1=q_2=\cdots=0$, corresponding to the zero-form $1$, and the expression will be terminated on the right by an infinite sequence of $q_s=0$, $s \ge s_0$. To be precise about what we are doing, we are provisionally limiting consideration to jets of indefinitely high but not infinite order, denote these by $\Omega_0^*(M) \subset \Omega^*(M)$.

The action of the Hodge-* operator on decomposable pieces may now be written as,
\begin{equation}
	* \left( \omega_1 \wedge \omega_2 \wedge \cdots \right) = \left( *_1 \omega_1 \right) \wedge \left( *_2 \omega_2 \right) \wedge \cdots,
\end{equation}
from which we see that for a proper duality to hold, we need to adjoin to the $\Omega_0^*(M)$ we start out with another part, $\hat{\Omega}_0^*(M)$ consisting, in this basis, of linear combinations of infinite products of the basis elements $e^\alpha$ in lexicographical order, with only finitely many having $q_s<m_s$. Then
\begin{equation}
	*: \Omega_0^*(M) \rightarrow \hat{\Omega}_0^*(M)
\end{equation}
and vice versa
\begin{equation}
	*: \hat{\Omega}_0^*(M) \rightarrow \Omega_0^*(M).
\end{equation}

\begin{lemma}
	(1) $\hat{\Omega}_0^*(M) \cong \Omega_0^*(M)$ and (2) the action of the Hodge-* as described on $\Omega_0^*(M)\oplus\hat{\Omega}_0^*(M)$ is well defined; the closure of the latter subspace is all of $\Omega^*(M)$ and the Hodge-* extends uniquely to its closure.
\end{lemma}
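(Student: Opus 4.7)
The plan is to exploit the block-diagonal structure of the metric induced by the decomposition $J^{*\infty} = \bigoplus_{s \ge 1} Q_s$ and reduce both claims to statements about the componentwise Hodge operators $*_s$ acting within each finite-dimensional factor. First, for (1), I would define the natural map $\Phi: \Omega_0^*(M) \rightarrow \hat{\Omega}_0^*(M)$ on decomposables by $\Phi(\omega_1 \wedge \omega_2 \wedge \cdots) := (*_1 \omega_1) \wedge (*_2 \omega_2) \wedge \cdots$ and extend by linearity. Since $*_s$ sends a $q_s$-form in $Q_s$ to an $(m_s - q_s)$-form, the condition ``$q_s = 0$ for all but finitely many $s$'' (defining $\Omega_0^*$) maps precisely to the condition ``$m_s - q_s = m_s$ for all but finitely many $s$'', i.e.\ ``$q_s < m_s$ for only finitely many $s$'' (defining $\hat{\Omega}_0^*$). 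The inverse is furnished by the analogous map built from the $*_s$ in the other direction, together with $*_s *_s = \pm \mathrm{id}$ on each $Q_s$ (the sign being the standard one, determined by $\dim Q_s$ and the signature of $\hat{g}_s$). Linear independence of decomposable monomials in distinct lexicographic orderings of the $e^\alpha$'s ensures that $\Phi$ is a well-defined bijection on the linear spans.

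For well-definedness of $*$ on $\Omega_0^*(M) \oplus \hat{\Omega}_0^*(M)$, the essential point is that the prescribed action is manifestly independent of how a given form is written as a linear combination of decomposable monomials, because each $*_s$ is already known to be a well-defined linear operator on the finite-dimensional exterior algebra $\bigwedge^* Q_s$ (this is the classical Hodge-$*$, shown above to be independent of orthonormal-frame choices within $Q_s$). What one must check is that the two prescriptions --- starting from $\Omega_0^*$ and from $\hat{\Omega}_0^*$ --- agree on the intersection; but the intersection consists only of the decomposables satisfying both ``$q_s = 0$ cofinitely'' and ``$q_s = m_s$ cofinitely'', which in the nontrivial case forces $Q_s = 0$ cofinitely, and here both definitions reduce to the ordinary finite-dimensional Hodge-$*$.

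For the closure and extension, I would equip $\Omega^*(M)$ with the natural inverse-limit topology inherited from the family of truncations $\Omega^*(M;\text{\th})_{\le r} := \Gamma(\bigwedge^* J^{r*}M)$ together with the canonical projections $\pi_{r',r}^*$ for $r' > r$ (mirroring the treatment of $\mathscr{J}^{\infty *}$ in \S2.2). Convergence of a sequence is then convergence of each truncation in its own finite-rank Fréchet topology. The key observation is that every $\omega \in \Omega^*(M)$ is determined by its projections onto the $Q_s$-factors, and by truncating to $s \le r$ and padding with top-degree forms in the remaining factors (or with zero, as appropriate) one can approximate $\omega$ by elements of $\Omega_0^*(M) \oplus \hat{\Omega}_0^*(M)$. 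The main obstacle, which I anticipate will require the most care, is to verify that $*$ is continuous in this topology: because $*_s$ mixes $q_s$ with $m_s - q_s$, a finite truncation in the domain does not in general correspond to a finite truncation in the range, and one must instead observe that $*$ decomposes as an infinite direct sum of bounded operators $*_s$ on the finite-dimensional factors $\bigwedge^* Q_s$, each of which is an isometry with respect to the natural inner product. Continuity and uniqueness of the extension then follow from the standard density argument: if $\omega_n \to \omega$ in $\Omega^*(M)$ then $*\omega_n$ is Cauchy factor-by-factor, its limit defines $*\omega$, and independence of the approximating sequence is forced by linearity together with the isometry property within each $Q_s$.
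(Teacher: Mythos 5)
The paper states this lemma without proof, so your sketch has to stand on its own. Part (1) and the well-definedness half of (2) are sound: the factorwise map $\Phi$ (which is just $*$ itself restricted to $\Omega_0^*(M)$) carries the wedge-monomial basis of $\Omega_0^*(M)$ bijectively, up to signs, onto that of $\hat{\Omega}_0^*(M)$, since each $*_s$ is a linear bijection of $\bigwedge^* Q_s$ exchanging degree $q_s$ with degree $m_s - q_s$; and your observation that the two conditions ``$q_s=0$ cofinitely'' and ``$q_s=m_s$ cofinitely'' are jointly unsatisfiable (because every $Q_s \neq 0$) correctly shows the sum is direct, so the two prescriptions for $*$ cannot conflict.

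The genuine gap is in the closure-and-extension step: the inverse-limit topology you set up is incompatible with the rest of your own argument. The connecting maps induced on exterior algebras by the canonical projections $\pi_{r',r}$ annihilate every monomial with more than $\dim J^{r*}_pM$ factors, simply because $\bigwedge^k J^{r*}_pM = 0$ once $k$ exceeds that dimension; in particular every infinite product dies at every finite level. Hence all of $\hat{\Omega}_0^*(M)$ --- and with it the image $*[\Omega_0^*(M)]$ --- is indistinguishable from $0$ in your topology: the direct sum is not Hausdorff, ``density'' of $\Omega_0^*(M)\oplus\hat{\Omega}_0^*(M)$ becomes vacuous, and a map that exchanges the two summands cannot have a unique continuous extension, which is exactly what the lemma asserts. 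Your closing appeal to ``Cauchy factor-by-factor'' does not repair this, because factorwise convergence and norm convergence genuinely differ: the sequence $\mathrm{top}_1\wedge\cdots\wedge\mathrm{top}_n\wedge 1\wedge 1\wedge\cdots$ converges factorwise to an element of $\hat{\Omega}_0^*(M)$ yet is not norm-Cauchy, successive differences having norm $\sqrt{2}$. The correct mechanism is the one you mention in passing but do not actually use: complete $\Omega_0^*(M)\oplus\hat{\Omega}_0^*(M)$ with respect to the inner product $\langle\alpha,\beta\rangle = *\left(\alpha\wedge *\beta\right)$ already used in the paper's independence lemma, in which the wedge monomials in the $e^\alpha$ (finite and cofinite alike) form an orthonormal set. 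Since $*$ permutes this orthonormal set up to signs, it is an isometry of a dense subspace, and the unique extension to the closure follows from the standard bounded-linear-transformation argument; the assertion that ``the closure is all of $\Omega^*(M)$'' is then read as identifying $\Omega^*(M)$ with that completion. With that substitution of topology your sketch goes through; without it, the extension step fails.
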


As we shall see, the usual formulae involving the Hodge-* extended to the present context of jets of indefinitely high order go through unproblematically as long as the dependence is linear.

Owing to theorem I.4.3, differential forms in $\Omega^*(M)$ can be arranged into a cochain complex, viz., a resolution of the sheaf of smooth functions that is locally, or stalkwise, exact. It is natural, in this context, to ask about the global problem concerning the generalized de Rham cohomology groups, defined along the usual lines as the quotient of the space of closed forms by the subspace of exact forms: $H^p_{\mathrm{dR}}(M;\text{\th}):=Z^p(M;\text{\th})/B^p(M;\text{\th})$, $p \ge 0$. The vanishing of the generalized de Rham cohomology in the case of contractible manifolds forms the subject of the following result:

\begin{theorem}[Poincar{\'e} Lemma; cf. Lee, \cite{lee_smooth_manif}, Theorem 17.14]\label{poincare_lemma}
	If $O \subset \vvmathbb{R}^n$ denotes any star-shaped open subset in Euclidean space, then $H^p_\mathrm{dR}(O) = 0$ for $p \ge 1$.
\end{theorem}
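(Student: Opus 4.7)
My plan is to adapt the classical chain-homotopy proof (cf. Lee, \cite{lee_smooth_manif}, Theorem 17.14) to the generalized setting. Assume, without loss of generality, that $O$ is star-shaped with respect to the origin, and let $H : [0,1] \times O \to O$ be the smooth retraction $H(t,x) = tx$. Writing $H_t := H(t,\cdot)$, lemma \ref{pullback_on_forms} yields the key scaling
\begin{equation*}
	H_t^* d^\alpha = t^{|\alpha|} d^\alpha, \qquad H_t^* \left( f(x) d^{\alpha_1} \wedge \cdots \wedge d^{\alpha_p} \right) = t^{\|A\|} f(tx)\, d^{\alpha_1} \wedge \cdots \wedge d^{\alpha_p},
\end{equation*}
where $A = (\alpha_1,\ldots,\alpha_p)$ is a grand multi-index and $\|A\| := |\alpha_1| + \cdots + |\alpha_p|$. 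Since $H_1 = \mathrm{id}_O$ and $H_0$ is the constant map to $0$, we have $H_1^* \omega = \omega$ and $H_0^* \omega = 0$ whenever $p \ge 1$; the idea is to recover $\omega$ as the $t$-integral of $\frac{d}{dt} H_t^* \omega$ and to reorganize this integral into a chain-homotopy identity $\omega = \text{\th}(K\omega) + K(\text{\th}\omega)$.

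I would construct the homotopy operator $K : \Omega^p(O) \to \Omega^{p-1}(O)$ as follows. Naive interior multiplication by the Euler 1-vector field $E = x^i \partial_i$ fails, because $d^\alpha(E) = 0$ whenever $|\alpha| \ge 2$, so such an operator would annihilate any form built from higher-order differentials. In its place I would take the $r$-prolongation $\mathrm{pr}_r E$, passing to the direct limit as $r \to \infty$, which is the higher-order vector field in $\mathscr{J}^\infty(O)$ whose associated flow is precisely $H_t$ (after the reparametrization $t = e^s$). On a monomial form $\omega = f(x)\, d^{\alpha_1} \wedge \cdots \wedge d^{\alpha_p}$, set
\begin{equation*}
	K \omega := \sum_{j=1}^p (-1)^{j-1} \left( \int_0^1 t^{\|A\|-1} f(tx)\, dt \right) d^{\alpha_j}(\mathrm{pr}_\infty E) \cdot d^{\alpha_1} \wedge \cdots \wedge \widehat{d^{\alpha_j}} \wedge \cdots \wedge d^{\alpha_p},
\end{equation*}
and extend by linearity. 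The weight $t^{\|A\|-1}$ is dictated by the scaling $H_t^* d^A = t^{\|A\|} d^A$, so that the $t$-exponents produced by $\text{\th} K \omega$ and $K \text{\th} \omega$ will align upon differentiation under the integral sign.

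The core of the argument is to verify the chain-homotopy identity $\text{\th} K + K \text{\th} = \mathrm{id}$ on $\Omega^p(O)$ for $p \ge 1$; given this, any closed $\omega$ satisfies $\omega = \text{\th}(K\omega)$ and is therefore exact. The verification is to proceed by writing
\begin{equation*}
	\omega = H_1^*\omega - H_0^*\omega = \int_0^1 \frac{d}{dt} H_t^*\omega\, dt,
\end{equation*}
computing the integrand coordinate-wise via the scaling identity, and invoking naturality of $\text{\th}$ under pullback (proposition \ref{properties_exterior_deriv}(4)) to commute $\text{\th}$ past $H_t^*$ and to identify the resulting expression with $\text{\th}(K\omega) + K(\text{\th}\omega)$.

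The principal obstacle will be controlling the cross-terms of the generalized Leibniz rule from theorem \ref{exist_exterior_deriv}. When $\text{\th}$ acts on $K\omega$, each derivative $\partial_\beta$ with $|\beta| \ge 2$ distributes over the product of an integrated polynomial coefficient and an interior-contracted wedge product, producing $\vee$-contributions with no counterpart in the first-order situation. For the identity to close, these must cancel precisely against analogous $\vee$-contributions arising from the higher-order components of $\mathrm{pr}_\infty E$ that enter $K \text{\th} \omega$. I expect the cancellation to hinge on a symmetrization argument combining the (anti)commutativity of $\vee$ recorded in lemma \ref{commutativity_of_vee} with the multinomial coefficients $\alpha!/(\alpha_1!\alpha_2!)$ appearing in the Leibniz expansion; making this match up uniformly across all orders $|\alpha|$ simultaneously --- including orders higher than any originally present in $\omega$ --- is the principal technical difficulty and may well require refining the definition of $K$ with additional counter-terms.
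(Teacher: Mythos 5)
Your strategy---a chain homotopy built from the scaling map $H_t(x)=tx$---is never actually completed: the identity $\text{\th}K + K\text{\th} = \mathrm{id}$ \emph{is} the proof, and you defer exactly that step, conceding it ``may well require refining the definition of $K$ with additional counter-terms.'' In fact no counter-terms can save it, because no operator $K:\Omega^1(O)\rightarrow\Omega^0(O)$ of any kind can satisfy the identity. Take $O=\vvmathbb{R}$ and $\omega = c\,d^{xx}$ with $c\neq 0$ constant. Then $\text{\th}\omega=0$ (its coefficient is constant), so the identity would demand $\text{\th}(K\omega)=\omega$ for the smooth function $g:=K\omega$; but $\text{\th}g=(\partial_x g)\,d^x+\tfrac{1}{2}(\partial_x^2 g)\,d^{xx}+\cdots$, and vanishing of the $d^x$-component forces $\partial_x g\equiv 0$, hence $\tfrac{1}{2}\partial_x^2 g\equiv 0\neq c$ in the $d^{xx}$-component. (With your specific $K$: setting the formal parameter of the prolongation to unity, $d^{xx}(\mathrm{pr}_\infty E)=x^2$, so $K\omega=\tfrac{c}{2}x^2$ and $\text{\th}(K\omega)=cx\,d^x+\tfrac{c}{2}\,d^{xx}\neq\omega$.) The obstruction is therefore structural, not combinatorial: constant-coefficient higher differentials are closed but never lie in the image of $\text{\th}$ on functions, so the $\vee$ cross-terms you flag are not the real issue. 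Relatedly, your plan to reorganize $\int_0^1\frac{d}{dt}H_t^*\omega\,dt$ into $\text{\th}(K\omega)+K(\text{\th}\omega)$ is an implicit appeal to Cartan's magic formula $\mathscr{L}_X=i_X\,d+d\,i_X$, and the paper states explicitly, in its own proof, that no such formula---indeed no Lie derivative of higher-order vector fields---is available in this theory.

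The paper's proof is structured to sidestep precisely this point, and the differences are not cosmetic. It does not build a homotopy operator on $O$ at all, but on the cylinder $M\times[0,1]$: it takes the higher-order vector field $S=0\oplus(\partial_s+\partial_{ss}+\cdots)$ in the time direction, defines $h\omega:=\int_{[0,1]}i_t^*(S\lrcorner\omega)\,\theta^t$---where the integration is the higher-order integral of {\S}\ref{quadrature_on_real_line} against $\theta^t$, not the ordinary $\int_0^1 dt$---and argues the identity $h(\text{\th}\omega)+\text{\th}(h\omega)=i_1^*\omega-i_0^*\omega$ directly from the product structure (fundamental theorem of calculus on the first-order components, bump functions to kill the fluxes from higher-order integrations), after which homotopy invariance of the $\text{\th}$-cohomology and the straight-line homotopy $H(x,t)=c+t(x-c)$ yield the theorem. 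The two features doing the work there---a homotopy parameter that carries infinitesimals of all orders at once, and integration against $\theta^t$ rather than $dt$---have no counterpart in your construction, in which $t$ is an ordinary first-order variable; without them the scaling-homotopy route cannot close.
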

\begin{proof}
	The strategy of proof is to unwind Lee's chain of lemmas and propositions, replacing the exterior derivative $d$ everywhere with its generalized version $\text{\th}$ and making the necessary alterations to the argument. The key step lies in construction of a homotopy operator. For each $t \in [0,1]$ define the mapping $i_t: M \rightarrow M \times [0,1]$ as follows:
	\begin{equation}
		i_t(x) := (x,t).
	\end{equation}
	Then, for each $p\ge 1$ we need to define a linear map $h: \Omega^p(M) \rightarrow \Omega^{p-1}(M)$ satisfying the condition,
	\begin{equation}
		h(\text{\th}\omega) + \text{\th} (h\omega) = i_1^* \omega - i_0^* \omega
	\end{equation}
	for all $\omega \in \Omega^p(M \times [0,1])$. If $s$ denotes the standard Cartesian coordinate on $[0,1] \subset \vvmathbb{R}$, let $S$ be the vector field on $M \times [0,1]$ given by $S := 0 \oplus \left( \partial_s + \partial_{ss} + \cdots \right)$ under the usual identification $J^r(M \times [0,1]) \cong J^r(M) \times J^r([0,1])$. We may now obtain a form of one lower degree by the higher-order integration,
	\begin{equation}
		h \omega := \int_{[0,1]} i_t^* (S \lrcorner \omega) \theta^t,
	\end{equation}
	where the integration is to be interpreted componentwise. The other term $\text{\th}(h\omega)$ may be computed by differentiating under the integral signs,
	\begin{equation}
		\text{\th}(h\omega) = \int_{[0,1]} \text{\th} \left( i_t^* (S \lrcorner \omega) \right) \theta^t =  \int_{[0,1]} i_t^* \text{\th} (S \lrcorner \omega)  \theta^t.
	\end{equation}
	Thus,
	\begin{equation}\label{homotopy_integral}
		h \left( \text{\th} \omega \right) + \text{\th}(h\omega) = \int_{[0,1]} \left[ i_t^* (S \lrcorner \text{\th}\omega) + i_t^*  \text{\th} (S \lrcorner \omega) \right] \theta^t.
	\end{equation}
	At this point, Lee appeals to Cartan's magic formula for the Lie derivative, $\mathscr{L}_X = i_X d + d i_X$, but we cannot do likewise as in the present state of development of the theory we have no corresponding concept of Lie derivative for higher-order vector fields. Fortunately, we have only this single expression to evaluate and its evaluation can be performed directly due to the simple nature of the pertinent quantities defined on the Cartesian product manifold $M \times [0,1]$. For we want the integrand to be given by
	\begin{equation}
		\text{\th}_t i^*_t \omega,
	\end{equation}
	in which case we may apply the fundamental theorem of calculus to its first-order components while all second- and higher-order components give no contribution, if we understand that $\omega$ should be a $p$-form defined on $M \times [0,1]$ by restriction of one of compact support on $M \times \vvmathbb{R}$. Equality with the integrand in equation (\ref{homotopy_integral}) follows if we consider that the total derivatives in the time direction correspond to partial derivatives with respect to time of the spatial components of $\omega$ together with the implicit dependence on time via components of $\omega$ having a temporal part, when contracted against a spatial displacement; that is, we must have
	\begin{equation}
		\text{\th}_t i^*_t \omega = \left[ i_t^* (S \lrcorner \text{\th}\omega)+ i_t^* \text{\th} (S \lrcorner \omega) \right] \theta^t.
	\end{equation}
	Making use of an appropriate bump function in the limit, we can arrange for any fluxes coming from higher-order integrations to reduce identically to zero and to give no net contribution to the entire integral $\int_{[0,1]} \text{\th}_t i^*_t \omega$. Therefore, we are left with
	\begin{equation}
		h \left( \text{\th} \omega \right) + \text{\th}(h\omega) = i^*_1 \omega - i^*_0 \omega.
	\end{equation}
	Existence of a homotopy operator as just shown implies equality of induced cohomology maps for homotopic smooth maps as in Lee, \cite{lee_smooth_manif}, Proposition 17.10. Homotopy invariance of generalized de Rham cohomology follows by the same argument as in Lee, \cite{lee_smooth_manif}, Theorem 17.11. Vanishing in positive degree of the generalized de Rham cohomology for contractible smooth manifolds and the Poincar{\'e} lemma may be derived as an immediate consequence, for one has the straight-line homotopy
	\begin{equation}
		H(x,t) = c + t(x-c)
	\end{equation} 
	around the point $c \in \vvmathbb{R}^n$ with respect to which $O$ is star-shaped (as in Lee, \cite{lee_smooth_manif}, Theorems 17.13 and 17.14).
\end{proof}

\begin{corollary}[Cf. Lee, \cite{lee_smooth_manif}, Corollary 17.15]\label{poincare_lemma_corollary}
	Let $M$ be a differentiable manifold. Then every point in $M$ has a neighborhood on which every closed form is exact. In particular, the de Rham cohomology of Euclidean space vanishes identically.
\end{corollary}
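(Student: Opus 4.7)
The plan is to deduce the corollary from Theorem \ref{poincare_lemma} by standard localization, transporting the result via a coordinate chart. Both claims are direct consequences, so the task reduces to organizing the appropriate pullback argument.

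For the first assertion, given a point $p \in M$, I would choose a smooth chart $(U,\varphi)$ around $p$ whose image $\varphi(U) \subset \vvmathbb{R}^n$ is star-shaped --- one may always arrange this by shrinking $U$ so that $\varphi(U)$ is an open ball centered on $\varphi(p)$. Suppose $\omega \in \Omega^k(U)$ with $k \ge 1$ satisfies $\text{\th}\omega = 0$. Consider $\tilde\omega := (\varphi^{-1})^*\omega \in \Omega^k(\varphi(U))$. By property (4) of Proposition \ref{properties_exterior_deriv} (commutation of $\text{\th}$ with pullbacks), $\text{\th}\tilde\omega = (\varphi^{-1})^* \text{\th}\omega = 0$, so $\tilde\omega$ is closed on the star-shaped domain $\varphi(U)$. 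Then Theorem \ref{poincare_lemma} furnishes $\tilde\eta \in \Omega^{k-1}(\varphi(U))$ with $\text{\th}\tilde\eta = \tilde\omega$. Setting $\eta := \varphi^*\tilde\eta$ and invoking property (4) once more gives $\text{\th}\eta = \varphi^*\text{\th}\tilde\eta = \varphi^*\tilde\omega = \omega$, so $\omega$ is exact on $U$. For $k = 0$, a closed $0$-form is a locally constant function, which is trivially exact only in the sense that it is its own primitive; the statement is conventionally restricted to $k \ge 1$, in parallel with Theorem \ref{poincare_lemma}.

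For the second assertion, the entire space $\vvmathbb{R}^n$ is itself star-shaped with respect to the origin, so Theorem \ref{poincare_lemma} applies directly with $O = \vvmathbb{R}^n$ to yield $H^p_{\mathrm{dR}}(\vvmathbb{R}^n;\text{\th}) = 0$ for all $p \ge 1$.

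The argument is essentially bookkeeping and raises no genuine obstacle, since all the substantive work --- the construction of the homotopy operator $h$ and the verification that $h(\text{\th}\omega) + \text{\th}(h\omega)$ realizes the required chain homotopy --- has already been discharged in Theorem \ref{poincare_lemma}. The only minor point requiring care is to confirm that the naturality property $\varphi^*\text{\th} = \text{\th}\varphi^*$ extends from its formulation in Proposition \ref{properties_exterior_deriv} (which concerns maps between open subsets of $\vvmathbb{R}^n$) to diffeomorphisms between a chart $U \subset M$ and its coordinate image $\varphi(U) \subset \vvmathbb{R}^n$; but this is precisely the content of the chart-independence step used in establishing existence in Theorem \ref{exist_exterior_deriv}, so no new argument is needed.
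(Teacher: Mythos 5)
Your proposal is correct and coincides with the argument the paper intends: the corollary is stated there without proof precisely because it follows from Theorem \ref{poincare_lemma} by the standard chart-pullback localization you describe, using the naturality $\varphi^*\text{\th} = \text{\th}\varphi^*$ that is built into the chartwise definition of $\text{\th}$ in Theorem \ref{exist_exterior_deriv}. Your remarks on the $k=0$ case and on star-shapedness of $\vvmathbb{R}^n$ are also the right bookkeeping; nothing further is needed.
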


\begin{remark}
	In view of corollary \ref{poincare_lemma_corollary}, the de Rham theorem  \cite{warner} tells us that cohomology with respect to $\text{\th}$ is canonically isomorphic to that with respect to the ordinary exterior derivative $d$, which goes to show the former may be non-trivial under appropriate circumstances despite the difference in definition between chain operators, the former going up infinitely high in derivatives.
\end{remark}

\begin{lemma}[Rules for the Hodge-*; cf. Thirring, \cite{thirring_vol_2}, 1.2.18,a)b)c)]\label{hodge_rules}
	Let $\omega$ and $\eta$ be smooth $p$-forms. Then,
	\begin{itemize}
		\item[$(1)$] $\omega \wedge *\eta = \eta \wedge *\omega$ \\
		
		\item[$(2)$] $e_\alpha \wedge * e^{\beta_1\cdots\beta_p} = \sum_{r=1}^p (-1)^{r+p} \delta^{\beta_r}_\alpha * e^{\beta_1\cdots\beta_r\beta_{r+1}\cdots\beta_p} = (-1)^{p+1} * i_{e_\alpha} e^{\beta_1\cdots\beta_p}$ \\
		
		\item[$(3)$] $i_{e_\alpha} * e^{\beta_1\cdots\beta_p} = *e^{\beta_1\cdots\beta_p\alpha}$.  
	\end{itemize}
\end{lemma}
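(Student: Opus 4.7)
The strategy is to reduce all three identities to their classical counterparts inside each orthogonal component $Q_s$ of the direct-sum decomposition $\mathscr{J}^{*\infty} = \bigoplus_s Q_s$ set up in {\S}\ref{def_hodge_star}. Since $* = *_1\oplus *_2 \oplus \cdots$ with each $*_s$ the ordinary Hodge star on the finite-dimensional inner-product space $(Q_s,\hat g_s)$, and since the metric is block-diagonal across the $Q_s$, the identities of Thirring, \cite{thirring_vol_2}, 1.2.18, are available in every block. What remains is the bookkeeping needed to transport these block-wise identities through the wedge products that couple distinct blocks.

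First I would choose, in the chosen orthonormal frame, the standard multi-index basis $e^{\alpha_1\cdots\alpha_p}=e^{\alpha_1}\wedge\cdots\wedge e^{\alpha_p}$ of $\Omega^p(M)$, partition the indices of any basis monomial by which $Q_s$ each $\alpha_r$ belongs to, and invoke anti-commutativity of $\wedge$ to regroup
\begin{equation*}
e^{\alpha_1\cdots\alpha_p} = \mathrm{sgn}(\sigma)\,\bigwedge_s \omega_s,\qquad \omega_s\in\bigwedge^{k_s}Q_s,\ \ \sum_s k_s = p.
\end{equation*}
Then $*e^{\alpha_1\cdots\alpha_p} = \mathrm{sgn}(\sigma)\bigwedge_s *_s\omega_s$ by construction of $*$. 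For (1), by bilinearity I reduce to basis monomials $\omega=e^A$ and $\eta=e^B$; wedging $e^A\wedge *e^B$ produces a form whose $Q_s$-component is $(e^A)_s\wedge *_s(e^B)_s$. A degree count shows this vanishes unless $(e^A)_s$ and $(e^B)_s$ carry the same degree $k_s$ for every $s$, in which case the classical identity $(e^A)_s\wedge *_s(e^B)_s=(e^B)_s\wedge *_s(e^A)_s$ (standard in each $Q_s$ because both sides are equal to $\langle (e^A)_s,(e^B)_s\rangle$ times the volume form of $Q_s$) applies block by block; the overall signs from the regrouping are identical on the two sides, giving (1).

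For (2), let $s=|\alpha|$ so that $e_\alpha=\hat g_s{}_{\alpha\gamma}e^\gamma$ lies entirely in $Q_s$. Partition $\beta_1,\ldots,\beta_p$ by their orders; let $\beta_{r_1},\ldots,\beta_{r_{k_s}}$ be those in $Q_s$. After pulling the $Q_s$-factors to the left with the appropriate sign $\epsilon$, I would write
\begin{equation*}
e_\alpha\wedge *e^{\beta_1\cdots\beta_p} \;=\; \epsilon\,\bigl(e_\alpha\wedge *_s(e^{\beta_{r_1}}\wedge\cdots\wedge e^{\beta_{r_{k_s}}})\bigr)\wedge\bigwedge_{s'\ne s}*_{s'}\!(\cdots),
\end{equation*}
apply the classical identity from Thirring 1.2.18,b) inside $Q_s$ to the factor in parentheses, and then repackage the result by exactly the inverse sign $\epsilon^{-1}$; the interior products that arise from $i_{e_\alpha}$ touch only the indices at level $s$ (since $e_\alpha$ annihilates $Q_{s'}$ for $s'\ne s$), so $i_{e_\alpha}e^{\beta_1\cdots\beta_p}$ returns exactly the same sum, weighted by the same Kronecker deltas and alternating signs $(-1)^{r+p}$, proving the equality with both the middle and right members of (2).

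For (3), I apply $*$ to both sides of (2). Using $*\,*\omega = \pm\omega$ within each $Q_s$ on forms homogeneous of degree in that block (which holds by the classical double-star identity applied to each $*_s$), and the adjointness between wedge and interior products on an inner-product space, one obtains $i_{e_\alpha}*e^{\beta_1\cdots\beta_p} = *e^{\beta_1\cdots\beta_p\alpha}$ after the signs reorganize into those encoded by appending $\alpha$ at the end of the multi-index string. The main obstacle I anticipate is not the blockwise reduction itself---these classical facts are routine---but the sign tracking when regrouping multi-indices by their order so as to align with the block structure, compounded by the fact that in the generalized setting there is no top-degree form to serve as an overall reference; this is why it is essential to check, as I have done implicitly above, that every regrouping sign cancels between the two sides, leaving only the classical signs within each $Q_s$.
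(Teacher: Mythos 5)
Your blockwise framework is the right one, and your argument for (1) is sound: both sides vanish unless the block degrees of $\omega$ and $\eta$ coincide, the classical identity applies inside each $Q_s$, and the interleaving signs $\prod_s(-1)^{a_s\sum_{s'<s}(m_{s'}-b_{s'})}$ are symmetric in the two arguments once $a_s=b_s$ for all $s$. The gap is in (2) and (3). Your key claim --- that after pulling the level-$s$ factors to the left, applying Thirring's identity inside $Q_s$, and repackaging ``by exactly the inverse sign,'' one recovers the global signs $(-1)^{r+p}$ --- is false, and so is the mixed-block statement you are trying to prove. Concretely, take $Q_1=\mathrm{span}(e^1,e^2)$ (so $m_1=2$), $Q_2=\mathrm{span}(e^3)$ (so $m_2=1$), Euclidean metric, $p=2$, $e^{\beta_1\beta_2}=e^1\wedge e^3$, $\alpha=1$. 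The blockwise definition gives $*(e^1\wedge e^3)=(*_1e^1)\wedge(*_2e^3)=e^2$, so the left member of (2) equals $e^1\wedge e^2$; the middle member (with $\beta_r$ deleted, as in Thirring) retains only the $r=1$ term, namely $(-1)^{1+2}\,{*}e^3=-({*_1}1)\wedge({*_2}e^3)=-e^1\wedge e^2$. The two sides differ by a sign. The reason is structural: Thirring's sign $(-1)^{r+p}$ uses the global position $r$ and the global degree $p$, whereas the block computation can only ever produce the block-local sign $(-1)^{r_{\mathrm{loc}}+k_s}$ ($r_{\mathrm{loc}}$ the position of $\beta_r$ among the level-$s$ indices, $k_s$ their number; here $+1$ versus $-1$), and the transposition signs incurred in moving $e_\alpha$ past the factors $*_{s'}\omega_{s'}$ of degree $m_{s'}-k_{s'}$, and in re-assembling the starred monomial after deletion, do not restore the difference. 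Identity (3) fails in the same example: $i_{e_2}*(e^1\wedge e^3)=i_{e_2}e^2=1$, whereas $*e^{132}=-\,{*}e^{123}=-1$. Note that only the first equality of (2) is at fault; middle $=$ right is purely formal (linearity of $*$ plus the definition of $i_{e_\alpha}$), which is why your (3), derived from (2), inherits the error.

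This is exactly why the paper's proof confines all three identities to a single piece: there $\alpha,\beta_1,\ldots,\beta_p$ all lie in one $Q_s$, the star is $*_s$, and the products are taken inside $\bigwedge^* Q_s$, so the statements are literally the classical ones for the finite-dimensional inner-product space $(Q_s,\hat g_s)$ with indices renamed as multi-indices; the passage to $\Omega^*(M)$ happens only through the linear, blockwise definition of $*$, not by asserting (2)--(3) for mixed monomials. The necessary restriction is even stronger than ``single-block form, full-space star'': with $Q_1=\mathrm{span}(e^1)$, $Q_2=\mathrm{span}(e^2,e^3)$ and $\alpha=\beta_1=2$, one finds $e_2\wedge *e^2=e^2\wedge e^1\wedge e^3=-e^{123}$ while the middle member is $+\,{*}1=+e^{123}$, the culprit being the odd-dimensional block preceding $Q_2$. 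A correct mixed-block version would have to replace $(-1)^{r+p}$ by the block-local signs together with explicit interleaving factors such as $(-1)^{\sum_{s'<s}(m_{s'}-k_{s'})}$ --- but that is a different lemma from the one stated. As written, your proof of (2) establishes a false identity, and (3) falls with it.
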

\begin{proof}
	Since only multilinear algebra on basis elements is involved, Thirring's solutions apply unchanged, understanding that the indices are now to be regarded as multi-indices. For the convenience of the reader, we reproduce his derivations here. Bear in mind, of course, that we are working with forms on a direct sum of pieces $s=1,2,\ldots$. Thus, the formulae below apply within each piece and are to be extended by linearity to the full space of forms as described above in {\S}\ref{def_hodge_star}.
	
	For (1),
	\begin{align}
		\omega \wedge *\eta &= \omega_{\alpha_1\cdots\alpha_p}\eta^{\beta_1\cdots\beta_p} \varepsilon_{\beta_1\cdots\beta_p\gamma_{p+1}\cdots\gamma_{m_s}} \frac{\sqrt{|\hat{g}_s|}}{(m_s-p)!} e^{\alpha_1\cdots\alpha_p\gamma_{p+1}\cdots\gamma_{m_s}} \nonumber \\
		&=\omega_{\alpha_1\cdots\alpha_p}\eta^{\beta_1\cdots\beta_p} \varepsilon_{\beta_1\cdots\beta_p\gamma_{p+1}\cdots\gamma_{m_s}} \frac{\sqrt{|\hat{g}_s|}}{(m_s-p)!}\varepsilon_{\alpha_1\cdots\alpha_p\gamma_{p+1}\cdots\gamma_{m_s}}e^{12\cdots m_s} \nonumber \\
		&= \eta \wedge *\omega.
	\end{align}
	
	For (2),
	\begin{align}
		e_\alpha \wedge * e^{\beta_1\cdots\beta_p} &= g_{\alpha\gamma}g^{\beta_1\gamma_1}\cdots g^{\beta_p\gamma_p} e^{\alpha\gamma_{p+1}\cdots\gamma_{m_s}} \frac{\sqrt{|\hat{g}_s|}}{(m_s-p)!} \nonumber \\
		&= \sum_{r=1}^p (-1)^{r+p} g^{\beta_1\gamma_1} \cdots g^{\beta_{r-1}\gamma_{r-1}}
		g^{\beta_{r+1}\gamma_{r+1}} \cdots g^{\beta_p\gamma_p} \varepsilon_{\gamma_1\cdots\gamma_{r-1}\gamma\gamma_{r+1}\cdots\gamma_{m_s}} \times \nonumber \\
		&\qquad \delta^{\beta_r}_\alpha e^{\alpha\gamma_{p+1}\cdots\gamma_{m_s}} \frac{\sqrt{|\hat{g}_s|}}{(m_s-p+1)!} \nonumber \\
		&= \sum_{r=1}^p (-1)^{r+p} \delta^{\beta_r}_\alpha * e^{\beta_1\cdots\beta_r\beta_{r+1}\cdots\beta_p}.
	\end{align}
	
	For (3),
	\begin{align}
		i_{e_\alpha} g^{\alpha_1\beta_1} \cdots g^{\alpha_1\beta_p} \varepsilon_{\beta_1\cdots\beta_{m_s}} \frac{\sqrt{|\hat{g}_s|}}{(m_s-p)!} &=
		g^{\alpha_1\beta_1} \cdots g^{\alpha_1\beta_p} g^{\alpha\gamma} \delta^{beta_{p+1}}_\gamma \times \nonumber \\
		& \qquad (m_s-p) \varepsilon_{\beta_1\cdots\beta_{m_s}} e^{\beta_{p+2}\cdots\beta_{m_s}} \frac{\sqrt{|\hat{g}_s|}}{(m_s-p)!} \nonumber \\
		&= * e^{\beta_1\cdots\beta_p\alpha}.
	\end{align}
	This completes the proof, subject to the above understanding.
\end{proof}

\subsection{Appendix}\label{riemannian_appendix}

Here, we wish to suggest a reasonable sufficient condition by which to ensure that the series defining the Levi-Civita jet connection and the Riemannian curvature endomorphism will always be convergent. Let us work in the coordinate basis and identify the differential $d^\alpha$ with the tangent $\partial_\alpha$. In these terms, the metric may be viewed as an operator $g: \ell^2(\vvmathbb{N}_0^n \setminus \{ 0 \}) \rightarrow \ell^2(\vvmathbb{N}_0^n \setminus \{ 0 \})$. For the sake of notational simplicity, henceforward we denote the Hilbert space as just $\ell^2$. A possible starting point would be to require the metric to assume the form
\begin{equation}
	g = \eta + K,
\end{equation}
where $K$ is a compact operator and in the Riemannian case $\eta$ designates the identity operator while in the pseudo-Riemannian case it should designate a bounded diagonal operator in uniformizing coordinates with eigenvalues $\pm 1$.

\begin{proposition}
	A metric tensor of the indicated form retains this form upon smooth changes of coordinate, in the Riemannian case, and upon generalized Lorentz transformations in the pseudo-Riemannian case.
\end{proposition}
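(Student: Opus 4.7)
The plan is to write the transformation law for the metric under the change of coordinates as an operator-theoretic conjugation on $\ell^2(\vvmathbb{N}_0^n\setminus\{0\})$ and to analyze the two resulting pieces separately. Concretely, fix the infinite-order analogue of the jet transformation matrix in equation (\ref{jet_transf_law}), call it $T$; then the transformed metric is $g' = T^{\ast} g\, T = T^{\ast}\eta\, T + T^{\ast}K\, T$. First I would verify that $T$ is bounded on $\ell^2$: the matrix is lower block-triangular with diagonal blocks equal to the tensor powers $J^{\otimes r}$ of the ordinary Jacobian $J = \partial x/\partial y$ and off-diagonal blocks built from higher derivatives of the transition function, and these derivatives are uniformly bounded on the coordinate patch by smoothness (restricting, if necessary, to a relatively compact neighborhood of the point under consideration), so the norms of the blocks can be controlled row by row.

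Next I would dispose of the $T^{\ast} K T$ term once and for all: compact operators form a two-sided ideal in the bounded operators on a Hilbert space, so the product of $T^{\ast}$, $K$, and $T$ is compact regardless of which case we are in. The pseudo-Riemannian case then falls out immediately, because by definition the generalized Lorentz transformations are exactly those whose associated jet transformation $T$ satisfies $T^{\ast}\eta T = \eta$; hence $g' = \eta + T^{\ast} K T$ is of the required form. In the Riemannian case, $\eta = I$ and I need to exhibit $T^{\ast}T - I$ as a compact operator. The strategy is to decompose $T = D + L$ where $D$ is the block-diagonal part whose $r$-th block is $J^{\otimes r}$ and $L$ is the strictly lower block-triangular remainder. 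Then $T^{\ast}T - I = (D^{\ast}D - I) + D^{\ast}L + L^{\ast}D + L^{\ast}L$. Because each diagonal block $J^{\otimes r}$ sits inside a finite-dimensional space, the operator $D^{\ast}D - I$ decomposes into a direct sum of finite-dimensional self-adjoint operators, and its compactness reduces to showing the operator norms of these summands tend to zero with $r$. The off-diagonal blocks of $L$ involve finitely many derivatives of the transition map at each level and are strictly subdiagonal, so after checking the analogous block-norm decay they can likewise be assembled into a compact operator.

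The main obstacle, and the place where a genuine assumption is needed, is the decay of $\|(J^{\otimes r})^{\ast}(J^{\otimes r}) - I\|$ as $r \to \infty$. This fails for a completely general smooth change of coordinates: if $J$ is not orthogonal, then $J^{\otimes r}$ can have norm bounded away from unity uniformly in $r$, and the block-diagonal remainder would only be bounded, not compact. I expect the intended hypothesis here to be that the transition is considered either in orthonormal (uniformizing) coordinates at the point in question, where $J$ is orthogonal so that each $J^{\otimes r}$ is an isometry on its block and $D^{\ast}D - I = 0$, or, more generally, among transitions for which $J^{\ast}J - I$ has spectral radius strictly less than one with a quantitative bound so that $\|(J^{\otimes r})^{\ast}(J^{\otimes r}) - I\|$ decays geometrically. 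Under either such hypothesis the remaining terms $D^{\ast}L$, $L^{\ast}D$, $L^{\ast}L$ are controlled by the same decay together with the banded-triangular structure of $L$, completing the argument. An alternative, perhaps cleaner, route would be to reduce the Riemannian case to the Lorentz case by first performing a pointwise orthonormalization (which is a Lorentz transformation for the Euclidean $\eta$) and then treating the residual change of coordinates as a perturbation of the identity whose deviation is automatically compact by the banded structure of $T - I$.
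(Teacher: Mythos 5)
Your proposal takes a genuinely different route from the paper, and the difference is instructive. The paper's proof never writes down the jet Jacobian or any block decomposition at all. Its entire argument is: by ``covariance of the metric tensor'' the change of coordinate $\Phi$ satisfies $\langle \Phi \varv, \Phi \varv \rangle = \langle \varv, \varv \rangle$, hence $\|\Phi\|=1$ and $\Phi$ is invertible with $\Phi^{-1}$ bounded; the transformed metric is then written as $\eta + \Phi K \Phi^{-1}$ (conjugation, not your congruence $T^*gT$); compactness of $\Phi K \Phi^{-1}$ is checked by hand with a bounded-sequence/convergent-subsequence argument (the same fact you get from the two-sided-ideal property); and the $\eta$ part is untouched because $\Phi\eta\Phi^{-1}=\Phi\Phi^{-1}=\eta$ trivially in the Riemannian case, while in the pseudo-Riemannian case $\Phi\eta\Phi^{-1}=\eta$ is taken as the \emph{definition} of a generalized Lorentz transformation. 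In short, the paper builds into the phrase ``change of coordinate'' the hypothesis that it acts on $\ell^2$ as a norm-preserving invertible operator, after which everything is a two-line consequence; under that unitarity your condition $T^*\eta T=\eta$ and the paper's conjugation condition coincide.

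What your analysis buys is an exposure of exactly what that one-line isometry assertion conceals, and the obstacle you isolate is genuine rather than an artifact of your method. Take the dilation $y=\lambda x$, $\lambda\ne 1$: by equation (\ref{jet_transf_law}) the jet transformation is block-diagonal with $r$-th block $\lambda^{-r}$ times the identity, so $T^*T-I$ has eigenvalues $\lambda^{-2r}-1$, which tend to $-1$ (if $\lambda>1$) or to $+\infty$ (if $\lambda<1$); in neither case is $T^*T-I$ compact, so the Riemannian half of the statement, read with the honest congruence law and literally arbitrary smooth coordinate changes, fails. The paper's proof evades this only because positing $\|\Phi \varv\|=\|\varv\|$ for the $\ell^2$ norm is tantamount to your suggested hypothesis that the transformation be (generalized) orthogonal; and the paper itself tacitly concedes the point in the closing remark of {\S}\ref{riemannian_appendix}, where it says general covariance must be ``circumscribed'' to transformations of suitable regularity. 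One internal inconsistency in your sketch should be repaired, though: you first claim $T$ is bounded because its blocks can be ``controlled row by row,'' but your own later observation contradicts this, since $\|J^{\otimes r}\|=\|J\|^r$, so for $\|J\|>1$ the diagonal part $D$ (hence $T$) is already unbounded, and for $\|J\|\le 1$ with $J$ not an isometry the operator $D^*D-I$ is bounded but its spectrum accumulates away from zero. Boundedness of $T$ itself, not merely compactness of $T^*T-I$, must therefore be folded into the extra hypothesis; with that repair, your pseudo-Riemannian argument and your orthogonality-based reduction of the Riemannian case are sound, and amount to a rigorous reconstruction of what the paper assumes in a single sentence.
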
	
\begin{proof}
	An immediate consequence of the covariance of the metric tensor. Let $\Phi: \ell^2 \rightarrow \ell^2$ denote the change of coordinate. Then from
	\begin{equation}
		\langle \Phi \varv, \Phi \varv \rangle = \langle \varv, \varv \rangle
	\end{equation}
	we know that $\| \Phi \varv \| = \| \varv \|$, or $\| \Phi \| = 1$ so $\Phi$ is continuous in the strong topology on $\ell^2$. Now, let $\varv_n$ be a bounded sequence in the new coordinates. By covariance, $\Phi^{-1} \varv_n$ will be bounded in the old coordinates. By hypothesis, $K$ is compact which implies that there exists a convergent subsequence $K \Phi^{-1} \varv_{n_k}$. Hence, by continuity of $\Phi$, $\Phi K \Phi^{-1} \varv_{n_k}$ will be convergent as well, meaning that $\Phi K \Phi^{-1}$ is a compact operator. In other words, the metric tensor with respect to the new coordinates $g = \eta + \Phi K \Phi^{-1}$ has the required form. Here, we invoke that $\Phi \Phi^{-1}=1$ in the Riemannian case while in the pseudo-Riemannian case the condition that $\Phi \eta \Phi^{-1} = \eta$ defines what is meant by a generalized Lorentz transform.
\end{proof}

Let us investigate the inverse metric tensor $g^{-1}$. A little algebra shows that we may write
\begin{equation}
	g^{-1} = \eta + H
\end{equation}
with
\begin{equation}
	H = - \eta K \eta - \eta K \eta K \eta - \cdots;
\end{equation}

\begin{proposition}
	Let $K$ be a compact operator with $\| K \| < 1$. Then
	\begin{itemize}
		\item[$(1)$] $\eta K$ resp. $K\eta$ is compact;
		
		\item[$(2)$] $K + K^2 + \cdots$ is compact.
	\end{itemize}
\end{proposition}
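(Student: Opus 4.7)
The plan is to reduce both parts to standard facts in the functional analysis of compact operators on Hilbert space, specifically: (i) the set $\mathscr{K}(\ell^2)$ of compact operators forms a two-sided ideal in the algebra $B(\ell^2)$ of bounded operators, and (ii) $\mathscr{K}(\ell^2)$ is closed in the operator norm topology.

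For part (1), I would first observe that $\eta$ is a bounded operator on $\ell^2$: in the Riemannian case it is the identity, and in the pseudo-Riemannian case it is a diagonal operator whose diagonal entries are $\pm 1$, so $\|\eta\|=1$ in either instance. The compactness of $\eta K$ and $K\eta$ then follows immediately from the ideal property: a product of a bounded operator with a compact operator (on either side) is compact.

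For part (2), I would exploit the Neumann-series argument. Since $\|K\|<1$, the geometric estimate $\|K^n\|\le \|K\|^n$ implies that the partial sums
\begin{equation}
S_N \;:=\; \sum_{n=1}^N K^n
\end{equation}
form a Cauchy sequence in the operator norm, hence converge to some $S \in B(\ell^2)$. By the ideal property applied inductively, each $K^n$ is compact, so each $S_N$ is compact as a finite sum of compact operators. The final step is to invoke closedness of $\mathscr{K}(\ell^2)$ in the norm topology to conclude that the limit $S = \sum_{n=1}^\infty K^n$ is itself compact.

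There is no real obstacle here; the statement is essentially a packaging of standard Banach-algebra facts. The only point deserving care is to record explicitly that $\eta$ is bounded in the pseudo-Riemannian case (so that the ideal property applies), and to note that convergence of the series takes place in operator norm, not merely strongly, since the closedness of $\mathscr{K}(\ell^2)$ needed at the end is a statement about the norm topology.
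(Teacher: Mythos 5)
Your proposal is correct and follows essentially the same route as the paper: part (1) via the two-sided ideal property of compact operators (which the paper proves by hand with a bounded-sequence/subsequence argument and also cites from MacCluer), and part (2) via compactness of the partial sums, norm convergence of the geometric series under $\|K\|<1$, and norm-closedness of the compact operators. No gaps; the only cosmetic difference is that the paper identifies the limit explicitly as $(1-K)^{-1}-1$ and estimates the tail, whereas you argue the partial sums are Cauchy, which amounts to the same thing.
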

\begin{proof}
	Let $\varv_n$ be a bounded sequence in $\ell^2$ and $K \varv_{n_k}$ the convergent subsequence that exists by hypothesis. Now if $\varv_n$ is a bounded sequence so is $\eta \varv_n$ and vice versa because $\|\eta\|=1$ and $\eta^2=1$.
	So we have that $K \eta \varv_{n_k}$ converges as well. Therefore $K \eta$ is compact. Alternately, from the convergence of the subsequence $K \varv_{n_k}$ we may conclude to that of $\eta K \varv_{n_k}$ by continuity of $\eta$. Hence, $\eta K$ is compact too.
	
	Every compact operator is bounded; the product $BK$ of a bounded operator $B$ with a compact operator $K$ is compact and the sum $K_1+K_2$ of two compact operators $K_{1,2}$ is compact (see, for instance, MacCluer, Propositions 4.6 and 4.9, \cite{maccluer}). Therefore,
	\begin{equation}
		K_m := K + K^2 + \cdots + K^m
	\end{equation}
	represents a sequence of compact operators so by MacCluer, Proposition 4.10 if there exists a bounded operator $K_\infty$ such that $\| K_m - K_\infty \| \rightarrow 0$, then $K_\infty$ must be compact as well. But given $\varv \in \ell^2$, we may define $K_\infty \varv := K \varv + K^2 \varv + \cdots$, which series converges since equal to $\dfrac{1}{1-K}-1$ applied to $\varv$ and the inverse exists if $\| K \| < 1$. Thus,
	\begin{equation}
		\| K_\infty - K_m \| \le \| K \|^{m+1} + \| K \|^{m+2} + \cdots = \frac{\| K \|^m}{1 - \| K \|},
	\end{equation}
	which obviously tends to zero as $m \rightarrow \infty$.
\end{proof}

\begin{corollary}\label{inverse_compactness}
	If the metric assumes the form $g=\eta+K$, with $K$ compact and $\| K \| < 1$, the inverse metric may be written $g^{-1}=\eta+H$ with $H$ compact and $\| H \| < 1 $ if $\| K \|$ is sufficiently small; i.e., $\| K \|/(1-\| K \|) <1$.
\end{corollary}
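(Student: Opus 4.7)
The plan is to read the corollary as a direct application of the preceding proposition to the Neumann series for $g^{-1}$. First I would rewrite $g = \eta + K$ as $g = \eta(1 + L)$ with $L := \eta K$, using the fact that $\eta^{2}=1$. Since $\|\eta\| = 1$, I have $\|L\| \le \|K\| < 1$, so the Neumann series
\begin{equation}
(1 + L)^{-1} = \sum_{m \ge 0} (-L)^{m}
\end{equation}
converges in operator norm on $\ell^{2}$ and yields a bounded inverse. Consequently $g^{-1} = (1+L)^{-1}\eta$ exists and can be written as
\begin{equation}
g^{-1} = \eta + \sum_{m \ge 1}(-L)^{m}\eta =: \eta + H,
\end{equation}
which agrees (up to the sign conventions needed to make $(\eta+K)(\eta+H)=1$) with the formal series preceding the corollary.

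Next I would establish compactness of $H$. By part (1) of the preceding proposition, $L = \eta K$ is compact because $K$ is compact and $\eta$ is bounded. By part (2), the series $\sum_{m \ge 1}(-L)^{m}$ is compact, being a norm-limit of finite linear combinations of powers of the compact operator $L$. Finally $H = \bigl(\sum_{m \ge 1}(-L)^{m}\bigr)\eta$ is compact because the product of a compact operator with a bounded one is compact.

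For the norm estimate, I would simply use the triangle inequality and submultiplicativity of the operator norm:
\begin{equation}
\|H\| \;\le\; \|\eta\|\sum_{m \ge 1}\|L\|^{m} \;\le\; \sum_{m \ge 1}\|K\|^{m} \;=\; \frac{\|K\|}{1-\|K\|}.
\end{equation}
Therefore, under the hypothesis $\|K\|/(1-\|K\|) < 1$ (equivalently $\|K\| < 1/2$), one obtains $\|H\| < 1$, which is the conclusion. The only step requiring any care is the book-keeping that identifies $H$ as obtained from the Neumann series of $(1+L)^{-1}$ so that the preceding proposition applies verbatim; no genuine obstacle arises, as all of the analytic content -- compactness of products $\eta K$, compactness of $\sum L^{m}$ for $\|L\|<1$, and the geometric-series bound -- has already been established above.
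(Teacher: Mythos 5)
Your proposal is correct and takes essentially the same route the paper intends: expand the inverse in the geometric (Neumann) series, get compactness of $H$ from parts $(1)$ and $(2)$ of the preceding proposition (products of compacts with bounded operators, and norm limits of finite sums of compacts), and close with the bound $\| H \| \le \| K \|/(1-\| K \|) < 1$. Your factorization $g = \eta(1+\eta K)$ is slightly cleaner bookkeeping and in fact yields the correct alternating signs in the series for $H$ (the paper's displayed series $H = -\eta K \eta - \eta K \eta K \eta - \cdots$ with all negative terms is inconsistent with $(\eta+K)(\eta+H)=1$), though this discrepancy affects neither the compactness argument nor the norm estimate.
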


We want, however, eventually to show that the Levi-Civita jet connection is well defined. If we consider the map $(\ell^2)^{\otimes 2} \rightarrow \vvmathbb{C}$ induced by
$\varv \otimes w \mapsto \langle \varv, K w \rangle$ then in an orthonormal basis $e_\alpha$, we have
\begin{equation}
	\sum_{\alpha\beta} \| K e_\alpha \otimes e_\beta \|^2 = \mathrm{tr}~ K^*K.
\end{equation}	
Hence a natural condition to impose would be that $K$ be not only compact but also Hilbert-Schmidt (cf. Reed-Simon, Problem VI.48, \cite{reed_simon}). In order to ensure this, it will be convenient to impose a condition of uniform boundedness on the components of the metric tensor and all of their derivatives; namely, we require that there exist constants $0 < \lambda < 1$ and $M>0$ such that for all multi-indices $\alpha, \beta, \gamma$ we have
\begin{equation}
	| \partial_\alpha g_{\beta\gamma} | \le \lambda^{|\alpha|+|\beta|+|\gamma|}M.
\end{equation}
Let us establish some properties of this condition for later use. First of all, a sum of the following form arises:
\begin{equation}
	C_\lambda = \sum_{r\ge 0} \binom{n+r-1}{r} \lambda^{2r}.
\end{equation}
If we bound the factorials with the help of Stirling's formula in the form given by Rudin \cite{rudin}, Problem 8.20, viz.,
\begin{equation}
	e^{7/8} < \dfrac{m!}{m^{m+\frac{1}{2}}e^{-m}} < e,
\end{equation}
it becomes evident that
\begin{align}
	\binom{n+r-1}{r} &< C_0 \dfrac{(n+r-1)^{n+r-\frac{1}{2}}e^{-n-r+1}}{r^{r+\frac{1}{2}}e^{-r}} \nonumber \\
	&< C_1 \dfrac{(n+r-1)^{n+r-\frac{1}{2}}}{r^{n+r-\frac{1}{2}}r^{1-n}} \nonumber \\
	&< C_2 \left( 1 + \frac{n-1}{r} \right)^{n+r-\frac{1}{2}} r^{n-1} \nonumber \\
	&< C_3 r^{n-1}		
\end{align}
for some positive constants $C_{0,1,2,3}$. Hence, $C_\lambda < \infty$ and it is clear that $C_\lambda$ tends to zero as $\lambda$ does.

\begin{proposition}\label{unif_bd_properties}
	Let $K_{1,2}$ be operators satisfying the condition of uniform boundedness for some $0<\lambda<1$ and $M_{1,2}>0$ and let $B$ be a bounded operator. Then
	\begin{itemize}
		\item[$(1)$] $K_{1,2}$ are Hilbert-Schmidt	and hence compact;
		\item[$(2)$] $K_1+K_2$ satisfies the condition with bound $M_1+M_2$;
		
		\item[$(3)$] $K_1 K_2$ satisfies the condition with bound $C_\lambda M_1 M_2$; 
		
		\item[$(4)$] $BK_1$ and $K_1 B$ satisfy the condition with bound $\| B \| M_1$.	
	\end{itemize}
\end{proposition}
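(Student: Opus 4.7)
\emph{Proof plan for proposition \ref{unif_bd_properties}.} The plan is to verify the four items in order of increasing difficulty, working exclusively from the componentwise bound
\begin{equation*}
|\partial_\alpha K_{i,\beta\gamma}| \le M_i \lambda^{|\alpha|+|\beta|+|\gamma|}
\end{equation*}
supplied by the uniform-boundedness hypothesis, together with the finite-sum estimate $\sum_{|\delta|\ge 1}\lambda^{2|\delta|} = C_\lambda - 1 \le C_\lambda$ justified by the Stirling calculation immediately preceding the statement.

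First I would dispose of (1) by specializing to $\alpha=0$: the componentwise bound yields $\mathrm{tr}\, K^*K = \sum_{\beta,\gamma}|K_{\beta\gamma}|^2 \le M^2(\sum_\beta \lambda^{2|\beta|})^2 \le M^2 C_\lambda^2 < \infty$, so $K$ is Hilbert--Schmidt and hence compact by a standard result (Reed--Simon, Theorem VI.22). Then (2) follows immediately from the triangle inequality applied componentwise, since derivatives and matrix entries behave linearly.

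For (3), I would expand the product as $(K_1 K_2)_{\beta\gamma} = \sum_\delta K_{1,\beta\delta}K_{2,\delta\gamma}$ and differentiate via the generalized Leibniz rule of {\S}\ref{parallel_transport_jets}:
\begin{equation*}
\partial_\alpha (K_1 K_2)_{\beta\gamma} = \sum_\delta \sum_{\alpha_1+\alpha_2=\alpha}\frac{\alpha!}{\alpha_1!\alpha_2!}(\partial_{\alpha_1}K_{1,\beta\delta})(\partial_{\alpha_2}K_{2,\delta\gamma}).
\end{equation*}
Applying the hypothesis to each factor and summing the geometric series over $\delta$ yields a factor of $M_1 M_2 C_\lambda \lambda^{|\alpha|+|\beta|+|\gamma|}$, multiplied by $\sum_{\alpha_1+\alpha_2=\alpha}\alpha!/\alpha_1!\alpha_2! = 2^{|\alpha|}$. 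This combinatorial factor is the first technical obstacle: to match the stated bound $C_\lambda M_1 M_2$ one must either restrict to $\lambda < 1/2$ from the outset (absorbing the $2^{|\alpha|}$ into a rescaled parameter $\tilde\lambda = 2\lambda$) or interpret the uniform-boundedness condition as already accounting for Leibniz expansions in its definition of $C_\lambda$; either reading requires a small renormalization of constants which I would flag explicitly.

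For (4), writing $(BK_1)_{\beta\gamma} = \langle e_\beta, B(K_1 e_\gamma)\rangle$ and applying Cauchy--Schwarz gives $|(BK_1)_{\beta\gamma}|\le \|B\|(\sum_\delta |K_{1,\delta\gamma}|^2)^{1/2} \le \|B\|M_1 C_\lambda^{1/2}\lambda^{|\gamma|}$. I expect this to be the main obstacle, since the estimate furnishes $\lambda^{|\gamma|}$ but not the required $\lambda^{|\beta|}$ factor: the operator-norm hypothesis on $B$ alone does not propagate row-decay, so one must either strengthen the hypothesis on $B$ (e.g.\ by assuming $B$ is itself uniformly bounded componentwise, in which case the argument of (3) applies with $C_\lambda$ replaced by $\|B\|$) or invoke symmetrical control via $K_1 B$ and duality. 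The derivative bounds are handled by the same Leibniz argument as in (3), with $B$ treated as position-independent (or, if position-dependent, under the additional assumption that its derivatives are similarly controlled). Once the componentwise estimate is in hand, the Hilbert--Schmidt conclusion of (1) transfers automatically and closure of the class under the operations (2)--(4) follows.
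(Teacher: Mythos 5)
Your proposal is sound where the paper's statement is sound, and it is considerably more careful than the paper's own proof, which establishes only item (1) and dismisses (2)--(4) as ``obvious.'' For (1) the two arguments differ only cosmetically: the paper dominates $K_{1,2}$ by the diagonal operator $\Lambda : e_\alpha \mapsto \lambda^{|\alpha|} M e_\alpha$ (the printed exponent $-|\alpha|$ there is a typo) and invokes monotonicity of trace norms (Thirring 2.3.19) together with MacCluer's theorem for compactness, whereas you compute $\mathrm{tr}\, K^*K \le M^2 C_\lambda^2$ directly; both reduce to the same geometric-series estimate $\sum_\beta \lambda^{2|\beta|} \le C_\lambda$, and your version is self-contained. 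Your item (2) is the componentwise triangle inequality the paper omits.

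The two obstacles you flag in (3) and (4) are genuine defects of the statement, not of your argument. For (3), the Leibniz expansion really does produce $\sum_{\alpha_1+\alpha_2=\alpha} \alpha!/(\alpha_1!\alpha_2!) = 2^{|\alpha|}$, so the product satisfies the condition only with a degraded decay parameter (your $\tilde\lambda = 2\lambda$), not with the original $\lambda$ as the statement implies; the paper tacitly concedes exactly this point in the later proposition on $\Gamma_0$, where it notes that ``the combinatorial factor is bounded by a constant to the power of $|\alpha|$'' and replaces $\lambda$ by $n\lambda$. Your proposed repair is therefore the right one, and consistent with how the estimate is actually used downstream. For (4), your objection is also correct: an operator-norm bound on $B$ yields column decay ($\lambda^{|\gamma|}$, via Cauchy--Schwarz as you compute) but no row decay --- for instance, if $B$ permutes the basis so as to carry high-order indices to low-order ones, then $BK_1$ has no $\lambda^{|\beta|}$ decay whatsoever --- so the item is false for a general bounded $B$, even at $\alpha = 0$. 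It survives in the only place the paper uses it (the corollary writing $g^{-1} = \eta + H$ as a Neumann series), because there $B = \eta$ is diagonal with entries $\pm 1$, which is precisely the componentwise-controlled strengthening of the hypothesis you suggest. In short: your proof plan is correct, and the hedges you flag are not weaknesses of the plan but corrections the proposition itself requires.
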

\begin{proof}
	For (1), invoke the result in Thirring \cite{thirring_vol_3}, 2.3.19, iii) that if two bounded operators satisfy $A \le B$, then their trace norms satisfy $\mathrm{tr}~ A^* A \le \mathrm{tr}~ B^*B$. In this case, $|K|_{1,2} \le \Lambda$ where $\Lambda$ is the diagonal operator given by $e_\alpha \mapsto \lambda^{-|\alpha|} M e_\alpha$. Clearly $\Lambda$ itself is Hilbert-Schmidt since
	\begin{equation}
		\mathrm{tr}~ \Lambda^* \Lambda = C_\lambda M^2 < \infty.
	\end{equation}
	Hence, $K_{1,2}$ are Hilbert-Schmidt as well. For their compactness, see  MacCluer, Theorem 4.15 \cite{maccluer}. Items (2), (3) and (4) are obvious.
\end{proof}

\begin{proposition}
	If $K_n$ is a sequence of operators satisfying the condition of uniform boundedness with common constants $0<\lambda<1$ and $M>0$ and $\| K_n - K_\infty \| \rightarrow 0$ for some bounded operator $K_\infty$, then $K_\infty$ obeys the condition of uniform boundedness as well with the same constants.
\end{proposition}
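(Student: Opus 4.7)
The plan is to exploit the fact that operator-norm convergence forces pointwise convergence of matrix elements in any fixed orthonormal basis, and then to upgrade this scalar pointwise convergence to $C^\infty$ convergence by invoking Arzelà--Ascoli together with the uniform-in-$n$ control on all derivatives supplied by the hypothesis. Once this is in hand, passing to the limit in the pointwise scalar bounds becomes a matter of course.

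First I would work in the orthonormal coordinate basis $\{e_\alpha\}$ and observe that, for each pair of non-zero multi-indices $\beta,\gamma$ and each $x\in M$,
\begin{equation*}
|(K_n)_{\beta\gamma}(x) - (K_\infty)_{\beta\gamma}(x)| = |\langle e_\beta, (K_n(x) - K_\infty(x))e_\gamma \rangle| \le \|K_n(x) - K_\infty(x)\|,
\end{equation*}
which tends to zero by hypothesis. The $\alpha = 0$ case of the uniform-boundedness inequality $|(K_n)_{\beta\gamma}(x)| \le \lambda^{|\beta|+|\gamma|}M$ then passes directly to the limit, yielding the required bound for $K_\infty$ at zeroth order in spatial derivatives. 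Note in particular that the pointwise limit $(K_\infty)_{\beta\gamma}$ is thereby identified uniquely once and for all.

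For derivatives of positive order, I would fix a relatively compact coordinate patch $V \subset M$ and consider, for each pair $\beta,\gamma$, the sequence of smooth scalar functions $f_n := (K_n)_{\beta\gamma}|_V$. By hypothesis $\{f_n\}$ is bounded in $C^k(\bar V)$ for every $k \ge 0$, with bounds independent of $n$. Arzelà--Ascoli applied in $C^k(\bar V)$ for each $k$, together with a diagonal extraction over $k$, yields a subsequence converging in $C^\infty(\bar V)$ to a smooth limit. By the pointwise identification of the previous paragraph, this limit must coincide with $(K_\infty)_{\beta\gamma}|_V$, which is therefore itself smooth, and each $\partial_\alpha (K_\infty)_{\beta\gamma}$ is the uniform limit on $V$ of $\partial_\alpha f_{n_j}$. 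Passing to the limit in the inequality $|\partial_\alpha f_{n_j}(x)| \le \lambda^{|\alpha|+|\beta|+|\gamma|}M$ delivers the required bound for $K_\infty$ at $x \in V$.

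The main obstacle will be organizational rather than analytical: the extraction above produces a subsequence that depends a priori on $(\beta,\gamma)$ and on the patch $V$, whereas the conclusion requires the derivative bounds to hold simultaneously at every point and for every triple $(\alpha,\beta,\gamma)$. One clean way forward is a countable diagonal argument enumerating the triples jointly with a compact exhaustion of $M$. Alternatively, and perhaps more elegantly, one may avoid subsequences entirely: operator-norm Cauchyness makes each $f_n$ Cauchy in $C^0(\bar V)$, and standard interpolation estimates of Landau--Kolmogorov type combined with the uniform $C^\infty$ bounds promote this to $C^\infty(\bar V)$-Cauchyness, so that the full sequence converges and the derivative bounds pass to the limit unambiguously for every $(\alpha,\beta,\gamma)$ at once.
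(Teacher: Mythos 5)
Your proposal is correct and takes essentially the same route as the paper's proof: both extract pointwise convergence of the matrix elements $(K_n)_{\beta\gamma}$ from the operator-norm convergence, and then pass the bounds $|\partial_\alpha(K_n)_{\beta\gamma}|\le\lambda^{|\alpha|+|\beta|+|\gamma|}M$ to the limit by justifying the interchange of $\partial_\alpha$ with $\lim_n$ via the uniform derivative bounds---a step the paper delegates to Courant--John (equicontinuity and term-by-term differentiation) and which you spell out with Arzel\`a--Ascoli plus diagonal extraction or Landau--Kolmogorov interpolation. Note only that your ``organizational obstacle'' is not really one: since the bound to be proved concerns the fixed limit functions $(K_\infty)_{\beta\gamma}$, it suffices to run the extraction separately for each $(\beta,\gamma)$ and each patch $V$, so no simultaneous choice of subsequence is needed.
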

\begin{proof}
	Since by the above each $K_n$ is compact, so is $K_\infty$. Strong convergence implies weak convergence, whence the matrix elements of $K_\infty$ are given by limits of the corresponding matrix elements of the $K_n$. We can interchange the operation of taking the derivative $\partial_\alpha$ with the limit as $n$ tends to infinity because the operators $K_n$ are uniformly bounded (cf. Courant and John, {\S}7.4, b) and e), \cite{courant_john}: the hypothesized uniform boundedness of the term-by-term derivatives entails, in the limit, uniform continuity on compact sets by Cauchy's test and hence the permissibility of term-by-term differentiation). Thus, we have
	\begin{equation}
		|\langle e_\beta, \partial_\alpha K_\infty e_\gamma \rangle| = 
		\lim_{n \rightarrow \infty} |\langle e_\beta, \partial_\alpha K_n e_\gamma \rangle| \le \lambda^{|\alpha|+|\beta|+|\gamma|}M,
	\end{equation}
	since by hypothesis the same $\lambda$ and $M$ apply to all of the $K_n$.
\end{proof}

\begin{corollary}\label{inverse_metric_unif_bd}
	Suppose the metric assumes the form $g=\eta+K$ where $K$ satisfies the condition of uniform boundedness with $\lambda>0$ sufficiently small that $C_\lambda<1$ and $M<C_\lambda^{1/2}$. Then $g^{-1}=\eta+H$ where $H$ obeys the condition of uniform boundedness as well.
\end{corollary}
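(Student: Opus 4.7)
The plan is to expand $g^{-1}$ as a Neumann series in $K$, control each term via proposition \ref{unif_bd_properties}, and then pass to the norm limit using the stability result established immediately above the corollary. First I would observe that, since $\eta^2 = I$, one has the identity
\begin{equation}
g^{-1} = (\eta + K)^{-1} = \sum_{m \ge 0} (-1)^m (\eta K)^m \eta = \eta + H,
\end{equation}
with $H := \sum_{m \ge 1} (-1)^m (\eta K)^m \eta$. Convergence of this series in operator norm is guaranteed, exactly as in the argument leading to corollary \ref{inverse_compactness}, because proposition \ref{unif_bd_properties}(1) combined with the hypothesis $M < C_\lambda^{1/2}$ gives the Hilbert--Schmidt bound $\| K \| \le (\mathrm{tr}~ K^* K)^{1/2} \le C_\lambda^{1/2} M < C_\lambda < 1$.

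Next I would track the uniform-boundedness constants through the Neumann series. By item (4) of proposition \ref{unif_bd_properties}, $\eta K$ is uniformly bounded with constants $(\lambda, M)$; a straightforward induction based on item (3) then shows that $(\eta K)^m$ is uniformly bounded with constants $(\lambda, C_\lambda^{m-1} M^m)$, and one more application of (4) extends the same bound to $(\eta K)^m \eta$. By item (2), the truncation $H_N := \sum_{m=1}^{N} (-1)^m (\eta K)^m \eta$ is therefore uniformly bounded with constants $(\lambda, \tilde M_N)$, where
\begin{equation}
\tilde M_N := \sum_{m=1}^{N} C_\lambda^{m-1} M^m = M \sum_{m=0}^{N-1} (C_\lambda M)^m \le \frac{M}{1 - C_\lambda M} =: \tilde M.
\end{equation}
The hypotheses $C_\lambda < 1$ and $M < C_\lambda^{1/2}$ yield $C_\lambda M < C_\lambda^{3/2} < 1$, so $\tilde M$ is finite and provides a common bound independent of $N$.

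To finish I would invoke the proposition immediately preceding the corollary, which asserts that uniform boundedness with fixed constants is preserved under operator-norm convergence. Applied to the sequence $H_N \to H$, whose terms all satisfy the condition with the common constants $(\lambda, \tilde M)$, it delivers that $H$ itself is uniformly bounded with the same constants. Thus $g^{-1} = \eta + H$ has the form claimed.

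The main obstacle I anticipate is the bookkeeping needed to verify that iterated products $(\eta K)^m$ pick up only a multiplicative factor $C_\lambda^{m-1}$ rather than a larger combinatorial factor, and that the resulting geometric series of uniform-boundedness constants actually converges; this is precisely where the quantitative smallness hypotheses $C_\lambda < 1$ and $M < C_\lambda^{1/2}$ do their work, the second ensuring $C_\lambda M < 1$ with room to spare.
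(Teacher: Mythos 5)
Your proof is correct and takes essentially the same route as the paper: expand $g^{-1}$ in a Neumann series, bound each term $(\eta K)^m \eta$ through the sum/product/bounded-factor rules of proposition \ref{unif_bd_properties}, observe that the resulting constants form a convergent geometric series (since $C_\lambda M < C_\lambda^{3/2} < 1$), and finish with the proposition immediately preceding the corollary on stability of uniform boundedness under norm limits. Your explicit derivation of $\| K \| \le C_\lambda^{1/2} M < C_\lambda < 1$ from the Hilbert--Schmidt estimate is a detail the paper leaves implicit (it simply defers to corollary \ref{inverse_compactness}, which assumes $\| K \| < 1$ outright), and it tightens the argument rather than changing it.
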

\begin{proof}
	By the derivation of corollary \ref{inverse_compactness} we know that $H$ exists and is given by the strong limit of a sequence $H_m$ where each element in the sequence satisfies the uniform boundedness condition by above results. But with our restriction on $M$ it follows that $\eta K \eta K \eta$ is bounded by $C_\lambda M^2 < C_\lambda^2$ and the successive powers may be likewise be bounded by $C_\lambda^m$, so the series has a limit with bound $0<M_H<\infty$. 
\end{proof}

Furnished with this condition, it proves not difficult to establish the existence of the Levi-Civita jet connection with nice convergence properties. First off, let us neglect the cross-terms; i.e., write
\begin{equation}\label{leading_Christoffel_term}
	\Gamma_0 := \frac{1}{2} g^{-1} \circ \left( K_1 + K_2 - K_0 \right),
\end{equation}
where $K_{0,1,2}$ are the operators $(\ell^2)^{\otimes 2} \rightarrow \ell^2$ induced by
\begin{align}
	K_0:~ & \varv \otimes w \mapsto \left( \varv^\alpha w^\beta \partial_\gamma K_{\alpha\beta} \right)_{\gamma \in \vvmathbb{N}_0^n \setminus \{0 \} } \\
	K_1:~ & \varv \otimes w \mapsto \left( \varv^\alpha w^\beta \partial_\alpha K_{\beta\gamma} \right)_{\gamma \in \vvmathbb{N}_0^n \setminus \{ 0 \} } \\
	K_2:~ & \varv \otimes w \mapsto \left( \varv^\alpha w^\beta \partial_\beta K_{\gamma\alpha} \right)_{\gamma \in \vvmathbb{N}_0^n \setminus \{ 0 \} } 
\end{align}

In order to establish the condition of uniform boundedness on $K_{0,1,2}$ and their derivatives, fix a multi-index $\delta$ and consider applying $\partial_\delta$ to them. For $K_0$, it is easy to see that
\begin{equation}
	|\partial_\delta \partial_\gamma K_{\alpha\beta}| = |\partial_{\delta+\gamma} K_{\alpha\beta}| \le \lambda^{|\alpha|+|\beta|+|\delta+\gamma|}M=
	\lambda^{|\alpha|+|\beta|+|\gamma|+|\delta|}M,
\end{equation}
so that its uniform boundedness follows from what is hypothesized about $K$ itself. Analogously for $K_1$,
\begin{equation}
	|\partial_\delta (K_1)_{\alpha\beta}^\gamma | = | \partial_{\delta+\alpha} K_{\beta\gamma}| \le \lambda^{|\delta+\alpha|+|\beta|+|\gamma|}M =
	\lambda^{|\alpha|+|\beta|+|\gamma|+|\delta|}M.
\end{equation}
The operator $K_2$ agrees with $K_1$ up to the symmetry that interchanges $\varv$ and $w$; hence a similar argument to that for $K_1$ yields the desired result for $K_2$ as well.

\begin{proposition}
	As an operator $(\ell^2)^{\otimes 2} \rightarrow \ell^2$, the Christoffel symbol $\Gamma_0$ exists and satisfies a condition of uniform boundedness (hence is Hilbert-Schmidt).
\end{proposition}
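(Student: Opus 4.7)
The plan is to show existence of $\Gamma_0$ by assembling it from pieces that have already been shown to be well-behaved, and then to verify the uniform boundedness condition by applying the closure properties of proposition \ref{unif_bd_properties} in the correct order.

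First I would handle the inner factor $K_1 + K_2 - K_0$. The explicit coordinate bounds established immediately before the statement give
\begin{equation*}
|\partial_\delta (K_i)_{\alpha\beta}^\gamma| \le \lambda^{|\alpha|+|\beta|+|\gamma|+|\delta|}M, \qquad i = 0,1,2,
\end{equation*}
so each $K_i$ satisfies the condition of uniform boundedness with the same constants $\lambda, M$. By proposition \ref{unif_bd_properties}(2), extended in the obvious way from $\ell^2 \to \ell^2$ operators to operators $(\ell^2)^{\otimes 2} \to \ell^2$ (this extension is routine because the condition of uniform boundedness is defined componentwise on multi-indices and is stable under finite linear combinations), the combination $\tilde K := K_1 + K_2 - K_0$ satisfies the uniform boundedness condition with constants $\lambda$ and $3M$. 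In particular $\tilde K$ is Hilbert-Schmidt as a map $(\ell^2)^{\otimes 2} \to \ell^2$ by the same Stirling-type calculation used in proposition \ref{unif_bd_properties}(1).

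Second I would handle the outer factor. By corollary \ref{inverse_metric_unif_bd}, under the smallness hypothesis on $\| K \|$ and $M$ already in force, we may write $g^{-1} = \eta + H$ where $\eta$ is a bounded diagonal operator with $\| \eta \| = 1$ and $H$ is a compact operator on $\ell^2$ satisfying uniform boundedness with some constants $\lambda, M_H$. The composition $g^{-1} \circ \tilde K$ decomposes as $\eta \tilde K + H \tilde K$. For the first term, $\eta$ being bounded, part (4) of proposition \ref{unif_bd_properties} (again in its evident tensor-product extension, since $\eta$ acts only on the output index $\gamma$) yields uniform boundedness of $\eta \tilde K$ with constants $\lambda$ and $\| \eta \| \cdot 3M = 3M$. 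For the second term, $H \tilde K$, both factors are themselves uniformly bounded operators, so part (3) of proposition \ref{unif_bd_properties} applies and produces uniform boundedness with constants $\lambda$ and $3 C_\lambda M_H M$. Summing and inserting the prefactor $\tfrac12$ gives uniform boundedness of $\Gamma_0$ with constants $\lambda$ and $\tfrac32\bigl(M + C_\lambda M_H M\bigr)$.

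The main obstacle I anticipate is purely notational: the operators $K_0, K_1, K_2, \tilde K$ act between different tensor products than the $K$ treated in proposition \ref{unif_bd_properties}, so the clauses (1)--(4) of that proposition need to be read in the form of an obvious generalization where the multi-index set indexing the "output" coordinate still lies in $\vvmathbb{N}_0^n \setminus \{0\}$ while the input is indexed by a pair of such multi-indices. Once one grants this extension --- which is immediate because the Hilbert-Schmidt norm, operator composition, and differentiation with respect to position all factor through the output index --- the whole argument reduces to bookkeeping of the three constants $\lambda, M, M_H$. The quantitative bound on $\Gamma_0$ (and hence the conclusion that it is Hilbert-Schmidt as a map $(\ell^2)^{\otimes 2} \to \ell^2$) follows at once from $\mathrm{tr}\,\Gamma_0^*\Gamma_0 \le \bigl(\tfrac32\bigr)^2(M + C_\lambda M_H M)^2 \, C_\lambda < \infty$.
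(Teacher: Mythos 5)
Your decomposition follows the same route as the paper (closure properties of proposition \ref{unif_bd_properties} applied to $K_{0,1,2}$ and to $g^{-1}=\eta+H$ from corollary \ref{inverse_metric_unif_bd}), but there is a genuine gap at the step where you conclude that $H\tilde{K}$ --- and hence $\Gamma_0$ --- satisfies the condition of uniform boundedness \emph{with the same constant} $\lambda$. The condition is not a statement about the undifferentiated matrix entries alone: it requires $|\partial_\delta(\Gamma_0)^\gamma_{\alpha\beta}|\le\lambda'^{|\delta|+|\alpha|+|\beta|+|\gamma|}M'$ for \emph{all} multi-indices $\delta$, with $M'$ independent of $\delta$. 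Since both $H$ and $K_1+K_2-K_0$ have position-dependent entries, $\partial_\delta$ must be distributed across the product by the Leibniz rule, producing a sum over decompositions $\delta=\delta_1+\delta_2$ whose number of terms (together with the combinatorial weights) grows like a constant to the power $|\delta|$. Such a factor cannot be absorbed into $M'$; it can only be absorbed by degrading the geometric rate. This is exactly the point the paper's proof makes: after distributing the derivative across $g^{-1}$ and the $K_{0,1,2}$, it concludes that $\Gamma_0$ obeys the condition with $n\lambda$ in place of $\lambda$. Your claimed constants ($\lambda$ and $\tfrac{3}{2}(M+C_\lambda M_H M)$) are therefore not established, and the reading of proposition \ref{unif_bd_properties}(3) on which they rest --- product closure preserving $\lambda$ exactly --- fails for position-dependent factors for precisely this reason.

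The loss from $\lambda$ to $n\lambda$ is not cosmetic. Immediately after this proposition the paper stipulates $0<\lambda<\frac{1}{n}$ so that $0<n\lambda<1$, and the subsequent contraction-mapping construction of $\Gamma_\infty$ and the uniform bounds on the curvature all run with the degraded constant; a proof that silently keeps $\lambda$ unchanged would propagate an unjustified estimate through the rest of {\S}\ref{riemannian_appendix}. Two smaller points: your split $g^{-1}\tilde{K}=\eta\tilde{K}+H\tilde{K}$ is a good instinct, and the term $\eta\tilde{K}$ genuinely is harmless because $\eta$ is constant, but the split does not remove the problem in the $H\tilde{K}$ term; and your Hilbert-Schmidt bound should carry $C_\lambda^3$ rather than $C_\lambda$, since $\Gamma_0$ has three free multi-indices over which the geometric series must be summed.
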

\begin{proof}
	In view of proposition \ref{unif_bd_properties}, the result follows immediately from the existence of $K_{0,1,2}$ with the requisite properties along with the inverse metric from corollary \ref{inverse_metric_unif_bd}. The condition of uniform boundedness on derivatives $\partial_\alpha \Gamma_0$ can be seen by distributing the derivative across $g^{-1}$ and the $K_{0,1,2}$. Since every summand in the resulting expression satisfies the condition of uniform boundedness, so will the sum where the degrees of the derivatives must add up to $|\alpha|$. The combinatorial factor is bounded by a constant to the power of $|\alpha|$; thus, if $\lambda$ is the constant controlling $K_{0,1,2}$ and $g^{-1}$, $\Gamma_0$ itself will have the constant $n\lambda$ in its place.
\end{proof}

We see that, if $\Gamma_0$ is to have good convergence properties, we must stipulate that $0<\lambda<\frac{1}{n}$ so that $0<n\lambda<1$. Next, we have to show existence of the full Levi-Civita jet connection from that of $\Gamma_0$. Schematically we may write equation (\ref{Levi_Civita_formula}) as
\begin{equation}
	\Gamma = \Gamma_0 + \Gamma \Gamma.
\end{equation}
Let then
\begin{align}
	\Gamma_1 &= \Gamma_0 + \Gamma_0 \Gamma_0 \nonumber \\
	\Gamma_2 &= \Gamma_0 + \Gamma_1 \Gamma_1 \nonumber \\
	\Gamma_3 &= \Gamma_0 + \Gamma_2 \Gamma_2,
\end{align}
etc., defining inductively a sequence $\Gamma_n$ which we want to converge to a limit $\Gamma_\infty$, which should agree with the Levi-Civita jet connection whose existence up to any finite order is guaranteed by theorem \ref{exist_levi_civita}. The point is that now, with the aid of the condition of uniform boundedness on the components of the metric tensor and their derivatives, we have good control over the convergence of the formal series and thus are in a position to justify the existence of $\Gamma_\infty$, here to be defined as acting on vector fields in the direct limit, i.e., having non-zero components up to indefinitely high order.

It is evident that
\begin{align}
	\Gamma_{n+1} - \Gamma_n &= \Gamma_n \Gamma_n - \Gamma_{n-1} \Gamma_{n-1}
	\nonumber \\
	&= (\Gamma_0 + \Gamma_{n-1}\Gamma_{n-1})(\Gamma_0+\Gamma_{n-1}\Gamma_{n-1}) - \Gamma_{n-1} \Gamma_{n-1}.
\end{align}
Thus, by descent we obtain every $\Gamma_n$ as a polynomial in $\Gamma_0$. Not only this, we can say that
\begin{equation}
	\Gamma_n - \Gamma_{n-1} = O(\Gamma_0^{n+1}),
\end{equation}
as follows by a straightforward induction. Since successive differences decline as an increasing power of $\Gamma_0$, we have reason to expect convergence of the telescoped series
\begin{equation}
	\Gamma_0 + (\Gamma_1-\Gamma_0) + (\Gamma_2 - \Gamma_1) + \cdots
\end{equation}
to some limiting $\Gamma_\infty$. The informal argument has now to be put on a firm basis.

\begin{proposition}
	Suppose the metric assumes the form $g=\eta+K$ where $K$ satisfies a condition of uniform boundedness with constants $0<\lambda<1$ and $M>0$ so that $\Gamma_0$ exists and itself satisfies uniform boundedness with constants $0<\lambda_0<1$ and $M_0>0$. Then the $\Gamma_n$ converge in the strong topology to some bounded operator $\Gamma_\infty: (\ell^2)^{\otimes 2} \rightarrow \ell^2$. Moreover, the $\Gamma_\infty$ so obtained itself satisfies a condition of uniform boundedness with constants $0<\lambda_\infty<1$ and $M_\infty>0$ if $\lambda$ be taken sufficiently small.
\end{proposition}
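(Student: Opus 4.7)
The plan is to recognize the recursion $\Gamma_{n+1} = \Gamma_0 + \Gamma_n \Gamma_n$ as a fixed-point iteration for the schematic equation $\Gamma = \Gamma_0 + \Gamma \Gamma$ that defines the Levi-Civita jet connection in the formulation preceding the proposition, and to run a contraction-mapping argument in the Banach space of bounded operators $(\ell^2)^{\otimes 2} \to \ell^2$. The core output to establish is a pair of consecutive estimates: first, an a priori uniform bound $\|\Gamma_n\|_{\mathrm{op}} \le M^{*}$ valid for every $n$, and second, a geometric contraction estimate $\|\Gamma_{n+1}-\Gamma_n\|_{\mathrm{op}} \le \kappa \|\Gamma_n - \Gamma_{n-1}\|_{\mathrm{op}}$ with some $\kappa<1$, whereupon standard Banach-space reasoning delivers convergence in operator norm (hence in the strong topology) to a bounded operator $\Gamma_\infty$ which then satisfies $\Gamma_\infty = \Gamma_0 + \Gamma_\infty \Gamma_\infty$ by continuity of composition on bounded sets.

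For the uniform bound, I would argue inductively. Pick constants $\lambda^{*} \in (0,1)$ and $M^{*}>0$, so that for sufficiently small $\lambda$ and $M$ the Christoffel symbol $\Gamma_0$ satisfies the condition of uniform boundedness with $(\lambda^{*},M^{*})$. Proposition \ref{unif_bd_properties} then yields that if $\Gamma_{n}$ satisfies the condition with $(\lambda^{*},M^{*})$, then $\Gamma_n \Gamma_n$ does so with constants $(\lambda^{*}, C_{\lambda^{*}} M^{*2})$, and $\Gamma_0 + \Gamma_n \Gamma_n$ with $(\lambda^{*}, M_0 + C_{\lambda^{*}} M^{*2})$. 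Solvability of the quadratic inequality $M_0 + C_{\lambda^{*}} M^{*2} \le M^{*}$ for some admissible $M^{*}$ requires the discriminant condition $4 C_{\lambda^{*}} M_0 \le 1$, which can always be arranged by taking $\lambda$ (and hence $\lambda^{*}$, $M_0$) sufficiently small, given that $C_{\lambda^{*}} \to 0$ as $\lambda^{*}\to 0$. Fix $M^{*}$ as the smaller root; every $\Gamma_n$ inherits the condition of uniform boundedness with constants $(\lambda^{*},M^{*})$, and in particular the operator norms are bounded by a common constant that I shall continue to denote $M^{*}$.

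Next, write the telescoping identity already observed in the excerpt,
\begin{equation}
\Gamma_{n+1} - \Gamma_n = \Gamma_n \Gamma_n - \Gamma_{n-1} \Gamma_{n-1} = \Gamma_n (\Gamma_n - \Gamma_{n-1}) + (\Gamma_n - \Gamma_{n-1}) \Gamma_{n-1}.
\end{equation}
Combining this with the uniform bound gives $\|\Gamma_{n+1}-\Gamma_n\|_{\mathrm{op}} \le 2 M^{*} \|\Gamma_n - \Gamma_{n-1}\|_{\mathrm{op}}$. If, in addition, the smallness of $\lambda$ has been chosen so that $\kappa := 2 M^{*} < 1$---which is compatible with the bound of the preceding paragraph, since $M^{*}$ and $M_0$ both shrink as $\lambda \to 0$---we obtain a Cauchy sequence with geometric rate in the Banach space of bounded operators. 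Its limit $\Gamma_\infty$ is then a bounded operator, and $\Gamma_\infty = \Gamma_0 + \Gamma_\infty \Gamma_\infty$ by passing to the limit, the composition $(A,B) \mapsto AB$ being continuous on norm-bounded sets.

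For the final assertion that $\Gamma_\infty$ itself obeys the uniform-boundedness condition, I would differentiate the recursion termwise with respect to an arbitrary multi-index $\alpha$ and carry the induction through for every matrix element $\partial_\alpha (\Gamma_n)_{\beta\gamma}^{\delta}$, using again proposition \ref{unif_bd_properties} to absorb the Leibniz factors and the multinomial combinatorics into the constant $(n\lambda^{*})^{|\alpha|}$ type growth that was controlled in the preceding proposition on $\Gamma_0$. Strong convergence of the $\Gamma_n$ to $\Gamma_\infty$ forces convergence of the individual matrix elements and, with the uniform bounds on their derivatives, permits termwise differentiation of the limit (Courant--John style, as invoked above in the excerpt). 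The bound therefore passes to the limit, yielding constants $(\lambda_\infty, M_\infty)$ with $\lambda_\infty = \lambda^{*}$ and $M_\infty$ the common bound on $M_n$; both can be kept strictly less than unity when $\lambda$ is small. The hard part, and the one that controls everything, is the bookkeeping that simultaneously keeps $C_{\lambda^{*}} M_0$ small enough that the fixed-point quadratic has a real root $M^{*}$ and keeps $2 M^{*}<1$ so that the iteration contracts; once this is done, the rest follows by routine Banach-space estimates together with the permanence properties of proposition \ref{unif_bd_properties}.
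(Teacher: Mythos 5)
Your treatment of the convergence claim is essentially the paper's own argument with the abstract contraction mapping theorem unwound into explicit estimates: the paper sets $T(\Gamma)=\Gamma_0+\Gamma\Gamma$, restricts to the ball $\|H\|\le\|\Gamma_0\|$ after writing $\Gamma=\Gamma_0+H$, proves $\|T(H_1)-T(H_2)\|\le C_0\|\Gamma_0\|\,\|H_1-H_2\|$, and invokes the fixed-point theorem --- which is exactly your a priori bound plus geometric contraction of successive differences, since the $\Gamma_n$ are precisely the Picard iterates of $T$. The one thing you elide is what the schematic square $\Gamma_n\Gamma_n$ actually is: the paper spends the bulk of its proof constructing the operators $T_0,T_1,T_2$ out of partial traces, the star product $A*B$, and a modified inner product, precisely so that the quadratic terms of equation (\ref{Levi_Civita_formula}) become norm-estimable; your contraction constant $\kappa=2M^{*}$ should therefore carry the attendant factors of type $C_{\lambda_0}$ and $C_{n\lambda_0}$. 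Since you assume smallness anyway, that is only a bookkeeping defect, not a structural one.

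The genuine gap is in your final paragraph. You propose to push the uniform-boundedness condition, derivatives included, through the induction on $n$ with \emph{common} constants $(\lambda^{*},M^{*})$, and then to invoke the paper's proposition on permanence of the condition under norm limits. But the condition $|\partial_\alpha K_{\beta\gamma}|\le\lambda^{|\alpha|+|\beta|+|\gamma|}M$ is not stable under multiplication with the \emph{same} $\lambda$: differentiating a product produces the Leibniz sum $\sum_{\alpha_1+\alpha_2=\alpha}\alpha!/(\alpha_1!\alpha_2!)=2^{|\alpha|}$, so a product of two operators satisfying the condition with constant $\lambda$ satisfies it only with constant $2\lambda$ (this is exactly the degradation the paper itself records when it replaces $\lambda$ by $n\lambda$ in passing to $\Gamma_0$). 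In your induction this degradation compounds: if $\Gamma_n$ carries constant $\mu_n$, then $\Gamma_{n+1}=\Gamma_0+\Gamma_n\Gamma_n$ carries constant $\max(\lambda_0,2\mu_n)$, so $\mu_n\sim 2^{n}\lambda_0$ leaves the interval $(0,1)$ after finitely many steps, and the hypothesis of the limit proposition --- a single pair of constants valid for every $n$ --- is never verified. This is precisely why the paper does not argue iteratively at this point: it applies the degradation \emph{once}, to the fixed-point equation $\Gamma_\infty=T(\Gamma_\infty)$ itself, restricting $T$ to the portion of the ball on which a uniform bound with some $\lambda_1\in(\lambda_0,1)$ holds and arguing that the fixed point cannot lie on the boundary of that region. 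To repair your route you would need to show that the shrinking prefactors attached to the high-degree monomials in $\Gamma_0$ beat the growing $\lambda$-constants uniformly in the order of differentiation; the naive termwise bound does not give this, since summing $k^{|\alpha|}(CM_0)^k$ over the polynomial degree $k$ produces factorial growth in $|\alpha|$ rather than a bound of the form $\lambda_\infty^{|\alpha|}M_\infty$.
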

\begin{proof}
	First, as to existence of the limiting $\Gamma_\infty$. The bounded operators $(\ell^2)^{\otimes 2} \rightarrow \ell^2$, sc. $\mathscr{B}((\ell^2)^{\otimes 2},\ell^2)$, form a complete metric space (vide \cite{maccluer}, Proposition 2.3). Thus, we may hope to apply the contraction mapping theorem (\cite{reed_simon}, Theorem V.18). To prepare the ground for this step, it will be convenient to seek an operatorial expression for equation (\ref{Levi_Civita_formula}). We already have one for the leading part, namely in equation (\ref{leading_Christoffel_term}). For the remainder, we write
	\begin{equation}
		\Gamma = \Gamma_0 - T_1 - T_2 + T_0, 
	\end{equation}
	where $T_{0,1,2}$ are to be defined in a moment. First, we require notation for some modifications of the operators involved. If $\Gamma: (\ell^2)^{\otimes 2} \rightarrow \ell^2$ is represented in components by $\Gamma^\gamma_{\alpha\beta}$,
	let $\hat{\Gamma}: (\ell^2)^{\otimes 2} \rightarrow \ell^2$ be defined component-wise by $\hat{\Gamma}^\gamma_{\alpha\beta} := \Gamma^\beta_{\alpha\gamma}$. Since the three indices enter in the same way into the condition of uniform boundedness, if $\Gamma$ satisfies such a condition so will $\hat{\Gamma}$ with the same two constants $\lambda$ and $M$. The other modification we need is as follows. Suppose $A, B$ are functionals, i.e., map from $\ell^2 \rightarrow \vvmathbb{C}$. If they are given by components $A_\alpha$ and $B_\alpha$, define $A * B$ component-wise by $(A*B)_\alpha := \dfrac{\alpha!}{\alpha_1!\alpha_2!}\big|_{\alpha_{1,2} \ne 0} A_{\alpha_1} B_{\alpha_2} = P \circ (A \otimes B)$ where $P: (\ell^2)^{\otimes 2} \rightarrow \ell^2$ is the projection given by $e_\alpha \otimes e_\beta \mapsto \dfrac{(a+\beta)!}{\alpha!\beta!} e_{\alpha+\beta}$. To suppress the combinatorial factors, resort to a norm on $\ell^2$ obtained from the modified inner product $\langle e_\alpha, e_\beta \rangle = \frac{1}{\sqrt{\alpha!\beta!}}$; i.e., the orthonormal basis is $f_\alpha = \frac{1}{\sqrt{\alpha!}}e_\alpha$, $\alpha \in \vvmathbb{N}_0^n \setminus \{ 0\}$. In these terms, $P$ has norm $\| P \| = 1$.
	
	In terms of the notation just introduced, we can express $T_{0,1,2}$ as operators $(\ell^2)^{\otimes 2} \rightarrow \ell^2$ as follows:
	\begin{align}
		T_0 &:= \frac{1}{2} \mathrm{tr}_{23} g^{-1} \otimes \mathrm{tr}_{25} (g \circ \Gamma_1 ) * \Gamma_2 \\
		T_1 &:= \frac{1}{2} g^{-1} \circ \mathrm{tr}_{25} (g \circ \Gamma_1) * \hat{\Gamma}_2 \\
		T_2 &:= T_1 \circ S,
	\end{align}
	where $S$ is the symmetry induced by $\varv \otimes w \mapsto w \otimes \varv$ and we allow for the possibility that $\Gamma_1 \ne \Gamma_2$. Here, the subscripts indicate the slots over which the traces are to be taken.
	
	We wish to define a strong contraction $T$ in a suitable subspace of $\mathscr{B}((\ell^2)^{\otimes 2},\ell^2)$. Let therefore (in our compressed notation)
	\begin{equation}
		T(\Gamma) = \Gamma_0 + \Gamma \Gamma.
	\end{equation}
	If we can show $T$ to be a strong contraction, then its unique fixed point $\Gamma = T(\Gamma) = \Gamma_0 + \Gamma \Gamma$ will be the sought-for solution $\Gamma_\infty$. In fact, we are interested in the deviation from $\Gamma_0$ so set $\Gamma_{1,2} = \Gamma_0 + H_{1,2}$. In these terms, the putative contraction becomes 
	\begin{equation}
		T(H) = (\Gamma_0 + H)(\Gamma_0 + H).
	\end{equation}
	The subspace to which we restrict $T$ will be the ball $\| H \| \le \| \Gamma_0 \|$. If $T$ proves in fact to be a strong contraction, it will map from this ball into itself and we can employ $\Gamma_0$ to estimate $H$. For
	\begin{align}\label{contraction_estimate}
		\| T(H_1)-T(H_2) \| = &\| (\Gamma_0 + H_1)(\Gamma_0+H_1) - (\Gamma_0 + H_2)(\Gamma_0+H_2) \| \nonumber \\
		= &\| (H_1-H_2)\Gamma_0 + \Gamma_0(H_1-H_2) + H_1H_1 - H_2H_2 \| \nonumber \\
		= &\| (H_1-H_2)\Gamma_0 + \Gamma_0(H_1-H_2) + (H_1+H_2)(H_1-H_2)+ \nonumber \\
		& (H_1-H_2)H_1 + H_1(H_2-H_1) \| \nonumber \\
		\le &\| (H_1-H_2)\Gamma_0 \| + \| \Gamma_0(H_1-H_2) \| + \| (H_1+H_2)(H_1-H_2) \| + \nonumber \\
		&\| (H_1-H_2)H_1 \| + \| H_1(H_2-H_1) \|.
	\end{align}
	Recall at this juncture that these quadratic expressions schematically represent $T_0-T_1-T_2$ applied to their respective arguments. But $T_{0,1,2}$ are formed from given operators so we can estimate their norms. Certainly since $g=\eta+K$ with $\| K \| < 1$ sufficiently small that $g^{-1}=\eta+K_1$, $\| K_1 \| < 1$ (and $\| \eta \|=1$) we have $\| g \| \le 2$ and $\| g^{-1} \| \le 2$. Then
	\begin{equation}
		\| T_0 \| = \| \frac{1}{2} \mathrm{tr}_{23} g^{-1} \otimes \mathrm{tr}_{25} (g \circ \Gamma_1) * \Gamma_2 \| \le 2 C_{\lambda_0}^2 \| \Gamma_1 \| \| \Gamma_2 \|.
	\end{equation}
	Estimating $T_{1,2}$ is slightly more complicated since we have to put a bound on the star product. But we know in any case that the sum over the bionomial coefficients is given by $n^{|\alpha|}$ while $A$ and $B$ may be presumed to satisfy a condition of uniform boundedness with constants $\lambda_0$ and $M_0$. Hence, we can trace out one of them so as to find either $\| A*B \| \le C_{n\lambda_0} \| A \|$ or $\| A*B \| \le C_{n\lambda_0} \| B \|$. Applied to $\Gamma_{1,2}$, we conclude that
	\begin{equation}
		\| T_{1,2} \| \le 2 C_{n\lambda_0} \| \Gamma_{1,2}\|.
	\end{equation}
	(Since of course $\| S \| = 1$, we have that $T_2$ satisfies the same bounds as $T_1$.) Putting everything together, we have from equation (\ref{contraction_estimate}) that
	\begin{equation}
		\| T(H_1)-T(H_2) \| \le C_0 \| \Gamma_0 \| \| H_1 - H_2 \|,
	\end{equation}
	where $C_0>0$ is some positive constant. Now we may presume the bound $M$ on $K$ small enough that $C_0 \| \Gamma_0 \| < 1$ and $T$ will indeed be a strong contraction.
	
	We have proved therefore the existence of a unique $\Gamma_\infty$. As for the condition of uniform boundedness it ought to satisfy, we have to look at the structure of the $T_{0,1,2}$. Since they are composed of operators themselves satisfying uniform bounds, the overall expression must as well, according to the usual argument. A subtle point is raised by the star product. If we use the modified inner product with orthonormal basis $f_\alpha$, we can bound the star product by $\| A * B \| \le \| A \| \| B \|$. Hence by proposition \ref{unif_bd_properties}, $T=T_0-T_1-T_2$ obeys a condition of uniform boundedness, where the constant $\lambda_0$ has to be replaced by any $\lambda_1$ with $\lambda_0 < \lambda_1 < 1$. That is, we can restrict its domain to the part of the ball $\| \Gamma \| \le \| \Gamma_0 \|$ where uniform boundedness also holds (with respect to \textit{some} $\lambda_0<\lambda_1<1$ and $M_0<M_1<\infty$). If the fixed point were to lie on the boundary, where $M_1$ diverges, it would lead to a contradiction since we would then not be free to opt for another $\lambda_{11}$, $0<\lambda_0<\lambda_{11}<\lambda_1$. Thus the fixed point $\Gamma_\infty$ will lie inside this domain, or itself satisfy the uniform bound with $\lambda_1$ and $0<M_1<\infty$. Reverting now to the standard norm, the constant $\lambda_1$ must be further degraded to $\lambda_\infty$ with $\lambda_1 < \lambda_\infty < 1$.
\end{proof}

\begin{proposition}
	If the Christoffel symbols up to indefinitely high order are well defined in the sense that the sum over multi-indices converges, then the Levi-Civita jet connection is well defined as a mapping from $\mathscr{J}^\infty \times \mathscr{J}^\infty$ into $\mathscr{J}^\infty$.
\end{proposition}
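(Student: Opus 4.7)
The plan is to work stalkwise at each point $p \in M$, viewing elements of $\mathscr{J}^\infty_p$ as vectors in $\ell^2 := \ell^2(\vvmathbb{N}_0^n\setminus\{0\})$, so that finitely supported elements of the direct limit sit inside an ambient Hilbert space and the convergence hypothesis on the Christoffel symbols tells us the output still lies in $\ell^2$. From equation (\ref{jet_conn_on_vf}), $\nabla_X Y$ splits into three pieces according to whether $\alpha_2 = 0$, $\alpha_1 = 0$, or both are non-zero, and I would treat each piece separately.

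The piece with $\alpha_2 = 0$ reduces, via the convention $\nabla_0 = \mathrm{id}$ (whence $\Gamma^\gamma_{0\beta} = \delta^\gamma_\beta$), to $X^\alpha \partial_\alpha Y^\beta \partial_\beta$; this is a finite sum in $\alpha, \beta$ whenever $X, Y$ lie in the direct limit, and its coefficients are smooth in $p$. The piece with $\alpha_1 = 0$ is $X^\alpha Y^\beta \Gamma^\gamma_{\alpha\beta} \partial_\gamma$; here, for each fixed pair $(\alpha,\beta)$ with $X^\alpha Y^\beta$ non-vanishing, the sequence $(\Gamma^\gamma_{\alpha\beta})_\gamma$ is a matrix element of the operator $\Gamma_\infty \colon (\ell^2)^{\otimes 2} \to \ell^2$ whose existence was established in the preceding proposition, and hence lies in $\ell^2$ with norm controlled by $\|\Gamma_\infty\|$. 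A finite linear combination therefore yields a well-defined $\ell^2$-valued element, smooth in $p$ because the analogous uniform bound on $\partial_\delta \Gamma_\infty$ licenses differentiation under the $\gamma$-sum.

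The genuine cross-terms (both $\alpha_1, \alpha_2 \ne 0$) are the main obstacle, since the combinatorial factor $\alpha!/(\alpha_1!\alpha_2!)$ could in principle destroy convergence. The plan here is to recycle the star-product estimate from the proof of the preceding proposition: in the rescaled orthonormal basis $f_\alpha = (\alpha!)^{-1/2} e_\alpha$, the projection $P \colon e_\alpha \otimes e_\beta \mapsto \binom{\alpha+\beta}{\alpha} e_{\alpha+\beta}$ has unit norm, so $\|A * B\| \le \|A\| \|B\|$ whenever both factors lie in $\ell^2$. Since $X^\alpha$ and $\partial_{\alpha_1} Y^\beta$ vanish outside a finite set of multi-indices (as $X, Y$ lie in the direct limit), the cross-term contribution is a finite linear combination of star products of the form $\partial_{\alpha_1} Y * (\Gamma_\infty)_{\alpha_2,\beta}$, each with finite $\ell^2$ norm, and again lies in $\ell^2$.

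Adding the three pieces gives $\nabla_X Y|_p \in \ell^2 \cong \mathscr{J}^\infty_p$, now interpreted in the enlarged $\ell^2$ sense implicit in the convergence hypothesis. Smoothness of the section $p \mapsto \nabla_X Y|_p$ follows from smoothness of the metric components and of the $X^\alpha, Y^\beta$, combined with the uniform boundedness that licenses term-by-term differentiation. The three axioms of a jet connection (linearity over $C^\infty(M)$ in $X$, linearity over $\vvmathbb{R}$ in $Y$, and the Leibniz product rule) transfer from the finite-order case in lemma \ref{localjetconn} by precisely the same formal arguments, since all rearrangements involved are absolutely convergent in the relevant $\ell^2$ norms; coordinate-invariance is inherited from the covariance of $\Gamma_\infty$ already embedded in theorem \ref{exist_levi_civita}. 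The hard part throughout is the cross-term estimate, and once the rescaled basis is in place the remainder is routine bookkeeping.
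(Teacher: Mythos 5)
Your proof is correct, but it is considerably heavier than what the statement requires, and it misplaces the difficulty. The paper's own proof is two sentences: since $X,Y$ lie in the direct limit $\mathscr{J}^\infty$, their components vanish beyond some finite orders $r_X, r_Y$, so \emph{every} input summation — over $\alpha$, over $\beta$, and over the finitely many decompositions $\alpha_1+\alpha_2=\alpha$ of each fixed $\alpha$ — is finite; hence $\nabla_X Y$ is a finite linear combination of the objects $\Gamma^\lambda_{\alpha_2\beta}\partial_\lambda$, each of which is well defined precisely by the stated convergence hypothesis, and one is done. In particular, your ``main obstacle'' (the cross-terms with the factor $\alpha!/(\alpha_1!\alpha_2!)$) is no obstacle at all: that factor occurs only finitely many times, so it cannot affect convergence, and the rescaled basis $f_\alpha=(\alpha!)^{-1/2}e_\alpha$ and the star-product estimate you recycle are superfluous here. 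There is also a cost to your route: by invoking $\Gamma_\infty$ as a bounded operator $(\ell^2)^{\otimes 2}\rightarrow\ell^2$ together with the bounds on $\partial_\delta\Gamma_\infty$, you are quietly assuming the uniform-boundedness hypotheses of the preceding proposition, whereas the statement (and the paper's proof) needs only componentwise convergence of the Christoffel symbols — so your argument proves the conclusion under strictly stronger assumptions than stated. What your approach buys in exchange is genuine, though: quantitative $\ell^2$ control of the output, an explicit treatment of smoothness in $p$, and a sketch of why the connection axioms survive the passage to indefinite order — all points the paper leaves implicit (and indeed the paper shares your imprecision that the output, having possibly infinitely many non-zero components, really lives in the $\ell^2$-closure $\overline{\mathscr{J}^\infty}$ rather than in $\mathscr{J}^\infty$ itself, as its own later remark concedes).
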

\begin{proof}
	Let $X, Y \in \mathscr{J}^\infty$. This means that for some $r_{X,Y}$ the components $X_\alpha$, $Y_\beta$ vanish identically whenever $|\alpha|>r_X$ resp. $|\beta|>r_Y$. Then $\nabla_X Y$ reduces to a finite sum of type
	\begin{equation}
		X^\alpha \nabla_\alpha Y^\beta \partial_\beta = X^\alpha \frac{\alpha!}{\alpha_1!\alpha_2!} Y^\beta_{,\alpha_1} \Gamma^\lambda_{\alpha_2\beta} \partial_\lambda,
	\end{equation}
	where the summations extend over $|\alpha|\le r_X$, $|\beta|\le r_Y$. 
\end{proof}

In consequence,

\begin{proposition}
	If the Christoffel symbols up to indefinitely high order are well defined in the sense that the sum over multi-indices converges, then the Riemannian curvature endomorphism is well defined as a mapping from $\mathscr{J}^\infty \times \mathscr{J}^\infty \times \mathscr{J}^\infty$ into $\mathscr{J}^\infty$.
\end{proposition}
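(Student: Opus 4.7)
The plan is to reduce the statement to the preceding proposition, which guarantees that the Levi-Civita jet connection itself maps $\mathscr{J}^\infty \times \mathscr{J}^\infty$ into $\mathscr{J}^\infty$, and then to handle the Lie bracket term separately. Recall that $\mathscr{J}^\infty$ is defined as the direct limit $\mathscr{J}^\infty = \varinjlim_r \mathscr{J}^r$, so any element $X \in \mathscr{J}^\infty$ is represented by a finite-order higher tangent vector field, with order $r_X < \infty$. The key structural point is closure under all three operations appearing in definition \ref{def_curv}.

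First I would observe that by the preceding proposition, if $Y, Z \in \mathscr{J}^\infty$, then $\nabla_Y Z \in \mathscr{J}^\infty$ as well, whence $\nabla_X (\nabla_Y Z) \in \mathscr{J}^\infty$ for $X \in \mathscr{J}^\infty$ by a second application of the same proposition. The same reasoning gives $\nabla_Y \nabla_X Z \in \mathscr{J}^\infty$. Here no additional convergence issue arises beyond what is already secured by the hypothesis on the Christoffel symbols, because for $X, Y, Z$ of finite orders $r_X, r_Y, r_Z$, the iterated covariant derivative involves only finitely many sums over multi-indices $\alpha$ with $|\alpha| \le r_X$ (outer) and $|\alpha| \le r_Y$ (inner), with the output order bounded by $r_X + r_Y + r_Z$ after the two applications of $\nabla$ through the Christoffel prescription.

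For the bracket term, I would invoke the remark in {\S}2.2 that the commutator of two linear differential operators of orders $r_1$ and $r_2$ is a differential operator of order at most $r_1 + r_2 - 1$. Hence if $X, Y \in \mathscr{J}^\infty$ with representatives of orders $r_X, r_Y$, their bracket $[X,Y]$, viewed as a linear partial differential operator via equation \eqref{eq18}, has order at most $r_X + r_Y - 1 < \infty$; that is, $[X,Y] \in \mathscr{J}^{r_X + r_Y - 1} \subset \mathscr{J}^\infty$. Then $\nabla_{[X,Y]} Z$ lies in $\mathscr{J}^\infty$ by yet another application of the preceding proposition.

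Combining these three observations, the right-hand side of equation \eqref{def_curvature_endomorphism} is the difference of three elements of $\mathscr{J}^\infty$, hence itself lies in $\mathscr{J}^\infty$, which is the desired conclusion. The only delicate point is ensuring that the bracket $[X,Y]$ truly defines an element of $\mathscr{J}^\infty$—that is, that no sum over multi-indices of unbounded order arises in its computation; but since we start with finite-order representatives on both sides, the orders remain bounded throughout, and the hypothesis on the Christoffel symbols then carries us the rest of the way. Tensoriality (shown in theorem \ref{tensoriality_of_curvature}) ensures that the result does not depend on the choice of representatives in the direct limit.
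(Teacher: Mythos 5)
Your proposal is correct and takes essentially the same route as the paper: both reduce the two iterated covariant derivative terms to the immediately preceding proposition on the Levi-Civita jet connection, and both observe that $[X,Y]$ has components vanishing above order $r_X + r_Y - 1$, hence lies in $\mathscr{J}^\infty$, so the third term is covered by the same proposition. One small caveat: your parenthetical claim that the output order is bounded by $r_X + r_Y + r_Z$ overstates what holds (the paper's closing remark warns that the connection's and curvature's components may be non-zero up to indefinitely high order), but this claim is not load-bearing, since membership in $\mathscr{J}^\infty$ is supplied by the preceding proposition rather than by any explicit order bound.
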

\begin{proof}
	Let $X, Y, Z \in \mathscr{J}^\infty$. Since by definition,
	\begin{equation}
		R(X,Y)Z = \nabla_X \nabla_Y Z - \nabla_Y \nabla_X Z - \nabla_{[X,Y]} Z,
	\end{equation}
	in view of the immediately preceding result all we have to show is that $[X,Y] \in \mathscr{J}^\infty$. But by hypothesis there exist $r_{X,Y}$ such that $X^\alpha$ resp. $Y^\beta$ vanish identically whenever $|\alpha|>r_X$ resp. $|\beta|>r_Y$. It follows at once that $[X,Y]^\gamma=0$ identically if $|\gamma| > r_X + r_Y -1$. This statement is tantamount to $[X,Y] \in \mathscr{J}^\infty$. 
\end{proof}

\begin{remark}
	The point is that the order of multi-index up to which the summations extend depends on the vector fields in $\mathscr{J}^\infty$ chosen as arguments. In general, one must expect that the components of the Levi-Civita jet connection and of the Riemannian curvature themselves may be non-zero up to indefinitely high order.
\end{remark}

To save on repetition, let us now make the standing assumption that the metric assumes the form $g=\eta+K$ where $K$ satisfies the condition of uniform boundedness with $0<\lambda<1$ and $M>0$ sufficiently small that the same applies to the Christoffel symbol $\Gamma$; i.e., $0 < \lambda_\infty < \frac{1}{n}$ as well.

\begin{proposition}\label{riemannian_unif_bd}
	The Riemannian curvature endomorphism satisfies a condition of uniform boundedness in the sense that its components obey
	\begin{equation}\label{riemann_unif_est}
		|\partial_\alpha R_{\beta\gamma\delta}^\mu| \le \lambda_1^{|\alpha|+|\beta|+|\gamma|+|\delta|+|\mu|}M_1
	\end{equation}
	for $\lambda_1=n\lambda_\infty$ and some $M_\infty<M_1<\infty$.
\end{proposition}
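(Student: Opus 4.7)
The plan is to bootstrap the uniform estimate from the Christoffel symbols to the curvature by direct expansion in coordinates. Since $[\partial_\beta,\partial_\gamma]=0$ for coordinate-basis higher tangents (mixed partials commute), the term $\nabla_{[X,Y]}Z$ drops out when $R$ is evaluated on a coordinate frame and one is left with
\begin{equation}
R^\mu_{\beta\gamma\delta}=\partial_\beta\Gamma^\mu_{\gamma\delta}-\partial_\gamma\Gamma^\mu_{\beta\delta}+\frac{\beta!}{\beta_1!\beta_2!}\partial_{\beta_1}\Gamma^\lambda_{\gamma\delta}\,\Gamma^\mu_{\beta_2\lambda}-\frac{\gamma!}{\gamma_1!\gamma_2!}\partial_{\gamma_1}\Gamma^\lambda_{\beta\delta}\,\Gamma^\mu_{\gamma_2\lambda},
\end{equation}
with the implicit summations running over all splittings $\beta_1+\beta_2=\beta$ and $\gamma_1+\gamma_2=\gamma$ (and over the intermediate multi-index $\lambda$). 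This is the analogue of the classical formula $R = d\Gamma + \Gamma\wedge\Gamma$, now carrying cross-order combinatorial weights dictated by the Leibniz rule of the jet connection.

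I then apply $\partial_\alpha$ to this expression and distribute via the higher-order Leibniz rule of {\S}\ref{chapter_3}, producing a finite sum of monomials of the form $\partial_{\alpha_0}\partial_{\mu_1}\Gamma^\lambda_{\mu_2\mu_3}\cdot\partial_{\alpha_0'}\Gamma^\mu_{\nu_1\nu_2}$ (for the quadratic terms) and $\partial_\alpha\partial_\beta\Gamma^\mu_{\gamma\delta}$ (for the linear terms), each weighted by a multinomial coefficient. Using the standing assumption that $\Gamma$ itself satisfies $|\partial_\sigma\Gamma^\gamma_{\alpha\beta}|\le\lambda_\infty^{|\sigma|+|\alpha|+|\beta|+|\gamma|}M_\infty$, every such monomial is bounded by $M_\infty^2\,\lambda_\infty^{N}$, where $N$ is the total degree in every free and summed multi-index appearing in the monomial. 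The essential observation is that the intermediate multi-index $\lambda$ contributes a factor $\lambda_\infty^{2|\lambda|}$, so its summation converges and yields a bounded constant of the type $C_{\lambda_\infty}$ introduced before proposition \ref{unif_bd_properties}.

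The remaining combinatorial factors are handled as follows. Each application of the Leibniz rule to a product of two factors introduces a sum $\sum_{\alpha_1+\alpha_2=\alpha}\alpha!/(\alpha_1!\alpha_2!)\le n^{|\alpha|}$, by the multinomial identity applied coordinate-by-coordinate. Because only a bounded number of Leibniz expansions (in fact at most three: one outer $\partial_\alpha$ and two internal splittings for the $\Gamma\Gamma$ terms) occur, the accumulated combinatorial weight is bounded by a constant multiple of $n^{|\alpha|+|\beta|+|\gamma|}$. Absorbing these powers of $n$ into the base of the geometric factor converts the exponent from $\lambda_\infty$ to $\lambda_1=n\lambda_\infty$. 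The hypothesis $\lambda_\infty<1/n$ (forced in the preceding proposition to make $C_{\lambda_\infty}$ well-defined) guarantees $\lambda_1<1$, as required by the statement of \ref{riemannian_unif_bd}. Collecting everything with $M_1$ absorbing the finite multiplicative constants $M_\infty^2,C_{\lambda_\infty}$ and a combinatorial overhead depending only on $n$ gives the desired estimate \eqref{riemann_unif_est}.

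The main technical obstacle is bookkeeping: one has to verify that every summation in the expanded expression for $\partial_\alpha R^\mu_{\beta\gamma\delta}$ remains absolutely convergent and that the geometric factors line up so that the final exponent is precisely the sum $|\alpha|+|\beta|+|\gamma|+|\delta|+|\mu|$ and no free multi-index is double-counted. I expect the cleanest route to be the operator-theoretic rephrasing used in the preceding two propositions, so that $R$ is realized as a finite algebraic combination of operators (tensor products, partial traces, and the star product $*$) built from $\Gamma$ and $\partial\Gamma$, each of which preserves the uniform-boundedness class by proposition \ref{unif_bd_properties}; then the conclusion is automatic and the explicit constant $M_1$ can be read off from the bounds there.
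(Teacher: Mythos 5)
Your proposal is correct and follows essentially the same route as the paper's own proof: expand $R^\mu_{\beta\gamma\delta}$ in the coordinate frame (where the commutator term vanishes) as Leibniz-weighted products of $\Gamma$ and its derivatives, apply $\partial_\alpha$, and estimate term by term using the standing uniform bound on $\Gamma$, with the geometric sum over the contracted multi-index converging and the multinomial factors absorbed into $\lambda_1=n\lambda_\infty$. The paper compresses all of this bookkeeping into a single sentence, so your write-up is simply a more explicit version of the same argument.
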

\begin{proof}
	If we work in the coordinate basis, the commutators $[\partial_\alpha,\partial_\beta]=0$ identically. Hence, the components of the Riemannian curvature are given by
	\begin{align}
		R_{\alpha\beta\gamma}^\delta &= \nabla_\alpha \nabla_\beta \partial_\gamma - \nabla_\beta \nabla_\alpha \partial_\gamma \nonumber \\
		&= \frac{\alpha!}{\alpha_1!\alpha_2!} \Gamma^\mu_{\beta\gamma,\alpha_1} \Gamma^\delta_{\alpha_2\mu} -
		\frac{\beta!}{\beta_1!\beta_2!} \Gamma^\mu_{\alpha\gamma,\beta_1} \Gamma^\delta_{\beta_2\mu}.
	\end{align}
	If we apply $\partial_\alpha$ to $R^\mu_{\beta\gamma\delta}$ from the above expression, we obtain
	\begin{equation}
		\partial_\alpha R^\mu_{\beta\gamma\delta} = \frac{\alpha!}{\alpha_1!\alpha_2!} \left( \frac{\beta!}{\beta_1!\beta_2!} \Gamma^\nu_{\gamma\delta,\alpha_1+\beta_1} \Gamma^\mu_{\beta_2\nu,\alpha_2} -
		\frac{\gamma!}{\gamma_1!\gamma_2!} \Gamma^\nu_{\beta\delta,\alpha_1+\gamma_1} \Gamma^\mu_{\gamma_2\nu,\alpha_2}.
		\right)
	\end{equation}
	Clearly, then the estimates on the Christoffel symbols and their derivatives suffice to show equation (\ref{riemann_unif_est}).
\end{proof}

\begin{proposition}
	The Ricci tensor is well defined and satisfies a condition of uniform boundedness in the sense that
	\begin{equation}\label{ricci_unif_bd}
		| \partial_\alpha R_{\mu\nu} | \le \lambda_1^{|\alpha|+|\mu|+|\nu|}M_2
	\end{equation}
	with $M_1<M_2<\infty$. Moreover, the scalar curvature is given by a convergent series.
\end{proposition}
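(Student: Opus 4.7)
The plan is to obtain the Ricci tensor $R_{\mu\nu}$ by contracting the Riemann curvature endomorphism over one upper and one lower index, and then to obtain the scalar curvature $R$ by a further contraction against the inverse metric. The novelty here is that the indices being summed over are multi-indices of arbitrary order, so what in the classical setting is a finite trace becomes an infinite sum; the whole task is therefore to show that these sums converge and that the bounds they yield propagate the uniform-boundedness condition.

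First I would write
\begin{equation}
R_{\mu\nu} := \sum_\lambda R^\lambda_{\mu\lambda\nu},
\end{equation}
where the sum extends over all multi-indices $\lambda \ne 0$. By proposition \ref{riemannian_unif_bd}, each summand is controlled as
\begin{equation}
|R^\lambda_{\mu\lambda\nu}| \le \lambda_1^{|\mu|+2|\lambda|+|\nu|} M_1.
\end{equation}
Grouping the terms according to $|\lambda| = r$ and using the standard count of multi-indices of a given order,
\begin{equation}
\# \{\lambda : |\lambda| = r\} = \binom{n+r-1}{r},
\end{equation}
the triangle inequality yields
\begin{equation}
|R_{\mu\nu}| \le M_1 \lambda_1^{|\mu|+|\nu|} \sum_{r \ge 1} \binom{n+r-1}{r} \lambda_1^{2r} \le M_1 \lambda_1^{|\mu|+|\nu|} C_{\lambda_1},
\end{equation}
with $C_{\lambda_1}$ the finite sum whose convergence (for $\lambda_1 < 1$) was already established in the preceding subsection via the Stirling bound on the binomial coefficients. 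Thus $R_{\mu\nu}$ is well defined as an absolutely convergent series, and setting $M_2 := M_1 C_{\lambda_1}$ gives the desired bound for $\alpha = 0$.

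For arbitrary $\alpha$, absolute convergence of the defining series allows me to interchange $\partial_\alpha$ with the sum over $\lambda$, and the bound from proposition \ref{riemannian_unif_bd} for the derivatives of the Riemann tensor yields
\begin{equation}
|\partial_\alpha R_{\mu\nu}| \le M_1 \lambda_1^{|\alpha|+|\mu|+|\nu|} \sum_{r \ge 1} \binom{n+r-1}{r} \lambda_1^{2r} = M_2 \lambda_1^{|\alpha|+|\mu|+|\nu|},
\end{equation}
as claimed; one verifies $M_1 < M_2 < \infty$ since $C_{\lambda_1} > 1$ (the $r = 1$ term alone contributes $n \lambda_1^2 > 0$, and there is no harm in enlarging $M_2$ if need be).

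Finally, for the scalar curvature I would use corollary \ref{inverse_metric_unif_bd}, which writes $g^{-1} = \eta + H$ with $\eta$ the identity part and $H$ compact and uniformly bounded, say $|\partial_\alpha H^{\mu\nu}| \le \lambda^{|\alpha|+|\mu|+|\nu|} M_H$. Then
\begin{equation}
R = g^{\mu\nu} R_{\mu\nu} = \sum_\mu \eta^{\mu\mu} R_{\mu\mu} + \sum_{\mu,\nu} H^{\mu\nu} R_{\mu\nu}.
\end{equation}
The first sum is bounded in absolute value by $M_2 \sum_\mu \lambda_1^{2|\mu|} = M_2 C_{\lambda_1}$, while the second is bounded by $M_H M_2 \sum_{\mu,\nu} \lambda^{|\mu|+|\nu|} \lambda_1^{|\mu|+|\nu|} \le M_H M_2 C_{\lambda\lambda_1}^2$, which is finite since $\lambda\lambda_1 < 1$. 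Hence $R$ is a convergent series, and by the same absolute convergence its arbitrary derivatives may be obtained termwise and likewise bounded uniformly.

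The main obstacle I anticipate is the bookkeeping for the interchange of sum and derivative and the verification that $C_{\lambda_1}$ and its analogues remain finite after all the multiplicative constants are absorbed; these are however straightforward given the standing assumption that $\lambda < 1/n$, which was already imposed to make the Levi-Civita jet connection converge. No new analytic input beyond the uniform-boundedness apparatus already assembled in {\S}\ref{riemannian_appendix} is required.
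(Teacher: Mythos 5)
Your proposal is correct and follows essentially the same route as the paper's own (much terser) proof: contract $R^\lambda_{\mu\lambda\nu}$ over the multi-index $\lambda$, apply $\partial_\alpha$ termwise, invoke the uniform bound of proposition \ref{riemannian_unif_bd}, and then contract against $g^{-1}=\eta+H$ using the standing assumption on the metric for the scalar curvature. The extra detail you supply --- the multi-index count $\binom{n+r-1}{r}$, the factor $C_{\lambda_1}$, and the explicit choice $M_2 = M_1 C_{\lambda_1}$ --- is exactly what the paper leaves implicit, so there is no substantive difference.
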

\begin{proof}
	The components of the Ricci tensor are obtained by contraction:
	\begin{equation}
		R_{\mu\nu} := R_{\mu\lambda\nu}^\lambda.
	\end{equation}
	We may apply $\partial_\alpha$ term-wise to the right hand side. Then the claim follows from proposition \ref{riemannian_unif_bd}. The estimate (\ref{ricci_unif_bd}) with $\alpha=0$ along with the standing assumption on the metric  ensures convergence when we contract once again to yield the scalar curvature: $R = R_\mu^\mu = g^{\mu\nu}R_{\mu\nu}$.
\end{proof}

It follows that the Riemannian curvature and Ricci tensors correspond to well-behaved Hilbert-Schmidt and hence compact operators. This suggests that it is indeed sensible to speak of them as applying to vector fields lying in the closure $\overline{\mathscr{J}^\infty}$ of the direct limit $\mathscr{J}^\infty$ with respect to the $\ell^2$-norm. But the mathematical status of such a construct is not very clear at present. Another point deserving mention here concerns the scope of general covariance. From the point of view of spatial intuition, it seems that Riemannian geometry would have a natural interpretation only when the content of the metric is concentrated in low order in the infinitesimals; that is, while we may allow infinitesimals of arbitrarily high order, we want them to fall off in importance as their order tends to infinity. This means that we must confine ourselves to changes of coordinate away from the Cartesian frame with a suitable degree of regularity, since a proviso such as this would not be respected by an arbitrary smooth transformation. Thus, the principle of general covariance has somehow to be circumscribed, at least for models to be put forward as having relevance to physics---clearly a topic ripe for further investigation!

	\section{Revisiting the Foundations of the Integral Calculus}\label{chapter_5} 

Einstein, when formulating the general theory of relativity, could content himself with the differential and integral calculus in pretty much the form in which Newton and Leibniz left it, but, after the introduction of higher-order infinitesimals, the received concepts of the mathematical tradition ever since the seventeenth century are clearly no longer sufficient for our purposes. We have already indicated in {\S}\ref{chapter_3} how the role of differentiation is to be reconceived; we have now in this section to take up that of integration as well. The basic theory of integration of differentials of higher than first order on the real line is first surveyed. We proceed to Euclidean space in many dimensions and then to an arbitrary differentiable manifold, where one wants to formulate the theory in an intrinsic manner. The main difficulty to overcome is to frame a suitable generalization of the notion of orientation so as to allow passage between any two coordinate charts. Lastly, it is shown how a generalized Riemannian metric leads to a canonical choice of the macroscopic part of the volume form, thereby extending the usual theory.

\subsection{Modification of the Integral in the Presence of Non-Uniform Flow of Time}
\label{quadrature_on_real_line}

In real analysis as currently known, the procedure of integration (whether Riemann's or Lebesgue's) presumes a uniform standard of time, if we choose to designate the abscissa as a temporal dimension. By instituting a truncation to first order, Newton and Leibniz suppose that, as far as integration is concerned, the relevant part of every motion must be reducible to the case of uniform acceleration, locally in time.\footnote{Consider, that is, a body of mass $m$ in motion subject to a force $F$. The impulse it suffers over the interval of time from $t_0$ to $t_1$ may be written as $\int_{t_0}^{t_1} F dt$. Here, the acceleration as a function of time is $a = F/m$. For the purpose of performing the integral defining the impulse, the body's acceleration $a$ is regarded as being approximately constant over the infinitesimal increment of time from $t$ to $t+dt$.} For us, the key point will be that the difference quotient with which the time derivative is computed presupposes a uniform measure of time and therefore cannot be a covariant notion. Lipschitz \cite{lipschitz} in an 1872 paper on the variational calculus hints that the integrand (in his case, the action of classical mechanics) ought to be definable in an manner independent of the choice of coordinates. As soon as one considers integration of a differential of higher than first order, truncation as is customary in the calculus up to now clearly ceases to be admissible. Therefore, in {\S}\ref{quadrature_on_real_line}, we recur to the elementary problem of integration over a single real variable. In the remainder of this section, we consider first Euclidean spaces of any dimensions and then advance an intrinsic theory of integration adapted to the higher-order setting, on an arbitrary differentiable manifold.

Proceeding now to a formal definition of the integral, we have merely to recast Berkeley's antique language into modern parlance \cite{berkeley_analyst}. To render the sought-for integral intrinsic, replace the function forming the integrand with a jet; just as is done already in the calculus on differentiable manifolds where the integrand on the real line would be a 1-form, or equivalently, a 1-jet field, now it will be substituted with an $r$-jet field. The immediate and simplest non-trivial case would be to integrate $d^{xx}$, i.e., a differential expression of second order having a constant coefficient. In line with Berkeley's suggestions, we want its integral $F$ to be a function having quadratic rather than linear increase. Therefore, the natural proposal would be $F := \frac{1}{2} x^2$ so that $F^{\prime\prime}=1$ identically, viz., the coefficient of the constant second-order differential $d^{xx}$. Note in this context that just as in the ordinary calculus the antiderivative can be defined only up to an additive constant, here it will be defined up to an arbitrary linear expression. Clearly, the right way to integrate a constant differential of arbitrary order $k$, or
$d^{\overbrace{x\cdots x}^{k ~ \mathrm{times}}}$, would be to set $F_k := \dfrac{1}{k!} x^k$.

The problem that follows right away would be to integrate a differential expression having spatial dependence, say $f d^{xx}$. A moment's reflection leads to the following formula:
\begin{equation}
	F(x) := \int_0^x dx^\prime \int_0^{x^\prime} dx^{\prime\prime} f(x^{\prime\prime}),
\end{equation}
for then
\begin{equation}
	F^\prime(x) = \int_0^x dx^\prime f(x^\prime)
\end{equation}
whence
\begin{equation}
	F^{\prime\prime}(x) = f(x),
\end{equation}
as it should. The general expression for the antiderivative of a spatially dependent differential of arbitrary order $k$, $f d^{\overbrace{x\cdots x}^{k ~ \mathrm{times}}}$ would then assume the form,
\begin{equation}\label{antideriv_def}
	F_k(x) := \int_0^x dx^\prime \int_0^{x^\prime} dx^{\prime\prime} \cdots 
	\int_0^{x^{(k-1)}} dx^{(k)} f(x^{(k)}).
\end{equation}
As we demand the obvious property of linearity, the antiderivative $F = \int \omega$ of the general $r$-jet field $\omega = f_1 d^x + f_2 d^{xx} + \cdots + f_r d^{\overbrace{x\cdots x}^{r ~ \mathrm{times}}}$ can now simply be obtained as a superposition over $k=1,\ldots,r$ of each of its monomial parts $F_k$ as defined by equation (\ref{antideriv_def}); i.e., $F := F_1 + \cdots + F_r$.

\begin{remark}\label{indefinite_part_convention}
	Implicitly, the antiderivative will be unique only up to addition of an arbitrary polynomial of $(r-1)$-th degree. If, however, we wish it to have the desirable property that
	\begin{equation}
		\int_{x_0}^{x_1} \omega + \int_{x_1}^{x_2} \omega = \int_{x_0}^{x_2} \omega,
	\end{equation}
	the natural choice would be to equate the indefinite part to the extrapolation of whatever lies to the left; namely, to set
	\begin{equation}\label{antideriv_indefinite_part}
		F := F_1 + \cdots + F_r + a_0 + a_1 x + \frac{1}{2} a_2 x^2 + \cdots +\frac{1}{(r-1)!} a_{r-1} x^{r-1},
	\end{equation}
	where the coefficients are to be determined by $a_k := F^{(k)} |_{0^-}$ (the derivatives being evaluated from the left). The convention here adopted, needless to say, is essential to the validity of theorem \ref{fundamental_theorem_calculus}, which we are about to state in a moment. Proof of the coordinate-invariance of the antiderivative so defined will be deferred to {\S}\ref{integration_on_manif}.
\end{remark}

\begin{proposition}
	The antiderivative $F = \int \omega$ of an arbitrary generalized 1-form $\omega$ on $\vvmathbb{R}$, as defined by Equations \ref{antideriv_def} and \ref{antideriv_indefinite_part}, satisfies	
	\begin{equation}
		\int_{x_0}^{x_1} \omega + \int_{x_1}^{x_2} \omega = \int_{x_0}^{x_2} \omega.
	\end{equation}	
\end{proposition}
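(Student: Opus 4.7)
The plan is to reduce both sides of the asserted equality to evaluations of a single canonical antiderivative on $[x_0,x_2]$. Let $F_k(x) := \int_{x_0}^x \int_{x_0}^{y_1} \cdots \int_{x_0}^{y_{k-1}} f_k(y_k)\, dy_k \cdots dy_1$ be the $k$-fold iterated integral of the $k$-th coefficient of $\omega = \sum_{k=1}^r f_k\, d^{\overbrace{x\cdots x}^{k ~\mathrm{times}}}$ anchored at $x_0$, and set $G := \sum_{k=1}^r F_k$. Since no interval lies to the left of $x_0$, remark~\ref{indefinite_part_convention} forces the indefinite polynomial part of the antiderivative on $[x_0,\cdot]$ to vanish identically, so that $\int_{x_0}^{x_1} \omega = G(x_1)$ and $\int_{x_0}^{x_2} \omega = G(x_2)$ directly from the definition. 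It suffices therefore to show $\int_{x_1}^{x_2} \omega = G(x_2) - G(x_1)$.

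The main tool is Taylor's formula with integral remainder, applied to each summand separately. For each $k$ and $x \in [x_1, x_2]$,
\begin{equation}
F_k(x) = \sum_{j=0}^{k-1} F_k^{(j)}(x_1)\, \frac{(x-x_1)^j}{j!} + \frac{1}{(k-1)!}\int_{x_1}^x (x-t)^{k-1} f_k(t)\, dt,
\end{equation}
and Cauchy's formula for repeated integration identifies the integral remainder with $\tilde F_k(x) := \int_{x_1}^x \int_{x_1}^{y_1} \cdots \int_{x_1}^{y_{k-1}} f_k(y_k)\, dy_k \cdots dy_1$. Summing over $k$ and exchanging the order of summation,
\begin{equation}
G(x) \;=\; \sum_{j=0}^{r-1} \Bigl[\, \sum_{k>j} F_k^{(j)}(x_1) \,\Bigr]\, \frac{(x-x_1)^j}{j!} \;+\; \sum_{k=1}^r \tilde F_k(x).
\end{equation}

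It remains to identify this expression with the antiderivative $\tilde G$ on $[x_1, x_2]$ prescribed by remark~\ref{indefinite_part_convention}. Writing $\tilde G(x) = \sum_k \tilde F_k(x) + \sum_{j=0}^{r-1} a_j (x-x_1)^j/j!$ and imposing the matching of left and right derivatives at $x_1$, viz.\ $\tilde G^{(j)}(x_1^+) = G^{(j)}(x_1^-)$ for $j=0,\ldots,r-1$, one uses $\tilde F_k^{(j)}(x_1) = 0$ for $k>j$ and $\tilde F_k^{(j)}(x_1) = f_k^{(j-k)}(x_1) = F_k^{(j)}(x_1)$ for $k \le j$ to conclude, after cancellation of the common local derivatives, that $a_j = \sum_{k>j} F_k^{(j)}(x_1)$---exactly the Taylor coefficient displayed above. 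Consequently $G \equiv \tilde G$ throughout $[x_1, x_2]$, and therefore
\begin{equation}
\int_{x_0}^{x_1} \omega + \int_{x_1}^{x_2} \omega = G(x_1) + \bigl[\,\tilde G(x_2) - \tilde G(x_1)\,\bigr] = G(x_2) = \int_{x_0}^{x_2} \omega.
\end{equation}

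The only non-routine step is the final identification: one must verify that the convention's indefinite-polynomial coefficients reproduce precisely the Taylor coefficients of $G$ at $x_1$. The hard part is not the Taylor formula itself, which proceeds by inspection using $F_k^{(k)} = f_k$, but rather the bookkeeping that separates the "local" derivative data $f_k^{(j-k)}$ (for $k \le j$), already supplied automatically by the iterated integrals $\tilde F_k$ on the right-hand interval, from the residual \emph{iterated-integral} derivatives $F_k^{(j)}(x_1)$ with $k > j$, which alone must be transported across $x_1$ through the indefinite polynomial --- exactly what Taylor's theorem delivers and the convention prescribes.
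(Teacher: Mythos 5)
Your argument is correct, and it is not the paper's argument; it is in fact tighter. Both proofs split at the junction $x_1$ and invoke Taylor's theorem, but the paper applies Taylor to the coefficient functions $f_k$ and then propagates the indefinite constants strictly order-by-order, asserting that the constants governing $[x_1,x_2]$ are $b_k=f_k(x_1)+a_k$. You instead apply Taylor with integral remainder to the iterated integrals $F_k$ themselves, identify the remainder with the re-anchored integral $\tilde F_k$ via Cauchy's repeated-integration formula, and find that the data to be transported across the junction are $a_j=\sum_{k>j}F_k^{(j)}(x_1)$, which mix orders. The two junction rules are incompatible, and yours is the one that actually delivers additivity. Take $\omega=f_2\,d^{xx}$ with $f_1=0$: Taylor gives $G(x)-G(x_1)-\tilde F_2(x)=\bigl(\int_{x_0}^{x_1}f_2\bigr)\,(x-x_1)$, a first-order drift, which is exactly your $a_1(x-x_1)$; the paper's rule supplies no drift but instead a second-order term $f_2(x_1)(x-x_1)^2/2$, so under its bookkeeping the continued antiderivative does not agree with $G$ and the asserted identity fails (e.g.\ $f_2(x)=x$, $x_0=0$, $x_1=1$, $x_2=3$). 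What your route buys is precisely the cross-order terms $F_k^{(j)}(x_1)$, $k>j$, that the paper's order-diagonal accounting drops---and once $r\ge 2$ those terms are the whole content of the statement. What the paper's route buys is brevity, at the cost of a junction identity that does not withstand scrutiny.

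One point you should make explicit: your matching condition $\tilde G^{(j)}(x_1^+)=G^{(j)}(x_1^-)$ is not the literal text of remark \ref{indefinite_part_convention}, which sets $a_j$ equal to the left derivative $G^{(j)}(x_1^-)$ itself. Since the re-anchored part $\sum_k\tilde F_k$ already has the nonzero derivatives $\sum_{k\le j}f_k^{(j-k)}(x_1)$ at $x_1^+$, the literal reading double-counts these local terms; already for $\omega=f_1\,d^x$ regarded as a $2$-jet field ($f_2\equiv 0$) it produces a spurious drift $f_1(x_1)(x-x_1)$ and additivity fails. Your proof therefore tacitly replaces the remark's prescription by the derivative-matching prescription, which your own computation shows is the unique choice making the proposition true. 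That is a defensible---indeed forced---reading of ``extrapolation of whatever lies to the left,'' but it is doing normative work in your argument, not mere bookkeeping, and should be flagged as such.
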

\begin{proof}
	After an immediate change of variables, equation (\ref{antideriv_def}) assumes the form,
	\begin{equation}\label{antideriv_def_translated}
		F_k(x) := \int_{x_0}^x dx^\prime \int_{x_0}^{x^\prime} dx^{\prime\prime} \cdots 
		\int_{x_0}^{x^{(k-1)}} dx^{(k)} f(x^{(k)}),
	\end{equation}
	which of course always amounts to $\int_{x_0}^{x_1}$ of a $(k-1)$-th order antiderivative. But then, for the definite part of equation (\ref{antideriv_indefinite_part}), each $F_k$ term ($k=1,\ldots,r$) will involve just an identity in the ordinary integral, namely, $\int_{x_0}^{x_1} + \int_{x_1}^{x_2} = \int_{x_0}^{x_2}$. It remains to check the indefinite part. Now, evidently our convention of remark \ref{indefinite_part_convention} is tantamount to the replacement of $\omega$ with
	\begin{equation}
		\tilde{\omega} := a_0 + (f_1+a_1) d^x + (f_2+a_2) d^{xx} + \cdots + (f_r+a_r) d^{\overbrace{x\cdots x}^{r ~ \mathrm{times}}}.
	\end{equation}
	Let us denote the integrand corresponding to $\int_{x_0}^{x_1}$ by $\tilde{\omega}$ while that corresponding to $\int_{x_1}^{x_2}$ by $\tilde{\eta}$, where the additional constant terms appearing in $\tilde{\eta}$ will be denoted with the letters $b_0, \ldots, b_r$. Therefore, all that has to be shown is that $b_k = f_k(x_1)+a_k$, $k=0, \ldots, r$. Now, after the above-mentioned change of variables, the antiderivative corresponding to a constant unit coefficient would be just $F_k = \frac{1}{k!} (x_1-x_0)^k$. By continuity, however, around $x=x_1+\varepsilon$ we have from Taylor's theorem (\cite{courant_john}, {\S}5.4b),
	\begin{equation}
		f_k(x)=f_k(x_1)+f_k^\prime(x_1)\varepsilon+\cdots+\frac{1}{r!}f_k^{(r)}\varepsilon^r+R,
	\end{equation}
	where it is a matter of indifference whether one employs Lagrange's or Cauchy's formula for the remainder $R$, for in any case it will be $o(\varepsilon^r)$. Hence, extracting the derivatives from the left as stipulated by our convention (remembering that we are using $\tilde{\omega}$ in place of $\omega$) yields $b_k=f_k(x_1)+a_k$, $k=0, \ldots, r$.
\end{proof}

\begin{theorem}[Fundamental Theorem of Calculus]\label{fundamental_theorem_calculus}
	If $F = \int_{x_0}^x \omega$, where $\omega$ is an exact generalized 1-form having compact support on $\vvmathbb{R}$, then $\text{\th}F = r \omega$.
\end{theorem}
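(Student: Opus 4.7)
The strategy is to exploit linearity of both $\int$ and $\text{\th}$ together with the exactness hypothesis. Since $\omega$ is exact, write $\omega = \text{\th} g$ for some $g \in C^\infty(\vvmathbb{R})$; and since $\omega$ has compact support, I may take $g$ to be compactly supported as well. In one dimension, with the convention $\partial_k = \tfrac{1}{k!}\tfrac{d^k}{dx^k}$ built into $\text{\th}$, the monomial components of $\omega$ are $f_k = \tfrac{1}{k!}\,g^{(k)}$ for $1 \le k \le r$. The plan is to compute $F = \int_{x_0}^x \omega$ componentwise, identify $F$ up to an indefinite-part polynomial as a scalar multiple of $g$, and then apply $\text{\th}$ on both sides.

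For the componentwise computation I choose the basepoint $x_0$ to lie to the left of the support of $g$, so that $g$ and all of its derivatives vanish at $x_0$. Then each iterated integral
\[
F_k(x) = \frac{1}{k!} \int_{x_0}^x \int_{x_0}^{x'} \cdots \int_{x_0}^{x^{(k-1)}} g^{(k)}(x^{(k)})\, dx^{(k)} \cdots dx'
\]
is handled by an induction on the number of integrations: the innermost integration produces $g^{(k-1)}(x^{(k-1)}) - g^{(k-1)}(x_0)$, the boundary piece vanishes by the choice of $x_0$, and the process iterates. What survives is just $F_k(x) = \tfrac{1}{k!}\,g(x)$ up to a polynomial of degree at most $k-1$ that is absorbed into the indefinite part of the antiderivative by the convention of remark \ref{indefinite_part_convention}.

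Summing over $k$ gives that $F$ equals a fixed scalar multiple of $g$ modulo the prescribed indefinite polynomial part. Applying $\text{\th}$ to both sides then produces the same scalar multiple of $\text{\th} g = \omega$, which is the claimed identity $\text{\th} F = r\,\omega$. The numerical value of the proportionality constant emerges from the combinatorial bookkeeping of how the factorials in $d^\alpha$, $\partial_\alpha$, and the iterated-integral definition of $F_k$ combine when summed over $k$.

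The main obstacle is precisely this combinatorial accounting, together with the verification that the polynomial residues assembled according to remark \ref{indefinite_part_convention} contribute nothing to $\text{\th} F$ beyond the asserted $r\omega$. The delicate point is that $\text{\th}$ applied to a polynomial of degree $\le r-1$ does not vanish in general, so one must show that the specific polynomial structure forced by the convention combined with the iterated-integration formula gives an exact cancellation at every jet order up to $r$. A subsidiary but routine point is to check that the exactness hypothesis (rather than the weaker hypothesis that $\omega$ be closed with respect to $\text{\th}$) is indispensable, since without it the components $f_k$ need not satisfy the relations $f_k = \tfrac{1}{k!} g^{(k)}$ and the clean expression for $F$ in terms of $g$ is unavailable.
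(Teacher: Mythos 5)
Your overall route is the same as the paper's: use exactness to produce a primitive $g$, collapse each $k$-fold iterated integral, kill the boundary and polynomial residues by placing $x_0$ to the left of the support, and then differentiate. But there is a genuine gap, and it sits exactly where you defer to ``combinatorial bookkeeping'': with the normalization you chose, the constant does not come out to $r$. You read exactness with the factorial convention, $f_k = \tfrac{1}{k!}g^{(k)}$, so your own computation gives $F_k = \tfrac{1}{k!}g$ modulo the indefinite part, hence
\begin{equation*}
	F = \Bigl(\textstyle\sum_{k=1}^r \tfrac{1}{k!}\Bigr)\, g + \text{(indefinite part)},
\end{equation*}
and applying $\text{\th}$ to a scalar multiple of $g$ returns that same scalar multiple of $\text{\th}g=\omega$. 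The factor is therefore $\sum_{k=1}^r 1/k!$ (for instance $3/2$ when $r=2$), not $r$, and no amount of further bookkeeping can repair this: the identity $\text{\th}F = r\omega$ requires each order $k$ to contribute weight $1$, not $1/k!$. The paper's proof obtains the factor $r$ precisely because it reads exactness without factorials---there exists $g$ with $g^{(k)} = f_k$---so each $k$-fold iterated integral of $f_k$ returns $g$ on the nose, the sum over the $r$ orders gives $r\bigl(g(x)-g(x_0)\bigr)$, and the components of $\text{\th}F$ are then read off as $F^{(k)} = r g^{(k)} = r f_k$. In other words, the constant $r$ is not an output of neutral computation; it is tied to a specific factorial-free normalization of both the exactness hypothesis and of $\text{\th}$ acting on functions, and under the normalization you adopted the statement your argument actually yields is $\text{\th}F = \bigl(\sum_{k=1}^r 1/k!\bigr)\omega$.

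Two smaller points. First, the ``delicate point'' you flag about polynomial residues is already disposed of by your own choice of basepoint: every coefficient of each $P_k$ is a derivative of $g$ evaluated at $x_0$, and these vanish identically once $x_0$ lies to the left of the support (this is exactly the paper's observation that one may displace $x_0$ sufficiently far to the left and evaluate the coefficients there); so nothing survives for $\text{\th}$ to act on, and no cancellation argument is needed. Second, you cannot in general take $g$ itself compactly supported: compact support of $\omega$ only forces $g$ to be constant outside a compact set, and the two asymptotic constants differ whenever $\int f_1 \ne 0$; what you actually need---and all you need---is that $g$ and its derivatives vanish at and to the left of $x_0$, which is arranged by subtracting the left-hand constant.
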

\begin{proof}
	Immediate consequence of the definitions of the antiderivative $F$ and the generalized exterior derivative operator $\text{\th}$. For, by hypothesis, assuming $\omega =  f_1 d^x + f_2 d^{xx} + \cdots + f_r d^{\overbrace{x\cdots x}^{r ~ \mathrm{times}}}$ there exists a function $g$ such that $g^{(k)} = f_k$, $k=1,\ldots,r$. Then we may write,
	\begin{align}
		F_k(x) &= \int_{x_0}^x dx^\prime \int_{x_0}^{x^\prime} dx^{\prime\prime} \cdots 
		\int_{x_0}^{x^{(k-1)}} dx^{(k)} g^{(k)}(x^{(k)}) \\
		&= \int_{x_0}^x dx^\prime \int_{x_0}^{x^\prime} dx^{\prime\prime} \cdots 
		\int_{x_0}^{x^{(k-2)}} dx^{(k-1)} \left( g^{(k-1)}(x^{(k-1)})-g^{(k-1)}(x_0) \right) \\
		&= g(x_1) - g(x_0) + P_k(x_1-x_0),
	\end{align}
	where $P_k$ is a polynomial of degree $k-1$ whose coefficients depend on $g$ and its derivatives evaluated at $x_0$. Hence also
	\begin{equation}
		F_1 + \cdots F_r = r \left( g(x_1)-g(x_0) \right) + P_1(x_1-x_0) + \cdots P_r(x_1-x_0).
	\end{equation}
	The convention in force means, however, that the indefinite part of $F$ should vanish. The sum $P_1+\cdots+P_r$ equals zero as well, for we may displace $x_0$ sufficiently far to the left as to fall outside the support of $\omega$ and evaluate its coefficients there. Thus,
	\begin{align}
		\text{\th} F &= r g^\prime d^x + r g^{\prime\prime} d^{xx} + \cdots + r g^{(r)}  d^{\overbrace{x\cdots x}^{r ~ \mathrm{times}}} \\
		&= r f_1 d^x + r f_2 d^{xx} + \cdots + r f_r d^{\overbrace{x\cdots x}^{r ~ \mathrm{times}}} \\
		& = r \omega,
	\end{align}
	as was to be shown.
\end{proof}

\begin{remark}\label{integration_with_compact_sppt}
	We have proved theorem \ref{fundamental_theorem_calculus} only relative to the fundamental theorem of the ordinary calculus itself (as in Royden \cite{royden}, Theorem 5.10). Perhaps once the formalism has been better understood it will be possible to devise a more direct and elegant proof of the general result. The assumption of compactness of the support, moreover, is sufficient for the result in its present form but not perhaps necessary. Presumably, if it were to be omitted, one would want to impose on the components of $\omega$ a condition of rapid decrease towards infinity to the left. But, in this case, it is not anymore so evident what the proper form of $\text{\th}F$ will be, in view of the complication posed by the possibility that the antiderivatives could contain fluxes coming from negative infinity. A deeper study of the problem clearly is called for.
\end{remark}

Before, in the next section, launching into an investigation of the obviously very interesting problem of higher-order integration in many dimensions, we should like to dwell for a moment on a point that, subtle as it is, proves to be of the essence to the entire theory. The ordinary differential of first order with which we all have been so familiar for the span of four centuries, $d^x$, represents a degenerate special case in that it assigns zero measure to all fluxions of higher than first order. In general, we must expect to have to deal with jet fields whose components at every order, $f_k d^{\overbrace{x\cdots x}^{r ~ \mathrm{times}}}$, $k \ge 1$, could be non-vanishing. The statement that a component of a jet field at a given order at a given point is non-zero, however, depends on the coordinate system and therefore cannot correspond to an intrinsic property. What can we do, then? A good place to start would be to look for a normal form into which the jet field can be cast, at least in the vicinity of a given point in space. This consideration leads to the following proposition:

\begin{proposition}\label{jet_normal_form}
	Let $\omega \in \mathscr{J}^{*r}(\vvmathbb{R})$ be of the form $\omega =  f_1 d^x + f_2 d^{xx} + \cdots + f_r d^{\overbrace{x\cdots x}^{r ~ \mathrm{times}}}$ where $f_1 \ne 0$ at the point $p \in \vvmathbb{R}$. Then there exists a change of coordinate that puts it into the normal form $\omega_p = d^y |_p + d^{yy} |_p + \cdots + d^{\overbrace{y\cdots y}^{r ~ \mathrm{times}}} |_p$ at the point $p$.
\end{proposition}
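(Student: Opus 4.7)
The plan is to specialize the jet transformation law (\ref{jet_transf_law}) to one dimension, in which case multi-indices collapse to natural numbers and the matrix relating new coefficients $b_\mu$ at $p$ to old coefficients $a_\alpha = f_\alpha(p)$ is lower triangular. Writing $c = x(p)$, $d = y(p)$, the diagonal entry in row $k$ is $(\partial x/\partial y)^k|_p$ (acting on $a_k$), while the coefficient of $a_1$ in $b_k$ is $\tfrac{1}{k!}(\partial^k x/\partial y^k)|_p$; the remaining off-diagonal entries involve products of strictly lower-order derivatives of $x$ with respect to $y$, evaluated at $p$.

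I would proceed by induction on $k$ to solve $b_k = 1$ for $k = 1,\ldots,r$. The base case $b_1 = (\partial x/\partial y)|_p \, f_1(p) = 1$ is solvable because by hypothesis $f_1(p) \neq 0$, yielding $(\partial x/\partial y)|_p = 1/f_1(p)$. Assume $(\partial^j x/\partial y^j)|_p$ has been fixed for $j = 1,\ldots,k-1$ so that $b_1 = \cdots = b_{k-1} = 1$. Then $b_k$ has the schematic form
\[
b_k = \frac{f_1(p)}{k!}\frac{\partial^k x}{\partial y^k}\bigg|_p + \Phi_k\!\left(f_2(p),\ldots,f_k(p),\frac{\partial x}{\partial y}\bigg|_p,\ldots,\frac{\partial^{k-1} x}{\partial y^{k-1}}\bigg|_p\right),
\]
where $\Phi_k$ is the polynomial expression read off the $k$-th row of the transformation matrix. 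Since $f_1(p) \neq 0$, the equation $b_k = 1$ is uniquely solvable for $(\partial^k x/\partial y^k)|_p$ in terms of the already-determined data.

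To realize these numerical prescriptions by an actual coordinate change, I would take $x(y)$ to be the polynomial
\[
x(y) := c + \sum_{k=1}^{r} \frac{\alpha_k}{k!}(y - d)^k,
\]
where $\alpha_k := (\partial^k x/\partial y^k)|_p$ is the value produced by the recursion above (and $d$ is chosen arbitrarily, say $d = 0$). Because $\alpha_1 = 1/f_1(p) \neq 0$, this polynomial has non-zero derivative at $y = d$, so the inverse function theorem supplies a smooth local inverse $y(x)$ defined on some neighborhood of $p$. By construction the first $r$ derivatives of $x$ with respect to $y$ at $p$ agree with the prescribed $\alpha_k$'s, and therefore the transformation law forces $\omega_p = d^y|_p + d^{yy}|_p + \cdots + d^{\overbrace{y\cdots y}^{r\ \mathrm{times}}}|_p$, as required.

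The main obstacle is purely notational: one must unwind the combinatorial structure of equation (\ref{jet_transf_law}) to verify explicitly that the coefficient of $(\partial^k x/\partial y^k)|_p$ in $b_k$ is indeed $f_1(p)/k!$ and that no derivative of order $\geq k$ of $x$ with respect to $y$ appears elsewhere in the $k$-th row. Once this triangular structure is confirmed, existence of the normal form follows immediately from the recursive solvability argument; no deeper analytic input is needed because only the germ of the coordinate change at $p$ matters.
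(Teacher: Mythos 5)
Your proof is correct and rests on the same mechanism as the paper's: the lower-triangular structure of the one-dimensional specialization of the transformation law (\ref{jet_transf_law}), together with the hypothesis $f_1(p) \neq 0$, drives an induction on the order $k$. The implementation differs in a way worth recording. The paper builds the coordinate change as a composition of elementary substitutions, one per degree --- first $y = a_1 x$, then $y = x + c_k x^k$ for $k = 2,\ldots,r$ --- choosing each $c_k$ so that the $k$-th coefficient becomes $1$ while the lower ones are preserved; since each $c_k$ carries $1-a_k$ in its denominator, the paper must separately dispose of the degenerate case $a_k = 1$ by skipping that stage. You instead solve the triangular system once for the full Taylor data $\alpha_k = (\partial^k x/\partial y^k)|_p$ of a single coordinate change, the only division ever performed being by $f_1(p)$, and then realize these data by one polynomial $x(y)$ inverted locally via the inverse function theorem. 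This eliminates the case distinction and makes the non-degeneracy hypothesis enter exactly once, which is cleaner; the paper's route, in exchange, exhibits the normal form as the outcome of explicit, concrete substitutions. A small additional point in your favor: your coefficient $f_1(p)/k!$ for $(\partial^k x/\partial y^k)|_p$ in $b_k$ is the one dictated by the general law (\ref{jet_transf_law}), whereas the paper's own one-dimensional display (\ref{jet_transf_law_1d}) silently drops the factorial factors.
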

\begin{proof}
	Without loss of generality we may take $p$ to be the origin. In one dimension, the transformation law (\ref{jet_transf_law}) becomes,
	\begin{equation}\label{jet_transf_law_1d}
		\begin{pmatrix}
			b_1 \cr b_2 \cr b_3
		\end{pmatrix}
		= \begin{pmatrix}
			x^\prime & 0 & 0 \cr
			x^{\prime\prime} & (x^\prime)^2 & 0 \cr
			x^{\prime\prime\prime} & x^\prime x^{\prime\prime} & (x^\prime)^3
		\end{pmatrix}
		\begin{pmatrix}
			a_1 \cr a_2 \cr a_3
		\end{pmatrix},
	\end{equation}
	where the continuation of the scheme to higher orders is evident. Setting $a_k := f_k(0)$, $k=1, \ldots, r$ we put the jet at the origin into the requisite form via a succession of polynomial transformations of increasing degree, which will all be invertible near the origin since $a_1 \ne 0$ by hypothesis. First the linear transformation $y=a_1 x$ leads to $x^\prime = 1/a_1$ and hence $b_1 = 1$. Relabel the $b_1, b_2, \ldots$  as $a_1, a_2, \ldots$ again and proceed to the second transformation, viz., $y=x+\frac{1}{2(1-a_2)}x^2$ so that $x^{\prime\prime}=1-a_2$ and $b_1=1,b_2=1$ where $b_3$ and on could be anything. It is clear that with a third step of the form $y=x+cx^3$ we could preserve $b_{1,2}=1$ while rendering $b_3=1$ with an appropriate choice of the coefficient $c$. All we need at the $k$-th stage would be a transformation of the form $y=x+cx^k$ such that $x^{(k)}=1-a_k$; hence, $c=1/k!(1-a_k)$. The sole non-degeneracy condition would be to ensure that one never encounters $a_k=1$. But clearly, if one were to have component coefficients of the form $(1,\ldots,1,a_k,a_{k+1},\ldots,a_r)$ with $a_k=1$, the $k$-th stage may simply be skipped since $a_k=1$ already, as we want it to. Therefore, by induction we have implicitly constructed a composite change of coordinates around the origin that leads to $\omega_0 = d^y |_0 + d^{yy} |_0 + \cdots + d^{\overbrace{y\cdots y}^{r ~ \mathrm{times}}} |_0$, as stipulated. 
\end{proof}

What is the significance of proposition \ref{jet_normal_form}? It tells us that as long as a jet over a given point satisfies the non-degeneracy condition $f_1 \ne 0$, when viewed in the right coordinates the jet could yield a non-zero result upon contraction with any higher tangent of arbitrary order. As such, the normal form at the given point of a non-degenerate jet enjoys a certain naturality. There need be no reason, of course, that there exist a coordinate in which the normal form holds identically on an open set. Nonetheless, due to continuity its components cannot immediately cross zero upon moving away from the point in question. 

We wish to suggest that the property just described is desirable for purposes of integration at higher order and merely extends what is customary in the received integral calculus on the Euclidean real line, where everybody uses the first-order differential $d^x$ so as to yield the standard Lebesgue measure which assigns to the interval $[a,b]$ the length $b-a$. The natural higher-order analogue would be
\begin{equation}\label{natural_frame_1d}
	\theta^x:=d^x+d^{xx}+\cdots+d^{\overbrace{x\cdots x}^{r ~ \mathrm{times}}}.
\end{equation}Manifestly, it will again be invariant under rigid translations. Let us remark on how to go back and forth between $d^x$ and $\theta^x$. Near the origin in the given coordinate $x$, we may write $\theta^x = x + x^2 + x^3 + \cdots$ where the series continues indefinitely, it being understood that the terms beyond the $r$-th order will be rendered immaterial upon transition to $\mathfrak{m}_0/\mathfrak{m}_0^{r+1}$. Now close enough to the origin we can find another coordinate $y$ such that $\theta^x = y ~\mathrm{mod}~ \mathfrak{m}_0^{r+1}$ via reversion of the series. Indeed, put
\begin{equation}
	1 + y = 1 + x + x^2 + \cdots = \frac{1}{1-x}
\end{equation}
so that $1 - x = \dfrac{1}{1+y}$ or (expanding)
\begin{equation}
	x = 1 - \frac{1}{1+y} = y - y^2 + y^3 - \cdots.
\end{equation}
Thus, evidently $x + x^2 + \cdots + x^r = y + o(y^r)$. But after modding out, this statement is tantamount to the sought-for relation $\theta^x = d^y$. The uniformizing coordinate $y$ is well defined of course only out to $|x|<1$. For the interval $[a,b]$ the postulated form of $\theta^x$ assigns the measure
\begin{equation}\label{interval_length_in_1d}
	\int_{[a,b]} \theta^x = b-a + \frac{1}{2}(b-a)^2 + \cdots \frac{1}{r!}(b-a)^r.
\end{equation}
In the limit of indefinitely high order, the natural measure tends to
\begin{equation}\label{interval_in_limit}
	\lim_{r \rightarrow \infty} \int_{[a,b]} \theta^x = e^{b-a}-1.
\end{equation}
As we see, countable (even finite) additivity for collections of disjoint sets fails.\footnote{Left as a homework exercise to show this. Hint: $e - 1 + e - 1 < e^2 - 1$; now extend the argument first to arbitrary intervals, then to Borel sets. By Carath{\'e}odory's extension theorem (\cite{royden}, Proposition 3.15 or Theorem 12.8), one can conclude to subadditivity but not additivity for Lebesgue-measurable sets.} Evidently measure theory at higher than first order demands a rethinking of many of our received concepts!

\begin{remark}
	For want of a better name, we shall refer to the above argument by which to establish the natural choice of a translation-invariant measure on the real line at higher order as a principle of plenitude.
\end{remark}

\subsection{The Problem of Quadrature, Stated in General}\label{quadrature_in_flat_space}

Taking our cue from the results of the previous section on the one-dimensional case, the problem of quadrature in the presence of higher-order infinitesimals has to be framed in arbitrary dimension (first of all in Euclidean space). We wish to forget about the kinds of integration we know very well (Riemann-Stieltjes, Lebesgue, Daniell etc.) and to return to the pregnant environment of the seventeenth century for inspiration. At that time, during the period leading up to the formal definition of the calculus by Leibniz \cite{leibniz_1684, leibniz_1686}, methods of quadrature were being discovered, both geometrical (Neil, Wren, van Heuraet, Huygens) and analytical (Wallis). But we intend here to focus on pictorial intuition, not on specialized techniques of limited applicability---the great virtue of Leibniz' integral being its generality. Thus, our goal in undertaking the exercise of the present section is to point the way to the proper extension of Leibniz' integral to take into account possible difformity; Newton's concept of fluents/fluxions helps us to imagine how to proceed, although Newton himself, unlike Leibniz, never proposed an analytical definition of the integral.

The first item on the agenda will be to integrate a constant-coefficient differential $d^\alpha$ where $\alpha=(\alpha_1,\ldots,\alpha_n)$ may be any multi-index with $|\alpha| \ge 1$ arbitrary. For simplicity, let us take the region of integration to be the rectangle $[a_1,b_1] \times \cdots \times [a_n,b_n] \subset \vvmathbb{R}^n$. Following suit from the previous section, it is evident that its indefinite integral ought to be given by an expression of the form
\begin{equation}
	F_\alpha := \frac{1}{\alpha!} (x_1-a_1)^{\alpha_1} \cdots (x_n-a_n)^{\alpha_n} + P,
\end{equation}
where the indefinite part $P$ will be a polynomial in the $x_{1,\ldots,n}$ of degree at most $\alpha_k-1$ in $x_k$, $k=1,\ldots,n$ (adopting the convention that a term of negative degree in one of its variables has only a multiplicative constant dependence on that variable). Thus, the definite integral becomes
\begin{equation}\label{simple_coord_differential}
	\int_{} d^\alpha = 
	\frac{1}{\alpha!} (b_1-a_1)^{\alpha_1} \cdots (b_n-a_n)^{\alpha_n} + P.
\end{equation}
Now for a differential with spatial dependence, $f d^\alpha$, with $f$ of compact support inside a rectangle $[a_1,b_1] \times \cdots \times [a_n,b_n] \subset \vvmathbb{R}^n$, say, in analogy to equation (\ref{antideriv_def}) we have for the antiderivative the following:
\begin{equation}
	F_\alpha(x_1,\ldots,x_n) := \left( \prod_{i=1,\ldots,n} \int_{a_i}^{x_{i1}} dx^{i1} \cdots \int_{a_i}^{x_{i\alpha_i}} dx^{i\alpha_i} \right) 
	f(x_{1\alpha_1},\ldots,x_{n\alpha_n}) + P,	
\end{equation}
where by convention a term in the product is to be omitted whenever $\alpha_i=0$ and where $x_{i0}=x_{i1}=x^i$, i.e., the $i$-th coordinate variable appearing of the left-hand side.

Following equation (\ref{natural_frame_1d}), in place of the simple coordinate differentials of equation (\ref{simple_coord_differential}) the natural frame in many dimensions specified by $\theta^{1,\ldots,n}$ would have the corresponding differential expression for the integrand: $\theta^{x_1}\cdots\theta^{x_n}$. The volume of rectangular prism in analogy to equation (\ref{interval_length_in_1d}) evidently becomes just the \textit{sum} resp. \textit{product},
\begin{align}\label{interval_length_in_many_d}
	\int_{[a_1,b_1] \times \cdots \times [a_n,b_n]} \theta^{x_1}\cdots\theta^{x_n} &= 
	\sum_{(1,\ldots,1) \le \beta \le (r,\ldots,r)}
	\frac{1}{\beta!} (b_1-a_1)^{\beta_1} \cdots (b_n-a_n)^{\beta_n} \\
	&= \prod_{i=1,\ldots,n} \left(
	b_i-a_i + \frac{1}{2}(b_i-a_i)^2 + \cdots \frac{1}{r!}(b_i-a_i)^r
	\right)
\end{align}
since the multidimensional integral considered in the usual sense of the multivariate calculus factorizes. Note: what everyone has hitherto regarded as the volume of a rectangular prism corresponds to the $(1,\ldots,1)$-term alone, which in the present context becomes merely the first term in a series. Evidently in the limit of indefinitely high order, we have
\begin{equation}
	\lim_{r \rightarrow \infty} \int_{[a_1,b_1] \times \cdots \times [a_n,b_n]} \theta^{x_1}\cdots\theta^{x_n} 
	= \prod_{i=1,\ldots,n} \left( e^{b_i-a_i} - 1 \right)
\end{equation}
which extends equation (\ref{interval_in_limit}) to define, once again, a subadditive but not additive translation-invariant measure on Borel sets in Euclidean $\vvmathbb{R}^n$.

To illustrate the complications that arise when measuring regions of other than rectangular shape, let us compute the area of a disk to higher order. If we proceed directly, we should find,
\begin{equation}
	\int_{\vvmathbb{D}} \theta^x \theta^y = \int_{-1}^1 \theta^x \left( \int_{-\sqrt{1-x^2}}^{\sqrt{1-x^2}} \theta^y \right) = \int_{-1}^1 \theta^x \left( e^{2\sqrt{1-x^2}} - 1 \right),
\end{equation}
which already cannot be evaluated in explicit form. Note that the area of the disk will no longer be twice the area of half of a disk, which would instead be given by
\begin{equation}
	2 \int_{-1}^1 \theta^x \left( e^{\sqrt{1-x^2}} - 1 \right).
\end{equation}
We may, however, with some ingenuity evaluate the area of the disk by going to polar coordinates, where we suppose the element of area to be given now by $r \theta^r \theta^\phi$. Then we have that the integral factorizes into,
\begin{equation}
	\int_0^1 r\theta^r \int_0^{2\pi} \theta^\phi =
	\left( e^{2\pi} - 1 \right) \left( \frac{1}{2}r^2 \bigg|_0^1 + \frac{1}{6} r^3 \bigg|_0^1 + \cdots \right) = \left( e^{2\pi} - 1 \right) \left( e - 2 \right).
\end{equation}
In order to recover the usual result, we must introduce a scale of length $L$ and register the radial coordinate in terms of the dimensionless quantity $r/L$. As for the angular coordinate, it is implicit that higher differentials should decline and become negligible as $L$ tends to zero. For convenience, set $L=1/k$. It is then not hard to derive the asymptotic formula,
\begin{equation}
	\int_0^1 r\theta^r \int_0^{2\pi} \theta^\phi = k^3 \left( e^{2\pi/k} - 1 \right) \left( e^{1/k} - 1 - \frac{1}{k} \right) = \pi + \frac{\pi^2+\pi/3}{k} + \frac{8 \pi^3 + 4\pi^2 + \pi}{12k^2} + \cdots
\end{equation}
The reader will appreciate that all of the familiar quadratures become more delicate once higher differentials have to be considered. 

\subsection{Intrinsic Integration on Manifolds in the Presence of Higher-Order Infinitesimals}\label{integration_on_manif}

Our basic idea can be simply put: in the ordinary theory of integration on manifolds, the integrand enters under the guise of a top-degree differential form. Why? Any alternating form assigns to a collection of linearly independent vectors the volume of the parallelepiped whose edges are spanned by the respective vectors (see Lee, \cite{lee_smooth_manif}, Problem 16.1). A differential form could be viewed as nothing but a convenient apparatus for keeping track of such a rule of assignment of volume in a coordinate-invariant manner as one moves from point to point in space. If the picture of higher-order infinitesimals we have been developing has any cogency, it stands to reason then that when one goes to the general case one ought to employ a collection of linearly independent higher tangents in order to play the role of the 1-vectors spanning the sides of the parallelepiped. Pictorially, we want, so to speak, a curvilinear parallelepiped. We must still expect an infinitesimal volume element to be determinantal so an alternating generalized tensor of some kind (i.e., a differential form) would seem to be the natural candidate for a volume form against which to integrate.

\subsubsection{Definition of a Generalized Volume Form and its Integration in many Dimensions}

The appropriate theory can now be obtained by following (mutatis mutandis) Lee's particularly lucid exposition in Chapter 16 of \cite{lee_smooth_manif}. We have already defined in {\S}\ref{quadrature_in_flat_space} how to integrate a differential form of compact support in Euclidean space. As in \cite{lee_smooth_manif}, we may use diffeomorphism invariance and partitions of unity in order to extend this definition to differential forms defined on orientable manifolds in a manner that will be invariant under orientation-preserving diffeomorphisms (thus, independent of any choice of local chart). The major subtlety that arises in connection with the presence of differentials of higher than first order is that we must introduce a distinction between macroscopic versus microscopic directions in space. For, integration in the usual sense makes sense only when the degree of the differential form agrees with the number of dimensions of space. But in the present context, the space of jets in the stalk over a given point in a differentiable $n$-manifold is spanned not only by the $d^{x_{1,\ldots,n}}$ but also includes differentials up to the $r$-th degree, where $r \ge 1$. Therefore, an arbitrary $n$-form could be degenerate in the sense that there may exist directions in the image of the tangent bundle under the canonical injection along which contraction against the given $n$-form yields zero---a phenomenon that cannot occur with a top-degree form in the ordinary sense on a manifold. Clearly, our intuitive idea of integration over space implies that the integrand ought to fill out all possible directions in space around every point. If we look more closely at the root of the problematical phenomenon just mentioned, it happens because implicitly we are operating with a more comprehensive notion of the term `direction' than what one would be confronted with, limiting oneself to infinitesimals of the first order. At first order, every possible direction corresponds to an infinitesimal translation of space. At higher order, though, when we speak of a many-fold direction corresponding to the differential $d^\alpha$ where $|\alpha| \ge 2$, we refer to a virtual motion in cotangent space that need not give rise to a collective infinitesimal spatial movement in the base space $M$ (in the case of constant coefficients). Therefore, we ought to distinguish between this case and the first-order case, and the terminology that seems fitting would be to say that the former represents a microscopic direction in space as opposed to the $n$ independent macroscopic directions with which we are familiar at first order.

How to render the concept precise? We want to say that a given $n$-form $\omega \in \Omega^n(M)$ is macrosopic if it is non-degenerate in all $n$ indepedent macroscopic directions; in other words, if $J^1_p(M) \subset J^r_p(M)$ denotes the image of the canonical injection, then we should have that $\ker \omega_p|_{J^1(M)} = 0$ for every $p \in M$.

\begin{lemma}\label{lead_macroscopic_vol_form}
	Let $\omega$ be a macroscopic $n$-form on the manifold $M$. Then around any point $p \in M$, it can be written uniquely in local coordinates in a chart $p \in U \subset M$ as $\omega = f \theta^{x_1} \wedge \cdots \wedge \theta^{x_n} + \eta$, where $f$ is a smooth function and $\eta \in \ker \pi_{r1}$ lies in the kernel of the canonical projection onto $\mathscr{J}^{1*}(U) \wedge \cdots \wedge \mathscr{J}^{1*}(U)$.
\end{lemma}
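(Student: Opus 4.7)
The plan is to work entirely in the chosen chart $p\in U\subset M$ with coordinates $x^{1},\ldots,x^{n}$, and to exploit the fact that at each point the space $\bigwedge^{n}J^{r*}_{q}$ decomposes naturally into a one-dimensional "purely first-order" subspace and the rest, with the rest being exactly $\ker\pi_{r,1}$ at the level of $n$-forms. Concretely, I would first expand $\omega$ as a sum of wedge monomials $\omega_{A}\,d^{\alpha_{1}}\wedge\cdots\wedge d^{\alpha_{n}}$, where $A=(\alpha_{1},\ldots,\alpha_{n})$ is a grand multi-index in the notation introduced earlier. Those monomials with $|\alpha_{i}|=1$ for every $i$ are proportional to the single wedge $d^{x_{1}}\wedge\cdots\wedge d^{x_{n}}$ (since $\bigwedge^{n}J^{1*}_{q}$ is one-dimensional), and so contribute a term of the form $h\,d^{x_{1}}\wedge\cdots\wedge d^{x_{n}}$ with $h\in C^{\infty}(U)$. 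Every other monomial contains at least one factor $d^{\alpha_{i}}$ with $|\alpha_{i}|\ge 2$ and hence is killed by $\pi_{r,1}$, so it already lies in $\ker\pi_{r,1}$.

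For existence, I then recall from $\theta^{x_{i}}=d^{x_{i}}+d^{x_{i}x_{i}}+\cdots$ that, on expanding the product $\theta^{x_{1}}\wedge\cdots\wedge\theta^{x_{n}}$, the unique monomial all of whose factors have degree one is precisely $d^{x_{1}}\wedge\cdots\wedge d^{x_{n}}$; every other summand contains some $d^{\alpha}$ with $|\alpha|\ge 2$, so
\begin{equation*}
\theta^{x_{1}}\wedge\cdots\wedge\theta^{x_{n}}\;=\;d^{x_{1}}\wedge\cdots\wedge d^{x_{n}}\;+\;\rho,\qquad \rho\in\ker\pi_{r,1}.
\end{equation*}
Setting $f:=h$ and $\eta:=\omega-f\,\theta^{x_{1}}\wedge\cdots\wedge\theta^{x_{n}}$ therefore produces the asserted decomposition, and smoothness of $f$ is inherited from smoothness of $\omega$ via the smooth bundle map $\pi_{r,1}$ acting fibrewise on $\bigwedge^{n}J^{r*}$.

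For uniqueness, suppose $f\,\theta^{x_{1}}\wedge\cdots\wedge\theta^{x_{n}}+\eta=f'\,\theta^{x_{1}}\wedge\cdots\wedge\theta^{x_{n}}+\eta'$ with $\eta,\eta'\in\ker\pi_{r,1}$. Rearranging yields $(f-f')\,\theta^{x_{1}}\wedge\cdots\wedge\theta^{x_{n}}=\eta'-\eta\in\ker\pi_{r,1}$; applying $\pi_{r,1}$ to both sides and using the expansion above reduces this to $(f-f')\,d^{x_{1}}\wedge\cdots\wedge d^{x_{n}}=0$, which forces $f=f'$ pointwise and then $\eta=\eta'$ as well. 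The macroscopic hypothesis enters only to rule out the trivial case: were $f(q)=0$ at some $q\in U$, then $\pi_{r,1}(\omega)_{q}=0$ as a top form on $J^{1}_{q}(M)$, which would produce a nonzero $1$-vector annihilating $\omega_{q}$ under interior product, contradicting $\ker\omega_{q}|_{J^{1}(M)}=0$; hence $f$ is automatically nonvanishing.

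The step I expect to require the most care is the first one, namely the bookkeeping that cleanly splits the wedge monomial expansion of $\omega$ according to whether every factor has degree one or not, and matches this split against the expansion of $\theta^{x_{1}}\wedge\cdots\wedge\theta^{x_{n}}$. Everything else is linear algebra together with the naturality of $\pi_{r,1}$ on exterior powers; the genuinely geometric content is that $\theta^{x_{1}}\wedge\cdots\wedge\theta^{x_{n}}$ supplies a distinguished lift of the ordinary top form $d^{x_{1}}\wedge\cdots\wedge d^{x_{n}}$ to a non-degenerate macroscopic generalized $n$-form, against which an arbitrary macroscopic $n$-form can be calibrated modulo microscopic corrections.
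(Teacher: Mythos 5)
Your proof is correct and takes essentially the same route as the paper's: both extract the first-order part $f\,d^{x_1}\wedge\cdots\wedge d^{x_n}$ of $\omega$ via $\pi_{r1}$, define $\eta := \omega - f\,\theta^{x_1}\wedge\cdots\wedge\theta^{x_n}$ (which lies in $\ker\pi_{r1}$ because $\theta^{x_1}\wedge\cdots\wedge\theta^{x_n}$ projects onto $d^{x_1}\wedge\cdots\wedge d^{x_n}$), and prove uniqueness by applying $\pi_{r1}$ to the two decompositions. Your additional bookkeeping---the explicit monomial expansion and the argument that macroscopicity forces $f$ to be nowhere vanishing---merely spells out what the paper asserts in a single line.
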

\begin{proof}
	The image of $\omega$ under $\pi_{r1}$ takes the form $f d^{x_1} \wedge \cdots d^{x_n}$, where owing to the macroscopic hypothesis $f$ must be everywhere non-zero. Now, modulo the kernel of $\pi_{r1}$ we are free to rewrite this in the desired form as $\omega =  f \theta^{x_1} \wedge \cdots \wedge \theta^{x_n} + \eta$ if we susbstitute $\eta := \omega - f \theta^{x_1} \wedge \cdots \wedge \theta^{x_n}$. As for uniqueness, suppose $\omega =  f_1 \theta^{x_1} \wedge \cdots \wedge \theta^{x_n} + \eta_1 = \omega =  f_2 \theta^{x_1} \wedge \cdots \wedge \theta^{x_n} + \eta_2$. Applying $\pi_{r1}$ to both sides we get $f_1 d^{x_1} \wedge \cdots d^{x_n} = f_2 d^{x_1} \wedge \cdots d^{x_n}$ from which we may conclude that $f_1 = f_2$ identically on $U$.
\end{proof}

\begin{remark}
	Here is where we make use of the condition that $\omega$ be macroscopic. The decision to project along $\theta^{x_1} \wedge \cdots \wedge \theta^{x_n}$ rather than along any other representative of $d^{x_1} \wedge \cdots d^{x_n}$ in the equivalence class modulo $\ker \pi_{r1}$ is not necessary of course, but motivated by the principle of plenitude.
\end{remark}
The above lemma will best be viewed as the first step in a finite inductive process, where we adopt the following notation:
\begin{equation}
	\theta^\alpha := \overbrace{\theta^{x_1} \cdots \theta^{x_1}}^{\alpha_1 ~\mathrm{times}} \cdots \overbrace{\theta^{x_n} \cdots \theta^{x_n}}^{\alpha_n ~\mathrm{times}}.
\end{equation}

\begin{lemma}\label{full_macroscopic_vol_form}
	Let $\omega$ be an arbitrary $n$-form on the manifold $M$. Then around any point $p \in M$, it can be written uniquely in local coordinates in a chart $p \in U \subset M$ as $\omega = f_A \theta^{\wedge A}$, where for the grand multi-index $A=(A_n,\ldots,A_n)$ we introduce the notation $\theta^{\wedge A} := \theta^{A_1} \wedge \cdots \wedge \theta^{A_n}$ (note that in this case we may presume the $\alpha_{1,\ldots,n}$ to be distinct).
\end{lemma}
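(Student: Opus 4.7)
The plan is to reduce the claim to linear algebra in the stalk $\mathscr{J}^{r*}_pM$ and then to verify smoothness uniformly on the chart $U$. Within $U$, both the ordinary coordinate jets $\{d^\alpha : 1 \le |\alpha| \le r\}$ (known from {\S}\ref{formal_def} to be a basis) and the auxiliary jets $\{\theta^\alpha : 1 \le |\alpha| \le r\}$ are candidates for a basis of $\mathscr{J}^{r*}_pM$; establishing that the $\theta^\alpha$'s form a basis of the cotangent jet space and then passing to the wedge algebra will give the result.

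First I would expand $\theta^\alpha$ in terms of the $d^\beta$. Since $\theta^{x_i} = d^{x_i} + d^{x_ix_i} + \cdots + d^{\overbrace{x_i\cdots x_i}^{r~\mathrm{times}}}$ and $\theta^\alpha$ is the product of $\alpha_i$-fold jet-algebra powers of the $\theta^{x_i}$ in $\mathfrak{m}_p/\mathfrak{m}_p^{r+1}$, a direct calculation using the defining representatives $(x_1 + x_1^2 + \cdots)^{\alpha_1}\cdots(x_n + x_n^2 + \cdots)^{\alpha_n}$ yields
\begin{equation}
\theta^\alpha = d^\alpha + \sum_{\substack{\beta_i \ge \alpha_i ~\forall i \\ |\beta|>|\alpha|}} c^\alpha_\beta\, d^\beta,
\end{equation}
for certain combinatorial constants $c^\alpha_\beta$ arising from the multinomial expansion. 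Under the grading by total order $|\cdot|$, this transition is strictly lower triangular with unit diagonal and so is invertible; hence $\{\theta^\alpha\}_{1\le|\alpha|\le r}$ is indeed a basis of $\mathscr{J}^{r*}_pM$.

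By standard multilinear algebra, the induced wedge monomials $\theta^{\wedge A} := \theta^{A_1}\wedge\cdots\wedge\theta^{A_n}$, indexed by grand multi-indices $A = (A_1,\ldots,A_n)$ whose entries are strictly increasing in the fixed lexicographic order on multi-indices, then form a basis of $\bigwedge^n \mathscr{J}^{r*}_pM$. Consequently every $n$-form $\omega$ admits a unique pointwise expansion $\omega_p = f_A(p)\,\theta^{\wedge A}|_p$; since $\{\theta^{\wedge A}\}$ is related to the coordinate wedge basis $\{d^{\alpha_1}\wedge\cdots\wedge d^{\alpha_n}\}$ by a constant (chart-dependent but position-independent) invertible matrix, the coefficient functions $f_A$ are smooth on $U$ if and only if $\omega$ is.

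The main obstacle, modest as it is, lies in checking the triangular structure of the transition matrix cleanly in the multi-coordinate case, keeping the multinomial bookkeeping straight so that no off-diagonal entries of the form $c^\alpha_\beta$ with $|\beta| \le |\alpha|, \beta\ne\alpha$ can appear. Once this is confirmed, existence, uniqueness, and smoothness all reduce to fiberwise linear algebra transported uniformly in $p \in U$, in direct parallel with (and strictly extending) lemma \ref{lead_macroscopic_vol_form}, which corresponds to the special case where only the top-degree coefficient $f_{(e_1,\ldots,e_n)}$ is extracted and the remainder $\eta$ gathers all the other $f_A\,\theta^{\wedge A}$ terms.
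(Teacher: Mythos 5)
Your proof is correct, but it takes a genuinely different route from the paper's. The paper argues by induction on the filtration level $s=\max(|A_1|,\ldots,|A_n|)$: with the coefficients $f_A$ fixed for $\max(|A_i|)\le s$ (the base case being lemma \ref{lead_macroscopic_vol_form}), it observes that the remainder $\eta_s=\omega-\sum_A f_A\,\theta^{\wedge A}$ involves only coordinate wedge monomials with some $|B_i|\ge s+1$, reads off the next layer of coefficients from that coordinate expansion, and gets uniqueness by applying the canonical projection $\pi_{r,s+1}$ to two competing decompositions. You instead work one-shot and fiberwise: the unipotent triangular transition $\theta^\alpha=d^\alpha+\sum_{\beta\ge\alpha,\,|\beta|>|\alpha|}c^\alpha_\beta\,d^\beta$ shows that $\{\theta^\alpha\}_{1\le|\alpha|\le r}$ is a basis of $J^{r*}_pM$, standard exterior algebra then makes the $\theta^{\wedge A}$ with $A_1<\cdots<A_n$ a basis of $\bigwedge^n J^{r*}_pM$, and existence, uniqueness, and smoothness of the $f_A$ all drop out simultaneously---smoothness because the transition matrix has constant, position-independent entries, a point the paper's proof never explicitly addresses. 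Both arguments ultimately rest on the same fact, namely that $\theta^\alpha$ agrees with $d^\alpha$ modulo jets of strictly higher order; your packaging is more economical and makes the regularity of the coefficients transparent, while the paper's packaging keeps the projections $\pi_{rs}$ in the foreground, which is the structure it uses to define macroscopic forms and to interpret the higher $f_A$ as microscopic corrections in the remarks that follow the lemma. The one step you flag as the ``main obstacle'' is in fact unproblematic: since $(x_i+x_i^2+\cdots)^{\alpha_i}=x_i^{\alpha_i}(1+x_i+\cdots)^{\alpha_i}$, every monomial $x^\beta$ in the representative of $\theta^\alpha$ satisfies $\beta_i\ge\alpha_i$ for all $i$, with $|\beta|>|\alpha|$ unless $\beta=\alpha$, so no off-diagonal entries with $|\beta|\le|\alpha|$, $\beta\ne\alpha$, can ever appear and the triangularity is automatic.
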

\begin{proof}
	By lemma \ref{lead_macroscopic_vol_form}, $f_{(1,\ldots,1)}$ is uniquely fixed. Suppose now that the $f_A$ have been determined for all multi-indices with $\max(|A_1|,\ldots,|A_n|)\le s$. We wish to find the $f_A$ with $\max(|A_1|,\ldots,|A_n|)\le s+1$. To this end, denote the canonical projection from $\mathscr{J}^{r*} \wedge \cdots \wedge \mathscr{J}^{r*}$ to
	$\mathscr{J}^{s*} \wedge \cdots \wedge \mathscr{J}^{s*}$, $s \le r$, by $\pi_{rs}$. In other words, we have already selected a representative of $\omega$ modulo $\ker \pi_{rs}$. Clearly, $\eta_s := \omega - \sum_{A: \max(|A_1|,\ldots,|A_n|)\le s} f_A \theta^{\wedge A}$ consists in a linear combination of terms like $g_B d^{B_1} \wedge \cdots \wedge d^{B_n}$ with at least one of the $|B_{1,\ldots,n}| \ge s+1$. Simply set $f_B = g_B$ for all $B$ with $\max(|B_1|,\ldots,|B_n|)=s+1$. It remains to show their uniqueness just as before. For suppose
	\begin{equation}
		\omega = \sum_{A: \max(|A_1|,\ldots,|A_n|)\le s+1} f_A \theta^{\wedge A} + \eta_{s+1} = 
		\sum_{A: \max(|A_1|,\ldots,|A_n|)\le s+1} g_A \theta^{\wedge A} + \zeta_{s+1}
	\end{equation}
	with $\eta, \zeta \in \ker \pi_{r,s+1}$. Apply $\pi_{r,s+1}$ to both sides in order to arrive at a linear combination of terms in $d^{A_1} \wedge \cdots \wedge d^{A_n}$ with $\max(|A_1|,\ldots,|A_n|) \le s+1$. These being linearly independent, though, we may infer that $f_A = g_A$ for all multi-indices $A$ with $\max(|A_1|,\ldots,|A_n|) \le s+1$, as wanted. This completes the inductive step.
\end{proof}

\begin{proposition}
	Let $\omega$ be a macroscopic $n$-form on the manifold $M$. Then $\omega$ factorizes in the sense that there are a smooth function $f$ and $1$-forms $\eta^{1,\ldots,n}$ such that $\omega=f\eta^1 \wedge \cdots \wedge \eta^n$.	These quantities are unique up to non-zero factors $\lambda_{0,1,\ldots,n}$ such that $\prod_{k=0}^n \lambda_k = 1$.
\end{proposition}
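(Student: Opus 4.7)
My plan is to build the factorization off of the canonical expansion supplied by lemma \ref{full_macroscopic_vol_form}. That lemma gives a unique expression $\omega = \sum_A f_A \theta^{\wedge A}$ in any local chart, and the macroscopic hypothesis forces $f := f_{(1,\ldots,1)}$ to be non-vanishing at every point of that chart. It is natural therefore to seek $\eta^i = \theta^{x_i} + \sigma^i$, where $\sigma^i$ is a 1-form whose expansion in the basis $d^\alpha$ involves only multi-indices $\alpha$ of order $\ge 2$ (so that projecting $\eta^i$ to $\mathscr{J}^{1*}$ recovers $d^{x_i}$), and to solve for the $\sigma^i$ order-by-order so that $f \eta^1 \wedge \cdots \wedge \eta^n = \omega$.

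First I would verify the base case: the leading wedge $f \, \theta^{x_1} \wedge \cdots \wedge \theta^{x_n}$ reproduces exactly the $\theta^{\wedge(1,\ldots,1)}$ component of $\omega$. Proceeding inductively on the total order $s = |A_1| + \cdots + |A_n|$, at each stage the coefficient $c^i_\alpha$ of a new basis jet $d^\alpha$ in $\sigma^i$ would be chosen to cancel the residual term of $\omega$ of shape $\theta^{x_1} \wedge \cdots \wedge d^\alpha \wedge \cdots \wedge \theta^{x_n}$ (with $d^\alpha$ in slot $i$). This yields a triangular linear system whose diagonal coefficients are precisely $f$, non-vanishing, so that at every order one can solve uniquely for the new coefficients given the lower-order data. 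Globalization would then proceed via a partition of unity patching argument, once the local factorizations agree up to the allowed rescaling ambiguity on overlaps.

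The main obstacle, as I see it, is that the actual expansion of $\eta^1 \wedge \cdots \wedge \eta^n$ contains \emph{cross-terms} where two or more factors simultaneously contribute higher-order pieces from their respective $\sigma^i$; these generate nonlinear couplings among the $c^i_\alpha$. The naive triangular-induction above must be upgraded to a fixed-point argument: one writes the equation $\omega/f = \theta^{x_1} \wedge \cdots \wedge \theta^{x_n} + L(\sigma) + N(\sigma)$, where $L$ is linear (one factor corrected) and $N$ is the nonlinear remainder (two or more factors corrected). Since $L$ is a triangular isomorphism by the macroscopic hypothesis, the equation can be rewritten $\sigma = L^{-1}(\omega/f - \theta^{\wedge (1,\ldots,1)} - N(\sigma))$, and $N$ lowers total order (each cross-term is ``quadratically small'' in the $\sigma^i$). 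Because our multi-indices are bounded by $r$ this map is actually a finite iteration, so a fixed point exists and is unique in the class of $\sigma^i$ with vanishing first-order part.

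For the uniqueness statement, I would observe first that any two candidate $n$-tuples $(\eta^i)$, $(\tilde\eta^i)$ producing the same wedge are related by an element of $GL_n$; the stipulation that $\eta^i$ project to $d^{x_i}$ in $\mathscr{J}^{1*}$ (which is unambiguously fixed from the leading part of $\omega$ up to the scalars $\lambda_i$) forces this matrix to be diagonal. The scalar factor $f$ then absorbs the remaining rescaling via the relation $\lambda_0 \prod_{k=1}^n \lambda_k = 1$, which is exactly the constraint stated. The delicate point to write out carefully is that the normalization ``first-order part of $\eta^i$ equals $d^{x_i}$'' pins down the $GL_n$ ambiguity to its diagonal; this hinges again on the injectivity of $\pi_{r1}$ acting on decomposable wedges whose $\mathscr{J}^{1*}$-projections are linearly independent, which is precisely the content of the macroscopic hypothesis.
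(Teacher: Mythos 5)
Your fixed-point argument fails at the step where you call $L$ a ``triangular isomorphism.'' It is a triangular \emph{injection} only: the coefficient of each singly-corrected monomial $d^{x_1}\wedge\cdots\wedge d^\alpha\wedge\cdots\wedge d^{x_n}$ (with $d^\alpha$, $|\alpha|\ge 2$, in slot $i$) in $L(\sigma)$ is $\pm c^i_\alpha$, so $\ker L=0$; but $L$ is far from surjective onto the space where your residual lives. Already for $n=2$, $r=2$, the unknowns $\sigma^{1},\sigma^2$ carry $6$ coefficients while $\ker\pi_{r1}$ inside $\bigwedge^2 J^{2*}$ has dimension $\binom{5}{2}-1=9$; the missing directions are the doubly-corrected monomials $d^\alpha\wedge d^\beta$ with $|\alpha|,|\beta|\ge 2$. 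Consequently $L^{-1}$ simply cannot be applied to $\omega/f-\theta^{\wedge(1,\ldots,1)}-N(\sigma)$, and projecting onto the image of $L$ leaves a component of the equation that your iteration never touches. This is not a repairable technicality: whether those components can be matched is precisely the classical Pl\"ucker decomposability constraint, and it genuinely obstructs. Concretely, in two dimensions with $r=2$ take $\omega=d^{x_1}\wedge d^{x_2}+d^{x_1x_1}\wedge d^{x_2x_2}$. This form is macroscopic, since $\pi_{r1}\omega=d^{x_1}\wedge d^{x_2}$ is non-degenerate, yet $\omega\wedge\omega=2\,d^{x_1}\wedge d^{x_2}\wedge d^{x_1x_1}\wedge d^{x_2x_2}\neq 0$, whereas any candidate $f\eta^1\wedge\eta^2$ satisfies $\bigl(f\eta^1\wedge\eta^2\bigr)\wedge\bigl(f\eta^1\wedge\eta^2\bigr)=0$. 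So this macroscopic form admits no factorization at all; if you run your scheme on it, the iteration stabilizes after one step but leaves an unremovable residue proportional to $d^{x_1x_1}\wedge d^{x_2x_2}$, because no solution exists for it to converge to.

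Two further remarks. First, the paper's own argument takes a different route from yours: it writes $\eta^i=\varepsilon^i+h^i_{jk}\varepsilon^{jk}$ and matches only the monomials in which \emph{every} slot carries a second-order jet, obtaining the multiplicative system $f\prod_k h^A_{\mu_k\nu_k}=g_A$, which it linearizes by taking logarithms of a lower-triangular system; the mixed monomials that your $L$ and $N$ are designed to track are never examined there, so the two arguments founder on the same cross-terms, merely at different places. Second, your uniqueness argument establishes less than the statement asserts: the normalization $\pi_{r1}\eta^i\propto d^{x_i}$ is a choice you impose, not a consequence of the factorization. The leading parts of the $\eta^i$ are determined by $\pi_{r1}\omega$ only up to $GL_n$ (for instance $\eta^1\wedge\eta^2=(\eta^1+\eta^2)\wedge\eta^2$), so your reduction of the ambiguity to diagonal matrices, and hence to scalars $\lambda_k$ with $\prod_{k=0}^n\lambda_k=1$, is valid only within your normalized class, not for arbitrary factorizations as the proposition requires.
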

\begin{proof}
	For the sake of simplicity, consider first the case in which $\omega$ assumes the form (without loss of generality taking the natural basis of 1-forms)
	\begin{equation}
		\omega = f \varepsilon^1 \wedge \cdots \wedge \varepsilon^n + g_{\mu_{1,\ldots,n}\nu_{1,\ldots,n}} \varepsilon^{\mu_1\nu_1} \wedge \cdots \wedge \varepsilon^{\mu_n\nu_n},
	\end{equation}
	where $f \ne 0$ by reason of the macroscopic assumption and without loss of generality, the $g_{\mu_{1,\ldots,n}\nu_{1,\ldots,n}}$ may be taken to be totally antisymmetric. Here, $\mu_i\nu_i=20,11,02$, $i=1,\ldots,n$. If we write
	\begin{equation}
		\eta^i = \varepsilon^i + h^i_{jk} \varepsilon^{jk},
	\end{equation}	
	then the condition that
	\begin{equation}
		\omega = f \eta^1 \wedge \cdots \wedge \eta^n
	\end{equation}
	becomes a system of equations for each multi-index $A, |A|=2$: 
	\begin{equation}
		f \prod_{k=1}^n h^A_{\mu_k\nu_k} = g_{\mu_{1,\ldots,n}\nu_{1,\ldots,n}}.
	\end{equation}
	Divide through by $f$ since $f$ is non-zero by hypothesis and take the logarithm of both sides:
	\begin{equation}
		\sum_{k=1,\ldots,n} \ln \pm h^A_{\mu_k\nu_k} = \ln g_{\mu_{1,\ldots,n}\nu_{1,\ldots,n}}/f.
	\end{equation}
	By insisting on lexicographical order of multi-indices, however, we may ensure that the resulting system of equations corresponds to an inhomogeneous linear equation with a lower-triangular matrix with constant coefficients. The matrix will be invertible as all of its diagonal elements equal $\pm 1$. Solve for the $\ln \pm h^A_{\mu_k\nu_k}$ and exponentiate. If it should happen that one of the $g_{\mu_{1,\ldots,n}\nu_{1,\ldots,n}}$ equal zero, set it to something non-zero and take the limit as its absolute value tends back to zero. In this limit, one or more of the $\ln \pm  h^A_{\mu_k\nu_k}$ will likewise diverge to negative infinity, but this just tells us that the corresponding $ h^A_{\mu_k\nu_k}$ should be set itself to zero. The coefficients $h^A_{\mu_k\nu_k}$, $k=1,\ldots,n$ are uniquely defined since inhomogenous linear equations have unique solutions. We are free, however, to scale $f$ by $\lambda_0$ and $\eta_k$ by $\lambda_k$ in such a way that $\prod_{k=0}^n \lambda_k = 1$ without affecting the result.
	
	The same procedure applies mutatis mutandis when we expand $\omega$ into a lexicographically ordered sum over multi-indices $A, |A|=1,\ldots,r$. This concludes the proof.
\end{proof}

\begin{proposition}\label{local_1_frame}
	Let $\omega$ be a macroscopic smooth $n$-form on the manifold $M$ that factorizes as $\omega = \eta^1 \wedge \cdots \wedge \eta^n$ (suppressing the factor of $f \ne 0$ by distributing it somehow among the $\eta^{1,\ldots,n}$) and further suppose that around a given point there exists a local coordinate system $x^{1,\ldots,n}$ in terms of which the coefficients of $\omega$ become analytic functions. Then there exists another local coordinate system $z^{1,\ldots,n}$ such that with respect to these coordinates $\omega$ simplifies to just $d^{z_1} \wedge \cdots \wedge d^{z_n}$ at the given point.
\end{proposition}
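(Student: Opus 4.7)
The strategy is to construct the coordinates $z^{1,\ldots,n}$ one order at a time, in the spirit of proposition~\ref{jet_normal_form}, driving each $\eta^k$ individually to $d^{z_k}$ at $p$. This suffices because, modulo $\ker\pi_{r1}$, the target $d^{z_1}\wedge\cdots\wedge d^{z_n}$ has no higher-order contributions, so any factorization of $\omega|_p$ reproducing the target is forced---up to the harmless overall scalings identified in the preceding proposition---to have each $\eta^k|_p$ equal to $d^{z_k}|_p$.

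I would begin by invoking the macroscopic hypothesis together with the factorization: the covectors $\pi_{r1}(\eta^k|_p)$ are linearly independent in $J^{1*}_p(M)$, so a linear change of coordinates (computed as in proposition~\ref{jet_normal_form}, but simultaneously for $n$ 1-forms) can be chosen to place them in canonical form $(\eta^k)_\alpha|_p = \delta^k_\alpha$ for every $|\alpha|=1$. The higher-order components get shuffled according to equation~(\ref{jet_transf_law}), but that does not yet matter.

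Next I would run the induction. Assume coordinates $y^{1,\ldots,n}$ have been built so that at $p$, $(\eta^k)_\alpha|_p = \delta^k_\alpha$ for $|\alpha|=1$ and $(\eta^k)_\alpha|_p = 0$ for $2 \le |\alpha| \le s$. Introduce new coordinates via $y^\alpha(z) = z^\alpha + Q^\alpha(z)$, where $Q^\alpha$ is a homogeneous polynomial of degree $s+1$ in $z$ with Taylor coefficients $c^\alpha_\beta$ to be determined. Such a change has identity Jacobian at $p$ and all derivatives at $p$ of orders $2,\ldots,s$ vanishing, so by (\ref{jet_transf_law}) the already normalized pieces of $\eta^k$ up through order $s$ are left untouched. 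Inspecting the pattern of row $s+1$ of the transformation law, the only surviving contributions to $(\eta^k)'_\beta|_p$ are the identity term $(\eta^k)_\beta|_p$ and a single cross-term of the form $\tfrac{1}{(s+1)!}c^\alpha_\beta\,(\eta^k)_\alpha|_p = \tfrac{1}{(s+1)!}c^k_\beta$; every other cross-term involves at least one factor $\partial^j y/\partial z^j|_p$ with $2 \le j \le s$ and hence vanishes. One then solves $c^k_\beta = -(s+1)!\,(\eta^k)_\beta|_p$ uniquely for each $k$ and each symmetric multi-index $\beta$ of order $s+1$, wiping out the order-$(s+1)$ components of every $\eta^k$ in a single step. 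The counts match: $n\binom{n+s}{s+1}$ equations against $n\binom{n+s}{s+1}$ free Taylor coefficients $c^\alpha_\beta$.

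The hard part lies in two complementary issues. The first is the general-order verification that the transformation law really does collapse at row $s+1$ to the diagonal form advertised above. Equation~(\ref{jet_transf_law}) is only written through third order, so one must argue combinatorially: at row $s+1$ the column corresponding to $a_\alpha$ with $|\alpha|=m$ is a sum over products of $m$ derivatives $\partial^{j_i} y/\partial z^{j_i}|_p$ with $j_1+\cdots+j_m = s+1$, and the induction hypothesis kills every such product apart from the two I have singled out---a straightforward but non-trivial bookkeeping exercise. The second is the passage from this order-by-order formal construction to an actual local coordinate change. For finite $r$ the recursion terminates after $r-1$ steps and the analyticity hypothesis is inessential, one composing finitely many polynomial changes and obtaining a genuine polynomial diffeomorphism on a neighborhood of $p$. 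For the indefinite-order case, analyticity of the coefficients of $\omega$ supports a majorization argument bounding the recursively produced $c^\alpha_\beta$ and hence the convergence of the composite formal power series on some neighborhood of $p$, yielding an analytic local diffeomorphism whose Jacobian at $p$ is nonsingular. In either case, in the final coordinates $\eta^k|_p = d^{z_k}|_p$ for each $k$, whence $\omega|_p = d^{z_1}\wedge\cdots\wedge d^{z_n}|_p$ as required.
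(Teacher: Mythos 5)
Your argument is correct, but where the paper's own proof is a one-shot construction, yours is its iterative counterpart. You normalize order by order, composing a linear change with successive homogeneous polynomial changes $y^\alpha(z)=z^\alpha+Q^\alpha(z)$, and your bookkeeping at row $s+1$ of (\ref{jet_transf_law}) is sound: any cross-term other than the two you keep contains a derivative factor of order between $2$ and $s$, which vanishes by construction, so each stage is a trivially invertible diagonal solve. The paper instead reads the desired conclusion directly through (\ref{jet_transf_law}) with $z$ as source and $x$ as target: since $d^{z_k}$ has its coefficient vector concentrated in the first-order slot, the transformed components of $\eta^k$ are exactly the entries of the single column of the transition matrix acting on that slot, namely the successive derivatives $\frac{1}{m!}\,\partial^{m} z^k/\partial x^\beta\big|_p$. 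One therefore simply \emph{defines} $z^k$ by prescribing its Taylor coefficients at $p$ to be the corresponding factorial multiples of $(\eta^k)_\beta|_p$, with the macroscopic hypothesis guaranteeing an invertible linear part. The direct method buys the absence of any induction and, more importantly, a cleaner convergence question: one power series built verbatim from the data (where analyticity is invoked), rather than a majorization of recursively produced coefficients $c^\alpha_\beta$ arising from a composition of infinitely many maps. Your method buys transparency at finite $r$, where it shows analyticity is irrelevant. One small caveat: your opening claim that any factorization reproducing the target is ``forced'' to have $\eta^k|_p=d^{z_k}|_p$ is unnecessary and not quite accurate (a unimodular mixing of the factors also preserves the wedge); constructing coordinates with $\eta^k|_p=d^{z_k}|_p$ is already sufficient, and that is all your proof actually uses.
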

\begin{proof}
	It will be convenient to regard the $z^{1,\ldots,n}$ as the source and the $x^{1,\ldots,n}$ as the target. The indicated form will exist if we can choose the mapping from the source to the target such that the transition function of equation (\ref{jet_transf_law}) can yield any desired $\eta^{1,\ldots,n}$ on the left-hand side, where on the right-hand side we want these to reduce just to a Kronecker delta function in the first $n$ coordinates. But the lower block-diagonal structure of the transition function comes to our aid. For the required coefficients, say, of $\eta^1$ with respect to the target coordinates can readily be obtained from $d^{z_1}$ on the right-hand side merely with a corresponding column vector in the first position, the entries of which will be the successive derivatives of $z^1$ with respect to collections of the $x^{1,\ldots,n}$ coordinates for each multi-index. The same goes for $d^{z_k}$ for any $k=1,\ldots,n$. Now in the transformation at first order in the upper leftmost block will be invertible by virtue of the macroscopic hypothesis, which in turn guarantees invertibility of all the blocks on the diagonal. In this manner we arrive at a power series solution (i.e., polynomial to any finite degree $r$) defining the diffeomorphism from the source to the target coordinates. Evidently the power series must have a finite radius of convergence due to the implicit function theorem along with the hypothesis of analyticity.
\end{proof}

With this preliminary material out of the way, we follow Lee's exposition (in the first section of his Chapter 16) in order to supply a sense to the integral of a differential form on a manifold.

\begin{definition}\label{Euclidean_integration}
	Let $D \subset \vvmathbb{R}^n$ be a domain of integration; viz., a compact set whose boundary has measure zero. Suppose as guaranteed by lemma \ref{full_macroscopic_vol_form} the $n$-form $\omega$ can be put into the form $\omega = f_A  \theta^{A_1} \wedge \cdots \wedge \theta^{A_n}$ where the continuous functions $f_A$ have support inside $D$ and we fix the sign by requiring the the $A_{1,\ldots,n}$ to be in the obvious lexicographical order. Then we define the integral of $\omega$ over $D$ to be
	\begin{equation}
		\int_D \omega := \int_D f_A \theta^A,
	\end{equation}
	understanding by $\theta^A$ namely, the product $\theta^{A_1} \cdots \theta^{A_n}$. Speaking more generally, if $U \subset \vvmathbb{R}^n$ is an open set and $\omega$ is compactly supported in $U$, following Lee we define $\int_U \omega = \int_D \omega$ for any domain of integration $D \supset \mathrm{supp}~ \omega$ (extending $\omega$ by zero on the complement of its support).
\end{definition}

\begin{remark}
	By the uniqueness in lemma \ref{full_macroscopic_vol_form} the functions $f_A$ in definition \ref{Euclidean_integration} are well defined, but we shall nevertheless point out why we are justified in thus distinguishing them. The idea is that the $f_A$ with $\max(|A_1|,\ldots,|A_n|) \ge 2$ correspond to microscopic parts of the integrand (indeed, $f_B$ is microscopic relative to $f_A$ if $\max(|A_1|,\ldots,|A_n|) < \max(|B_1|,\dots,|B_n|)$). Anything lying in the kernel of $\pi_{r1}$ will be representable as a finite linear combination of terms of the form $g \theta^{A_1} \wedge \cdots \wedge \theta^{A_n}$ where at least one of the $A_{1,\ldots,n}$ satisfies $|A_{1,\ldots,n}| \ge 2$. But then each respective term in $\omega - f_{(1,\ldots,1)} \theta^{x_1} \wedge \cdots \wedge \theta^{x_n}$ will be of at least one higher order in the differentials than the main part of the integrand $f_{(1,\ldots,1)}$ and therefore should be negligible in comparison.
\end{remark}

\begin{remark}
	Manifolds with corners are not really needed for the theory of integration, but convenient to employ with many possible parametrizations (for instance, the torus $S^1 \times S^1$ where one would like to adopt as coordinates the angles in each respective copy of $S^1$). There is no call to formalize the concept, however, in as much as in any application one could just resort to a bump function whose value equals unity away from a neighborhood of the boundary, where the corners may reside, and equal to zero on the boundary itself. Multiply the integrand by this bump function in order to permit it to be integrated according to the usual theory without corners. Then pass to the limit where the size of the neighborhood of the boundary tends to zero.
\end{remark}

Having settled upon a reasonable convention as to what we should mean by higher-order integration in a compact domain in Euclidean space, the generalization to compactly supported macroscopic volume forms on arbitrary manifolds follows fairly straightforwardly along the lines of Lee \cite{lee_smooth_manif}, Chapter 16.

Clearly, we are going to need some generalization of the concept of a Jacobian and thus an organized fashion of writing out the change of coordinate formula (\ref{jet_transf_law}) when applied to the standard basis of jets. Therefore, we wish to find an expression for the generalized Jacobian transformation,
\begin{equation}\label{jacobian_def}
	\hat{J} := \begin{pmatrix}
		J^{(1,1)} & 0 & 0 \cr
		J^{(2,1)} & J^{(2,2)} & 0 \cr
		J^{(3,1)} & J^{(3,2)} & J^{(3,3)} 
	\end{pmatrix}.
\end{equation}
It is immediate that $J^{(m,m)} = \overbrace{J \otimes \cdots \otimes J}^{m ~\mathrm{times}} = J^{\otimes m}$ where
\begin{equation}
	J = \frac{\partial(x_1,\ldots,x_n)}{\partial(y_1,\ldots,y_n)}
\end{equation}
is the ordinary Jacobian. Equally evident from equation (\ref{jet_transf_law}) would be that
\begin{equation}
	J^{(m,1)} := \frac{\partial^m(x_1,\ldots,x_n)}{\overbrace{\partial(y_1,\ldots,y_n)\cdots\partial(y_1,\ldots,y_n)}^{m ~\mathrm{times}}}.
\end{equation}
A little thought leads to a formula for $J^{(b,a)}$:
\begin{equation}
	J^{(b,a)} := \sum_{a_{1,\ldots,a},b_{1.\ldots,b}: a_1+\cdots+a_a=a, b_1+\cdots+b_b=b} 
	\frac{a_1!\cdots a_a!b!}{b_1!\cdots b_b!a!}
	J^{(b_1,a_1)} \otimes \cdots \otimes J^{(b_b,a_a)},
\end{equation}
where implicitly one has to work one's way up in the respective orders of the jets, starting from $a,b=1$. Thus, for instance,
\begin{equation}
	J^{(3,2)} = \frac{1}{2} \left( J^{(2,1)} \otimes J^{(1,1)} + J^{(1,1)} \otimes J^{(2,1)} \right).
\end{equation}

Now, if we are to frame an intrinsic theory of integration on manifolds we stand in need of a pullback formula by which to express how the integrand behaves under change of coordinate. There are two senses in which the notion of integrand could be understood: either first, as a compound differential in all the Cartesian coordinates or second, as a top-degree volume differential form. The first sense would be of course the more basic in the order of discovery, but the second would be preferable from an advanced point of view in which all geometrical objects are to be described intrinsically; i.e., in a coordinate-free manner. The salient difference between the two is that differentials are commutative while volume forms are totally antisymmetric and the real significance of the pullback formula is that it mediates between the two notions of integrand. In the generalized setting in which we are working, the integrand is to be interpreted as a macroscopic volume $n$-form in jets of up to the $r$-th order. Due to linearity, we may allow the Jacobian to act on the individual components of the volume form and collect it in the result as a prefactor. As everyone knows, in the usual multivariate calculus this procedure gives rise to the determinant of the Jacobian. Clearly, if we proceed along similar lines we shall obtain a generalized formula that replaces the determinant, which will no longer be a scalar since the space of $n$-forms has greater than one in dimension (cf. Halmos, \cite{halmos}, {\S}53). Thus, with the aid of the following definition the requisite generalized pullback formula becomes immediate:

\begin{definition}
	Let $M$ be a differentiable manifold of dimensions $n$ on which $U, V$ are two coordinate charts at a given point $p \in M$ and denote the Jacobian between them by $\hat{J}$, as above in equation (\ref{jacobian_def}). Then by its determinant we understand,
	\begin{equation}
		\det \hat{J} := \bigwedge \overbrace{ \hat{J} \otimes \cdots \otimes \hat{J}}^{n ~\mathrm{times}}.
	\end{equation}
	
\end{definition}

\begin{proposition}[Pullback Formula; cf. Lee, Proposition 14.20]\label{pullback_formula}
	Let $F: M \rightarrow N$ be a smooth map between $n$-manifolds. If $x_{1,\ldots,n}$ and $y_{1,\ldots,n}$ are smooth coordinates on open subsets $U \subset M$ and $V \subset N$ respectively, and $\omega = f_A \theta^{\wedge A}$ is a macroscopic volume form on $N$, then the following holds on $U \cap F^{-1}[V]$:
	\begin{equation}
		F^* \omega = f_A \theta^{y\wedge A} = \omega_A \circ F \det \hat{J}^A_B ~\theta^{x\wedge B},
	\end{equation}
	where $\hat{J}$ represents the Jacobian transformation associated to $F$ in these coordinates and a summation over grand multi-indices $A, B$ is by convention understood.
\end{proposition}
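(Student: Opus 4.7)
The strategy is to exploit the already-established behavior of the pullback on generalized differential forms (lemma \ref{pullback_on_forms}), reduce the problem to computing the pullback of a single generalized jet $\theta^{y^\alpha}$, and then reassemble the top-degree form through multiplicativity of $F^*$ across wedge products.

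First I would apply lemma \ref{pullback_on_forms}: since $F^*$ is $\vvmathbb{R}$-linear and commutes with wedge products, writing $\omega = f_A \theta^{y\wedge A}$ immediately gives
\begin{equation*}
F^* \omega = (f_A \circ F) \, F^*\theta^{y\wedge A} = (f_A \circ F) \bigwedge_{i=1}^{n} F^*\theta^{y^{A_i}},
\end{equation*}
so that everything reduces to understanding $F^*\theta^{y^\alpha}$ for a single multi-index $\alpha$.

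Next, I would unfold the definition of pullback of a jet given in {\S}\ref{formal_def}. Representing $d^{y^\alpha}|_q = [(y-y(q))^\alpha]$ modulo $\mathfrak{m}_q^{r+1}$, pullback by $F$ amounts to substituting $y\circ F$ for $y$, viz.\ $(F^*d^{y^\alpha})|_p = [(y\circ F - y(F(p)))^\alpha]$ modulo $\mathfrak{m}_p^{r+1}$. Taylor-expanding $y\circ F$ about $p$ in the $x$-coordinates and collecting terms by multi-index order is precisely the derivation that yields the block-triangular jet transformation law (\ref{jet_transf_law}); in compressed form
\begin{equation*}
F^* d^{y^\alpha} = \hat{J}^{\alpha}{}_{\beta}\, d^{x^\beta},
\end{equation*}
where $\hat{J}^{\alpha}{}_{\beta}$ is the $(|\alpha|,|\beta|)$-block entry of the generalized Jacobian defined in (\ref{jacobian_def}). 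Summing over the orders that comprise $\theta^{y^\alpha}$ converts this, by linearity, into $F^*\theta^{y^\alpha} = \hat{J}^{\alpha}{}_{\beta}\, \theta^{x^\beta}$.

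Then I would form the $n$-fold wedge. By the multiplicativity of $F^*$ established in lemma \ref{pullback_on_forms}(2),
\begin{equation*}
F^*\theta^{y\wedge A} = \bigwedge_{i=1}^{n} \bigl( \hat{J}^{A_i}{}_{B_i}\, \theta^{x^{B_i}} \bigr),
\end{equation*}
and the distributed wedge collects the totally antisymmetric combination of tensor products of Jacobian blocks contracted against $\theta^{x^{B_1}} \wedge \cdots \wedge \theta^{x^{B_n}}$. By the very definition of the generalized determinant as $\bigwedge \hat{J}^{\otimes n}$, this sum is precisely $\det \hat{J}^{A}{}_{B}\, \theta^{x\wedge B}$; substitution into the first display completes the identity.

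The principal obstacle is the combinatorial bookkeeping. Because $\hat{J}$ is block-lower-triangular and the off-diagonal blocks $J^{(b,a)}$ with $b>a$ couple differing orders of infinitesimal, the expansion of $F^*\theta^{y \wedge A}$ contains a welter of cross-order contributions that must be shown to reorganize into the intrinsic determinant. One must identify, among these contributions, exactly the antisymmetric sum matching the definition of $\det \hat{J}$, and verify that any purely symmetric remainders vanish upon antisymmetrization on the wedge side. Block-triangularity is what keeps the expansion finite, but the combinatorial weights $a_1!\cdots a_a!\, b!/(b_1!\cdots b_b!\, a!)$ appearing in the formula for $J^{(b,a)}$ must be tracked carefully so that they match the weights coming from the distributed wedge product.
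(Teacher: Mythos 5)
Your proposal is correct and follows essentially the same route as the paper's own proof: both arguments factor $F^*$ across the wedge product using lemma \ref{pullback_on_forms}, reduce everything to the pullback of a single generalized differential as encoded in the generalized Jacobian $\hat{J}$, and then identify the antisymmetrized $n$-fold product with $\det \hat{J}^A_B$ contracted against the $\theta^{x\wedge B}$, leaving the same final combinatorial reorganization asserted rather than written out. The only difference is bookkeeping: the paper obtains the single-jet transformation by writing the basis jets as $\text{\th}$ of coordinate monomials, commuting $F^*$ past $\text{\th}$ (proposition \ref{properties_exterior_deriv}) and evaluating on the coordinate vectors $\partial_{x\beta}$, whereas you read the same matrix off the definitional pullback of jets and the transformation law (\ref{jet_transf_law}) — dual descriptions of the identical computation.
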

\begin{proof}
	Evaluate both sides on $\partial_{x\beta}$. Equality will be justified by virtue of lemma \ref{pullback_formula} and proposition \ref{properties_exterior_deriv}.
	From
	\begin{equation}
		F^* \text{\th} \left( \frac{1}{\alpha_1!} y^{\alpha_1} \right) \wedge \cdots \text{\th} \left( \frac{1}{\alpha_n!} y^{\alpha_n} \right)=
		\text{\th} \left( \frac{1}{\alpha_1!} y^{\alpha_1} \circ F \right) \wedge \cdots \text{\th} \left( \frac{1}{\alpha_n!} y^{\alpha_n} \circ F \right) 
	\end{equation}
	we obtain after antisymmetrizing,
	\begin{equation}
		\text{\th} \left( \frac{1}{\alpha_1!} y^{\alpha_1} \circ F \right) \wedge \cdots \text{\th} \left( \frac{1}{\alpha_n!} y^{\alpha_n} \circ F \right) \left( \partial_{x\beta_1},\ldots,\partial_{x\beta_n} \right) =\det \hat{J}^A_B.
	\end{equation}
	If we allow the grand multi-index $B$ to be arbitrary, evidently one must sum on the right-hand side over the $n$-forms $\theta^{x\wedge B}$ and the indicated pullback formula results.
\end{proof}

By convention the differentials of whatever order in the integrand should have a common positive sign, but of course $\hat{J}$ can flip them so we need to set them right again by hand; the phenomenon arises already in the usual theory where a diffeomorphism can be either orientation-preserving or -reversing and the sign of the integral has to be adjusted accordingly (see Lee, \cite{lee_smooth_manif}, Propositions 16.1, 16.6); now there will be multiple orientation types of a mapping corresponding to the possibility of flipping the sign of each piece of the integrand separately. By inspection of the pullback formula, one infers that the orientation type is to be determined by the collection of mappings $J^{(b,a)}$ with $b \ge a$ and their signatures. In the case $a=b=1$, the signature must be given by $\mathrm{sgn}~ \hat{J}^{(1,1)} = \det J^{(1,1)}$. The signatures cannot all be independent as for instance $\det J^{(m,m)} = (\det J^{(1,1)})^{mn}$; in particular, it will always be positive for even $m$. This fact appears to be very material and we shall revisit it below, but there is no reason for $J^{(b_1,a)}$ to bear any relation to $J^{(b_2,a)}$ for $b_2 > b_1$ since nothing connects the partial derivatives of the transformation of different order.

Hence, we are seeking to generalize the concept of orientation of a manifold in a suitable sense. First, a pertinent observation. The space of jets carries a natural grading by order of the jet, from which one derives a natural grading of the $n$-fold alternating product according to $|A|$ where for the grand multi-index $A=(A_1,\ldots,A_n)$, we have $|A| := \max (|A_1|,\ldots,|A_n|)$. It will be convenient to employ the natural frame. Therefore, for $m=1,\ldots,n$ define
\begin{equation}
	\Lambda^{n,m} := \mathrm{span}~ \{ \theta^{\wedge A} : |A| = m \} 
\end{equation}
and equip it with the standard orientation corresponding to the basis elements $\theta^{\wedge A}$ arranged in lexicographical order.

\begin{lemma}\label{det_lemma}
	The sign of $\det \hat{J}^B_A$ depends only on whether $\hat{J}$ is orientation-preserving or orientation-reversing considered as a mapping from $\Lambda^{n,|A|}$ onto its image in $\Lambda^{n,|B|}$ in the induced orientation; in particular, it depends only on $|A|$ and $|B|$.
\end{lemma}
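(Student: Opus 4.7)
My approach rests on the block-triangular structure of $\hat J$ with respect to the order grading on $J^{r*}_p$. First I would establish the triangularity: since $d^{y_\mu} = [y^\mu - y^\mu(q)]$ is the class of a function whose Taylor expansion about $q$ begins at order $|\mu|$, the coefficient of $d^{x_\beta}$ in $F^* d^{y_\mu}$ must vanish unless $|\beta|\ge|\mu|$. Ordering the basis by degree makes $\hat J$ block-lower-triangular, with diagonal blocks $\hat J^{(m,m)}$ determined solely by the leading-order Taylor term and hence equal to the $m$-th symmetric tensor power of the ordinary Jacobian $J = \partial y/\partial x|_q$. Consequently $\det\hat J^{(m,m)}$ is a definite power of $\det J$ — the formula $\det J^{(m,m)} = (\det J)^{mn}$ already recorded in the text — whose sign depends only on $m$, $n$ and $\mathrm{sgn}(\det J)$.

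Next I would transfer this structure to $\Lambda^n J^{r*}_p$. The order filtration on $J^{r*}_p$ induces a filtration on the exterior power whose associated graded is $\bigoplus_m \Lambda^{n,m}$, arranged by $|A|=\max_i|A_i|$. Since $\bigwedge^n\hat J$ preserves this filtration, it too is block-triangular, with diagonal blocks built from the $\hat J^{(a,a)}$ acting on grand multi-indices of maximal order $a$. In the diagonal case $|A|=|B|$, the matrix element $\det\hat J^A_B$ lies in such a diagonal block, whose determinant is a definite power of $\det J$; so its sign depends only on $|A|$ and $\mathrm{sgn}(\det J)$ — that is, only on $|A|$ together with the orientation datum of the ordinary map $\hat J^{(1,1)}$.

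The main obstacle is the off-diagonal case $|A|<|B|$, where $\det\hat J^A_B$ picks up contributions from the off-diagonal blocks of $\hat J$ involving higher-order partial derivatives of the coordinate transformation. My plan is to factor $\bigwedge^n\hat J = D(\mathbf 1 + N)$ with $D$ block-diagonal and $N$ strictly degree-raising, and to identify the dominant contribution to $\det\hat J^A_B$ with a specific composition of diagonal blocks of $D$ against a particular combinatorial expression in $N$ whose sign is determined by the pair $(|A|,|B|)$ alone. The orientation statement of the lemma would then follow once I verify that subleading terms cannot flip the sign of this dominant piece, the induced orientation on the image of $\Lambda^{n,|A|}$ inside $\Lambda^{n,|B|}$ being ultimately controlled by the signs of the diagonal blocks together with a combinatorial factor depending only on $(|A|,|B|)$. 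The hardest step will be the combinatorial bookkeeping needed to match the signs of the permutations in the antisymmetrization with the signs of the diagonal blocks of $\hat J$; I expect this to require an induction on the pair $(|A|,|B|)$, taking the diagonal case as the base and using the block-upper-triangular form of $N$ to reduce $(|A|,|B|)$ to $(|A|,|B|-1)$ at each step.
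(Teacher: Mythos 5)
Your plan takes a genuinely different route from the paper's (the paper never factors $\hat J$ or inducts on blocks; it compares two orientations on the image of $\Lambda^{n,|A|}$ inside $\Lambda^{n,|B|}$ and identifies their relative sign $\zeta$ with a ``superdeterminant'' of the restricted map), but it contains a gap that is fatal already in your diagonal case. You pass from ``the diagonal block has determinant a definite power of $\det J$'' to ``each matrix element $\det\hat J^B_A$ lying in that block has sign determined by $|A|$ and $\mathrm{sgn}\det J$.'' That inference is invalid: the sign of a determinant constrains no individual entry of the matrix. Concretely, take $n=2$, $r=2$ and the change of coordinates $y_1=x_1$, $y_2=-x_2$. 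Then $d^{y_1}=d^{x_1}$, $d^{y_1}d^{y_1}=d^{x_1}d^{x_1}$, while $d^{y_1}d^{y_2}=-\,d^{x_1}d^{x_2}$; since the degree-two elements of the $\theta$-basis reduce to the corresponding $d$'s and $\theta^{y_1}=\theta^{x_1}$, the pullback sends $\theta^{y_1}\wedge\bigl(\theta^{y_1}\theta^{y_1}\bigr)\mapsto +\,\theta^{x_1}\wedge\bigl(\theta^{x_1}\theta^{x_1}\bigr)$ but $\theta^{y_1}\wedge\bigl(\theta^{y_1}\theta^{y_2}\bigr)\mapsto -\,\theta^{x_1}\wedge\bigl(\theta^{x_1}\theta^{x_2}\bigr)$. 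These are two matrix elements in the same sector $|A|=|B|=2$ with opposite signs, so no argument that uses only block determinants (equivalently, only orientation data) can control entry-wise signs. This is exactly the point the paper's own proof sidesteps by working with the determinant of the restricted block map $\Lambda^{n,|A|}\to V$ rather than with its individual coefficients.

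The off-diagonal half of your plan cannot be completed either, and not merely because the bookkeeping is hard: the entries of your degree-raising operator $N$ are built from second and higher derivatives of the transition functions, whose signs are unconstrained by any orientation datum. Already for $n=1$, $r=2$, the single matrix element in the sector $(|A|,|B|)=(1,2)$ equals $\tfrac12\,\partial^2y/\partial x^2+y'(y'-1)$ in the $\theta$-basis, which takes either sign while $\partial y/\partial x>0$ (at a point with $y'=1$ it is just $\tfrac12 y''$); hence no ``dominant term'' expression in $D$, $N$ and $(|A|,|B|)$ determines it, and ``subleading terms cannot flip the sign'' is false. (A secondary issue: the formula $\det J^{(m,m)}=(\det J)^{mn}$ you borrow from the text is itself wrong; for the symmetric power the exponent is $\tfrac{m}{n}\binom{n+m-1}{m}$, e.g.\ $(\det J)^{3}$ for $n=m=2$, which is odd, so the parity bookkeeping the text suggests is also unreliable.) The honest conclusion is that the entry-wise statement you set out to prove does not hold as stated, and any viable argument must reinterpret $\det\hat J^B_A$ sector-wise---as the determinant, or orientation class, of the induced map $\Lambda^{n,|A|}\to\Lambda^{n,|B|}$---which is in effect what the paper's proof does.
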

\begin{proof}
	Let $V \subset \Lambda^{n,|B|}$ denote the image of $\Lambda^{n,|A|}$ under $\hat{J}$. Since $\hat{J}$ derives from a change of coordinates, i.e., a diffeomorphism, its kernel vanishes and thus $V$ has the same dimension as $\Lambda^{n,|A|}$, call it $N_{|A|}$. According to Lee, \cite{lee_smooth_manif}, Proposition 15.3, each non-zero element $\eta \in \Lambda^{N_{|A|}}V^*$ determines an orientation of $V$, of which we have two:
	first, the restriction of the standard orientation $o_{|B|}$ in $\Lambda^{n,|B|}$ to $V$ and second, the pullback of the standard orientation $o_{|A|}$ in $\Lambda^{n,|A|}$ under $\hat{J}^{-1}|_V$. But both $o_{|B|}|_V$ and $\hat{J}^{-1}|_V^* o_{|A|}$ being top-degree $N_{|A|}$-covectors, they will be either a positive or a negative multiple of each other depending on whether $\hat{J}|_{\Lambda^{n,|A|}}$ is orientation-preserving or -reversing. Thus, 
	$\hat{J}^{-1}|_V^* o_{|A|}=\zeta o_{|B|}|_V$ with $\zeta=1$ resp. $\zeta=-1$.
	
	Now for a given grand multi-index $A=(A_1,\ldots,A_n)$, write $e_A := \theta^{\wedge A}$ and its image in $V$, $\left(\hat{J}^{-1}\right)^* e_A = J^{|B|,|A|} e_A = \det \hat{J}^B_A e_B$. By definition, however, $o_{|A|} = \bigwedge_A e_A$ and $o_{|B|} = \bigwedge_B e_B$, the wedge products being taken in lexicographical order. If we define a so-called superdeterminant $\mathrm{Det}~ J^{(|B|,|A|)}$ as the determinant of the non-singular mapping $\det \hat{J}: \Lambda^{n,|A|} \rightarrow V$, unwinding the definitions leads to the conclusion that it is equal to $\zeta$.
\end{proof}
Hence, we may define $\mathrm{sgn}~ \hat{J}^A_B := \mathrm{sgn}~ \det \hat{J}^A_B$. Thus, the orientation type of a mapping is entirely determined by $\frac{1}{2}r(r+1)$ plus-or-minus signs for $|A|,|B|=11,12,\ldots,1r,22,23,$ $\ldots,2r,\ldots,rr$. In the ordinary theory of integration on differentiable manifolds, $r=1$ so there is only one possible choice of sign to make, for $|A|=|B|=1$.

The sense in which the concept of orientation is to be generalized to macroscopic volume forms may best be described as introducing a relative orientation. To follow Lee in \cite{lee_smooth_manif}, Chapter 15, we begin with a concept of relative orientation to be defined on an arbitrary pair of vectors spaces $W \subset V$. As Lee observes, in default of any canonical ordering of bases in $W$ resp. $V$, we have to bring one in via the deus ex machina of a labeled basis, by which we intend an \textit{ordered} collection of vectors $e^{1,\ldots,\mathrm{dim}~V}$ forming a basis of $V$ such that the first $\mathrm{dim}~W$ of them form a basis of $W$, regarded as the coset space $V/W^\prime$ for some complementary subspace $W^\prime$; i.e., satisfying $V=W+W^\prime$.

We shall say that two labeled bases $e_{1,\ldots,\mathrm{dim}~V}$ and $f_{1,\ldots,\mathrm{dim}~V}$ are consistently ordered if the transition mapping $J:V \rightarrow V$ sending $e^\alpha$ to $f^\alpha$ has positive determinant when restricted to $W$ resp. $W^\prime$. More generally, suppose that $V=W_1+\cdots+W_r$ and group the labeled basis vectors into $r$ classes, $e_{1,\ldots,\mathrm{dim}~W_1}$, $e_{\mathrm{dim}~W_1+1,\ldots,\mathrm{dim}~W_1+\mathrm{dim}~W_2}$ etc., where
\begin{equation}
	W_1+\cdots+W_s = V/(W_{s+1}+\cdots+W_r), \qquad s=1,\ldots,r-1.				
\end{equation} and say that the two labeled bases are consistently if $J$ is block-diagonal with respect to the decompositon $V=W_1+\cdots+W_r$ with each non-zero block having positive determinant in the sense of lemma \ref{det_lemma} (which will readily be seen not to have any dependence on the manifold structure but only on the decomposition of the space of jets into the sum $\Lambda^{n,1}+\cdots+\Lambda^{n,r}$).

From the product rule for Jacobians it follows that the property of being consistently oriented is an equivalence relation on the set of all labeled bases.
Therefore, we have a well-defined notion of an orientation for $V$, namely, an equivalence class of its labeled bases. Let $M$ be a smooth manifold with or without boundary. A pointwise orientation on $M$ designates a choice of orientation of each higher tangent space $J^r_pM$, which decomposes into a sum of parts of increasing degree using the canonical injections, starting with $T_pM=J^1_pM$. The following two propositions encapsulate what we need to know for the purpose of integration:

\begin{proposition}[Cf. Lee, Proposition 15.3]\label{orientation_type}
	Let $V=W_1+\cdots+W_r$ be a vector space of dimension $N=N_1+\cdots+N_r$, where $N_s=\mathrm{dim}~W_s, s=1,\ldots,r$. An collection of elements $\omega^{(1,\ldots,r)} \in \bigwedge^* V^*$ will be said to be full if $\omega^{(s)} \in \bigwedge^{N_s} V^*, s=1,\ldots,r$. Then each full collection of orientation forms determines a unique orientation type $O_\omega$ for $V$ as follows: $O_\omega$ will be the set of labeled bases $e_{1,\ldots,N}$ such that $\omega^{1,\ldots,r)}(e^{N_1+\cdots+N_{s-1}+1},\ldots,e^{N_1+\cdots+N_s})>0$.
\end{proposition}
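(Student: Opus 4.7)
The plan is to reduce the problem block-by-block to the classical case of Lee's Proposition 15.3 applied inside each summand $W_s$. First I would note that since $\omega^{(s)} \in \bigwedge^{N_s} V^*$ and $N_s = \dim W_s$, the restriction $\omega^{(s)}|_{W_s}$ lives in the one-dimensional space $\bigwedge^{N_s} W_s^*$; one must assume (as an unstated non-degeneracy hypothesis on what it means to be a full collection of orientation \emph{forms}) that this restriction is non-zero, at which point by Lee's Proposition 15.3 it singles out a conventional orientation of $W_s$.

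Next, I would establish non-emptiness of $O_\omega$ by explicit construction. For each $s$, pick any basis of $W_s$ and label it $e_{N_1+\cdots+N_{s-1}+1},\ldots,e_{N_1+\cdots+N_s}$; if $\omega^{(s)}$ evaluated on this ordered $N_s$-tuple is negative, swap two vectors to flip the sign. Concatenating over $s=1,\ldots,r$ produces a labeled basis of $V$ lying in $O_\omega$.

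Then I would verify the equivalence-class property. Given two labeled bases $e$ and $f$, both lying in the subspace decomposition $V=W_1\oplus\cdots\oplus W_r$, the transition matrix $J$ sending $e$ to $f$ is automatically block-diagonal, $J = J_1 \oplus \cdots \oplus J_r$ with $J_s: W_s \to W_s$, because each block of each labeled basis lives in the corresponding $W_s$. A direct change-of-basis computation yields
\begin{equation}
\omega^{(s)}\bigl(f_{N_1+\cdots+N_{s-1}+1},\ldots,f_{N_1+\cdots+N_s}\bigr)
= (\det J_s)\,\omega^{(s)}\bigl(e_{N_1+\cdots+N_{s-1}+1},\ldots,e_{N_1+\cdots+N_s}\bigr)
\end{equation}
for each $s$. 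If both bases belong to $O_\omega$, both sides are positive, forcing $\det J_s > 0$ for every $s$, i.e., $e$ and $f$ are consistently ordered in the generalized sense of Lee's definition. Conversely, if $e \in O_\omega$ and $f$ is consistently ordered with $e$, the same identity forces $f \in O_\omega$. Thus $O_\omega$ is exactly one equivalence class of labeled bases, which by the definition given in the preceding text is precisely an orientation of $V$ relative to the chosen decomposition; uniqueness is then automatic.

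The main obstacle is conceptual rather than technical: ensuring that the notion of a labeled basis really does force the transition matrix to be block-diagonal (as opposed to merely block upper-triangular), so that ordinary determinants of the blocks $J_s$ suffice and the generalized superdeterminant of lemma \ref{det_lemma} need not enter. This is why I would emphasize up front that each block of a labeled basis is taken to lie in the corresponding summand $W_s$, in accordance with the decomposition $V = W_1 + \cdots + W_r$ being a direct sum---the only reading that makes the proposition clean and that is compatible with its intended use in defining orientations of $J^r_pM$ by degree.
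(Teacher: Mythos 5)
Your proof is correct and its engine is the same as the paper's: the change-of-basis identity $\omega^{(s)}(f_{N_1+\cdots+N_{s-1}+1},\ldots,f_{N_1+\cdots+N_s}) = \det(J_s)\,\omega^{(s)}(e_{N_1+\cdots+N_{s-1}+1},\ldots,e_{N_1+\cdots+N_s})$, which makes joint membership in $O_\omega$ equivalent to positivity of the diagonal block determinants. The genuine difference lies in how that identity is licensed. The paper's proof writes $\omega(f\ldots)=\det B^{(s,s)}\,\omega(e\ldots)$ without verifying that the transition $B$ preserves each block, and then appends a remark about the off-diagonal maps $B^{(s',s)}$, $s'>s$, having positive signature; this is an artifact of its flag-style definition of labeled basis (the first $N_1+\cdots+N_s$ vectors span $W_1+\cdots+W_s$), under which $B$ is only block-triangular and the multilinear expansion of $\omega^{(s)}(Be,\ldots,Be)$ has cross terms that need not vanish. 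Under that reading the statement even fails literally: in $V=\vvmathbb{R}^2$ with $W_s=\mathrm{span}(e_s)$, $\omega^{(1)}=e^1$, $\omega^{(2)}=e^2$, the labeled bases $(e_1,e_2)$ and $(e_1,e_1+e_2)$ both lie in $O_\omega$ yet are not related by any block-diagonal transition, so $O_\omega$ is not a single equivalence class. Your insistence that each block of a labeled basis lie in the corresponding summand $W_s$ --- justified by the hypothesis $N=N_1+\cdots+N_r$, which forces the sum to be direct --- is precisely what makes $B$ honestly block-diagonal and the proposition true; and your two additions (the explicit non-emptiness construction, and the non-degeneracy of the restriction $\omega^{(s)}|_{W_s}$, which is genuinely needed since non-vanishing of $\omega^{(s)}$ on $V$ does not imply non-vanishing of its restriction) fill in steps the paper's proof omits. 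In short: same determinant mechanism, but your handling of the block structure closes a hole that the paper's own argument leaves open.
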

\begin{proof}
	The zero-dimensional case is trivial. Given $\omega^{(1,\ldots,r)}$ and $O_\omega$, we have to show every labeled basis $e_{1,\ldots,N} \in O_\omega$ belongs to the same orientation type as does every other labeled basis $f_{1,\ldots,N} \in O_\omega$. Let $B:V \rightarrow V$ be the map sending $e^\alpha$ to $f^\alpha$, $\alpha=1,\ldots,N$:
	\begin{align}
		\omega(f_{N_1+\cdots+N_{s-1}+1},\ldots,f_{N_1+\cdots+N_s}) &=
		\omega(Be_{N_1+\cdots+N_{s-1}+1},\ldots,Be_{N_1+\cdots+N_s}) \nonumber \\
		&=\det B^{(s,s)} \omega(e_{N_1+\cdots+N_{s-1}+1},\ldots,e_{N_1+\cdots+N_s}).
	\end{align}
	Hence, the two bases can belong to $O_\omega$ if and only if $\omega(e_{N_1+\cdots+N_{s-1}+1},\ldots,e_{N_1+\cdots+N_s})$ and $\omega(f_{N_1+\cdots+N_{s-1}+1},\ldots,f_{N_1+\cdots+N_s})$ have the same sign.
	The same goes for $\omega^{(s)}$ and the restriction of $\omega^{(s^\prime)}$ to  $W_1+\cdots+W_s$, $s^\prime>s$, where now the signature of $B^{(s^\prime,s)}$ is positive.
\end{proof}

\begin{proposition}[Cf. Lee, Proposition 15.5]
	Let $M$ be a smooth, connected $n$-manifold with or without boundary. Every full collection of non-vanishing orientation forms determines a unique orientation on $M$ for which the orientation type is positive at each point.
\end{proposition}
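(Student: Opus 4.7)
The plan is to follow the strategy of Lee's proof of his Proposition 15.5, adapting it to the decomposed setting of the preceding proposition on orientation types. The main work is to promote the pointwise orientation determined by a full collection of non-vanishing orientation forms to a well-defined, globally consistent orientation on $M$, using the block-graded structure ($V = W_1 + \cdots + W_r$ with $W_s$ corresponding to the jet order $s$ part of $J^r_pM$) to handle each grade separately.

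First I would work pointwise. At each $p \in M$ the full collection $\omega^{(1)}_p, \ldots, \omega^{(r)}_p$ satisfies the hypotheses of the orientation-type proposition, and so determines a unique orientation class $O_\omega(p)$ on the graded space $J^r_pM = \bigoplus_s W_s(p)$. This gives a pointwise orientation; all that remains is to check that it is continuous in the sense required to constitute an orientation on $M$.

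Next I would establish local consistency. In any coordinate chart $(U, x^1, \ldots, x^n)$ we have the natural frame $\{\partial_\alpha\}$ graded by $|\alpha|$; this gives, at every $q \in U$, a labeled basis of $J^r_qU$ respecting the decomposition into grades. Evaluating each $\omega^{(s)}$ on the grade-$s$ portion of this frame produces a smooth function $f^{(s)}_U : U \to \mathbb{R}$, which is nowhere zero by the non-vanishing hypothesis on the forms. Hence $\mathrm{sgn}\, f^{(s)}_U$ is locally constant, and by shrinking $U$ if necessary we may assume each $\mathrm{sgn}\, f^{(s)}_U$ is constant on $U$. Whenever a sign is negative, we may flip the sign of a single basis element in the corresponding grade (e.g., replace $\partial_{\alpha_0}$ by $-\partial_{\alpha_0}$ for one multi-index $\alpha_0$ with $|\alpha_0|=s$); this yields a modified smooth labeled local frame that lies in $O_\omega(q)$ for every $q \in U$. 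Such local frames, one for each chart in an atlas, comprise a consistent orientation in the generalized sense.

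Then I would verify the global compatibility. On overlaps $U \cap V$ of two such charts, the transition matrix from one positively-oriented labeled frame to another is, by construction, block lower-triangular with respect to the grading (this is exactly the structure of equation (\ref{jet_transf_law}) and of the generalized Jacobian $\hat{J}$). Its restricted-to-diagonal blocks $J^{(s,s)}$ satisfy $\det J^{(s,s)} = (\det J^{(1,1)})^{sn}$ up to the sign conventions tracked by Lemma \ref{det_lemma}, and consistency of the signatures on $U \cap V$ is forced by the fact that both frames evaluate $\omega^{(s)}$ to positive functions there. Thus the orientation glues to a well-defined orientation on $M$ for which $O_\omega$ is positive at every point; uniqueness is immediate, since at each $p$ the value of $O_\omega(p)$ is prescribed by the preceding proposition and any two orientations that agree pointwise on a connected manifold coincide.

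The main obstacle will be the bookkeeping for the off-diagonal blocks $J^{(b,a)}$ with $b > a$ in the generalized Jacobian: one must check that, because these do not enter the signatures defining positivity of the orientation type (only the diagonal blocks do, per Lemma \ref{det_lemma}), their presence cannot obstruct extension of a positively-oriented labeled frame across chart overlaps. This reduces ultimately to the observation that the block-triangular structure of $\hat{J}$ means a labeled basis consistent with the grading in one chart remains so after transition, and the positivity on each grade transfers across via the sign of $\det J^{(s,s)}$ alone. Granted this, the rest of the argument reduces to standard orientability considerations as in Lee.
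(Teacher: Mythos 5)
Your proposal is correct and follows essentially the same route as the paper's own proof: reduce to a continuity check via the preceding orientation-type proposition, evaluate each $\omega^{(s)}$ on the grade-$s$ part of a local labeled frame to get nowhere-vanishing (hence locally constant-sign) coefficient functions, flip one basis vector per grade where the sign is negative, and glue; your extra bookkeeping on the block-triangular Jacobian merely spells out what the paper compresses into the assertion that the pointwise orientation type is continuously defined. The only difference is that the paper's proof also establishes the converse (an oriented $M$ admits a full collection of positive orientation forms, by convexity and a partition of unity), which is not demanded by the statement as quoted and so its absence from your argument is not a gap.
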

\begin{proof}
	In view of proposition \ref{orientation_type}, all that has to be checked is continuity. Choose a labeled basis of $J^r$ at each point that belongs to $O_\omega$ there. Then 
	\begin{equation}
		\omega^{(s)} = f_s e^{N_1+\cdots+N_{s-1}+1} \wedge \cdots \wedge e^{N_1+\cdots+N_s}
	\end{equation}
	for some continuous functions $f_{1,\ldots,r}$. Nonvanishing of the orientation forms implies that of the $f_{1,\ldots,r}$, which therefore cannot change sign within any connected neighborhood of a point. If negative, one may flip the sign of one of the respective basis vectors to make it positive. Therefore the pointwise orientation type is continuously defined on each connected component of $M$.
	
	Conversely, if $M$ be supposed oriented, let $\Lambda_+$ denote the set of all full collections of positive orientation forms. The intersection of $\Lambda_+$ with $\bigwedge^* J^*_pM$ at any given point is obviously convex. Then, by a partition-of-unity argument one can construct a full collection of global orientation forms.
\end{proof}

Collecting results so far assembled, we are at last in a position to define an intrinsic sense of integration on the general differentiable manifold along the same lines as in Lee \cite{lee_smooth_manif}. The main subtlety in the existing theory consists in setting up the integral in such a way that one can pass from one coordinate chart to another, where the transition function between the two could be either orientation-preserving or orientation-reversing. These two cases exhaust the possibilities; in the latter one wants to introduce an additional minus sign to compensate for the change in sign of the volume form originating from the pullback formula \ref{pullback_formula}. Here, we evidently stand in need of a generalized concept of orientation type to connect charts of possibly different type in a consistent manner.

With the above apparatus in hand, higher-order analogues of Lee's Propositions 16.4, 16.5 and 16.6 are immediate.

\begin{corollary}[Change of Variables; cf. Lee, Theorem C.26 and Rudin, Theorem 10.24]\label{change_of_var_formula}
	Suppose $D$ and $E$ are open domains of integration in $\vvmathbb{R}^n$ and $G: \bar{D} \rightarrow \bar{E}$ a smooth map that restricts to a diffeomorphism from $D$ to $E$. For every continuous function $f: \bar{E} \rightarrow \vvmathbb{R}$ and for any macroscopic grand multi-index $A=(A_1,\ldots,A_n)$ (meaning $A_{11}, \ldots, A_{nn} = 1,\dots,r$ and the $A_{1,\ldots,n}$ are distinct and in increasing lexicographical order),
	\begin{equation}
		\int_E f \theta^A = \mathrm{sgn} \hat{J}^A_B \int_D f \circ G ~|\det \hat{J}^A_B| ~\theta^B
	\end{equation}
\end{corollary}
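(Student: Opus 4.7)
The plan is to reduce the statement to a combination of the pullback formula (proposition \ref{pullback_formula}), the Euclidean definition of higher-order integration (definition \ref{Euclidean_integration}), and the classical change-of-variables theorem for Riemann integrals in $\vvmathbb{R}^n$. The generalized orientation machinery (lemma \ref{det_lemma} and proposition \ref{orientation_type}) is invoked only to package the sign factor coherently.

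First I would fix the macroscopic $n$-form $\omega := f\, \theta^A$ on $\bar{E}$ and compute its pullback $G^{*}\omega$ on $\bar{D}$. By proposition \ref{pullback_formula}, applied with $F = G$, $y = G(x)$, and a single term $A$ on the target, we obtain
\begin{equation}
G^{*}(f\,\theta^A) = (f \circ G)\, \det \hat{J}^A_B\,\theta^B,
\end{equation}
with the sum over grand multi-indices $B$ running implicitly. Because $G$ is a diffeomorphism, only finitely many of the $\det \hat{J}^A_B$ fail to vanish; by lemma \ref{det_lemma} each of these has a sign depending only on $|A|$ and $|B|$, so we may split $\det \hat{J}^A_B = \mathrm{sgn}\,\hat{J}^A_B \cdot |\det \hat{J}^A_B|$ with the first factor constant on each connected component of $\bar{D}$.

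Next I would invoke definition \ref{Euclidean_integration}, which expresses $\int_E \omega$ as the ordinary Riemann integral of the coefficient function against the product differential $\theta^A = \theta^{A_1}\cdots\theta^{A_n}$, the sign being pinned down by the lexicographic convention on the components of $A$. Componentwise (i.e., piece by piece in the $|A|$-graded decomposition $\Lambda^{n,1}\oplus\cdots\oplus\Lambda^{n,r}$), each constituent $\theta^{A_1}\cdots\theta^{A_n}$ is a constant-coefficient integrand to which the classical change-of-variables theorem (Rudin, Theorem 10.24) applies: after absorbing the factorials from $\theta^{\alpha}=\sum\tfrac{1}{\beta!}x^{\beta}\cdots$ into the coefficient, transforming by $G$ produces the absolute-value Jacobian $|\det J^{(|A_i|,|A_i|)}|$ on each factor, together with whatever mixing among lower-order differentials is permitted by the block-lower-triangular structure recorded in equation (\ref{jet_transf_law}). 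Tallying these contributions gives precisely $|\det \hat{J}^A_B|\,\theta^B$, and a separate sign $\mathrm{sgn}\,\hat{J}^A_B$ records whether the map between $\Lambda^{n,|A|}$ and its image in $\Lambda^{n,|B|}$ is orientation-preserving or -reversing relative to the standard ordering prescribed in definition \ref{Euclidean_integration}.

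The hard part, as I see it, is the bookkeeping of the sign: the classical change-of-variables theorem returns $|{\det}|$ on each graded piece, and one must verify that the overall sign collected across all orders is exactly $\mathrm{sgn}\,\hat{J}^A_B$ as defined via lemma \ref{det_lemma}. I would handle this by inducting on $\max(|B_1|,\ldots,|B_n|)$: the base case $|B|=|A|=1$ is the conventional Jacobian sign; for the inductive step, note that off-diagonal blocks $J^{(b,a)}$ with $b>a$ contribute only to differentials of strictly higher order, hence their sign is immaterial relative to the graded component currently under integration. This justifies that the orientation type of $G$ restricted to $\Lambda^{n,|A|}$ alone governs the sign on the $B$-piece, matching the definition $\mathrm{sgn}\,\hat{J}^A_B := \mathrm{sgn}\,\det \hat{J}^A_B$. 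Putting the three steps together yields the stated equality.
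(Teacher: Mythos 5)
Your proposal follows exactly the route the paper intends: the paper states this corollary \emph{without} a proof, declaring it ``immediate'' from the assembled apparatus, and the ingredients you combine---the pullback formula (proposition \ref{pullback_formula}), the sign machinery of lemma \ref{det_lemma}, the Euclidean integration convention of definition \ref{Euclidean_integration}, and the classical change-of-variables theorem cited in the corollary's own name---are precisely that apparatus. Your closing induction on the grading to pin down the sign is an addition the paper does not record, but it is consistent with the paper's discussion of orientation types immediately following the pullback formula, so your reconstruction is, if anything, more explicit than the original.
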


\begin{proposition}[Cf. Lee, Proposition 16.1]\label{integral_under_diffeo}
	Suppose $D$ and $E$ to be open domains of integration in $\vvmathbb{R}^n$ and $G: \bar{D} \rightarrow \bar{E}$ to be a smooth map that restricts to an orientation-preserving or orientation-reversing diffeomorphism from $D$ to $E$. If $\omega = \omega_A \theta^{y \wedge A}$ is a macroscopic $n$-form on $\bar{E}$, then
	\begin{equation}
		\int_D G^* \omega = \int_E \omega.
	\end{equation}
\end{proposition}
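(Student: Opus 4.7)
The plan is to reduce the statement to the Euclidean change-of-variables formula (Corollary~\ref{change_of_var_formula}) by first using the pullback formula (Proposition~\ref{pullback_formula}) to rewrite $G^*\omega$ in the $x$-coordinates, and then matching the sign conventions term by term. The strategy parallels Lee's Proposition 16.1, but the bookkeeping is more delicate because the integrand splits canonically into pieces indexed by grand multi-indices $A$ and the pullback produces a double sum over such pieces, with each block of the Jacobian contributing its own orientation sign.

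Concretely, I would first invoke Lemma~\ref{full_macroscopic_vol_form} to write $\omega=\sum_A \omega_A\,\theta^{y\wedge A}$ uniquely in the natural frame on $\bar E$, so that by Definition~\ref{Euclidean_integration} one has $\int_E\omega=\sum_A\int_E\omega_A\,\theta^A$. Applying Proposition~\ref{pullback_formula} gives
\begin{equation*}
G^*\omega=\sum_{A,B}(\omega_A\circ G)\,\det\hat J^A_B\,\theta^{x\wedge B},
\end{equation*}
and the uniqueness in Lemma~\ref{full_macroscopic_vol_form} identifies the coefficient of $\theta^{x\wedge B}$ with $\sum_A(\omega_A\circ G)\det\hat J^A_B$, which is what Definition~\ref{Euclidean_integration} extracts as the integrand. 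Next, for each pair $(A,B)$, Corollary~\ref{change_of_var_formula} yields
\begin{equation*}
\int_E \omega_A\,\theta^A=\operatorname{sgn}\hat J^A_B\int_D(\omega_A\circ G)\,|\det\hat J^A_B|\,\theta^B
=\int_D(\omega_A\circ G)\,\det\hat J^A_B\,\theta^B,
\end{equation*}
provided the sign $\operatorname{sgn}\hat J^A_B$ is $+1$ in the orientation-preserving case (or uniformly $-1$ in the orientation-reversing case, in which case an overall sign is absorbed). Summing over $A$ and $B$ then identifies $\int_E\omega$ with $\int_D G^*\omega$ up to the appropriate global sign.

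The main obstacle is the sign bookkeeping: in the higher-order setting, Lemma~\ref{det_lemma} together with Proposition~\ref{orientation_type} tells us that the sign $\operatorname{sgn}\det\hat J^A_B$ depends only on the graded orientation type of $G$ viewed as a map from $\Lambda^{n,|A|}$ into $\Lambda^{n,|B|}$, and there are a priori $\tfrac12 r(r+1)$ independent signs. The hypothesis that $G$ is orientation-preserving (resp.\ reversing) must therefore be read in the generalized sense of preserving (resp.\ reversing) the chosen full collection of orientation forms, so that each $\operatorname{sgn}\hat J^A_B$ appearing in the double sum is simultaneously $+1$ (resp.\ simultaneously $-1$ across all pieces of a given parity, with the resulting global sign being coherent). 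Once this is unwound using the grading of the Jacobian into diagonal blocks $J^{(m,m)}=J^{\otimes m}$ and off-diagonal blocks $J^{(b,a)}$ with $b>a$, whose signs are controlled by $\det J^{(1,1)}$ and by the generalized orientation type, the cancellations in the double sum go through term by term. A secondary technical point is to verify that Definition~\ref{Euclidean_integration} applies to $G^*\omega$ with its full decomposition rather than only the leading macroscopic piece; this follows because the pullback formula preserves macroscopicity at leading order (the $(|A|,|B|)=(1,1)$ block is the ordinary Jacobian, which is nonsingular for a diffeomorphism) and the higher-graded pieces are integrated by definition, independently, against the corresponding $\theta^B$.
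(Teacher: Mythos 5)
Your proposal is correct and takes essentially the same route as the paper's proof: write $\int_E \omega$ in the natural frame, apply the change-of-variables formula (Corollary \ref{change_of_var_formula}), collapse $\mathrm{sgn}\,\hat{J}^A_B\,|\det \hat{J}^A_B|$ to $\det \hat{J}^A_B$, and recognize the result as $\int_D G^*\omega$ via the pullback formula (Proposition \ref{pullback_formula}). The one point worth noting is that your proviso about the signs being uniformly $+1$ or $-1$ is unnecessary: since $\mathrm{sgn}(x)\,|x| = x$ identically, the signed absolute values recombine into $\det \hat{J}^A_B$ term by term regardless of the orientation type of $G$, which is precisely why the paper's one-line computation needs no case distinction.
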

\begin{proof}
	Let $y_{1,\ldots,n}$ be standard coordinates in $E$ and $x_{1,\ldots,n}$ in $D$. Then the change of variables formula together with proposition \ref{pullback_formula} yields
	\begin{equation}
		\int_E \omega = \int_E \omega_A \theta^A = \int_D \omega_A \circ G ~\mathrm{sgn}~ \hat{J}^A_B |\det \hat{J}^A_B| ~\theta^{xB} = \int_D \omega_A \circ G \det \hat{J}^A_B ~\theta^{x \wedge B} = \int_D G^* \omega,  
	\end{equation}
	as was to be shown.
\end{proof}

Given a macroscopic volume form $\varphi$ and an orientation type specified by $\omega^{(1,\ldots,r)}$, define $\left[ \varphi \right]^\pm$ as follows: select a labeled basis $e_{1,\ldots,N}$ corresponding to the orientation type and expand $\varphi$ in homogeneous components: $\varphi=\varphi_1+\cdots +\varphi_r$ where $\varphi_s \in \Lambda^{n,s}$, $s=1,\ldots,r$. Then assign a $\pm$ factor to $\varphi_s$ according to whether it agrees or disagrees with the orientation type $(s,1)$. If $\varphi_s=0$ it is irrelevant what sign we take. Write now
\begin{equation}
	\left[ \varphi \right]^\pm = \pm \varphi_1 \pm \cdots \pm \varphi_r,
\end{equation}
where each monomial term receives the respective $\pm$ sign. Therefore, if $M$ is an oriented smooth $n$-manifold and $\omega$ macroscopic $n$-form on $M$ compactly supported in the domain of a single chart $(U,\varphi)$, define the integral of $\omega$ over $M$ to be
\begin{equation}\label{def_compact_supp_integral}
	\int_M \omega := \int_{\varphi[U]} \left[ \left( \varphi^{-1} \right)^* \omega
	\right]^\pm
\end{equation}
(cf. Lee, \cite{lee_smooth_manif}, Equation 16.1). Proceeding along the lines already sketched by Lee, our next result is the following:

\begin{proposition}[Cf. Lee, \cite{lee_smooth_manif}, Proposition 16.4]\label{indep_of_chart}With $\omega$ as above, $\int_M \omega$ will be the same for every smooth chart whose domain contains $\mathrm{supp}~ \omega$.
\end{proposition}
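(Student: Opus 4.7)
The plan is to adapt Lee's argument for the first-order case (\cite{lee_smooth_manif}, Proposition 16.4) to the present higher-order setting, using the pullback formula from proposition \ref{pullback_formula} together with corollary \ref{change_of_var_formula} and the generalized notion of orientation from proposition \ref{orientation_type}. First I would take two charts $(U,\varphi)$ and $(V,\psi)$ whose domains both contain $\mathrm{supp}~\omega$ and restrict attention to the overlap $U \cap V$, since $\omega$ vanishes outside it. Denote the transition diffeomorphism by $F := \psi \circ \varphi^{-1}$ and set $\omega_\varphi := (\varphi^{-1})^* \omega$, $\omega_\psi := (\psi^{-1})^* \omega$; by functoriality of pullbacks, $F^* \omega_\psi = \omega_\varphi$ on $\varphi[U \cap V]$, so it suffices to show $\int_{\varphi[U]} [\omega_\varphi]^\pm = \int_{\psi[V]} [\omega_\psi]^\pm$.

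Next, to exploit the grade structure, I would decompose $\omega_\varphi = \sum_{s=1}^r \omega_{\varphi,s}$ and similarly $\omega_\psi$, where $\omega_{\varphi,s}$ collects those wedge monomials $\theta^{\wedge A}$ with $|A|=s$. Because the generalized Jacobian $\hat{J}$ of $F$ is lower block-triangular (equation \ref{jacobian_def}), the pullback $F^* \omega_{\psi,s}$ lives in grades $\ge s$; the diagonal block $J^{(s,s)} = J^{\otimes s}$ produces the grade-$s$ piece of $F^* \omega_\psi$ from $\omega_{\psi,s}$, while the off-diagonal blocks $J^{(t,s)}$ with $t > s$ feed into higher grades. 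Applying corollary \ref{change_of_var_formula} to each homogeneous component, I obtain for every grand multi-index pair an identity of the form $\int \omega_{\psi,B}\,\theta^{y \wedge B} = \mathrm{sgn}\,\det \hat{J}^A_B \int F^*(\omega_{\psi,B}\,\theta^{y \wedge B})$, where by lemma \ref{det_lemma} the sign depends only on the pair $(|A|,|B|)$.

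The central consistency check concerns the $[\cdot]^\pm$ bracket defined immediately preceding equation (\ref{def_compact_supp_integral}). Since both charts are compatible with a single global orientation on $M$ specified by a full collection of orientation forms $\omega^{(1,\ldots,r)}$, the sign attached to $\omega_{\varphi,s}$ differs from that attached to $\omega_{\psi,s}$ by precisely $\mathrm{sgn}\,\det J^{(s,s)}$. Grade-by-grade cancellation of these signs against those arising from the change of variables then yields the claimed equality. The hard part will be controlling the off-diagonal contributions $J^{(t,s)}$, $t > s$, which have no analogue in Lee's first-order proof and which redistribute content from lower-grade pieces of $\omega_\psi$ into higher-grade pieces of $F^* \omega_\psi$. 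I expect lemma \ref{det_lemma} supplies just the right tool: each mixed contribution inherits a sign that depends only on its destination grade, matching the sign assigned by the bracket $[\cdot]^\pm$ acting at that grade rather than the possibly different $\mathrm{sgn}\,\det J^{(t,s)}$. Verifying this alignment carefully is where I anticipate the delicate bookkeeping to reside; once settled, summing over grades gives independence of the chart.
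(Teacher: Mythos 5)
Your opening paragraph is exactly the paper's reduction: restrict to $U \cap V$, set $F = \psi \circ \varphi^{-1}$, and use functoriality of pullbacks to get $F^*\omega_\psi = \omega_\varphi$. But from that point the paper finishes in two lines by invoking proposition \ref{integral_under_diffeo}: since $F$ restricts to an orientation-preserving or orientation-reversing diffeomorphism, $\int_{\varphi[U\cap V]} F^*\left(\psi^{-1}\right)^*\omega = \int_{\psi[U\cap V]} \left(\psi^{-1}\right)^*\omega$, and when the charts carry different generalized orientation types, the altered $\pm$ sequence in the bracket $[\,\cdot\,]^\pm$ of equation (\ref{def_compact_supp_integral}) is compensated by the extra signs that proposition supplies. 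You never cite that proposition; instead you attempt to rebuild its content grade by grade, and that is where your proposal breaks down.

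The gap, concretely: the homogeneous-grade decomposition does not commute with $F^*$. Precisely because $\hat{J}$ is block lower-triangular, $F^*\omega_{\psi,s}$ spreads over all grades $\ge s$, so the grade-$t$ piece of $\omega_\varphi$ is a sum of contributions from every $\omega_{\psi,s}$ with $s \le t$. There is therefore no grade-preserving correspondence against which your claimed cancellation (``the sign attached to $\omega_{\varphi,s}$ differs from that attached to $\omega_{\psi,s}$ by $\mathrm{sgn}\,\det J^{(s,s)}$'') can even be formulated. Your proposed rescue, that each mixed contribution inherits a sign depending only on its destination grade, contradicts lemma \ref{det_lemma}, which makes the sign of $\det \hat{J}^A_B$ depend on the pair $(|A|,|B|)$; and the paper notes explicitly, just after the pullback formula, that nothing relates $J^{(b_1,a)}$ to $J^{(b_2,a)}$ for $b_1 \ne b_2$, so the mixed-block signs are independent data with no reason to match what $[\,\cdot\,]^\pm$ assigns at the destination grade. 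The correct bookkeeping is not by grade but by pairs of grand multi-indices: in corollary \ref{change_of_var_formula} the factor $\mathrm{sgn}\,\hat{J}^A_B\,|\det \hat{J}^A_B|$ recombines into $\det \hat{J}^A_B$, which is exactly the coefficient appearing in the pullback formula of proposition \ref{pullback_formula}, so that summing over $A$ and $B$ gives $\int_E \omega' = \int_D F^*\omega'$ for the whole inhomogeneous form at once, with no grade-by-grade alignment ever required --- that is precisely proposition \ref{integral_under_diffeo}. Replacing your second and third paragraphs with an appeal to that proposition, together with the orientation-type remark above, turns your first paragraph into a complete proof.
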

\begin{proof}
	Suppose $(U,\varphi)$ and $(V,\psi)$ are two charts such that $\mathrm{supp}~ \omega \subset U \cap V$. For starters, consider the case where both charts carry the same generalized orientation. Then $\psi \circ \varphi^{-1}$ is an orientation-preserving diffeomorphism from $\varphi[ U \cap V ]$ to $\psi[ U \cap V]$ and by proposition \ref{integral_under_diffeo},
	\begin{align}
		\int_{\psi[V]} \left( \psi^{-1} \right)^* \omega &= \int_{\psi[ U \cap V ]} \left( \psi^{-1} \right)^* \omega = \int_{\varphi[ U \cap V ]} \left( \psi \circ \varphi^{-1} \right)^* \left( \psi^{-1} \right)^* \omega \\ 
		&= \int_{\varphi[ U \cap V ]} \left( \varphi^{-1} \right)^* \psi^* \left( \psi^{-1} \right)^* \omega = \int_{\varphi[U]} \left( \varphi^{-1} \right)^* \omega.
	\end{align}
	If the charts carry different generalized orientations, then equation (\ref{def_compact_supp_integral}) will involve a different sequence of $\pm$ signs on the right-hand side, depending on which chart is used, but the mapping between charts $\psi \circ \varphi^{-1}$ is no longer orientation-preserving, in consequence of which extra minus signs enter via proposition \ref{integral_under_diffeo} so as to compensate in the computation above. In every case, the respective definitions of $\int_M \omega$ agree.
\end{proof}

To integrate a compactly supported macroscopic $n$-form over an entire manifold, one may resort to a partition of unity and apply the definition \ref{def_compact_supp_integral}. Let $U_i$ be a finite open cover of $\mathrm{supp}~ \omega$ by domains of oriented charts and let $\psi_i$ be a partition of unity subordinate to this open cover. Then define the integral of $\omega$ over $M$ to be the sum,
\begin{equation}\label{def_integral}
	\int_M \omega := \sum_i \int_M \psi_i \omega,
\end{equation}
where each of the terms on the right-hand side is well defined of course because the support of $\psi_i \omega$ rests inside the domain of $U_i$ by hypothesis. It remains, thus, to show that their sum is well defined as well:

\begin{proposition}[Cf. Lee, \cite{lee_smooth_manif}, Proposition 16.5]
	The definition of $\int_M \omega$ just given does not depend either on the choice of open cover or on the partition of unity.
\end{proposition}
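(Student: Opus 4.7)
The plan is to adapt the familiar argument from ordinary integration theory (cf.\ Lee, \cite{lee_smooth_manif}, Proposition 16.5): given two choices of finite open covers $\{U_i\}$ and $\{V_j\}$ of $\mathrm{supp}~\omega$ by domains of oriented charts, with subordinate partitions of unity $\{\psi_i\}$ and $\{\phi_j\}$ respectively, I would form the common refinement $\{U_i \cap V_j\}$ together with the partition of unity $\{\psi_i \phi_j\}$ subordinate to it, and show that both iterated sums reduce to the same double sum
\begin{equation}
\sum_{i,j} \int_M \psi_i \phi_j \omega.
\end{equation}

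First I would establish $\vvmathbb{R}$-linearity of the single-chart integral defined by equation (\ref{def_compact_supp_integral}) when the two integrands have compact support inside the same chart $(U,\varphi)$. This reduces to $\vvmathbb{R}$-linearity of the Euclidean integral of definition \ref{Euclidean_integration}, which is immediate from the fact that the coefficient extraction $\omega \mapsto f_A$ provided by lemma \ref{full_macroscopic_vol_form} is $\vvmathbb{R}$-linear and that the $[\cdot]^\pm$ operation is likewise $\vvmathbb{R}$-linear on homogeneous components of each degree (so the $\pm$ signs, which are fixed by the orientation type of the chart and not by the integrand, pull through additively). Next, since $\mathrm{supp}~\omega$ is compact, only finitely many indices $i$ resp.\ $j$ contribute, so all sums in sight are finite and commute freely with integration. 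Using $\sum_j \phi_j \equiv 1$ on $\mathrm{supp}~\omega$ one then expands
\begin{equation}
\sum_i \int_M \psi_i \omega = \sum_i \int_M \psi_i \left( \sum_j \phi_j \right) \omega = \sum_i \int_M \sum_j \psi_i \phi_j \omega = \sum_{i,j} \int_M \psi_i \phi_j \omega,
\end{equation}
and symmetrically $\sum_j \int_M \phi_j \omega = \sum_{i,j} \int_M \psi_i \phi_j \omega$.

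The subtle step, and where I would focus the most care, is justifying the interchange in the middle identity: the form $\psi_i \phi_j \omega$ is compactly supported in the intersection $U_i \cap V_j$, which lies in both $U_i$ and $V_j$, and the value of $\int_M (\psi_i \phi_j \omega)$ is \emph{a priori} computed via whichever of these two charts one adopts. Invoking proposition \ref{indep_of_chart}, however, both choices produce the same numerical value, so the double sum $\sum_{i,j} \int_M \psi_i \phi_j \omega$ is unambiguous. This is the only place where the generalized orientation machinery of proposition \ref{integral_under_diffeo} is essential, since the transition $\psi_j \circ \varphi_i^{-1}$ between charts of possibly different generalized orientation types could, absent the compensating $\pm$ signs built into equation (\ref{def_compact_supp_integral}), cause a mismatch order-by-order in the degree of the jet. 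Once proposition \ref{indep_of_chart} is applied termwise, the equality of the three displayed expressions for $\int_M \omega$ follows and independence both of the cover and of the partition of unity is established.
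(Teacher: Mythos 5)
Your proposal is correct and follows essentially the same route as the paper: both expand each iterated sum via the identity $\sum_j \phi_j \equiv 1$ into the common double sum $\sum_{i,j} \int_M \psi_i \phi_j \omega$, and both rest on proposition \ref{indep_of_chart} to make each term of that double sum well defined, since $\psi_i \phi_j \omega$ is compactly supported in the single chart $U_i \cap V_j$. The only difference is cosmetic: you spell out the $\vvmathbb{R}$-linearity of the single-chart integral and the finiteness of the sums, which the paper uses implicitly.
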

\begin{proof}
	Suppose $V_j$ another finite open cover of $\mathrm{supp}~ \omega$ by domains of oriented charts and $\chi_j$ a partition of unity subordinate to it. Starting with the $U_i$ we have,
	\begin{equation}
		\int_M \psi_i \omega = \int_M \sum_j \chi_j \psi_i \omega = \sum_j \int_M \chi_j \psi_i \omega.
	\end{equation}
	Sum over $i$ to obtain
	\begin{equation}
		\sum_i \int_M \psi_i \omega = \sum_{ij} \int_M \chi_j \psi_i \omega.
	\end{equation}
	In as much as the integrand in each term on the right-hand side is compactly supported in a single chart, its integral over $M$ will be well defined by proposition \ref{indep_of_chart}. The same argument starting from the $V_j$ yields
	\begin{equation}
		\sum_j \int_M \chi_i \omega = \sum_{ij} \int_M \chi_j \psi_i \omega.
	\end{equation}
	Either way we arrive at the same expression for $\int_M \omega$.
\end{proof}

\begin{proposition}[Cf. Lee, \cite{lee_smooth_manif}, Proposition 16.6]\label{properties_of_integral}
	Suppose $M$ and $N$ are non-empty oriented smooth $n$-manifolds and $\omega, \eta$ are two compactly supported macroscopic $n$-forms on $M$.
	\begin{itemize}
		\item[$(1)$] If $a,b \in \vvmathbb{R}$, then
		\begin{equation}
			\int_M \left( a \omega + b \eta \right) = a \int_M \omega + b \int_M \eta.
		\end{equation}
		
		\item[$(2)$] If $M^\pm$ denotes $M$ with another orientation, then
		\begin{equation}
			\int_{M^\pm} \omega = \int_M \omega^\pm
		\end{equation}
		where $\omega^\pm$ is to be defined as above under the identity map considered as an orientation-changing diffeomorphism.
		
		\item[$(3)$] If $\omega$ is positively oriented, then $\int_M \omega > 0$.
		
		\item[$(4)$] If $F:N \rightarrow M$ is a diffeomorphism, then
		\begin{equation}
			\int_M \omega = \int_N \left[ F^* \omega \right]^\pm.
		\end{equation}
	\end{itemize}
\end{proposition}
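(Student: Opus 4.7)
The plan is to follow the template of Lee, \cite{lee_smooth_manif}, Proposition 16.6, adapting each item to the generalized setting in which $\omega$ decomposes into homogeneous components $\omega = \omega_1 + \cdots + \omega_r$ with $\omega_s \in \Lambda^{n,s}$ and the operation $[\,\cdot\,]^\pm$ assigns an independent sign to each piece. First, I would reduce every statement to the case of a form compactly supported in a single oriented chart, by invoking definition (\ref{def_integral}) and the partition of unity $\psi_i$: all four properties are preserved under forming the finite sum $\sum_i \int_M \psi_i \omega$ provided they are established term by term.

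For (1), in a single chart apply lemma \ref{full_macroscopic_vol_form} to write $\omega = \omega_A \theta^{\wedge A}$ and $\eta = \eta_A \theta^{\wedge A}$, so that $a\omega+b\eta = (a\omega_A+b\eta_A)\theta^{\wedge A}$ by uniqueness of the decomposition. Because $[\,\cdot\,]^\pm$ acts as a fixed sign $\pm 1$ on each homogeneous summand $\Lambda^{n,s}$, it commutes with $\vvmathbb{R}$-linear combinations; hence the claim reduces to linearity of the Euclidean integral of definition \ref{Euclidean_integration}, which is immediate from linearity of the ordinary Lebesgue integral of each coefficient $f_A$ against the measure $\theta^A$.

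For (2), observe that $M^\pm$ shares its smooth atlas with $M$ but differs in its choice of orientation type, which amounts to flipping the sign convention in some subset of the components $\Lambda^{n,s}$. Comparing the two sides of (\ref{def_compact_supp_integral}) in any chart, the signs that $[\,\cdot\,]^\pm$ inserts on $M^\pm$ are precisely those defining $\omega^\pm$ on $M$; hence the two integrals agree chart-by-chart, and the partition-of-unity sum then gives the global identity. For (3), a positively oriented $\omega$ has, in any oriented chart, the leading coefficient $f_{(1,\ldots,1)}$ strictly positive wherever nonvanishing, and each higher-order coefficient $f_A$ aligned in sign with the chosen orientation on $\Lambda^{n,|A|}$; thus $[\,(\varphi^{-1})^*\omega\,]^\pm$ becomes a sum of terms with nonnegative coefficients in the natural frame, with the leading term strictly positive on a set of positive measure. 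Positivity of the Euclidean integral then gives $\int_M \omega > 0$.

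For (4), which is the main obstacle, I would pull back an oriented atlas $(U_i,\varphi_i)$ on $M$ to oriented charts $(F^{-1}[U_i], \varphi_i \circ F)$ on $N$ and pull back the partition of unity $\psi_i$ to $\psi_i \circ F$, reducing to the single-chart statement $\int_N (\psi_i \circ F)\,[F^*\omega]^\pm = \int_M \psi_i \omega$. In the common chart this becomes the statement that $\int_{\varphi_i[U_i]} [(\varphi_i^{-1})^*\omega]^\pm$ equals $\int_{(\varphi_i\circ F)[F^{-1}[U_i]]} [(\,(\varphi_i\circ F)^{-1}\,)^* F^* \omega\,]^\pm$, i.e.\ an application of proposition \ref{integral_under_diffeo} composed with the fact that $F^*$ followed by the chart inverse on $N$ equals the chart inverse on $M$. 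The delicate point, and what makes this item the genuine obstacle rather than a routine verification, is that the Jacobian $\hat{J}$ of the combined transition map is block lower triangular and its signs $\mathrm{sgn}\,\det\hat{J}^A_B$ split, by lemma \ref{det_lemma}, into one sign per pair $(|A|,|B|)$, which are not constrained to agree across orders. The operation $[\,\cdot\,]^\pm$ is engineered precisely to absorb these per-order signs one order at a time, so I would verify (4) by inducting on the maximal order $s = |A|$, using at each step the fact that proposition \ref{integral_under_diffeo} together with corollary \ref{change_of_var_formula} produce exactly the sign $\mathrm{sgn}\,\det\hat{J}^{(s,s)}$ needed to match the sign that the orientation on $M$ demands in $\Lambda^{n,s}$; the cross-blocks $\hat{J}^{(s,s')}$ with $s'<s$ contribute only to the values of the lower-order coefficients, not to the sign in $\Lambda^{n,s}$, and so are harmless. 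Summing over the chart indices $i$ and over the order $s$ completes the proof.
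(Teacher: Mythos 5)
Your proposal is correct and follows essentially the same route as the paper: reduce to a form compactly supported in a single oriented chart via the partition of unity, dispose of (1)--(3) by linearity of $[\,\cdot\,]^\pm$ and the definition of positive orientation, and obtain (4) by applying proposition \ref{integral_under_diffeo} to the pulled-back charts $(F^{-1}[U_i],\varphi_i\circ F)$ on $N$. Your additional induction on the order $s$ with the per-block sign bookkeeping in (4) is not wrong, but it only re-derives what proposition \ref{integral_under_diffeo}, corollary \ref{change_of_var_formula} and lemma \ref{det_lemma} already encapsulate, so the paper simply cites \ref{integral_under_diffeo} and stops there.
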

\begin{proof}
	The statements (1) and (2) are immediate. Item (3) is tantamount to what is meant by a positively oriented form, viz., one whose coefficients are always positive in a chart with the $(+,\ldots,+)$ orientation, or speaking more generally in case no such chart is available, one whose coefficients are always positive resp. negative in a chart with the $(\pm,\ldots,\pm)$ orientation. The terms in the defining sum in equation (\ref{def_integral}) will all be non-negative, with at least one strictly positive.
	
	To prove (4), it suffices to assume $\omega$ compactly supported in a single oriented chart, since any compactly supported macroscopic $n$-form can be written as a finite sum of such by means of a partition of unity. Thus, suppose $(U, \varphi)$ is an oriented chart whose domain contains $\mathrm{supp}~ \omega$. In this case, (4) is just proposition \ref{integral_under_diffeo} applied to the oriented chart $(F^{-1}[U], \varphi \circ F)$ on $N$ whose domain contains $\mathrm{supp}~ F^* \omega$.
\end{proof}

\begin{remark}
	Although sufficient to enable the definition of a coordinate-invariant integral, our approach is inelegant in that it appeals to a standard basis. Perhaps it would be possible to do better, if someone clever enough were to apply his mind to the problem.
\end{remark}

\subsubsection{Stokes' Theorem}

The final piece of machinery we need will be a higher-order analogue of Stokes' theorem; something like this has to hold if we are to have the flexibility in wielding the integral up to a total differential to which one is accustomed in the theory as currently fashioned. Now, the concept of a differentiable manifold with boundary does not present any new complication in the higher-order setting; we suppose the charts to be subsets of semi-infinite Euclidean half-spaces (usually denoted $\vvmathbb{H}^n$ )or infinite Euclidean spaces $\vvmathbb{R}^n$ as the case may be. The transition functions are merely supposed to be definable on an extension to an open set (relative to $\vvmathbb{R}^n$) containing the open set relative to $\vvmathbb{H}^n$. But since jets are defined by a local construction, they will not differ at the boundary from what we are used to; in other words, one can simply restrict to stalks over the open set in $\vvmathbb{H}^n$. 

First, we must introduce concept of induced orientation on the boundary. Viewing the boundary as an embedded hypersurface, by Lee, Theorem 5.11 and Problem 8.4, the always exists a smooth outward-pointing vector field along $\partial M$. The following analogue of a proposition in Lee is not very difficult:

\begin{proposition}[Cf. Lee, Proposition 15.24]
	Let $M$ be a smooth manifold with boundary. Then $\partial M$ is orientable and all outward-pointing vector fields along $\partial M$ determine the same macroscopic orientation type.	
\end{proposition}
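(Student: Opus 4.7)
The plan is to imitate the argument of Lee's Proposition 15.24, adapting it to the graded setting where, per propositions 15.3 and 15.5 of the excerpt, the orientation on $M$ is specified by a full collection of nonvanishing orientation forms $\omega_M^{(1)},\ldots,\omega_M^{(r)}$ at each grade of $J^r_pM$, and by induction we must produce a compatible collection $\omega_{\partial M}^{(1)},\ldots,\omega_{\partial M}^{(r)}$ on $\partial M$.

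First I would work in boundary slice charts. Around each $p\in\partial M$, choose a smooth chart $(U,\varphi)$ with $\varphi[U]\subset\{x^1\ge 0\}$ and $x^1$ the transverse coordinate vanishing on $\partial U$. In such adapted coordinates one has the natural inclusion $J^r_p(\partial M)\hookrightarrow J^r_pM$ realized as $\mathrm{span}\{\partial_\alpha|_p:\alpha_1=0\}$, with a complementary subspace $C_p=\mathrm{span}\{\partial_\alpha|_p:\alpha_1\ge 1\}$. This decomposition respects the grading used to define orientation types, so that at each grade $s$ one has $W_s^M=W_s^{\partial M}\oplus C_p^{(s)}$.

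Second, I would define the induced orientation on $\partial M$ grade by grade. An outward-pointing $N_p$ has strictly negative $\partial/\partial x^1$-component in any adapted chart. Pick a labeled basis of $C_p^{(s)}$ whose leading vector is $N_p$ and whose remaining vectors are the coordinate jets $\partial_{(x^1)^{k_1}\cdots(x^n)^{k_n}}$ with $k_1\ge 1$, $k_1+\cdots+k_n\le s$, in lexicographical order. Declare a labeled basis of $W_s^{\partial M}$ positive iff its concatenation with this chosen basis of $C_p^{(s)}$ yields a positive labeled basis of $W_s^M$ with respect to $\omega_M^{(s)}$. By proposition 15.3 of the excerpt this prescribes a well-defined orientation type at $p$ on each grade.

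Third, I would verify independence of the adapted chart and smoothness. A change of adapted coordinates $y=y(x)$ preserves the slice $\{x^1=0\}$, so on $\partial M$ one has $\partial y^1/\partial x^i=0$ for $i\ne 1$ and $\partial y^1/\partial x^1>0$. The induced higher-order Jacobian $\hat{J}$ therefore has, at each grade $s$, a block-triangular form separately preserving $W_s^{\partial M}$ and $C_p^{(s)}$; by lemma \ref{det_lemma} each diagonal block has positive determinant since both charts carry the same generalized orientation type on $M$. Smoothness of the induced orientation type then follows from smoothness of $N$ and of the coordinate jet fields exactly as in the proof of proposition 15.5.

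The main obstacle, which occupies the final step, is independence of the outward-pointing vector field itself. Any two such vector fields satisfy $N'=aN+T$ with $a>0$ and $T$ tangent to $\partial M$, which suffices at grade one by the classical argument, but at grade $s\ge 2$ the induced orientation was defined using coordinate extensions of $N$ to a full basis of $C_p^{(s)}$, and one must argue that only the positivity of the transverse component of $N$ matters. The plan is to exhibit on $C_p^{(s)}$ a natural filtration indexed by the number $k$ of transverse $\partial_{x^1}$-factors, with associated graded pieces canonically isomorphic via $N$ to $N^{\odot k}\odot S^{s-k}(T_p\partial M)$; the positive ray in $N$ then orients each piece in a way that agrees with the coordinate prescription up to a strictly positive factor (essentially $a^k$ on the $k$-th filtrand), leaving the induced orientation type on $W_s^{\partial M}$ invariant. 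Checking that this intrinsic description really does coincide with the chart-dependent one, and that the tangential perturbation $T$ only produces block-unitriangular changes of basis on $C_p^{(s)}$, is the technical core of the argument and the place where the graded structure of the higher-order setting genuinely complicates the classical proof.
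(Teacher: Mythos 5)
Your proposal is correct and reaches the result by a genuinely different route from the paper's. The paper generalizes Lee's contraction construction: at each grade $s$ it forms $s$-vectors with outward-pointing parts (at $s=2$: $N\otimes N$, $N\otimes\partial_2,\ldots,N\otimes\partial_n$) and takes $\eta^{(s)} := \iota^*_{\partial M}\bigl(N\otimes N\,\lrcorner\,\cdots\,\lrcorner\,\omega^{(s)}\bigr)$ as the induced orientation form, thereby producing exactly the data its orientation framework runs on (a full collection of non-vanishing forms); independence of $N$ is then dispatched in one line from the observation that any two outward fields have transverse components of like sign, so the relevant transition matrices have positive signature. You instead use the dual, labeled-basis formulation---a boundary basis is positive iff its concatenation with a complementary family built from $N$ and transverse coordinate jets is positive---and you check everything by block-triangularity of the higher-order Jacobian. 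Your route is more careful on two points the paper glosses over: chart-independence (the paper's $\eta^{(s)}$ manifestly involves the coordinate fields $\partial_2,\ldots,\partial_n$, and no invariance argument is given there), and the precise reason why only the sign of the transverse component of $N$ matters---your filtration of $C_p^{(s)}$ by the number of transverse factors, with $N\mapsto aN+T$ acting triangularly with diagonal entries $a^k>0$, is the rigorous content behind the paper's one-line signature claim; its verification is routine (expand $(aN+T)^{\odot k}\odot(\text{tangential factors})$ and note every correction term has strictly fewer transverse factors), so leaving it as a plan is not a genuine gap. What the paper's route buys is economy: contraction yields the boundary orientation forms directly, with no complement or filtration bookkeeping. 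One small repair to your write-up: for $s\ge 2$ the 1-vector $N_p$ is not an element of $C_p^{(s)}$ (and even at grade one $N_p$ generally has a tangential part, so it lies off your coordinate complement); either work in the quotient $W_s^M/W_s^{\partial M}$ or require only that your complementary family be linearly independent modulo $W_s^{\partial M}$.
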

\begin{proof}
	Appeal to the proof of the proposition in Lee to get a macroscopic orientation form from any outward-pointing vector field $N$, which fixes the macroscopic part of the orientation uniquely.
	
	A similar argument delivers the remaining parts of the orientation. For $s=2$ let $y^{2,\ldots,n}$ be local coordinates on $\partial M$ and $N \otimes N$, $N \otimes \partial_2, \ldots, N \otimes \partial_n$ be second-order tangents having an outward-pointing part. Define
	\begin{equation}
		\eta^{(2)} = \iota^*_{\partial M} N \otimes N \lrcorner N \otimes \partial_2 \lrcorner \cdots \lrcorner N \otimes \partial_n \lrcorner \omega^{(2)}.
	\end{equation}
	The fact that $N$ is nowhere tangent to $\partial M$ ensures that none of the 2-vectors $N \otimes N$, $N \otimes \partial_2, \ldots, N \otimes \partial_n$ ever vanishes on $\partial M$. Then the non-vanishing of $\omega^{(2)}$ as well ensures that $\eta^{(2)}$ can be part of a full complement of the orientation forms on $\partial M$. The argument is similar for $s=3,\ldots,r$. At each stage, one forms all possible $s$-vectors with outward-pointing parts and contracts $\omega^{(s)}$ against them to get an orientation form $\eta^{(s)}$ on $\partial M$. 
	
	To show independence on $N$, let $N^\prime$ be another outward-pointing vector field. By Lee, Proposition 5.41 their first components are always negative. Hence, the transition matrices between the dual bases of any order always go between components of like sign. In other words, the Jacobian signatures are positive in every case and therefore the two determine the same orientation type.
\end{proof}
The difference remains, however, that in the more general setting the integrand could have non-vanishing coefficients at higher order when expressed in arbitrary coordinates. At this point, the immediately preceding proposition proves adept. For it says that, locally, we can go to new coordinates in which all higher-order coefficients cancel in the integrand. Owing to this felicitous property, proof of an analogue to Stokes' theorem can be had by following the derivation in the first-order case and making the obvious adjustments. Therefore, we assert the following:

\begin{theorem}[Stokes' theorem; cf. Lee \cite{lee_smooth_manif}, Theorem 16.11]
	Let $M$ be an oriented smooth manifold with boundary and let $\omega$ be a compactly supported smooth $(n-1)$-form on $M$. Then
	\begin{equation}
		\int_M \text{\th} \omega = \int_{\partial M} \omega.
	\end{equation}
\end{theorem}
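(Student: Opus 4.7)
My plan is to follow the classical three-step template for Stokes' theorem, adapted for higher-order infinitesimals. First, using a partition of unity subordinate to a finite open cover of $\mathrm{supp}\,\omega$ by oriented smooth charts, and invoking linearity of both $\int_M(\cdot)$ and $\text{\th}$ (theorem \ref{exist_exterior_deriv}), the problem reduces to the case in which $\omega$ is compactly supported in a single chart. Because $\text{\th}$ commutes with pullbacks (proposition \ref{properties_exterior_deriv}(4)) and the integral transforms covariantly under orientation-preserving diffeomorphisms (proposition \ref{properties_of_integral}(4)), I may transport the question to the Euclidean model: an $(n-1)$-form compactly supported either in an open subset of $\vvmathbb{R}^n$ (interior chart) or an open subset of $\vvmathbb{H}^n$ whose closure meets $\partial\vvmathbb{H}^n$ (boundary chart).

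Second, in the model setting I expand $\omega = \sum\nolimits^\prime_B \omega_B\,d^B$ over grand multi-indices $B$ of length $n-1$ and compute
\begin{equation*}
\text{\th}\omega \;=\; \sum\nolimits^\prime_B \sum_\alpha (\partial_\alpha \omega_B)\,d^\alpha \wedge d^B
\end{equation*}
per equation (\ref{def_exterior_deriv}). Retaining the terms whose wedge product survives as a macroscopic $n$-form under the integration convention of definition \ref{Euclidean_integration}, the integrand becomes a finite sum of pieces of the form $(\partial_\alpha \omega_B)\,\theta^{\wedge C}$ with $C$ a macroscopic grand multi-index. Applying the generalized fundamental theorem of calculus (theorem \ref{fundamental_theorem_calculus}) iteratively in each coordinate direction then reduces each piece to a flux across the boundary of the chart: in the interior chart every flux vanishes by compact support; in the boundary chart only the outward-normal direction contributes, and the sum of these contributions must be matched with $\int_{\partial M}\omega$ pulled back under the induced boundary orientation.

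The hard part will be absorbing three complications absent in the first-order theory: the vee-product cross-terms in the generalized Leibniz rule (theorem \ref{exist_exterior_deriv}(2)), the order-dependent prefactor $r$ produced by theorem \ref{fundamental_theorem_calculus}, and the convention of remark \ref{indefinite_part_convention} for the indefinite part of iterated antiderivatives, which is sensitive to behavior at infinity (cf.\ remark \ref{integration_with_compact_sppt}, noting that antiderivatives of a compactly supported density need not themselves be compactly supported). I expect to dispose of the cross-terms by invoking corollary \ref{vee_cancellation_lemma}, exploiting the parity constraint between the degrees $n-1$ and $1$ to force pairwise cancellation in the bulk and reassembly at the boundary, while the prefactor $r$ should absorb into the bookkeeping since each of the $r$ orders of differentiation contributing to $\text{\th}$ produces its own antiderivative tuned precisely by the convention of remark \ref{indefinite_part_convention}. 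The cleanest statement may well require restricting to those $(n-1)$-forms whose iterated tangential antiderivatives also decay at infinity --- a hypothesis I anticipate verifying is automatic for forms arising from a compactly supported integrand after normalising against the natural frame $\theta$.
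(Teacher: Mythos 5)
Your overall skeleton (reduce to a single chart by a partition of unity, transport to the Euclidean model by naturality of $\text{\th}$ under pullbacks and diffeomorphism invariance of the integral, then compute separately in $\vvmathbb{H}^n$ and $\vvmathbb{R}^n$) coincides with the paper's, but the core Euclidean computation as you plan it has a genuine gap. Theorem \ref{fundamental_theorem_calculus} is not the right tool: it states that $\text{\th}F = r\,\omega$ when $F$ is the antiderivative of an \emph{exact} generalized $1$-form, i.e.\ it computes the derivative of an integral, not the integral of a derivative, and it carries the prefactor $r$. That factor does not ``absorb into the bookkeeping'': if it entered the chart computation at all, Stokes' formula would acquire a spurious multiplicative constant, contradicting the clean identity you are proving. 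The paper never invokes this theorem. Instead it works directly from the iterated-antiderivative definition of the higher-order integral (definition \ref{Euclidean_integration}), encloses $\mathrm{supp}\,\omega$ in a box $[-\rho,\rho]^{n-1}\times[0,\rho]$, and uses the ordinary one-variable fundamental theorem of calculus together with integration by parts: every term of $\text{\th}\omega$ carrying a tangential derivative (any $\beta_1,\ldots,\beta_{n-1}>0$) integrates to zero by compact support, and among the normal-derivative terms only the pure first-order one survives, reproducing exactly $\int_{\partial\vvmathbb{H}^n}\omega$ in the induced orientation. This also makes your proposed extra hypothesis (decay of iterated tangential antiderivatives) unnecessary and undesirable: compact support plus a sufficiently large box already does the work, and the theorem should hold for all compactly supported $\omega$, not a restricted subclass.

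Your treatment of the vee cross-terms is also misdirected, in two ways. First, in the bulk chart computation there are no cross-terms at all: $\text{\th}$ applied to a single form is given by the coordinate formula (\ref{def_exterior_deriv}), which involves no vee products; cross-terms arise only from the product rule. Second, in the one place they genuinely do arise --- the gluing step, where one expands $\text{\th}(\psi_k\omega)$ for a partition of unity $\psi_k$ --- corollary \ref{vee_cancellation_lemma} is inapplicable, since $\psi_k$ has degree $0$ (even) while the corollary requires the first factor to have odd degree. The paper disposes of these terms by summing over $k$ \emph{before} integrating, so that every term involving a derivative of $\psi_k$ hits $\sum_k\psi_k=1$ and vanishes identically; alternatively, your own appeal to linearity of $\text{\th}$ over the finite sum $\omega=\sum_k\psi_k\omega$ already suffices and bypasses the Leibniz expansion entirely. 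So the reduction step of your proposal is sound, but the cancellation mechanism you name for it would fail if you actually needed it.
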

\begin{proof}
	Start with the simplest non-trivial case where $M = \vvmathbb{H}^n$ itself. If the support of $\omega$ is to be compact, it must be possible to obtain it from a bounded set in $\vvmathbb{R}^n$ in the relative topology. Therefore, there exists a $\rho>0$ such that $\mathrm{supp}~\omega \subset R := [-\rho,\rho] \times \cdots [\rho,\rho] \times [0,\rho]$. Write in standard coordinates there,
	\begin{equation}
		\omega = \sum_{A: \deg A = n-1} \omega_A d^{\wedge A}.
	\end{equation}
	Thus,
	\begin{align}
		\text{\th} \omega &= \sum_{A: \deg A = n-1} \text{\th} \omega_A d^{\wedge A} \\
		&= \sum_{A: \deg A = n-1} \sum_{b \notin A} \partial_b \omega_A d^b \wedge d^{\wedge A} \\
		&= \sum_{A: \deg A = n-1} \sum_{C: \deg C = n} (-1)^{\chi_{CA}} \partial_{C-A} \omega_A d^{\wedge C}.
	\end{align}
	Hence we compute
	\begin{align}
		\int_{\vvmathbb{H}^n} \text{\th} \omega &= \sum_{A: \deg A = n-1} \sum_{C: \deg C = n}  (-1)^{\chi_{CA}} \int_R \partial_{C-A} \omega_A d^{\wedge C} \\
		&= \sum_{A: \deg A = n-1} \sum_{C: \deg C = n}  (-1)^{\chi_{CA}} \int_{-\rho}^\rho dx_1 \cdots \int_{-\rho}^\rho dx_{n-1}  \int_0^\rho dx_n \partial_{C-A} \omega_A d^{c_1} \wedge \cdots \wedge d^{c_n}.
	\end{align}
	Now, each grand multi-index $C$ appearing in the sum differs from $A$ in only one position, which we are free to anti-commute to the first in order to perform the integration over it alone, leaving the rest of the integrand intact. When we do so, we recover a sum over a single multi-index $b \notin A$. Note that the sign $(-1)^{\chi_{CA}}$ simply goes away (in virtue of its definition in the first place). Suppose $b=(\beta_1,\ldots,\beta_n)$. The cases $\beta_n=0$ and $\beta_n>0$ have to be handled separately. The inner integrand in the former case will amount just to
	\begin{equation}
		\int_{-\rho}^\rho dx_1 \cdots \int_{-\rho}^\rho dx_{n-1} \partial_{(\beta_1,\ldots,\beta_n)} \omega_A d^{(\beta_1,\ldots,\beta_n)} 
		d^{(a_1,\ldots,a_{n-1})} = \int_{-\rho}^\rho dx_2 \cdots \int_{-\rho}^\rho dx_{n-1} \partial \omega_A \bigg|_{-\rho}^\rho = 0,
	\end{equation}
	where we over the $x_1$ variable and appeal to the fundamental theorem of calculus. Here, without loss of generality we may suppose $\beta_1>0$ (else take any one of the $\beta_k>0$). Since $\rho$ has been chosen large enough to lie outside the support of $\omega$, the integral vanishes term by term except possibly for those involving $\beta_n>0$. For them, an analogous procedure yields
	\begin{align}
		\int_{\vvmathbb{H}^n} \text{\th} \omega &= \sum_{A: \deg A = n-1} \pm \int_{-\rho}^\rho dx_1 \cdots \int_{-\rho}^\rho dx_{n-1}  \int_0^\rho dx_n 
		\partial_b \omega_A d^b \wedge d^{\wedge A} \\
		&= \sum_{A: \deg A = n-1} \pm \int_{-\rho}^\rho dx_1 \cdots \int_{-\rho}^\rho dx_{n-1} \partial_{b \setminus \beta_n} \omega_A \bigg|_0^\rho d^{A \cup b \setminus \beta_n} \\
		&= \sum_{A: \deg A = n-1} \pm \int_{-\rho}^\rho dx_1 \cdots \int_{-\rho}^\rho dx_{n-1} \partial_{b \setminus \beta_n} \omega_A(x_1,\ldots,x_{n-1},0) d^{A \cup b \setminus \beta_n}.
	\end{align} 
	In the last line, however, an argument invoking integration by parts as above in the case $\beta_n=0$ leads to the dropping out of all terms with any $\beta_{1,\ldots,n-1}>0$. Hence, the only contribution that survives is in fact
	\begin{equation}
		\sum_{A: \deg A = n-1} \pm \int_{-\rho}^\rho dx_1 \cdots \int_{-\rho}^\rho dx_{n-1} \omega_A(x_1,\ldots,x_{n-1},0) d^A = \int_{\partial \vvmathbb{H}^n} \omega.
	\end{equation}
	But we get just the right-hand side in the induced orientation. If we had $\vvmathbb{R}^n$ for $M$ in place of $\vvmathbb{H}^2$, the same argument as above simplifies since we could put the support of $\omega$ inside a box $[-\rho,\rho]^n$ and integration by leads leads to vanishing on the left-hand side while the empty boundary of $\vvmathbb{R}^n$ causes the right-hand side to vanish as well.
	
	Let now $M$ represent any smooth manifold with boundary. If we can still put the support of $\omega$ inside the domain of a single chart $(U,\varphi)$, only a slight modification of the argument is required. Without loss of generality, we can go to an orientation on the chart such that the transition function $\varphi$ becomes simply oriented. In this case, we have
	\begin{equation}
		\int_M \text{\th} \omega = \int_{\vvmathbb{H}^n} (\phi^{-1} )^* \text{\th} \omega = \int_{\vvmathbb{H}^n}  \text{\th} (\phi^{-1} )^* \omega = \int_{\partial\vvmathbb{H}^n}  (\phi^{-1} )^* \omega,
	\end{equation}
	and equality of the outermost terms forces equality of the inner two. Cleary, nothing would change in this computation if the chart were a region in $\vvmathbb{R}^n$.
	
	We are left with the case of an arbitrary compactly supported smooth $(n-1)$-form, which evidently we should be able to handle with a partition of unity. To wit, cover $\mathrm{supp}~ \omega$ with finitely many charts $U_{1,\ldots,m}$ of given orientation and find a partition of unity $\psi_{1,\ldots,m}$ subordinate to this finite covering. Then we may apply the preceding arguments to compute
	\begin{align}
		\int_{\partial M} \omega &= \sum_k \int_{\partial M} \psi_k \omega =
		\sum_k \int_M \text{\th} (\psi_k \omega) \\
		&= \sum_k \int_M \left( \text{\th} \psi_k \wedge \omega +
		\frac{\alpha!}{\alpha_1!\alpha_2!} \text{\th}_{\alpha_1}\bigg|_{\alpha_{1,2}\ne 0} \psi_k \vee \text{\th}_{\alpha_2} \omega +
		\psi_k \text{\th} \omega \right) \\
		&= \int_M \text{\th} \omega,
	\end{align}
	since the sum over the index $k$ can be taken inside the exterior derivatives $\text{\th}_{\alpha_{1,2} \ne 0}$ and these vanish identically when applied to $\sum_k \psi_k = 1$, leaving only the last term in the sum on the final line.
\end{proof}

\begin{corollary}[Integrals of exact forms; cf. Lee \cite{lee_smooth_manif}, Corollary 16.13]
	If $M$ is a compact oriented smooth manifold without boundary, then the integral of every exact form over $M$ vanishes:
	\begin{equation}
		\int_M \text{\th} \omega = 0 \qquad \mathrm{if}~ \partial M = \emptyset.
	\end{equation}	
\end{corollary}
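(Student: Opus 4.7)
The plan is to deduce this corollary as a direct application of the generalized Stokes' theorem that has just been established. Since $M$ is compact and $\omega$ is a smooth $(n-1)$-form on $M$, its support is automatically compact, so the hypotheses of Stokes' theorem are met without any further ado. By Stokes',
\begin{equation}
\int_M \text{\th}\omega = \int_{\partial M} \omega.
\end{equation}

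The hypothesis $\partial M = \emptyset$ means that the right-hand side is an integral over the empty set, which by the definition of $\int_M \omega$ via a partition of unity (equation (\ref{def_integral})) reduces to an empty sum and therefore equals zero. Hence $\int_M \text{\th}\omega = 0$.

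There is no real obstacle here; the entire content of the corollary has been packaged into Stokes' theorem already. The only remark worth making is that the argument goes through identically in the generalized setting because every ingredient invoked—compact support passing to $\text{\th}\omega$, the partition-of-unity definition of the integral, and the vanishing convention on the empty boundary—has been developed in the preceding subsections in a form applicable to macroscopic volume forms of arbitrary order in the infinitesimals. Accordingly, the proof will consist of little more than invoking Stokes' theorem and observing that $\partial M = \emptyset$.
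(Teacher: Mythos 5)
Your proof is correct and coincides with what the paper intends: the corollary is stated there without proof precisely because it is this one-line consequence of the generalized Stokes' theorem just established. Your two observations — that compactness of $M$ supplies the compact-support hypothesis, and that $\partial M = \emptyset$ forces the boundary integral to vanish (an empty sum under the partition-of-unity definition of the integral) — are exactly the intended deduction, and nothing more is needed.
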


\begin{corollary}[Integrals of closed forms over boundaries; cf. Lee \cite{lee_smooth_manif}, Corollary 16.14]
	Let $M$ be a closed compact oriented smooth manifold with boundary. If the form $\omega$ is closed on $M$, then its integral over $\partial M$ vanishes:
	\begin{equation}
		\int_{\partial M} \omega = 0 \qquad \mathrm{if}~ \text{\th}\omega = 0 ~\mathrm{identically}.
	\end{equation}	
\end{corollary}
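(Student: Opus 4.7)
The plan is to derive this corollary as an immediate consequence of the generalized Stokes' theorem proved immediately above, together with the linearity of the integral established in proposition \ref{properties_of_integral}(1). There is essentially no substantive work to do beyond chaining together hypotheses, so the proposal is really a checklist rather than a strategy.

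First I would observe that, since $M$ is by hypothesis compact and oriented with boundary, and $\omega$ is a smooth $(n-1)$-form on $M$ (in particular of compact support, since $M$ itself is compact), the generalized Stokes' theorem applies verbatim to give
\begin{equation}
\int_{\partial M} \omega = \int_M \text{\th} \omega.
\end{equation}
Then I would invoke the hypothesis $\text{\th}\omega = 0$ identically on $M$, so that the integrand on the right-hand side is the zero $n$-form. By linearity of the integral (proposition \ref{properties_of_integral}(1) applied with $a=b=0$ and any reference form, or more directly by definition \ref{def_integral} since every term $\int_M \psi_i \cdot 0$ vanishes), the right-hand side equals zero.

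The only item that warrants a moment's pause, and thus the closest thing to an obstacle, is to confirm that $\omega$ meets the compact-support hypothesis required by Stokes' theorem as stated. But this is automatic from compactness of $M$: any smooth form on a compact manifold has compact support, since $\operatorname{supp}\omega \subset M$ and $M$ is compact. No further analysis is required, and the corollary follows in one line.
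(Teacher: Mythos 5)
Your proof is correct and is precisely the argument the paper intends: the corollary is stated without proof as an immediate consequence of the generalized Stokes' theorem, namely $\int_{\partial M}\omega = \int_M \text{\th}\omega = 0$ when $\text{\th}\omega = 0$, with compact support of $\omega$ automatic from compactness of $M$. Your checklist (including the implicit observation that $\omega$ must be an $(n-1)$-form for the boundary integral to make sense) covers everything required.
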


\subsection{A Generalized Riemannian Volume Form}\label{generalized_Riemannian_volume_form}

We seek to determine what the postulated form of the flat metric in Euclidean space with respect to the natural frame $\theta^{x_{1,\ldots,n}}$ ought to be. In one dimension, clearly we want $\theta^x$ itself to be our Riemannian volume form. How to derive it from a metric, however? Furthermore, how to extend the construction to many spatial dimensions? In view of the principle of plenitude invoked in {\S}\ref{quadrature_on_real_line} when the idea is generalized to many dimensions, we wish to arrange that deviations to any order in any one variable can couple to deviations to any order in any other variable, so the following Ansatz seems to be the most immediate:
\begin{equation}\label{canonical_flat_metric}
	\delta := \sum_{\lambda=1}^\infty \sum_{\begin{smallmatrix}\alpha, \beta ~\mathrm{such~that}~|\alpha|\ge\lambda, |\beta|\ge\lambda, \\\mathrm{either}~ \alpha \le \beta ~\mathrm{or}~ \beta \le \alpha \end{smallmatrix}}  
	\frac{|\alpha|!}{\alpha!}\frac{|\beta|!}{\beta!}
	d^\alpha \otimes d^\beta,
\end{equation}
where for the multi-indices $\alpha=(\alpha_1,\ldots,\alpha_n)$ and $\beta=(\beta_1,\ldots,\beta_n)$ we understand $\alpha \le \beta$ if $\alpha_{1,\ldots,n} \le \beta_{1,\ldots,n}$. The reasoning behind this choice will become clearer in a moment. To sketch it in advance, we shall say that we want to sum over all orders in the infinitesimals and for an infinitesimal at a given order to interact with all others at an order greater than or equal to its own. First of all, we must verify that the symmetrical generalized covariant rank-$\binom{2}{0}$ tensor field so defined forms a good metric. 

\begin{lemma}
	The would-be canonical form of the metric in Euclidean space, $\delta$, defined by the formula \ref{canonical_flat_metric} is everywhere non-degenerate and positive definite.
\end{lemma}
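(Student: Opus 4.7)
The plan is to work pointwise at a given $p \in M$ and to reduce the claim to a finite-dimensional linear-algebra statement about the restriction of $\delta$ to $J^{r*}_p M$ for arbitrary finite $r$; the passage $r \to \infty$ is then formal from the direct-limit structure used throughout the paper. The first step is to carry out the inner summation over $\lambda$ in the definition of $\delta$ explicitly: an ordered pair $(\alpha,\beta)$ with $\alpha$ and $\beta$ componentwise comparable contributes its coefficient once for each $\lambda$ with $1\le\lambda\le\min(|\alpha|,|\beta|)$, and otherwise contributes nothing. Hence, in the coordinate cobasis $\{d^\alpha\}$, the matrix entries become
\begin{equation*}
\delta_{\alpha\beta} \;=\; \frac{|\alpha|!}{\alpha!}\,\frac{|\beta|!}{\beta!}\,\min(|\alpha|,|\beta|)\,\chi(\alpha,\beta),
\end{equation*}
where $\chi(\alpha,\beta)=1$ if $\alpha$ and $\beta$ are componentwise comparable and $0$ otherwise.

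My next step would be to exhibit $\delta$ as a sum of manifestly positive rank-one pieces. For each multi-index $\gamma$ with $|\gamma|=\lambda\ge 1$, introduce the auxiliary 1-form
\begin{equation*}
\vartheta_{\lambda,\gamma} \;:=\; \sum_{\alpha\ge\gamma}\frac{|\alpha|!}{\alpha!}\,d^{\alpha}
\end{equation*}
supported on the upward cone above $\gamma$ in the componentwise partial order. Because only pairs $(\alpha,\beta)$ with a common lower bound $\gamma$ contribute to $\vartheta_{\lambda,\gamma}\otimes\vartheta_{\lambda,\gamma}$, and since an arbitrary comparable pair has $\gamma=\alpha\wedge\beta$ as its unique minimal witness, one expects a judicious choice of nonnegative weights $c_{\lambda,\gamma}$ to produce a Gram-type decomposition
\begin{equation*}
\delta \;=\; \sum_{\lambda\ge 1}\sum_{|\gamma|=\lambda} c_{\lambda,\gamma}\,\vartheta_{\lambda,\gamma}\otimes\vartheta_{\lambda,\gamma},
\end{equation*}
exhibiting $\delta$ as positive semi-definite. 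The coefficients $c_{\lambda,\gamma}$ should be obtainable by Möbius inversion on the partial order of multi-indices, so as to reproduce exactly the combinatorial factor $\min(|\alpha|,|\beta|)\,\chi(\alpha,\beta)$ without spurious contributions.

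For strict positive definiteness I would then proceed by descending induction on the maximum total order $r$. Distinct multi-indices of degree exactly $r$ are pairwise incomparable, so the block of $\delta$ at that top order is diagonal with strictly positive entries $r\,(r!/\alpha!)^2$; a Schur-complement argument peels off this block and leaves a residual quadratic form on the space of multi-indices of order $\le r-1$, to which the induction hypothesis applies. Combined with the semi-definiteness from the previous step, this yields strict positive definiteness on each truncation $J^{r*}_p M$, and hence non-degeneracy of $\delta$ on the full direct limit.

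The main obstacle I anticipate is precisely the combinatorial bookkeeping in the second step: it is not immediate that there exists a nonnegative Möbius-type assignment of the weights $c_{\lambda,\gamma}$ that exactly matches the factor $\min(|\alpha|,|\beta|)\,\chi(\alpha,\beta)$, because the upward cones above distinct $\gamma$ overlap in delicate ways whenever two multi-indices share nontrivial meets without themselves being comparable. Should the Gram decomposition fail to close up cleanly in this form, a workable fallback is to verify Sylvester's criterion directly on finite truncations by joint induction on the dimension $n$ and the top order $r$, computing principal minors with the aid of the block-triangular structure of $\delta$ induced by grading according to total degree.
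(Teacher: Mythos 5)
Your reduction of the problem to the matrix $\delta_{\alpha\beta}=\frac{|\alpha|!}{\alpha!}\frac{|\beta|!}{\beta!}\min(|\alpha|,|\beta|)\,\chi(\alpha,\beta)$ is the correct literal reading of equation (\ref{canonical_flat_metric}), but both halves of your argument contain genuine gaps. The obstacle you flag in the Gram step is not delicate bookkeeping; it is fatal in dimension $n\ge 2$. For $\alpha=(2,0)$, $\beta=(1,1)$ the entry $\delta_{\alpha\beta}$ vanishes (the pair is incomparable), yet the only cone containing both is the one over $\gamma=(1,0)$, which contributes the strictly positive amount $2c_{1,(1,0)}$ to that entry; hence $c_{1,(1,0)}=0$, and then nothing can produce the nonzero diagonal entry $\delta_{(1,0)(1,0)}=1$, so no choice of weights closes the system. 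The Schur-complement induction is also invalid as stated: positive definiteness of the truncation is equivalent to $D\succ 0$ together with $A-BD^{-1}B^{T}\succ 0$, and since $BD^{-1}B^{T}\succeq 0$ the Schur complement is \emph{smaller} than $A$ in the Loewner order, so your induction hypothesis $A\succ 0$ implies nothing about it. Worse, no fallback (Sylvester's criterion included) can repair the proof under your reading, because the statement is then false: in dimension $2$ at order $2$ one has $\delta_{(1,0)(1,1)}=\delta_{(0,1)(1,1)}=2$, $\delta_{(1,0)(2,0)}=\delta_{(0,1)(0,2)}=1$, $\delta_{(1,1)(1,1)}=8$, $\delta_{(2,0)(2,0)}=\delta_{(0,2)(0,2)}=2$, all other off-diagonal entries zero, and the vector $v=\partial_{(1,0)}+\partial_{(0,1)}-\tfrac12\bigl(\partial_{(2,0)}+\partial_{(1,1)}+\partial_{(0,2)}\bigr)$ gives
\begin{equation*}
\delta(v,v)=1+1+\tfrac14\left(2+8+2\right)-(1+2)-(2+1)=-1<0.
\end{equation*}

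The paper's own proof takes an entirely different route that never meets these difficulties, and the comparison explains why yours cannot close. It passes to ``uniformizing coordinates'' $y_{1,\ldots,n}$, constructed via the implicit function theorem so that $\varepsilon^{x_k}:=\sum_{\alpha:\,\alpha_k\ge 1}d^{\alpha}$ becomes $d^{y_k}$, and asserts that $\delta$ then takes the form (\ref{reduced_canonical_form_of_metric}), namely $d^{y_\mu}\otimes d^{y_\mu}+d^{y_\mu}d^{y_\nu}\otimes d^{y_\mu}d^{y_\nu}+\cdots$, a sum of tensor squares which is diagonal with positive entries $|\gamma|!/\gamma!$ in the $y$-jet basis, hence nondegenerate and positive definite. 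That argument really concerns the reduced, sum-of-squares presentation of $\delta$, which is positive essentially by construction. Your computation (and the counterexample above) shows that this reduced form does not coincide with the literal double sum (\ref{canonical_flat_metric}): already in one dimension the literal formula gives the entries $\min(a,b)$ while the reduced form gives $\binom{a+b-2}{a-1}$, which differ for $a,b\ge 3$ (and for $a=2$, $b=3$). So the divergence sits exactly in the paper's ``inspection'' step identifying the two expressions, and a correct proof of the lemma must take the sum-of-squares form as the definition of $\delta$; starting from there, positivity is immediate and none of your machinery is needed.
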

\begin{proof}
	Since the expression is manifestly translation-invariant, it suffices to show this at a single point. Therefore, take the point to be the origin and go to uniformizing coordinates $y_{1,\dots,n}$. A subtlety occurs in dimension greater than one which we must attend to. The principle of plenitude suggests that given  the $y_{1,\ldots,n}$, when their germ is taken at the origin so as to yield a mixed differential, it is not enough merely to include $\theta^{x_k}$ as above, for we could expect not only the differentials of higher order in $x_k$ itself to enter but also differentials involving potential displacements in any of the additional directions in space. Therefore, we wish to replace $\theta^{x_k}$ with
	\begin{equation}\label{natural_diff_in_many_dim}
		\varepsilon^{x_k} := \sum_{\alpha ~\mathrm{such~that}~\alpha_k\ge 1} d^\alpha,
	\end{equation}
	for $k=1,\ldots,n$. If we are to define the sought-for uniformizing coordinates $y_{1,\ldots,n}$ as power series in the Cartesian coordinates $x_{1,\ldots,n}$, it is evident from equation (\ref{natural_diff_in_many_dim}) that we shall have $y_k = x_k + \mathrm{h.o.}$; i.e., to linear order the coordinates remain the same. But then it is obvious from the implicit function theorem that we can indeed define the $y_{1,\ldots,n}$ in terms of the $x_{1,\ldots,n}$ in a small-enough neighborhood of the origin, which is all we need. Thus, suppose we have gone to uniformizing coordinates such that 
	$\varepsilon^{x_{1,\ldots,n}}=d^{y_{1,\ldots,n}}$. Inspection of the defining formula \ref{canonical_flat_metric} for $\delta$ reveals that with respect to the uniformizing coordinates it assumes the simple form,
	\begin{equation}\label{reduced_canonical_form_of_metric}
		\delta = \varepsilon^{x_\mu} \otimes \varepsilon^{x_\mu} +
		\varepsilon^{x_\mu} \varepsilon^{x_\nu} \otimes \varepsilon^{x_\mu} \varepsilon^{x_\nu} + \cdots
		= d^{y_\mu} \otimes d^{y_\mu} +
		d^{y_\mu} d^{y_\nu} \otimes d^{y_\mu} d^{y_\nu} + \cdots,
	\end{equation}
	where a summation convention over repeated spatial indices is understood to be in force. Note, the combinatorial factors in the defining formula \ref{canonical_flat_metric} take care of all repetitions of terms which agree among themselves by virtue of the commutativity of jet multiplication; i.e., $d^{x_1}d^{x_2}=d^{x_2}d^{x_1}$, hence under summation one gets $2d^{(1,1,0,\ldots,0)}$ and so forth.  Temporarily, identify $d^{y_k}$ with $\partial_{y_k}$ so that $\delta$ may be regarded as an endomorphism of $J^r_0(\vvmathbb{R}^n)$. Now, equation (\ref{reduced_canonical_form_of_metric}) says that with respect to the uniformizing coordinates $\delta$ is diagonal with non-zero and positive entries on the diagonal. Therefore it is non-degenerate and positive definite, as claimed.
\end{proof}
Will the formula \ref{canonical_flat_metric} reproduce $\theta^{x_1}\wedge\cdots\wedge\theta^{x_n}$ as its generalized Riemannian volume form? Indeed, the answer ought to be yes, at least up to constant factors reflecting the number of distinct permutations of the spatial indices in any given monomial differential. First we need a definition of what we mean by a Riemannian volume form in the generalized sense and proof of its existence.

\begin{proposition}\label{metric_canonical_form}
	Let $g$ be a generalized metric tensor defined on a differentiable manifold $M$ possibly with spatial dependence. If $e^\alpha$, $\alpha=1,\ldots,N=N_1+\cdots+N_r$, is an orthonormal coframe basis defined on an open set $U \subset M$, it may be arranged so that the first $N_1=n$ elements are jets of first degree, the second $N_2$ are jets of second degree and so on, with the last $N_r$ being jets of $r$-th degree. By the degree of a jet, we mean the least $1 \le s \le r$ such that its projection to $J^{*s}$ is non-zero.
\end{proposition}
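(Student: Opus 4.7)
The approach will be to exploit the intrinsic decreasing filtration
$0 = \mathfrak{m}_p^{r+1}/\mathfrak{m}_p^{r+1} \subset \mathfrak{m}_p^{r}/\mathfrak{m}_p^{r+1} \subset \cdots \subset \mathfrak{m}_p^{2}/\mathfrak{m}_p^{r+1} \subset \mathfrak{m}_p/\mathfrak{m}_p^{r+1} = J^{*r}_p$ of the stalk by coordinate-independent subspaces, which coincide with $\ker \pi_{r,s-1}$ where $\pi_{r,s-1}:J^{*r}_p \to J^{*s-1}_p$ is the canonical projection. Write $F_s := \mathfrak{m}_p^s / \mathfrak{m}_p^{r+1}$ for brevity. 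Unwinding the definition given in the proposition, a jet $j$ has degree exactly $s$ if and only if $j \in F_s \setminus F_{s+1}$; thus the notion of degree is intrinsic and entirely encoded by this filtration, which reduces the construction of a graded orthonormal basis to orthogonally decomposing $J^{*r}_p$ compatibly with $F_\bullet$.

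For each $s = 1, \ldots, r$ I will set $W_s := F_s \cap F_{s+1}^{\perp}$, the orthogonal complement being taken with respect to $g$ in $J^{*r}_p$. Since $g$ is positive-definite its restriction to any subspace remains positive-definite, hence non-degenerate, so one obtains the splitting $F_s = F_{s+1} \oplus W_s$ by the standard argument (for $u \in F_s$, non-degeneracy of $g|_{F_{s+1}}$ supplies a unique $u_1 \in F_{s+1}$ with $\langle \cdot, u_1\rangle = \langle \cdot, u\rangle$ on $F_{s+1}$, whence $u - u_1 \in W_s$). Iterating yields the $g$-orthogonal direct sum $J^{*r}_p = W_1 \oplus W_2 \oplus \cdots \oplus W_r$, with $\dim W_s = N_s$. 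The crucial claim is that every non-zero $w \in W_s$ has degree exactly $s$: $w \in F_s$ gives degree $\ge s$, while if additionally $w \in F_{s+1}$ then $w \in F_{s+1} \cap F_{s+1}^\perp$ would force $\langle w, w\rangle = 0$, contradicting positive-definiteness. Within each $W_s$ I will then invoke ordinary Gram-Schmidt with respect to the positive-definite form $g|_{W_s}$ to produce an orthonormal basis of $N_s$ degree-$s$ jets; concatenating these in the order $s = 1, 2, \ldots, r$ delivers the coframe with the stipulated graded structure. Smoothness across $p \in U$ is automatic because the filtration $F_\bullet$ is algebraically defined and the Gram-Schmidt procedure depends smoothly on $g$.

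The main technical point is the intermediate non-degeneracy required for the splittings $F_s = F_{s+1} \oplus W_s$: one needs $g|_{F_{s+1}}$ to be non-degenerate for every $s$, which is immediate in the Riemannian (positive-definite) case but would require an explicit hypothesis in a genuinely pseudo-Riemannian setting. A secondary conceptual subtlety is that the naive coordinate splitting $J^{*r}_p = V_1 \oplus V_2 \oplus \cdots \oplus V_r$ into spaces of purely order-$s$ jets is not preserved under coordinate change, so Gram-Schmidt carried out against that splitting would not yield an intrinsically graded basis; the proof essentially hinges on replacing the coordinate-dependent direct summands $V_s$ with the intrinsic filtration $F_\bullet$ and its $g$-orthogonal complements.
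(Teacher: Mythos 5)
Your construction is correct (granting positive-definiteness, which you rightly flag), but it is a genuinely different argument from the paper's, and the difference matters. The paper keeps the \emph{given} orthonormal coframe and merely relabels it: it claims that exactly $N_1=n$ of the $e^\alpha$ have non-zero projection under $\pi_{r,1}$ --- fewer would contradict non-degeneracy, while more would force a linear dependence among the $\pi_{r,1}e^\alpha$ and hence (so the paper asserts) a null $1$-vector --- and then iterates this count order by order on the remaining elements. You instead discard the given coframe and build an adapted one from the intrinsic filtration $F_s=\ker\pi_{r,s-1}$, via the orthogonal splittings $F_s=F_{s+1}\oplus W_s$ and Gram--Schmidt inside each $W_s$. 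Your route is the sound one: the paper's counting claim is false for an arbitrary orthonormal coframe. Take $n=1$, $r=2$, the flat metric with orthonormal coframe $\{d^x,d^{xx}\}$, and rotate: $e^1=\cos\theta\,d^x+\sin\theta\,d^{xx}$, $e^2=-\sin\theta\,d^x+\cos\theta\,d^{xx}$ is again orthonormal, yet for generic $\theta$ both elements have non-zero $1$-jet projection. The dependence $\sin\theta\,\pi_{2,1}e^1+\cos\theta\,\pi_{2,1}e^2=0$ yields the jet $\sin\theta\,e^1+\cos\theta\,e^2=d^{xx}$, whose norm is one, not zero; no null vector arises and the paper's contradiction evaporates. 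Were the paper's argument valid it would buy a stronger statement (every orthonormal coframe is graded after a mere permutation); your argument buys what is actually true and actually needed, namely the existence of \emph{some} filtration-adapted orthonormal coframe --- which is also the only reading under which ``may be arranged'' is defensible, since no permutation fixes the rotated example. Finally, your closing caveat applies to both proofs and indeed to the statement itself: for an indefinite metric such as $g=d^x\otimes d^{xx}+d^{xx}\otimes d^x$ (with $n=1$, $r=2$) the subspace $F_2$ is null, the splitting $F_s=F_{s+1}\oplus W_s$ fails, and no adapted orthonormal coframe exists, so definiteness is not an artifact of either method but a genuine hypothesis.
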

\begin{proof}	
	Consider the projections $\pi_{r,1} e^\alpha$. If fewer than $n$ of these are non-zero, the metric $g$ would be degenerate, contrary to hypothesis. If, however, more than $n$ of the $\pi_{r,1} e^\alpha$ were non-zero there would have to be a relation of linear dependence among them and we could construct a 1-vector $X_1$ such that $g(X_1,X_1)=0$ (where we implicitly identify $X_1$ with its image under the canonical injection into $J^r$). Thus, precisely $n$ of the $e^\alpha$ have non-zero projection onto 1-jets. Relabel them as $e^{1,\ldots,n}$. 
	
	At the second stage, consider all projections $\pi_{r,2} e^\alpha$ for $\alpha=n+1,\ldots,N$. For the same reason as in the previous paragraph, precisely $N_2$ of these will be non-zero and linearly independent; relabel these as $e^{n_1,\ldots,n+N_2}$. Here, one would use a possible relation of linear dependence among the non-zero $\pi_{r,2} e^\alpha$ to construct a 2-vector $X_2$ such that $g(X_2,X_2)=0$. Clearly, we may continue along the same lines until the collection $e^{1,\ldots,N}$ is labeled as a sequence of $N_1$ degree-1 jets followed by $N_2$ degree-2 jets, and so on up to the last $N_r$ degree-$r$ jets, as required.
\end{proof}

In general, it is too much to ask for there to exist a coordinate frame such that the coordinate differentials recover the orthonormal coframe basis, even just at a point. The 11-sector can be diagonalized by a coordinate transform as usual, but a glance at the jet transformation law (\ref{jet_transf_law}) reveals that there is too little room for play in the 22 and higher sectors for a corresponding diagonalization to be carried out there as well, by means of a coordinate transform. Thus, the uniformizing coordinates in flat space constitute rather the exception.

\begin{proposition}[Cf. Lee, \cite{lee_smooth_manif}, Proposition 15.29]\label{exist_Riemannian_vol_form}
	Suppose $(M,g)$ is an oriented generalized Riemannian manifold of dimension $n \ge 1$. Then there exists a smooth macroscopic orientation form $\omega_g \in \Omega^n(M)$ that satisfies
	\begin{equation}
		\omega_g(E_1,\ldots,E_n) = 1
	\end{equation}
	for every local oriented frame $E_{1,\ldots,n}$ on $M$. The $\omega_g$ so obtained is unique modulo $\mathrm{ker}~ \pi_{r1}$.
\end{proposition}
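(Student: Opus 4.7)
The key observation is that the defining condition $\omega_g(E_1,\ldots,E_n)=1$ involves only first-order arguments, so contraction picks out exactly the macroscopic part of $\omega_g$ and leaves $\ker\pi_{r1}$ entirely unconstrained. This already explains why uniqueness can hold only modulo $\ker\pi_{r1}$. My strategy is therefore to (a) distill out the ordinary Riemannian datum contained in $g$, (b) invoke Lee's classical theorem to obtain a Riemannian volume form for this datum, and (c) lift it back to $\Omega^n(M)$ by a partition-of-unity argument that keeps the macroscopic part canonical.

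\textbf{Step 1: the first-order restriction.} Let $\iota:\mathscr{J}^1(M)\hookrightarrow\mathscr{J}^r(M)$ be the canonical injection and set $\hat g_1:=\iota^*g$. I first check that $\hat g_1$ is non-degenerate as an ordinary bilinear form on $TM=\mathscr{J}^1(M)$. By Proposition \ref{metric_canonical_form}, any local $g$-orthonormal coframe $(e^1,\ldots,e^N)$ for $\mathscr{J}^{r*}$ can be arranged so that exactly $N_1=n$ of its elements are of degree $1$; their projections under $\pi_{r1}$ span $\mathscr{J}^{1*}$, from which it follows immediately that $\hat g_1$ has full rank and inherits the orientation of $M$. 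Thus $(M,\hat g_1)$ is a bona fide oriented (pseudo-)Riemannian manifold in the conventional sense.

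\textbf{Step 2: local construction and gluing.} Applying Lee's Proposition 15.29 to $(M,\hat g_1)$ yields a unique smooth ordinary Riemannian volume form $\omega_{\hat g_1}\in\Gamma\bigl(\bigwedge^n\mathscr{J}^{1*}\bigr)$ characterized by $\omega_{\hat g_1}(E_1,\ldots,E_n)=1$ on every oriented $\hat g_1$-orthonormal first-order frame. In any smooth chart $(U,\varphi)$ with coordinates $x^{1,\ldots,n}$, there is an obvious section $s_U$ of the surjection $\pi_{r1}:\mathscr{J}^{r*}\to\mathscr{J}^{1*}$ sending the basis element $d^{e_i}\in\mathscr{J}^{*1}$ to the like-named $d^{e_i}\in\mathscr{J}^{*r}$; this lifts $\omega_{\hat g_1}|_U$ to a local macroscopic $n$-form $\omega^U_g\in\Omega^n(U)$. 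Choosing a locally finite open cover $\{U_\alpha\}$ by such charts and a subordinate partition of unity $\{\chi_\alpha\}$, define globally
\begin{equation*}
\omega_g := \sum_\alpha \chi_\alpha\,\omega^{U_\alpha}_g.
\end{equation*}

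\textbf{Step 3: verification and uniqueness.} Because $\pi_{r1}$ is $C^\infty(M)$-linear and each $\omega^{U_\alpha}_g$ lifts the \emph{same} form $\omega_{\hat g_1}$, one obtains $\pi_{r1}(\omega_g)=\sum_\alpha\chi_\alpha\,\omega_{\hat g_1}=\omega_{\hat g_1}$. Since contraction against first-order vectors factors through $\pi_{r1}$, the desired identity $\omega_g(E_1,\ldots,E_n)=\omega_{\hat g_1}(E_1,\ldots,E_n)=1$ holds on every oriented (orthonormal) first-order frame. Macroscopicness follows from lemma \ref{lead_macroscopic_vol_form}: the image of $\omega_g$ under $\pi_{r1}$ is the nowhere-vanishing $\omega_{\hat g_1}$. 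For uniqueness, if $\omega'_g$ is any other such form, then $\pi_{r1}(\omega'_g-\omega_g)$ annihilates every tuple of first-order vectors, hence lies in $\ker\pi_{r1}$, which is precisely the assertion.

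\textbf{Main obstacle.} The delicate point is the coordinate-dependence of the section $s_U$: different charts yield lifts of $\omega_{\hat g_1}$ differing by elements of $\ker\pi_{r1}$, so one cannot expect the local $\omega^{U_\alpha}_g$ to agree on overlaps. What rescues the construction is that the partition-of-unity sum commutes with $\pi_{r1}$ and $\sum_\alpha\chi_\alpha=1$, so all the ambiguous microscopic contributions survive inside $\Omega^n(M)$ but collapse harmlessly upon projection; the macroscopic content, which is what the defining condition sees, remains intact and equal to the canonical Lee form $\omega_{\hat g_1}$. Said differently, the non-canonicity of the lift is \emph{exactly} the non-uniqueness clause of the proposition.
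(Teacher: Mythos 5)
Your proof is correct, but it follows a genuinely different route from the paper's. The paper never leaves the generalized setting: it takes a $g$-orthonormal coframe arranged as in proposition \ref{metric_canonical_form} and simply sets $\omega_g := e^1 \wedge \cdots \wedge e^n$, the wedge of the degree-one coframe elements (which are full $r$-jets, carrying higher-order components determined by $g$); independence of the chosen frame follows from the $SO(n)$ transition argument exactly as in Lee, so the local definitions agree on overlaps and assemble into a canonical global form with no partition of unity, and uniqueness modulo $\ker \pi_{r1}$ is read off from the normalization. You instead reduce to the classical theorem: restrict $g$ to the first-order subbundle to get $\hat g_1$, invoke Lee's Proposition 15.29 for $(M,\hat g_1)$, and lift the classical volume form through chartwise splittings of $\pi_{r1}$, gluing with a partition of unity. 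What each buys: the paper's representative is canonical, its microscopic part being fixed by $g$ through the orthonormal coframe, whereas yours carries arbitrary microscopic content coming from the splittings $s_U$ and the bump functions; in exchange, your argument black-boxes the classical existence theorem rather than re-running its computation, and it makes structurally transparent that the defining condition only sees $\pi_{r1}\omega_g$ and hence can only ever determine $\omega_g$ modulo $\ker\pi_{r1}$.

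Two points need tightening. First, your Step 1 inference (``the projections of the degree-one coframe elements span $\mathscr{J}^{1*}$, hence $\hat g_1$ has full rank'') is not valid as stated: spanning by projected \emph{coframe} elements says nothing directly about the restriction of $g$ to the first-order \emph{vector} subbundle. For a generalized Riemannian metric the non-degeneracy of $\hat g_1$ is simply positive-definiteness, which is what you should cite; in a pseudo-Riemannian variant this step would be a genuine issue, since restrictions of non-degenerate forms to subspaces can degenerate. Second, you silently identify the frames $E_{1,\ldots,n}$ of the statement (read, as the paper's proof reads them, as the degree-one members of a $g$-orthonormal frame) with $\hat g_1$-orthonormal first-order frames. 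This identification is true but deserves a line: the remaining coframe elements $e^{n+1},\ldots,e^N$ are of degree $\ge 2$ and hence span $\ker\pi_{r1}$ by dimension count, so the dual vectors $E_{1,\ldots,n}$ annihilate $\ker\pi_{r1}$ and are therefore genuinely first-order, with $g(E_i,E_j)=\hat g_1(E_i,E_j)=\delta_{ij}$; conversely, any oriented $\hat g_1$-orthonormal first-order frame completes, by Gram--Schmidt on the $g$-orthogonal complement, to a $g$-orthonormal frame of the required type. Without this bridge, your Step 3 verification does not literally address the frames named in the proposition.
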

\begin{proof}
	Assume first existence and show uniqueness. If $E_{1,\ldots,n}$ is any local oriented orthonormal frame on an open subset $U \subset M$ and $e^{1,\ldots,n}$ the dual coframe, we may put $\omega_g = f e^1 \wedge \cdots \wedge e^n$. Then the condition reduces to $f=1$ identically, so that $\omega_g = e^1 \wedge \cdots \wedge e^n$ proving that it is uniquely determined up to jets of second or greater degree.
	
	To show existence, any $\omega_g$ so defined must be independent of the choice of oriented orthonormal frame. If $\tilde{E}_{1,\ldots,n}$ is another oriented orthonormal frame with dual coframe $\tilde{e}^{1,\ldots,n}$, let
	\begin{equation}
		\tilde{\omega}_g = \tilde{e}^1 \wedge \cdots \wedge \tilde{e}^n.
	\end{equation}
	Now, by hypothesis there exist smooth transition functions $A^j_i$ such that $\tilde{E}_i = A^j_i E_j$ and $A^j_i(p) \in O(n)$ for every $p \in U$. Hence, $\det A^j_i = \pm 1$, but the plus sign is demanded by the fact that the orientations of the two frames agree. Compute
	\begin{equation}
		\omega_g(\tilde{E}_1,\ldots,\tilde{E}_n) = \det e^j(\tilde{E}_i) = \det A^j_i = \tilde{\omega_g}(\tilde{E}_1,\ldots,\tilde{E}_n).
	\end{equation}
	Thus $\tilde{\omega}_g = \omega_g$ and the proposed pointwise definition yields a global $n$-form, which is clearly smooth and satisfies the unicity condition by construction.
\end{proof}

It will be convenient to have a coordinate-based expression for the generalized Riemannian volume form, as the following guarantees:

\begin{proposition}[Cf. Lee, \cite{lee_smooth_manif}, Proposition 15.31]\label{coord_Riemannian_vol_form}
	Let $(M,g)$ be an oriented Riemannian manifold of dimension $n \ge 1$ and $e^{1,\ldots,N_1+\cdots+N_r}$ an orthonormal coframe ordered as in proposition \ref{metric_canonical_form}. Let $\mathscr{E}^1 = \mathrm{span} (g e^1,\ldots,g e^n) \subset \mathscr{J}^r$. In any oriented coordinate chart, the Riemannian volume form has the local coordinate expression
	\begin{equation}\label{Riemannian_vol_formula}
		\omega_g = \sqrt{ \det g|_{\mathscr{E}^1 \otimes \mathscr{E}^1}} \theta^1 \wedge \cdots \wedge \theta^n.
	\end{equation}
\end{proposition}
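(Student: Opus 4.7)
The plan is to reduce the claim to Lee's first-order argument by noting that only the first-degree part of the orthonormal coframe contributes modulo $\ker \pi_{r1}$, the ambiguity allowed by proposition \ref{exist_Riemannian_vol_form}. First I would invoke proposition \ref{metric_canonical_form} to choose an oriented orthonormal coframe $(e^\alpha)_{\alpha=1,\ldots,N}$ on $U$ with the first $n$ elements $e^1,\ldots,e^n$ of first degree (i.e. $\pi_{r1}(e^i)\ne 0$) and the remaining elements in $\ker\pi_{r1}\subset\mathscr{J}^{r*}$. By proposition \ref{exist_Riemannian_vol_form}, $\omega_g = e^1 \wedge \cdots \wedge e^n$ modulo $\ker \pi_{r1}$. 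In the coordinate chart, decompose $e^i = B^i_{\,j}\, d^j + r^i$ for $i,j=1,\ldots,n$, where $r^i$ is purely of jet-order $\ge 2$; the block $B=(B^i_{\,j})$ is non-singular because $\pi_{r1}(e^1),\ldots,\pi_{r1}(e^n)$ are linearly independent (otherwise we could build a non-zero 1-vector annihilated by $g$, contradicting non-degeneracy).

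Next I would identify $\mathscr{E}^1$ with $T_pM$ viewed inside $\mathscr{J}^r_pM$ under the canonical injection. Since $e^i \in \ker\pi_{r1}$ for $i>n$, these $e^i$ annihilate $T_pM$, so the $(N-n)$-dimensional span of $\{e^{n+1},\ldots,e^N\}$ lies in the annihilator $(T_pM)^\circ$ and, by dimension, equals it. Consequently the dual vectors $E_1,\ldots,E_n$ (the first $n$ of the orthonormal frame dual to $(e^\alpha)$) lie in $((T_pM)^\circ)^\circ = T_pM$, giving $\mathscr{E}^1 = \mathrm{span}(E_1,\ldots,E_n) = T_pM$. Thus the restricted bilinear form $g|_{\mathscr{E}^1\otimes\mathscr{E}^1}$ is nothing but $g_{kl}:=g(\partial_k,\partial_l)$ in the coordinate basis of 1-jets.

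Now I would compare the two bases of $\mathscr{E}^1$. Dualising $e^i = B^i_{\,j}\,d^j$ modulo $\ker\pi_{r1}$ yields $\partial_k = B^i_{\,k}\, E_i$, so orthonormality $g(E_i,E_j)=\delta_{ij}$ transports to
\begin{equation}
g(\partial_k,\partial_l) = B^i_{\,k} B^j_{\,l}\,\delta_{ij} = (B^{T}B)_{kl},
\end{equation}
hence $\det g|_{\mathscr{E}^1\otimes\mathscr{E}^1} = (\det B)^2$ and $\det B = \sqrt{\det g|_{\mathscr{E}^1\otimes\mathscr{E}^1}}$ (the positive root is forced by orientation: the coordinate frame $(\partial_k)$ and $(E_i)$ are both positively oriented, so $\det B>0$).

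Finally I would compute the wedge. Expanding $e^1\wedge\cdots\wedge e^n = (B^1_{\,j}d^j + r^1)\wedge\cdots\wedge(B^n_{\,j}d^j + r^n)$, every cross term contains at least one factor $r^i$ of jet-order $\ge 2$ and therefore lies in $\ker\pi_{r1}$ of $\bigwedge^n\mathscr{J}^{r*}$; the surviving term is $\det B\cdot d^1\wedge\cdots\wedge d^n$. Since $\theta^i - d^i$ consists of higher-order jets, $d^1\wedge\cdots\wedge d^n \equiv \theta^1\wedge\cdots\wedge\theta^n \pmod{\ker\pi_{r1}}$ by the same reasoning. Combining these steps,
\begin{equation}
\omega_g \equiv \sqrt{\det g|_{\mathscr{E}^1\otimes\mathscr{E}^1}}\,\theta^1\wedge\cdots\wedge\theta^n \pmod{\ker\pi_{r1}},
\end{equation}
which is the asserted local expression (unique only up to the ambiguity built into the definition of $\omega_g$). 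The main obstacle is the identification $\mathscr{E}^1 = T_pM$, which hinges on the careful ordering of the orthonormal coframe by jet-degree and the non-degeneracy of $g$; once established, the rest is a direct transcription of the first-order computation, with the higher-order jet corrections systematically absorbed into $\ker\pi_{r1}$.
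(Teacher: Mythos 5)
Your proof is correct and follows essentially the same route as the paper's: both expand the coordinate frame against the oriented orthonormal frame via a matrix ($A$ there, your $B$), derive $g|_{\mathscr{E}^1\otimes\mathscr{E}^1}=B^{T}B$ from orthonormality, and fix the positive square root by orientation, with higher-order terms absorbed into $\ker\pi_{r1}$. Your explicit annihilator argument identifying $\mathscr{E}^1$ with $T_pM$ is a worthwhile addition the paper leaves implicit, and it renders automatic the frame-independence of $\det g|_{\mathscr{E}^1\otimes\mathscr{E}^1}$, which the paper instead checks separately at the end of its proof.
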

\begin{proof}
	Let $U$ be an oriented chart with coordinates $x_{1,\ldots,n}$ and $p \in M$. In these coordinates, $\omega_g = f \theta^1 \wedge \cdots \wedge \theta^n + \eta$ for some everywhere positive coefficient function $f \in C^\infty(U)$ and $\eta \in \mathrm{ker}~ \pi_{r1}$. We may set $\eta=0$. To compute what $f$ is, let $E_{1,\ldots,n}$ be any oriented orthonormal frame defined on a neighborhood of $p$ (which we may denote as $U$ again) and $e^{1,\ldots,n}$ the corresponding dual coframe. Write the natural frame in terms of the given orthonormal frame as
	\begin{equation}
		\partial_i = A^j_i E_j
	\end{equation}
	and compute
	\begin{align}
		f &= \omega_g \left( \partial_{y_1},\ldots,\partial_{y_n} \right) = 
		e^1 \wedge \cdots \wedge e^n \left( \partial_{y_1},\ldots,\partial_{y_n} \right) \nonumber \\
		&= \det e^j(\partial_{y_i}) = \det A^j_i.
	\end{align}
	Observe however that
	\begin{equation}
		g_{ij} = \langle \partial_{y_i}, \partial_{y_j} \rangle_g = \langle A^k_l E_k, A^l_j E_l \rangle_g = A^k_lA^l_k \langle E_k,E_l \rangle_g = (A^t A)_{ij}.
	\end{equation}
	Thus,
	\begin{equation}
		\det g|_{\mathscr{E}^1 \otimes \mathscr{E}^1} = \det A^t A = \det A^t \det A = (\det A)^2,
	\end{equation}
	whence $f = \det A = \pm \sqrt{\det g|_{\mathscr{E}^1 \otimes \mathscr{E}^1}}$. But by the common orientation of the frames $\partial_{y_1,\dots,y_n}$ and $E_{1,\ldots,n}$ the sign must be positive. A last point. Take two orthonormal frames whose elements of first degree are denoted by $e^{1,\ldots,n}$ resp. $\tilde{e}^{1,\ldots,n}$. Evaluate the coefficients $g_{ij}$ resp. $\tilde{g}_{ij}$. Now since the determinant of a matrix remains unchanged under a similarity transformation, we have $\det g_{ij} = \det \tilde{g}_{ij}.$ Therefore, the determinantal factor in equation (\ref{Riemannian_vol_formula}) is, in fact, independent of the choice of orthonormal frame.
\end{proof}

\begin{remark}
	It happens that indeed $\varepsilon^{x_1}\wedge\cdots\wedge\varepsilon^{x_n} = \theta^{x_1}\wedge\cdots\wedge\theta^{x_n}$. For it is evident that any given multi-index can appear once only on the right hand side. Na{\"i}vely, it could appear multiple times on the left hand side, but repetitions are eliminated by the wedge product.
\end{remark}

\begin{corollary}
	The Ansatz \ref{canonical_flat_metric} yields $\varepsilon^{x_1}\wedge\cdots\wedge\varepsilon^{x_n}$ as its generalized Riemannian volume form in flat space.
\end{corollary}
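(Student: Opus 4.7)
The plan is to invoke proposition \ref{coord_Riemannian_vol_form} directly: the generalized Riemannian volume form is
\[
\omega_\delta \;=\; \sqrt{\det \delta|_{\mathscr{E}^1\otimes \mathscr{E}^1}}\;\theta^{x_1}\wedge\cdots\wedge\theta^{x_n},
\]
so the whole content of the corollary reduces to showing that the square-root factor is identically equal to one, after which the preceding remark (that $\varepsilon^{x_1}\wedge\cdots\wedge\varepsilon^{x_n}=\theta^{x_1}\wedge\cdots\wedge\theta^{x_n}$) finishes the argument.

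First I would identify the first-order block of $\delta$. Reading off the Ansatz \ref{canonical_flat_metric}, the only summand of the form $d^\alpha\otimes d^\beta$ with $|\alpha|=|\beta|=1$ occurs for $\lambda=1$; moreover for unit multi-indices $\alpha=e_i,\ \beta=e_j$ the admissibility condition ``$\alpha\le\beta$ or $\beta\le\alpha$'' forces $i=j$, and the combinatorial prefactor $(|\alpha|!/\alpha!)(|\beta|!/\beta!)$ equals one. Hence $\pi_{r,1}\delta=\sum_i d^{x_i}\otimes d^{x_i}$ is nothing but the ordinary Euclidean metric on first-order jets.

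Next I would apply proposition \ref{metric_canonical_form} to pick out an orthonormal coframe ordered by degree. At the first-order stage the non-degeneracy of the Euclidean block forces the first-degree orthonormal covectors to be $e^i=d^{x_i}$ up to an orthogonal transformation, so $\mathscr{E}^1=\mathrm{span}(\delta^{-1}d^{x_1},\dots,\delta^{-1}d^{x_n})=\mathrm{span}(\partial_{x_1},\dots,\partial_{x_n})$. The restriction $\delta|_{\mathscr{E}^1\otimes\mathscr{E}^1}$ is thus the $n\times n$ identity matrix, giving $\det=1$ pointwise.

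The remaining step is essentially bookkeeping: substitute $\sqrt{\det\delta|_{\mathscr{E}^1\otimes\mathscr{E}^1}}=1$ into the formula of proposition \ref{coord_Riemannian_vol_form}, producing $\omega_\delta=\theta^{x_1}\wedge\cdots\wedge\theta^{x_n}$, and then invoke the preceding remark to rewrite this as $\varepsilon^{x_1}\wedge\cdots\wedge\varepsilon^{x_n}$. The main (mild) obstacle is making explicit why the corollary is well-posed in the first place, namely that the factor $\det\delta|_{\mathscr{E}^1\otimes\mathscr{E}^1}$ is independent of the choice of orthonormal coframe at first order (this is already noted in the proof of proposition \ref{coord_Riemannian_vol_form}) and that the higher-degree sectors of $\delta$, though contributing to the full inner product on $\mathscr{J}^{r*}$, do not perturb the volume form beyond the ambiguity modulo $\ker \pi_{r1}$ that is absorbed into the passage from $\theta^{x_i}$ to $\varepsilon^{x_i}$. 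Once these points are noted, the computation is immediate.
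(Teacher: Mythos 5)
Your proposal is correct and takes essentially the same approach as the paper: the paper's proof likewise observes that (with respect to the natural frame) the leading block $\delta|_{\mathscr{E}^1\otimes\mathscr{E}^1}$ is $\mathrm{diag}(1,\ldots,1)$, hence the determinantal factor in proposition \ref{coord_Riemannian_vol_form} equals one, and then concludes via the remark identifying $\theta^{x_1}\wedge\cdots\wedge\theta^{x_n}$ with $\varepsilon^{x_1}\wedge\cdots\wedge\varepsilon^{x_n}$, so your explicit check of the first-order block of the Ansatz (\ref{canonical_flat_metric}) simply fills in what the paper asserts. One small caveat: the first-degree orthonormal covectors of $\delta$ are the $\varepsilon^{x_i}$ (which carry higher-degree tails), not the $d^{x_i}$, so strictly $\delta^{-1}d^{x_i}\ne\partial_{x_i}$; but since $\mathscr{E}^1$ is in any case the canonically embedded span of the first-order vectors and your block computation gives $\delta(\partial_{x_i},\partial_{x_j})=\delta_{ij}$, the determinant---and hence the conclusion---is unaffected.
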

\begin{proof}
	With respect to the natural frame, the leading part of $\delta|_{\mathscr{E}^1 \otimes \mathscr{E}^1} = \mathrm{diag}(1,\ldots,1)$ so $\det \delta|_{\mathscr{E}^1 \otimes \mathscr{E}^1} = 1$ identically. In view of proposition \ref{coord_Riemannian_vol_form} the required result follows immediately.	
\end{proof}

Nonlinear terms should transform in the tensor product representation, then preservation of form is more or less automatic. Here we can readily see why the off-diagonal terms in equation (\ref{canonical_flat_metric}) are necessary; if they were absent, the general orthogonal transformation would lead to a linear combination of differentials which, when tensored with itself, would generate non-zero off-diagonal terms and thereby contradict the assumed diagonal form if it be desired that the latter stay unchanged.

\begin{proposition}
	The canonical form $\delta$ of the metric in flat space is invariant under Euclidean motions of $\vvmathbb{R}^n$.
\end{proposition}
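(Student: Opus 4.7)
Plan: The proof will consist in verifying the invariance of $\delta$ under translations $x \mapsto x+c$ and under orthogonal transformations $x \mapsto Rx$, $R \in O(n)$, in turn. Translation invariance is essentially automatic: the numerical coefficients in (\ref{canonical_flat_metric}) do not depend on position, while the coordinate jets $d^\alpha|_p = [(x^1-x^1(p))^{\alpha_1}\cdots(x^n-x^n(p))^{\alpha_n}]$ transport from $p$ to $p+c$ in a manifestly equivariant way, so that $\delta$ at $p+c$ expressed in the basis $d^\alpha|_{p+c}$ is formally identical to $\delta$ at $p$ expressed in the basis $d^\alpha|_p$.

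For orthogonal invariance, the first step is to observe that under a linear change of coordinate $y = Rx$ the general Jacobian transformation (\ref{jet_transf_law}) collapses dramatically: all second and higher partial derivatives of $R$ vanish, so every off-diagonal block $J^{(b,a)}$ with $b > a$ is identically zero and each diagonal block reduces to $J^{(a,a)} = R^{\otimes a}$. In particular the action of $R$ preserves the grading of $\mathscr{J}^{*r}$ by $|\alpha|$, and on the degree-$a$ subspace it acts as the $a$-th symmetric tensor power of the defining representation of $O(n)$. It therefore suffices to verify invariance bidegree by bidegree in $(|\alpha|,|\beta|)$.

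The core building block will be the manifestly $O(n)$-invariant identity
\[
\sum_{j_1,\ldots,j_r} d^{x_{j_1}}\cdots d^{x_{j_r}} \otimes d^{x_{j_1}}\cdots d^{x_{j_r}} \;=\; \sum_{|\alpha|=r} \frac{r!}{\alpha!}\, d^\alpha \otimes d^\alpha,
\]
whose left-hand side transforms into itself by virtue of the orthogonality relation $\sum_j R^{k}_{\,j} R^{k'}_{\,j} = \delta^{kk'}$. The plan is to rearrange the sums in (\ref{canonical_flat_metric}), grouping pairs $(\alpha,\beta)$ with $\alpha \le \beta$ or $\beta \le \alpha$ according to the multiplicity $\min(|\alpha|,|\beta|)$ that the outer sum over $\lambda$ confers upon each such pair, and to exhibit the resulting expression as a superposition of such invariant blocks (with their natural cross-bidegree analogues at degrees $r \ne s$). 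As an alternative route that bypasses most of this bookkeeping, one may appeal to the reduction established in the preceding lemma, where $\delta$ assumes in uniformizing coordinates the symmetric and diagonal form $\sum_{r\ge 1}\sum_{\mu_1,\ldots,\mu_r} d^{y_{\mu_1}}\cdots d^{y_{\mu_r}} \otimes d^{y_{\mu_1}}\cdots d^{y_{\mu_r}}$, and observe that a linear rotation in $x$ lifts to a transformation in $y$ whose linear part is still orthogonal (as $y_k = x_k + \text{h.o.}$), preserving this manifestly invariant expression.

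The main obstacle I expect is precisely this combinatorial bookkeeping: to show that the prescribed multiplicity $\min(|\alpha|,|\beta|)$, the weights $(|\alpha|!/\alpha!)(|\beta|!/\beta!)$, and the restriction to dominating pairs conspire, bidegree by bidegree, to reproduce the orthogonally invariant contractions. This is the content of the author's remark that the off-diagonal terms in (\ref{canonical_flat_metric}) are indispensable: a rotation acting on any individual $d^\alpha$ produces a linear combination of $d^\beta$'s of the same degree, and the cross-multi-index terms must cancel or recombine across the sum, which is where the most careful tracking of coefficients will be required.
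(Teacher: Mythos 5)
Your ``alternative route'' is, in substance, the paper's own proof: the paper treats translation invariance as manifest, decomposes a Euclidean motion into a translation followed by a rotation, observes from (\ref{vector_transf_law}) that a rotation $O$ acts on higher tangents by the block-diagonal operator $\hat{O}=O\oplus O\otimes O\oplus\cdots\oplus O^{\otimes r}$, and then cancels $\hat{O}^{t}\hat{O}=1$ inside the rank-one sums of the reduced form (\ref{reduced_canonical_form_of_metric}), i.e.\ it manipulates the $\varepsilon^{x_\mu}$ as if $\hat{O}$ carried them into orthogonal linear combinations of one another. Your translation step and your collapse of the Jacobian (\ref{jet_transf_law}) to $R^{\otimes a}$ on the degree-$a$ subspace line up with this exactly.

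The genuine problem lies in the two places you deferred. First, your primary route is not merely heavy bookkeeping: it cannot be completed, because (\ref{canonical_flat_metric}) is not a superposition of your invariant blocks. In bidegree $(2,2)$ the coefficients of (\ref{canonical_flat_metric}) are $\min(|\alpha|,|\beta|)\,\frac{|\alpha|!}{\alpha!}\frac{|\beta|!}{\beta!}$, giving the pattern $2$ on $d^{2e_\mu}\otimes d^{2e_\mu}$, $8$ on $d^{e_\mu+e_\nu}\otimes d^{e_\mu+e_\nu}$ ($\mu\ne\nu$), and $0$ on $d^{2e_\mu}\otimes d^{2e_\nu}$; the tensors in that bidegree fixed by $R^{\otimes2}\otimes R^{\otimes2}$ are spanned by the full contraction (pattern $1,2,0$) and the trace-times-trace (pattern $1,0,1$), and $(2,8,0)$ is not a combination of these --- the squared multinomial weights, rather than the single weight $|\alpha|!/\alpha!$ of your invariant block, are fatal. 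Worse, the cross-bidegree blocks can never be invariant: the $(1,2)$ block transforms under $R\oplus R^{\otimes2}$ into expressions containing the sums $\sum_j R_{jc}$ of entries of the rotation matrix (a $90^{\circ}$ rotation in a coordinate plane already changes it), and indeed no nonzero invariant pairing between degree-$1$ and degree-$2$ jets exists, since delta contractions need an even number of indices and the epsilon tensor is killed by the symmetry of second-order jets. Second, your route (b) hides the same issue inside the parenthesis ``$y_k=x_k+\mathrm{h.o.}$'': a rotation of $x$ induces a genuinely nonlinear change of the uniformizing coordinates, whose Jacobian has nonvanishing off-diagonal, degree-mixing blocks, so invariance of (\ref{reduced_canonical_form_of_metric}) under linear rotations of $y$ does not transfer to rotations of $x$; equivalently, $\hat{O}\varepsilon^{x_\mu}$ is \emph{not} an orthogonal combination of the $\varepsilon^{x_\nu}$, its degree-two part again involving the column sums of $O$. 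This is precisely the step the paper's own proof leaves unexamined when it moves $\hat{O}$ across the $\varepsilon^{x_\mu}$; adopting route (b) means inheriting that step rather than proving it, while carrying out route (a) honestly exposes it as false. A tenable statement would keep only the degree-diagonal blocks with single multinomial weights (exactly your invariant blocks); as written, neither your plan nor the paper's argument closes the gap on the cross-degree terms.
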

\begin{proof}
	The metric $\delta$ is manifestly invariant under rigid translations. Since an arbitrary Euclidean motion can be decomposed into a translation followed by a rotation, it remains to show invariance under rotations. Let $O \in SO(n)$. From equation (\ref{vector_transf_law}), the action of $O$ on higher tangent vectors may be written in the form,
	\begin{equation}
		\hat{O} := O \oplus O \otimes O \oplus \cdots \oplus \overbrace{O \otimes \cdots \otimes O}^{r~\mathrm{times}}.
	\end{equation}
	The invariance of $\delta$ follows from the form of equation (\ref{reduced_canonical_form_of_metric}) as a sum of rank-1 pieces:
	\begin{align}
		\hat{O}^t \delta \hat{O} &= \left( \hat{O} \varepsilon^{x_\mu} \right) \otimes \left( \hat{O} \varepsilon^{x_\mu} \right) +
		\left( \hat{O} \varepsilon^{x_\mu} \varepsilon^{x_\nu} \right) \otimes \left( \hat{O} \varepsilon^{x_\mu} \varepsilon^{x_\nu} \right) + \cdots \nonumber \\
		&= \varepsilon^{x_\mu} \hat{O}^t \otimes \hat{O} \varepsilon^{x_\mu} +
		\varepsilon^{x_\mu} \varepsilon^{x_\nu} \hat{O}^t \otimes \hat{O} \varepsilon^{x_\mu} \varepsilon^{x_\nu} + \cdots \nonumber \\
		&= \varepsilon^{x_\mu} \otimes \varepsilon^{x_\mu} +
		\varepsilon^{x_\mu} \varepsilon^{x_\nu} \otimes \varepsilon^{x_\mu} \varepsilon^{x_\nu} + \cdots = \delta,
	\end{align}
	since by virtue of orthogonality $\hat{O}^t \hat{O} = 1$. This step completes the proof.
\end{proof}

\begin{remark}
	Suppose one has put the generalized metric into the canonical form indicated by proposition \ref{metric_canonical_form}. It would prove of great interest, were it possible to write out the nearby dependence in terms of invariants in the generalized Riemannian curvature tensor, thus in analogy to the standard result that the metric can be expanded as (cf. \cite{lee_riemannian_manif}, Problem 10.3),
	\begin{equation}
		g_{\mu\nu} = \delta_{\mu\nu} - \frac{1}{3} R_{\mu\alpha\beta\nu} x^\alpha x^\beta + O(|x|^3),
	\end{equation}
	where indices $\mu, \nu, \alpha, \beta$ range over $1,\ldots,n$. A certain amount of technical apparatus needs first to be developed; one would want at least a theory of Jacobi fields to higher order, which is not necessarily going to be so straightforward (cf. Lee, Chapter 10 in \cite{lee_riemannian_manif}).	
\end{remark}

\subsection{Rectification of Curves}

In this section, we undertake a simple application of the theory for the sake of illustration, namely, to solve the problem of rectification of a curve, first for an arbitrary trajectory supposed to by given by its injection into its enveloping space (taken to be Euclidean so as to avoid inessential complications), $\iota: [a,b] \rightarrow \vvmathbb{R}^n$. We may pull back the Riemannian volume form $\delta_n$ to yield a volume form on the interval $[a,b]$ with coordinate $t$, $a \le t \le b$. By proposition \ref{jet_normal_form} the volume form so obtained, viz., $\iota^* \delta_n$, will be locally equivalent to $\theta^t$ except at places of degeneracy. Cover $[a,b]$ by neighborhoods vanishingly small in the limit and pass over to the uniformizing coordinate within each one and integrate against it as in {\S}\ref{quadrature_on_real_line}. 

The procedure will perhaps become clearer if exemplified in two of the most straightforward imaginable cases, the straight line and the parabola. For convenience, we work with the uniformizing frame in $\vvmathbb{R}^2$. Suppose the interval $[t_0,t_1]$ be mapped into the plane via $\iota: t \mapsto (At,Bt)$. By virtue of its being a straight line, $d^t$ has no second-order dependence itself. Hence ,
\begin{align}
	d^t &= A d^x + B d^y \nonumber \\
	d^{tt} &= A^2 d^{xx} + 2 AB d^{xy} + B^2 d^{yy}.
\end{align}
Remembering definition (\ref{def_vector_basis}), the pullback metric may be computed directly from
\begin{align}
	\iota_* \partial_t &= A \partial_x + B \partial_y \nonumber \\
	\iota_* \partial_{tt} &= A^2 \partial_{xx} + AB \partial_{xy} + B^2 \partial_{yy}
\end{align}
according to the formalism of {\S}\ref{example_spaces}:
\begin{equation}
	\hat{g} = \begin{pmatrix} A & B & A^2 & AB & B^2 \end{pmatrix}
	\begin{pmatrix}
		1 & 0 & 0 & 0 & 0 \\
		0 & 1 & 0 & 0 & 0 \\
		0 & 0 & 1 & 0 & 0 \\
		0 & 0 & 0 & 2 & 0 \\
		0 & 0 & 0 & 0 & 1 \\
	\end{pmatrix}
	\begin{pmatrix}
		A \\ B \\ A^2 \\ AB \\ B^2
	\end{pmatrix},
\end{equation}
from which one finds
\begin{equation}
	\det \hat{g} = A^2 + B^2 + A^4 + 2 A^2 B^2 + B^4.
\end{equation}
The length of the straight line becomes to second order,
\begin{align}\label{straight_line}
	\int_{t_0}^{t_1} & \sqrt{A^2 + B^2 + A^4 + 2 A^2 B^2 + B^4} \left( d^t + d^{tt} \right) \nonumber \\
	&= \sqrt{A^2 + B^2 + A^4 + 2 A^2 B^2 + B^4} \left( t_1 - t_0 + \frac{1}{2} (t_1 - t_0)^2 \right).
\end{align}
As it must, this formula reduces to the familiar result for low enough velocities and short enough times.

As for the parabola, we have instead the injection $\iota: t \mapsto (At,\frac{1}{2}Bt^2)$. Then
\begin{align}
	\iota_* \partial_t &= A \partial_x + Bt \partial_y \nonumber \\
	\iota_* \partial_{tt} &= A^2 \partial_{xx} + ABt \partial_{xy} + B^2t^2 \partial_{yy} + \frac{1}{2} B \partial_y.
\end{align}
The analogue of equation (\ref{straight_line}) for the arclength becomes now,
\begin{equation}\label{parabolic_arclength}
	\int_{t_0}^{t_1} \sqrt{|\hat{g}|} \left( d^t + d^{tt} \right) = \int_{t_0}^{t_1} \sqrt{A^2 + B^2 \left( t+\frac{1}{2} \right)^2 + A^4 + 2 A^2 B^2 t^2 + B^4 t^4} \left( d^t + d^{tt} \right), 
\end{equation}
what is not anymore evaluable in simple analytic form (the answer is in principle expressible in terms of elliptic functions, but too involved to be worth reproducing here). How is this result compatible with our expectation that the initial speed of a body starting along a parabolic trajectory should be $|A|$ and not $\sqrt{A^2+\frac{1}{4} B^2 + A^4}$? The answer is that one has now to distinguish between the speed as the magnitude of the 1-vector part of the velocity (after orthogonal projection onto the image of $J^1$ under the canonical injection) and the rate of increase of arclength, which incorporates the contribution from all higher components (cf. the convention in force behind equation (\ref{antideriv_indefinite_part})).

	\section{Discussion}\label{discussion}

It is natural to wonder whether the simple idea of incorporating infinitesimals to higher order into the foundations of geometry leads to any novel perspectives on traditional problems in the field. The question may be answered in the affirmative, on the basis of results we have found so far. The modern, intrinsic characterization of analysis on manifolds can indeed be extended immediately and consistently, once the proper definitions of a jet connection, the jet stream, the Levi-Civita jet connection and the generalized Riemannian curvature tensor have been devised. It seems we have arrived at a novel conception of kinematics and inertial motion, which reflects corrections to the metrical structure of space in the infinitely small originating in the coupling among infinitesimals differing in order, which, of course, could not have been studied before when one restricted oneself to the case of 1-jets and 1-vectors only.

The most exciting consequence of our new approach to differential geometry may be its implications for physics, however. Here we wish to conceptualize the transition from mathematical to physical space by reference to Patrizi, who reputedly is the first to have realized the need for a systematic treatment of space, see \cite{grant}. Patrizi, in a departure from the predominant Aristotelianism of the scholastic tradition, distinguishes physical space from mathematical space being one of the first to envision empty space as an arena in which physical processes take place. Therefore, empty space enjoys a certain priority when Patrizi differentiates between the concepts of locus versus void. See also Patrizi (\cite{patrizi}, pp. 229-231) on spacium versus locus in space, not the other way around. In other words, for him (physical) space is nothing but the manifold of possible loci (ibid.). Indeed, we might paraphrase his ideas by saying that a locus is to be obtained as an ideal limit of possible configurations in physical space. But take care: Patrizi also insists (op. cit., p. 240) that space must be thought of as a principle and not just as something to be abstracted from experience. We are, of course, free to make what use of Patrizi we please, and in light of modern views on the geometrical foundation of physics the last contention on Patrizi’s part may circumscribed by extracting its kernel of meaning to amount to a statement to the effect that space itself is potentially dynamical, as in modern times Einstein was the first to suppose. Hence, the idea of space serves to mediate between the purely mathematical construct of the continuum and the phenomena of physics (\cite{grant}, p. 206) and so, when space is supplemented with a contingent of infinitesimals as we have done in the present contribution, there must by rights be a sequel in physics, which we have now to find out.

In subsequent work, we intend to set forth the following far-reaching applications to fundamental physics of the Riemannian geometry to higher order we have just sketched. Free fall must now be described by geodesics in a higher sense and Einstein’s field equations acquire a hierarchy of higher sectors. As soon as one goes to second order, the profound revision of the concept of inertia manifests itself on the phenomenological level as a modified Newtonian dynamics obeyed by spacecraft in the solar system (thereby explaining the flyby anomaly). A second major implication is to open a path to field-theoretical unification on the classical level in the spirit of Einstein. The present theory leads immediately to another fundamental force of nature arising at each successive order in the jets. The 1-jet case reduces to gravity as known in the conventional general theory of relativity, of course. It is striking that, at the 2-jet level, one recovers the electroweak forces including spontaneous symmetry breaking from a single postulate, the proper generalization of the equivalence principle. At the 3-jet level, following the same procedure we obtain chromodynamics without any ad hoc modifications. See \cite{paper_ii} and \cite{paper_iii}.

\end{document}